 \renewcommand{\bcube}{{\overline{\square}}}
\let\Bl=\relax
\let\PSh=\relax
\let\CI=\relax
\let\ch=\relax
\DeclareMathOperator{\CDiv}{CDiv}
\DeclareMathOperator{\Bl}{Bl}
\DeclareMathOperator{\PSh}{PSh}
\DeclareMathOperator{\Sh}{Sh}
\DeclareMathOperator{\CI}{CI}
\DeclareMathOperator{\Fil}{Fil}
\DeclareMathOperator{\ch}{ch}
\def\int{\mathrm{int}}
\def\ceil#1{\lceil #1 \rceil}
\def\inj{\mathrm{inj}}
\def\category#1{\operatorname{\mathrm{#1}}}
\def\Sm{\category{Sm}}
\def\Sch{\category{Sch}}
\def\Cor{\category{Cor}}
\def\MCor{\category{MCor}}
\def\CAlg{\category{CAlg}}
\def\RSC{\category{RSC}}
\def\Ab{\category{Ab}}
\def\Mod{\category{Mod}}
\def\Spc{\category{Spc}}
\def\Sp{\category{Sp}}
\def\ulMCor{\category{\underline{M}Cor}}
\def\ulMSm{\category{\underline{M}Sm}}
\def\Witt{\mathrm{W}}
\def\ulM{\underline{\mathrm{M}}}
\def\CI{\mathrm{CI}}
\def\BI{\mathrm{BI}}
\def\HI{\mathrm{HI}}
\def\PrL{\category{Pr}^{\mathrm{L}}}
\def\Conn{\mathrm{Conn}}
\def\DA{\category{DA}}
\def\mSm{\category{mSm}}
\def\mDA{\category{mDA}}
\def\mH{\category{mH}}
\def\mSH{\category{mSH}}
\def\tDA{\category{tDA}}
\def\H{\category{H}}
\def\SH{\category{SH}}
\def\logSH{\category{logSH}}
\def\logH{\category{logH}}
\DeclareMathOperator{\fib}{fib}
\DeclareMathOperator{\cofib}{cofib}
\def\colim{\mathop{\mathrm{colim}}}
\DeclareMathOperator{\motive}{M}
\DeclareMathOperator{\MTh}{MTh}
\def\normal{\mathrm{N}}
\def\mot{\mathrm{mot}}
\DeclareMathOperator{\map}{map}
\def\lim{\mathop{\mathrm{lim}}}
\def\tDA{\category{tDA}}
\def\logDA{\category{logDA}}
\def\SmlSm{\category{SmlSm}}
\def\SmAdm{\category{SmAdm}}
\def\sNis{\mathrm{sNis}}
\def\exc{\mathrm{exc}}
\def\modulus{\mathrm{mod}}
\def\pt{\mathrm{pt}}
\def\bA{\mathbb{A}}
\def\SmS{\Sm_S}
\def\SmSNis{\Sm_{S,\Nis}}
\def\mSmS{\mSm_S}
\def\mSmSNis{\mSm_{S,\Nis}}
\def\MS{\category{MS}}
\def\ur{\mathrm{ur}}
\def\quotprojlim{%
  \mathop{``\mathpalette\varlim@{\leftarrowfill@\textstyle}"}\nmlimits@
}
\def\quotinjlim{%
  \mathop{``\mathpalette\varlim@{\rightarrowfill@\textstyle}"}\nmlimits@
}
\begin{document}
\title{Motivic homotopy theory with ramification filtrations}

\author[J. Koizumi]{Junnosuke Koizumi}
\address{RIKEN iTHEMS, Wako, Saitama 351-0198, Japan}
\email{junnosuke.koizumi@riken.jp}

\author[H. Miyazaki]{Hiroyasu Miyazaki}
\address{NTT Institute for Fundamental Mathematics, NTT Communication Science Laboratories, NTT Corporation, 3-9-11 Midori-cho,Musashino-shi,Tokyo 180-8585}
\email{hiroyasu.miyazaki@ntt.com}

\author[S. Saito]{Shuji Saito}
\address{Graduate School of Mathematical Sciences, the University of Tokyo, 3-8-1 Komaba Meguro-ku Tokyo 153-8914, Japan}
\email{sshuji.goo@gmail.com}

\date{\today}
\thanks{The first author is supported by JSPS KAKENHI Grant (22J20698).
The second author is supported by JSPS KAKENHI Grant (21K13783, 24K06699). 
}

\subjclass{14F42(primary), 13F35, 14F30, 19E15, 11S15, 14E22 (secondary).}

\begin{abstract}
The aim of this paper is to connect two important and apparently unrelated theories: motivic homotopy theory and ramification theory.
We construct motivic homotopy categories over a qcqs base scheme $S$, in which cohomology theories with ramification filtrations are representable. Every such cohomology theory enjoys basic properties such as the Nisnevich descent, the cube-invariance, the blow-up invariance, the smooth blow-up excision, the Gysin sequence, the projective bundle formula and the Thom isomorphism.
In case $S$ is the spectrum of a perfect field, the cohomology of every reciprocity sheaf is upgraded to a cohomology theory with a ramification filtration represented in our categories.
We also address relations of our theory with other non-$\A^1$-invariant motivic homotopy theories such as the logarithmic motivic homotopy theory of Binda, Park, and Østvær and the theory of motivic spectra of Annala-Iwasa.
\end{abstract}

\maketitle
\setcounter{tocdepth}{1}
\tableofcontents

\enlargethispage*{20pt}
\thispagestyle{empty}

\def\cO{\mathcal{O}}
\def\qz{\Q/\Z}
\def\cont{\mathrm{cont}}
\def\uMHet{\underline{\mathrm{M}} \mathrm{H}_{\et}}
\def\Het{\mathrm{H}_{\et}}
\def\mHet{\mathrm{mH}_{\et}}
\def\Hur{\mathrm{H}_{\mathrm{ur}}}
\def\uMHur{\underline{\mathrm{M}} \Hur}
\def\mHur{\mathrm{m} \Hur}
\def\mDAeff{\mDA^\eff}
\def\logDAeff{\category{logDA}^{\eff}}
\def\triv{\mathrm{tri}}
\def\Log{\mathcal{L}og}
\def\logH{\category{logH}}
\def\Mlog{M^{\log}}
\def\cM{\mathcal{M}}
\def\cC{\mathcal{C}}

\section*{Introduction}

The motivic homotopy theory initiated by Morel and Voevodsky provides a framework for treating cohomology theories of schemes in a homotopy theoretic manner and has been proved to be a very powerful tool in algebraic geometry. For example, it played a pivotal role in Voevodsky's proofs of the Milnor and Bloch-Kato conjectures. 
Morel-Voevodsky's motivic homotopy theory is an $\A^1$-homotopy theory, namely, the affine line $\mathbb{A}^1$ is used as an analogue of the unit interval $[0,1]$, and all $\A^1$-invariant cohomology theories are represented as mapping spaces in the category.
On the other hand, it is impossible to capture in this framework non-$\A^1$-invariant phenomena such as wild ramification of Galois representations and $D$-modules with irregular singularities.  A main motivation of this work is to construct a motivic homotopy theory which
captures such non-$\A^1$-invariant phenomena to encompass theories apparently unrelated to homotopy theory such as ramification theory.

In ramification theory, a central role is played by the ramification filtration 
$\{G_K^{r}\}_{r\in \Q_{\geq 0}}$ on the Galois group $G_K$ of a henselian discrete valuation field $K$:
In case the residue filed of $K$ is perfect, this is classical. The general case was established by Abbes-Saito \cite{AS}. It endows the presheaf\footnote{Here, $\pi_1(U,u)$ is the fundamental group of $U$ with a chosen geometric point $u$,}
\begin{equation}\label{Het}
\Het^1: U \mapsto \Het^1(U,\qz)=\Hom_{\cont}(\pi_1(U,u),\qz)\end{equation}
on the category $\Sm_k$ of smooth schemes over a field $k$ with a \emph{ramification filtration }
\begin{equation}\label{uMHet}
\uMHet^1(X,D)\subset \Het^1(U,\qz) \end{equation}
parametrized by pairs $(X,D)$ with a $k$-scheme $X$ and a $\Q$-Cartier divisor $D$ on $X$ such that $U=X-|D|$; an element $\chi$ in RHS is contained in LHS if for every henselian DVF $K$ and $\rho:\Spec K\to U$ whose composite with $U\hookrightarrow X$ factors through $\Spec \cO_K$, the pullback $\rho^*(\chi): G_K \to \qz$ annihilates 
$G_K^r$ with $r=v_K(\rho^*D)$, where $v_K$ is the normalized valuation of $K$. 
One can check that the association $(X,D)\mapsto \uMHet^1(X,D)$ gives a Nisnevich sheaf on a suitable category of such pairs. Then, a question is whether the associated cohomology theory is representable in suitable motivic homotopy categories.

In this paper, we answer this question. Specifically, choosing a connective commutative ring spectrum $\Lambda$ as a coefficient ring, we define the stable $\infty$-category $\mSH(S,\Lambda)$ over a qcqs base scheme $S$, which is a refinement of the stable motivic homotopy category $\SH(S,\Lambda)$ of Morel-Voevodsky, i.e., a stabilization of the full subcategory spanned by the $\A^1$-local objects in the $\infty$-category of Nisnevich sheaves with values in $\Mod_\Lambda$ on the category $\Sm_S$ of smooth schemes over $S$.
In our theory, we use a pair called the cube, denoted by $\bcube = (\mathbb{P}^1, [\infty])$ as a substitute of the affine line $\mathbb{A}^1$ used in $\A^1$-invariant theory. This modification offers the advantage of accommodating cohomology theories that could not be treated in the classical framework. For example, Hodge cohomology $\mathrm{R}\Gamma({-}, \Omega^q)$ is not $\mathbb{A}^1$-invariant and hence cannot be handled in the traditional motivic homotopy theory. However, the Hodge cohomology with ramification filtration $\mathrm{R}\Gamma({-}, \ulM\Omega^q)$, introduced by Kelly and Miyazaki, is representable in our category (see Example \ref{ex;mOmega} below). The same holds also for the Hodge-Witt cohomology (see Example \ref{ex;mWOmega} below), so our theory has potential applications to  $p$-adic cohomology theories. We also show that the cohomology theory $\mathrm{R}\Gamma({-}, \uMHet^1)$ arising from \eqref{uMHet} is also representable in our category, which exhibits a connection of our theory with ramification theory. Interestingly, it brings about a new motivic viewpoint on unramified chomology (see \S\ref{sec:Hone_exc}).
Moreover, we will show that there is a canonical way to upgrade the Nisnevich cohomology $\mathrm{R}\Gamma({-},F)$ of any reciprocity sheaf $F$ in the sense of \cite{KSY1}, \cite{KSY2} to a cohomology theory with ramification filtration which is representable in our category (see \eqref{functor-mod}), and also show that the cohomology theories $\mathrm{R}\Gamma({-}, \ulM\Omega^q)$ and $\mathrm{R}\Gamma({-}, \uMHet^1)$ are special cases of the general construction.

\medbreak

\subsection{Overview of the construction}

Now, we explain the construction of our category $\mSH(S,\Lambda)$. 
In a nutshell, it is defined as the localizations with respect to the cube-invariance and the SNC blow-up invariance of the categories of Nisnevich sheaves on the category $\mSmS$ of \emph{log-smooth $\mathbb{Q}$-modulus pairs over $S$}, followed by a certain stabilization process:
A log-smooth $\mathbb{Q}$-modulus pair over $S$ is a pair $\mathcal{X}=(X,D)$ where $X\in \SmS$ and $D$ is a relative $\mathbb{Q}$-SNCD over $S$ (see Definition \ref{relativeSNCD}).
    A morphism of log-smooth $\mathbb{Q}$-modulus pairs $f\colon (X,D)\to (Y,E)$ is a morphism of $S$-schemes $f\colon X\to Y$ such that $D\geq f^*E$.
  The category $\mSmS$ has a natural symmetric monoidal structure $\otimes$ given by $(X,D)\otimes (Y,E)=(X\times Y, \pr_1^*D+\pr_2^*E)$.
We can also define a modulus version of the Nisnevich topology on $\mSmS$ (see Definition \ref{def:topology}).
    We then consider the following classes of morphisms in $\mSmS$ (see Definition \ref{def:SNC_blowup} for SNC blow-ups):
 \[ \mathrm{CI}=\{\mathcal{X}\otimes \bcube\to\mathcal{X}\mid \mathcal{X}\in \mSmS\},\quad 
        \mathrm{BI}=\{\text{SNC blow-ups }\mathcal{Y}\to\mathcal{X}\}.\]
    Choosing a coefficient category 
    $\mathcal{C}\in \{\Spc,\Spc_*,\Sp,\Mod_\Lambda\}$, we define the \emph{$\mathcal{C}$-valued motivic homotopy category with modulus}
    $
        \mH(S,\mathcal{C})
    $
    to be the full $\infty$-subcategory of the category $\Sh_\Nis(\mSmS,\mathcal{C})$ of $\mathcal{C}$-valued Nisnevich sheaves on $\mSmS$ spanned by 
    the $(\mathrm{CI}\cup \mathrm{BI})$-local objects.
    It is a standard fact that $\mH(S,\mathcal{C})$ underlies a presentably symmetric monoidal $\infty$-category, where the monoidal structure is the Day convolution of the tensor product $\otimes$ on $\mSmS$.
    It is equipped with a functor
    \begin{equation}\label{eq;Motivicfunctor}
        \motive\colon \mSm_S\xrightarrow{y} \Sh_\Nis(\mSm_S,\mathcal{C})\xrightarrow{\mathrm{L}_\mot}\mH(S,\mathcal{C}),\end{equation}
    where $y$ is the Yoneda functor and $\mathrm{L}_\mot\colon \Sh_\Nis(\mSmS,\mathcal{C}) \to \mH(S,\mathcal{C})$ is the localization functor\footnote{i.e. left adjoint to the inclusion functor.}.
    We write $\mSH_{S^1}(S,\Lambda)$ for $\mH(S,\Mod_\Lambda)$.
    
    The stable category $\mSH(S,\Lambda)$,  equipped with a natural functor $\motive : \mSm_S \to \mSH(S,\Lambda)$, is then obtained from $\mSH_{S^1}(S,\Lambda)$ by stabilizing the endofunctor $S^1_t\otimes({-})$ on $\mSH(S,\Lambda)$, where $S^1_t$ is the \emph{Tate circle with modulus} defined as
    $$
        S^1_t=\motive(\mathbb{P}^1,[0]+[\infty])/\motive(\{1\}) \in \mSH_{S^1}(S,\Lambda).
    $$
When $\Lambda$ is the sphere spectrum, we simply write $\mSH(S)$ for $\mSH(S,\Lambda)$.

\medbreak

The following properties follow immediately from the definition:
    \begin{enumerate}
        \item (Nisnevich descent)
                For any elementary distinguished Nisnevich square in $\mSm_S$, its image under $\motive$ from \eqref{eq;Motivicfunctor} is a coCartesian square.
        \item (Cube-invariance)
                For any $\mathcal{X}=(X,D)\in \mSm_S$, the canonical morphism $\motive(\mathcal{X}\otimes\bcube)\to \motive(\mathcal{X})$ is an equivalence.
        \item (Blow-up invariance)
                For any SNC blow-up $\mathcal{Y}\to \mathcal{X}$, the induced morphism
                $\motive(\mathcal{Y})\to \motive(\mathcal{X})$ is an equivalence.
                \end{enumerate}
   
   Besides the above, we will prove the following (see Theorems \ref{SBU}, \ref{THA}, \ref{GS}).
                
\begin{theorem} \label{main-2}
    Let $\mathcal{C}\in \{\Sp,\Mod_\Lambda\}$\footnote{In fact, we prove similar statements in the unstable category, allowing the case $\mathcal{C}=\Spc_*$ too.} be as above and $\motive$ be as \eqref{eq;Motivicfunctor}.
    Let $\mathcal{X}=(X,D)\in \mSm_S$ and let $Z\subset X$ be a smooth closed subscheme which is transversal to $|D|$ (see Definition \ref{NCwithD}).
        \begin{enumerate}
                \item (Smooth blow-up excision)
                Let $E$ denote the exceptional divisor of the blow-up $\pi\colon \Bl_ZX\to X$.
                Then, the following square is coCartesian:
                $$
                \xymatrix{
                    \motive(E,\pi^*D|_E)\ar[r]\ar[d]       &\motive(\Bl_ZX,\pi^*D)\ar[d]\\
                    \motive(Z,D|_Z)\ar[r]             &\motive(\mathcal{X}).
                }
                $$
        \item (Tame Hasse-Arf theorem)
                Suppose that $Z\subset X$ has codimension $1$.
                Then for any $\varepsilon\in (0,1]\cap \mathbb{Q}$, the natural morphism 
                 $$
                    \motive(X,D+Z)\to \motive(X,D+\varepsilon Z)
                $$
                is an equivalence.
        \item (Gysin sequence)
                Let $\pi\colon \normal_ZX\to Z$ be the normal bundle of $Z$, and set $\mathcal{N}_ZX:=(\normal_ZX,\pi^*(D|_Z))$.
                Then there exists a canonical cofiber sequence
                $$
                    \motive(\Bl_ZX,q^*D+E)\to \motive(\mathcal{X})\to \MTh(\mathcal{N}_ZX),
                $$
                where $q\colon \Bl_ZX\to X$ is the blow-up along $Z$, $E$ is the exceptional divisor, and $\MTh(\mathcal{N}_ZX)$ is the Thom space of $\mathcal{N}_ZX$ (see Definition \ref{def:thomsp}).
        \end{enumerate}
\end{theorem}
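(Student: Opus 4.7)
My overall strategy is to reduce each assertion Nisnevich-locally, then exploit cube-invariance and SNC blow-up invariance to complete the computation. The transversality hypothesis $Z\pitchfork |D|$ allows us, Nisnevich-locally on $X$, to choose étale coordinates so that $Z$ is the zero section of a trivial normal bundle $\mathbb{A}^c_Z\to Z$ and $D$ pulls back from $Z$ (plus possibly an SNC piece in the normal directions, which is trivialized by cube-invariance on the $\mathbb{A}^c$-factor).

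For part (1), the main tool is the \emph{deformation to the normal cone}. Let $M$ be the blow-up of $X\times \mathbb{A}^1$ along $Z\times\{0\}$ and let $M^\circ\subset M$ be the complement of the strict transform of $X\times\{0\}$, so $M^\circ$ is an $\mathbb{A}^1$-family whose fiber over $1$ is $X$ and whose fiber over $0$ is the total space of $\normal_Z X$, and $Z\times\mathbb{A}^1$ embeds as a constant family of centers. A suitable modulus-pair structure on a compactification of $M^\circ$, combined with cube-invariance applied in the $\mathbb{A}^1$-direction, identifies the desired square for $\mathcal{X}$ with the analogous square for $\mathcal{N}_Z X$. A further Nisnevich reduction trivializes the normal bundle, after which cube-invariance contracts the $\mathbb{A}^c$-factor, leaving only the blow-up of $\mathbb{A}^c$ at the origin with trivial modulus, which is elementary.

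For part (2), Nisnevich-localization reduces the claim to showing that $(\mathbb{A}^1_Y, D_Y+[0])\to (\mathbb{A}^1_Y, D_Y+\varepsilon[0])$ is a motivic equivalence for every $\varepsilon\in (0,1]\cap \mathbb{Q}$ and every log-smooth pair $(Y,D_Y)$. My plan is to construct, via an iterated toric SNC blow-up of $\mathbb{P}^1\times\mathbb{P}^1_Y$, an auxiliary modulus pair whose two structure projections realize both endpoint moduli on $\mathbb{A}^1_Y$ up to cube-invariant summands; SNC blow-up invariance together with cube-invariance then forces the equivalence. The main obstacle I foresee is arranging the interpolating pair so that every intermediate blow-up is genuinely SNC with respect to the ambient divisor, which in the general rational case requires a careful combinatorial choice of centers (and likely a weighted or iterated construction to realize the denominator of $\varepsilon$).

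Part (3) is deduced from (1) together with the projective bundle formula. The square of (1) yields a cofiber sequence identifying $\cofib(\motive(\Bl_Z X, q^*D)\to\motive(\mathcal{X}))$ with $\cofib(\motive(E, q^*D|_E)\to\motive(Z, D|_Z))$. The natural morphism $(\Bl_Z X, q^*D+E)\to(\Bl_Z X, q^*D)$ in $\mSmS$ and the identification $E\simeq \mathbb{P}(\normal_Z X)$ then allow us to apply the projective bundle formula (deducible from cube- and SNC blow-up invariance applied to the universal projective bundle) to rewrite the resulting cofiber as $\MTh(\mathcal{N}_Z X)$, producing the desired triangle. The assembly of these cofiber sequences into a single coherent diagram is formal once the identifications are in place.
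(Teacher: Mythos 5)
Your outline takes a genuinely different route for part (1) and is essentially incomplete for parts (2) and (3).

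For (1), the paper does not use deformation to the normal cone at all. Instead it uses a local \emph{structure lemma} (Lemma~\ref{transversal_structure}): Nisnevich-locally the triple $(X,D,Z)$ is linked by a pair of étale maps to $(Z\times\mathbb{A}^n, D|_Z\times\mathbb{A}^n, Z\times\{0\})$; one then tensors with $(Z,D|_Z)$, reduces the codimension-$n$ case to $(\mathbb{P}^n,\mathbb{P}^{n-1})$ blown up at a point, and concludes from the explicit motive computation (Lemma~\ref{P^n_relative_motive}). Your deformation-to-the-normal-cone route is plausible, but the step where you ``compactify $M^\circ$'' and apply cube-invariance is exactly the delicate point: the deformation parameter is an $\mathbb{A}^1$-direction, and you only have $\bcube$-invariance, so you must compactify to $\mathbb{P}^1$ and control the modulus along the strict transform of $X\times\{0\}$. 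The paper faces an analogue of this difficulty in the proof of the modified blow-up excision (Proposition~\ref{MBU}, used for the Gysin sequence, not for SBU), and there it is resolved not by pure formal manipulation but by transferring the question through $t_!$ to $\logSH$ and using known log results. Without that ingredient your argument for (1) is not closed.

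For (2), the paper's actual mechanism is Lemma~\ref{cube_epsilon_invariance}: the multiplication map $\mu\colon\mathbb{A}^1\times\mathbb{A}^1\to\mathbb{A}^1$ lifts to a single SNC blow-up $\Bl_{(0,\infty),(\infty,0)}(\mathbb{P}^1\times\mathbb{P}^1)\to\mathbb{P}^1$ and induces $\mu\colon\bcube^\varepsilon\otimes\bcube\to\bcube^\varepsilon$ for \emph{every} $\varepsilon\in(0,1]\cap\mathbb{Q}$ simultaneously, because the modulus inequality $\pr_1^*\varepsilon[\infty]+\pr_2^*[\infty]\geq\mu^*\varepsilon[\infty]$ is elementary to check. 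Your worry about needing a ``weighted or iterated'' construction keyed to the denominator of $\varepsilon$ points to a missing idea: no such iteration is needed once the multiplication map is in hand. Granted the correct lemma, the rest (Nisnevich/Zariski descent, tensor-stability, étale excision) matches the paper's scaffolding.

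For (3), there is a genuine gap. The codimension-one base case --- producing an equivalence $\cofib(\motive(X,D+Z)\to\motive(\mathcal{X}))\xrightarrow{\sim}\MTh(\mathcal{N}_ZX)$ --- is \emph{not} a formal corollary of smooth blow-up excision plus a projective bundle formula, and cannot be "assembled once the identifications are in place." One must actually \emph{construct} a map (the Gysin map), and then prove it is an equivalence. In the paper this is done via the modified blow-up excision (Proposition~\ref{MBU}): the deformation space $\Bl_{Z\times\{0\}}(X\times\mathbb{P}^1)$ with the modulus $q^*D+\pr_2^*[\infty]+Z_B$ is shown to fit into a coCartesian square, and combining it with the SBU square yields the equivalence $\motive_{Z_E}(E,D_E)\simeq\motive_{Z_B}(B,D_B)$ (Lemma~\ref{GS_deform_lemma}); the Gysin map $\beta_{(\mathcal{X},Z)}$ is then the zig-zag through $s_1$ and this equivalence, and its invertibility requires showing $s_0\simeq s_1$, a nontrivial homotopy which in the paper is verified after passing through $\logSH$ (Lemma~\ref{GS_cube}). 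Moreover the paper's projective bundle formula (Theorem~\ref{thm:pbf}) is a statement about \emph{cohomology} of oriented ring spectra, not a splitting of $\motive(\mathbb{P}(\mathcal{V}))$ itself, so it cannot be applied in the way you suggest. Finally, the reduction from general codimension to codimension $1$ is itself a separate step (via Lemma~\ref{lem:blowup_normal_exchange} and Lemma~\ref{lem:GS_reduction_lemma}, which uses SBU and Proposition~\ref{mthiso}), which your sketch does not identify.
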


We will also prove the projective bundle formula and the Thom isomorphism for oriented ring spectra in $\mSH(S)$ (see Theorem \ref{thm:pbf} and Corollary \ref{Thom}). 
These are essentially reworkings of the corresponding material in the logarithmic motivic homotopy theory \cite[\S 7]{BPO}.
\medbreak

\subsection{Relation with other motivic homotopy categories}
Here, we mention the relation between $\mSH(S,\Lambda)$ and other motivic homotopy categories.

Firstly, we have a localization functor from $\mSH(S,\Lambda)$ to the $\A^1$-invariant motivic homotopy category
 (see Corollary \ref{cor;mH-H-adjunction}). 
Let $\SH_{S^1}(S,\Lambda)\subset \Sh_\Nis(\Sm_S,\Mod_\Lambda)$ be the full $\infty$-subcategory spanned by the $\A^1$-local objects, and let $\SH(S,\Lambda)$ be the stabilization of $\SH_{S^1}(S,\Lambda)$ with respect to the Tate circle $(\AA^1- \{0\})/\{1\}$.
Then, we will see that there exists an adjunction
\[ \omega_!\colon \mSH(S,\Lambda)\rightleftarrows \SH(S,\Lambda)\colon \omega^*,\]
with $\omega_!$ symmetric monoidal, $\omega^*$ fully faithful and $\omega_!\motive(\mathcal{X})\simeq \motive(\mathcal{X}^\circ)$. Moreover, we prove that the functor $\omega_!$ is obtained as the $\AA^1$-localization of $\mSH(S,\Lambda)$ in \S \ref{sec:A1-loc}. 

\medbreak
    
Annala-Iwasa \cite{Annala-Iwasa} constructed a non-$\mathbb{A}^1$-invariant motivic homotopy category $\MS_S$ which they call the category of motivic spectra.
The category $\MS_S$ is is the localization of the category of Zariski sheaves of spectra on $\Sm_S$ with respect to the elementary blow-up excision (see Definition \ref{def:EBU}).
The smooth blow-up excision for $\mSH(S)$ (see Theorem \ref{main-2}) implies that there exists a pair of adjoint functors
\begin{equation}\label{MS-mSH}
    \begin{tikzcd}
            \MS_S
            \arrow[rr,shift left=0.75ex,"\lambda_!"]
            \arrow[rr,leftarrow,shift right=0.75ex,"\lambda^*" below]
             &&
            \mSH(S),
    \end{tikzcd}
\end{equation}
such that $\lambda_!(\Sigma^\infty_{\mathbb{P}^1}X_+)=\motive(X,\varnothing)$ (see Theorem \ref{thm:MS_comparison}).
\medbreak

Finally, we compare $\mSH(S,\Lambda)$ with 
the logarithmic motivic homotopy category $\logSH(S,\Lambda)$ defined by Binda, Park, and Østvær (see Corollary \ref{mSH_tSH_adjunction}).
We construct a string of adjoint functors
   \begin{equation}\label{eq;logHmHAdunction}
        \begin{tikzcd}
            \mSH(S,\Lambda)
            \arrow[rr,shift left=1.5ex,"t_!"]
            \arrow[rr,leftarrow,"t^*" description]
            \arrow[rr,shift right=1.5ex,"t_*"']
            &&
            \logSH(S,\Lambda),
        \end{tikzcd}   \end{equation}
where $t_!\dashv t^*$ is a symmetric monoidal adjunction, and $t^*\dashv t_*$ is an adjunction.
Moreover, $t^*$ is fully faithful and
\[  t_!\motive(X,D)\simeq\motive^{\log}(X,\cM_{|D|}),\quad t^*\motive^{\log} (X,\cM_D)\simeq\colim_{\varepsilon\to 0}\motive(X,\varepsilon D),
\]
where $\cM_D$ denotes the log structure on $X$ associated to 
an SNCD $D$ on $X$, and $\motive^{\log} $ is the logarithmic counterpart of \eqref{eq;Motivicfunctor}.
The essential image of $t^*$ is identified with the full subcategory generated by $\Q$-modulus pairs $(X,D)$ where all components of $D$ have multiplicity $\leq 1$.
Also, by construction, we have the following commutative diagram:
\[
\xymatrix{
&\mSH(S)\ar[dr]^-{\omega_!}\ar[dd]^-{t_!}\\
\MS_S\ar[ur]^-{\lambda_!}\ar[dr]_-{\lambda_{\#}}&&\SH(S)\\
&\logSH(S)\ar[ur]_-{\omega_{\#}}
}
\]
where the functors $\lambda_{\#}$ and $\omega_{\#}$ are defined in \cite[Constructions 4.0.8, 4.0.18]{LogHom}.

\medbreak

We would like to note that the idea of comparing the category of motives with modulus and the category of log motives was originally proposed by Shane Kelly in the private letter to the second author \cite{Letter2020} in 2020, and the unstable version of the compariosn is treated in \cite{Kelly-loghomotopy-type}, where the notion of $\mathbb{Q}$-modulus pairs is not used.
A merit of the use of $\mathbb{Q}$-modulus pairs in the present paper is that $t^*$ in \eqref{eq;logHmHAdunction} has a description in terms of representables.

\subsection{Representability of cohomology theories with ramification filtrations}

In the second part of this paper, we fix a perfect base field $k$ and work in the categories $\mDAeff(k):=\mSH_{S^1}(\Spec k,\mathbb{Z})$ and $\mDA(k):=\mSH(\Spec k,\Z)=\mDAeff(k)[(S^1_t)^{-1}]$.
We give examples of cohomolgy theories on $\mSm_k$ which are representable in $\mDAeff(k)$ and $\mDA(k)$.

\begin{ex}\label{ex;mOmega}(The Hodge cohomology with modulus, \S\ref{sec;MOmega})
This is a cohomology theory $\mathrm{R}\Gamma({-},\ulM\Omega^q)$ on $\mSm_k$ constructed by Kelly and the second author (\cite{KellyMiyazaki_Hodge1}, \cite{KellyMiyazaki_Hodge2}) and also by the first author (\cite{Koizumi-blowup}). It is an extension to $\mSm_k$ of the Hodge cohomology
$\mathrm{R}\Gamma({-},\Omega^q)$ defined on $\Sm_k$ 
and has the following simple description:
\[ \ulM\Omega^q (\sX) = \Gamma (X,\Omega^q_X (\log |D|)(\lceil D \rceil -|D|))
\;\text{ for }\mathcal{X}=(X,D)\in \mSm.\]
It is proved in {\it loc.cit}. that $\mathrm{R}\Gamma({-},\ulM\Omega^q)$ is $(\CI\cup\BI)$-local,
which implies that for any $q\geq 0$, there is an object $\mathrm{m}\Omega^q\in \mDAeff(k)$ such that there is a natural equivalence (see Theorem \ref{thm;MOmega_rep})
    $$
        \map_{\mDAeff(k)} (\motive (\sX),\mathrm{m}\Omega^q) \simeq \mathrm{R}\Gamma(\mathcal{X},\ulM\Omega^q).
    $$
Moreover, the $S^1_t$-loop space of $\mathrm{m}\Omega^q$ is isomorphic to $\mathrm{m}\Omega^{q-1}$ (see Lemma \ref{lem:mOmega_deloop}) so they constitute an oriented ring spectrum $\mathrm{m}\Omega:=(\mathrm{m}\Omega^0,\mathrm{m}\Omega^1,\mathrm{m}\Omega^2,\dots)\in \mDA(k)$ and we have
\begin{equation}\label{eq;ex;mOmega}
\Hom_{\mDA(k)}(\motive(\mathcal{X}),\Sigma^{p,q}\mathrm{m}\Omega)\simeq
    \mathrm{H}^{p-q}(\mathcal{X},\ulM\Omega^q),\end{equation}
    where $\Sigma^{p,q}E=(S^1_t)^{\otimes q}\otimes \Sigma^{p-q}E$ for $E\in \mDA(k)$.
\end{ex}

\begin{ex}\label{ex;mWOmega}(The Hodge-Witt cohomology with modulus, \S
\ref{sec;MWOmega})
    Assume that $\ch(k)=p>0$. This is a cohomology theory $\mathrm{R}\Gamma({-},\ulM\Witt_n\Omega^q)$ on $\mSm_k$ constructed by Shiho (\cite{Shiho}) as an extension of the Hodge-Witt cohomology $\mathrm{R}\Gamma({-},\Witt_n\Omega^q)$ defined on $\Sm_k$. 
    If a $\mathbb{Q}$-modulus pair $(X,D)$ can be lifted to a $\mathbb{Q}$-modulus pair over $\Witt_n(k)$, there exists an explicit description of the group $\ulM\Witt_n\Omega^q(X,D)$ (see \S \ref{sec;MWOmega}).
    It is proved in loc.cit. to be $(\CI\cup\BI)$-local, which implies that for any $n \geq 1$ and $q \geq 0$, there is an object $\mathrm{m}\Witt_n\Omega^q\in \mDAeff(k)$ such that there is a natural equivalence (Theorem \ref{thm;mWOmega})
    $$
        \map_{\mDAeff(k)} (\motive (\sX),\mathrm{m}\Witt_n\Omega^q) \simeq \mathrm{R}\Gamma(\mathcal{X},\ulM\Witt_n\Omega^q).
    $$
    Moreover, the $S^1_t$-loop space of $\mathrm{m}\Witt_n\Omega^q$ is isomorphic to $\mathrm{m}\Witt_n\Omega^{q-1}$ (see Lemma \ref{lem:mWOmega_deloop}) so they constitute an oriented ring spectrum $\mathrm{m}\Witt_n\Omega$ in $\mDA$ which represents the Hodge-Witt cohomology with ramification in the same way as \eqref{eq;ex;mOmega}.
    \end{ex}

These examples motivate the following question: 

\begin{question}
Is there a canonical way to upgrade 
the Nisnvich cohomology
$\mathrm{R}\Gamma({-},F)$ for a given Nisnevich sheaf 
$F$ of abelian groups on $\Sm_k$ to a cohomology theory on $\mSm_k$ which is representable in
$\mDAeff(k)$ as in the above examples?
\end{question}

To answer this question, we first recall that $\Omega^q$ and $\Witt_n\Omega^q$ are objects of the category $\RSC_\Nis$ of reciprocity sheaves introduced in \cite{KSY1} \cite{KSY2}. Later in the introduction, we will briefly review its definition (see also Definition \ref{def;RSC}).
An answer to a similar question in the context of logarithmic homotopy theory was given in \cite{Sai_log} by constructing a functor
$$
   (-)^{\log} : \RSC_{\Nis} \to \logDAeff(k),
$$
such that there is a natural equivalence for $F\in \RSC_{\Nis}$ and $X\in \Sm_k$
$$
        \map_{\logDAeff(k)} (\motive^{\log}(X,\triv),F^{\log}) \simeq \mathrm{R}\Gamma(X,F),
            $$ 
where $\logDA^{\eff}(k)$ is defined in \cite[5.2.1]{BPO} and viewed as the logarithmic counterpart of $\mDAeff(k)$ and $\motive^{\log}(X,\triv)$ is the object of $\logDAeff(k)$ associated to the log scheme with trivial log structure via the logarithmic conterpart of  \eqref{eq;Motivicfunctor}. For $F\in \RSC_{\Nis}$, $F^{\log}$ is defined as the cohomology of a sheaf whose value on a log scheme $\mathcal{X}=(X,\mathcal{M}_D)$ associated to $X\in \Sm_k$ and $D$, a SNCD on $X$, is a subgroup of $F(X-D)$ consisting of those sections which have logarithmic poles along $D$.  
The existence of such a functor connecting the theory of reciprocity sheaves and logarithmic motives, has a fundamental importance.
In this paper, we will refine $(-)^{\log}$ by constructing a functor 
\begin{equation}\label{functor-mod}
({-})^\modulus : \RSC_{\Nis} \to  \mDAeff(k)
\end{equation}
such that there is a natural equivalence for $F\in \RSC_{\Nis}$ and $X\in \Sm_k$
$$ \map_{\mDAeff(k)} (\motive(X,\varnothing),F^\modulus) \simeq \mathrm{R}\Gamma(X,F),
            $$ 
where $(X,\varnothing)\in \mSm_k$ is the modulus pair with the empty modulus.
The functor \eqref{functor-mod} gives a rich source of supply of cohomology theories with ramification filtrations representable in $\mDAeff(k)$.
For $F\in \RSC_{\Nis}$, $F^{\modulus}$ is defined as the cohomology of a sheaf whose value on $\mathcal{X}=(X,D)\in \mSm_k$ is a subgroup of $F(X-|D|)$ consisting of those sections whose ramification along $|D|$ is bounded by $D$ (see \eqref{eq;Fmodulus}).  
Moreover, we will show that there exists a natural equivalence of functors (see Theorem \ref{mod-log-diag}): 
\[t_* \circ ({-})^\modulus =(-)^{\log},\]
where $t_*$ is the functor from \eqref{eq;logHmHAdunction}.
In Theorem \ref{thm;MOmega_mod}, we will compute $(\Omega^q)^{\modulus}$ explicitly and prove that it coincides with $\mathrm{m}\Omega^q$ from  Examples \ref{ex;mOmega}.
We also show that $(\Het^1)^\modulus$ represents the cohomology theory
$\mathrm{R}\Gamma(-,\uMHet^1)$ (see \eqref{Het} and \eqref{uMHet}), which exhibits a connection of $\mDAeff(k)$ with ramification theory (see Theorem \ref{thm:MHone_exc}). We will also show that the filtrations capturing the irregular singularities of rank $1$ connections on smooth schemes over a field of characteristic $0$ are representable in our motivic homotopy category (see Theorem \ref{thm:ConnComp}).

In a forthcoming work, we hope to conduct a similar computation for $\mathrm{m} \Witt_m\Omega^q$ from Example \ref{ex;mWOmega}.

It is interesting to ask whether we can $S^1_t$-deloop $F^{\modulus}$ for any $F\in \RSC_\Nis$ to produce an $S^1_t$-spectrum in $\mDA(k)$.
This would follow if we can extend the cancellation theorem (\cite[Corollary 3.6]{Merici-Saito}) to $\mathbb{Q}$-modulus pairs.
An interesting observation (see Conjecture \ref{conj:unramified}) is that we should be able to 
$S^1_t$-deloop $(\Het^1)^{\modulus}$ by using unramified cohomology. 

\def\Zb{\overline{Z}}

\medbreak

Finally, we briefly review $\RSC_{\Nis}$ and the construction of \eqref{functor-mod}.
Roughly speaking, reciprocity sheaves, the objects of $\RSC_\Nis$, are a generalization of $\AA^1$-invariant Nisnevich sheaves with transfers, which played a fundamental role in the theory of mixed motives \`a la Voevodsky. 
For the convenience of the reader, we provide a concise recollection of the definition of reciprocity sheaves.

Let $\Cor$ be Voevodsky's category of finite correspondences on $\Sm_k$: It has the same objects as $\Sm_k$ and $\Cor(X,Y)$ ($X,Y\in \Sm_k$) is the free abelian group on the set of integral closed subschemes of $X\times Y$ finite and surjective over a component of $X$. 
The $\A^1$-invariance of a presheaf $F$ on $\Cor$ is rephrased by the condition that 
for every $X\in \Sm_k$, $a\in F(X)$ and $T\in \Sm_k$ and $\alpha,\beta\in \Cor(T,X)$ which are $\A^1$-homotopic, we have $\alpha^*a=\beta^*a\in F(T)$.
Here, $\alpha,\beta$ are $\A^1$-homotopic if 
\begin{enumerate}
\item[$(\dagger)$]
there is $\gamma\in\Cor(T\times\A^1,X)$ such that $\gamma_{|T\times 0}=\alpha$ and $\gamma_{|T\times 1}=\beta$.
\end{enumerate}

A \emph{reciprocity sheaf} is a Nisnevich sheaf of abelian groups on $\Cor$ satisfying
the following modulus refinement of the $\A^1$-invariance:
\begin{enumerate}
\item[$(\spadesuit)$]
For every $X\in \Sm_k$ and $a\in F(X)$, there exists a proper $k$-scheme $Y$ and an effective $\mathbb{Q}$-Cartier divisor $E$ on $Y$ with $X=Y-|E|$ such that 
$(Y,E)$ is a \emph{modulus of $a$}, i.e., for any $T\in \Sm_k$ and $\alpha,\beta\in \Cor(T,X)$ which are \emph{cube-homotopic with respect to $(Y,E)$}, we have $\alpha^*a=\beta^*a\in F(T)$. 
\end{enumerate}
Here, $\alpha,\beta$ are cube-homotopic with respect to $(Y,E)$ if one can take $\gamma$ in $(\dagger)$ from a subgroup
$\ulMCor(T\otimes\bcube,(Y,E))$ of $\Cor(T\times\A^1,X)$ generated by 
 integral closed subschemes $Z\subset T\times\A^1\times X$ satisfying
 a modulus condition with respect to the $\Q$-Cartier divisors $\infty$ on $\mathbb{P}^1$ and 
 $E$ on $Y$ (see Definition \ref{def;ulMCorQ} for details).

By the definition, every $\A^1$-invariant Nisnevich sheaf on $\Cor$ is a reciprocity sheaf (see Remark \ref{rmk;ulMCorQ} and \cite[\S11.1]{BRS} for examples of reciprocity sheaves which are non-$\A^1$-invariant).
By \cite[Th.0.1]{Sai20}, the category $\RSC_{\Nis}$ of reciprocity sheaves is abelian.

For any reciprocity sheaf, there is a canonical way to associate a Nisnevich sheaf on $\mSm_k$.
Namely, there is a functor
\begin{equation}
    \omega^\CI: \RSC_{\Nis}\to \Sh_\Nis(\mSm_k,\Ab),
\end{equation}
where $\Sh_\Nis(\mSm_k,\Ab)$ denotes the category of Nisnevich sheaves of abelian groups on $\mSm_k$,
such that for $\mathcal{X}=(X,D)\in \mSm_k$, we have (see Definition \ref{def;compactification} for a compactification of $\mathcal{X}$)
    $$
        \omega^\CI F(\mathcal{X})=\left\{a\in F(X-|D|)\;\middle|\; \begin{array}{l}
            \text{There exists a compactification $(\overline{X},\overline{D},\Sigma)$ of $\mathcal{X}$}\\
            \text{such that $(\overline{X},\overline{D}+\Sigma)$ is a modulus for $a$}
        \end{array}\right\}.
    $$
Finally, we define a functor \[
    \omega^\exc : \RSC_\Nis \to \Sh_{\Nis}(\mSm_k, \Ab)
\]
by setting
    $$
        \omega^\exc F(\mathcal{X}) = \colim_{\varepsilon\to 0}\omega^\CI F(X,(1-\varepsilon)D)
    $$
for $\mathcal{X}=(X,D)\in \mSm_k$, and consider its cohomology
\begin{equation}\label{eq;Fmodulus}
F^\modulus:=\mathrm{R}\Gamma_{\Nis}(-, \omega^{\exc}F)\in \Sh_\Nis(\mSm_k,\Mod_{\mathbb{Z}})\;\text{ for } F\in \RSC_{\Nis}.
\end{equation}
Then, we show that $F^\modulus$ is local with respect to $\CI$ and $\BI$ using the main results of \cite{Sai20} and \cite{Koizumi-blowup} due to the first and the third authors, and hence $F^\modulus$ is representable in $\mDAeff(k)$.

\subsection*{Acknowledgements}
The authors would like to thank Marc Hoyois and Naruki Masuda for answering questions about $\infty$-categories.
We thank Shane Kelly for his interest on our work, and for sharing his ideas about the comparison between the modulus theory and log theory.
We thank Ryomei Iwasa for a helpful discussion on the existence of the pair of adjoint functors \eqref{MS-mSH},  and for many helpful comments on an earlier version of the paper. 
We also thank Federico Binda, Bruno Kahn, Doosung Park, and Kay R\"ulling for their interest on our work.

\subsection*{Notation and conventions}

We use the following notation.

\begin{table}[h]
\begin{tabular}{l|l}
    $\Sch_S$    &   category of separated finite type $S$-schemes\\
    $\Sm_S$     &   category of smooth separated finite type $S$-schemes\\
    $\Spc$      &   $\infty$-category of spaces\\
    $\Spc_*$    &   $\infty$-category of pointed spaces\\
    $\Sp$       &   $\infty$-category of spectra\\
    $\PrL$      &   $\infty$-category of presentable ∞-categories and colimit-preserving functors\\
    $\Map_\mathcal{C}({-},{-})$       &       mapping space in an $\infty$-category $\mathcal{C}$\\
    $\map_\mathcal{C}({-},{-})$       &       mapping spectrum in a stable $\infty$-category $\mathcal{C}$\\
    $\Hom_\mathcal{C}({-},{-})$       &       $\pi_0\Map_\mathcal{C}({-},{-})$\\
    $\PSh(\mathcal{C},\mathcal{D})$     &   $\infty$-category of $\mathcal{D}$-valued presheaves on $\mathcal{C}$\\
    $\Sh_\tau(\mathcal{C},\mathcal{D})$     &   $\infty$-category of $\mathcal{D}$-valued $\tau$-sheaves on $\mathcal{C}$
\end{tabular}
\end{table}

For an integral domain $A$, we write $A^N$ for its integral closure.
Similarly, for an integral scheme $X$, we write $X^N$ for its normalization.
For $X\in \Sm_S$, a \emph{coordinate} on $X$ means a family of functions $x_1,\dots,x_d\in \Gamma(X,\mathcal{O}_X)$ which defines an \'etale morphism $X\to \mathbb{A}_S^d$.
We say that an $S$-scheme $X$ is \emph{essentially smooth} over $S$ if there exists a cofiltered diagram $\{X_i\}_{i \in I}$ in $\Sm_S$, whose transition maps are \'etale and affine, such that $X=\lim_{i \in I}X_i$.
We often extend a presheaf $F$ on $\Sm_S$ to essentially smooth schemes by setting
\[
X = \lim_{i \in I}X_i \implies  F(X) := \colim_{i \in I} F(X_i),
\]
where the right hand side does not depend on the choice of the diagram. 

Unless otherwise specified, cohomologies are taken with respect to the Nisnevich topology.
For a ring $A$, we write $A\{t_1,\dots,t_d\}$ for the henselization of $A[t_1,\dots,t_d]$ along $(t_1,\dots,t_d)$.

\part{Constructions and basic properties}

\section{Construction of the motivic homotopy categories with modulus}

\def\bA{\mathbb{A}}
\def\SmS{\Sm_S}
\def\SmSNis{\Sm_{S,\Nis}}
\def\mSmS{\mSm_S}
\def\mSmSNis{\mSm_{S,\Nis}}
\def\mDAefffS{\mDA^\eff(S)}

Throughout this section, we fix a qcqs scheme $S$.

\subsection{Relative $\mathbb{Q}$-SNCD}\label{relativeSNCD}

For a smooth $S$-scheme $X$, we write $\CDiv^+(X/S)$ for the monoid of relative effective Cartier divisors on $X$ over $S$.

\begin{definition}
     Let $X$ be a smooth $S$-scheme.
     A \emph{relative SNCD} on $X$ over $S$ is an element $D\in \CDiv^+(X/S)$ with the following property:
     \begin{itemize}
         \item Zariski-locally on $X$, there is a coordinate $x_1,\cdots,x_n$ on $X$ over $S$ such that $D=\div(x_1x_2\cdots x_m)$ for some $m\leq n$.
     \end{itemize}
\end{definition}

\begin{definition}\label{NCwithD}
    Let $X\in \Sm_S$ and let $D$ be a relative SNCD on $X$ over $S$.
    Let $Z\subset X$ be a smooth closed subscheme.
    We say that $Z$ has \emph{normal crossings} to $D$ if Zariski locally on $X$, there is a coordinate $x_1,\dots,x_n$ on $X$ over $S$ such that $D=\{x_1x_2\cdots x_m=0\}$ and $Z=\{x_{i_1}=\dots=x_{i_k}=0\}$.
    If moreover we have $Z\not\subset D$, then we say that $Z$ is \emph{transversal} to $D$.
\end{definition}

We also define a ``$\mathbb{Q}$-divisor version'' of relative SNCD:

\begin{definition}
     Let $X$ be a smooth $S$-scheme.
     A \emph{relative $\mathbb{Q}$-SNCD} on $X$ over $S$ is an element $D\in \CDiv^+(X/S)\otimes_{\mathbb{N}}\mathbb{Q}_{\geq 0}$ with the following property:
     \begin{itemize}
         \item Zariski-locally on $X$, there is a coordinate $x_1,\cdots,x_n$ on $X$ over $S$ such that $D=\sum_{i=1}^m r_i\div(x_i)$ for some $m\leq n$ and $r_1,\cdots,r_m\in \mathbb{Q}_{>0}$.
     \end{itemize}
     We say that $D$ has \emph{multiplicity $\leq 1$} if $r_i\leq 1$ holds for all $i$.
\end{definition}

\begin{definition}
     Let $X$ be a smooth $S$-scheme and $D$ be a relative $\mathbb{Q}$-SNCD on $X$ over $S$.
     If there is a coordinate $x_1,\cdots,x_n$ on $X$ over $S$ such that $D=\sum_{i=1}^m r_i\div(x_i)$ for some $m\leq n$ and $r_1,\cdots,r_m\in \mathbb{Q}_{>0}$, then we define the \emph{support} of $D$ by
     $$
        |D| = \div(x_1x_2\cdots x_m).
     $$
     In general, we define $|D|$ by gluing the above construction.
     By definition, $|D|$ is a relative SNCD on $X$ over $S$.
\end{definition}

\begin{lemma}\label{transversal_structure}
    Let $X\in \Sm_S$ and let $D$ be a relative $\mathbb{Q}$-SNCD on $X$ over $S$.
    Let $Z\subset X$ be a smooth closed subscheme of codimension $n$ which is transversal to $|D|$.
    Then Zariski locally on $X$, there is a commutative diagram
    $$
    \xymatrix{
        Z\ar@{^(->}[d]&Z\ar[l]_-{\id}\ar[r]^-{\sim}\ar@{^(->}[d]&Z\times\{0\}\ar@{^(->}[d]\\
        X&X'\ar[l]_-p\ar[r]^-{q}&Z\times\mathbb{A}^n
    }
    $$
    where $p$ and $q$ \'etale, the squares are Cartesian, and $p^*D=q^*\pr_1^*(D|_Z)$.
\end{lemma}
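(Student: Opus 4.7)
The plan is to find coordinates on $X$ that simultaneously witness both the $\mathbb{Q}$-SNCD structure of $D$ and the transversality of $Z$ to $|D|$, and then realize $X'$ as a fiber product of $X$ and $Z$ over an affine space that ``straightens out'' the $Z$-direction.

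Working Zariski locally on $X$, pick coordinates $x_1,\dots,x_d$ on $X$ over $S$ (with $d=\dim(X/S)$) such that $|D|=\{x_1\cdots x_m=0\}$ and $Z=\{x_{d-n+1}=\cdots=x_d=0\}$: the defining property of the SNCD $|D|$ combined with the transversality of $Z$ to $|D|$ yields such coordinates after permutation, and $Z\not\subset|D|$ forces $\{1,\dots,m\}\cap\{d-n+1,\dots,d\}=\varnothing$, i.e.\ $m\leq d-n$. Writing $D=\tfrac{1}{N}\tilde D$ with $\tilde D\in\CDiv^+(X/S)$ of support $|D|$ and using that each $x_i$ cuts out a prime divisor, one sees that $D=\sum_{i=1}^m r_i\,\mathrm{div}(x_i)$ for some $r_i\in\mathbb{Q}_{>0}$. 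Let $\phi_1=(x_1,\dots,x_{d-n})\colon X\to\mathbb{A}_S^{d-n}$ and $\phi_2=(x_{d-n+1},\dots,x_d)\colon X\to\mathbb{A}_S^n$, so that $\phi=(\phi_1,\phi_2)\colon X\to\mathbb{A}_S^d$ is \'etale and $Z=\phi_2^{-1}(0)$; base-changing $\phi$ along $\mathbb{A}_S^{d-n}\times\{0\}\hookrightarrow\mathbb{A}_S^d$ shows $\phi_1|_Z\colon Z\to\mathbb{A}_S^{d-n}$ is \'etale.

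Now set $X':=X\times_{\mathbb{A}_S^{d-n}}Z$, formed via $\phi_1$ and $\phi_1|_Z$. The projection $p:=\pr_X\colon X'\to X$ is \'etale as a base change of $\phi_1|_Z$, and $q:=(\pr_Z,\phi_2\circ p)\colon X'\to Z\times_S\mathbb{A}_S^n$ is \'etale because its composition with the \'etale map $\phi_1|_Z\times\id\colon Z\times_S\mathbb{A}_S^n\to\mathbb{A}_S^d$ equals $\phi\circ p$. The diagonal $\Delta\colon Z\to X'$, $z\mapsto(z,z)$, supplies both inclusions of $Z$ in the diagram: $p\circ\Delta=\id_Z$, and $q\circ\Delta=(\id_Z,\phi_2|_Z)=(\id_Z,0)$ since $\phi_2$ vanishes on $Z$.

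For Cartesianness, one computes $p^{-1}(Z)=Z\times_{\mathbb{A}_S^{d-n}}Z$; since $\phi_1|_Z$ is \'etale, its diagonal is an open immersion, so $\Delta(Z)$ is a clopen component of $p^{-1}(Z)$. Its complement is closed in $p^{-1}(Z)$, hence (as $p^{-1}(Z)$ is closed in $X'$) also closed in $X'$, and removing it gives a Zariski open subset of $X'$ on which $p^{-1}(Z)=Z$ as subschemes, yielding the left Cartesian square. The right square is then automatic via $q^{-1}(Z\times\{0\})=p^{-1}(\phi_2^{-1}(0))=p^{-1}(Z)=Z$. Finally, for $1\leq i\leq m\leq d-n$, the fiber product relation forces $x_i\circ\pr_X=x_i\circ\pr_Z$ on $X'$, so $p^*\mathrm{div}(x_i)=q^*\pr_1^*\mathrm{div}(x_i|_Z)$; summing with weights $r_i$ yields $p^*D=q^*\pr_1^*(D|_Z)$. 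The main subtlety is the shrinking step excising the non-diagonal components of $Z\times_{\mathbb{A}_S^{d-n}}Z$; everything else is a direct computation with the fiber product.
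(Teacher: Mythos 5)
Your proof is correct and is essentially the paper's argument (which follows Kelly--Saito). Your fiber product $X' = X\times_{\mathbb{A}^{d-n}_S}Z$ is canonically isomorphic to the paper's $\Gamma = X\times_{\mathbb{A}^{r+n}_S}(Z\times\mathbb{A}^n_S)$ (the $\mathbb{A}^n$ factor cancels), and both proofs proceed by excising the non-diagonal clopen part of $Z\times_{\mathbb{A}^{d-n}_S}Z$ and verifying Cartesianness and the divisor identity from the fiber-product relation $\phi_1\circ\pr_X=\phi_1|_Z\circ\pr_Z$. One trivial notational slip: $x_i\circ\pr_Z$ should read $(x_i|_Z)\circ\pr_Z$, but your subsequent line makes clear this is what is meant.
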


\begin{proof}
    This is \cite[Lemma 8]{Kelly-Saito} in case $S=\Spec k$ for a field $k$.
    The same proof works over a general base. We include it for the sake of the readers (see also \cite[Lem.3.2.28]{Morel-Voevodsky} for an argument over a general base).
    We may assume $S=\Spec R$ is affine.
    By Definition \ref{NCwithD}, 
    we may assume that there exists an \'etale morphism $\rho: X\to \bA^{r+n} = \Spec R[T_1,\dots,T_{r+n}]$ such that 
    $Z = \rho^{-1}(\bA^r\times \{0\})$ and $|D| = \rho^{-1}(\{T_1\cdots T_s = 0\})$ with $s \leq r$. 
    Define $\Gamma = X \times_{\bA^{r+n}} (Z \times \bA^n)$, 
    where the right morphism comes from the composition $Z \to X \overset{\rho}{\to} \bA^{r+n} \to\bA^r$.
    Then we have
    \[\Gamma\times_{\bA^{r+n}}  (\bA^r\times \{0\}) = (Z\times\bA^n)\times_{\bA^{r+n}} X\times_{\bA^{r+n}} (\bA^r\times\{0\}) =(Z\times\bA^n)\times_{\bA^{r+n}}Z = Z \times_{\bA^r} Z.\]
    Since $Z \to \bA^r$ is \'etale, the last term is a disjoint union of the diagonal 
    $Z \to Z \times_{\bA^r} Z$ and a closed subscheme $\Sigma\subset Z \times_{\bA^r} Z$. Put $X' = \Gamma-\Sigma$ with projections 
    $p : X' \to X$ and $q : X' \to Z\times \bA^n$. 
    Since $\rho: X\to\bA^{r+n}$ and $Z \times \bA^n\to \bA^{r+n}$ are \'etale, $p$ and $q$ are \'etale.  By the construction, 
    \[ p^{-1}(Z) \simeq Z, \quad q^{-1}(Z\times\{0\})\simeq  Z\times\{0\},\quad p^*D = q^*(D|_Z\times\bA^n),\]
    which implies the lemma.
\end{proof}

\subsection{log-smooth $\mathbb{Q}$-modulus pairs}

\begin{definition}\label{def;mSm}
    A \emph{log-smooth $\mathbb{Q}$-modulus pair} over $S$ is a pair $\mathcal{X}=(X,D)$ where $X\in \SmS$ and $D$ is a relative $\mathbb{Q}$-SNCD over $S$.
    For a log-smooth $\mathbb{Q}$-modulus pair $\mathcal{X}=(X,D)$, we write $\mathcal{X}^\circ:=X-|D|$ and call it the \emph{interior} of $\mathcal{X}$.
    A morphism of log-smooth $\mathbb{Q}$-modulus pairs $f\colon (X,D)\to (Y,E)$ is a morphism of $S$-schemes $f\colon X\to Y$ such that $f(\mathcal{X}^\circ)\subset \mathcal{Y}^\circ$ and $D\geq f^*E$ hold.
    We write $\mSmS$ for the category of log-smooth $\mathbb{Q}$-modulus pairs.
\end{definition}

\begin{example}
    We write $\bcube=(\mathbb{P}^1,[\infty])\in \mSmS$ and call it the \emph{cube}.
\end{example}

For two log-smooth $\mathbb{Q}$-modulus pairs $\mathcal{X}=(X,D)$ and $\mathcal{Y}=(Y,E)$, we set $\mathcal{X}\otimes \mathcal{Y}=(X\times Y, \pr_1^*D+\pr_2^*E)$.
It is easy to see that this gives a symmetric monoidal structure on $\mSmS$.
By left Kan extension, this extends to a symmetric monoidal structure on the category $\PSh(\mSmS)$ of presheaves of abelian groups on $\mSmS$.

\begin{definition}\label{def:SNC_blowup}
    Let $\mathcal{X}=(X,D)\in \mSmS$.
    Let $Z$ be a smooth closed subscheme of $X$ which has normal crossings to $|D|$.
    We write
    $$
        \Bl_Z\mathcal{X}:=(\Bl_ZX, \pi^*D)
    $$ where $\pi\colon \Bl_ZX\to X$ is the blow-up along $Z$.
    A morphism $\mathcal{Y}\to \mathcal{X}$ in $\mSmS$ is called an \emph{SNC blow-up} if there is a smooth closed subscheme $Z\subset |D|$ which has normal crossings to $|D|$ such that $\Bl_Z\mathcal{X}\cong \mathcal{Y}$.
\end{definition}

\subsection{Nisnevich topology on $\mSmS$}

\begin{definition}\label{def:topology}
    An \emph{elementary distinguished square} in $\mSmS$ is a commuative diagram in $\mSmS$ which is isomorphic to a diagram of the form
    $$
    \xymatrix{
        (V,D|_V)\ar[r]\ar[d]&(Y,D|_Y)\ar[d]\\
        (U,D|_U)\ar[r]      &(X,D),
    }
    $$
    where the associated diagram of underlying $S$-schemes is an elementary distinguished square.
    This defines a cd-structure and hence a topology on $\mSmS$, which we call the Nisnevich topology on $\mSmS$.
    We also define the Zariski topology on $\mSmS$ in the same manner.
\end{definition}

Let $\mathcal{C}\in \{\Spc,\Spc_*,\Mod_\Lambda\}$, where $\Lambda$ is a connective commutative ring spectrum.
We write $\mSmSNis$ for the site defined by the Nisnevich topology, and $\Sh_\Nis(\mSmS, \mathcal{C})$ for the associated $\infty$-category of $\mathcal{C}$-valued sheaves.
The inclusion functor $\Sh_\Nis(\mSm_S,\mathcal{C})\to \PSh(\mSm_S,\mathcal{C})$ admits a left adjoint $a_\Nis$.
There is a sequence of functors
$$
    \mSm_S\to \Sh_\Nis(\mSm_S,\Spc)\xrightarrow{({-})_+}
    \Sh_\Nis(\mSm_S,\Spc_*)\xrightarrow{\Lambda\otimes\Sigma^\infty({-})}
    \Sh_\Nis(\mSm_S,\Mod_\Lambda).
$$
We write $y\colon \mSm_S\to \Sh_\Nis(\mSm_S,\mathcal{C})$ for the canonical functor appearing in the above sequence, and call it the Yoneda functor.

\subsection{Construction of the motivic homotopy categories}

\begin{definition}
    We define classes of morphisms $\mathrm{CI},\mathrm{BI}$ in $\mSmS$ as follows:
    \begin{align*}
        \mathrm{CI}=&\{\mathcal{X}\otimes \bcube\to\mathcal{X}\mid \mathcal{X}\in \mSmS\},\\
        \mathrm{BI}=&\{\text{SNC blow-ups }\mathcal{Y}\to\mathcal{X}\}.
    \end{align*}
    Let $\mathcal{C}\in \{\Spc,\Spc_*,\Sp,\Mod_\Lambda\}$, where $\Lambda$ is a connective commutative ring spectrum.
    We define the \emph{$\mathcal{C}$-valued motivic homotopy category with modulus}
    $$
        \mH(S,\mathcal{C})
    $$
    to be the full $\infty$-subcategory of $\Sh_\Nis(\mSmS,\mathcal{C})$ spanned by the objects which are local with respect to $\CI$ and $\BI$.
    We write $\mathrm{L}_\mot\colon \Sh_\Nis(\mSmS,\mathcal{C})\to \mH(S,\mathcal{C})$ for the left adjoint to the inclusion functor.
    We define $\motive\colon \mSm_S\to \mH(S,\mathcal{C})$ to be the composition
    $$
        \motive\colon \mSm_S\xrightarrow{y} \Sh_\Nis(\mSm_S,\mathcal{C})\xrightarrow{\mathrm{L}_\mot}\mH(S,\mathcal{C}),
    $$
    where $y$ is the Yoneda functor.
    The $\infty$-category $\mH(S,\mathcal{C})$ underlies a presentably symmetric monoidal $\infty$-category, where the monoidal structure is the Day convolution of the tensor product $\otimes$ on $\mSmS$ \cite[Proposition 2.2.1.9]{Lurie-HA}.
\end{definition}

\begin{remark}
    We use the following notation:
    $$
        \mH(S,\mathcal{C}) = 
        \begin{cases}
            \mH(S)&(\mathcal{C}=\Spc),\\
            \mH_*(S)&(\mathcal{C}=\Spc_*),\\
            \mSH_{S^1}(S)&(\mathcal{C}=\Sp),\\
            \mSH_{S^1}(S,\Lambda)&(\mathcal{C}=\Mod_\Lambda).
        \end{cases}
    $$
    When $\Lambda$ is a classical ring, we write $\mDA^\eff(S,\Lambda)$ for $\mSH_{S^1}(S,\Lambda)$.
\end{remark}

\begin{remark}
    By construction, the category $\mH(S,\mathcal{C})$ has the following properties:
    \begin{enumerate}
        \item (Nisnevich descent)
                For any elementary distinguished square in $\mSmS$, its image under $\motive$ is a coCartesian square.
        \item (Cube-invariance)
                For any $\mathcal{X}\in \mSmS$, the canonical morphism $\motive(\mathcal{X}\otimes\bcube)\to \motive(\mathcal{X})$ is an equivalence.
        \item (Blow-up invariance)
                For any SNC blow-up $\mathcal{Y}\to \mathcal{X}$ in $\mSmS$, the induced morphism $\motive(\mathcal{Y})\to \motive(\mathcal{X})$ is an equivalence.
    \end{enumerate}
\end{remark}

\begin{definition}
    For $\varepsilon\in (0,1]\cap \mathbb{Q}$, we write $\bcube^\varepsilon = (\mathbb{P}^1,\varepsilon[\infty])$.
\end{definition}

\begin{lemma}\label{cube_epsilon_invariance}
    For any $\mathcal{X}\in \mSmS$ and $\varepsilon\in (0,1]\cap \mathbb{Q}$, the morphism $\motive(\mathcal{X}\otimes\bcube^\varepsilon)\to \motive(\mathcal{X})$ in $\mH(S,\mathcal{C})$ is an equivalence.
\end{lemma}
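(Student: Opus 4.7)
The projection $\pi\colon \mathcal{X}\otimes\bcube^\varepsilon\to \mathcal{X}$ admits a section $s\colon \mathcal{X}\to \mathcal{X}\otimes\bcube^\varepsilon$ given by the $0$-point of $\mathbb{P}^1$ (this is a morphism in $\mSmS$ since $s^*(\varepsilon[\infty])=0$), and $\pi\circ s=\mathrm{id}_{\mathcal{X}}$. Hence $\motive(\pi)\circ\motive(s)$ is already the identity, and the content of the lemma is the converse equality $\motive(s\circ\pi)=\motive(\mathrm{id})$ on $\motive(\mathcal{X}\otimes\bcube^\varepsilon)$.

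I would realize a cube-homotopy between these two composites using the rational multiplication map $\mu\colon \mathbb{P}^1\times\mathbb{P}^1\dashrightarrow\mathbb{P}^1$, $(x,t)\mapsto xt$, whose indeterminacy locus is exactly $\{(\infty,0),(0,\infty)\}$ and which satisfies $\mu(\cdot,0)\equiv 0$ and $\mu(\cdot,1)=\mathrm{id}$. Set $\mathcal{Y}:=\mathcal{X}\otimes\bcube^\varepsilon\otimes\bcube$ and let $q\colon\tilde{\mathcal{Y}}\to\mathcal{Y}$ be the blow-up along the two disjoint smooth centers $X\times\{(\infty,0)\}$ and $X\times\{(0,\infty)\}$ equipped with the pulled-back divisor. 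A local coordinate verification (in a chart $u=1/x$, $v=t$ near $(\infty,0)$ and symmetrically near $(0,\infty)$, combined with standard SNC coordinates for $D$ on $X$) shows that each center lies in the support of the divisor of $\mathcal{Y}$ and has normal crossings to it in the sense of Definition \ref{NCwithD}, so $q$ is an SNC blow-up and $\motive(q)$ is an equivalence by blow-up invariance. On $\tilde{\mathcal{Y}}$, the map $\mu$ applied to the last two factors extends to an everywhere-defined morphism of schemes $\tilde\mu\colon \tilde{\mathcal{Y}}\to \mathcal{X}\otimes\bcube^\varepsilon$; after computing the relevant divisorial pullbacks, the inequality required for $\tilde\mu$ to lie in $\mSmS$ reduces to
\[
\varepsilon E_0 + (1-\varepsilon)\tilde C_2 + E_1 \geq 0,
\]
where $E_0,E_1$ are the exceptional divisors over the two centers and $\tilde C_2$ is the strict transform of $X\times\mathbb{P}^1\times\{\infty\}$. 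This is the unique place where the hypothesis $\varepsilon\leq 1$ is used.

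The strict transforms of the fibers $\{t=0\}$ and $\{t=1\}$ then produce morphisms $\alpha_0,\alpha_1\colon \mathcal{X}\otimes\bcube^\varepsilon\to \tilde{\mathcal{Y}}$ in $\mSmS$: the fiber at $t=1$ avoids both blow-up centers, while at $t=0$ the center $X\times\{(\infty,0)\}$ sits in codimension one inside the fiber, so the restriction of $q$ to the strict transform is an isomorphism onto the fiber. By construction $\tilde\mu\circ\alpha_0=s\circ\pi$ and $\tilde\mu\circ\alpha_1=\mathrm{id}$, and both $\alpha_i$ are sections of the composite $\tilde{\mathcal{Y}}\xrightarrow{q}\mathcal{Y}\to \mathcal{X}\otimes\bcube^\varepsilon$ (the second map being projection off the last $\bcube$-factor), which is a motivic equivalence by blow-up invariance followed by cube-invariance applied to the base $\mathcal{X}\otimes\bcube^\varepsilon$. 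Therefore $\motive(\alpha_0)=\motive(\alpha_1)$, and composing with $\motive(\tilde\mu)$ gives $\motive(s\circ\pi)=\motive(\mathrm{id})$, completing the proof.

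The main technical obstacle I anticipate is the careful local-coordinate check that the two centers $X\times\{(\infty,0)\}$ and $X\times\{(0,\infty)\}$ satisfy the SNC condition with respect to the \emph{entire} divisor on $\mathcal{Y}$ (the pulled-back $D$ together with both cube-contributions), together with the bookkeeping that reduces the modulus condition for $\tilde\mu$ to the displayed inequality. Both are elementary, but must be spelled out because they are precisely where the hypothesis $\varepsilon\leq 1$ enters the argument.
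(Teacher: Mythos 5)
Your proposal is correct and follows essentially the same route as the paper's proof: both deform $s\circ\pi$ to the identity via the rational multiplication map $\mu(x,t)=xt$, extended to a morphism in $\mSmS$ after an SNC blow-up at $\{(\infty,0),(0,\infty)\}$, and then identify the two cube-homotopy endpoints as sections of a motivic equivalence. The only cosmetic difference is that the paper first uses the symmetric monoidal structure to reduce to $\mathcal{X}=\pt$, blowing up two points of $\mathbb{P}^1\times\mathbb{P}^1$ rather than $X\times\{(\infty,0)\}$, $X\times\{(0,\infty)\}$, which makes the coordinate bookkeeping you flag as the ``main technical obstacle'' a bit lighter.
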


\begin{proof}
    It suffices to show that $\motive(\bcube^\varepsilon)\to \motive(\pt)$ is an equivalence.
    Consider the multiplication map $\mu\colon \mathbb{A}^1\times\mathbb{A}^1\to \mathbb{A}^1$.
    It extends to a map $\mu\colon \Bl_{(0,\infty),(\infty,0)}(\mathbb{P}^1\times\mathbb{P}^1)\to \mathbb{P}^1$ and gives a morphism
    $$
        \mu\colon \Bl_{(0,\infty),(\infty,0)}(\bcube^\varepsilon\otimes\bcube)\to \bcube^\varepsilon
    $$
    in $\mSmS$.
    The canonical inclusions
    $$
        i_\nu\colon \bcube^\varepsilon\xrightarrow{\sim} \bcube^\varepsilon\otimes\{\nu\}\hookrightarrow \bcube^\varepsilon\otimes\bcube\quad(\nu=0,1)
    $$
    lift to $\iota_\nu\colon \bcube^\varepsilon\to \Bl_{(0,\infty),(\infty,0)}(\bcube^\varepsilon\otimes\bcube)$.
    Consider the following commutative diagram:
    $$
    \xymatrix{
        &\motive(\bcube^\varepsilon)\ar[dl]_-{i_1}\ar[d]^-{\iota_1}\ar[dr]^-{\id}\\
        \motive(\bcube^\varepsilon\otimes\bcube)&\motive(\Bl_{(0,\infty),(\infty,0)}(\bcube^\varepsilon\otimes\bcube))\ar[l]_-{\sim}\ar[r]^-{\mu}&\motive(\bcube^\varepsilon)\\
        &\motive(\bcube^\varepsilon)\ar[ul]^-{i_0}\ar[u]_-{\iota_0}\ar[r]&\motive(\pt).\ar[u]_-{0}
    }
    $$
    We have $i_0\simeq i_1$ since they are sections of the equivalence $\motive(\bcube^\varepsilon\otimes\bcube)\to \motive(\bcube^\varepsilon)$.
    Therefore we have $\iota_0 \simeq \iota_1$ and thus the composition $\motive(\bcube^\varepsilon)\to \motive(\pt)\to \motive(\bcube^\varepsilon)
    $ is homotopic to the identity.
\end{proof}

\subsection{Comparison with Morel-Voevodsky's categories}

In this subsection, we construct comparison functors between our categories and Morel-Voevodsky's motivic homotopy categories.
First we recall the definition of Morel-Voevodsky's categories:

\begin{definition}
    We define a class of morphisms $\mathrm{HI}$ in $\Sm_S$ by
    $$
        \mathrm{HI}=\{X\times\mathbb{A}^1\to X\mid X\in \Sm_S\}.
    $$
    Let $\mathcal{C}\in \{\Spc,\Spc_*,\Sp,\Mod_\Lambda\}$, where $\Lambda$ is a connective commutative ring spectrum.
    The \emph{$\mathcal{C}$-valued motivic homotopy category}
    $$
        \H(S,\mathcal{C})
    $$
    is defined to be the full $\infty$-subcategory of $\Sh_\Nis(\Sm_S,\mathcal{C})$ spanned by the objects which are local with respect to $\HI$.
    We write $\mathrm{L}_\mot\colon \Sh_\Nis(\Sm_S,\Spc)\to \H(S,\mathcal{C})$ for the left adjoint to the inclusion functor.
    We define $\motive\colon \Sm_S\to \H(S,\mathcal{C})$ to be the composition
    $$
        \motive\colon \Sm_S\xrightarrow{y} \Sh_\Nis(\Sm_S,\mathcal{C})\xrightarrow{\mathrm{L}_\mot}\H(S,\mathcal{C}),
    $$
    where $y$ is the Yoneda functor.
    We use the following notation:
    $$
        \H(S,\mathcal{C}) = 
        \begin{cases}
            \H(S)&(\mathcal{C}=\Spc),\\
            \H_*(S)&(\mathcal{C}=\Spc_*),\\
            \SH_{S^1}(S)&(\mathcal{C}=\Sp),\\
            \SH_{S^1}(S,\Lambda)&(\mathcal{C}=\Mod_\Lambda).
        \end{cases}
    $$
    When $\Lambda$ is a classical ring, we write $\DA^\eff(S,\Lambda)$ for $\SH_{S^1}(S,\Lambda)$.
\end{definition}

Let us compare $\H(S,\mathcal{C})$ with our category $\mH(S,\mathcal{C})$.
Consider the functor
$$
    \omega\colon \mSmS\to \SmS;\quad \mathcal{X}\mapsto \mathcal{X}^\circ,
$$
which admits a left adjoint $X\mapsto (X,\emptyset)$.
It is easy to see that $\omega$ gives a morphism of sites $\SmSNis\to \mSmSNis$.
Therefore the functor $\omega$ induces an adjunction between sheaf categories
\begin{align*}
\omega_!\colon \Sh_\Nis(\mSmS,\mathcal{C})\rightleftarrows\Sh_\Nis(\SmS,\mathcal{C})\colon \omega^*,
\end{align*}
where $\omega_!$ is symmetric monoidal and $\omega_!y(\mathcal{X})\simeq y(\mathcal{X}^\circ)$, $\omega^*F(\mathcal{X})\simeq F(\mathcal{X}^\circ)$.
Since the functor $\omega$ admits a left adjoint $X\mapsto (X,\emptyset)$, we also have $\omega_!F(X)\simeq F(X,\emptyset)$.
Moreover, the functor $\omega^*$ is fully faithful because $\omega_!\circ \omega^*\simeq \id$.

By the above formula for $\omega_!$, we see that the functor $\omega^*\colon \Sh_\Nis(\SmS,\mathcal{C})\to \Sh_\Nis(\mSmS,\mathcal{C})$ sends $\HI$-local objects to $(\CI\cup\BI)$-local objects.
This implies the following:

\begin{corollary}\label{cor;mH-H-adjunction}
    Let $\mathcal{C}\in \{\Spc,\Spc_*,\Sp,\Mod_\Lambda\}$, where $\Lambda$ is a connective commutative ring spectrum.
    Then there is an adjunction
    $$
        \omega_!\colon \mH(S,\mathcal{C})\rightleftarrows \H(S,\mathcal{C})\colon \omega^*
    $$
    where $\omega_!$ is symmetric monoidal, $\omega^*$ is fully faithful, and $\omega_!\motive(\mathcal{X})\simeq \motive(\mathcal{X}^\circ)$.
\end{corollary}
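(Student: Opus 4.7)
The plan is to descend the sheaf-level adjunction $\omega_!\dashv \omega^*$ established in the preceding paragraph to the motivic localizations. The crucial input, already recorded just above the statement, is that $\omega^*\colon \Sh_\Nis(\SmS,\mathcal{C})\to \Sh_\Nis(\mSmS,\mathcal{C})$ carries $\HI$-local objects to $(\CI\cup\BI)$-local objects; consequently $\omega^*$ restricts to a functor $\omega^*\colon \H(S,\mathcal{C})\to \mH(S,\mathcal{C})$. I would then define $\omega_!$ at the motivic level as the composition
\[
\omega_!\colon \mH(S,\mathcal{C})\hookrightarrow \Sh_\Nis(\mSmS,\mathcal{C})\xrightarrow{\omega_!} \Sh_\Nis(\SmS,\mathcal{C}) \xrightarrow{\mathrm{L}_\mot}\H(S,\mathcal{C}),
\]
and verify adjointness by the standard chain
\[
\Map_{\H(S,\mathcal{C})}(\omega_! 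F, G) \simeq \Map_{\Sh_\Nis(\SmS,\mathcal{C})}(\omega_! F, G) \simeq \Map_{\Sh_\Nis(\mSmS,\mathcal{C})}(F, \omega^* G) \simeq \Map_{\mH(S,\mathcal{C})}(F, \omega^* G),
\]
invoking in turn $\HI$-locality of $G$, the sheaf-level adjunction, and $(\CI\cup\BI)$-locality of both $F$ and $\omega^* G$.

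The formula $\omega_!\motive(\mathcal{X})\simeq \motive(\mathcal{X}^\circ)$ then drops out by Yoneda from the identity $(\omega^* G)(\mathcal{X}) \simeq G(\mathcal{X}^\circ)$. Full faithfulness of $\omega^*$ at the motivic level is inherited from full faithfulness at the sheaf level, which itself follows from $\omega_!\circ \omega^* \simeq \id$ (clear since $\omega$ admits the left adjoint $X\mapsto (X,\varnothing)$), combined with the fact that both motivic categories sit as full reflective subcategories of their respective sheaf categories.

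For symmetric monoidality of $\omega_!$, I would argue via the universal property of $\mH(S,\mathcal{C})$ as a presentably symmetric monoidal localization of $\PSh(\mSm_S,\mathcal{C})$ arising from the Day convolution (\cite[Proposition 2.2.1.9]{Lurie-HA}): a symmetric monoidal colimit-preserving functor out of $\mH(S,\mathcal{C})$ corresponds to a symmetric monoidal functor $\mSm_S\to \H(S,\mathcal{C})$ that inverts $\CI\cup\BI$ and sends Nisnevich distinguished squares to pushouts. The composition $\motive\circ \omega$ valued in $\H(S,\mathcal{C})$ meets all three conditions: $\omega$ is symmetric monoidal by the definition of $\otimes$ on $\mSm_S$; an SNC blow-up $\mathcal{Y}\to \mathcal{X}$ is centered in $|D|$ and therefore becomes an isomorphism after $\omega$; the projection $\bcube\otimes\mathcal{X}\to \mathcal{X}$ becomes $\mathbb{A}^1\times \mathcal{X}^\circ\to \mathcal{X}^\circ$, which is inverted in $\H(S,\mathcal{C})$; and $\omega$ visibly preserves Nisnevich distinguished squares. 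The resulting symmetric monoidal left adjoint coincides with $\omega_!$ by uniqueness of adjoints.

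The main obstacle I anticipate is purely foundational: invoking the correct $\infty$-categorical universal property of the Day-convolution symmetric monoidal localization $\mH(S,\mathcal{C})$. Once this is carefully set up, the verification that $\motive\circ \omega$ satisfies the required conditions is bookkeeping, and every other claim in the statement is formal.
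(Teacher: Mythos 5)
Your proposal is correct and follows the same route the paper takes, just spelled out in more detail: the paper offers no explicit proof and simply deduces the corollary from the observation, made immediately above it, that $\omega^*$ carries $\HI$-local objects to $(\CI\cup\BI)$-local objects (which is what lets the sheaf-level adjunction, monoidality of $\omega_!$, and full faithfulness of $\omega^*$ descend formally to the localizations). Your verification via the universal property of the Day-convolution localization and your explicit check that SNC blow-ups and cube projections become $\HI$-equivalences after $\omega$ are correct and fill in precisely the bookkeeping the paper leaves implicit.
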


\section{Properties of motivic homotopy categories with modulus} \label{sec:propmot}

In this section, we fix $\mathcal{C}\in \{\Spc_*,\Sp,\Mod_\Lambda\}$ and study basic properties of the functor
$$
    \motive \colon \mSmS\to \mH(S, \mathcal{C}).
$$

\subsection{Motive of $(\mathbb{P}^n,\mathbb{P}^{n-1})$}

First we prepare a lemma about the motive of $(\mathbb{P}^n,\mathbb{P}^{n-1})$ and its blow-ups.
Here, we identify $\mathbb{P}^{n-1}$ with a hypersurface inside $\mathbb{P}^n$.
\begin{lemma}\label{P^n_relative_motive}
    The following statements hold true:
    \begin{enumerate}
        \item   $\motive(\mathbb{P}^n,\mathbb{P}^{n-1})\to \motive(\pt)$ is an equivalence.
        \item   If $x:\pt\to \mathbb{P}^{n-1}$ is a section, then $\motive(\Bl_x(\mathbb{P}^n,\mathbb{P}^{n-1}))\to \motive(\ast)$ is an equivalence.
        \item   If $x: \pt\to \mathbb{P}^{n}\backslash \mathbb{P}^{n-1}$ is a section, then $\motive(E,\varnothing)\to \motive(\Bl_x(\mathbb{P}^n,\mathbb{P}^{n-1}))$ is an equivalence, where $E$ is the exceptional divisor.
    \end{enumerate}
\end{lemma}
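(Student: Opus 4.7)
The plan is to prove all three parts by a common cube-homotopy argument based on scalar multiplication, combined with SNC blow-ups of the resulting indeterminacy loci. Part (2) will be immediate from part (1) by blow-up invariance, and part (3) will use an analogous argument on the $\mathbb{P}^1$-bundle $\tilde X = \Bl_x\mathbb{P}^n \to \mathbb{P}^{n-1}$ given by projection from $x$.

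For part (1), I fix homogeneous coordinates so that $\mathbb{P}^{n-1} = \{x_0 = 0\}$ and let $p = [1:0:\cdots:0]$, then study the rational scalar multiplication
\[
\mu \colon (\mathbb{P}^n, \mathbb{P}^{n-1}) \otimes \bcube \dashrightarrow (\mathbb{P}^n, \mathbb{P}^{n-1}), \quad ([x_0:\cdots:x_n],[s_0:s_1]) \mapsto [s_0 x_0: s_1 x_1: \cdots: s_1 x_n].
\]
Its indeterminacy locus is the disjoint union of the smooth subvarieties $\{(p,\infty)\}$ and $\mathbb{P}^{n-1} \times \{0\}$, both lying in the boundary divisor $\pr_1^*\mathbb{P}^{n-1} + \pr_2^*\infty$ with normal crossings; blowing them up yields an iterated SNC blow-up $\pi \colon Y \to (\mathbb{P}^n, \mathbb{P}^{n-1}) \otimes \bcube$. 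The identity of Cartier divisors $\mu^*\mathbb{P}^{n-1} = \pr_1^*\mathbb{P}^{n-1} + \pr_2^*\infty$ on $\mathbb{P}^n \times \mathbb{P}^1$ shows that the extension $\tilde\mu \colon Y \to (\mathbb{P}^n, \mathbb{P}^{n-1})$ is a morphism in $\mSm$. By blow-up invariance and cube invariance, $\motive(Y) \simeq \motive(\mathbb{P}^n, \mathbb{P}^{n-1})$. I will then lift the sections $i_\nu \colon (\mathbb{P}^n, \mathbb{P}^{n-1}) \to (\mathbb{P}^n, \mathbb{P}^{n-1}) \otimes \bcube$ ($\nu=0,1$) to morphisms $\tilde i_\nu \colon (\mathbb{P}^n, \mathbb{P}^{n-1}) \to Y$: the lift $\tilde i_1$ is immediate since the blow-up centers lie away from $s=1$, and $\tilde i_0$ is obtained by identifying the strict transform of $\mathbb{P}^n \times \{0\}$ in $Y$ with $(\mathbb{P}^n, \mathbb{P}^{n-1})$ as a modulus pair (the restricted boundary on the strict transform coming entirely from the exceptional divisor of the first blow-up). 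Both $\tilde i_\nu$ are sections of the equivalence $\motive(Y) \xrightarrow{\sim} \motive(\mathbb{P}^n, \mathbb{P}^{n-1})$, hence induce homotopic maps in $\mH(S, \mathcal{C})$, while a chart computation gives $\tilde\mu \circ \tilde i_1 = \id_{\mathbb{P}^n}$ and $\tilde\mu \circ \tilde i_0 = c_p$, the constant map at $p$. A standard retraction argument then yields $\motive(\mathbb{P}^n, \mathbb{P}^{n-1}) \simeq \motive(\pt)$.

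Part (2) follows immediately: for $x \in \mathbb{P}^{n-1}$ the blow-up $\Bl_x(\mathbb{P}^n, \mathbb{P}^{n-1})$ is SNC, so blow-up invariance and (1) give $\motive(\Bl_x(\mathbb{P}^n, \mathbb{P}^{n-1})) \simeq \motive(\mathbb{P}^n, \mathbb{P}^{n-1}) \simeq \motive(\pt)$. For part (3), note that $\tilde X$ is a $\mathbb{P}^1$-bundle over $\mathbb{P}^{n-1}$ with the exceptional divisor $E$ and the strict transform $\tilde{\mathbb{P}^{n-1}}$ as disjoint sections, so $\tilde X \simeq \mathbb{P}(\mathcal{O}\oplus L)$ for some line bundle $L$; the section $s\colon (E, \varnothing) \to (\tilde X, \tilde{\mathbb{P}^{n-1}})$ and the bundle projection $\pi\colon (\tilde X, \tilde{\mathbb{P}^{n-1}}) \to (E, \varnothing) \cong (\mathbb{P}^{n-1},\varnothing)$ satisfy $\pi \circ s = \id$, so it suffices to prove $s \circ \pi \simeq \id$. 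I run the same cube-homotopy argument with the fiberwise $\mathbb{G}_m$-action $[a:b] \mapsto [s_0 a : s_1 b]$ (taking $\tilde{\mathbb{P}^{n-1}} = \{a=0\}$ and $E = \{b=0\}$), whose indeterminacies $\tilde{\mathbb{P}^{n-1}} \times \{0\}$ and $E \times \{\infty\}$ are SNC in the boundary of $(\tilde X, \tilde{\mathbb{P}^{n-1}}) \otimes \bcube$; the analogous verifications show that the extension respects the modulus and that the restriction to the strict transform of $\tilde X \times \{0\}$ equals $s \circ \pi$, giving $\motive(E,\varnothing) \simeq \motive(\Bl_x(\mathbb{P}^n, \mathbb{P}^{n-1}))$.

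The hard part will be the explicit local chart verifications: (i) that the indeterminacy centers are SNC in the boundary divisor, (ii) that the extended $\tilde\mu$ pulls the boundary back to a divisor dominated by the modulus on $Y$ (so that $\tilde\mu$ is a morphism in $\mSm$), and most importantly (iii) that the strict transform of $\mathbb{P}^n \times \{0\}$ (respectively $\tilde X \times \{0\}$) in $Y$ is isomorphic to $(\mathbb{P}^n, \mathbb{P}^{n-1})$ (respectively $(\tilde X, \tilde{\mathbb{P}^{n-1}})$) as a modulus pair, since this identification is what allows the section $\tilde i_0$ to exist in $\mSm$ and produces the constant map on the nose.
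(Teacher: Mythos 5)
Your proposal is correct, but it takes a genuinely different route from the paper. The paper argues by induction on $n$: for $n=1$ both pairs are $\bcube$, and for $n\geq 2$ it observes that $\Bl_x(\mathbb{P}^n,\mathbb{P}^{n-1})$ is a $\bcube$-bundle over $(\mathbb{P}^{n-1},\mathbb{P}^{n-2})$ when $x\in\mathbb{P}^{n-1}$, resp.\ over $(E,\varnothing)$ when $x\notin\mathbb{P}^{n-1}$, so (2) and (3) follow from Nisnevich descent plus cube-invariance, and (1) is then deduced from (2) via blow-up invariance. You go the other way: you prove (1) directly by the scalar-multiplication cube-homotopy $[x_0:\cdots:x_n]\mapsto[s_0x_0:s_1x_1:\cdots:s_1x_n]$, resolving its indeterminacy along $\mathbb{P}^{n-1}\times\{0\}$ and $(p,\infty)$ by SNC blow-ups — this is exactly the pattern of the paper's Lemma \ref{cube_epsilon_invariance}, transplanted to $\mathbb{P}^n$ — then get (2) from (1) by blow-up invariance, and rerun the fiberwise $\mathbb{G}_m$-homotopy on the $\mathbb{P}^1$-bundle $\Bl_x\mathbb{P}^n\to\mathbb{P}^{n-1}$ for (3). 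The chart verifications you flag as the hard part do go through: both centers lie in the boundary with normal crossings, the divisor identity $\mu^*\mathbb{P}^{n-1}=\pr_1^*\mathbb{P}^{n-1}+\pr_2^*[\infty]$ gives $\pi^*D\geq\widetilde{\mu}^*\mathbb{P}^{n-1}$ after blow-up (the discrepancy is an effective exceptional divisor), and the strict transform of the zero fiber, with the restriction of the total-transform modulus, is indeed $(\mathbb{P}^n,\mathbb{P}^{n-1})$ (the strict transforms of the two transversal divisors through the center get separated, so only the exceptional contributes, with multiplicity one), so $\widetilde{i}_0$ exists in $\mSm$ and $\widetilde{\mu}\circ\widetilde{i}_0$ is the constant map at $p$; the analogous checks for (3) work the same way. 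What each approach buys: the paper's induction is shorter and avoids all local computation by reducing everything to the bundle observation plus descent, while your argument is induction-free and exhibits the contraction of $(\mathbb{P}^n,\mathbb{P}^{n-1})$ by an explicit modulus-compatible homotopy, at the price of the (routine but necessary) blow-up and multiplicity bookkeeping.
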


\begin{proof}
    We proceed by induction on $n$.
    If $n=1$, then we have $(\mathbb{P}^1,\mathbb{P}^0)\cong \Bl_x(\mathbb{P}^1,\mathbb{P}^0)\cong \bcube$, so all statements follow from the cube-invariance.
    Suppose $n\geq 2$.
    If $x\in \mathbb{P}^{n-1}$ (resp. $x\in \mathbb{P}^{n}- \mathbb{P}^{n-1}$), then $\Bl_x(\mathbb{P}^n,\mathbb{P}^{n-1})$ is a cube-bundle over $(\mathbb{P}^{n-1},\mathbb{P}^{n-2})$ (resp. $(E,\varnothing)$).
    Therefore the claims (2) and (3) follow from the Nisnevich descent and the cube-invariance.
    The claim (1) follows from (2) via the blow-up invariance; $\motive(\Bl_x(\mathbb{P}^n,\mathbb{P}^{n-1}))\xrightarrow{\sim}\motive(\mathbb{P}^n,\mathbb{P}^{n-1})$.
\end{proof}

\subsection{Smooth blow-up excision}

In this subsection we prove the smooth blow-up excision following Kelly-Saito's argument \cite{Kelly-Saito}.

\begin{definition}
    Let $\mathcal{X}=(X,D)\in \mSmS$ and let $Z\subset X$ be a smooth closed subscheme which is transversal to $|D|$.
    Let $E$ denote the exceptional divisor of the blow-up $\Bl_ZX\to X$.
    Consider the following commutative diagram:
    $$
    \xymatrix{
        \motive(E,D|_E)\ar[r]\ar[d]       &\motive(\Bl_Z\mathcal{X})\ar[d]\\
        \motive(Z,D|_Z)\ar[r]             &\motive(\mathcal{X}).
    }
    $$
    We say that $(\mathrm{SBU})_{(\mathcal{X},Z)}$ holds if the total cofiber of the above square is zero.
\end{definition}

\begin{theorem}[Smooth blow-up excision]\label{SBU}
    Let $\mathcal{X}=(X,D)\in \mSmS$ and let $Z\subset X$ be a smooth closed subscheme which is transversal to $|D|$.
    Then $(\mathrm{SBU})_{(\mathcal{X},Z)}$ holds.
\end{theorem}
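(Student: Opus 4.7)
The plan is to establish $(\mathrm{SBU})_{(\mathcal{X},Z)}$ through three successive reductions, culminating in an explicit computation for $(\mathbb{A}^n,\varnothing,\{0\})$. For the first reduction, Lemma~\ref{transversal_structure} provides (after passing to a Zariski cover of $X$) an \'etale correspondence $X \xleftarrow{p} X' \xrightarrow{q} Z\times\mathbb{A}^n$ with $p^*D = q^*\pr_1^*(D|_Z)$ and with $p,q$ restricting to isomorphisms over $Z$ (respectively $Z\times\{0\}$). The distinguished Nisnevich squares $\{X\setminus Z,\, X'\to X\}$ on $X$ and its analogue on $Z\times\mathbb{A}^n$ give Nisnevich distinguished squares in each entry of the smooth blow-up diagram; the entries supported on the open complement of $Z$ involve blow-ups along the empty subscheme and thus have vanishing total cofiber. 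Applying Nisnevich descent entrywise then identifies the total cofibers attached to $(\mathcal{X},Z)$, $(X',p^*D,Z)$, and $(Z\times\mathbb{A}^n,\pr_1^*(D|_Z), Z\times\{0\})$, reducing the general case to the product case. For the second reduction, $\Bl_{Z\times\{0\}}(Z\times\mathbb{A}^n) = Z\times\Bl_0\mathbb{A}^n$ (blow-ups commute with flat base change), and since $D$ is pulled back from $Z$ each term of the smooth blow-up square factors as a Day convolution $\motive(Z,D|_Z)\otimes\motive(Y,\varnothing)$ for $Y\in\{\pt,\mathbb{P}^{n-1},\mathbb{A}^n,\Bl_0\mathbb{A}^n\}$. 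Because the monoidal structure on $\mH(S,\mathcal{C})$ preserves colimits in each variable, it suffices to prove $(\mathrm{SBU})_{(\mathbb{A}^n,\varnothing,\{0\})}$.

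For this base case, embed $\mathbb{A}^n\hookrightarrow\mathbb{P}^n$ with complement the hyperplane at infinity $H_\infty$, let $\widetilde{X} := \Bl_0\mathbb{P}^n$, and let $\widetilde{H}_\infty$ be the strict transform of $H_\infty$, which is disjoint from the exceptional divisor $E\cong\mathbb{P}^{n-1}$ since $0\notin H_\infty$. Consider the pasting diagram
\[
\xymatrix{
    \motive(\mathbb{P}^{n-1},\varnothing) \ar[r] \ar[d] & \motive(\Bl_0\mathbb{A}^n,\varnothing) \ar[d]^{\pi} \ar[r]^-{j_{\Bl}} & \motive(\widetilde{X},\widetilde{H}_\infty) \ar[d]^{\pi_{\widetilde{X}}} \\
    \motive(\pt) \ar[r] & \motive(\mathbb{A}^n,\varnothing) \ar[r]^-{j} & \motive(\mathbb{P}^n,H_\infty).
}
\]
For the right square: the pairs $\{\Bl_0\mathbb{A}^n,\widetilde{X}\setminus E\}$ and $\{\mathbb{A}^n,\mathbb{P}^n\setminus\{0\}\}$ are distinguished Zariski covers of $\widetilde{X}$ and $\mathbb{P}^n$ respectively, with matching intersection $\mathbb{A}^n\setminus\{0\}$; the blow-up $\widetilde{X}\to\mathbb{P}^n$ is an isomorphism away from $E$, so both associated Nisnevich descent pushout squares have the same top row, and termwise cofiber comparison yields $\cofib(\pi)\simeq\cofib(\pi_{\widetilde{X}})$, making the right square cocartesian. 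For the outer square: Lemma~\ref{P^n_relative_motive}(1) shows that the bottom horizontal arrow is an equivalence, and since $\widetilde{X}\to\mathbb{P}^{n-1}$ is a $\mathbb{P}^1$-bundle with two disjoint sections $E$ and $\widetilde{H}_\infty$, the projection $(\widetilde{X},\widetilde{H}_\infty)\to(\mathbb{P}^{n-1},\varnothing)$ is Zariski-locally trivial with fiber $\bcube$; by Zariski descent and cube-invariance the top horizontal arrow, induced by the section $E\cong\mathbb{P}^{n-1}$, is also an equivalence. The outer square being cocartesian too, the pasting lemma yields that the left square---which is exactly $(\mathrm{SBU})_{(\mathbb{A}^n,\varnothing,\{0\})}$---is cocartesian.

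The main obstacle lies in making the first reduction precise: one must verify cleanly that the total cofiber of the smooth blow-up square defines a Nisnevich-descending functor of the ambient scheme, so that the Zariski-local structure theorem of Lemma~\ref{transversal_structure} can be promoted to a genuine comparison of motives between $(\mathcal{X},Z)$ and the product model.
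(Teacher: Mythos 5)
Your proof is correct and follows essentially the same strategy as the paper: reduce via Lemma \ref{transversal_structure} together with Zariski descent and Nisnevich excision to the product model $(Z\times\mathbb{A}^n,\pr_1^*(D|_Z))$, strip off the $(Z,D|_Z)$ factor by monoidality, and conclude using Lemma \ref{P^n_relative_motive}. The only cosmetic deviation is that the paper proves the base case directly for $((\mathbb{P}^n,\mathbb{P}^{n-1}),x)$ in Lemma \ref{SBU_P^n}, while you phrase it for $(\mathbb{A}^n,\varnothing,\{0\})$ and pass to the projective compactification inside your pasting diagram; the precision you flag as needed for the first reduction is exactly what the paper packages into Lemmas \ref{SBU_descent} and \ref{SBU_excision}.
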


\begin{lemma}\label{SBU_tensor}
    Let $\mathcal{X}=(X,D)\in \mSmS$ and let $Z\subset X$ be a smooth closed subscheme which is transversal to $|D|$.
    If $(\mathrm{SBU})_{(\mathcal{X},Z)}$ holds, then $(\mathrm{SBU})_{(\mathcal{X}\otimes\mathcal{Y},Z\times Y)}$ holds for any $\mathcal{Y}=(Y,E)\in \mSmS$.
\end{lemma}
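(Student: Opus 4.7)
The plan is to observe that the SBU square for $(\mathcal{X}\otimes\mathcal{Y},Z\times Y)$ is canonically obtained from the SBU square for $(\mathcal{X},Z)$ by tensoring with $\motive(\mathcal{Y})$, and then to exploit the fact that tensoring with a fixed object preserves colimits in the presentably symmetric monoidal $\infty$-category $\mH(S,\mathcal{C})$.

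First I would verify the geometric compatibility. Write $\mathcal{Y}=(Y,E)$ and let $\pi\colon \Bl_ZX\to X$ be the blow-up with exceptional divisor $E_Z$. I would check three points:
(a) $Z\times Y\subset X\times Y$ is a smooth closed subscheme transversal to the support $|\pr_1^*D+\pr_2^*E|$; this follows from the transversality of $Z$ to $|D|$ and normal crossings of $|E|$ in $Y$ by taking product charts, using the local structure of Lemma~\ref{transversal_structure}.
(b) Blow-ups are stable under flat base change, so $\Bl_{Z\times Y}(X\times Y)\cong (\Bl_ZX)\times Y$ with exceptional divisor identified with $E_Z\times Y$.
(c) The moduli are compatible: pulling back $\pr_1^*D+\pr_2^*E$ along the identification in (b) yields $\pr_1^*\pi^*D+\pr_2^*E$, giving $\Bl_{Z\times Y}(\mathcal{X}\otimes\mathcal{Y})\cong (\Bl_Z\mathcal{X})\otimes\mathcal{Y}$, and analogously $(E_Z\times Y, \cdot)\cong (E_Z,\pi^*D|_{E_Z})\otimes\mathcal{Y}$ and $(Z\times Y,\cdot)\cong (Z,D|_Z)\otimes\mathcal{Y}$.

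Using these identifications together with the symmetric monoidality of $\motive\colon \mSmS\to \mH(S,\mathcal{C})$ --- which follows from the Day convolution structure on $\Sh_\Nis(\mSmS,\mathcal{C})$ together with the fact that the localizing class $\CI\cup\BI$ is tensor-closed (so that $\mathrm{L}_\mot$ inherits a symmetric monoidal structure) --- the SBU square for $(\mathcal{X}\otimes\mathcal{Y},Z\times Y)$ is identified with
\[
\xymatrix{
    \motive(E_Z,\pi^*D|_{E_Z})\otimes\motive(\mathcal{Y})\ar[r]\ar[d] & \motive(\Bl_Z\mathcal{X})\otimes\motive(\mathcal{Y})\ar[d]\\
    \motive(Z,D|_Z)\otimes\motive(\mathcal{Y})\ar[r] & \motive(\mathcal{X})\otimes\motive(\mathcal{Y}),
}
\]
which is precisely the SBU square for $(\mathcal{X},Z)$ tensored with $\motive(\mathcal{Y})$.

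Finally, since $\mH(S,\mathcal{C})$ is presentably symmetric monoidal, the endofunctor $({-})\otimes\motive(\mathcal{Y})$ preserves all colimits, and in particular preserves total cofibers of squares. The hypothesis $(\mathrm{SBU})_{(\mathcal{X},Z)}$ says that the total cofiber of the pre-tensored square vanishes, so the total cofiber of the tensored square vanishes as well, establishing $(\mathrm{SBU})_{(\mathcal{X}\otimes\mathcal{Y},Z\times Y)}$. The only subtle point is the modulus bookkeeping in (c); once that is set up, the rest is a formal consequence of symmetric monoidality and presentability.
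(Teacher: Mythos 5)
Your proposal is correct and follows essentially the same route as the paper, whose entire proof is the observation that $\motive(Y,E)\otimes({-})$ preserves colimits; you have simply made explicit the geometric identifications (blow-up base change and modulus bookkeeping) and the tensor-closedness of $\CI\cup\BI$ that the paper leaves implicit. No gap.
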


\begin{proof}
    This is a consequence of the fact that the functor $\motive(Y,E)\otimes({-})$ preserves colimits.
\end{proof}

\begin{lemma}\label{SBU_descent}
    Let $\mathcal{X}=(X,D)\in \mSmS$ and let $Z\subset X$ be a smooth closed subscheme which is transversal to $|D|$.
    Let $\{\mathcal{U}_i\}_{i\in I}$ be a Zariski covering of $\mathcal{X}$.
    For each finite non-empty subset $J\subset I$, we set $\mathcal{U}_J=\bigcap_{j\in J}\mathcal{U}_j$.
    If $(\mathrm{SBU})_{(\mathcal{U}_J,Z_J)}$ holds for every finite non-empty subset $J\subset I$, then $(\mathrm{SBU})_{(\mathcal{X},Z)}$ holds.
\end{lemma}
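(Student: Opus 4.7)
The plan is to deduce the claim from Nisnevich (hence Zariski) descent in $\mH(S,\mathcal{C})$ together with the fact that the total cofiber is itself a colimit, so colimits commute through it.

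More precisely, I will first observe that blow-up, exceptional divisor, and restriction of divisors all commute with open immersions: for any open $\mathcal{U}=(U,D|_U)\subset \mathcal{X}$, one has $\Bl_{Z\cap U}U\simeq \Bl_ZX\times_X U$, the exceptional divisor restricts to the exceptional divisor, and the pullback of $D$ along the blow-up restricts correctly. Consequently, for every finite non-empty $J\subset I$ the square
$$
\xymatrix{
    \motive(E_J,D|_{E_J})\ar[r]\ar[d] & \motive(\Bl_{Z_J}\mathcal{U}_J)\ar[d]\\
    \motive(Z_J,D|_{Z_J})\ar[r]       & \motive(\mathcal{U}_J)
}
$$
is functorial in $J$, where $E_J$ is the exceptional divisor of $\Bl_{Z_J}U_J\to U_J$, and it is obtained by restricting the analogous square for $(\mathcal{X},Z)$ along the open immersions $\mathcal{U}_J\hookrightarrow \mathcal{X}$.

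Next I invoke Zariski descent. Since $\mH(S,\mathcal{C})$ is a reflective localization of $\Sh_\Nis(\mSmS,\mathcal{C})$ and the Nisnevich topology is finer than the Zariski topology, for each of the four corners the canonical map
$$
\colim_{\emptyset\neq J\subset I,\,|J|<\infty} \motive(\mathcal{U}_J,\text{corner data})\xrightarrow{\ \sim\ } \motive(\mathcal{X},\text{corner data})
$$
is an equivalence (the colimit being over the Čech nerve of the Zariski cover). Applying this to each of the four corners and using functoriality in $J$, we obtain the big square for $(\mathcal{X},Z)$ as the colimit of the squares for $(\mathcal{U}_J,Z_J)$ in the category of squares in $\mH(S,\mathcal{C})$.

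Finally, the total cofiber functor from squares to objects is a colimit and so preserves colimits. Therefore the total cofiber of the square for $(\mathcal{X},Z)$ is the colimit of the total cofibers for the $(\mathcal{U}_J,Z_J)$, each of which vanishes by hypothesis, so the total cofiber is zero. There is no real obstacle here; the only care needed is to check that the restriction of the smooth transversal datum to each Zariski open is again a smooth transversal datum (so that $(\mathrm{SBU})_{(\mathcal{U}_J,Z_J)}$ even makes sense), which is immediate from Definition \ref{NCwithD}.
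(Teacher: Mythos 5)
Your proof is correct and takes the same route the paper does: the paper's proof is the one-liner ``This follows immediately from the Nisnevich descent,'' and your argument is simply a careful unwinding of that phrase. The key points you spell out — that blow-up, exceptional divisor, and pulled-back divisor all restrict compatibly along Zariski opens, that each corner of the square is the (Čech/punctured-cube) colimit of the corresponding corners over the cover, and that the total cofiber, being a finite colimit, commutes with this colimit — are exactly what is packed into ``Nisnevich descent.'' The only cosmetic wrinkle is that the colimit you write over finite non-empty $J\subset I$ and the Čech-nerve colimit over $\Delta^{\mathrm{op}}$ are a priori different shapes; they agree here because $X$ is quasi-compact (so the cover refines to a finite one, for which the punctured-cube formulation is standard), but it would be cleaner either to fix a finite subcover at the outset or to phrase the descent statement directly over $\Delta^{\mathrm{op}}$.
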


\begin{proof}
    This follows immediately from the Nisnevich descent.
\end{proof}

\begin{lemma}\label{SBU_P^n}
    Let $x=[0:\cdots:0:1]$.
    Then $(\mathrm{SBU})_{((\mathbb{P}^n,\mathbb{P}^{n-1}),x)}$ holds.
\end{lemma}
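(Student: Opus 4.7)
The plan is to observe that the point $x=[0:\cdots:0:1]$ lies in the open complement $\mathbb{P}^n\setminus\mathbb{P}^{n-1}$ (taking $\mathbb{P}^{n-1}\subset\mathbb{P}^n$ as the standard hyperplane $\{T_n=0\}$), and then invoke the parts of Lemma~\ref{P^n_relative_motive} that handle this case.

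First, I would simplify the $(\mathrm{SBU})$-square. Since $x\notin|\mathbb{P}^{n-1}|$, the restriction $\mathbb{P}^{n-1}|_x$ is empty. Because the blow-up $\pi\colon\Bl_x\mathbb{P}^n\to\mathbb{P}^n$ is an isomorphism outside $x$, the exceptional divisor $E=\pi^{-1}(x)$ is disjoint from the preimage of $\mathbb{P}^{n-1}$, so $(\pi^*\mathbb{P}^{n-1})|_E$ is also empty. Thus the defining square reduces to
\[
\xymatrix{
\motive(E,\varnothing)\ar[r]\ar[d] & \motive(\Bl_x(\mathbb{P}^n,\mathbb{P}^{n-1}))\ar[d]\\
\motive(\pt)\ar[r] & \motive(\mathbb{P}^n,\mathbb{P}^{n-1}).
}
\]

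Next, I would argue that both horizontal maps are equivalences. The top horizontal map is exactly the assertion of Lemma~\ref{P^n_relative_motive}(3), applied to the section $x\colon\pt\to\mathbb{P}^n\setminus\mathbb{P}^{n-1}$. For the bottom horizontal map, Lemma~\ref{P^n_relative_motive}(1) says that the structure morphism $\motive(\mathbb{P}^n,\mathbb{P}^{n-1})\to\motive(\pt)$ is an equivalence; since the composition $\motive(\pt)\xrightarrow{x}\motive(\mathbb{P}^n,\mathbb{P}^{n-1})\to\motive(\pt)$ is the identity, the section $\motive(\pt)\to\motive(\mathbb{P}^n,\mathbb{P}^{n-1})$ is an inverse equivalence.

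Finally, a commutative square in a pointed (or stable) $\infty$-category whose two horizontal arrows are equivalences is automatically coCartesian, and hence has vanishing total cofiber. This gives $(\mathrm{SBU})_{((\mathbb{P}^n,\mathbb{P}^{n-1}),x)}$. There is no real obstacle here; the only substantive point is the vanishing of the restricted moduli $\pi^*\mathbb{P}^{n-1}|_E$ and $\mathbb{P}^{n-1}|_x$ forced by $x\notin|\mathbb{P}^{n-1}|$, after which everything is supplied by Lemma~\ref{P^n_relative_motive}.
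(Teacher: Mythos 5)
Your proof is correct and matches the paper's argument: the paper likewise notes that both horizontal arrows of the $(\mathrm{SBU})$-square are equivalences by Lemma~\ref{P^n_relative_motive} and concludes it is coCartesian. You have merely spelled out which parts of that lemma handle each arrow (part (3) for the top, part (1) plus the section-of-an-equivalence argument for the bottom) and why the moduli restrict to $\varnothing$, all of which the paper leaves implicit.
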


\begin{proof}
    Let $E$ denote the exceptional divisor of the blow-up $\Bl_x\mathbb{P}^n\to \mathbb{P}^n$.
    We have to show that the total cofiber of the following square is zero:
    $$
    \xymatrix{
        \motive(E,\varnothing)\ar[r]\ar[d]       &\motive(\Bl_x(\mathbb{P}^n,\mathbb{P}^{n-1}))\ar[d]\\
        \motive(\pt)\ar[r]^-{[0:\cdots:0:1]}             &\motive(\mathbb{P}^n,\mathbb{P}^{n-1}).
    }
    $$
    By Lemma \ref{P^n_relative_motive}, the horizontal morphisms are equivalences, so the diagram is coCartesian.
\end{proof}

\begin{lemma}\label{SBU_excision}
    Let $\mathcal{X}=(X,D)\in \mSmS$ and let $Z\subset X$ be a smooth closed subscheme which is transversal to $|D|$.
    Let $X'\to X$ be an \'etale morphism which induces an isomorphism $X'\times_XZ\cong Z$, and set $\mathcal{X}'=(X',D|_{X'})$.
    Then we have
    $$
        (\mathrm{SBU})_{(\mathcal{X}',Z)}\iff (\mathrm{SBU})_{(\mathcal{X},Z)}.
    $$
\end{lemma}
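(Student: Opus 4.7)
The plan is to reduce the statement to two applications of Nisnevich descent. Write $U = X - Z$ and $U' = X' - Z$ (where we identify $X' \times_X Z$ with $Z$ via the given isomorphism). By hypothesis the underlying diagram
$$
\xymatrix{
    U' \ar[r]\ar[d] & X' \ar[d] \\
    U \ar[r] & X
}
$$
is an elementary distinguished Nisnevich square in $\Sm_S$, and since every object carries the pullback of the divisor $D$, it lifts to an elementary distinguished Nisnevich square in $\mSm_S$.

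Next I would pass to the blow-ups. Since the blow-up construction commutes with flat base change and $X' \to X$ is étale, one has $\Bl_Z X' = \Bl_Z X \times_X X'$; moreover the exceptional divisor $E' \subset \Bl_Z X'$ equals $E \times_X X' = E \times_Z (X' \times_X Z) \cong E$ under the given isomorphism. Therefore, writing $Y = \Bl_Z X$ and $Y' = \Bl_Z X'$, the complements $Y - E$ and $Y' - E'$ are canonically identified with $U$ and $U'$, respectively, and we obtain a second elementary distinguished Nisnevich square in $\mSm_S$:
$$
\xymatrix{
    (U', D|_{U'}) \ar[r]\ar[d] & \Bl_Z\mathcal{X}' \ar[d] \\
    (U, D|_U) \ar[r] & \Bl_Z\mathcal{X}.
}
$$

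Now consider the map of $(\mathrm{SBU})$-squares from $\mathcal{X}'$ to $\mathcal{X}$: the left column (values at $(Z,D|_Z)$ and $(E,D|_E)$) is the identity, while the right column is
$$
\xymatrix{
    \motive(\Bl_Z\mathcal{X}')\ar[r]\ar[d] & \motive(\Bl_Z\mathcal{X})\ar[d] \\
    \motive(\mathcal{X}')\ar[r] & \motive(\mathcal{X}).
}
$$
The map of total cofibers of the $(\mathrm{SBU})$-squares is the total cofiber of this right-hand square. By Nisnevich descent applied to the two squares above, the cofibers of both vertical maps coincide with the cofiber of $\motive(U',D|_{U'}) \to \motive(U,D|_U)$, so the right-hand square is coCartesian. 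Hence the induced morphism of total cofibers of the two $(\mathrm{SBU})$-squares is an equivalence, proving $(\mathrm{SBU})_{(\mathcal{X}',Z)} \Leftrightarrow (\mathrm{SBU})_{(\mathcal{X},Z)}$.

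The only mild subtlety is verifying the compatibility of the blow-up with the étale base change, in particular the identification $E' \cong E$ of exceptional divisors arising from the isomorphism $X' \times_X Z \cong Z$; once that is in place, the rest is a formal manipulation of pushouts in $\mH(S,\mathcal{C})$.
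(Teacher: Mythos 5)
Your argument is correct and follows the same approach as the paper: use that $U' \to U$, $X' \to X$ and $U' \to U$, $\Bl_Z X' \to \Bl_Z X$ are both Nisnevich squares to deduce (via the pasting law) that the square relating $\motive(\Bl_Z\mathcal{X}') \to \motive(\Bl_Z\mathcal{X})$ and $\motive(\mathcal{X}') \to \motive(\mathcal{X})$ is coCartesian, hence that the two $(\mathrm{SBU})$-squares have equivalent total cofibers. One minor slip: where you write ``the cofibers of both vertical maps coincide,'' you mean the \emph{horizontal} maps of the displayed square (the \'etale base-change maps), and strictly one should phrase this as the pasting law rather than merely noting that the two cofibers agree abstractly, but this is clearly what you intend.
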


\begin{proof}
    Let $E$ denote the exceptional divisor of the blow-up $\Bl_ZX\to X$, which can be identified with the exceptional divisor of the blow-up $\Bl_ZX'\to X'$.
    Let $U=X- Z$ and $U'=X'- Z$.
    Consider the following commutative diagram:
    $$
    \xymatrix{
        \motive(U',D|_{U'})\ar[d]\ar[r]           &\motive(U,D|_U)\ar[d]\\
        \motive(\Bl_Z\mathcal{X}')\ar[d]\ar[r]   &\motive(\Bl_Z\mathcal{X})\ar[d]\\
        \motive(\mathcal{X}')\ar[r]     &\motive(\mathcal{X}).
    }
    $$
    The upper square and the total rectangle are coCartesian by the Nisnevich descent.
    By the pasting law, the lower square is also coCartesian.
    Next, we consider the following commutative diagram:
    $$
    \xymatrix{
        \motive(E,D|_E)\ar[r]\ar[d]       &\motive(\Bl_Z\mathcal{X}')\ar[d]\ar[r]   &\motive(\Bl_Z\mathcal{X})\ar[d]\\
        \motive(Z,D|_Z)\ar[r]             &\motive(\mathcal{X}')\ar[r]     &\motive(\mathcal{X}).
    }
    $$
    We have seen that the right square is coCartesian.
    This implies that the total cofiber of the left square is isomorphic to that of the total rectangle.
    Therefore $(\mathrm{SBU})_{(\mathcal{X}',Z)}$ holds if and only if $(\mathrm{SBU})_{(\mathcal{X},Z)}$ holds.
\end{proof}

\begin{proof}[Proof of Theorem \ref{SBU}]
    By Lemma \ref{transversal_structure} and Lemma \ref{SBU_descent}, we may assume that there is a commutative diagram
    \begin{equation} \label{diag:excision}
    \xymatrix{
        Z\ar@{^(->}[d]&Z\ar[l]_-{\id}\ar[rr]^-{\id}\ar@{^(->}[d]&&Z\times \{0\}\ar@{^(->}[d]\\
        X&X'\ar[l]_-p\ar[r]^-{q}&Z\times\mathbb{A}^n\ar@{^(->}[r]&Z\times\mathbb{P}^n,
    }
    \end{equation}
    where $p$ and $q$ \'etale, the squares are Cartesian, and $p^*D=q^*\pr_1^*(D|_Z)$ for the projection $\mathrm{pr}_1 : Z \times \AA^1 \to Z$.
    By Lemma \ref{SBU_P^n} and Lemma \ref{SBU_tensor}, we see that $(\mathrm{SBU})_{((Z\times\mathbb{P}^n,Z\times\mathbb{P}^{n-1}),Z)}$ holds.
    Using Lemma \ref{SBU_excision} twice, we conclude that $(\mathrm{SBU})_{(\mathcal{X},Z)}$ holds.
\end{proof}

\subsection{Tame Hasse-Arf theorem}

In this subsection we prove the following result:

\begin{definition}
    Let $\mathcal{X}=(X,D)\in \mSmS$ and let $Z\subset X$ be a smooth divisor which is transversal to $|D|$.
    We say that $(\mathrm{THA})_{(\mathcal{X},Z)}$ holds if for any $\varepsilon\in (0,1]\cap \mathbb{Q}$, the cofiber of the morphism
    $$
        \motive(X,D+Z)\to \motive(X,D+\varepsilon Z)
    $$
    is zero.
\end{definition}

\begin{theorem}[Tame Hasse-Arf theorem]\label{THA}
    Let $\mathcal{X}=(X,D)\in \mSmS$ and let $Z\subset X$ be a smooth divisor which is transversal to $|D|$.
    Then $(\mathrm{THA})_{(\mathcal{X},Z)}$ holds.
\end{theorem}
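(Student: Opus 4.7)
The plan is to follow the template of the proof of Theorem~\ref{SBU}: I would formulate $(\mathrm{THA})_{(\mathcal{X},Z)}$ as a cofiber-vanishing condition, establish stability properties for this condition, reduce by Lemma~\ref{transversal_structure} to a one-dimensional universal model, and finish by combining Nisnevich descent with the cube $\varepsilon$-invariance of Lemma~\ref{cube_epsilon_invariance}.

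First I would establish three analogs of Lemmas~\ref{SBU_tensor},~\ref{SBU_descent},~\ref{SBU_excision}: (i) \emph{tensor stability}, which is immediate because $\motive(\mathcal{Y})\otimes({-})$ preserves colimits and hence cofibers; (ii) \emph{Nisnevich descent on $\mathcal{X}$}, which follows by applying Nisnevich descent separately to $\motive(X,D+Z)$ and $\motive(X,D+\varepsilon Z)$; and (iii) \emph{étale excision}: if $p\colon X'\to X$ is étale with $X'\times_X Z\xrightarrow{\sim} Z$, then $(\mathrm{THA})_{(\mathcal{X}',Z)}\iff(\mathrm{THA})_{(\mathcal{X},Z)}$. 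For (iii), the elementary Nisnevich square $(X-Z,X')\rightrightarrows X$ yields coCartesian squares for each of the divisors $D+Z$ and $D+\varepsilon Z$ whose top edges involve only $(X-Z,D|_{X-Z})$ and $(X'-Z,D|_{X'-Z})$, and hence do not depend on the coefficient of $Z$; the cofiber of the comparison map is then again coCartesian with zero top edges, forcing the cofibers of $\motive(X,D+Z)\to\motive(X,D+\varepsilon Z)$ and $\motive(X',D|_{X'}+Z)\to\motive(X',D|_{X'}+\varepsilon Z)$ to agree.

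Granting these properties, since $Z\subset X$ has codimension one, Lemma~\ref{transversal_structure} with $n=1$, together with (ii) and (iii), reduces the problem to the case $\mathcal{X}=(Z\times\mathbb{A}^1,\pr_1^*(D|_Z))$ with divisor $Z\times\{0\}$. Writing this pair as $(Z,D|_Z)\otimes(\mathbb{A}^1,\varnothing)$ and applying (i) further reduces the problem to the universal case $(\mathcal{X},Z)=((\mathbb{A}^1,\varnothing),\{0\})$, i.e., to showing that $\motive(\mathbb{A}^1,[0])\to\motive(\mathbb{A}^1,\varepsilon[0])$ has zero cofiber. Applying (ii) to the Zariski cover $\mathbb{P}^1=\mathbb{A}^1\cup(\mathbb{P}^1-\{0\})$ (with intersection $\mathbb{G}_m$) yields coCartesian squares for $c=1$ and $c=\varepsilon$ whose top row $\mathbb{G}_m\to\mathbb{P}^1-\{0\}$ is independent of $c$; the same cofiber-comparison as in (iii) then reduces us to showing that $\motive(\mathbb{P}^1,[0])\to\motive(\mathbb{P}^1,\varepsilon[0])$ is an equivalence. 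Under the coordinate change $t\mapsto 1/t$ this becomes $\motive(\bcube)\to\motive(\bcube^\varepsilon)$, and since cube-invariance and Lemma~\ref{cube_epsilon_invariance} make both structural projections to $\motive(\pt)$ equivalences, the claim follows from two-out-of-three applied to the commutative triangle $\bcube\to\bcube^\varepsilon\to\pt$.

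The main obstacle is the étale excision step (iii); it is a direct parallel of Lemma~\ref{SBU_excision}, but the diagram chase with cofibers across two coCartesian squares needs to be set up carefully. Once that is in place, the remaining steps are essentially formal manipulations of the structural results already available in the paper.
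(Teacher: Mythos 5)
Your proof is correct and follows essentially the same route as the paper's: the same three stability lemmas (tensor, Nisnevich descent, \'etale excision) combined with Lemma~\ref{transversal_structure} to reduce to a one-dimensional universal model, with the base case settled by Lemma~\ref{cube_epsilon_invariance} exactly as in Lemma~\ref{THA_P^1}. The only cosmetic difference is your Zariski-cover cofiber comparison to pass from $(\mathbb{A}^1,\varnothing)$ to $(\mathbb{P}^1,\varnothing)$, which is a valid but slightly roundabout version of simply applying your own \'etale-excision step (iii) to the open immersion $\mathbb{A}^1\hookrightarrow\mathbb{P}^1$.
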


\begin{lemma}\label{THA_tensor}
    Let $\mathcal{X}=(X,D)\in \mSmS$ and let $Z\subset X$ be a smooth divisor which is transversal to $|D|$.
    If $(\mathrm{THA})_{(\mathcal{X},Z)}$ holds, then $(\mathrm{THA})_{(\mathcal{X}\otimes\mathcal{Y},Z\times Y)}$ holds for any $\mathcal{Y}=(Y,E)\in \mSmS$.
\end{lemma}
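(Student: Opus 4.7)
The plan is to reduce the claim directly to the hypothesis by using the presentably symmetric monoidal structure on $\mH(S,\mathcal{C})$. First I would unwind the definition: by the formula for the tensor product in $\mSm_S$, one has
\[
(X,D+Z)\otimes (Y,E) = (X\times Y,\,\pr_1^*D+\pr_2^*E+ Z\times Y),
\]
\[
(X,D+\varepsilon Z)\otimes (Y,E) = (X\times Y,\,\pr_1^*D+\pr_2^*E+\varepsilon (Z\times Y)),
\]
so the canonical morphism
\[
\motive(X\times Y,\pr_1^*D+\pr_2^*E+Z\times Y)\longrightarrow \motive(X\times Y,\pr_1^*D+\pr_2^*E+\varepsilon(Z\times Y))
\]
appearing in $(\mathrm{THA})_{(\mathcal{X}\otimes\mathcal{Y},Z\times Y)}$ is identified, via the symmetric monoidal functor $\motive\colon \mSm_S\to \mH(S,\mathcal{C})$, with the morphism
\[
\motive(\mathcal{X}+Z)\otimes \motive(\mathcal{Y})\longrightarrow \motive(\mathcal{X}+\varepsilon Z)\otimes \motive(\mathcal{Y})
\]
obtained by tensoring the morphism in $(\mathrm{THA})_{(\mathcal{X},Z)}$ with $\motive(\mathcal{Y})$.

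Next I would invoke that $\mH(S,\mathcal{C})$ is presentably symmetric monoidal, so the endofunctor $\motive(\mathcal{Y})\otimes(-)$ preserves all colimits, and in particular preserves cofibers. Consequently, the cofiber of the tensored morphism is $\motive(\mathcal{Y})\otimes C$, where $C$ is the cofiber of $\motive(X,D+Z)\to \motive(X,D+\varepsilon Z)$. By the assumption $(\mathrm{THA})_{(\mathcal{X},Z)}$ we have $C\simeq 0$ for every $\varepsilon\in (0,1]\cap\mathbb{Q}$, hence $\motive(\mathcal{Y})\otimes C\simeq 0$, which establishes $(\mathrm{THA})_{(\mathcal{X}\otimes\mathcal{Y},Z\times Y)}$.

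There is essentially no obstacle here; the argument is formally identical to the proof of Lemma \ref{SBU_tensor}. The only point that requires a (trivial) verification is that $Z\times Y$ is still a smooth divisor in $X\times Y$ transversal to $|\pr_1^*D+\pr_2^*E|$, which is immediate since $Z$ is transversal to $|D|$ in $X$ and the pullback of a relative $\mathbb{Q}$-SNCD along a smooth morphism remains a relative $\mathbb{Q}$-SNCD; this ensures that $(\mathrm{THA})_{(\mathcal{X}\otimes\mathcal{Y},Z\times Y)}$ is a well-posed statement.
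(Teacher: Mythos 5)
Your argument is correct and is essentially the paper's own proof, just spelled out in more detail: the paper likewise reduces $(\mathrm{THA})_{(\mathcal{X}\otimes\mathcal{Y},Z\times Y)}$ to $(\mathrm{THA})_{(\mathcal{X},Z)}$ by observing that $\motive(\mathcal{Y})\otimes(-)$ preserves colimits, hence cofibers.
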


\begin{proof}
    This is a consequence of the fact that the functor $\motive(Y,E)\otimes({-})$ preserves colimits.
\end{proof}

\begin{lemma}\label{THA_descent}
    Let $\mathcal{X}=(X,D)\in \mSmS$ and let $Z\subset X$ be a smooth divisor which is transversal to $|D|$.
    Let $\{\mathcal{U}_i\}_{i\in I}$ be a Zariski covering of $\mathcal{X}$.
    For each finite non-empty subset $J\subset I$, we set $\mathcal{U}_J=\bigcap_{j\in J}\mathcal{U}_j$.
    If $(\mathrm{THA})_{(\mathcal{U}_J,Z_J)}$ holds for every finite non-empty subset $J\subset I$, then $(\mathrm{THA})_{(\mathcal{X},Z)}$ holds.
\end{lemma}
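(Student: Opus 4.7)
The plan is to reduce this to a formal Čech descent argument in $\mH(S,\mathcal{C})$. Fix $\varepsilon\in(0,1]\cap\Q$ and denote by $C_\varepsilon(\mathcal{X},Z)$ the cofiber of the morphism
\[
    \motive(X,D+Z)\to \motive(X,D+\varepsilon Z).
\]
I want to show $C_\varepsilon(\mathcal{X},Z)\simeq 0$. The whole point is to exhibit each side of the morphism above as a colimit of its restrictions to the $\mathcal{U}_J$'s and then commute cofiber with colimit.

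First, I would observe that for each finite non-empty $J\subset I$, the restrictions $(U_J, (D+Z)|_{U_J})$ and $(U_J,(D+\varepsilon Z)|_{U_J})$ are well-defined objects of $\mSm_S$, and that $Z_J := Z\cap U_J$ is still a smooth divisor of $U_J$ transversal to $|D|\cap U_J$. Moreover, for any chain $j_0,\dots,j_n\in I$, the fiber product $\mathcal{U}_{j_0}\times_{\mathcal{X}}\cdots\times_{\mathcal{X}}\mathcal{U}_{j_n}$ in $\mSm_S$ (which is the fiber product of the underlying opens with pulled-back modulus) is precisely $(U_{\{j_0,\dots,j_n\}}, D|_{U_{\{j_0,\dots,j_n\}}})$, since the structure morphisms $\mathcal{U}_j\to \mathcal{X}$ are open immersions in both the scheme and modulus sense.

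The key technical step is Čech descent: since $\mH(S,\mathcal{C})$ satisfies Nisnevich descent for elementary distinguished squares (property (1) in the remark following the construction), a standard bootstrapping yields that for any Zariski cover $\{\mathcal{U}_i\to \mathcal{X}\}_{i\in I}$ the augmented simplicial diagram coming from the Čech nerve exhibits $\motive(\mathcal{X})$ as the colimit
\[
    \motive(\mathcal{X})\;\simeq\;\colim_{[n]\in \Delta^{\mathrm{op}}}\;\bigsqcup_{(j_0,\dots,j_n)\in I^{n+1}}\motive(\mathcal{U}_{\{j_0,\dots,j_n\}}).
\]
Applying this both to $(X,D+Z)$ and to $(X,D+\varepsilon Z)$ with the \emph{same} covering $\{U_i\}_{i\in I}$, and using that cofibers commute with colimits in $\mathcal{C}\in \{\Spc_*,\Sp,\Mod_\Lambda\}$, we obtain
\[
    C_\varepsilon(\mathcal{X},Z)\;\simeq\;\colim_{[n]\in\Delta^{\mathrm{op}}}\;\bigsqcup_{(j_0,\dots,j_n)}C_\varepsilon(\mathcal{U}_{\{j_0,\dots,j_n\}},Z_{\{j_0,\dots,j_n\}}).
\]
By assumption each $C_\varepsilon(\mathcal{U}_J,Z_J)$ vanishes, so the colimit vanishes as well, and the conclusion follows.

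The only non-formal point worth checking is the Čech descent formula above, but it is a standard consequence of the hypercompleteness-free Nisnevich descent encoded in the definition of $\mH(S,\mathcal{C})$ together with the fact that a Zariski cover can be refined by a filtered system of finite Zariski covers, each of which is built from elementary distinguished squares. I do not expect any serious obstacle; the lemma is essentially a formal consequence of the fact that $\motive$ is a left adjoint (hence colimit-preserving) composed with the Yoneda embedding into Nisnevich sheaves.
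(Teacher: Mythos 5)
Your proposal is correct and takes essentially the same route as the paper, which disposes of this lemma in one line by invoking Nisnevich descent; you have simply unwound that invocation into an explicit \v{C}ech colimit computation together with the observation that cofibers commute with colimits.
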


\begin{proof}
    This follows immediately from the Nisnevich descent.
\end{proof}

\begin{lemma}\label{THA_P^1}
    $(\mathrm{THA})_{((\mathbb{P}_S^1,\varnothing),\infty)}$ holds.
\end{lemma}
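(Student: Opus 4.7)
\medskip

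The plan is to reduce the statement to a direct application of cube-invariance together with Lemma \ref{cube_epsilon_invariance}. Unwinding the definition, $\mathcal{X}=(\mathbb{P}^1_S,\varnothing)$ and $Z=[\infty]$, so the morphism whose cofiber we must annihilate is
\[
\motive(\mathbb{P}^1_S,[\infty])\longrightarrow \motive(\mathbb{P}^1_S,\varepsilon[\infty]),
\]
i.e.\ the canonical morphism $\motive(\bcube)\to\motive(\bcube^\varepsilon)$ induced by the identity on $\mathbb{P}^1_S$ (a valid morphism in $\mSm_S$ since $\varepsilon\leq 1$ forces $[\infty]\geq \varepsilon[\infty]$).

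The key observation is that the structure map $\bcube\to\mathrm{pt}$ (coming from the projection $\mathbb{P}^1_S\to S$) factors through $\bcube^\varepsilon$ as
\[
\bcube\longrightarrow \bcube^\varepsilon\longrightarrow \mathrm{pt}.
\]
Applying the cube-invariance property (property (2) of $\mH(S,\mathcal{C})$), the composition $\motive(\bcube)\to\motive(\mathrm{pt})$ is an equivalence. By Lemma \ref{cube_epsilon_invariance}, the second map $\motive(\bcube^\varepsilon)\to\motive(\mathrm{pt})$ is also an equivalence. By the two-out-of-three property for equivalences in $\mathcal{C}\in\{\Spc_*,\Sp,\Mod_\Lambda\}$, the morphism $\motive(\bcube)\to\motive(\bcube^\varepsilon)$ is therefore an equivalence, and its cofiber vanishes.

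This argument is essentially a one-liner combining two facts already established in the paper, so I do not anticipate any technical obstacle; the only thing to check is that the factorization above genuinely holds in $\mSm_S$, which is immediate from the inequalities of $\mathbb{Q}$-divisors on $\mathbb{P}^1_S$.
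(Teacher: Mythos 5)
Your proof is correct and follows the paper's argument exactly: both reduce the statement to showing that $\motive(\bcube)\to\motive(\bcube^\varepsilon)$ is an equivalence, and both deduce this by combining cube-invariance with Lemma \ref{cube_epsilon_invariance} via two-out-of-three. You have merely spelled out the factorization through $\motive(\mathrm{pt})$, which the paper leaves implicit.
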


\begin{proof}
    What we have to show is that the cofiber of $\motive(\bcube)\to \motive(\bcube^\varepsilon)$ is zero.
    This morphism is an equivalence by Lemma \ref{cube_epsilon_invariance} and the cube-invariance.
\end{proof}

\begin{lemma}\label{THA_excision}
    Let $\mathcal{X}=(X,D)\in \mSmS$ and let $Z\subset X$ be a smooth divisor transversal to $|D|$.
    Let $X'\to X$ be an \'etale morphism which induces an isomorphism $X'\times_XZ\cong Z$ and set $\mathcal{X}'=(X',D|_{X'})$.
    Then we have
    $$
        (\mathrm{THA})_{(\mathcal{X}',Z)}
        \iff (\mathrm{THA})_{(\mathcal{X},Z)}
    $$
\end{lemma}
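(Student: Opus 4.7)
The plan is to mimic the proof of Lemma \ref{SBU_excision} by Nisnevich descent along the elementary distinguished square coming from $X' \to X$. The key observation is that, for any $\alpha \in \mathbb{Q}_{\geq 0}$, since $\alpha Z$ restricts to $0$ on $X \setminus Z$ and pulls back to itself along $X' \to X$ (using the Cartesian isomorphism $X' \times_X Z \cong Z$), the diagram
$$
\xymatrix{
(X' \setminus Z, D|_{X' \setminus Z}) \ar[r] \ar[d] & (X', D|_{X'} + \alpha Z) \ar[d] \\
(X \setminus Z, D|_{X \setminus Z}) \ar[r] & (X, D + \alpha Z)
}
$$
is an elementary distinguished Nisnevich square in $\mSmS$ in the sense of Definition \ref{def:topology}: all four divisors are obtained by restriction from $D + \alpha Z$ on $X$, and the underlying diagram of $S$-schemes is elementary distinguished Nisnevich by hypothesis. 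Hence by Nisnevich descent its image under $\motive$ is coCartesian in $\mH(S,\mathcal{C})$.

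Next, I would fix $\varepsilon \in (0,1] \cap \mathbb{Q}$ and apply this construction twice, once with $\alpha = 1$ and once with $\alpha = \varepsilon$. The inequality $D + Z \geq D + \varepsilon Z$ gives a morphism between the two resulting coCartesian squares which is the identity on the two left-hand entries. Taking cofibers term-wise of this morphism of coCartesian squares — an operation that preserves coCartesian squares, since colimits commute with colimits in any pointed $\infty$-category with finite colimits — yields a coCartesian square
$$
\xymatrix{
0 \ar[r] \ar[d] & C' \ar[d] \\
0 \ar[r] & C
}
$$
where
$$
C' = \cofib\bigl(\motive(X', D|_{X'} + Z) \to \motive(X', D|_{X'} + \varepsilon Z)\bigr),
$$
$$
C  = \cofib\bigl(\motive(X, D + Z) \to \motive(X, D + \varepsilon Z)\bigr).
$$
It follows that $C \simeq C'$, naturally in $\varepsilon$, and hence $(\mathrm{THA})_{(\mathcal{X}, Z)}$ holds if and only if $(\mathrm{THA})_{(\mathcal{X}', Z)}$ holds.

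The only nontrivial bookkeeping is verifying that the first displayed square is genuinely an elementary distinguished Nisnevich square in $\mSmS$: one needs the four modulus pairs to be compatible restrictions of $(X, D + \alpha Z)$, which relies both on the isomorphism $X' \times_X Z \cong Z$ (so $\alpha Z$ pulls back correctly to $X'$) and on the fact that $\alpha Z|_{X \setminus Z} = 0$ (so the divisors on the left column involve only $D$). One should also check that all four pairs lie in $\mSmS$, which follows from the hypothesis that $Z$ is transversal to $|D|$, so that $D + \alpha Z$ and its restrictions are relative $\mathbb{Q}$-SNCDs. Once these checks are in place, the remainder is formal from Nisnevich descent and the stability/pointedness of $\mathcal{C}$.
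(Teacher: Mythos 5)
Your proof is correct and takes essentially the same approach as the paper: both rely on Nisnevich descent applied to the elementary distinguished square induced by $X'\to X$, once at multiplicity $1$ and once at multiplicity $\varepsilon$. The paper arranges this as a $3\times 2$ diagram and invokes the pasting law to see that the right-hand square is coCartesian, while you take term-wise cofibers of a morphism between the two $2\times 2$ squares; these are logically equivalent.
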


\begin{proof}
    Let $U=X- Z$ and $U'=X'- Z$.
    Consider the following commutative diagram:
    $$
    \xymatrix{
        \motive(U',D|_{U'})\ar[d]\ar[r]&\motive(X',D|_{X'}+Z)\ar[r]\ar[d]       &\motive(X',D|_{X'}+\varepsilon Z)\ar[d]\\
        \motive(U,D|_U)\ar[r]&\motive(X,D+Z)\ar[r]&\motive(X,D+\varepsilon Z).
    }
    $$
    The left square and the total rectangle are coCartesian by the Nisnevich descent.
    By the pasting law, the right square is also coCartesian, so we have an equivalence between the cofibers of two horizontal morphisms of the right square.
    Therefore $(\mathrm{THA})_{(\mathcal{X}',Z)}$ holds if and only if $(\mathrm{THA})_{(\mathcal{X},Z)}$ holds.
\end{proof}

\begin{proof}[Proof of Theorem \ref{THA}]
    As in the proof of Theorem \ref{SBU}, the result follows from Lemma \ref{transversal_structure}, Lemma \ref{THA_tensor}, Lemma \ref{THA_descent}, Lemma \ref{THA_P^1}, and Lemma \ref{THA_excision}.
\end{proof}

\section{Relation with logarithmic motivic stable homotopy theory} \label{sec:modlog}

Let $S$ be a qcqs scheme and $\Lambda$ be a connective commutative ring spectrum.
In this section, we construct a localization functor from our category $\mSH_{S^1}(S,\Lambda)$ to the category of logarithmic motives $\logSH_{S^1}(S,\Lambda)$ defined by Binda-Park-{\O}stv{\ae}r \cite{LogHom}.

We would like to mention here that Shane Kelly kindly shared his idea of this type of comparison (in a slightly different form since we didn't have the notion of $\mathbb{Q}$-modulus pairs) in 2020 in the private communication with the second (and afterwards with the third) authors \cite{Letter2020}. 
Afterwards, Kelly and the second author independently realized that the strategy doesn't work if we use the category of sheaves \emph{with transfers} (which are constructed in \cite{KMSY1,KMSY2,KMSY3} over a field and in \cite{Kelly-Miyazaki} over any noetherian scheme), but it perfectly works for sheaves \emph{without} transfers. 
An unstable version of the comparison result is in Kelly's note \cite{Kelly-loghomotopy-type}.

\subsection{Logarithmic motivic stable homotopy category}
First we recall the construction of the logarithmic motivic stable homotopy category due to Binda-Park-Østvær.
We write $\SmlSm_S$ for the category of log-smooth separated fs log schemes of finite type over $S$, whose underlying scheme is smooth over $S$.

\begin{definition}
    A \emph{log pair} over $S$ is a pair $\mathfrak{X}=(X,D)$ where $X\in \Sm_S$ and $D$ is a relative SNCD on $X$ over $S$.
    We write $\mathfrak{X}^\circ=X-D$.
\end{definition}

If $(X,D)$ is a log pair over $S$, then we can construct a log scheme $(X,\mathcal{M}_{(X,D)})\in \SmlSm_S$ where $\mathcal{M}_{(X,D)}$ is the compactifying log structure induced by $X-D\hookrightarrow X$.
Suppose that $\mathfrak{X}=(X,D)$ and $\mathfrak{Y}=(Y,E)$ be log pairs over $S$.
Then, a morphism of $S$-schemes $f:X \to Y$ induces a (unique) morphism of log schemes $(X,\mathcal{M}_{(X,D)}) \to (Y,\mathcal{M}_{(Y,E)})$ if and only if $f(\mathfrak{X}^\circ) \subset \mathfrak{Y}^\circ$.
Conversely, for any $X\in \SmlSm_S$, there exists a unique relative SNCD $D$ on $\underline{X}$ over $S$ such that $X\cong (\underline{X},\mathcal{M}_{(\underline{X},D)})$ \cite[Lemma A.5.10]{BPO}.
Therefore we get the following result:

\begin{lemma} \label{lem:gamma-eq}
    The category $\SmlSm_S$ is equivalent to the following category:
    \begin{itemize}
        \item The objects are log pairs over $S$.
        \item A morphism from $\mathfrak{X}=(X,D)$ to $\mathfrak{Y}=(Y,E)$ is a morphism of $S$-schemes $f\colon X\to Y$ such that $f(\mathfrak{X}^\circ)\subset \mathfrak{Y}^\circ$.
    \end{itemize}
\end{lemma}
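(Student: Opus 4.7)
The plan is to assemble an explicit equivalence of categories from the two assertions already quoted in the paragraph preceding the lemma. Denote by $\mathcal{P}_S$ the target category: objects are log pairs $\mathfrak{X}=(X,D)$ over $S$, and morphisms $\mathfrak{X} \to \mathfrak{Y}$ are $S$-morphisms $f\colon X\to Y$ with $f(\mathfrak{X}^\circ)\subset \mathfrak{Y}^\circ$. I will define a functor $\Phi\colon \mathcal{P}_S \to \SmlSm_S$ and show it is an equivalence.

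On objects, set $\Phi(X,D):=(X,\mathcal{M}_{(X,D)})$. Since $D$ is a relative SNCD on a smooth $S$-scheme $X$, the pair $(X,\mathcal{M}_{(X,D)})$ is an fs log scheme log-smooth over $S$ with smooth underlying scheme, so it lies in $\SmlSm_S$. On morphisms, the statement quoted immediately before the lemma (``a morphism of $S$-schemes $f\colon X\to Y$ induces a unique morphism of log schemes $(X,\mathcal{M}_{(X,D)})\to(Y,\mathcal{M}_{(Y,E)})$ if and only if $f(\mathfrak{X}^\circ)\subset\mathfrak{Y}^\circ$'') says precisely two things: first, any morphism in $\mathcal{P}_S$ lifts to a log morphism, giving the action of $\Phi$ on arrows; second, this lift is unique, which both ensures functoriality (composition of lifts is the lift of the composite, by uniqueness) and shows that $\Phi$ is faithful. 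For fullness, any log morphism $(X,\mathcal{M}_{(X,D)})\to(Y,\mathcal{M}_{(Y,E)})$ in $\SmlSm_S$ has underlying $S$-morphism $f\colon X\to Y$; because the log structures in question are compactifying, $f$ must send the trivial locus $X-D=\mathfrak{X}^\circ$ into the trivial locus $Y-E=\mathfrak{Y}^\circ$, hence defines a morphism in $\mathcal{P}_S$ whose image under $\Phi$ is the original log morphism (again by the uniqueness clause).

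For essential surjectivity, let $X\in\SmlSm_S$. The result \cite[Lemma A.5.10]{BPO} cited in the excerpt provides a unique relative SNCD $D$ on $\underline{X}$ over $S$ together with an isomorphism $X\cong(\underline{X},\mathcal{M}_{(\underline{X},D)})=\Phi(\underline{X},D)$. Thus $\Phi$ hits every isomorphism class, and combined with the fully faithfulness shown above it is an equivalence of categories.

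There is no real obstacle in the argument: the two substantive inputs (the characterization of log morphisms between compactifying log structures, and the reconstruction of the SNCD from the log structure via \cite[Lemma A.5.10]{BPO}) are precisely what makes $\Phi$ fully faithful and essentially surjective, respectively. The only thing to verify carefully is that the lift-of-morphism construction is strictly functorial, but this is immediate from the uniqueness of the lift: if $f$ and $g$ are composable morphisms in $\mathcal{P}_S$, then $\Phi(g)\circ\Phi(f)$ is a log morphism lifting $g\circ f$, so by uniqueness it agrees with $\Phi(g\circ f)$, and similarly $\Phi(\id)=\id$.
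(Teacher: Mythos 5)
Your proof is correct and follows precisely the route the paper implicitly takes: the lemma appears with no separate proof environment and is simply stated as a consequence of the preceding paragraph (the construction of the compactifying log structure, the iff-characterization of log morphisms in terms of the interior, and \cite[Lemma A.5.10]{BPO}). Your write-up is a faithful and complete unpacking of that ``therefore,'' with the uniqueness of the lift correctly invoked for functoriality, faithfulness, and fullness, and the BPO lemma for essential surjectivity.
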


For the rest of the paper, we identify $\SmlSm_S$ with the category of log pairs.
The category $\SmlSm_S$ has Cartesian products: $(X,D)\times (Y,E)=(X\times Y, \pr_1^*D+\pr_2^*E)$.
We write $\bcube=(\mathbb{P}^1,[\infty])\in \SmlSm_S$.

\begin{definition}
    Let $\mathfrak{X}=(X,D)\in \SmlSm$ and let $Z\subset D$ be a smooth closed subscheme which has normal crossings to $D$.
    We write
    $$
        \Bl_Z\mathfrak{X}:=(\Bl_ZX, \pi^{-1}(D))
    $$ where $\pi\colon \Bl_ZX\to X$ is the blow-up along $Z$.
    A morphism $f\colon \mathfrak{Y}\to \mathfrak{X}$ in $\SmlSm_S$ is called an \emph{SNC blow-up} if there is a smooth closed subscheme $Z\subset D$ which has normal crossings to $D$ such that $f$ is isomorphic to $\pi\colon \Bl_Z\mathfrak{X}\to \mathfrak{X}$.
\end{definition}

\begin{definition}\label{def:log_topology}
    An \emph{elementary distinguished square} in $\SmlSm_S$ is a commuative diagram in $\SmlSm_S$ which is isomorphic to a diagram of the form
    $$
    \xymatrix{
        (V,D|_V)\ar[r]\ar[d]&(Y,D|_Y)\ar[d]\\
        (U,D|_U)\ar[r]      &(X,D),
    }
    $$
    where the associated diagram of underlying $S$-schemes is an elementary distinguished square.
    This defines a cd-structure and hence a topology on $\SmlSm_S$, which we call the strict Nisnevich topology on $\SmlSm_S$.
    We also define the Zariski topology on $\SmlSm_S$ in the same manner.
\end{definition}

Let $\Lambda$ be a connective commutative ring spectrum.
We write $\SmlSm_{S,\sNis}$ for the site defined by the strict Nisnevich topology, and $\Sh_\sNis(\SmlSm_S, \Mod_\Lambda)$ for the associated sheaf category.
The inclusion functor $\Sh_\sNis(\SmlSm_S,\Mod_\Lambda)\to \PSh(\SmlSm_S,\Mod_\Lambda)$ admits a left adjoint $a_\sNis$.
We write $y$ for the canonical functor
$$
    \SmlSm_S\to \Sh_\sNis(\SmlSm_S,\Spc)\xrightarrow{\Lambda\otimes\Sigma^\infty_+({-})}
    \Sh_\sNis(\SmlSm_S,\Mod_\Lambda).
$$

\begin{definition}\label{def;CIlogBIlog}
    We define classes of morphisms $\mathrm{CI}^{\log},\mathrm{BI}^{\log}$ in $\SmlSm_S$ as follows:
    \begin{align*}
        \mathrm{CI}^{\log}=&\{\mathfrak{X}\times \bcube\to\mathfrak{X}\mid \mathfrak{X}\in \SmlSm_S\},\\
        \mathrm{BI}^{\log}=&\{\text{SNC blow-ups }\mathfrak{Y}\to\mathfrak{X}\}.
    \end{align*}
    Let $\Lambda$ be a connective commutative ring spectrum.
    The \emph{$S^1$-stable logarithmic motivic homotopy category}
    $$
        \logSH_{S^1}(S,\Lambda)
    $$
    is defined to be the full $\infty$-subcategory of $\Sh_\sNis(\SmlSm_S,\Mod_\Lambda)$ spanned by the objects which are local with respect to $\CI^{\log}$ and $\BI^{\log}$.
    We write $\mathrm{L}_\mot\colon \Sh_\sNis(\SmlSm_S,\Mod_\Lambda)\to \logSH_{S^1}(S,\Lambda)$ for the left adjoint to the inclusion functor.
    We define $\motive^{\log}\colon \SmlSm_S\to \logSH_{S^1}(S,\Lambda)$ to be the composition
    $$
        \motive^{\log}\colon \SmlSm_S\xrightarrow{y} \Sh_\sNis(\SmlSm_S,\Mod_\Lambda)\xrightarrow{\mathrm{L}_\mot}\logSH_{S^1}(S,\Lambda),
    $$
    where $y$ is the Yoneda functor.
    The $\infty$-category $\logSH_{S^1}(S,\Lambda)$ underlies a presentably symmetric monoidal $\infty$-category, where the monoidal structure is the Day convolution of the Cartesian product in $\SmlSm_S$ \cite[Proposition 2.2.1.9]{Lurie-HA}.
\end{definition}

\begin{remark}
    The definition of $\logSH_{S^1}(S,\Lambda)$ given above is different from the original one \cite[Remark 2.6.11 and Corollary 3.5.4]{LogHom}; what we have defined is denoted by $\logSH_{S^1}^{\SmAdm}(S,\Lambda)$ in that paper.
    However, two definitions are equivalent by \cite[Remark 2.6.11 and Corollary 3.5.4]{LogHom}.
\end{remark}

\begin{remark}
    In the original paper \cite{LogHom}, the object $\motive^{\log}(\mathfrak{X})\in \logSH_{S^1}(S,\Lambda)$ is denoted by $\Lambda\otimes \Sigma^\infty_{S^1,+}(\mathfrak{X})$.
    When $\Lambda$ is a classical ring, we write $\logDA^\eff(S,\Lambda)$ for $\logSH_{S^1}(S,\Lambda)$.
\end{remark}

\begin{remark}
    By construction, the category $\logSH_{S^1}(S,\Lambda)$ has the following properties:
    \begin{enumerate}
        \item (Nisnevich descent)
                For any elementary distinguished square in $\SmlSm_S$, its image under $\motive^{\log}$ is a coCartesian square.
        \item (Cube-invariance)
                For any $\mathfrak{X}\in \SmlSm_S$, the canonical morphism $\motive^{\log}(\mathfrak{X}\times\bcube)\to \motive^{\log}(\mathfrak{X})$ is an equivalence.
        \item (Blow-up invariance)
                For any SNC blow-up $\mathfrak{Y}\to \mathfrak{X}$ in $\SmlSm_S$, the induced morphism $\motive^{\log}(\mathfrak{Y})\to \motive^{\log}(\mathfrak{X})$ is an equivalence.
    \end{enumerate}
\end{remark}

\subsection{Construction of the localization functor}

\begin{lemma}\label{SmlSm_vs_mSm}
    For $(X,D)\in \mSm_S$ and $(Y,E)\in \SmlSm_S$, we have
    $$
        \Hom_{\SmlSm_S}((X,|D|),(Y,E))\cong \colim_{\varepsilon\to 0} \Hom_{\mSm_S}((X,D),(Y,\varepsilon E)).
    $$
\end{lemma}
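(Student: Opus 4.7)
My plan is to check this as an equality of subsets of $\Hom_{\Sch_S}(X, Y)$, by unpacking both sides and verifying that the obvious forgetful map is a bijection.

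First I would unpack the LHS. By (the proof of) Lemma \ref{lem:gamma-eq}, a morphism $(X, |D|) \to (Y, E)$ in $\SmlSm_S$ is the data of an $S$-morphism $f \colon X \to Y$ with $f(X - |D|) \subset Y - E$, equivalently with $f^{-1}(E) \subset |D|$ as closed subsets.

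Next I would unpack the RHS. For $0 < \varepsilon' \leq \varepsilon$ in $\mathbb{Q}_{>0}$, the inequality $\varepsilon E \geq \varepsilon' E$ shows that the identity on $Y$ gives a morphism $(Y, \varepsilon E) \to (Y, \varepsilon' E)$ in $\mSm_S$, so composition yields an inclusion $\Hom_{\mSm_S}((X, D), (Y, \varepsilon E)) \hookrightarrow \Hom_{\mSm_S}((X, D), (Y, \varepsilon' E))$ that merely relaxes the modulus inequality. Since the index poset $\mathbb{Q}_{>0}$ (ordered by $\geq$) is totally ordered and in particular filtered, the colimit is the union: it consists of the $S$-morphisms $f \colon X \to Y$ such that $f(X - |D|) \subset Y - E$ and $D \geq \varepsilon f^{*} E$ for some $\varepsilon \in \mathbb{Q}_{>0}$.

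The natural forgetful map $\mathrm{RHS} \to \mathrm{LHS}$ is then injective, since both sides are subsets of $\Hom_{\Sch_S}(X, Y)$. The bulk of the argument will be surjectivity: given $f$ in the LHS, the effective Cartier divisor $f^{*} E$ has support $f^{-1}(E)$ contained in $|D|$, and since both $f^{*} E$ and $|D|$ are effective divisors of pure codimension one while $|D|$ is reduced, every component of the support of $f^{*} E$ must be an irreducible component of $|D|$. Combining the local coordinate description of a relative $\mathbb{Q}$-SNCD with the quasi-compactness of $X$ (finite type over the qcqs scheme $S$), $|D|$ has only finitely many irreducible components globally, say $Z_1, \dots, Z_N$. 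Writing $D = \sum_i r_i Z_i$ with $r_i \in \mathbb{Q}_{>0}$ and $f^{*} E = \sum_i a_i Z_i$ with $a_i \in \mathbb{Z}_{\geq 0}$, any positive rational $\varepsilon \leq \min_{a_i > 0}(r_i / a_i)$ (or any $\varepsilon > 0$ if all $a_i$ vanish) satisfies $D \geq \varepsilon f^{*} E$, producing a preimage in the RHS.

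The only subtle point is the global finiteness of the set of irreducible components of $|D|$; I expect this to be a minor obstacle, handled uniformly by combining the local normal-crossings description of $D$ with the quasi-compactness of $X$.
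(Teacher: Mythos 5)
Your proof is correct and is essentially the same argument the paper gives: both reduce to showing that $|f^*E|\subset|D|$ forces $D\geq\varepsilon f^*E$ for some rational $\varepsilon>0$, using quasi-compactness of $X$ to get finitely many components. The only cosmetic difference is that the paper phrases this as $nD\geq f^*E$ for $n\gg0$ and then rescales via the isomorphism $\Hom_{\mSm_S}((X,nD),(Y,E))\cong\Hom_{\mSm_S}((X,D),(Y,(1/n)E))$, whereas you compare coefficients directly; your version also spells out the support/pure-codimension-one justification that the paper leaves implicit.
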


\begin{proof}
    Let $f\colon X\to Y$ be an $S$-morphism.
    Then $f$ gives a morphism of log pairs $(X,|D|)\to (Y,E)$ if and only if $|f^*E|\subset |D|$.
    Since $X$ is quasi-compact, this assumption is equivalent to saying that $n D\geq f^*E$ holds for $n\gg 0$, that is, $f\in \colim_{n\to \infty} \Hom_{\mSm_S}((X,nD),(Y,E))$.
    The claim follows from the canonical isomorphism
    $\Hom_{\mSm_S}((X,nD),(Y,E))\cong \Hom_{\mSm_S}((X,D),(Y,(1/n) E))$.
\end{proof}

\begin{definition}
    We define a symmetric monoidal functor $t\colon \mSm_S\to \SmlSm_S$ by $(X,D)\mapsto (X,|D|)$.
    By Lemma \ref{SmlSm_vs_mSm}, the functor $t$ admits an ind-right adjoint $(X,D)\mapsto \text{``}\colim_{\varepsilon\to 0}\text{''}(X,\varepsilon D)$.
\end{definition}

\begin{lemma}\label{t_morphism_of_sites}
    The functor $t$ gives a morphism of sites $\SmlSm_{S,\sNis}\to \mSm_{S,\Nis}$.
\end{lemma}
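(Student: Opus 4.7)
The plan is to check that $t$ is continuous with respect to the two Nisnevich topologies, which, since both topologies are defined via a cd-structure of elementary distinguished squares (Definitions \ref{def:topology} and \ref{def:log_topology}), amounts to verifying that $t$ sends elementary distinguished squares in $\mSm_S$ to elementary distinguished squares in $\SmlSm_S$ and that $t$ preserves the initial object.

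More concretely, I would start with an arbitrary elementary distinguished square in $\mSm_S$, which by Definition \ref{def:topology} has the form
$$
\xymatrix{
    (V,D|_V)\ar[r]\ar[d]&(Y,D|_Y)\ar[d]\\
    (U,D|_U)\ar[r]      &(X,D),
}
$$
whose underlying diagram of $S$-schemes is an elementary distinguished Nisnevich square. Applying $t$ yields the diagram
$$
\xymatrix{
    (V,|D|_V|)\ar[r]\ar[d]&(Y,|D|_Y|)\ar[d]\\
    (U,|D|_U|)\ar[r]      &(X,|D|).
}
$$
Since the support of a $\mathbb{Q}$-divisor commutes with pullback along smooth (in fact arbitrary flat) morphisms, we have $|D|_W| = |D||_W$ for $W \in \{U,V,Y\}$, so the resulting diagram coincides with
$$
\xymatrix{
    (V,|D||_V)\ar[r]\ar[d]&(Y,|D||_Y)\ar[d]\\
    (U,|D||_U)\ar[r]      &(X,|D|),
}
$$
which is exactly an elementary distinguished square in $\SmlSm_S$ by Definition \ref{def:log_topology}. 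The initial object $(\emptyset,0) \in \mSm_S$ is likewise sent to the initial object of $\SmlSm_S$.

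The only genuine content is the identity $|f^*D| = f^{-1}|D|$ for a morphism $f$ appearing in an elementary Nisnevich square; this is a standard property of $\mathbb{Q}$-divisors since support depends only on the underlying reduced subscheme and pullback of divisors is compatible with taking support. Once this is in place, the continuity of $t$ with respect to the two cd-structures is automatic, and hence $t$ gives a morphism of sites in the stated direction, exactly as in the parallel assertion for $\omega\colon \mSm_S \to \Sm_S$ recorded just before Corollary \ref{cor;mH-H-adjunction}. I do not anticipate any real obstacle; the statement is essentially bookkeeping once one notes that support commutes with pullback.
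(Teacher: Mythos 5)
Your proposal only establishes that $t$ is a \emph{continuous} functor of sites (it sends elementary distinguished squares to elementary distinguished squares and preserves the initial object), which is exactly the part the paper dismisses with ``it is easy to see.'' But a \emph{morphism of sites} in the sense intended here is a continuous functor for which the induced sheaf-pullback $\Sh_\sNis(\SmlSm_S)\to \Sh_\Nis(\mSm_S)$ has an \emph{exact} left adjoint; continuity alone does not give exactness of that left adjoint. The standard criterion for this, and the actual content of the paper's proof, is that for every $\mathfrak{X}=(X,D)\in\SmlSm_S$ the comma category $\mathfrak{X}\downarrow t$ is cofiltered. Your proposal never addresses this.

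The analogy with $\omega\colon \mSm_S\to\Sm_S$ that you invoke is misleading: the reason that case is painless is not that only continuity was needed, but that $\omega$ admits a genuine \emph{left} adjoint $X\mapsto(X,\varnothing)$, which makes the comma categories $X\downarrow\omega$ have initial objects and hence be trivially cofiltered. The functor $t\colon(X,D)\mapsto(X,|D|)$ has no left adjoint (only an ind-right adjoint, by Lemma \ref{SmlSm_vs_mSm}), so the cofilteredness of $\mathfrak{X}\downarrow t$ genuinely requires an argument. The paper's argument proceeds by using Lemma \ref{SmlSm_vs_mSm} to find, for any two objects $((Y_i,E_i),f_i)$ of $\mathfrak{X}\downarrow t$, a common $n>0$ such that both $f_i$ factor through $(X,nD)$, producing the needed cone $y_1\leftarrow((X,nD),\id)\rightarrow y_2$, and then observing that any two parallel morphisms in the comma category are automatically equalized by this cone. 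You would need to supply a step of this kind; without it the proposal does not prove the lemma.
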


\begin{proof}
    It is easy to see that $t$ gives a continuous map of sites.
    To prove that $t$ gives a morphism of sites, it suffices to show that the comma category $\mathfrak{X}\downarrow t$ is cofiltered for any $\mathfrak{X}=(X,D)\in \SmlSm$.
    Let $y_i=((Y_i,E_i),f_i\colon \mathfrak{X}\to (Y_i,|E_i|))\;(i=1,2)$ be two objects in $\mathfrak{X}\downarrow t$.
    By Lemma \ref{SmlSm_vs_mSm}, there is some $n>0$ such that $f_i$ gives a morphism $(X,nD)\to (Y_i,E_i)$ for $i=1,2$.
    We define an object $x=((X,nD),\id\colon \mathfrak{X}\to (X,|nD|))$ of $\mathfrak{X}\downarrow t$.
    Then $f_i$ gives a morphism $x\to y_i$ for $i=1,2$, so we obtain a diagram $y_1\leftarrow x\rightarrow y_2$ in $\mathfrak{X}\downarrow t$.
    If there are two morphisms $g,h\colon y_1\to y_2$ in $\mathfrak{X}\downarrow t$, then we have $g\circ f_1=f_2=h\circ f_1$ by definition of $\mathfrak{X}\downarrow t$.
    Therefore $\mathfrak{X}\downarrow t$ is cofiltered.
\end{proof}

By Lemma \ref{t_morphism_of_sites}, the functor $t\colon \mSm_S\to \SmlSm_S$ induces an adjunction
\begin{align}
\label{eq: t_sheaf_adjunction}
    t_!\colon \Sh_\Nis(\mSm_S,\Mod_\Lambda)\rightleftarrows \Sh_\Nis(\SmlSm_S,\Mod_\Lambda)\colon t^*,
\end{align}
where $t_!y(X,D)\simeq y(X,|D|)$ and $t^*F(X,D)\simeq F(X,|D|)$.
The functor $t_!$ is symmetric monoidal.
Since the functor $t$ admits an ind-right adjoint $(X,D)\mapsto \text{``}\colim_{\varepsilon\to 0}\text{''}(X,\varepsilon D)$, we have
$$
    t^*y(X,D)\simeq \colim_{\varepsilon\to 0}y(X,\varepsilon D).
$$
Moreover, the functor $t^*$ is fully faithful because $t_!\circ t^*\simeq \id$.

\begin{lemma}\label{t^*_preserves_colimits}
    The functor $t^*$ preserves colimits.
\end{lemma}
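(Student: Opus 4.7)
The plan is to verify that $t^*$ preserves filtered colimits and finite colimits separately, and then to conclude via the standard fact that in a presentable $\infty$-category every small colimit can be built from filtered colimits of finite colimits.

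For filtered colimits, I would use the formula $t^*F(X,D)\simeq F(X,|D|)$ from \eqref{eq: t_sheaf_adjunction}: it exhibits $t^*$ at the presheaf level as precomposition with $t$, which preserves all limits and colimits objectwise. Since the Nisnevich cd-structure on $\mSm_S$ (Definition~\ref{def:topology}) is generated by the elementary distinguished squares, which are finite (in fact binary) diagrams, the topology is finitary, and in particular the inclusion $\Sh_\Nis(\mSm_S,\Mod_\Lambda)\hookrightarrow \PSh(\mSm_S,\Mod_\Lambda)$ preserves filtered colimits. The same holds for $\SmlSm_S$ with its strict Nisnevich topology. Hence filtered colimits in both sheaf categories can be computed at the presheaf level, so $t^*$ preserves them.

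For finite colimits, I would use stability: both $\Sh_\Nis(\mSm_S,\Mod_\Lambda)$ and $\Sh_\sNis(\SmlSm_S,\Mod_\Lambda)$ are stable $\infty$-categories, being sheaves valued in the stable $\infty$-category $\Mod_\Lambda$. The functor $t^*$ is the right adjoint of $t_!$ in \eqref{eq: t_sheaf_adjunction}, hence preserves all limits, in particular finite limits. In a stable $\infty$-category finite colimits coincide with finite limits (pushout squares are pullback squares, and the zero object is both initial and terminal), so any limit-preserving functor between stable categories also preserves finite colimits. Thus $t^*$ preserves finite colimits.

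Combining these two observations yields that $t^*$ preserves all small colimits. I expect the main (and essentially only) delicate point to be the verification that filtered colimits of Nisnevich sheaves on $\mSm_S$ are computed presheaf-wise, which rests on the finitary nature of the cd-structure; this is inherited from the usual Nisnevich topology on $\Sm_S$ via the elementary distinguished squares of Definition~\ref{def:topology}, and the same remarks apply verbatim on the $\SmlSm_S$ side. As a byproduct, one could then construct an explicit right adjoint $t_*$ given on objects by $(t_*G)(Y,E)\simeq \lim_{\varepsilon\to 0}G(Y,\varepsilon E)$, but the colimit-preservation statement is what is really needed here.
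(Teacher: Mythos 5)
Your proof is correct, but it follows a genuinely different route than the paper. The paper argues in one step: colimits in both $\Sh_\Nis(\mSm_S,\Mod_\Lambda)$ and $\Sh_\sNis(\SmlSm_S,\Mod_\Lambda)$ can be computed on the small Nisnevich sites $X_\Nis$, and the identification $(t^*F)_{(X,D)}\simeq F_{(X,|D|)}$ shows that $t^*$ is essentially the identity on each such small site, from which colimit-preservation is immediate. You instead split the problem into filtered colimits (handled via the finitary nature of the cd-topologies, so the inclusions into presheaves preserve them, where $t^*$ is precomposition with $t$) and finite colimits (handled via stability of $\Mod_\Lambda$ together with the fact that the right adjoint $t^*$ preserves finite limits, hence, between stable $\infty$-categories, also finite colimits). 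Both arguments are valid. The paper's argument is shorter once one accepts the detection-on-small-sites principle, and it works uniformly for non-stable coefficient categories such as $\Spc$ or $\Spc_*$. Yours requires stability of $\Mod_\Lambda$ for the finite-colimit half, but it is perhaps more transparent if one wants to avoid invoking the small-site detection principle explicitly, and it makes visible precisely which structural features (finitary topology, stability) are being used. One small remark: the precise form of the decomposition principle you invoke in the $\infty$-categorical setting is that all small colimits are generated by pushouts and coproducts, that pushouts are finite, and that coproducts are filtered colimits of finite coproducts; spelling this out would make the final step airtight.
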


\begin{proof}
    For any $F\in \Sh_\Nis(\SmlSm_S,\Mod_\Lambda)$ and $\mathcal{X}=(X,D)\in \mSm_S$, we have $(t^*F)_{(X,D)}\simeq F_{(X,|D|)}$ as a sheaf on $X_\Nis$.
    Since the colimits in $\Sh_\Nis(\mSm_S,\Mod_\Lambda)$ and $\Sh_\Nis(\SmlSm_S,\Mod_\Lambda)$ can be computed on each $X_\Nis$, it follows that $t^*$ preserves colimits.
\end{proof}

\begin{lemma}\label{t^*_monoidal}
    The functor $t^*$ is symmetric monoidal.
\end{lemma}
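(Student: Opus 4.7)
The plan is to combine the formal fact that the right adjoint of a symmetric monoidal functor is lax symmetric monoidal with the explicit formula $t^*y(\mathfrak{X})\simeq\colim_{\varepsilon\to 0}y(X,\varepsilon D)$ for representables, and then verify the lax structure map is an equivalence by reduction to representables. Since $t\colon \mSm_S\to\SmlSm_S$ is symmetric monoidal for the products that induce the Day convolutions on the sheaf categories, the left adjoint $t_!$ is symmetric monoidal by the universal property of Day convolution, so $t^*$ inherits a canonical lax symmetric monoidal structure with structure maps $\mu_{F,G}\colon t^*F\otimes t^*G\to t^*(F\otimes G)$. The lemma reduces to showing that every $\mu_{F,G}$ is an equivalence.

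First I would reduce to representables. The Day convolution is cocontinuous in each variable and $t^*$ preserves colimits by Lemma \ref{t^*_preserves_colimits}, so both sides of $\mu_{F,G}$ are cocontinuous in $F$ and $G$. Since every object of $\Sh_\Nis(\SmlSm_S,\Mod_\Lambda)$ is a colimit of representables $y(\mathfrak{X})$ with $\mathfrak{X}\in\SmlSm_S$, it suffices to treat $F=y(\mathfrak{X})$ and $G=y(\mathfrak{Y})$ with $\mathfrak{X}=(X,D)$, $\mathfrak{Y}=(Y,E)$.

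For the representable case, the formula $t^*y(Z,F_0)\simeq\colim_{\varepsilon\to 0}y(Z,\varepsilon F_0)$ together with the compatibility of the Yoneda embedding with Day convolution yields
$$t^*\bigl(y(\mathfrak{X})\otimes y(\mathfrak{Y})\bigr)\simeq\colim_{\eta\to 0}y\bigl(X\times Y,\eta(\mathrm{pr}_1^*D+\mathrm{pr}_2^*E)\bigr),$$
$$t^*y(\mathfrak{X})\otimes t^*y(\mathfrak{Y})\simeq\colim_{(\varepsilon,\delta)\to 0}y(X\times Y,\varepsilon\,\mathrm{pr}_1^*D+\delta\,\mathrm{pr}_2^*E),$$
and the map $\mu_{y(\mathfrak{X}),y(\mathfrak{Y})}$ is identified with the natural comparison induced by the diagonal $\Delta\colon \mathbb{Q}_{>0}\to \mathbb{Q}_{>0}\times\mathbb{Q}_{>0}$, $\eta\mapsto(\eta,\eta)$. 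The diagonal is cofinal for the relevant indexing: given any $(\varepsilon,\delta)$, the set of $\eta$ with $\eta\leq\min(\varepsilon,\delta)$ is non-empty and filtered in the ``$\eta\to 0$'' direction, which produces a map in $\mSm_S$ of the form $(X\times Y,\varepsilon\,\mathrm{pr}_1^*D+\delta\,\mathrm{pr}_2^*E)\to(X\times Y,\eta(\mathrm{pr}_1^*D+\mathrm{pr}_2^*E))$ witnessing cofinality. Hence the two colimits agree and $\mu_{y(\mathfrak{X}),y(\mathfrak{Y})}$ is an equivalence.

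The main subtlety will be verifying that the abstract lax structure map of $t^*$, defined from the adjunction $t_!\dashv t^*$ via the strong monoidal structure on $t_!$, really agrees on representables with the comparison map induced by the diagonal embedding of indexing posets. This is essentially a bookkeeping exercise that requires unwinding the definition of the Day convolution monoidal structure on $t_!$ and using $t_!y(X,D)\simeq y(X,|D|)$; once this identification is made, the cofinality of the diagonal is elementary.
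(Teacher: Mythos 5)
Your proposal follows essentially the same route as the paper: reduce the lax structure map of $t^*$ to representables using colimit-preservation (Lemma \ref{t^*_preserves_colimits}) and Day convolution, then identify the comparison map explicitly in terms of the $\colim_{\varepsilon\to 0}$ formula for $t^*y({-})$ and observe it is an equivalence. The only difference is that you spell out the cofinality-of-the-diagonal argument (and flag the bookkeeping needed to match the abstract lax structure map with the explicit colimit comparison), whereas the paper asserts the final equivalence "clearly"; your extra detail is harmless and, if anything, makes the step more transparent.
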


\begin{proof}
    It suffices to show that for $F,G\in \Sh_\Nis(\SmlSm_S,\Mod_\Lambda)$, the canonical morphism
    $$
        (t^*F)\otimes (t^*G)\to t^*t_!((t^*F)\otimes (t^*G))\xrightarrow{\simeq} t^*(F\otimes G)
    $$
    is an equivalence.
    Since $t^*$ preserves colimits by Lemma \ref{t^*_preserves_colimits}, we may assume that $F=y(\mathfrak{X})$ and $G=y(\mathfrak{Y})$ for $\mathfrak{X}=(X,D)$, $\mathfrak{Y}=(Y,E)\in \SmlSm_S$.
    In this case, the morphism in question is
    $$
        \colim_{\varepsilon\to 0} y(X,\varepsilon D)\otimes
        \colim_{\varepsilon\to 0} y(Y,\varepsilon E)\to
        \colim_{\varepsilon\to 0} y(X\times Y,\varepsilon(\pr_1^*D+\pr_2^*E)), 
    $$
    which is clearly an equivalence.
\end{proof}

\begin{definition}
    We define a functor
    $$
        t_*\colon \Sh_\Nis(\mSm_S,\Mod_\Lambda)\to \Sh_\sNis(\SmlSm_S,\Mod_\Lambda)
    $$
    to be the right adjoint of $t^*$.
\end{definition}

In summary, there is a string of adjoint functors
    $$
        \begin{tikzcd}
            \Sh_\Nis(\mSm_S,\Mod_\Lambda)
            \arrow[rr,shift left=1.5ex,"t_!"]
            \arrow[rr,leftarrow,"t^*" description]
            \arrow[rr,shift right=1.5ex,"t_*"']
            &&
            \Sh_\sNis(\SmlSm_S,\Mod_\Lambda),
        \end{tikzcd}
    $$
where $t_!\dashv t^*$ is a symmetric monoidal adjunction, and $t^*\dashv t_*$ is an adjunction.
Moreover, $t^*$ is fully faithful, and we have 
\begin{align}
    t_!y(X,D)\simeq y(X,|D|),& \quad t^*F(X,D)\simeq F(X,|D|),\label{t_formula1}\\
    t^*y(X,D)\simeq \colim_{\varepsilon\to 0}y(X,\varepsilon D),& \quad t_*F(X,D)\simeq \lim_{\varepsilon\to 0} F(X,\varepsilon D).\label{t_formula2}
\end{align}

Our next task is to upgrade these adjunctions to the level of motivic stable homotopy categories.
By the above formula for the functor $t_!$, we see that the functor $t^*$ sends $(\CI^{\log}\cup\BI^{\log})$-local objects to $(\CI\cup\BI)$-local objects.
This implies that there is an induced adjunction
$$
    t_!\colon \mSH_{S^1}(S,\Lambda) \rightleftarrows \logSH_{S^1}(S,\Lambda) \colon t^*.
$$
As for the other adjunction $t^*\dashv t_*$, we need the following lemma:

\begin{lemma}\label{lem:t_*_preserves_local_objects}
    The functor $t_*\colon \Sh_\Nis(\mSm_S,\Mod_\Lambda)\to \Sh_\sNis(\SmlSm_S,\Mod_\Lambda)$ sends $(\CI\cup\BI)$-local objects to $(\CI^{\log}\cup\BI^{\log})$-local objects.
\end{lemma}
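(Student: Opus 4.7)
The plan is to use the adjunction $t^*\dashv t_*$: for $F\in\Sh_\Nis(\mSm_S,\Mod_\Lambda)$ and any morphism $g$ in $\Sh_\sNis(\SmlSm_S,\Mod_\Lambda)$, there is a natural equivalence $\Map(g,t_*F)\simeq\Map(t^*g,F)$. Hence if $F$ is $(\CI\cup\BI)$-local, then $t_*F$ is $(\CI^{\log}\cup\BI^{\log})$-local as soon as every morphism in $t^*(\CI^{\log}\cup\BI^{\log})$ becomes an equivalence under the localization $\mathrm{L}_\mot\colon \Sh_\Nis(\mSm_S,\Mod_\Lambda)\to\mSH_{S^1}(S,\Lambda)$. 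Since $\mathrm{L}_\mot$ preserves colimits and $t^*y(X,D)\simeq\colim_{\varepsilon\to 0}y(X,\varepsilon D)$ by \eqref{t_formula2}, each case reduces to a filtered-colimit argument on representables in $\mSm_S$.

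For $\CI^{\log}$: the map $t^*y(\mathfrak{X}\times\bcube)\to t^*y(\mathfrak{X})$ with $\mathfrak{X}=(X,D)\in\SmlSm_S$ is the map of colimits
\[
\colim_{\varepsilon\to 0} y\bigl((X,\varepsilon D)\otimes\bcube^\varepsilon\bigr) \longrightarrow \colim_{\varepsilon\to 0} y(X,\varepsilon D)
\]
induced by the projections, using the identity $(X\times\mathbb{P}^1,\varepsilon(\pr_1^*D+\pr_2^*[\infty]))=(X,\varepsilon D)\otimes\bcube^\varepsilon$ in $\mSm_S$. For every $\varepsilon\in(0,1]\cap\mathbb{Q}$, Lemma \ref{cube_epsilon_invariance} guarantees that the term-wise projection is an equivalence in $\mSH_{S^1}(S,\Lambda)$, hence so is the colimit.

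For $\BI^{\log}$: let $\pi\colon\Bl_Z\mathfrak{X}\to\mathfrak{X}$ be an SNC blow-up with $\mathfrak{X}=(X,D)\in\SmlSm_S$, write $D=\sum_i D_i$ for the decomposition into irreducible components, and set $I=\{i\mid Z\subset D_i\}$. The reduced preimage used in the log setting and the divisorial pullback needed for the $\mSm_S$-blow-up invariance are, respectively,
\[
\pi^{-1}(D) = \sum_i \tilde{D}_i + E, \qquad \pi^*D = \sum_i \tilde{D}_i + |I|\, E.
\]
For every $\varepsilon$, the map $(\Bl_Z X,\varepsilon\pi^*D)\to(X,\varepsilon D)$ is an SNC blow-up in $\mSm_S$, hence becomes an equivalence in $\mSH_{S^1}(S,\Lambda)$. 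It therefore suffices to identify the colimits
\[
\colim_{\varepsilon\to 0} y(\Bl_Z X,\varepsilon\pi^{-1}(D)) \simeq \colim_{\varepsilon\to 0} y(\Bl_Z X,\varepsilon\pi^*D)
\]
in $\Sh_\Nis(\mSm_S,\Mod_\Lambda)$. Both sit as cofinal subdiagrams of the two-parameter filtered system
\[
(\alpha,\beta) \longmapsto y\bigl(\Bl_Z X,\alpha\sum_i \tilde{D}_i + \beta E\bigr), \qquad (\alpha,\beta)\in (0,1]^2\cap\mathbb{Q}^2,
\]
ordered so that smaller coefficients are later: the first is the diagonal $\alpha=\beta=\varepsilon$, the second the line $\beta=|I|\alpha$. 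Each is cofinal because any $(\alpha_0,\beta_0)$ is dominated by $(\varepsilon,\varepsilon)$ with $\varepsilon=\min(\alpha_0,\beta_0)$, and by $(\alpha,|I|\alpha)$ with $\alpha=\min(\alpha_0,\beta_0/|I|)$.

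The main subtlety is reconciling the reduced preimage $\pi^{-1}(D)$, natural for log pairs, with the divisorial pullback $\pi^*D$, required by the $\mSm_S$-blow-up invariance; they differ by the factor $|I|$ on the exceptional divisor. This discrepancy vanishes only because we work with $\mathbb{Q}$-modulus pairs: the finite rescaling $\varepsilon\mapsto\varepsilon/|I|$ is legitimate and supplies the cofinality above. Without $\mathbb{Q}$-coefficients the argument breaks down, which is one of the motivations behind the setup of $\mSm_S$.
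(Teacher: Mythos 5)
Your proof is correct and follows the same strategy as the paper's: reduce by adjunction to showing each morphism in $t^*(\CI^{\log}\cup\BI^{\log})$ becomes an equivalence after $\mathrm{L}_\mot$, invoke Lemma~\ref{cube_epsilon_invariance} for the cube direction, and identify $\colim_{\varepsilon\to 0}y(Y,\varepsilon\pi^{-1}(D))$ with $\colim_{\varepsilon\to 0}y(Y,\varepsilon\pi^*D)$ for the blow-up direction. The only difference is presentational: the paper passes from $\varepsilon\pi^{-1}(D)$ to $\varepsilon\pi^*D$ in one line, noting simply that $|\pi^*D|=\pi^{-1}(D)$, whereas you make the underlying cofinality argument explicit via the two-parameter system; this is a genuinely helpful clarification of what the paper leaves implicit, and it correctly identifies where the $\Q$-modulus framework earns its keep.
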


\begin{proof}
    It suffices to show that for any morphism $f\colon \mathfrak{X}\to \mathfrak{Y}$ in $\CI^{\log}\cup \BI^{\log}$, the image of the morphism $t^*y(f)$ under $\mathrm{L}_\mot\colon \Sh_\Nis(\mSm_S,\Mod_\Lambda)\to \mSH_{S^1}(S,\Lambda)$ is an equivalence.
    First we take $f\in \CI^{\log}$.
    In other words, we consider the canonical morphism $f\colon \mathfrak{X}\otimes \bcube\to \mathfrak{X}$, where $\mathfrak{X}=(X,D)\in \SmlSm_S$.
    By the formula (\ref{t_formula2}), we get
    $$
        t^*y(f)\simeq[\colim_{\varepsilon\to 0} y((X,\varepsilon D)\otimes \bcube^\varepsilon)\to \colim_{\varepsilon\to 0} y(X,\varepsilon D)].
    $$
    Lemma \ref{cube_epsilon_invariance} implies that for any $\varepsilon\in (0,1]\cap \mathbb{Q}$, the image of the morphism $y((X,\varepsilon D)\otimes \bcube^\varepsilon)\to y(X,\varepsilon D)$ under $\mathrm{L}_\mot$ is an equivalence.
    This proves the claim for $f\in \CI^{\log}$.

    Next we take $f\in \BI^{\log}$.
    In other words, we consider an SNC blow-up $f\colon \mathfrak{Y}\to \mathfrak{X}$, where $\mathcal{X}=(X,D),\mathcal{Y}=(Y,E)\in \SmlSm_S$.
    Then we have $|f^*D|=E$, and $f\colon (Y,f^*D)\to (X,D)$ is an SNC blow-up in $\mSm_S$.
    By the formula (\ref{t_formula2}), we get
    \begin{align*}
        t^*y(f)&\simeq[\colim_{\varepsilon\to 0} y(Y,\varepsilon E)\to \colim_{\varepsilon\to 0}y(X,\varepsilon D)]\\
        &\simeq [\colim_{\varepsilon\to 0} y(Y,\varepsilon (f^*D))\to \colim_{\varepsilon\to 0} y(X,\varepsilon D)].
    \end{align*}
    The blow-up invariance implies that the image of the morphism $y(Y,\varepsilon (f^*D))\to y(X,\varepsilon D)$ under $\mathrm{L}_\mot$ is an equivalence.
    This proves the claim for $f\in \BI^{\log}$.
\end{proof}

By Lemma \ref{lem:t_*_preserves_local_objects}, the functor $t^*$ commutes with the localization functor $\mathrm{L}_{\mot}$.
In particular, $t^*$ is symmetric monoidal.
In summary, we have the following result:

\begin{corollary}\label{mH_tH_adjunction}
There is a string of adjoint functors
    $$
        \begin{tikzcd}
            \mSH_{S^1}(S,\Lambda)
            \arrow[rr,shift left=1.5ex,"t_!"]
            \arrow[rr,leftarrow,"t^*" description]
            \arrow[rr,shift right=1.5ex,"t_*"']
            &&
            \logSH_{S^1}(S,\Lambda),
        \end{tikzcd}
    $$
where $t_!\dashv t^*$ is a symmetric monoidal adjunction, and $t^*\dashv t_*$ is an adjunction.
Moreover, $t^*$ is fully faithful and
$$
    t_!\motive(X,D)\simeq\motive^{\log}(X,|D|),\quad t^*\motive^{\log} (X,D)\simeq\colim_{\varepsilon\to 0}\motive(X,\varepsilon D).
$$
\end{corollary}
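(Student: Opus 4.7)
The plan is to descend each of the three sheaf-level adjunctions $t_!\dashv t^*\dashv t_*$ to the motivic level, and then to read off the stated formulas and full faithfulness by standard adjunction calculus.

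For $t_!\dashv t^*$, by the universal property of the Bousfield localization it suffices to show that the composition $\mathrm{L}_{\mot}^{\log}\circ t_!$ sends every morphism in $\CI\cup\BI$ to an equivalence. Using $t_!y(\mathcal{X})\simeq y(t(\mathcal{X}))$ from \eqref{t_formula1}, a cube projection $\mathcal{X}\otimes\bcube\to\mathcal{X}$ is sent to the cube projection $(X,|D|)\times\bcube\to(X,|D|)\in\CI^{\log}$, while an SNC blow-up $\Bl_Z\mathcal{X}\to\mathcal{X}$ with $Z\subset|D|$ is sent to $(\Bl_ZX,|\pi^*D|)\to(X,|D|)$; since $|\pi^*D|=\pi^{-1}(|D|)$, this is precisely an SNC blow-up in $\SmlSm_S$, hence lies in $\BI^{\log}$. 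Both generating classes are thus inverted, and the descended adjunction automatically satisfies $t_!\motive(X,D)\simeq\motive^{\log}(X,|D|)$.

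For $t^*\dashv t_*$, Lemma \ref{lem:t_*_preserves_local_objects} restricts $t_*$ to a functor $\mSH_{S^1}(S,\Lambda)\to\logSH_{S^1}(S,\Lambda)$; since $t^*$ also restricts (being the left adjoint of a functor that preserves local objects, or equivalently by the argument above), the pair descends by routine inclusion-reflection bookkeeping. The formula $t^*\motive^{\log}(X,D)\simeq\colim_{\varepsilon\to 0}\motive(X,\varepsilon D)$ then follows from \eqref{t_formula2} and the colimit-preservation of $\mathrm{L}_{\mot}$.

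Full faithfulness of $t^*$ at the motivic level is inherited from the sheaf level: the sheaf-level counit $t_!t^*\to\id$ is an equivalence, and since both $t_!$ and $t^*$ commute with the respective $\mathrm{L}_{\mot}$'s, the counit descends to an equivalence. Symmetric monoidality of $t_!\dashv t^*$ transfers from the sheaf-level symmetric monoidal adjunction (which comes from the symmetric monoidality of $t\colon\mSm_S\to\SmlSm_S$ built into its definition) via the observation that $\CI\cup\BI$ and $\CI^{\log}\cup\BI^{\log}$ are stable under Day-convolution tensoring with representables, for instance $(\Bl_Z\mathcal{X})\otimes\mathcal{Y}=\Bl_{Z\times Y}(\mathcal{X}\otimes\mathcal{Y})$; consequently both motivic localizations are symmetric monoidal Bousfield localizations. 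The single non-formal ingredient in the entire descent is Lemma \ref{lem:t_*_preserves_local_objects}, which in turn invokes the $\bcube^\varepsilon$-invariance of Lemma \ref{cube_epsilon_invariance} to handle the filtered colimit $\colim_{\varepsilon\to 0}$ intrinsic to $t^*y$; everything else is a purely formal manipulation with adjunctions and reflective subcategories.
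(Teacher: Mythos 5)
Your proposal is correct and follows essentially the same route as the paper: descend $t_!\dashv t^*$ by observing that $t_!$ sends the generating classes $\CI$ and $\BI$ into $\CI^{\log}$ and $\BI^{\log}$ (equivalently, that $t^*$ preserves local objects), and descend $t^*\dashv t_*$ by invoking Lemma~\ref{lem:t_*_preserves_local_objects}, which the paper also identifies as the non-formal step.

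One small caution about your full-faithfulness justification: you say ``both $t_!$ and $t^*$ commute with the respective $\mathrm{L}_{\mot}$'s,'' but $t_!$ does \emph{not} commute with $\mathrm{L}_\mot$ in general --- there is no claim that $t_!$ preserves $(\CI\cup\BI)$-local objects. What actually makes the counit descend is that, for $F\in\logSH_{S^1}(S,\Lambda)$, one has $T^*F=t^*F$ (since $t^*$ preserves local objects), and then $T_!T^*F=\mathrm{L}_\mot^{\log}\,t_!t^*F\simeq\mathrm{L}_\mot^{\log}F\simeq F$ because $F$ is \emph{already} log-local, so the outer $\mathrm{L}_\mot^{\log}$ acts trivially. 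The conclusion is right but the stated reason is slightly off; the same imprecision appears in your parenthetical ``being the left adjoint of a functor that preserves local objects,'' which is not the correct justification for $t^*$ preserving local objects (that fact comes from the formula $t_!y(\mathcal{X})\simeq y(X,|D|)$ together with adjunction, as you correctly note in the first paragraph).
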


\begin{theorem}\label{tame_motive_HA}
    Let $\mathcal{X}=(X,D)\in \mSm_S$ and suppose that $D$ has multiplicity $\leq 1$.
    Then $\motive(\mathcal{X})\to t^*t_!\motive(\mathcal{X})$ is an equivalence in $\mSH^\eff(S,\Lambda)$.
\end{theorem}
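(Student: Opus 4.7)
The plan is to unfold the unit map using Corollary \ref{mH_tH_adjunction} and then apply Theorem \ref{THA} (the Tame Hasse-Arf theorem) iteratively to each irreducible component of $|D|$.

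Write $D = \sum_{i=1}^n r_i Z_i$ where the $Z_i$ are the irreducible components of $|D|$ and $r_i \in (0,1] \cap \mathbb{Q}$, using the multiplicity $\leq 1$ hypothesis. By Corollary \ref{mH_tH_adjunction}, we have
\[ t^* t_! \motive(X, D) \simeq t^* \motive^{\log}(X, |D|) \simeq \colim_{\varepsilon \to 0} \motive(X, \varepsilon |D|), \]
and tracing through the adjunction $t_!\dashv t^*$ (using Lemma \ref{SmlSm_vs_mSm} and the quasi-compactness of $X$), the unit is identified with the canonical composition
\[ \motive(X, D) \to \motive(X, \varepsilon |D|) \to \colim_{\varepsilon' \to 0} \motive(X, \varepsilon' |D|) \]
for any $\varepsilon \in (0, 1]\cap \mathbb{Q}$ with $\varepsilon \leq \min_i r_i$; the first arrow comes from the morphism of modulus pairs $(X, D) \to (X, \varepsilon |D|)$, which exists precisely because $D \geq \varepsilon |D|$.

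The key input is the following replacement principle: if $D''$ is a relative $\mathbb{Q}$-SNCD on $X$, $Z$ is a smooth divisor transversal to $|D''|$, and $1 \geq r \geq \varepsilon > 0$ are rationals, then the canonical morphism $\motive(X, D'' + rZ) \to \motive(X, D'' + \varepsilon Z)$ is an equivalence. Indeed, applying Theorem \ref{THA} to $(X, D'')$ and the divisor $Z$ twice yields equivalences $\motive(X, D'' + Z) \xrightarrow{\sim} \motive(X, D'' + rZ)$ and $\motive(X, D'' + Z) \xrightarrow{\sim} \motive(X, D'' + \varepsilon Z)$, and together with the canonical morphism these form a commutative triangle, forcing the latter to be an equivalence as well.

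Applying the replacement principle iteratively, changing the coefficient on one $Z_i$ at a time (the support remains equal to $|D|$ at each step, so the transversality hypothesis is preserved), yields that both $\motive(X, D) \to \motive(X, \varepsilon |D|)$ and $\motive(X, \varepsilon_1 |D|) \to \motive(X, \varepsilon_2 |D|)$ (for $\varepsilon_1 \geq \varepsilon_2$ in $(0,1] \cap \mathbb{Q}$) are equivalences. In particular, every transition map in the filtered system $\{\motive(X, \varepsilon' |D|)\}$ is an equivalence, so each term maps to the colimit by an equivalence, and the unit is the composition of two equivalences. The main technical point that requires care is the concrete identification of the abstract unit map with the composition displayed above; once this is granted, the rest is bookkeeping with Theorem \ref{THA}.
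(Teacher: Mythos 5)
Your proposal is correct and follows essentially the same route as the paper's proof, which simply computes $t^*t_!\motive(\mathcal{X})\simeq\colim_{\varepsilon\to 0}\motive(X,\varepsilon D)$ from Corollary \ref{mH_tH_adjunction} and then invokes Theorem \ref{THA}. You just spell out the two steps the paper leaves implicit: the concrete identification of the unit map with the canonical comparison into the colimit, and the iterated ``replacement principle'' (applying Theorem \ref{THA} twice per component) that reduces one multiplicity at a time.
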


\begin{proof}
    We have $t^*t_!\motive(\mathcal{X})=\colim_{\varepsilon\to 0}\motive(X,\varepsilon D)$.
    Therefore the claim follows immediately from the tame Hasse-Arf theorem (Theorem \ref{THA}).
\end{proof}

\subsection{Stabilization with respect to the Tate circle}

In this section, we construct the motivic stable homotopy category with modulus by imitating the construction of the logarithmic motivic stable homotopy category described in \cite[\S 2.5]{BPO}.

\begin{definition}\label{def;Tatecircle}
    The \emph{Tate circle with modulus} is defined by
    $$
        S^1_t=\motive(\mathbb{P}^1,[0]+[\infty])/\motive(\{1\}) \in \mSH_{S^1}(S,\Lambda).
    $$
    By Theorem \ref{tame_motive_HA}, we have $S^1_t\simeq t^*S^{1,\log}_t$, where $S^{1,\log}_t$ is the log Tate circle
    $$
        S^{1,\log}_t=\motive^{\log}(\mathbb{P}^1,[0]+[\infty])/\motive^{\log}(\{1\}) \in \logSH_{S^1}(S,\Lambda)
    $$
    defined in \cite[Definition 2.5.5]{LogHom}.
\end{definition}

\begin{lemma}\label{lem;S1S^1_t}
    There is a canonical equivalence in $\mSH_{S^1}(S,\Lambda)$
    $$
        \motive(\mathbb{P}^1,\varnothing)/\motive(\{1\})\simeq S^1\otimes S^1_t.
    $$
\end{lemma}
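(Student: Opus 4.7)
The plan is to exhibit $\motive(\mathbb{P}^1,\varnothing)/1$ as a suspension of $S^1_t$, where $1 := \motive(\pt)$. I will realize $\motive(\mathbb{P}^1,\varnothing)$ as a pushout of two copies of $1$ along $\motive(\mathbb{P}^1,[0]+[\infty])$ with both structure maps, and then use the basepoint splitting of $\motive(\mathbb{P}^1,[0]+[\infty])$ to identify the cofiber.

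First, I will show that the square in $\mSm_S$ whose arrows all come from the identity on $\mathbb{P}^1$ with progressively smaller modulus, namely
\[
(\mathbb{P}^1,[0]+[\infty]) \to (\mathbb{P}^1,[0]) \to (\mathbb{P}^1,\varnothing)\quad \text{and} \quad (\mathbb{P}^1,[0]+[\infty]) \to (\mathbb{P}^1,[\infty]) \to (\mathbb{P}^1,\varnothing),
\]
becomes coCartesian in $\mSH_{S^1}(S,\Lambda)$. To check this, cover $\mathbb{P}^1$ by $U_0 = \mathbb{P}^1-\{0\}$ and $U_\infty=\mathbb{P}^1-\{\infty\}$; by Nisnevich descent each corner is a pushout of its restrictions to $U_0$, $U_\infty$ along $\mathbb{G}_m$. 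On $U_\infty$ and on $\mathbb{G}_m$ the two horizontal maps are identities, while on $U_0$ both horizontal maps coincide with $(\mathbb{A}^1,[0]) \to (\mathbb{A}^1,\varnothing)$. Computing the cofibers of the horizontal arrows componentwise, both equal $\cofib(\motive(\mathbb{A}^1,[0]) \to \motive(\mathbb{A}^1,\varnothing))$, and the induced comparison is the identity, so the square is coCartesian.

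Next, by cube-invariance the structure maps realize equivalences $\motive(\mathbb{P}^1,[0]) \simeq 1 \simeq \motive(\mathbb{P}^1,[\infty])$. Substituting these into the coCartesian square produces
\[
\motive(\mathbb{P}^1,[0]+[\infty]) \to 1,\qquad 1 \to \motive(\mathbb{P}^1,\varnothing),
\]
in which both resulting maps $\motive(\mathbb{P}^1,[0]+[\infty]) \to 1$ agree with the structure map, since each arises by applying $\motive$ to the (unique) composite $(\mathbb{P}^1,[0]+[\infty]) \to (\mathbb{P}^1,D) \to \pt$ in $\mSm_S$. Because $\mSH_{S^1}(S,\Lambda)$ is stable, this coCartesian square induces an equivalence of the parallel horizontal cofibers, yielding
\[
\motive(\mathbb{P}^1,\varnothing)/1 \simeq \cofib\bigl(\motive(\mathbb{P}^1,[0]+[\infty]) \to 1\bigr).
\]

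Finally, the basepoint inclusion $\{1\} \hookrightarrow (\mathbb{P}^1,[0]+[\infty])$ gives a section of the structure map, so by the very definition of $S^1_t$ we obtain a splitting $\motive(\mathbb{P}^1,[0]+[\infty]) \simeq 1 \oplus S^1_t$ under which the structure map is the first projection. The cofiber of such a projection in a stable $\infty$-category is $\Sigma S^1_t = S^1 \otimes S^1_t$, which gives the desired equivalence. The main obstacle is the verification of the initial coCartesian square; once that is in place, everything afterwards is a formal manipulation inside a stable $\infty$-category.
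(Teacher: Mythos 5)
Your argument is correct, but it takes a genuinely different route from the paper. The paper's proof is a one-liner by transport of structure: the corresponding statement is \cite[Proposition 2.5.13]{LogHom} in $\logSH_{S^1}(S,\Lambda)$, and one applies the colimit-preserving symmetric monoidal functor $t^*$ of Corollary \ref{mH_tH_adjunction}, using that $S^1_t\simeq t^*S^{1,\log}_t$ (Definition \ref{def;Tatecircle}, which itself rests on the tame Hasse--Arf theorem via Theorem \ref{tame_motive_HA}). You instead give a self-contained proof inside $\mSH_{S^1}(S,\Lambda)$: your key coCartesian square
$$
\xymatrix{
\motive(\mathbb{P}^1,[0]+[\infty])\ar[r]\ar[d]&\motive(\mathbb{P}^1,[0])\ar[d]\\
\motive(\mathbb{P}^1,[\infty])\ar[r]&\motive(\mathbb{P}^1,\varnothing)
}
$$
is exactly the paper's later Lemma \ref{GS_lemma}, and your Zariski-descent verification of it is the same argument given there; the rest (cube-invariance identifying the off-diagonal corners with $\motive(\pt)$, the splitting of $\motive(\mathbb{P}^1,[0]+[\infty])$ by the section $\{1\}$, and $\cofib(\mathrm{pr}_1)\simeq \Sigma S^1_t$) is formal and in effect reproduces the BPO proof internally. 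What each approach buys: the paper's route is shorter given the machinery of \S\ref{sec:modlog} already in place and fits its general strategy of importing statements from $\logSH$ (as for the symmetry of $S^1_t$), while yours avoids the comparison functor, the external reference, and the Hasse--Arf input altogether. One small point you should make explicit: the lemma's cofiber is taken along the map induced by the rational point $\{1\}\hookrightarrow \mathbb{P}^1$, whereas your square produces the map $\motive(\pt)\simeq\motive(\mathbb{P}^1,[\infty])\to\motive(\mathbb{P}^1,\varnothing)$; these agree up to homotopy because $\motive(\{1\})\colon \motive(\pt)\to\motive(\mathbb{P}^1,[\infty])$ is a section of the cube-invariance equivalence and hence its homotopy inverse, so your $\motive(\mathbb{P}^1,\varnothing)/1$ is indeed $\motive(\mathbb{P}^1,\varnothing)/\motive(\{1\})$.
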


\begin{proof}
    This statement is proved for $\logSH_{S^1}(S,\Lambda)$ in \cite[Proposition 2.5.13]{LogHom}.
    Applying the functor $t^*$, we get the desired equivalence in $\mSH(S,\Lambda)$.
\end{proof}

\begin{lemma}\label{lem: G_m is symmetric}
    The object $S^1_t\in \mSH_{S^1}(S,\Lambda)$ is symmetric.
    That is, the cyclic permutation on $(S^1_t)^{\otimes 3}$ is homotopic to the identity.
\end{lemma}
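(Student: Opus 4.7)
The plan is to transport the symmetry from the logarithmic motivic setting via the functor $t^*$. By Definition \ref{def;Tatecircle}, we have a canonical equivalence $S^1_t \simeq t^* S^{1,\log}_t$ in $\mSH_{S^1}(S,\Lambda)$. By Corollary \ref{mH_tH_adjunction} and the discussion preceding it (following Lemma \ref{lem:t_*_preserves_local_objects}), the functor $t^*\colon \logSH_{S^1}(S,\Lambda) \to \mSH_{S^1}(S,\Lambda)$ is symmetric monoidal. We therefore obtain a canonical equivalence
\[
(S^1_t)^{\otimes 3} \simeq t^*\bigl((S^{1,\log}_t)^{\otimes 3}\bigr),
\]
under which the cyclic permutation on the left-hand side is identified with the image under $t^*$ of the cyclic permutation on the right.

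It therefore suffices to prove that the log Tate circle $S^{1,\log}_t$ is symmetric in $\logSH_{S^1}(S,\Lambda)$, i.e., that the cyclic permutation on $(S^{1,\log}_t)^{\otimes 3}$ is homotopic to the identity. This is established by Binda-Park-Østvær in \cite{LogHom}, where it is used to construct the stable logarithmic motivic homotopy category by stabilizing with respect to $S^{1,\log}_t$; their argument is an adaptation of Voevodsky's classical proof for $\Gm^{\wedge 3}$, replacing $\mathbb{A}^1$-homotopies by cube-homotopies. Since $t^*$ preserves homotopies, applying it to the witnessing homotopy yields the required one in $\mSH_{S^1}(S,\Lambda)$.

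The main obstacle, had we pursued a self-contained direct proof, would be the explicit construction of symmetric homotopies in $\mSm_S$ respecting modulus data. Such a direct argument would use Lemma \ref{lem;S1S^1_t} to rewrite $(S^1_t)^{\otimes 3}$ in terms of $\motive(\mathbb{P}^1,\varnothing)$ (up to an $S^1$-suspension) and then adapt Voevodsky's shear-map argument. The difficulty is that the intermediate projective transformations used in Voevodsky's construction do not a priori preserve the modulus $[0]+[\infty]$; one would need to control the modulus of the pullback divisors along each step and invoke the tame Hasse-Arf theorem (Theorem \ref{THA}) to absorb the resulting fractional moduli. The transport via $t^*$ bypasses this bookkeeping entirely.
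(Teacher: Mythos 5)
Your proof is correct and follows essentially the same route as the paper's: the paper also cites \cite[Proposition 3.2.15]{LogHom} for the symmetry of $S^{1,\log}_t$ and then applies the symmetric monoidal functor $t^*$. The extra discussion of a hypothetical direct argument is not in the paper but does not affect the validity of the proof.
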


\begin{proof}
    This statement is proved for $\logSH_{S^1}(S,\Lambda)$ in \cite[Proposition 3.2.15]{LogHom}.
    Applying the functor $t^*$, we get the desired result in $\mSH(S,\Lambda)$.
\end{proof}

\begin{definition}\label{def;mSH}
    Let $\Lambda$ be a connective commutative ring spectrum.
    We define the \emph{$\mathbb{P}^1$-stable $\Lambda$-linear motivic homotopy category with modulus}
    $$
        \mSH(S,\Lambda)
    $$
    to be the formal inversion of $S^1_t$ in $\mSH_{S^1}(S,\Lambda)$:
    $$
        \mSH(S,\Lambda):=\mSH_{S^1}(S,\Lambda)[(S^1_t)^{-1}]
    $$
    (see \cite[Definition 2.6]{Robalo_Ktheory}).
    We write $\mSH(S)$ for $\mSH(S,\mathbb{S})$.
    When $\Lambda$ is a classical ring, we write $\mDA(S,\Lambda)$ for $\mSH(S,\Lambda)$.
    By Lemma \ref{lem: G_m is symmetric} and \cite[Corollary 2.22]{Robalo_Ktheory}, we have an equivalence of $\infty$-categories
    \begin{align*}
        \mSH(S,\Lambda) \simeq {}
        &\colim_{\PrL}\bigl(\mSH_{S^1}(S,\Lambda)
        \xrightarrow{\Sigma_{S^1_t}}\mSH_{S^1}(S,\Lambda)
        \xrightarrow{\Sigma_{S^1_t}}\cdots\bigr)\\
        \simeq {} & \lim_{\mathrm{Cat}_\infty}\bigl(\cdots \xrightarrow{\Omega_{S^1_t}}\mSH_{S^1}(S,\Lambda)
        \xrightarrow{\Omega_{S^1_t}}\mSH_{S^1}(S,\Lambda)\bigr),
    \end{align*}
    where $\Sigma_{S^1_t} = S^1_t\otimes({-})$ and $\Omega_{S^1_t}$ is its right adjoint. Recall that $\PrL$ denotes the $\infty$-category whose objects are presentable ∞-categories and whose morphisms are colimit-preserving functors. 
\end{definition}

\begin{lemma}\label{lem:adjoint_inversion}
Let $\mathcal{C},\mathcal{D}$ be presentably symmetric monoidal $\infty$-categories.
Suppose that there is a symmetric monoidal adjunction
    $$
        \begin{tikzcd}
            \mathcal{C}
            \arrow[rr,shift left=0.75ex,"F"]
            \arrow[rr,leftarrow, shift right=0.75ex,"G"']
            &&
            \mathcal{D}
        \end{tikzcd}
    $$
such that $G$ is fully faithful and colimit-preserving.
Let $T$ be an object of $\mathcal{D}$.
Then, there is an induced symmetric monoidal adjunction
    $$
        \begin{tikzcd}
            \mathcal{C}[G(T)^{-1}]
            \arrow[rr,shift left=0.75ex,"F'"]
            \arrow[rr,leftarrow, shift right=0.75ex,"G'"']
            &&
            \mathcal{D}[T^{-1}]
        \end{tikzcd}
    $$
such that $G'$ is fully faithful and colimit-preserving.
\end{lemma}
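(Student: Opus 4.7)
The plan is to construct $F'$ via the universal property of the monoidal inversion, to construct a candidate $\hat G$ for the right adjoint by the same universal property (using that $G$ is strong monoidal in this setting), and to identify $\hat G$ with $G'$.

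First, since $F$ is symmetric monoidal and $F \circ G \simeq \mathrm{id}_\mathcal{D}$ by fully faithfulness of $G$, we have $F(G(T)) \simeq T$. Hence the composite $\mathrm{L}_\mathcal{D} \circ F\colon \mathcal{C} \to \mathcal{D} \to \mathcal{D}[T^{-1}]$ is a symmetric monoidal, colimit-preserving functor sending $G(T)$ to the invertible object $\mathrm{L}_\mathcal{D}(T)$. The universal property of the monoidal inversion in $\CAlg(\PrL)$ (see \cite[Definition 2.6]{Robalo_Ktheory}) yields a unique factorization as $F' \circ \mathrm{L}_\mathcal{C}$ for a symmetric monoidal colimit-preserving $F'\colon \mathcal{C}[G(T)^{-1}] \to \mathcal{D}[T^{-1}]$; its right adjoint $G'$ exists by the adjoint functor theorem.

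The key observation is that $G$ is in fact strong (not merely lax) symmetric monoidal. Since $F$ is a symmetric monoidal Bousfield localization, the class of $F$-equivalences is closed under tensoring with arbitrary objects of $\mathcal{C}$, so the local subcategory $G(\mathcal{D}) \subset \mathcal{C}$ is closed under $\otimes_\mathcal{C}$; equivalently, the canonical lax structure maps $G(d) \otimes G(d') \to G(d \otimes d')$ are equivalences. It follows that the composite $\mathrm{L}_\mathcal{C} \circ G\colon \mathcal{D} \to \mathcal{C} \to \mathcal{C}[G(T)^{-1}]$ is symmetric monoidal and colimit-preserving, and sends $T$ to the invertible object $\mathrm{L}_\mathcal{C}(G(T))$. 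The universal property again provides a factorization as $\hat G \circ \mathrm{L}_\mathcal{D}$ for a symmetric monoidal colimit-preserving $\hat G\colon \mathcal{D}[T^{-1}] \to \mathcal{C}[G(T)^{-1}]$.

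I then identify $\hat G$ with $G'$. Composing with $\mathrm{L}_\mathcal{D}$ gives $F' \hat G \circ \mathrm{L}_\mathcal{D} \simeq \mathrm{L}_\mathcal{D} \circ F \circ G \simeq \mathrm{L}_\mathcal{D}$, so $F' \hat G \simeq \mathrm{id}$. For fully faithfulness of $\hat G$, I invoke Robalo's sequential colimit description of the monoidal inversion, valid here because $T$ (and hence $G(T)$) is symmetric in the applications (Lemma \ref{lem: G_m is symmetric} and its log analogue). Under this description, mapping spaces are filtered colimits, and the chain of equivalences
\[
\Map_{\mathcal{C}[G(T)^{-1}]}(\hat G x, \hat G y) \simeq \colim_n \Map_\mathcal{C}(G x, G(T)^{\otimes n} \otimes G y) \simeq \colim_n \Map_\mathcal{D}(x, T^{\otimes n} \otimes y) \simeq \Map_{\mathcal{D}[T^{-1}]}(x, y)
\]
uses the strong monoidality and fully faithfulness of $G$. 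Combined with $F' \hat G \simeq \mathrm{id}$, this identifies $\hat G$ with the right adjoint $G'$, which therefore inherits the colimit-preservation and fully faithfulness of $\hat G$.

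The main obstacle is the strong monoidality of $G$: this is what enables the construction of $\hat G$ by the universal property, and it hinges on $F$ being a symmetric monoidal Bousfield localization so that local objects are tensor-closed. The rest is routine manipulation of universal properties and of mapping spaces in the inversion.
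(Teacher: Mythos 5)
Your argument takes a genuinely different route from the paper's main proof. The paper's proof is shorter and more abstract: it regards $\mathcal{C}$ as an object of $\CAlg(\PrL)_{\mathcal{D}/}$ via $G$, notes that formal inversion of $T$ is the $\CAlg(\PrL)$-linear tensor product functor $({-}) \otimes_{\mathcal{D}} \mathcal{D}[T^{-1}]$, observes that such a $\CAlg(\PrL)$-linear functor upgrades to a $2$-functor on the $(\infty,2)$-enhancement and therefore carries the adjunction $F \dashv G$ to an adjunction $F' \dashv G'$ with $G'$ colimit-preserving and $F'G' \simeq \id$, hence $G'$ fully faithful. Your explicit universal-property construction of $F'$ and $\hat G$ and the mapping-space computation is instead close in spirit to an alternative proof that the paper sketches in a remark right after the lemma.

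There are, however, a few real gaps in what you wrote.

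\emph{Strong monoidality of $G$.} The step ``$F$-equivalences are tensor-closed, hence the local subcategory $G(\mathcal{D})$ is closed under $\otimes_\mathcal{C}$'' is false in general. In a compatible symmetric monoidal localization the tensor product on local objects is $L(X\otimes Y)$, not $X\otimes Y$ (\cite[Proposition 2.2.1.9]{Lurie-HA}); for instance the Day convolution in $\PSh(\Sm_S)$ of two Nisnevich sheaves is typically not a sheaf, even though sheafification is strong monoidal. So $G$ being strong monoidal is \emph{not} automatic. It is worth noting that the paper's own proof also uses $G$ strong monoidal implicitly (this is exactly what lets one regard $\mathcal{C}$ as an object of $\CAlg(\PrL)_{\mathcal{D}/}$ via $G$), so the lemma statement is really missing this hypothesis; in the applications it is supplied by Lemma \ref{t^*_monoidal}. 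But your claimed derivation of it does not work.

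\emph{Identifying $\hat G$ with $G'$.} Knowing $F'\hat G \simeq \id$ and that $\hat G$ is fully faithful does not by itself exhibit $\hat G$ as a right adjoint of $F'$: a fully faithful colimit-preserving section of $F'$ has a right adjoint, not necessarily a left one, and full faithfulness only tests the comparison map $\hat G \to G'$ against objects in the essential image of $\hat G$, which is far from all of $\mathcal{C}[G(T)^{-1}]$. The clean way to close this, as in the paper's remark, is to prove that the essential image of $\hat G$ consists of $F'$-local objects (using that $F'$ is the localization at the ``shifts'' of $F$-equivalences and that each $\mathrm{L}_\mathcal{C} G(d)$ is local for these); together with $F'\hat G\simeq \id$ this does characterize $\hat G$ as the right adjoint of the reflection $F'$.

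\emph{Symmetry of $T$.} Your use of the sequential colimit model of the formal inversion requires $T$ to be symmetric. The lemma is stated without this hypothesis (Robalo's formal inversion exists without it), so as written your argument proves a weaker statement. This is harmless in the paper's applications (where $T = S^1_t$ is symmetric by Lemma \ref{lem: G_m is symmetric}) but should be flagged. Also, even granting symmetry and strong monoidality, the mapping-space formula should read $\colim_n \Map_\mathcal{C}(G(T)^{\otimes n}\otimes Gx, G(T)^{\otimes n}\otimes Gy)$ rather than $\colim_n \Map_\mathcal{C}(Gx, G(T)^{\otimes n}\otimes Gy)$; only the former is correct in a non-stable setting, although the chain of equivalences still goes through after the fix.
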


\begin{proof}
    We can regard $\mathcal{C}$ as an object of $\CAlg(\PrL)_{\mathcal{D}/}$ via $G$.
    Since we have $F\circ G\simeq \id$, the functor $G$ can be regarded as a morphism in $\CAlg(\PrL)_{\mathcal{D}/}$.
    By \cite[Proposition 2.9]{Robalo_Ktheory}, the formal inversion of $T$ is given by the tensor product functor
    $$
        ({-})\otimes_{\mathcal{D}}\mathcal{D}[T^{-1}]\colon \CAlg(\PrL)_{\mathcal{D}/}\to \CAlg(\PrL)_{\mathcal{D}[T^{-1}]/}.
    $$
    Moreover, since this functor is $\CAlg(\PrL)$-linear, it actually gives a $2$-functor between the $(\infty,2)$-enhancement of these categories (see \cite[Theorem 1.1.2]{Stefanich}).
    Therefore, the symmetric monoidal adjunction $F\dashv G$ induces a symmetric monoidal adjunction $F'\dashv G'$ between $\mathcal{C}[G(T)^{-1}]$ and $\mathcal{D}[T^{-1}]$, where $G'$ is colimit-preserving.
    Since $F\circ G\simeq \id$, we have $F'\circ G'\simeq \id$, which means that $G'$ is fully faithful.
\end{proof}

\begin{remark}
    There is an alternative proof of Lemma \ref{lem:adjoint_inversion} based on \cite[Proposition 1.3.14, Lemma 1.5.4]{Annala-Iwasa}. We thank Ryomei Iwasa for pointing this out. 
    The nontrivial part is to see that the induced functors $F',G'$ are adjoint to each other. By {\it loc. cit.},  $F'$ is a localization with respect to the class of morphisms consisting of the ``shifts'' of $F$-equivalences, and hence the objects in the essential image of $G'$ are readily local for $F'$-equivalences. Since $F'G' \simeq \id$ by construction, this characterizes $G'$ as a right adjoint of $F'$. Note that we haven't used the fact that $G$ is colimit-preserving for the existence of the adjunction. If $G$ is colimit preserving, then so is $G'$ by construction.
\end{remark}

\begin{corollary}\label{mSH_tSH_adjunction}
There is a string of adjoint functors
    $$
        \begin{tikzcd}
            \mSH(S,\Lambda)
            \arrow[rr,shift left=1.5ex,"t_!"]
            \arrow[rr,leftarrow,"t^*" description]
            \arrow[rr,shift right=1.5ex,"t_*"']
            &&
            \logSH(S,\Lambda)
        \end{tikzcd}
    $$
where $t_!\dashv t^*$ is a symmetric monoidal adjunction, and $t^*\dashv t_*$ is an adjunction.
Moreover, $t^*$ is fully faithful and
$$
    t_!\motive(X,D)\simeq\motive^{\log}(X,|D|),\quad t^*\motive^{\log} (X,D)\simeq\colim_{\varepsilon\to 0}\motive(X,\varepsilon D).
$$
\end{corollary}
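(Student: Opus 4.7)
The plan is to reduce the corollary to the $S^1$-stable case (Corollary \ref{mH_tH_adjunction}) together with the general formal inversion lemma (Lemma \ref{lem:adjoint_inversion}), and then derive the right adjoint $t_*$ from the adjoint functor theorem. The essential input is the identification $S^1_t \simeq t^* S^{1,\log}_t$ observed in Definition \ref{def;Tatecircle}, which allows one to transport the $S^1$-stable adjunction across the inversion of the Tate circle on each side.

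First I would apply Lemma \ref{lem:adjoint_inversion} to the symmetric monoidal adjunction
\[ t_!\colon \mSH_{S^1}(S,\Lambda)\rightleftarrows \logSH_{S^1}(S,\Lambda)\colon t^*\]
with $T = S^{1,\log}_t$. Two hypotheses must be checked for the right adjoint $G = t^*$: full faithfulness, which is already part of Corollary \ref{mH_tH_adjunction}; and colimit-preservation. The latter can be inherited from the sheaf-level statement (Lemma \ref{t^*_preserves_colimits}), since $t^*$ at the motivic level is obtained by restriction from the sheaf category and commutes with the localization $\mathrm{L}_\mot$ by Lemma \ref{lem:t_*_preserves_local_objects}. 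Since $G(T) = t^* S^{1,\log}_t \simeq S^1_t$ (Definition \ref{def;Tatecircle}), Lemma \ref{lem:adjoint_inversion} yields a symmetric monoidal adjunction
\[ t_!\colon \mSH(S,\Lambda)\rightleftarrows \logSH(S,\Lambda)\colon t^*\]
in which $t^*$ is again fully faithful and colimit-preserving.

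Next, to produce $t_*$, I would invoke the adjoint functor theorem: both $\mSH(S,\Lambda)$ and $\logSH(S,\Lambda)$ are presentable, and the newly obtained $t^*$ is colimit-preserving, so it admits a right adjoint $t_*$. This gives the full string of adjoints $t_! \dashv t^* \dashv t_*$.

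Finally, the explicit formulas for $t_!\motive(X,D)$ and $t^*\motive^{\log}(X,D)$ propagate from the unstable (sheaf-level) identities \eqref{t_formula1}--\eqref{t_formula2} through the symmetric monoidal colimit-preserving localization $\mathrm{L}_\mot$ on each side and through the $\Sigma^\infty_{S^1_t}$-stabilization (which is again colimit-preserving and symmetric monoidal). I don't anticipate a genuine obstacle here — the proof is essentially a matter of assembling the inputs in the right order — but the main subtlety to be careful about is that the colimit-preservation of $t^*$ at the motivic (unstable) level really does follow from its colimit-preservation at the sheaf level together with the commutation with $\mathrm{L}_\mot$ established in Lemma \ref{lem:t_*_preserves_local_objects}; this is what enables the application of Lemma \ref{lem:adjoint_inversion}.
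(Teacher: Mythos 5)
Your proposal is correct and takes essentially the same route as the paper: the paper's proof is the one-liner ``This follows from Corollary \ref{mH_tH_adjunction} and Lemma \ref{lem:adjoint_inversion},'' and you have simply unpacked the implicit checks — full faithfulness of $t^*$, colimit-preservation, the identification $t^*S^{1,\log}_t \simeq S^1_t$, and the adjoint functor theorem for $t_*$ — in the order the paper intends. The only stylistic simplification worth noting is that colimit-preservation of $t^*$ at the $S^1$-stable level can be read off directly from the existence of $t_*$ in Corollary \ref{mH_tH_adjunction}, rather than re-deriving it from the sheaf-level Lemma \ref{t^*_preserves_colimits} and the commutation with $\mathrm{L}_\mot$; both arguments are valid and equivalent in content.
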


\begin{proof}
    This follows from Corollary \ref{mH_tH_adjunction} and Lemma \ref{lem:adjoint_inversion}.
\end{proof}

\subsection{Relation with Annala-Iwasa's category}

Annala-Iwasa \cite{Annala-Iwasa} defined the category $\MS_S$ (following the notation of \cite{Annala-Hoyois-Iwasa}) of motivic spectra as follows.

\begin{definition}\label{def:EBU}
     A closed immersion $i: Z \hookrightarrow X$ in $\Sm_S$ is called \emph{elementary} if, Zariski-locally on $X$, it is the zero section of $\mathbb{A}^n_Z \sqcup Y$ for some $n\geq 0$ and some $Y\in \Sm_S$.
     Let $F\in \PSh(\Sm_S, \Sp)$ be a presheaf of spectra on $\Sm_S$.
     We say that $F$ satisfies \emph{elementary blow-up excision} if $F(\varnothing)=0$ and for any elementary closed immersion $i\colon Z\hookrightarrow X$ in $\Sm_S$, $F$ sends the blow-up square
     \begin{align}\label{eq:EBU}
       \xymatrix{
       E\ar@{^(->}[r]\ar[d]&\Bl_ZX\ar[d]\\
       Z\ar@{^(->}[r]^-i&X
       } 
     \end{align}
     to a Cartesian square.
     Consider the full subcategory of $\PSh(\Sm_S, \Sp)$ spanned by presheaves satisfying Zariski descent and elementary blow-up excision.
     The category $\MS_S$ is defined to be the formal inversion of the pointed projective line $\mathbb{P}^1$ in this $\infty$-category.
\end{definition}

Let us compare $\MS_S$ with our category $\mSH(S)$.
Consider the symmetric monoidal functor
$$
    \lambda\colon \Sm_S\to \mSH(S);\quad X\mapsto \motive(X,\varnothing).
$$
By the Nisnevich descent in $\mSH(S)$, the functor $\lambda$ sends Zariski distinguished squares to coCartesian squares.
The smooth blow-up excision in $\mSH(S)$ (Theorem \ref{SBU}) implies that, for any elementary closed immersion $i\colon Z\to X$, the functor $\lambda$ sends the blow-up square \eqref{eq:EBU} to a coCartesian square.
Moreover, by Lemma \ref{lem;S1S^1_t}, the object $\motive(\mathbb{P}^1,\varnothing)/\motive(\{1\})$ is invertible in $\mSH(S)$.
Therefore, we get a symmetric monoidal, colimit-preserving functor
$$
    \lambda_!\colon \MS_S\to \mSH(S)
$$
which satisfies $\lambda_!(\Sigma^\infty_{\mathbb{P}^1}X_+)\simeq \motive(X)$.
Setting $\lambda^*$ to be the right adjoint of $\lambda_!$, we get the following:

\begin{theorem}\label{thm:MS_comparison}
    There is an adjunction
    $$
        \lambda_!\colon \MS_S\rightleftarrows\mSH(S)\colon \lambda^*
    $$
    where $\lambda_!$ is symmetric monoidal and $\lambda_!(\Sigma^\infty_{\mathbb{P}^1}X_+)\simeq \motive(X)$. 
\end{theorem}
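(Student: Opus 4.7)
The paragraph immediately preceding the statement already outlines the construction; the task is to organise the claims as a clean application of universal properties. The plan is to extend $\lambda\colon \Sm_S\to\mSH(S),\;X\mapsto\motive(X,\varnothing)$, by left Kan extension along the stable Yoneda embedding, to a symmetric monoidal, colimit-preserving functor $\widetilde{\lambda}\colon \PSh(\Sm_S,\Sp)\to \mSH(S)$. This is legitimate because $\mSH(S)$ is a presentably symmetric monoidal stable $\infty$-category and $\lambda$ is symmetric monoidal (the formula $(X,\varnothing)\otimes(Y,\varnothing)=(X\times Y,\varnothing)$ shows $\lambda(X\times Y)\simeq\lambda(X)\otimes\lambda(Y)$) and unit-preserving.

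I then argue that $\widetilde{\lambda}$ factors through $\MS_S$ by verifying, in turn, the three properties which cut out $\MS_S$ as a localization-then-inversion of $\PSh(\Sm_S,\Sp)$. First, for Zariski descent: a Zariski elementary distinguished square in $\Sm_S$ becomes an elementary distinguished square in $\mSm_S$ under $X\mapsto(X,\varnothing)$, and is sent to a coCartesian square by the Nisnevich descent property of $\motive$. Second, for elementary blow-up excision: if $i\colon Z\hookrightarrow X$ is an elementary closed immersion, then $Z$ is a smooth closed subscheme of $X$, vacuously transversal to the empty divisor; applying Theorem~\ref{SBU} to $\mathcal{X}=(X,\varnothing)$ and $Z$ gives the desired coCartesian blow-up square, since $\pi^*\varnothing=\varnothing$, $\varnothing|_Z=\varnothing$, and $\varnothing|_E=\varnothing$. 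Third, for $\mathbb{P}^1$-invertibility: $\widetilde{\lambda}$ sends the pointed $\mathbb{P}^1$ to $\motive(\mathbb{P}^1,\varnothing)/\motive(\{1\})$, which equals $S^1\otimes S^1_t$ by Lemma~\ref{lem;S1S^1_t}, and this is invertible in $\mSH(S)$ because $\mSH(S)$ is stable (so $S^1$ is invertible) while $S^1_t$ is inverted by construction (Definition~\ref{def;mSH}).

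By the universal property of $\MS_S$ as the successive localization-then-inversion of $\PSh(\Sm_S,\Sp)$ (reading off the resulting factorisations in $\PrL$, e.g.\ via Lemma~\ref{lem:adjoint_inversion} for the inversion step), these verifications produce a symmetric monoidal, colimit-preserving functor $\lambda_!\colon \MS_S\to \mSH(S)$ with $\lambda_!(\Sigma^\infty_{\mathbb{P}^1}X_+)\simeq\motive(X)$ by construction. Finally, since both $\MS_S$ and $\mSH(S)$ are presentable and $\lambda_!$ preserves colimits, the adjoint functor theorem furnishes a right adjoint $\lambda^*$. The only substantive input is Theorem~\ref{SBU} (which is exactly tailored to the elementary blow-up excision condition); the remaining work is formal bookkeeping of universal properties in $\PrL$, with no genuine obstacle.
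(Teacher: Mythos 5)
Your proposal follows the same route as the paper: verify that $X\mapsto\motive(X,\varnothing)$ satisfies Zariski descent (via Nisnevich descent in $\mSH(S)$), elementary blow-up excision (via Theorem~\ref{SBU}, noting a smooth closed subscheme is transversal to $\varnothing$), and $\mathbb{P}^1$-invertibility (via Lemma~\ref{lem;S1S^1_t}), then invoke the universal property of $\MS_S$ and the adjoint functor theorem. One small miscitation: the factorisation through the formal inversion is the universal property of $\mathcal{D}[T^{-1}]$ in $\CAlg(\PrL)$ from \cite{Robalo_Ktheory}, not Lemma~\ref{lem:adjoint_inversion} (which is a separate statement about transferring an adjunction with fully faithful right adjoint through inversion); but this does not affect the substance of the argument.
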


In summary, we have the following sequence of symmetric monoidal, colimit-preserving functors:
$$
    \MS_S\xrightarrow{\lambda_!}\mSH(S)\xrightarrow{t_!}\logSH(S)\xrightarrow{\omega_{\sharp}}\SH(S).
$$

\subsection{$\mathbb{A}^1$-localization} \label{sec:A1-loc}

Finally, we will see briefly that the natural functor $\mSH(S) \to \SH(S)$ is obtained by the $\AA^1$-localization. 
In other words, we prove the following result.

\begin{theorem} \label{thm:mSHSH}
    Let $\cC \in \{\Sp,\Mod_{\Lambda}\}$.
    The functor $\mSH(S,\cC) \to \SH(S,\cC)$ factors through the $\AA^1$-localization functor $L_{\AA^1} : \mSH(S,\cC) \to (\AA^1)^{-1}\mSH(S,\cC)$. Moreover, the induced functor $(\AA^1)^{-1}\mSH(S,\cC) \to \SH(S,\cC)$ is an equivalence, as a funny application of the tame Hasse-Arf theorem. Here, we identify $\AA^1$ with the $\mathbb{Q}$-modulus pair $(\AA^1,\varnothing)$.
\end{theorem}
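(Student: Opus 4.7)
The factorization is immediate: $\omega_!$ is symmetric monoidal and sends $\motive(\mathbb{A}^1,\varnothing)$ to $\motive(\mathbb{A}^1)$, which is contractible in the $\mathbb{A}^1$-invariant category $\SH(S,\mathcal{C})$. Hence $\omega_!$ inverts the $\mathbb{A}^1$-projections $\motive(\mathcal{X})\otimes\motive(\mathbb{A}^1,\varnothing)\to\motive(\mathcal{X})$ for all $\mathcal{X}\in\mSm_S$, and by the universal property of the $\mathbb{A}^1$-localization it descends to a functor $\bar\omega_!\colon (\mathbb{A}^1)^{-1}\mSH(S,\mathcal{C})\to\SH(S,\mathcal{C})$.

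To show $\bar\omega_!$ is an equivalence, use the adjunction $\omega_!\dashv\omega^*$ of Corollary~\ref{cor;mH-H-adjunction}. Since $\omega_!$ is symmetric monoidal, the projection formula combined with the $\mathbb{A}^1$-invariance of $G\in\SH$ gives
\[
\Map_{\mSH}\bigl(\motive(X,D)\otimes\motive(\mathbb{A}^1,\varnothing),\omega^*G\bigr)
\simeq \Map_{\SH}\bigl(\motive(X-|D|),G\bigr)
\simeq \Map_{\mSH}\bigl(\motive(X,D),\omega^*G\bigr),
\]
so $\omega^*$ lands in the $\mathbb{A}^1$-local subcategory of $\mSH$. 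One therefore gets an induced adjunction $\bar\omega_!\dashv\bar\omega^*\colon\SH(S,\mathcal{C})\to (\mathbb{A}^1)^{-1}\mSH(S,\mathcal{C})$ with $\bar\omega^*$ still fully faithful, so the counit is automatically an equivalence. The theorem reduces to essential surjectivity of $\bar\omega^*$, i.e., to showing that for every $(X,D)\in\mSm_S$ the unit $\motive(X,D)\to\omega^*\motive(X-|D|)$ is an $\mathbb{A}^1$-equivalence in $\mSH$.

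Here is where the tame Hasse-Arf theorem enters in a ``funny'' way; I would route the argument through the logarithmic motivic category. Iterated application of Theorem~\ref{THA} along the components of the $\mathbb{Q}$-SNCD $D$ yields $\motive(X,D)\simeq\motive(X,|D|)$ in $\mSH$, and $|D|$ has multiplicity one. Theorem~\ref{tame_motive_HA} then identifies $\motive(X,|D|)\simeq t^*\motive^{\log}(X,|D|)$, so every generator of $\mSH$ lies in the essential image of the fully faithful, colimit-preserving functor $t^*\colon\logSH(S,\mathcal{C})\to\mSH(S,\mathcal{C})$. Hence $t^*$ is an equivalence of $\infty$-categories, and the commutative diagram in the introduction transports $\omega_!$ to the logarithmic $\omega_\#^{\log}\colon\logSH\to\SH$. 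The problem is thereby reduced to the logarithmic analogue $(\mathbb{A}^1)^{-1}\logSH(S,\mathcal{C})\simeq \SH(S,\mathcal{C})$.

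The main obstacle is this last logarithmic analogue, which is essentially geometric. By Nisnevich descent and blow-up invariance in $\logSH$, one reduces to toric charts $(\mathbb{A}^n,\{x_1\cdots x_m=0\})$, where an explicit $\mathbb{A}^1$-homotopy supplies the required equivalence $\motive^{\log}(\mathbb{A}^n,\mathcal{M}_{\{x_1\cdots x_m=0\}})\simeq \omega^{*,\log}\motive(\mathbb{G}_m^m\times\mathbb{A}^{n-m})$ after $\mathbb{A}^1$-inversion. Combined with the $t^*$-equivalence, this yields the theorem.
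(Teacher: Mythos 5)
Your overall architecture is close to the paper's — you reduce to showing $t^*$ induces an equivalence and appeal to the known logarithmic statement $(\AA^1)^{-1}\logSH(S)\simeq\SH(S)$ — but there is a fatal error in the middle step, and it is precisely at the point where the proof needs to be most careful.

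You claim that ``iterated application of Theorem~\ref{THA} along the components of the $\mathbb{Q}$-SNCD $D$ yields $\motive(X,D)\simeq\motive(X,|D|)$ in $\mSH$,'' and from there conclude that $t^*\colon\logSH(S,\cC)\to\mSH(S,\cC)$ is an equivalence of $\infty$-categories. Both assertions are false. Theorem~\ref{THA} compares $(X,D+Z)$ with $(X,D+\varepsilon Z)$ for $\varepsilon\in(0,1]$; it says the multiplicity of $Z$ is irrelevant \emph{as long as it lies in $(0,1]$}. It gives no way to reduce a component of multiplicity $r>1$ down to $1$. Indeed, $\motive(X,D)\simeq\motive(X,|D|)$ in $\mSH(S)$ would force $t^*$ to be essentially surjective and hence an equivalence, which would make $\mSH$ nothing more than $\logSH$ — contradicting the entire point of the paper, and explicitly contradicting, e.g., the fact that $\ulM\Omega^q(X,D)$ genuinely depends on the multiplicities of $D$ and not just on $|D|$.

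The correct statement, which is the actual heart of the paper's proof, is that $\motive(X,D)\simeq\motive(X,|D|)$ only holds \emph{after $\AA^1$-localization}. This is Lemma~\ref{lem:rHA}, and its proof is not an iteration of Theorem~\ref{THA}: one first observes (Lemma~\ref{lem:ncube}) that inverting $\bcube^{(r)}=(\PP^1,r[\infty])$ makes multiplicities $\leq r$ irrelevant, and then shows that in $(\AA^1)^{-1}\mH(S)$ the object $\bcube^{(r)}$ \emph{is} inverted for every $r$; the latter is where the $\AA^1$-homotopy (via the multiplication map lifted to $\Bl_{(\infty,0)}(\PP^1\times\AA^1)$, replacing $\bcube$ with $\AA^1$ in the proof of Lemma~\ref{cube_epsilon_invariance}) really enters. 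So the ``funny application of tame Hasse-Arf'' is not the naive one you describe; the new ingredient is the interplay between the cube-invariance and the $\AA^1$-invariance that one gains after localizing.

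Your opening steps (that $\omega_!$ factors, that $\omega^*$ lands in $\AA^1$-local objects, that the theorem reduces to essential surjectivity on compact generators) are fine and are close in spirit to the paper's reductions — the paper phrases essential surjectivity as $t^*t_*C\simeq C$ for $\AA^1$-local $C$, using compact generation by representables and $t^*t_!\motive(\sX)\simeq\motive(\sX_{\red})$. But the substance of the argument — reducing arbitrary $\mathbb{Q}$-multiplicities to support after $\AA^1$-localization — is missing from your proposal, and the statement you assert in its place is false.
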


In the following, we omit the coefficient category $\cC$ for the simplicity of notation. 
We know that the analogous functor 
\[
    (\AA^1)^{-1} \logSH(S) \to \SH(S)
\]
exists and is an equivalence, thanks to \cite[Remark 4.0.9]{LogHom}. Therefore, to prove Theorem \ref{thm:mSHSH}, it suffices to show that the functor $t^*\colon \logSH(S) \to \mSH(S)$ induces an equivalence 
\[
    (\AA^1)^{-1}\logSH(S) \xrightarrow{\sim} (\AA^1)^{-1}\mSH(S).
\]
Since $t_!$ sends $\motive(\mathbb{A}^1)$ to $\motive^{\log}(\mathbb{A}^1)$, it follows by adjunction that $t^*$ sends $\mathbb{A}^1$-local objects to $\mathbb{A}^1$-local objects.
Thus, we are reduced to showing that the fully faithful functor $(\AA^1)^{-1} \logSH(S) \to (\AA^1)^{-1} \mSH(S)$ is essentially surjective. 

\begin{lemma} \label{lem:ncube}
    Let $r$ be a positive rational number, and let $\CI^{(r)}$ denote the class of morphisms $\{\mathrm{pr}_1\colon \sX \otimes \bcube^{(r)} \to \sX\mid \sX \in \mSm_S\}$ in $\mSm_S$, where $\bcube^{(r)} := (\PP^1,r[\infty])$.
    Let $\mH^{(r)}(S)$ denote the full subcategory of $\Sh_{\Nis}(\mSm_S,\Spc)$ consisting of $(\CI^{(r)},\BI)$-local objects.
    Then, for any $\sX=(X,D) \in \mSm_S$ such that the multiplcity of each component of $D$ is less than or equal to $r$, the natural moprhism 
    \[
         \motive (X,r|D|) \xrightarrow{\sim} \motive (X,D)
    \]
    is an isomorphism.
\end{lemma}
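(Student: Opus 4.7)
The plan is to adapt the proof of the tame Hasse-Arf theorem (Theorem \ref{THA}) to the rescaled localization $\mH^{(r)}(S)$. Writing $D=\sum_{i=1}^N r_i D_i$ with $r_i\in(0,r]\cap\mathbb{Q}$, so that $r|D|=\sum_i r D_i$, I first reduce by induction on $N$, decreasing the coefficient of one component at a time, to the following single-component statement: for any $(X,D')\in\mSm_S$, any smooth divisor $Z\subset X$ transversal to $|D'|$, and any $s\in(0,r]\cap\mathbb{Q}$, the morphism
\[
    \motive(X,D'+rZ)\longrightarrow\motive(X,D'+sZ)
\]
is an equivalence in $\mH^{(r)}(S)$. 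Denote this assertion by $(\mathrm{THA}^{(r)})_{((X,D'),Z)}$.

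The proofs of Lemmas \ref{THA_tensor}, \ref{THA_descent}, and \ref{THA_excision} transfer verbatim to $(\mathrm{THA}^{(r)})$: they rely only on colimit preservation, Nisnevich descent, and the pasting law, none of which depends on the multiplicity of the cube used for the localization. Combining these with the \'etale-local structure for smooth transversal divisors from Lemma \ref{transversal_structure}, the reduction in the proof of Theorem \ref{THA} collapses the problem to the base case $(\mathrm{THA}^{(r)})_{((\mathbb{P}^1_S,\varnothing),\infty)}$, namely that $\motive(\mathbb{P}^1_S,r[\infty])\to\motive(\mathbb{P}^1_S,s[\infty])$ is an equivalence in $\mH^{(r)}(S)$ for every $s\in(0,r]\cap\mathbb{Q}$.

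For this base case, the source $\motive(\mathbb{P}^1_S,r[\infty])=\motive(\bcube^{(r)})$ is equivalent to $\motive(\pt)$ by the very definition of $\mH^{(r)}(S)$. To treat the target, I will re-run the argument of Lemma \ref{cube_epsilon_invariance} with $\bcube$ replaced by $\bcube^{(r)}$ and $\bcube^\varepsilon$ replaced by $\bcube^{(s)}:=(\mathbb{P}^1,s[\infty])$. The multiplication $\mu\colon\mathbb{A}^1\times\mathbb{A}^1\to\mathbb{A}^1$ extends to a morphism of schemes $\tilde\mu\colon\Bl_{(0,\infty),(\infty,0)}(\mathbb{P}^1\times\mathbb{P}^1)\to\mathbb{P}^1$. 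A direct local computation shows that $\tilde\mu^*(s[\infty])$ equals $s$ times the sum of strict transforms of $\{\infty\}\times\mathbb{P}^1$ and $\mathbb{P}^1\times\{\infty\}$, with no contribution from the two exceptional divisors, whereas the pullback to the blowup of the modulus $s(\{\infty\}\times\mathbb{P}^1)+r(\mathbb{P}^1\times\{\infty\})$ of $\bcube^{(s)}\otimes\bcube^{(r)}$ contains, in addition, positive contributions from both exceptional divisors. The required inequality of $\mathbb{Q}$-divisors holds precisely when $s\leq r$, so $\tilde\mu$ defines a morphism $\Bl_{(0,\infty),(\infty,0)}(\bcube^{(s)}\otimes\bcube^{(r)})\to\bcube^{(s)}$ in $\mSm_S$. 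The two sections of $\bcube^{(s)}\otimes\bcube^{(r)}\to\bcube^{(s)}$ at $0$ and $1$ lift to the blowup since they avoid the blown-up points, and the diagram chase of Lemma \ref{cube_epsilon_invariance}, now using cube-invariance with respect to $\bcube^{(r)}$ together with SNC blow-up invariance, yields $\motive(\bcube^{(s)})\simeq\motive(\pt)$ in $\mH^{(r)}(S)$. Applying two-out-of-three to the composition $\motive(\bcube^{(r)})\to\motive(\bcube^{(s)})\to\motive(\pt)$ finishes the base case.

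The main obstacle I anticipate is the divisor bookkeeping in the base case: one has to verify that, on the blowup $\Bl_{(0,\infty),(\infty,0)}(\mathbb{P}^1\times\mathbb{P}^1)$, the asymmetric appearance of exceptional divisors on the two sides makes the inequality between $\tilde\mu^*(s[\infty])$ and the pullback of the modulus of $\bcube^{(s)}\otimes\bcube^{(r)}$ equivalent exactly to $s\leq r$. Once this is settled, every other step is a direct transcription of the proof of Theorem \ref{THA}.
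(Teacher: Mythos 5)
Your proposal is correct, but it takes a substantially longer route than the paper does. The paper's proof is a one-liner: the rescaling functor $\sigma_r\colon \mSm_S\to\mSm_S$, $(X,D)\mapsto(X,rD)$, is an automorphism of symmetric monoidal sites that carries $\bcube$ to $\bcube^{(r)}$ and SNC blow-ups to SNC blow-ups, hence carries the $(\CI,\BI)$-localization to the $(\CI^{(r)},\BI)$-localization, inducing an equivalence $\mH(S)\xrightarrow{\sim}\mH^{(r)}(S)$ under which $\motive(X,D)$ goes to $\motive(X,rD)$. Applying this equivalence to the tame Hasse-Arf theorem $\motive(X,|D'|)\xrightarrow{\sim}\motive(X,D')$ for $D'=D/r$ (which has multiplicity $\leq 1$) gives the statement directly. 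You instead re-run the whole tame Hasse-Arf machinery inside $\mH^{(r)}(S)$: you check that the auxiliary lemmas \ref{THA_tensor}, \ref{THA_descent}, \ref{THA_excision} transfer verbatim, reduce to the base case $\motive(\bcube^{(r)})\to\motive(\bcube^{(s)})$ for $s\in(0,r]$, and verify it by adapting Lemma \ref{cube_epsilon_invariance}. Your divisor bookkeeping on $\Bl_{(0,\infty),(\infty,0)}(\mathbb{P}^1\times\mathbb{P}^1)$ is correct — the exceptional divisors do not contribute to $\tilde\mu^*(s[\infty])$, and the required inequality at the strict transform of $\mathbb{P}^1\times\{\infty\}$ is exactly $s\leq r$ — so the argument goes through. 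What your approach buys is transparency about which parts of the $\mH(S)$ machinery are multiplicity-independent; what the paper's approach buys is brevity and the avoidance of re-proving the reduction lemmas, at the cost of requiring the reader to observe and trust the automorphism trick. Both are valid; the paper's is the one you should recognize as the intended proof because the whole lemma is set up (the auxiliary category $\mH^{(r)}(S)$ is introduced just here) precisely to make the scaling argument available.
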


\begin{remark}
    This lemma says that inverting $(\PP^1,r[\infty])$ neglects the multiplicity $\leq r$.
\end{remark}

\begin{proof}
    Consider the automoprhism $\mSm \to \mSm; \sX \to \sX^{(r)} := (X,rD)$, which obviously induces an equivalence $\mH(S) \to \mH^{(r)}(S)$. This sends $\motive (X,D)$ to $\motive (X,rD)$ by construction. Therefore, the statement immediately follows from the tame Hasse-Arf theorem (Theorem \ref{THA}) for $\mH(S)$.
\end{proof}

\begin{lemma} \label{lem:rHA}
    The functor $\mH(S) \to (\AA^1)^{-1} \mH(S)$ factors through $\mH^{(r)}(S)$ for any positive rational number $r$.
    In particular, for any $(X,D) \in \mSm$, there exists a canonical equivalence $\motive (X,D) \simeq \motive (X,|D|)$ in $(\AA^1)^{-1} \mH(S)$. 
    Consequently, the same assertion also holds in $(\AA^1)^{-1}\mSH(S)$.
\end{lemma}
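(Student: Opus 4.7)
The plan is to prove part (1) --- the factorization $\mH(S) \to (\AA^1)^{-1}\mH(S)$ through $\mH^{(r)}(S)$ for every positive rational $r$ --- since parts (2) and (3) follow formally: a double application of Lemma \ref{lem:ncube} (once to $(X,D)$, once to $(X,|D|)$) would yield $\motive(X,D) \simeq \motive(X,r|D|) \simeq \motive(X,|D|)$ in $(\AA^1)^{-1}\mH(S)$ for $r\geq 1$ at least as large as all multiplicities of $D$, and the stable statement is immediate because $\AA^1$-localization commutes with $S^1_t$-stabilization. The factorization amounts to showing that every $\AA^1$-local object of $\mH(S)$ is $\CI^{(r)}$-local, which by the symmetric monoidal structure reduces to the single equivalence $\motive(\bcube^{(r)}) \simeq \motive(\pt)$ in $(\AA^1)^{-1}\mH(S)$.

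To obtain this equivalence, I would construct an explicit $\AA^1$-homotopy between the identity and a constant map on $\bcube^{(r)}$, resolved via a single blow-up. Consider the rational map $H\colon \AA^1 \times \PP^1 \dashrightarrow \PP^1$, $(u,v) \mapsto (1-u)v + u$, whose unique point of indeterminacy is $(1,\infty)$. Blowing this point up produces a morphism $\tilde H$ on $\Bl := \Bl_{(1,\infty)}(\AA^1\times\PP^1)$. Since $(1,\infty)$ lies in and has normal crossings to $\AA^1 \times \{\infty\}$, the blow-up $\pi\colon \Bl \to \AA^1\times\PP^1$ upgrades to an SNC blow-up $(\Bl, rD'+rE) \to (\AA^1,\varnothing)\otimes\bcube^{(r)}$ in $\mSm_S$, where $D'$ is the strict transform of $\AA^1\times\{\infty\}$ and $E$ is the exceptional divisor; hence $\motive(\pi)$ is an equivalence in $\mH(S)$ by $\BI$-invariance. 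A local chart computation shows that $\tilde H^*[\infty] = D'$ with multiplicity $1$, so $\tilde H\colon (\Bl, rD'+rE) \to \bcube^{(r)}$ is a valid morphism of modulus pairs.

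Next I would consider the two sections $i_0, i_1\colon \bcube^{(r)} \rightrightarrows (\AA^1,\varnothing)\otimes\bcube^{(r)}$ given by $0, 1 \in \AA^1$. Both are right inverses of the projection $\mathrm{pr}_2$, which is an equivalence in $(\AA^1)^{-1}\mH(S)$, so $i_0 \simeq i_1$ there. Each lifts to $\Bl$: the section $i_0$ trivially, since $\{0\}\times\PP^1$ misses the blow-up center, while $i_1$ lifts via the strict transform $\widetilde{\{1\}\times\PP^1}$. Composing the respective lifts with $\tilde H$ recovers the identity of $\bcube^{(r)}$ (because $H(0,v) = v$) and the constant map to $1 \in \bcube^{(r)}$ (because $H(1,v) = 1$ on the open locus, and extends to the strict transform). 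Hence in $(\AA^1)^{-1}\mH(S)$ the identity and the constant map at $1$ agree on $\motive(\bcube^{(r)})$, so $\bcube^{(r)} \simeq \pt$.

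The step I expect to be the main obstacle is verifying that the strict transform $\widetilde{\{1\}\times\PP^1}$, endowed with the restriction of the modulus $rD'+rE$ along its inclusion into $\Bl$, identifies naturally with $\bcube^{(r)}$ itself --- not with some different modulus pair. Concretely, one needs $D'$ and $\widetilde{\{1\}\times\PP^1}$ to be disjoint in $\Bl$ (which holds because they are strict transforms of curves meeting transversally at $(1,\infty)$) and $E$ to intersect $\widetilde{\{1\}\times\PP^1}$ in exactly one point, corresponding under the projection to $\infty \in \PP^1$, so that the restricted modulus is precisely $r[\infty]$. Once this identification is in hand, $\tilde H \circ \tilde i_1$ is genuinely an endomorphism of $\bcube^{(r)}$ and the homotopy closes up to give the retraction $\bcube^{(r)} \simeq \pt$.
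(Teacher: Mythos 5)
Your proposal is correct and follows the same overall strategy as the paper's: reduce, via the monoidal structure of $\mH(S)$ and Lemma \ref{lem:ncube}, to showing $\motive(\bcube^{(r)}) \simeq \motive(\pt)$ in $(\AA^1)^{-1}\mH(S)$, and prove this by exhibiting an $\AA^1$-homotopy between the identity of $\bcube^{(r)}$ and a constant map, resolved by a single SNC blow-up; the standard ``two sections of an equivalence are homotopic'' argument then closes the retraction. The only real difference is which rational map you resolve. You take the affine interpolation $H(u,v) = (1-u)v + u$ on $\AA^1\times\PP^1$ with its lone indeterminacy at $(1,\infty)$; the paper instead tells the reader to rerun the proof of Lemma \ref{cube_epsilon_invariance} with $\bcube$ replaced by $\AA^1$ and $\bcube^\varepsilon$ by $\bcube^{(r)}$, i.e.\ to use the multiplication map $(x,y)\mapsto xy$ on $\PP^1\times\AA^1$, whose only remaining indeterminacy after removing $(0,\infty)$ (gone since $\infty\notin\AA^1$) is at $(\infty,0)$. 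Both choices have exactly one indeterminacy point lying on the support of the modulus, and after the blow-up the pullback of $r[\infty]$ is $rD' \le rD'+rE$, so both give legitimate morphisms of $\mathbb{Q}$-modulus pairs. The paper's choice buys economy through literal re-use of an earlier proof; yours is self-contained and perhaps more transparently an ``$\AA^1$-homotopy to a constant.'' The worry you flag at the end --- that $\widetilde{\{1\}\times\PP^1}$ with the induced modulus is honestly $\bcube^{(r)}$ --- is the right sanity check, and it does go through: the strict transforms of $\AA^1\times\{\infty\}$ and $\{1\}\times\PP^1$ are disjoint in $\Bl$, and the latter meets $E$ transversally at the single point lying over $\infty$, so the restricted modulus is exactly $r[\infty]$ and the lift $\tilde{i}_1$ is a morphism in $\mSm_S$.
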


\begin{proof}
    To prove the first assertion, it suffices to show that the morphism $\motive (\sX \otimes \bcube^{(r)}) \to \motive (\sX)$ is an equivalence in $(\AA^1)^{-1} \mH(S)$.
    The proof of Lemma \ref{cube_epsilon_invariance} applies after replacing all $\bcube$ with $\AA^1=(\AA^1,\varnothing)$, $\bcube^{\varepsilon}$ with $\bcube^{(r)}$, and the blow-up $\Bl_{(0,\infty), (\infty,0)} (\PP^1 \times \PP^1)$ with its open subscheme $\Bl_{(\infty,0)} (\PP^1 \times \AA^1)$.
    The rest of the assertion then follows from Lemma \ref{lem:ncube}.
\end{proof}

Now, we are ready to prove the main theorem in this subsection. 

\begin{proof}[Proof of Theorem \ref{thm:mSHSH}]
    Let $C$ be an $\AA^1$-local object in $\mSH(S)$. We want to prove that there exists an $\AA^1$-local object $D$ in $\logSH(S)$ with $C \simeq t^*D$.
    
    Since $t^*$ sends $\motive^{\log}(\mathbb{A}^1)$ to $\motive(\mathbb{A}^1)$, it follows by adjunction that the functor $t_*$ sends $\mathbb{A}^1$-local objects to $\mathbb{A}^1$-local objects.
    In particular, $t_*C$ is $\mathbb{A}^1$-local.
    Thus, it remains to prove $t^*t_*C\simeq C$.
    Note that $\mSH(S)$ is compactly generated by representables (see e.g. \cite[Corollary 1.5.3]{Annala-Iwasa}).
    For any $\sX \in \mSm$ and $i \in \Z$, we compute:
    \begin{align*}
        \map_{\mSH(S)}(\motive (\sX)[i],t^*t_*C)
        &{}\simeq \map_{\mSH(S)} (t^*t_!\motive (\sX)[i],C) \\
        &{}\simeq \map_{\mSH(S)} (\motive (\sX_{\red})[i],C) \\
        &{}\simeq \map_{\mSH(S)} (\motive (\sX)[i],C),
   \end{align*}
   where we used $t^*t_!\motive (X) \simeq \motive (\sX_{\red})$ with $\sX_{\red}:=(X,|D|)$ from Corollary \ref{mSH_tSH_adjunction} and Theorem \ref{THA}, and the last isomorphism follows from Lemma \ref{lem:rHA}.
   This proves the desired claim.
\end{proof}

\section{Gysin sequence}

In this section we construct the Gysin sequence in $\mSH_{S^1}(S)$ following \cite{Matsumoto}.

\subsection{Thom space}

\begin{definition} \label{def:vectorbundle}
    Let $\mathcal{X}=(X,D) \in \mSm_{S}$. A \emph{vector bundle of rank $n$ on $\mathcal{X}$} is a morphism $p:\mathcal{V}=(V,D_V) \to \mathcal{X}$ such that the underlying morphism $p\colon V \to X$ is a vector bundle of rank $n$ and $D_V = p^*D$.
    For a vector bundle $p\colon \mathcal{V}=(V,D_V)\to \mathcal{X}=(X,D)$, we set
    $$
        \mathbb{P}(\mathcal{V})=(\mathbb{P}(V), \pi^*D),
    $$
    where $\pi\colon \mathbb{P}(V)\to X$ is the canonical projection.
\end{definition}

\begin{remark}
    For any vector bundle $p:(V,D_V) \to (X,D)$ in the sense of Definition \ref{def:vectorbundle}, the zero section $X \to V$ induces a morphism $s : (X,D) \to (V,D_V)$ such that $ps=\id$. We call this morphism the zero section, too.
\end{remark}

\begin{definition} \label{def:thomsp}
    Let $\mathcal{X}=(X,D)\in \mSm_{S}$ and $p:\mathcal{V}=(V,D_V) \to \mathcal{X}$ be a vector bundle of rank $d$.
    Consider the blow up $q:\Bl_X(V) \to V$ along the zero section $X \hookrightarrow V$. Then the \emph{Thom space associated to $p$} is defined by
    \[
        \MTh(\mathcal{V}) := \cofib \bigl(\motive (\Bl_X(V),q^*D_V + E) \to \motive (V,D_V)\bigr),
    \]
    where $E$ denotes the exceptional divisor.
\end{definition}

\begin{proposition} \label{mthiso}
    Let $\sX=(X,D) \in \mSm_{S}$ and $\sV=(V,D_V) \to \sX$ be a vector bundle. 
    Regard $\mathbb{P}(\sV)$ as the closed subscheme $\mathbb{P}(\sV \oplus \OO)$ at infinity. Then there exists a canonical equivalence 
    \[
        \MTh(\sV) \xrightarrow{\sim} \cofib \bigl(\motive (\PP(\sV)) \to \motive (\PP (\sV \oplus \OO))\bigr).
    \]
\end{proposition}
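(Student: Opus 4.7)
The plan is to relate both cofibers to a common one via smooth blow-up excision (Theorem~\ref{SBU}), Zariski descent, and cube-invariance, in the spirit of a ``deformation to the normal cone'' argument. Write $W := \PP(V \oplus \OO)$ and let $q' : \widetilde{W} := \Bl_X W \to W$ be the blow-up of $W$ along the zero section $X \hookrightarrow V \hookrightarrow W$. The exceptional divisor $E'_0$ of $q'$ agrees with the exceptional divisor $E$ of $q : \Bl_X V \to V$ under the open immersion $\Bl_X V \cong \widetilde{W} - E'_\infty$, where $E'_\infty$ denotes the strict transform of the hyperplane at infinity $\PP(V) \subset W$; both $E'_0$ and $E'_\infty$ are disjoint sections of the induced $\PP^1$-bundle $\widetilde{W} \to \PP(V)$, and the other complement is $\widetilde{W} - E'_0 \cong W - X$ via $q'$. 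Write $\tilde D := (q')^* \pi^* D$.

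First I will prove two key preliminary identifications. Since $\widetilde{W} \to \PP(V)$ is Zariski-locally a trivial $\PP^1$-bundle and $E'_\infty$ is the section at infinity, over a trivializing open $U \subset \PP(V)$ one has $(\widetilde{W}, \tilde D + E'_\infty)|_U \cong \PP(\sV)|_U \otimes \bcube$. By cube-invariance and Nisnevich descent, $\motive(\widetilde{W}, \tilde D + E'_\infty) \simeq \motive(\PP(\sV))$, and the section $E'_0$ is identified with the identity on $\motive(\PP(\sV))$ under this equivalence. Applying Theorem~\ref{SBU} to $(W, \pi^* D + \PP(V))$ along $X$ (which is transversal to the support since $X$ is disjoint from $\PP(V)$ and meets $|\pi^* D|$ transversally, and since $(q')^* \PP(V) = E'_\infty$) then yields a coCartesian square whose top arrow is this identity map; its bottom arrow is therefore also an equivalence, giving $\motive(W, \pi^* D + \PP(V)) \simeq \motive(\sX)$. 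This is a generalization of Lemma~\ref{P^n_relative_motive}(1) to nontrivial vector bundles with modulus.

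Next I apply Zariski descent to the cover $\widetilde{W} = \Bl_X V \cup (\widetilde{W} - E'_0)$ with intersection $V - X$, endowed with the modulus $\tilde D + E'_0 + E'_\infty$, and in parallel to the cover $W = V \cup (W - X)$ with modulus $\pi^* D + \PP(V)$. The restrictions to the three pieces read $(\Bl_X V, q^* D_V + E)$, $(W - X, \pi^* D|_{W-X} + \PP(V))$, $(V - X, D_V|_{V - X})$ in the first case, and $(V, D_V)$, $(W - X, \pi^* D|_{W-X} + \PP(V))$, $(V - X, D_V|_{V-X})$ in the second; the blow-down maps $q, q'$ induce a morphism of coCartesian squares whose two top induced vertical maps are the identity. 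Taking cofibers of the vertical maps gives
\[
\MTh(\sV) \simeq \cofib\bigl(\motive(\widetilde{W}, \tilde D + E'_0 + E'_\infty) \to \motive(W, \pi^* D + \PP(V))\bigr),
\]
and combining with the preceding step, $\MTh(\sV) \simeq \cofib\bigl(\motive(\widetilde{W}, \tilde D + E'_0 + E'_\infty) \to \motive(\sX)\bigr)$.

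Finally, to compare this with $B := \cofib(\motive(\PP(\sV)) \to \motive(\PP(\sV \oplus \OO)))$, I apply Theorem~\ref{SBU} to $(W, \pi^* D)$ along $X$ to obtain a second coCartesian square relating $\motive(\PP(\sV))$, $\motive(\widetilde{W}, \tilde D)$, $\motive(\sX)$, and $\motive(\PP(\sV \oplus \OO))$, and combine it with the square from the preliminary identification via the morphism induced by decreasing the modulus at $E'_\infty$ (respectively at $\PP(V)$). The desired equivalence $\MTh(\sV) \simeq B$ follows by matching the octahedral cofiber sequences associated with the compositions $\motive(\widetilde{W}, \tilde D + E'_0 + E'_\infty) \to \motive(\widetilde{W}, \tilde D + E'_\infty) \to \motive(\sX)$ and $\motive(\PP(\sV)) \to \motive(\widetilde{W}, \tilde D) \to \motive(\PP(\sV \oplus \OO))$, identifying the corresponding terms via the two preliminary identifications. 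The hardest part is this last step: tracking how the various moduli $\tilde D, \tilde D + E'_0, \tilde D + E'_\infty, \tilde D + E'_0 + E'_\infty$ on $\widetilde{W}$ and $\pi^* D, \pi^* D + \PP(V)$ on $W$ fit together through a network of coCartesian squares, and verifying via the pasting law and the octahedral axiom that the induced morphisms produce compatible cofiber sequences.
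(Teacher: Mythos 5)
Your steps (1)--(4) are correct and parallel the structure of the paper's proof; in particular, your Zariski-descent computation in step (3) is essentially identical to the one the paper performs (with the cosmetic difference that you carry the extra divisor $E'_\infty$, resp.\ $\PP(V)$, in the modulus throughout). The difficulty lies entirely in your final step, which is both under-justified and, as sketched, appears to head in a direction that cannot directly produce the claimed equivalence.

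The root cause is your choice of orientation in the preliminary $\bcube$-bundle identification. You put the modulus on $E'_\infty$ (the strict transform of the hyperplane at infinity) and section by the exceptional divisor $E'_0$, obtaining $\motive(\widetilde W,\tilde D+E'_\infty)\simeq\motive(\PP(\sV))$ with $E'_0$ as the canonical section. This then compels you, in step (4), to replace $\motive(W,\pi^*D+\PP(V))$ by $\motive(\sX)$ via the zero section, and the resulting description $\MTh(\sV)\simeq\cofib\bigl(\motive(\widetilde W,\tilde D+E'_0+E'_\infty)\to\motive(\sX)\bigr)$ is a cofiber over a \emph{projection}, no longer geometrically tied to the hyperplane at infinity. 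To recover $B=\cofib(\motive(\PP(\sV))\to\motive(\PP(\sV\oplus\OO)))$, where the map is the inclusion of the hyperplane $E'_\infty$, you sketch an ``octahedral matching'' of two different compositions. But an octahedral matching only yields an equivalence of middle terms if one actually produces a morphism of cofiber sequences; it is not enough that the outer terms happen to be equivalent. In the two compositions you write down, the outer terms $\cofib(P_1\to Q_1)=\cofib(\motive(\widetilde W,\tilde D+E'_0+E'_\infty)\to\motive(\widetilde W,\tilde D+E'_\infty))$ and $\cofib(P_2\to Q_2)=\cofib(\motive(E'_\infty,\tilde D|_{E'_\infty})\to\motive(\widetilde W,\tilde D))$ are of a completely different nature (the first is a ``motive with support'', the second is a Gysin-type cofiber), and no identification between them is given or even clearly available; nor do you produce the compatible map $\MTh(\sV)\to B$ that the two-out-of-three argument would need. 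Moreover, the SBU square $S_2$ only exhibits $\cofib(\motive(\sX)\to\motive(\PP(\sV\oplus\OO)))$ along the \emph{zero} section, which is not $B$ a priori.

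The paper's argument is considerably shorter and avoids all of this. It uses the \emph{opposite} $\bcube$-bundle: with modulus on the exceptional divisor, $(\widetilde W,\tilde D+E'_0)\to\PP(\sV)$ is a $\bcube$-bundle with $E'_\infty$ as the section; so $\motive(E'_\infty,\tilde D|_{E'_\infty})\xrightarrow{\sim}\motive(\widetilde W,\tilde D+E'_0)$, and the composite to $\motive(\PP(\sV\oplus\OO))$ is precisely the hyperplane inclusion. Plugging this into the Zariski-descent identification (which gives $\MTh(\sV)\simeq\cofib(\motive(\widetilde W,\tilde D+E'_0)\to\motive(\PP(\sV\oplus\OO)))$) finishes the proof in one line, with no need for SBU (Theorem~\ref{SBU}) or the octahedral axiom. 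If you want to salvage your route through $\motive(\sX)$, the honest way is to first observe (by comparing your two Zariski-descent setups, for the moduli with and without the divisor at infinity) that the square with vertical maps the blow-downs and horizontal maps dropping $E'_\infty$, resp.\ $\PP(V)$, from the modulus is coCartesian; this lets you pass back to $\cofib(\motive(\widetilde W,\tilde D+E'_0)\to\motive(\PP(\sV\oplus\OO)))$ and then apply the opposite $\bcube$-bundle identification. As written, however, your final step is a genuine gap.
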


\begin{proof}
    Let $\sY=(Y,D_Y) \to \sV$ and $\sY'=(Y',D_{Y'}) \to \PP(\sV \oplus \OO)$ be the blow-ups along the zero sections, and let $E \subset Y$ be the exceptional divisor. Note that $V=\PP(V \oplus \OO) - \PP(V) \subset \PP(V \oplus \OO)$ is an open subscheme, and hence we obtain a commutative diagram 
    \[\xymatrix{
        (Y,D_Y+E) \ar[r] \ar[d] & (Y',D_{Y'} + E) \ar[d] \\
        \sV \ar[r] & (\PP(V\oplus \OO),h^*D)
    }\]
    where $h:\PP(V\oplus \OO) \to X$ is the projection. 
    The underlying diagram of schemes is a pullback square. 
    After applying the motive functor $\motive$, the Nisnevich descent induces an equivalence between the cofibers of the vertical morphisms:
    \begin{equation} \label{eq0001}
        \MTh(\sV) \xrightarrow{\sim} \cofib \bigl(\motive (Y',D_{Y'}+E) \to \motive (\PP(V\oplus \OO),h^*D)\bigr).
    \end{equation}

    Consider the closed subscheme $Z:=\PP(V)$ at infinity.
    There is a canonical inclusion $\sZ:=(Z,D_Z) \hookrightarrow (Y',D_{Y'}+E)$, where $D_Z := D_{Y'}|_Z$.
    By the equivalence \eqref{eq0001}, it suffices to show that the induced morphism $\motive (\sZ) \to \motive (Y',D_{Y'}+E)$ is an equivalence.

    To see this, we may assume that the vector bundle $V$ is trivial since the problem is Zariski local.
    Let $\pi\colon \mathbb{P}(V)=\mathbb{P}^{n-1}_X\to X$ denote the canonical projection.
        Since the composite
        \[
            (Z,D_Z) \hookrightarrow (Y',D_{Y'}+E) \twoheadrightarrow \PP(\sV) = (\PP^{n-1}_X, \pi^*D)
        \]
        is an equivalence by construction, it suffice to show that the induced morphism 
        \[
            \motive (Y',D_{Y'}+E) \twoheadrightarrow \motive (\PP^{n-1}_X, \pi^*D)
        \]
        is an equivalence, but this follows from the fact that $(Y',D_{Y'}+E') \to (\PP^{n-1}_X, \pi^*D)$ is a $\bcube$-bundle.
\end{proof}

\begin{theorem}[Gysin sequence]\label{GS}
    Let $\mathcal{X}=(X,D)\in \mSm_S$ and let $Z\subset X$ be a smooth closed subscheme which is transversal to $|D|$.
    Let $\pi\colon \normal_ZX\to Z$ be the normal bundle of $Z$, and set $\mathcal{N}_ZX:=(\normal_ZX,\pi^*(D|_Z))$.
    Then there exists a canonical cofiber sequence
    $$
        \motive(\Bl_ZX,q^*D+E)\to \motive(\mathcal{X})\to \MTh(\mathcal{N}_ZX)
    $$
    in $\mSH_{S^1}(S)$, where $q\colon \Bl_ZX\to X$ is the blow-up along $Z$ and $E$ is the exceptional divisor.
\end{theorem}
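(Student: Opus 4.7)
The plan follows \cite{Matsumoto}: deformation to the normal cone, combined with the smooth blow-up excision (Theorem \ref{SBU}) and the projective-bundle presentation of the Thom space (Proposition \ref{mthiso}). Using Proposition \ref{mthiso}, the target of the asserted map can be rewritten as
\[
\MTh(\mathcal{N}_Z X) \simeq \cofib \bigl( \motive(\mathbb{P}(\mathcal{N}_Z X)) \to \motive(\mathbb{P}(\mathcal{N}_Z X \oplus \mathcal{O})) \bigr),
\]
so I aim to construct a canonical equivalence between $\cofib(\motive(\Bl_Z X, q^{*}D + E) \to \motive(\mathcal{X}))$ and the right-hand side. Morally, this says that the ``relative Thom cofiber'' attached to the smooth pair $(X, Z)$ with its compatible modulus depends only on the normal bundle datum $\mathcal{N}_Z X$, in exact parallel with how Theorem \ref{SBU} reduces $\mathcal{X}$ to its $Z$-stratum.

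I would then carry out a modulus deformation to the normal cone. Set $W := \Bl_{Z \times \{0\}}(X \times \mathbb{P}^{1})$ with blow-down $q_W \colon W \to X \times \mathbb{P}^{1}$, equip it with the modulus $D_W := q_W^{*}(\mathrm{pr}_1^{*} D)$, and take $\tilde Z \subset W$ to be the proper transform of $Z \times \mathbb{P}^{1}$; it is smooth and transversal to $|D_W|$. The composition $\pi_W \colon W \to \mathbb{P}^{1}$ has general fiber $X$ and special fiber $\Bl_Z X \cup_{\mathbb{P}(\normal_Z X)} \mathbb{P}(\normal_Z X \oplus \mathcal{O})$ over $t=0$. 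Writing $\mathcal{W} := (W, D_W)$, applying Theorem \ref{SBU} to $(\mathcal{W}, \tilde Z)$ produces a coCartesian square whose ``Thom-type'' cofiber I would compare, along fiber inclusions of $\pi_W$, to that of $(\mathcal{X}, Z)$ at $t=1$ (giving the left-hand side of the desired equivalence) and to that of $(\mathcal{N}_Z X, Z)$ at $t=0$ (giving the right-hand side, via Proposition \ref{mthiso}).

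To transfer information between the two fiber inclusions in the absence of $\mathbb{A}^{1}$-invariance, I would compactify along $\mathbb{P}^{1}$ by adding an auxiliary modulus at $\infty$ and invoke cube-invariance (Lemma \ref{cube_epsilon_invariance}) to collapse the resulting $\bcube$-factor after passing to the Thom cofiber. This step replaces the $\mathbb{A}^{1}$-contraction used in the classical Morel--Voevodsky argument; the added $\bcube$-modulus is precisely what makes the restrictions at $t=0$ and $t=1$ comparable within the cube-invariant framework.

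The main technical obstacle is the modulus bookkeeping. One must verify that $D_W$ restricts correctly to each fiber of $\pi_W$, and that on the $\mathbb{P}(\normal_Z X \oplus \mathcal{O})$ component of the $t=0$ fiber the restriction plus the exceptional contribution matches the modulus pulled back from $(Z, D|_Z)$ along $\pi \colon \normal_Z X \to Z$; this is what makes Proposition \ref{mthiso} applicable and realises the identification with $\MTh(\mathcal{N}_Z X)$. Once this compatibility is in place, the desired cofiber sequence follows from Theorem \ref{SBU} applied to $(\mathcal{W}, \tilde Z)$ together with the two fiber-inclusion equivalences, by a diagram chase.
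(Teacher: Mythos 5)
Your overall strategy---deformation to the normal cone, combined with the smooth blow-up excision and the projective-bundle presentation of the Thom space---is exactly the route the paper takes. You also correctly sense that the usual $\mathbb{A}^1$-contraction step must be replaced by something involving cube-invariance. However, two concrete gaps make the argument as stated unworkable.

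First, the crucial coCartesian square you need on the deformation space is \emph{not} an instance of Theorem \ref{SBU}. If you apply Theorem \ref{SBU} to $(X\times\mathbb{P}^1,\ \pr_1^*D+\pr_2^*[\infty])$ and $Z\times\{0\}$, you get a square involving $\motive(E,D_E)$ and $\motive(B,D_B)$ (the square \eqref{MBU_square'} in the text), but the Gysin sequence requires the same square with the \emph{strict transform} $Z_B$ of $Z\times\mathbb{P}^1$ and its trace $Z_E$ added as divisors on $B$ and $E$ (the square \eqref{MBU_square}). Proving that this modified square is still coCartesian is what the paper calls \emph{modified blow-up excision} (Proposition \ref{MBU}), and it requires its own reduction chain (Lemmas \ref{MBU_tensor}--\ref{MBU_excision}) and a non-trivial base case (Lemma \ref{MBU_A^1}, proved by passing to $\logSH_{S^1}$ via $t_!$ and a $\mathbb{P}^2$-computation). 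Your proposal instead suggests applying SBU to $(\mathcal{W},\tilde{Z})$ directly---i.e.\ performing a \emph{second} blow-up along $\tilde Z$---which produces a different square and does not yield the strict-transform modulus you need; moreover, when $Z$ is a divisor this SBU square degenerates since $\Bl_{\tilde Z}W = W$. Relatedly, the reduction of the general codimension case to the divisor case (via $\mathrm{N}_E\Bl_Z X\cong \Bl_Z(\mathrm{N}_Z X)$, Lemma \ref{lem:blowup_normal_exchange} and Lemma \ref{lem:GS_reduction_lemma}) is missing, and MBU is only available for $Z$ a divisor.

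Second, the claim that one can ``invoke cube-invariance (Lemma \ref{cube_epsilon_invariance}) to collapse the resulting $\bcube$-factor'' to compare the $t=0$ and $t=1$ fiber inclusions does not go through: the deformation space $B=\Bl_{Z\times\{0\}}(X\times\mathbb{P}^1)$ with its modulus is \emph{not} of the form $\mathcal{Y}\otimes\bcube$, so there is no $\bcube$-factor to collapse. Lemma \ref{GS_deform_lemma} (a formal consequence of MBU together with SBU) gives you that the $t=0$ inclusion $\motive_{Z_E}(E,D_E)\to\motive_{Z_B}(B,D_B)$ is an equivalence; what remains---and is the real content---is that the $t=1$ inclusion $s_1$ agrees with the $t=0$ inclusion $s_0$ on the level of these ``supported'' motives. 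The paper handles this in Lemma \ref{GS_cube} by an explicit homotopy constructed from the multiplication map $h(s,t)=(st,t)$, lifted through blow-ups of $\mathbb{P}^1\times\mathbb{P}^1$, after applying $t_!$ and using the tame Hasse--Arf theorem (Theorem \ref{tame_motive_HA}) to reduce to the logarithmic category. Without an argument at this level of precision, the ``diagram chase'' you invoke at the end has no input.
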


\subsection{Reduction to the case of divisors}

First we reduce to the case where $Z$ has codimension $1$:

\begin{theorem}[Gysin sequence for divisors]\label{GS_divisor}
    Let $\mathcal{X}=(X,D)\in \mSm_S$ and let $Z\subset X$ be a smooth divisor which is transversal to $|D|$.
    Let $\pi\colon \normal_ZX\to Z$ be the normal bundle of $Z$, and set $\mathcal{N}_ZX:=(\normal_ZX,\pi^*(D|_Z))$.
    Then there exists a canonical cofiber sequence
    $$
        \motive(X,D+E)\to \motive(\mathcal{X})\to \MTh(\mathcal{N}_ZX)
    $$
    in $\mSH_{S^1}(S)$.
\end{theorem}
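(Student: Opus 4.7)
The plan is to identify the cofiber of $\motive(X,D+Z) \to \motive(X,D)$ with $\MTh(\mathcal{N}_ZX)$ by reducing, via étale excision along $Z$ combined with Lemma \ref{transversal_structure}, to the tautological model case $(X,D,Z) = (Z \times \mathbb{A}^1,\, \pr_1^*(D|_Z),\, Z \times \{0\})$. The first ingredient is a reshaping of the right-hand side. Since $Z$ is a smooth divisor, the normal bundle $\mathcal{N}_ZX = (N,\pi^*(D|_Z))$ is a line bundle, so the zero section $Z \hookrightarrow N$ is a Cartier divisor; consequently $\Bl_Z N = N$ with exceptional divisor equal to $Z$ itself, and Definition \ref{def:thomsp} collapses to
\[
\MTh(\mathcal{N}_ZX) \simeq \cofib\bigl(\motive(N,\pi^*(D|_Z)+Z) \to \motive(N,\pi^*(D|_Z))\bigr),
\]
which already has the same shape as the desired cofiber. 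In the trivial model case just mentioned, $N$ coincides with $X$ canonically, $\pi^*(D|_Z) = \pr_1^*(D|_Z) = D$, and the zero section is $Z$, so the Gysin sequence of the theorem is literally the defining cofiber sequence of $\MTh(\mathcal{N}_ZX)$.

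Next, I would establish an excision principle, analogous to Lemma \ref{SBU_excision}, for the functor $G(X,D,Z) := \cofib\bigl(\motive(X,D+Z) \to \motive(X,D)\bigr)$: if $p\colon X' \to X$ is étale and restricts to an isomorphism $Z' := p^{-1}(Z) \xrightarrow{\sim} Z$, then applying Nisnevich descent to the elementary distinguished square associated to the covering $\{p,\, X - Z \hookrightarrow X\}$ with moduli $D$ and $D+Z$ separately produces two coCartesian squares whose iterated cofibers yield an equivalence $G(X',p^*D,Z') \xrightarrow{\sim} G(X,D,Z)$. Applying this excision twice along the zig-zag of étale morphisms $X \xleftarrow{p} X' \xrightarrow{q} Z \times \mathbb{A}^1$ provided Zariski-locally by Lemma \ref{transversal_structure} (with $n=1$), and then gluing via a Lemma \ref{SBU_descent}-style descent argument, reduces the theorem to the model case handled in the previous paragraph.

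The main obstacle I expect is the globalization: producing a canonical morphism $\motive(\mathcal{X}) \to \MTh(\mathcal{N}_ZX)$ so that the various local identifications of cofibers assemble into a single well-defined cofiber sequence independent of the étale charts chosen above. Indeed, the tautological identification $N \cong X$ in the model case depends on the trivialization of the normal bundle and will not be compatible on overlaps. Following \cite{Matsumoto}, this is handled by deformation to the normal bundle: the blow-up $\Bl_{Z \times \{0\}}(X \times \mathbb{A}^1)$, equipped with a suitable modulus built from $D$ and the fibre over $\infty \in \mathbb{P}^1$, provides a canonical family interpolating between the generic fibre $\mathcal{X}$ and the special fibre $\mathcal{N}_ZX$; cube-invariance together with the smooth blow-up excision of Theorem \ref{SBU} then yield a well-defined morphism in $\mSH_{S^1}(S)$, and careful bookkeeping of the moduli data on this deformation space is the technical heart of the argument.
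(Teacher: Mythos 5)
Your outline follows the same route as the paper — reduction via Lemma \ref{transversal_structure}, descent and \'etale excision to the model case $(Z,D|_Z)\otimes(\mathbb{A}^1,\varnothing)$, and deformation to the normal cone over $\Bl_{Z\times\{0\}}(X\times\mathbb{P}^1)$ to produce the canonical map — and your observation that for a divisor the Thom space collapses to $\cofib(\motive(N,\pi^*(D|_Z)+Z)\to\motive(N,\pi^*(D|_Z)))$ is correct and is implicitly what makes the model case transparent. However, the proposal defers exactly the two steps that constitute the actual proof. First, to get the wrong-way equivalence in the definition of the Gysin map you need $\motive_{Z_E}(E,D_E)\to\motive_{Z_B}(B,D_B)$ to be an equivalence, where $Z_B$ is the strict transform of $Z\times\mathbb{P}^1$ and the modulus on both sides includes this strict transform. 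This is \emph{not} a formal consequence of Theorem \ref{SBU} plus cube-invariance, because the smooth blow-up excision square carries the modulus $D_B$ rather than $D_B+Z_B$; the paper has to prove a separate ``modified blow-up excision'' (Proposition \ref{MBU}) by its own induction, whose base case $(\mathbb{A}^1,\varnothing)$ is handled by passing to $\logSH$ via Theorem \ref{tame_motive_HA} and a projective-geometry computation on $\Bl\,\mathbb{P}^2$. Your sentence ``cube-invariance together with the smooth blow-up excision then yield a well-defined morphism'' papers over this.

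Second, even once the map $\beta_{(\mathcal{X},Z)}$ is defined canonically, the theorem is the assertion that it is an equivalence, and your reduction scheme is not set up to prove that: excising the bare cofiber $G(X,D,Z)$ before having the global map does not track the Gysin map, so you need an excision statement comparing the maps $s_1\colon\motive_Z(\mathcal{X})\to\motive_{Z_B}(B,D_B)$ for $\mathcal{X}$ and $\mathcal{X}'$ (the paper's Lemma \ref{GS_excision}), together with compatibility under $\otimes\mathcal{Y}$ and Zariski descent. Most importantly, in the model case the content is not that ``the sequence is literally the defining cofiber sequence'': one must show that evaluation at the fiber over $1$ agrees (up to homotopy) with evaluation at the exceptional fiber, i.e.\ $s_1\simeq s_0$ on $\motive_{\{0\}}(\bcube)$. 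In the paper this is the nontrivial step (Lemma \ref{GS_cube}), proved with the explicit homotopy $(s,t)\mapsto(st,t)$ lifted to $\Bl_{(0,\infty),(\infty,0)}(\mathbb{P}^1\times\mathbb{P}^1)$ and again a transfer to the log category. Your proposal names this globalization issue but offers no argument for it, so as written it is a genuine gap rather than a complete proof.
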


Let $\mathcal{X}=(X,D)\in \mSm_S$ and let $Z\subset X$ be a smooth closed subscheme which is transversal to $|D|$.
We write $q\colon \Bl_ZX\to X$ for the blow-up of $X$ along $Z$.
Let $\pi\colon \normal_ZX\to Z$ be the normal bundle of $Z$, and set $\mathcal{N}_ZX:=(\normal_ZX,\pi^*(D|_Z))$.
Similarly, we write $\tau\colon \mathrm{N}_E\Bl_ZX\to E$ for the normal bundle of $E$, and set $\mathcal{N}_E\Bl_ZX:=(\mathrm{N}_E\Bl_ZX,\tau^*(q^*D|_E))$.

\begin{lemma}\label{lem:blowup_normal_exchange}
    There is a canonical isomorphism of $S$-schemes $\mathrm{N}_E\Bl_ZX\cong \Bl_Z(\mathrm{N}_ZX)$.
    Moreover, the composition
    $$
        \mathrm{N}_E\Bl_ZX\cong \Bl_Z(\mathrm{N}_ZX)\to \mathrm{N}_ZX
    $$
    induces an isomorphism $\mathbb{P}(\mathrm{N}_E\Bl_ZX)\cong \mathbb{P}(\mathrm{N}_ZX)$.
\end{lemma}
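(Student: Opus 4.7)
The plan is to reduce to a local model by \'etale descent and verify the two claimed isomorphisms there. Both constructions are compatible with \'etale base change on $X$: the blow-up of an \'etale base change of a regular embedding is the \'etale base change of the blow-up, and the normal bundle of a smooth closed subscheme is compatible with smooth base change. Combined with Lemma \ref{transversal_structure}, this lets us work \'etale-locally on $X$ near $Z$ and assume that $X=Z\times\mathbb{A}^n$ with $Z$ corresponding to the zero section $Z\times\{0\}$.

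In this local model, $\mathrm{N}_ZX$ is canonically identified with $X$ itself (as the trivial vector bundle $Z\times\mathbb{A}^n$), so $\Bl_Z(\mathrm{N}_ZX)=\Bl_ZX$ tautologically. Moreover, $\Bl_ZX$ admits the standard explicit description as the total space of the tautological line bundle $\mathcal{O}_{\mathbb{P}^{n-1}_Z}(-1)$ over $E=\mathbb{P}^{n-1}_Z=\mathbb{P}(\mathrm{N}_ZX)$, in which $E$ appears precisely as the zero section. Since the normal bundle of the zero section of a line bundle is the line bundle itself, we conclude $\mathrm{N}_E\Bl_ZX\cong\Bl_ZX=\Bl_Z(\mathrm{N}_ZX)$ in the local model.

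To ensure this local isomorphism is canonical and therefore glues to a global one, I would use the intrinsic descriptions $\mathrm{N}_E\Bl_ZX=\mathrm{Spec}_E\,\mathrm{Sym}(\mathcal{I}_E/\mathcal{I}_E^2)$ and $\Bl_Z(\mathrm{N}_ZX)=$ total space of $\mathcal{O}_{\mathbb{P}(\mathrm{N}_ZX)}(-1)\to\mathbb{P}(\mathrm{N}_ZX)$, together with the classical identification $\mathcal{I}_E/\mathcal{I}_E^2\cong\mathcal{O}_E(1)$ for the exceptional divisor of a blow-up along a regular embedding (under which $E=\mathbb{P}(\mathrm{N}_ZX)$). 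Both identifications are natural, so the local isomorphism extends unambiguously.

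For the second assertion, since $\mathrm{N}_E\Bl_ZX$ is a line bundle on $E=\mathbb{P}(\mathrm{N}_ZX)$, its projectivization equals $E$, and $\mathbb{P}(\mathrm{N}_ZX)$ is also $E$ by the blow-up construction. I would then check in the local model that the composition $\mathbb{P}(\mathrm{N}_E\Bl_ZX)=E\to\mathbb{P}(\mathrm{N}_ZX)$ coming from the blow-down $\Bl_Z(\mathrm{N}_ZX)\to\mathrm{N}_ZX$ is the identity under these canonical identifications. The main obstacle I anticipate is the bookkeeping needed to match the two natural descriptions of the tautological line bundle on $E$—one arising as the conormal sheaf of the exceptional divisor and one as the tautological subbundle on $\mathbb{P}(\mathrm{N}_ZX)$—but this is a standard compatibility for blow-ups of regular embeddings.
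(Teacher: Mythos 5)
Your proposal is correct, and for the first assertion you essentially do what the paper does (a local computation, which you flesh out by explaining how the canonicality of $\mathcal{I}_E/\mathcal{I}_E^2\cong\mathcal{O}_E(1)$ makes the local identification glue). For the second assertion, however, you take a genuinely different route. The paper observes that, under the identification $\mathrm{N}_E\Bl_ZX\cong\Bl_Z(\mathrm{N}_ZX)$, the composed map $\mathrm{N}_E\Bl_ZX\to\mathrm{N}_ZX$ restricts to an isomorphism $\mathrm{N}_E\Bl_ZX-E\to\mathrm{N}_ZX-Z$ (because the blow-down is an isomorphism away from the exceptional divisor), and this isomorphism is $\mathbb{G}_m$-equivariant; quotienting by the $\mathbb{G}_m$-scaling action then yields $\mathbb{P}(\mathrm{N}_E\Bl_ZX)\cong\mathbb{P}(\mathrm{N}_ZX)$ in one stroke, without needing to identify either side with $E$ first. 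You instead note that $\mathrm{N}_E\Bl_ZX$ is a line bundle over $E=\mathbb{P}(\mathrm{N}_ZX)$, so $\mathbb{P}(\mathrm{N}_E\Bl_ZX)$ is tautologically the base $E$, and then propose to check in the local model that the induced map is the identity. Both work; the paper's quotient argument is slicker in that it produces the isomorphism directly from the birational map (no separate check that a composition is the identity), whereas your argument makes it transparent that the projectivization is canonically $E$ and isolates exactly the remaining compatibility to verify. If you carry your plan through, I would explicitly confirm that the two identifications of $E$ with $\mathbb{P}(\mathrm{N}_ZX)$ (one via the conormal sheaf/tautological bundle, one via the blow-up construction) agree, since that is the entirety of the final step.
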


\begin{proof}
    The first part can be easily checked by a local computation.
    As for the second part, observe that the induced morphism
    $$
        \mathrm{N}_E\Bl_ZX-E\to \mathrm{N}_ZX-Z
    $$
    is an isomorphism, since $E$ is the exceptional divisor of the blow-up $\Bl_ZX\to X$.
    Taking the quotient of both sides with respect to the canonical $\mathbb{G}_m$-action, we obtain the desired isomorphism.
\end{proof}

\begin{lemma}\label{lem:GS_reduction_lemma}
    The following diagram in $\mSH_{S^1}(S)$ is coCartesian:
    \[
    \xymatrix{
    \motive(E,q^*D|_E)\ar[r]\ar[d]&\MTh(\mathcal{N}_E\Bl_ZX)\ar[d]\\
    \motive(Z,D|_Z)\ar[r]&\MTh(\mathcal{N}_ZX).
    }
    \]
\end{lemma}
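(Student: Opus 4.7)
The strategy is to apply the smooth blow-up excision (Theorem \ref{SBU}) to the projective completion $\overline{\mathcal{N}}:=\mathbb{P}(\mathcal{N}_ZX\oplus\mathcal{O})$ blown up along the zero section $Z$, and then translate the result into Thom spaces via Proposition \ref{mthiso}. Set also $\overline{\mathcal{N}}_E:=\mathbb{P}(\mathcal{N}_E\Bl_ZX\oplus\mathcal{O})$. The key geometric input, refining Lemma \ref{lem:blowup_normal_exchange} to projective completions, is the canonical isomorphism in $\mSm_S$
\[
\Bl_Z \overline{\mathcal{N}} \;\cong\; \overline{\mathcal{N}}_E,
\]
under which the exceptional divisor corresponds to the zero section $E\hookrightarrow \overline{\mathcal{N}}_E$, while the strict transform of the infinity section $\mathbb{P}(\mathcal{N}_ZX)\subset\overline{\mathcal{N}}$ corresponds to the infinity section $\mathbb{P}(\mathcal{N}_E\Bl_ZX)\subset\overline{\mathcal{N}}_E$ via the isomorphism of Lemma \ref{lem:blowup_normal_exchange}. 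At the level of underlying schemes this is a relative Hirzebruch-type computation, and the moduli on both sides are pulled back from $D|_Z$, so they agree.

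Granted this, transversality of $Z$ to $|D|$ in $X$ implies transversality of the zero section $Z\hookrightarrow \overline{\mathcal{N}}$ to the pullback modulus, so Theorem \ref{SBU} provides a coCartesian square
\[
\xymatrix{
\motive(E,q^*D|_E)\ar[r]\ar[d] & \motive(\overline{\mathcal{N}}_E)\ar[d]\\
\motive(Z,D|_Z)\ar[r] & \motive(\overline{\mathcal{N}}),
}
\]
with right vertical induced by the blow-up. Separately, Proposition \ref{mthiso} gives cofiber sequences fitting into a commutative diagram
\[
\xymatrix{
\motive(\mathbb{P}(\mathcal{N}_E\Bl_ZX))\ar[r]\ar[d]^-\sim & \motive(\overline{\mathcal{N}}_E)\ar[d]\ar[r] & \MTh(\mathcal{N}_E\Bl_ZX)\ar[d]\\
\motive(\mathbb{P}(\mathcal{N}_ZX))\ar[r] & \motive(\overline{\mathcal{N}})\ar[r] & \MTh(\mathcal{N}_ZX),
}
\]
whose leftmost vertical is an equivalence by Lemma \ref{lem:blowup_normal_exchange}. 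Since the rows are cofiber sequences, the right square is coCartesian.

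Pasting the SBU square with this right square produces a coCartesian $2\times 3$ rectangle whose outer square is exactly the Lemma square, because each horizontal composite realizes the natural zero-section map into the Thom space (the exceptional divisor in $\Bl_Z\overline{\mathcal{N}}$ being matched with the zero section of $\overline{\mathcal{N}}_E$ by the first step). The main obstacle is therefore the projective-completion version of Lemma \ref{lem:blowup_normal_exchange} announced in the first paragraph, together with the compatibility of moduli and of the infinity sections; in particular one needs to verify that $N_E\Bl_Z(\mathcal{N}_ZX)$ coincides with $\mathcal{N}_E\Bl_ZX$ as line bundles on $E$, so that the Hirzebruch-type identification makes sense in the modulus setting. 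Once this is checked, the remainder of the argument is a purely formal manipulation of coCartesian squares and cofiber sequences.
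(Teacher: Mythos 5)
Your proposal is correct and follows essentially the same route as the paper: apply the smooth blow-up excision to $Z\hookrightarrow\mathbb{P}(\mathrm{N}_ZX\oplus\mathcal{O})$, identify the blow-up with $\mathbb{P}(\mathrm{N}_E\Bl_ZX\oplus\mathcal{O})$, use the $\mathbb{P}(\mathrm{N}_E\Bl_ZX)\cong\mathbb{P}(\mathrm{N}_ZX)$ equivalence from Lemma~\ref{lem:blowup_normal_exchange}, and pass to Thom spaces via Proposition~\ref{mthiso}. The projective-completion refinement of Lemma~\ref{lem:blowup_normal_exchange} that you flag as the remaining obstacle is likewise invoked without separate proof in the paper (it follows from the affine statement $\mathrm{N}_E\Bl_ZX\cong\Bl_Z(\mathrm{N}_ZX)$ together with the canonical identification $\mathrm{N}_Z(\mathrm{N}_ZX)\cong\mathrm{N}_ZX$), and the modulus compatibility is automatic since both $\overline{\pi}\circ b$ and $p\circ\overline{\tau}$ compute the same projection to $Z$, so your argument is complete.
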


\begin{proof}
    We use the following notation for the canonical projections:
    \begin{align*}
        &\overline{\tau}\colon \mathbb{P}(\mathrm{N}_E\Bl_ZX\oplus \mathcal{O})\to E,\\
        &\overline{\pi}\colon \mathbb{P}(\mathrm{N}_ZX\oplus \mathcal{O})\to Z,\\
        &\tau_0\colon \mathbb{P}(\mathrm{N}_E\Bl_ZX)\to E,\\
        &\pi_0\colon \mathbb{P}(\mathrm{N}_ZX)\to Z.
    \end{align*}
    By Lemma \ref{lem:blowup_normal_exchange}, $\mathbb{P}(\mathrm{N}_E\Bl_ZX\oplus \mathcal{O})$ can be regarded as the blow-up of $\mathbb{P}(\mathrm{N}_ZX\oplus \mathcal{O})$ along the zero section.
    By the smooth blow-up excision in $\mSH_{S^1}(S)$ (Theorem \ref{SBU}), we have the following coCartesian diagram:
    \[
    \xymatrix{
        \motive(E,q^*D|_E)\ar[r]\ar[d]&\motive(\mathbb{P}(\mathrm{N}_E\Bl_ZX\oplus \mathcal{O}), \overline{\tau}^*(q^*D|_E))\ar[d]\\
        \motive(Z,D|_Z)\ar[r]&\motive(\mathbb{P}(\mathrm{N}_ZX\oplus \mathcal{O}), \overline{\pi}^*(D|_Z)).
    }
    \]
    Moreover, the induced morphism
    $$
        \motive(\mathbb{P}(\mathrm{N}_E\Bl_ZX),\tau_0^*(q^*D|_E))\to \motive(\mathbb{P}(\mathrm{N}_ZX),\pi_0^*(D|_Z))
    $$
    is an equivalence by Lemma \ref{lem:blowup_normal_exchange}.
    Therefore, the claim follows from Proposition \ref{mthiso}.
\end{proof}

\begin{proof}[Proof of Theorem \ref{GS} from Theorem \ref{GS_divisor}]
    Consider the following diagram:
    \[
    \xymatrix{
        \motive(E,q^*D|_E)\ar[r]\ar[d]&
        \motive(\Bl_ZX,q^*D)\ar[r]\ar[d]&
        \MTh(\mathcal{N}_E\Bl_ZX)\ar[d]\\
        \motive(Z,D|_Z)\ar[r]\ar@/_18pt/[rr]&
        \motive(X,D)\ar@{-->}[r]&
        \MTh(\mathcal{N}_ZX).
    }
    \]
    \vskip\baselineskip
    The left square is coCartesian by the smooth blow-up excision (Theorem \ref{SBU}), and the total rectangle is coCartesian by Lemma \ref{lem:GS_reduction_lemma}.
    Therefore, the dashed arrow is canonically induced by the universal property, and the right square is also coCartesian.
    In particular, we obtain a canonical equivalence
    \begin{align*}
        \fib(\motive(X,D)\to \MTh(\mathcal{N}_ZX))\simeq \fib(\motive(\Bl_ZX,q^*D)\to \MTh(\mathcal{N}_E\Bl_ZX)).
    \end{align*}
    The right hand side is equivalent to $\motive(\Bl_ZX,q^*D+E)$ by our assumption that Theorem \ref{GS_divisor} is true.
    Therefore we obtain the desired cofiber sequence.
\end{proof}

\subsection{Modified blow-up excision}

To prove Theorem \ref{GS_divisor}, we need the following auxiliary result:

\begin{proposition}[Modified blow-up excision]\label{MBU}
    Let $\mathcal{X}=(X,D) \in \mSm_S$ and let $Z\subset X$ be a smooth divisor which is transversal to $|D|$.
    For any scheme $Y$ over $X\times \mathbb{P}^1$, we set
    $$
        D_Y:=(\pr_1^*D+\pr_2^*[\infty])|_Y.
    $$
    Let $B = \Bl_{Z\times\{0\}}(X\times\mathbb{P}^1)$ and let $E$ denote the exceptional divisor.
    Let $Z_B$ be the strict transform of $Z\times \mathbb{P}^1$ and $Z_E=Z_B|_E$.
    Then the following square in $\mSH_{S^1}(S,\Lambda)$ is coCartesian:
    \begin{align}\label{MBU_square}
    \xymatrix{
        \motive(E,D_E+Z_E)\ar[r]\ar[d]  &\motive(B,D_B+Z_B)\ar[d]\\
        \motive(Z,D_Z)\ar[r]            &\motive(\mathcal{X}\otimes \bcube).
    }
    \end{align}
\end{proposition}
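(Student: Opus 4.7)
The plan is to reduce \eqref{MBU_square} to an auxiliary ``difference square'' by comparison with smooth blow-up excision, reduce that difference square to a universal local statement via Nisnevich descent and a product decomposition, and verify the universal statement by direct computation.

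First I would apply Theorem~\ref{SBU} to $Z\times\{0\}\subset X\times\mathbb{P}^1$, which is smooth of codimension $2$ and transversal to $|\pr_1^*D+\pr_2^*[\infty]|$ (since $Z$ is transversal to $|D|$ and $\{0\}\cap\{\infty\}=\varnothing$). This yields a coCartesian square
\[
\xymatrix{
\motive(E,D_E)\ar[r]\ar[d] &\motive(B,D_B)\ar[d]\\
\motive(Z,D_Z)\ar[r]      &\motive(\mathcal{X}\otimes\bcube)
}
\]
(under the identification $(Z\times\{0\},D_{Z\times\{0\}})\cong(Z,D_Z)$). Since this coincides with \eqref{MBU_square} on the bottom row, \eqref{MBU_square} is coCartesian if and only if the ``difference square'' of modulus-decreasing maps
\[
\xymatrix{
\motive(E,D_E+Z_E)\ar[r]\ar[d] &\motive(E,D_E)\ar[d]\\
\motive(B,D_B+Z_B)\ar[r]      &\motive(B,D_B)
}
\]
is coCartesian.

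Next I would reduce to a universal local statement. Using Lemma~\ref{transversal_structure} (with $n=1$), Nisnevich descent, and the excision argument from Lemmas~\ref{SBU_excision} and~\ref{THA_excision}, I may assume $X=Z\times\mathbb{A}^1_s$ with $Z$ embedded as the zero section and $D=\pr_Z^*(D|_Z)$. Then $B\cong Z\times B_0$ with $B_0:=\Bl_{(0,0)}(\mathbb{A}^1_s\times\mathbb{P}^1_t)$, and an explicit check shows
\begin{align*}
(B,D_B)&\cong(Z,D_Z)\otimes(B_0,(B_0)_\infty),\\
(B,D_B+Z_B)&\cong(Z,D_Z)\otimes(B_0,(B_0)_\infty+Z_{B_0}),\\
(E,D_E)&\cong(Z,D_Z)\otimes(E_0,\varnothing),\\
(E,D_E+Z_E)&\cong(Z,D_Z)\otimes(E_0,[p]),
\end{align*}
where $E_0\cong\mathbb{P}^1$ is the exceptional divisor, $Z_{B_0}$ is the strict transform of $\{0\}\times\mathbb{P}^1$, and $p:=E_0\cap Z_{B_0}$. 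Since $\motive(Z,D_Z)\otimes({-})$ preserves colimits, the problem reduces to coCartesianness of the universal square
\[
\xymatrix{
\motive(E_0,[p])\ar[r]\ar[d]    &\motive(E_0,\varnothing)\ar[d]\\
\motive(B_0,(B_0)_\infty+Z_{B_0})\ar[r] &\motive(B_0,(B_0)_\infty).
}
\]

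Finally, I would verify this universal square. Cube-invariance identifies $\motive(E_0,[p])$ with $\motive(\pt)$. Theorem~\ref{SBU} applied to $\{(0,0)\}\hookrightarrow\mathbb{A}^1\times\mathbb{P}^1$ with modulus $\mathbb{A}^1\times[\infty]$, combined with cube-invariance, computes the cofiber of the right column as $\cofib(\motive(\pt)\to\motive(\mathbb{A}^1))$. For the left column, SNC blow-up invariance applied to the same center but with the enlarged modulus $\mathbb{A}^1\times[\infty]+\{0\}\times\mathbb{P}^1$ (which contains the center) yields, after cube-invariance, $\motive(B_0,(B_0)_\infty+Z_{B_0}+E_0)\simeq\motive(\mathbb{A}^1,\{0\})$; the Tame Hasse-Arf Theorem~\ref{THA} then gives $\motive(B_0,(B_0)_\infty+Z_{B_0}+E_0)\simeq\motive(B_0,(B_0)_\infty+Z_{B_0}+\varepsilon E_0)$ for any $\varepsilon\in(0,1]\cap\mathbb{Q}$. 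The main technical obstacle is the last step, namely the passage from positive $\varepsilon$-modulus along $E_0$ to removing the $E_0$-modulus entirely and matching the resulting object with the right-column cofiber; this will require careful Nisnevich-local bookkeeping on the chart $V_1\cong\mathbb{A}^2_{v,t}$ containing $p$, which is where the asymmetric roles of $Z_{B_0}$ and $E_0$ come together.
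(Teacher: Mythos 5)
Your reduction to the local ``difference square'' is logically fine (it is the standard $3\times 3$ comparison with the smooth blow-up excision square), and the further reduction via Lemma~\ref{transversal_structure} and descent to the case $X=Z\times\mathbb{A}^1$ mirrors the paper's own tensor/descent/\'etale-excision template, which reduces Proposition~\ref{MBU} to the case $\mathcal{X}=(\mathbb{A}^1,\varnothing)$, $Z=\{0\}$ (Lemma~\ref{MBU_A^1}). Your BI/THA computation identifying $\motive(B_0,(B_0)_\infty+Z_{B_0}+E_0)$ with $\motive(\mathbb{A}^1,\{0\})$ is also correct.

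The ``obstacle'' you flag is, however, a genuine gap, not a matter of Nisnevich-local bookkeeping. You need to pass from the modulus $(B_0)_\infty+Z_{B_0}+\varepsilon E_0$ (which $\BI$ and Theorem~\ref{THA} give you for every $\varepsilon\in(0,1]$) to $(B_0)_\infty+Z_{B_0}$, that is, to delete the transversal component $E_0$ from the modulus outright. Theorem~\ref{THA} only rescales a multiplicity within $(0,1]$, and none of the localizations you have imposed ($\Nis$, $\CI$, $\BI$) permit sending a multiplicity to zero; if such a move held unconditionally it would collapse the modulus structure. Unwinding the diagrams, this $\varepsilon\to 0$ passage is essentially the content of the universal case of the proposition itself, so your argument becomes circular at exactly that point.

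The idea the paper supplies here, and which your proposal is missing, is the transfer to $\logSH_{S^1}(S,\Lambda)$. In the key local square every divisor has multiplicity $\leq 1$, so Theorem~\ref{tame_motive_HA} identifies each vertex with $t^*t_!$ of itself; since $t^*$ is fully faithful and colimit-preserving, it suffices to check coCartesianness after $t_!$ in $\logSH$, where multiplicities vanish and the $\varepsilon\to 0$ ambiguity disappears. The resulting log square is then verified by compactifying $\mathbb{A}^1\times\mathbb{P}^1$ inside $Q=\Bl_{[0:1:0]}\mathbb{P}^2$ and showing that \emph{both vertical maps} of the compactified square are already equivalences, using blow-up invariance and Lemma~\ref{P^n_relative_motive}. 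Note that this compactification trick does not mesh well with your difference-square reformulation: the lower horizontal arrow there, $\motive(B_0,(B_0)_\infty+Z_{B_0})\to\motive(B_0,(B_0)_\infty)$, is precisely the hard map, and there is no evident compactification under which it becomes an equivalence. If you want to complete the argument, you should apply the log transfer directly to the original square rather than to a difference square.
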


\begin{remark}
    In the situation of Proposition \ref{MBU}, we have the coCartesian square
    \begin{align}\label{MBU_square'}
    \xymatrix{
        \motive(E,D_E)\ar[r]\ar[d]  &\motive(B,D_B)\ar[d]\\
        \motive(Z,D_Z)\ar[r]            &\motive(\mathcal{X}\otimes \bcube)
    }
    \end{align}
    by the smooth blow-up excision (Theorem \ref{SBU}).
    Proposition \ref{MBU} can be regarded as a slight modification of this square.
\end{remark}

\begin{definition}
    Let $\mathcal{X}=(X,D)\in \mSm_S$ and let $Z\subset X$ be a smooth divisor which is transversal to $|D|$.
    We say that $(\mathrm{MBU})_{(\mathcal{X},Z)}$ holds if the conclusion of Proposition \ref{MBU} holds for $\mathcal{X}$ and $Z$.
\end{definition}

\begin{lemma}\label{MBU_tensor}
    Let $\mathcal{X}=(X,D)\in \mSm_S$ and let $Z\subset X$ be a smooth divisor which is transversal to $|D|$.
    If $(\mathrm{MBU})_{(\mathcal{X},Z)}$ holds, then $(\mathrm{MBU})_{(\mathcal{X}\otimes\mathcal{Y},Z\times Y)}$ holds for any $\mathcal{Y}=(Y,E)\in \mSm_S$.
\end{lemma}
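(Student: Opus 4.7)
The plan is to reduce $(\mathrm{MBU})_{(\mathcal{X}\otimes\mathcal{Y},Z\times Y)}$ to $(\mathrm{MBU})_{(\mathcal{X},Z)}$ by tensoring with $\motive(\mathcal{Y})$, exactly in the spirit of Lemmas \ref{SBU_tensor} and \ref{THA_tensor}. The key point is that since $\mH(S,\mathcal{C})$ is presentably symmetric monoidal (via Day convolution), the endofunctor $\motive(\mathcal{Y})\otimes({-})$ admits a right adjoint and therefore preserves all small colimits; in particular it preserves coCartesian squares.

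To apply this principle, I first need to identify the MBU square associated to $(\mathcal{X}\otimes\mathcal{Y}, Z\times Y)$ with the MBU square for $(\mathcal{X},Z)$ tensored with $\motive(\mathcal{Y})$. Writing $\mathcal{Y}=(Y,F)$ to avoid collision with the exceptional divisor notation, the modulus pair $(\mathcal{X}\otimes\mathcal{Y})\otimes\bcube$ is canonically isomorphic to $(\mathcal{X}\otimes\bcube)\otimes\mathcal{Y}$. On the level of underlying schemes, the canonical isomorphism
\[
(X\times Y)\times\mathbb{P}^1\;\cong\;(X\times\mathbb{P}^1)\times Y
\]
carries the center $(Z\times Y)\times\{0\}$ to $(Z\times\{0\})\times Y$. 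Since blowing up is compatible with flat base change, the resulting blow-up is $B\times Y$, its exceptional divisor is $E\times Y$, and the strict transform of $(Z\times Y)\times\mathbb{P}^1$ is $Z_B\times Y$. A direct check of the divisor formulas shows that the resulting modulus pairs agree with $(B,D_B+Z_B)\otimes\mathcal{Y}$, $(E,D_E+Z_E)\otimes\mathcal{Y}$, and $(Z,D_Z)\otimes\mathcal{Y}$ respectively.

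Hence the MBU square for $(\mathcal{X}\otimes\mathcal{Y},Z\times Y)$ is obtained from the MBU square \eqref{MBU_square} for $(\mathcal{X},Z)$ by applying the colimit-preserving endofunctor $\motive(\mathcal{Y})\otimes({-})$. Since this endofunctor preserves coCartesian squares and the input is coCartesian by hypothesis, the output is coCartesian, which is exactly $(\mathrm{MBU})_{(\mathcal{X}\otimes\mathcal{Y},Z\times Y)}$.

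The only step that is not entirely automatic is the verification that the divisor structures in all four corners of the MBU square genuinely split as pullbacks from $\mathcal{X}$-data tensored with $\mathcal{Y}$; but this is immediate from the formula $\mathcal{X}\otimes\mathcal{Y}=(X\times Y,\pr_1^*D+\pr_2^*F)$ combined with the base change compatibility of the blow-up along $(Z\times\{0\})\times Y$, so no serious obstacle arises.
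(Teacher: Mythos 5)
Your proof is correct and follows the same approach as the paper, which simply states that this is a consequence of $\motive(\mathcal{Y})\otimes({-})$ preserving colimits. You have merely spelled out the (routine but useful) verification that the MBU square for $(\mathcal{X}\otimes\mathcal{Y},Z\times Y)$ is indeed the MBU square for $(\mathcal{X},Z)$ tensored with $\motive(\mathcal{Y})$, using compatibility of blow-ups with the flat base change $-\times Y$.
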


\begin{proof}
    This is a consequence of the fact that the functor $\motive(Y,E)\otimes({-})$ preserves colimits.
\end{proof}

\begin{lemma}\label{MBU_descent}
    Let $\mathcal{X}=(X,D)\in \mSm_S$ and let $Z\subset X$ be a smooth divisor which is transversal to $|D|$.
    Let $\{\mathcal{U}_i\}_{i\in I}$ be a Zariski covering of $\mathcal{X}$.
    For each finite non-empty subset $J\subset I$, we set $\mathcal{U}_J=\bigcap_{j\in J}\mathcal{U}_j$.
    If $(\mathrm{MBU})_{(\mathcal{U}_J,Z_J)}$ holds for every finite non-empty subset $J\subset I$, then $(\mathrm{MBU})_{(\mathcal{X},Z)}$ holds.
\end{lemma}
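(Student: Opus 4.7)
The plan is to reduce the statement to the Nisnevich descent property, by expressing each corner of the square \eqref{MBU_square} over $\mathcal{X}$ as a colimit over the Čech nerve of the covering $\{\mathcal{U}_i\}_{i\in I}$, and then invoking the fact that a colimit of coCartesian squares in a presentable $\infty$-category is coCartesian. This is the same pattern used in Lemma \ref{SBU_descent} and Lemma \ref{THA_descent}, but with a little more bookkeeping because four different schemes (namely $X$, $Z$, $B$, and $E$) are involved.

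First, I would check that each of the constructions appearing in the square \eqref{MBU_square} is compatible with Zariski restriction. That is, for each nonempty finite $J \subset I$ with $\mathcal{U}_J = (U_J, D|_{U_J})$ and $Z_J = Z \cap U_J$, I want to verify that
\[
  B \times_X U_J \simeq \Bl_{Z_J \times \{0\}}(U_J \times \mathbb{P}^1), \qquad E \times_X U_J \simeq E_J, \qquad Z_B \times_X U_J \simeq Z_{B_J},
\]
with the divisors behaving as expected under pullback. This is routine: the blow-up commutes with flat (in particular open) base change, and strict transforms, exceptional divisors, and pullbacks of $\mathbb{Q}$-SNCD's all localize on $X$.

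Next, I would apply the Nisnevich descent property (which holds in $\mSH_{S^1}(S,\Lambda)$ by construction, and \emph{a fortiori} for the Zariski topology on $\mSm_S$) to each of the four corners of the square \eqref{MBU_square}. Writing $\check{C}(\{\mathcal{U}_i\})$ for the Čech nerve of the Zariski cover, I get
\[
  \motive(\mathcal{X}\otimes\bcube) \simeq \colim_{J} \motive(\mathcal{U}_J\otimes\bcube),\qquad \motive(B,D_B+Z_B) \simeq \colim_{J} \motive(B_J,D_{B_J}+Z_{B_J}),
\]
and similarly for $\motive(Z,D_Z)$ and $\motive(E,D_E+Z_E)$, where the colimits are taken over the opposite of the poset of nonempty finite $J \subset I$. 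By the compatibilities above, these four colimits assemble into a diagram of squares indexed by $J$, each one being exactly the MBU square for $(\mathcal{U}_J, Z_J)$.

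Finally, by hypothesis each such square is coCartesian in $\mSH_{S^1}(S,\Lambda)$. Since colimits commute with colimits in a presentable $\infty$-category, the colimit of a diagram of coCartesian squares is coCartesian, which yields the desired coCartesianness of \eqref{MBU_square}. The only possible obstacle is the compatibility check in the first step, but that is a straightforward consequence of the flatness of open immersions together with the standard behavior of blow-ups, exceptional divisors, and strict transforms under flat base change.
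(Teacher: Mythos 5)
Your proposal is correct and is precisely the intended elaboration of the paper's one-line proof, which simply reads ``This follows immediately from the Nisnevich descent.'' The two points you flag --- compatibility of blow-ups, strict transforms, and exceptional divisors with open base change, and the fact that a colimit of coCartesian squares is coCartesian --- are exactly what makes that one-liner go through.
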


\begin{proof}
    This follows immediately from the Nisnevich descent.
\end{proof}

\begin{lemma}\label{MBU_A^1}
    $(\mathrm{MBU})_{((\mathbb{A}^1,\varnothing),0)}$ holds.
\end{lemma}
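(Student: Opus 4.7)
The plan is to reduce the claim to Theorem \ref{SBU} (smooth blow-up excision) via a pasting argument, and then to verify the residual ``difference square'' by Zariski descent on $B$ together with blow-up invariance.

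First, apply Theorem \ref{SBU} to the smooth closed subscheme $Z\times\{0\}=\{(0,0)\}\hookrightarrow\mathbb{A}^1\times\mathbb{P}^1$, which is transversal to $|\pr_2^*[\infty]|=\mathbb{A}^1\times\{\infty\}$ since $(0,0)\notin\mathbb{A}^1\times\{\infty\}$. Here $D_B$ is the strict transform of $\mathbb{A}^1\times\{\infty\}$, which is disjoint from $E$, so $D_E=\varnothing$; likewise $D_Z=\varnothing$. The resulting coCartesian square shares its bottom row $\motive(\{0\},\varnothing)\to\motive(\mathcal{X}\otimes\bcube)$ with the target MBU square. By the pasting lemma, the MBU square is therefore coCartesian if and only if the ``upper'' square
\[
\xymatrix{
\motive(E,Z_E) \ar[r] \ar[d] & \motive(B,D_B+Z_B) \ar[d] \\
\motive(E,\varnothing) \ar[r] & \motive(B,D_B)
}
\]
is coCartesian in $\mSH_{S^1}(S,\Lambda)$; equivalently, the induced map $\cofib(\motive(E,Z_E)\to\motive(E,\varnothing))\to\cofib(\motive(B,D_B+Z_B)\to\motive(B,D_B))$ must be an equivalence.

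To verify this, I would decompose $B$ via the Zariski cover $B=B_1\cup B_2$, where $B_1:=\pi^{-1}(\mathbb{A}^1_x\times\mathbb{A}^1_t)\cong\Bl_{(0,0)}\mathbb{A}^2_{x,t}$ contains the whole exceptional divisor $E$, while $B_2:=\mathbb{A}^1_x\times\mathbb{A}^1_s$ (with $s=1/t$) is disjoint from both $E$ and the blow-up center; their overlap is $B_{12}\cong\mathbb{A}^1_x\times\mathbb{G}_{m,s}$. On the charts $B_2$ and $B_{12}$, which do not meet $E$, the relevant divisors admit product decompositions with the factor $(\mathbb{A}^1_x,\{0\})$ on one side and $(\mathbb{A}^1_s,\{0\})$ or $(\mathbb{G}_{m,s},\varnothing)$ on the other, so Nisnevich descent combined with cube-invariance assembles these contributions into a single piece matching the restriction to $B_1$. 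On $B_1\cong\Bl_{(0,0)}\mathbb{A}^2$, blow-up invariance applied to the SNC pair $(\mathbb{A}^2,\{x=0\})$ (a bona fide SNC blow-up since the center $(0,0)$ lies in $|\{x=0\}|$ with normal crossings) gives $\motive(B_1,\widetilde{Z}_B+E)\simeq\motive(\mathbb{A}^2,\{x=0\})$; combined with a further application of Theorem \ref{SBU} to $(\mathbb{A}^2,\varnothing)$ at the origin---which expresses $\motive(B_1,\varnothing)$ as a pushout involving $\motive(\mathbb{A}^2)$ and $\motive(\mathbb{P}^1)\simeq\motive(E,\varnothing)$---and cube-invariance $\motive(\mathbb{A}^2,\{x=0\})\simeq\motive(\mathbb{A}^1,\{0\})$, these identifications assemble the local picture on $B_1$ into the desired coCartesian square.

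The main obstacle is the discrepancy between $\widetilde{Z}_B$ (appearing in the upper square) and $\widetilde{Z}_B+E$ (naturally produced by blow-up invariance on the SNC pair $(\mathbb{A}^2,\{x=0\})$). Reconciling these requires carefully tracking how the extra $E$-contribution cancels against the copy of $\motive(E,\varnothing)$ arising from the SBU applied to $(\mathbb{A}^2,\varnothing)$ at the origin, and this cancellation is the combinatorial heart of the proof.
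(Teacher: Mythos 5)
Your first step---applying Theorem \ref{SBU} to $\{(0,0)\}\hookrightarrow(\mathbb{A}^1\times\mathbb{P}^1,\mathrm{pr}_2^*[\infty])$ and pasting so as to reduce the MBU square to the ``upper square''---is correct, and it is in fact the content of Lemma \ref{GS_deform_lemma} read in reverse; but the paper does not make this reduction to prove Lemma \ref{MBU_A^1}, so this is a genuinely different entry point. Unfortunately, the verification of the upper square that you sketch has a real gap, not a bookkeeping issue.

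Concretely: (i) Blow-up invariance in $\mSm$ applies only when the blow-up center sits inside the support of the modulus, and it produces the \emph{total} transform of the modulus. You can therefore identify $\motive(B_1,\widetilde{Z}_B+E)\simeq\motive(\mathbb{A}^2,\{x=0\})$, but there is no tool available at this stage that lets you compute $\motive(B_1,\widetilde{Z}_B)$ with the exceptional divisor omitted; neither cube-invariance nor blow-up invariance nor SBU touches that object. (ii) Your assembly over the Zariski cover does not go through chart-by-chart, because on $B_2$ (and on $B_{12}$) the exceptional divisor is absent, so the upper square restricts to $0\to\motive(B_2,W|_{B_2}+V|_{B_2})$ over $0\to\motive(B_2,W|_{B_2})$, i.e., to the assertion that $\motive((\mathbb{A}^1,\{0\})\otimes(\mathbb{A}^1,\{0\}))\to\motive((\mathbb{A}^1,\varnothing)\otimes(\mathbb{A}^1,\{0\}))$ is an equivalence. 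It is not: for instance $\ulM\Omega^1(\mathbb{A}^1,\{0\})=k[x]\,dx/x\neq k[x]\,dx=\ulM\Omega^1(\mathbb{A}^1,\varnothing)$, and by Theorem \ref{thm;MOmega_rep} this is detected by an object of $\mDA^{\eff}(k)$. So one cannot argue that the square is coCartesian on each chart and then glue; the contributions from $B_1,B_2,B_{12}$ must conspire in the pushout, and you do not show that they do.

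What makes the paper's argument work---and what is missing from your sketch---is the pair of ideas: first, use Theorem \ref{tame_motive_HA} (all divisors here have multiplicity $\leq 1$) to transport the square to $\logSH_{S^1}(S,\Lambda)$ via $t_!$, and second, compactify $\mathbb{A}^1\times\mathbb{P}^1$ inside $Q=\Bl_{[0:1:0]}\mathbb{P}^2$ and run Nisnevich descent on $Q$ rather than on $B$. After compactification the bottom horizontal arrow collapses, via blow-up invariance, to $\motive^{\log}(\pt)\to\motive^{\log}(\mathbb{P}^2,\pi(L))$, which is an equivalence by Lemma \ref{P^n_relative_motive}(1), and the top arrow to a $\bcube$-bundle projection. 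This ``anchor'' $\motive^{\log}(\mathbb{P}^2,\mathbb{P}^1)\simeq\motive^{\log}(\pt)$ is precisely what resolves the cancellation that you flag as ``the combinatorial heart of the proof''; without some replacement for it, the discrepancy between $\widetilde{Z}_B$ and $\widetilde{Z}_B+E$ does not go away.
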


\begin{proof}
    Let $E$ denote the exceptional divisor of the blow-up $\Bl_{(0,0)}(\mathbb{A}^1\times\mathbb{P}^1)\to \mathbb{A}^1\times\mathbb{P}^1$.
    We write $V\subset \Bl_{(0,0)}(\mathbb{A}^1\times\mathbb{P}^1)$ for the strict transform of $\{0\}\times\mathbb{P}^1$, and set $W=\mathbb{A}^1\times\{\infty\}$.
    We need to show that the following diagram in $\mSH_{S^1}(S,\Lambda)$ is coCartesian: 
    \begin{align}\label{MBU_square1}
    \xymatrix{
        \motive(E,V|_E)\ar[r]\ar[d]  &\motive(\Bl_{(0,0)}\motive(\mathbb{A}^1\times\mathbb{P}^1),V+W)\ar[d]\\
        \motive(\pt)\ar[r]^-{(0,0)}            &\motive(\mathbb{A}^1\times\mathbb{P}^1, W).
    }
    \end{align}
    All divisors appearing in the above diagram have multiplicity $\leq 1$.
    Therefore, by Theorem \ref{tame_motive_HA}, it suffices to show that the image of (\ref{MBU_square1}) under $t_!\colon \mSH_{S^1}(S,\Lambda)\to \logSH_{S^1}(S,\Lambda)$ is coCartesian.
    In other words, it suffices to show that the following diagram in $\logSH_{S^1}(S,\Lambda)$ is coCartesian:
    \begin{align}\label{MBU_square1log}
    \xymatrix{
        \motive^{\log}(E,V|_E)\ar[r]\ar[d]  &\motive^{\log}(\Bl_{(0,0)}(\mathbb{A}^1\times\mathbb{P}^1),V+W)\ar[d]\\
        \motive^{\log}(\pt)\ar[r]^-{(0,0)}            &\motive^{\log}(\mathbb{A}^1\times\mathbb{P}^1, W).
    }
    \end{align}
    
    We set $Q=\Bl_{[0:1:0]}\mathbb{P}^2$.
    The open immersion
    $$\mathbb{A}^1\times \mathbb{A}^1\hookrightarrow \mathbb{P}^2-\{[0:1:0]\};\quad (x,y)\mapsto [x:y:1]$$
    extends to an open immersion $\mathbb{A}^1\times\mathbb{P}^1\hookrightarrow Q$.
    We regard $\mathbb{A}^1\times\mathbb{P}^1$ as an open subscheme of $Q$ via this open immersion.
    The closure $\overline{W}$ of $W$ in $Q$ is the exceptional divisor of $Q\to \mathbb{P}^2$.
    Let $L\subset Q$ denote the strict transform of $\{[x:y:z]\mid z=0\}\subset \mathbb{P}^2$, which is equal to the complement $Q- \mathbb{A}^1\times\mathbb{P}^1$.
    Then (\ref{MBU_square1log}) is (the image under $\motive^{\log}$ of) the restriction of the following diagram to $\mathbb{A}^1\times\mathbb{P}^1\subset Q$:
    \begin{align}\label{MBU_square2}
    \xymatrix{
        (E,V|_E)\ar[r]\ar[d]  &(\Bl_{[0:0:1]}Q,V+\overline{W}+L)\ar[d]\\
        \pt \ar[r]^-{[0:0:1]}            &(Q, \overline{W}+L).
    }
    \end{align}
    Let $U=Q-(E\cup V)$.
    Then, $Q$ is covered by two open subschemes $\mathbb{A}^1\times\mathbb{P}^1$, $U$.
    The vertical morphisms of the above diagram are isomorphisms over $U\subset Q$.
    Therefore, by the Nisnevich descent, it suffices to show that the image under $\motive^{\log}$ of the diagram (\ref{MBU_square2}) is coCartesian.

    We make use of the canonical projections $\pi\colon Q\to \mathbb{P}^2$ and $\varpi\colon \Bl_{[0:0:1]}Q\to \Bl_{[0:0:1]}\mathbb{P}^2$.
    By the blow-up invariance of motives, the morphism $\motive^{\log}(\pt)\xrightarrow{[0:0:1]}\motive^{\log}(Q, \overline{W}+L)$ isomorphic to $\motive^{\log}(\pt)\xrightarrow{[0:0:1]}\motive^{\log}(\mathbb{P}^2, \pi(L))$, which is an isomorphism by Lemma \ref{P^n_relative_motive} (1).
    On the other hand, by the blow-up invariance, the morphism
    $$
        \motive^{\log}(E,V|_E)\to\motive^{\log}(\Bl_{[0:0:1]}Q,V+\overline{W}+L)
    $$
    is equivalent to
    $$
    \motive^{\log}(E,V|_E)\to
    \motive^{\log}(\Bl_{[0:0:1]}\mathbb{P}^2,\varpi(V)+\varpi(L)),
    $$
    which is an equivalence since $(\Bl_{[0:0:1]}\mathbb{P}^2,\varpi(V)+\varpi(L))$ is a cube-bundle over $(E,V|_E)$.
    This shows that the diagram (\ref{MBU_square2}) is coCartesian.
\end{proof}

\begin{lemma}\label{MBU_excision}
    Let $\mathcal{X}=(X,D)\in \mSm_S$ and let $Z\subset X$ be a smooth divisor which is transversal to $|D|$.
    Let $X'\to X$ be an \'etale morphism which induces an isomorphism $X'\times_XZ\cong Z$, and set $\mathcal{X}'=(X',D|_{X'})$.
    Then $(\mathrm{MBU})_{(\mathcal{X},Z)}$ holds if and only if $(\mathrm{MBU})_{(\mathcal{X}',Z)}$ holds.
\end{lemma}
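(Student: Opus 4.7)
The plan is to compare the MBU squares for $\mathcal{X}$ and $\mathcal{X}'$ by the strategy used in the proofs of Lemmas~\ref{SBU_excision} and~\ref{THA_excision}: assemble a $3\times 2$ diagram in which the left square is the MBU square for $\mathcal{X}'$ and the total rectangle is the MBU square for $\mathcal{X}$, and reduce by the pasting law to proving that the remaining (right) square is cocartesian.

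First I would use flat base change for blow-ups along the \'etale morphism $X'\to X$ to identify $B'$ with $B\times_{X\times\mathbb{P}^1}(X'\times\mathbb{P}^1)$, from which $B'\to B$ is \'etale and restricts to canonical isomorphisms $E'\cong E$ and $Z_{E'}\cong Z_E$. Under these identifications the left columns of the two MBU squares agree, so it makes sense to form
\[\xymatrix{
\motive(E,D_E+Z_E)\ar[d]\ar[r] & \motive(B',D_{B'}+Z_{B'})\ar[r]\ar[d] & \motive(B,D_B+Z_B)\ar[d]\\
\motive(Z,D_Z)\ar[r] & \motive(\mathcal{X}'\otimes\bcube)\ar[r] & \motive(\mathcal{X}\otimes\bcube).
}\]

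The key remaining step is to prove that the right square is cocartesian. Setting $V:=X\times\mathbb{P}^1-Z\times\{0\}$ and $V':=X'\times\mathbb{P}^1-Z\times\{0\}$, both $\{B',V\}\to B$ and $\{X'\times\mathbb{P}^1,V\}\to X\times\mathbb{P}^1$ are elementary Nisnevich distinguished squares (the \'etaleness coming from base change, and the isomorphism over the closed complement from $X'\times_XZ\cong Z$). Applying Nisnevich descent to each, the cofibers of the two horizontal morphisms in the right square are identified with the cofibers of
\[\motive(V',(D_V+Z_V)|_{V'})\to\motive(V,D_V+Z_V)\quad\text{and}\quad\motive(V',D_V|_{V'})\to\motive(V,D_V),\]
respectively, where $D_V=(\pr_1^*D+\pr_2^*[\infty])|_V$ and $Z_V=Z\times(\mathbb{P}^1-\{0\})=Z_B|_V$. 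To identify these two cofibers I would apply Nisnevich descent once more, this time to the elementary distinguished square $\{V',(X-Z)\times\mathbb{P}^1\}\to V$ (valid because $V'\to V$ is \'etale and an isomorphism over the closed complement $Z_V\subset V$). The crucial observation is that $Z_V$ is disjoint from $(X-Z)\times\mathbb{P}^1$, so it contributes nothing to the restricted modulus there; consequently, both cofibers become equivalent to the same object
\[\cofib\bigl(\motive((X'-Z)\times\mathbb{P}^1,\pr_1^*D|_{X-Z}+\pr_2^*[\infty])\to\motive((X-Z)\times\mathbb{P}^1,\pr_1^*D|_{X-Z}+\pr_2^*[\infty])\bigr),\]
and the induced comparison map is the identity. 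This shows the right square is cocartesian, and the pasting law then yields $(\mathrm{MBU})_{(\mathcal{X}',Z)}\iff(\mathrm{MBU})_{(\mathcal{X},Z)}$.

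The main obstacle is the bookkeeping of the two distinct modulus structures inherited by the open $V$ from $(B,D_B+Z_B)$ on the one hand and from $\mathcal{X}\otimes\bcube$ on the other; their discrepancy $Z_V$ becomes invisible only after restricting to $(X-Z)\times\mathbb{P}^1$. Beyond this point the argument is a mechanical assembly of Nisnevich descent and the pasting law, following the template of Lemmas~\ref{SBU_excision} and~\ref{THA_excision}.
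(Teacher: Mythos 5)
Your argument is correct. The high-level strategy — assemble a $3\times 2$ diagram whose left square is the MBU square for $\mathcal{X}'$ and whose total rectangle is the MBU square for $\mathcal{X}$, then kill the right square by Nisnevich descent and invoke the pasting law — is exactly the template the paper uses for the sibling Lemmas~\ref{SBU_excision} and~\ref{THA_excision}, and that is certainly the intended argument here as well. However, the proof printed in the paper under Lemma~\ref{MBU_excision} appears to be an accidental repetition of the diagram from Lemma~\ref{THA_excision} (it displays $\motive(X',D|_{X'}+Z)\to\motive(X',D|_{X'}+\varepsilon Z)$ etc., which has nothing to do with the blow-up $B$ or the strict transforms $Z_B$), so there is no genuine proof in the paper to compare against verbatim.

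Your version is more detailed precisely where detail is needed: you identify $B'\simeq B\times_{X\times\mathbb{P}^1}(X'\times\mathbb{P}^1)$ by flat base change, observe $E'\simeq E$, $Z_{E'}\simeq Z_E$ from $X'\times_X Z\simeq Z$, and — this is the step that is absent from the shorter template of the other excision lemmas — handle the fact that the two horizontal cofibers in the right square carry \emph{different} modulus decorations ($D_B+Z_B$ versus $D_B$). Your two-step Nisnevich descent, first reducing along $\{B',V\}\to B$ and $\{X'\times\mathbb{P}^1,V\}\to X\times\mathbb{P}^1$ to the open $V=X\times\mathbb{P}^1-Z\times\{0\}$, then along $\{V',(X-Z)\times\mathbb{P}^1\}\to V$, correctly identifies both cofibers with
\[
\cofib\bigl(\motive((X'-Z)\times\mathbb{P}^1,\pr_1^*D+\pr_2^*[\infty])\to\motive((X-Z)\times\mathbb{P}^1,\pr_1^*D+\pr_2^*[\infty])\bigr),
\]
because $Z_V=Z\times(\mathbb{P}^1-\{0\})$ is disjoint from $(X-Z)\times\mathbb{P}^1$. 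The resulting equivalence of cofibers is compatible with the comparison map, so the right square is coCartesian and the pasting law closes the proof. No gap.
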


\begin{proof}
    Let $U=X- Z$ and $U'=X'- Z$.
    Consider the following commutative diagram:
    $$
    \xymatrix{
        \motive(U',D|_{U'})\ar[d]\ar[r] &\motive(X',D|_{X'}+Z)\ar[r]\ar[d]       &\motive(X',D|_{X'}+\varepsilon Z)\ar[d]\\
        \motive(U,D|_U)\ar[r] &\motive(X,D+Z)\ar[r] &\motive(X,D+\varepsilon Z).
    }
    $$
    The left square and the total rectangle are coCartesian by the Nisnevich descent.
    By the pasting law, the right square is also coCartesian.
    This proves the claim.
\end{proof}

\begin{proof}[Proof of Proposition \ref{MBU}]
    As in the proof of Theorem \ref{SBU}, we may reduce to the case $\mathcal{X}=(Z,D|_Z)\otimes (\mathbb{A}^1,\varnothing)$ using Lemma \ref{transversal_structure}, Lemma \ref{MBU_descent}, and Lemma \ref{MBU_excision}.
    In this case the claim follows from Lemma \ref{MBU_tensor} and Lemma \ref{MBU_A^1}.
\end{proof}

\subsection{Gysin sequence}
We return to the proof of Theorem \ref{GS}.
First we construct the Gysin map.

\begin{definition}
    Let $\mathcal{X}=(X,D)\in \mSm_S$ and let $Z\subset X$ be a smooth divisor which is transversal to $|D|$.
    We define the \emph{motive of $\mathcal{X}$ supported on $Z$} to be the cofiber
    $$
        \motive_Z(\mathcal{X})=\cofib\bigl(\motive(X,D+Z)\to \motive(\mathcal{X})\bigr).
    $$
\end{definition}

\begin{lemma}\label{GS_deform_lemma}
    Let $\mathcal{X}=(X,D)\in \mSm_S$ and let $Z\subset X$ be a smooth divisor transversal to $|D|$.
    Define $B,E,Z_B,Z_E$ as in Theorem \ref{GS}.
    Then
    $$
        \motive_{Z_E}(E,D_E)\to \motive_{Z_B}(B,D_B)
    $$
    is an equivalence.
\end{lemma}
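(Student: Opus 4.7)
The plan is to prove this lemma by a straightforward application of the pasting law for pushouts to a stack of two squares: the bottom one being the smooth blow-up excision square \eqref{MBU_square'} and the total rectangle being the modified blow-up excision square \eqref{MBU_square}.

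Specifically, I would begin by assembling the following commutative diagram in $\mSH_{S^1}(S)$:
\[
\xymatrix@R=1.5em{
\motive(E, D_E + Z_E) \ar[r] \ar[d] & \motive(B, D_B + Z_B) \ar[d] \\
\motive(E, D_E) \ar[r] \ar[d] & \motive(B, D_B) \ar[d] \\
\motive(Z, D_Z) \ar[r] & \motive(\sX \otimes \bcube),
}
\]
where the vertical maps in the upper square are the canonical morphisms weakening the modulus (i.e.\ forgetting the divisor $Z_E$, resp.\ $Z_B$) and the vertical maps in the lower square come from the blow-up projection $B\to X\times\PP^1$ and its restriction $E \to Z\times\{0\} \simeq Z$.

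The next step is to observe that the bottom square is the coCartesian square \eqref{MBU_square'}, obtained by applying the smooth blow-up excision (Theorem \ref{SBU}) to the pair $(\sX \otimes \bcube, Z\times\{0\})$; note that $Z\times\{0\}$ is transversal to $|\pr_1^* D + \pr_2^*[\infty]|$ because $Z$ is transversal to $|D|$ and $Z\times\{0\}$ is disjoint from $X \times \{\infty\}$. On the other hand, the total rectangle is precisely the modified blow-up excision square \eqref{MBU_square}, which is coCartesian by Proposition \ref{MBU}. Applying the pasting lemma for pushouts — namely, that if the lower square and the outer rectangle are coCartesian then so is the upper square — yields that the top square is coCartesian.

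Finally, since $\mSH_{S^1}(S)$ is stable, a square is coCartesian if and only if the induced map on the cofibers of its vertical arrows is an equivalence. By the definition of $\motive_Z({-})$, the cofibers of the two vertical maps in the top square are $\motive_{Z_E}(E, D_E)$ and $\motive_{Z_B}(B, D_B)$ respectively, so we conclude that the induced map between them is an equivalence. There is no serious obstacle here: all real work is encapsulated in the modified blow-up excision (Proposition \ref{MBU}), and the rest is a formal pasting argument in a stable $\infty$-category.
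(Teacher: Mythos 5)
Your proof is correct and is essentially the same argument the paper gives: the paper says to combine the coCartesian squares \eqref{MBU_square} and \eqref{MBU_square'} to obtain the coCartesian square with vertical cofibers $\motive_{Z_E}(E,D_E)$ and $\motive_{Z_B}(B,D_B)$, and you have simply spelled out the pasting-law step and the identification of cofibers in a stable $\infty$-category.
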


\begin{proof}
    Combining the coCartesian squares (\ref{MBU_square}) and (\ref{MBU_square'}), we get a coCartesian square
    $$
    \xymatrix{
        \motive(E,D_E+Z_E)\ar[r]\ar[d]  &\motive(B,D_B+Z_B)\ar[d]\\
        \motive(E,D_E)\ar[r]            &\motive(B,D_B),
    }
    $$
    which implies the claim.
\end{proof}

\begin{definition}
    Let $\mathcal{X}\in \mSm_S$ and let $Z\subset X$ be a smooth divisor which is transversal to $|D|$.
    Define $B,E,Z_B,Z_E$ as in Theorem \ref{GS}.
    Let $s_1$ denote the composition
    $$
           s_1\colon X\xrightarrow{\sim} X\times \{1\}\hookrightarrow B.
    $$
    We define the \emph{Gysin map} $\beta_{(\mathcal{X},Z)}$ to be the composition
    $$
        \motive_Z(\mathcal{X})\xrightarrow{s_1}\motive_{Z_B}(B,D_B)
        \xleftarrow{\sim}\motive_{Z_E}(E,D_E)
        \cong \MTh(\mathcal{N}_ZX),
    $$
    where the last equivalence is induced by the identification $E\cong \mathbb{P}(\normal_ZX\oplus \mathcal{O})$.
\end{definition}

\begin{definition}
    Let $\mathcal{X}=(X,D) \in \mSm_S$ and let $Z\subset X$ be a smooth divisor which is transversal to $|D|$.
     We say that $(\mathrm{GS})_{(\mathcal{X},Z)}$ holds if the Gysin map
     $$
     \beta_{(\mathcal{X},Z)}\colon \motive_Z(\mathcal{X}) \to \MTh(\mathcal{N}_ZX)
     $$
     is an equivalence.
     By definition, this is equivalent to saying that $s_1\colon \motive_Z(\mathcal{X})\to \motive_{Z_B}(B,D_B)$ is an equivalence.
\end{definition}

\begin{lemma}\label{GS_tensor}
    Let $\mathcal{X}=(X,D)\in \mSm_S$ and let $Z\subset X$ be a smooth divisor which is transversal to $|D|$.
    If $(\mathrm{GS})_{(\mathcal{X},Z)}$ holds, then $(\mathrm{GS})_{(\mathcal{X}\otimes\mathcal{Y},Z\times Y)}$ holds for any $\mathcal{Y}=(Y,E)\in \mSm$.
\end{lemma}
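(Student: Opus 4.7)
The plan is to track how each ingredient in the construction of the Gysin map transforms under tensoring with $\motive(\mathcal{Y})$, and then exploit the fact that this functor preserves equivalences.

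First, I would observe that the blow-up construction is compatible with the product $(-)\times Y$: there is a canonical isomorphism
\[
\Bl_{(Z\times Y)\times\{0\}}\bigl((X\times Y)\times \mathbb{P}^1\bigr) \cong B\times Y, \qquad B=\Bl_{Z\times\{0\}}(X\times\mathbb{P}^1),
\]
under which the exceptional divisor becomes $E\times Y$ and the strict transform of $(Z\times Y)\times \mathbb{P}^1$ becomes $Z_B\times Y$. Likewise, the normal bundle of $Z\times Y$ in $X\times Y$ is the pullback of $\mathcal{N}_ZX$ along $\pr_1$, so $\mathcal{N}_{Z\times Y}(X\times Y) \cong \mathcal{N}_ZX \times Y$ as vector bundles on $\mathcal{X}\otimes\mathcal{Y}$.

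Second, since $\motive(\mathcal{Y})\otimes(-)$ is a left adjoint, it preserves cofibers. Combined with the identifications above, this yields canonical equivalences
\begin{align*}
\motive_{Z\times Y}(\mathcal{X}\otimes\mathcal{Y}) &\simeq \motive_Z(\mathcal{X}) \otimes \motive(\mathcal{Y}), \\
\motive_{Z_B\times Y}((B,D_B)\otimes\mathcal{Y}) &\simeq \motive_{Z_B}(B,D_B) \otimes \motive(\mathcal{Y}),
\end{align*}
and, using Proposition \ref{mthiso} (or equivalently the blow-up description of the Thom space), $\MTh(\mathcal{N}_{Z\times Y}(X\times Y)) \simeq \MTh(\mathcal{N}_ZX)\otimes \motive(\mathcal{Y})$.

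Finally, under these equivalences, the section $s_1$ appearing in the definition of the Gysin map for $(\mathcal{X}\otimes\mathcal{Y},Z\times Y)$ is the product of the corresponding section for $(\mathcal{X},Z)$ with $\id_Y$, so $\beta_{(\mathcal{X}\otimes\mathcal{Y}, Z\times Y)}$ is identified with $\beta_{(\mathcal{X},Z)} \otimes \id_{\motive(\mathcal{Y})}$. Since $\beta_{(\mathcal{X},Z)}$ is an equivalence by hypothesis and tensoring with $\motive(\mathcal{Y})$ preserves equivalences, the result follows. There is no serious obstacle here; the argument is purely a formal verification that each step in the construction of the Gysin map is natural under $(-)\otimes\mathcal{Y}$.
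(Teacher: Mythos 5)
Your proposal is correct and follows the same approach as the paper, which simply remarks that $\motive(\mathcal{Y})\otimes(-)$ preserves colimits; your write-up just spells out in detail how the blow-up, the strict transform, the normal bundle, and the section $s_1$ are each compatible with the product $(-)\times Y$, which is exactly the implicit content of that one-line proof.
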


\begin{proof}
    This is a consequence of the fact that the functor $\motive(Y,E)\otimes({-})$ preserves colimits.
\end{proof}

\begin{lemma}\label{GS_descent}
    Let $\mathcal{X}=(X,D)\in \mSm_S$ and let $Z\subset X$ be a smooth divisor which is transversal to $|D|$.
    Let $\{\mathcal{U}_i\}_{i\in I}$ be a Zariski covering of $\mathcal{X}$.
    For each finite non-empty subset $J\subset I$, we set $\mathcal{U}_J=\bigcap_{j\in J}\mathcal{U}_j$.
    If $(\mathrm{GS})_{(\mathcal{U}_J,Z_J)}$ holds for every finite non-empty subset $J\subset I$, then $(\mathrm{GS})_{(\mathcal{X},Z)}$ holds.
\end{lemma}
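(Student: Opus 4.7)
The plan is to show that both the source and target of the Gysin map satisfy Zariski (hence Nisnevich) descent, and that the Gysin map itself is natural with respect to restriction along Zariski open immersions, so that the statement reduces to checking the equivalence on the Čech nerve of the covering.

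First I would show that $\motive_Z(\mathcal{X}) = \cofib(\motive(X,D+Z)\to \motive(\mathcal{X}))$ satisfies Zariski descent for the covering $\{\mathcal{U}_i\}_{i \in I}$. This is immediate: by Nisnevich descent for $\motive$ in $\mSH_{S^1}(S)$ one has $\motive(\mathcal{X})\simeq \colim_J \motive(\mathcal{U}_J)$ and similarly for $(X,D+Z)$ using the restrictions $(U_J,D|_{U_J}+Z_J)$, and cofibers commute with colimits in the stable category. Hence
\[
\motive_Z(\mathcal{X}) \simeq \colim_J \motive_{Z_J}(\mathcal{U}_J).
\]

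Next I would establish the same descent property for the target. Using the equivalent characterization of $(\mathrm{GS})_{(\mathcal{X},Z)}$ in terms of $s_1\colon \motive_Z(\mathcal{X}) \to \motive_{Z_B}(B,D_B)$, observe that blow-ups commute with base change along open immersions, so from the covering $\{\mathcal{U}_i\}$ of $\mathcal{X}$ one obtains a Zariski covering $\{\mathcal{B}_i\}$ of $(B,D_B)$ where $\mathcal{B}_i = \Bl_{Z_i \times \{0\}}(U_i \times \mathbb{P}^1)$ with its induced modulus and $(Z_B)_i = Z_B \cap \mathcal{B}_i = Z_{\mathcal{B}_i}$. The same descent argument then yields
\[
\motive_{Z_B}(B,D_B) \simeq \colim_J \motive_{Z_{\mathcal{B}_J}}(\mathcal{B}_J,D_{\mathcal{B}_J}).
\]
The section $s_1$ is evidently natural with respect to restriction to each $\mathcal{U}_J$, so $s_1$ becomes a map of colimit diagrams whose $J$-component is precisely the Gysin map $\beta_{(\mathcal{U}_J,Z_J)}$ (up to the identification of $E$ with the projective completion of the normal bundle, which is also Zariski local on $Z$).

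Since each $\beta_{(\mathcal{U}_J,Z_J)}$ is an equivalence by hypothesis, the induced morphism on colimits is an equivalence, giving $(\mathrm{GS})_{(\mathcal{X},Z)}$. The only subtle point to verify carefully is that the identification $E \cong \mathbb{P}(\normal_ZX \oplus \mathcal{O})$ used in the construction of $\beta$ is compatible with restriction to the Zariski cover; but this is clear from the local nature of the normal bundle and the fact that blow-up commutes with flat base change.
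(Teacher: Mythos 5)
Your proof is correct and fills in exactly the details that the paper leaves implicit in its one-line proof (``This follows immediately from the Nisnevich descent''): you unfold the descent for both $\motive_Z(\mathcal{X})$ and $\motive_{Z_B}(B,D_B)$ via the \v Cech nerve, use that cofibers commute with colimits, and check naturality of $s_1$ and compatibility of the blow-up and the normal-bundle identification with restriction to opens. This is the same approach, just spelled out.
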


\begin{proof}
    This follows immediately from the Nisnevich descent.
\end{proof}

\begin{lemma}\label{GS_lemma}
    The following square in $\mSH_{S^1}(S,\Lambda)$ is coCartesian:
    $$
    \xymatrix{
        \motive(\mathbb{P}^1,[0]+[\infty])\ar[r]\ar[d]      &\motive(\mathbb{P}^1,[0])\ar[d]\\
        \motive(\mathbb{P}^1,[\infty])\ar[r]      &\motive(\mathbb{P}^1,\varnothing).
    }
    $$
\end{lemma}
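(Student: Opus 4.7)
The plan is to verify the statement by Nisnevich descent along the standard Zariski cover of $\PP^1$ together with a straightforward inspection.

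Set $U_\infty := \PP^1 \setminus \{\infty\}$ and $U_0 := \PP^1 \setminus \{0\}$, both isomorphic to $\AA^1_S$, with intersection $\mathbb{G}_m = U_0 \cap U_\infty$. First I would note that for each of the four modulus pairs $(\PP^1, D)$ appearing in the square (with $D \in \{[0]+[\infty],\, [0],\, [\infty],\, \varnothing\}$), restricting $D$ to $U_0$, $U_\infty$, and $\mathbb{G}_m$ gives an elementary distinguished square in $\mSm_S$ (it is a Zariski square, hence a Nisnevich square). By the Nisnevich descent property of $\motive$, each such square is coCartesian in $\mSH_{S^1}(S,\Lambda)$. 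Consequently, the whole $2 \times 2$ square in the statement is itself obtained as a pushout of three $2 \times 2$ squares, indexed by $U_\infty$, $U_0$, and $\mathbb{G}_m$.

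Next, since colimits commute in the stable $\infty$-category $\mSH_{S^1}(S,\Lambda)$, the total cofiber of the original square is the pushout of the total cofibers of these three restricted squares. It therefore suffices to check that each restricted total cofiber vanishes. On $U_\infty$, the divisor $[\infty]$ restricts to $0$, so both horizontal morphisms of the restricted square become the identity; its total cofiber is zero. Symmetrically, on $U_0$ the divisor $[0]$ restricts to $0$, so both vertical morphisms become the identity and the total cofiber is again zero. On $\mathbb{G}_m$, both divisors restrict to $0$, so every vertex becomes $\motive(\mathbb{G}_m, \varnothing)$ and every arrow is the identity, giving total cofiber zero trivially.

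No step should present a serious obstacle; the argument is essentially bookkeeping. The only point worth being careful about is the compatibility used in the second paragraph, namely that forming the total cofiber of a square of squares commutes with taking pushouts termwise, which is a standard consequence of the commutation of colimits in a stable $\infty$-category.
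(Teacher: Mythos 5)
Your proof is correct and is precisely the paper's argument: the paper's one-line proof is ``By the Nisnevich descent, it suffices to prove that the restrictions of this square to $\mathbb{A}^1$, $\mathbb{P}^1-\{0\}$, $\mathbb{A}^1-\{0\}\subset \mathbb{P}^1$ are coCartesian, which is obvious,'' and you have simply spelled out the bookkeeping (total cofiber commutes with the pointwise pushout coming from Zariski descent) that the paper leaves implicit.
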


\begin{proof}
    By the Nisnevich descent, it suffices to prove that the restrictions of this square to $\mathbb{A}^1,\mathbb{P}^1-\{0\},\mathbb{A}^1-\{0\}\subset \mathbb{P}^1$ are coCartesian, which is obvious.
\end{proof}

\begin{lemma}\label{GS_cube}
    $(\mathrm{GS})_{(\bcube,0)}$ holds.
\end{lemma}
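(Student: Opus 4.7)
The plan is to unwind the Gysin construction by hand in this base case. With $\mathcal X = \bcube$ and $Z = \{0\}\subset \mathbb P^1$, the deformation space $B = \Bl_{(0,0)}(\mathbb P^1\times \mathbb P^1)$ carries an exceptional divisor $E \cong \mathbb P^1$ and the strict transform $Z_B \cong \mathbb P^1$ of $\{0\}\times\mathbb P^1$, meeting $E$ transversally in a single point $Z_E$; the modulus $D_B$ equals the pullback of $\pr_1^*[\infty]+\pr_2^*[\infty]$, whose support misses the blow-up center (so $D_B|_E = 0$ and $D_B|_{Z_B} = [\infty]$). The normal bundle $\mathcal N_{\{0\}}\mathbb P^1$ is a trivial line bundle over a point.

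My first step will be to realize all three relevant relative motives as the same cofiber $\cofib(\motive(\pt)\to \motive(\mathbb P^1,\varnothing))$: the Thom space $\MTh(\mathcal N_{\{0\}}\mathbb P^1)$ via Proposition \ref{mthiso} applied to the trivial line bundle over a point; the group $\motive_{Z_E}(E,D_E)$ directly from its definition, combined with cube-invariance applied to $(\mathbb P^1,[Z_E])\cong \bcube$; and $\motive_{\{0\}}(\bcube)$ via Lemma \ref{GS_lemma} together with cube-invariance applied to $(\mathbb P^1,[0])\cong \bcube$.

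By Lemma \ref{GS_deform_lemma} the map $i_E$ is already known to be an equivalence, so the task reduces to showing that the section map $s_1: \motive_{\{0\}}(\bcube)\to \motive_{Z_B}(B,D_B)$ is an equivalence. For this I will carry out a diagram chase using the natural square
\[
\xymatrix{
\motive(\mathbb P^1,[\infty]+[0])\ar[r]\ar[d]_{s_1} & \motive(\mathbb P^1,[\infty])\ar[d]^{s_1}\\
\motive(B,D_B+Z_B)\ar[r] & \motive(B,D_B),
}
\]
whose rows are the defining cofiber sequences for $\motive_{\{0\}}(\bcube)$ and $\motive_{Z_B}(B,D_B)$. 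Composing with the blow-down $(B,D_B)\to \bcube\otimes\bcube$ and applying cube-invariance and blow-up invariance identifies the right-hand column with the canonical equivalence $\motive(\bcube)\simeq\motive(\pt)\simeq\motive(\bcube\otimes\bcube)$. A parallel analysis with $i_E$ in place of $s_1$ should produce the same identifications, so that $i_E^{-1}\circ s_1$ corresponds to the identity under the equivalences of the first step.

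The main obstacle will be the bookkeeping in this last diagram chase: several instances of Lemma \ref{GS_lemma}, blow-up invariance, and cube-invariance must be combined, and one has to check that the resulting self-map of $\cofib(\motive(\pt)\to \motive(\mathbb P^1,\varnothing))$ really is the identity rather than some nontrivial automorphism. I expect this compatibility to follow from a direct inspection of the two standard coordinate charts on $B$, using that $B\to \bcube$ given by the second projection connects the fiber $s_1(\mathbb P^1)$ over $1$ to the component $E$ of the fiber over $0$.
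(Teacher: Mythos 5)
Your setup and preliminary identifications are all correct: realizing $\MTh(\mathcal N_{\{0\}}\mathbb P^1)$, $\motive_{Z_E}(E,D_E)$, and $\motive_{\{0\}}(\bcube)$ each as $\cofib\bigl(\motive(\pt)\to\motive(\mathbb P^1,\varnothing)\bigr)$ is exactly right, as is the reduction via Lemma \ref{GS_deform_lemma} to showing $s_1$ is an equivalence. The gap is in the step you yourself flag: showing the resulting self-map of $\cofib\bigl(\motive(\pt)\to\motive(\mathbb P^1,\varnothing)\bigr)$ is the identity. Two things go wrong in your proposed resolution.

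First, ``blow-up invariance'' does not apply to the blow-down $(B,D_B)\to\bcube\otimes\bcube$: the center $(0,0)$ is not in $|D_B|$, so this is not an SNC blow-up, and in fact the blow-down is \emph{not} an equivalence in $\mSH_{S^1}(S,\Lambda)$. (Smooth blow-up excision, Theorem \ref{SBU}, together with cube-invariance, shows instead that the \emph{other} map $\motive(E,\varnothing)\to\motive(B,D_B)$ is an equivalence.) Composing $s_1$ with a non-equivalence and identifying the composite therefore tells you nothing about $s_1$ itself.

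Second, using the second projection $B\to\mathbb P^1$ as a homotopy connecting $s_1(\mathbb P^1)$ to $E$ does not obviously work: the fiber over $0$ is the normal crossings divisor $Z_B\cup E$, not $\mathbb P^1$, and it is precisely the interaction between $Z_B$ (which is killed in $\motive_{Z_B}(B,D_B)$) and $E$ that has to be controlled. The paper gets around this not with the second projection, but with a genuine homotopy built from the multiplication map $h(s,t)=(st,t)$, lifted to $\widetilde h\colon \Bl_{(0,\infty),(\infty,0)}(\mathbb P^1\times\mathbb P^1)\to B$; at $t=1$ it recovers $s_1$, while at $t=0$ the constant map to $(0,0)$ lifts through the exceptional divisor, giving exactly $s_0$. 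Crucially, because $\mathbb P^1$-stability alone does not force the endomorphisms of the Tate twist to be trivial for an arbitrary connective ring spectrum $\Lambda$, one cannot simply wave away the ambiguity as a ``self-map of the cofiber''; the explicit homotopy (or an equivalent substitute) is needed. The paper also reduces this verification to $\logSH_{S^1}(S,\Lambda)$ via Theorem \ref{tame_motive_HA}, which makes the admissibility bookkeeping for $\widetilde h$ cleaner. To close the gap you would need to supply such a concrete homotopy connecting $s_0$ and $s_1$, rather than appeal to the projection $B\to\mathbb P^1$.
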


\begin{proof}
    Define $B,E,Z_B,Z_E$ as in Theorem \ref{GS}.
    Let $s_0$ denote the closed immersion
    $\mathbb{P}^1\cong E\hookrightarrow B$.
    This induces a morphism
    $$
        s_0\colon \motive_{\{0\}}(\bcube)\to
        \motive_{Z_B}(B,D_B).
    $$
    This is an equivalence since we have $\motive_{\{0\}}(\bcube)\cong\motive_{\{0\}}(\mathbb{P}^1)\cong \motive_{Z_E}(E,D_E)\cong \motive_{Z_B}(B,D_B)$, where the first equivalence follows from Lemma \ref{GS_lemma} and the last equivalence follows form Lemma \ref{GS_deform_lemma}.
    Therefore, to show that the Gysin map is an equivalence, it suffices to show that $s_1\colon \motive_{\{0\}}(\bcube)\to\motive_{Z_B}(B,D_B)$ is homotopic to $s_0$.
    Moreover, by Theorem \ref{tame_motive_HA}, it suffices to prove this for $\motive^{\log}({-})$ instead of $\motive({-})$.
    
    To this end, we consider the map
    $$
        h\colon \mathbb{A}^1\times \mathbb{A}^1\to \mathbb{A}^1\times \mathbb{A}^1;\quad (s,t)\mapsto (st,t).
    $$
    This lifts to a map $\widetilde{h}\colon \Bl_{(0,\infty),(\infty,0)}(\mathbb{P}^1\times\mathbb{P}^1)\to B=\Bl_{(0,0)}(\mathbb{P}^1\times\mathbb{P}^1)$ and gives a morphism of log pairs $\Bl_{(0,\infty),(\infty,0)}(\bcube\otimes\bcube)\to (B,D_B)$.
    Moreover, this induces
    $$
        \widetilde{h}\colon \motive^{\log}_{\pi^{-1}(\{0\}\times \mathbb{P}^1)}(\Bl_{(0,\infty),(\infty,0)}(\bcube\otimes\bcube))
        \to \motive^{\log}_{Z_B}(B,D_B),
    $$
    where $\pi\colon \Bl_{(0,\infty),(\infty,0)}(\mathbb{P}^1\times\mathbb{P}^1)\to \mathbb{P}^1\times\mathbb{P}^1$ is the projection.
    The canonical inclusions
    $$
        i_\nu\colon \mathbb{P}^1\xrightarrow{\sim}\mathbb{P}^1\times\{\nu\}\hookrightarrow
        \mathbb{P}^1\times\mathbb{P}^1\quad (\nu=0,1)
    $$
    can be lifted to
    $
        \iota_\nu\colon \mathbb{P}^1\to \Bl_{(0,\infty),(\infty,0)}(\mathbb{P}^1\times\mathbb{P}^1)
    $
    and induce morphisms of log pairs $\bcube\to \Bl_{(0,\infty),(\infty,0)}(\bcube\otimes\bcube)$.
    Moreover, they induce
    $$
        \iota_\nu\colon
        \motive^{\log}_{\{0\}}(\bcube)
        \to
        \motive^{\log}_{\pi^{-1}(\{0\}\times \mathbb{P}^1)}(\Bl_{(0,\infty),(\infty,0)}(\bcube\otimes\bcube))\quad (\nu=0,1).
    $$
    Consider the following commutative diagram in $\tDA^\eff$:
    $$
    \xymatrix{
        &
        \motive^{\log}_{\{0\}}(\bcube)
        \ar[dl]_-{i_0}\ar[d]^-{\iota_0}\ar[dr]^-{s_0}
        \\
        \motive^{\log}_{\{0\}\times\mathbb{P}^1}(\bcube\otimes\bcube)
        &
        \motive^{\log}_{\pi^{-1}(\{0\}\times \mathbb{P}^1)}(\Bl_{(0,\infty),(\infty,0)}(\bcube\otimes\bcube))
        \ar[r]^-{\widetilde{h}}
        \ar[l]_-{\sim}
        &
        \motive^{\log}_{Z_B}(B,D_B)
        \\
        &
        \motive^{\log}_{\{0\}}(\bcube).
        \ar[ul]^-{i_1}\ar[u]_-{\iota_1}\ar[ur]_-{s_1}
    }
    $$
    We have $i_0\simeq i_1$ since they are sections of the equivalence
    $$
        \pr_1\colon \motive^{\log}_{\{0\}\times\mathbb{P}^1}(\bcube\otimes\bcube)
        \xrightarrow{\sim}
        \motive^{\log}_{\{0\}}(\bcube).
    $$
    Therefore we get $\iota_0\simeq \iota_1$ and hence $s_0\simeq s_1$.
\end{proof}

\begin{lemma}\label{GS_excision}
    Let $\mathcal{X}=(X,D)\in \mSm_S$ and let $Z\subset X$ be a smooth divisor which is transversal to $|D|$.
    Let $X'\to X$ be an \'etale morphism which induces an isomorphism $X'\times_XZ\cong Z$ and set $\mathcal{X}'=(X',D|_{X'})$.
    Then $(\mathrm{GS})_{(\mathcal{X},Z)}$ holds if and only if $(\mathrm{GS})_{(\mathcal{X}',Z)}$ holds.
\end{lemma}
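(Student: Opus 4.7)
The plan is to reduce the claim to the Nisnevich descent property, mimicking the structure of Lemmas \ref{THA_excision}, \ref{SBU_excision}, and \ref{MBU_excision}. The key point is to lift the étale excision datum on $X$ to a compatible datum on the deformation space $B$.

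First I would construct $B':=\Bl_{Z\times\{0\}}(X'\times\mathbb{P}^1)$ and observe that since $X'\to X$ is flat, blow-ups commute with the base change along $X'\to X$, giving a canonical identification $B'\cong X'\times_X B$. Thus $B'\to B$ is étale, and because the blow-up center already lies over $Z$, we get an isomorphism $B'\times_B Z_B\cong Z_{B'}$. Moreover, the divisor $D_B$ (resp.~$D_B+Z_B$) pulls back to $D_{B'}$ (resp.~$D_{B'}+Z_{B'}$) since these are defined by pulling back from $X$ and $\mathbb{P}^1$. Hence the squares
\[
\xymatrix{
(U',D_{B'}|_{U'})\ar[r]\ar[d] & (B',D_{B'})\ar[d]\\
(U, D_B|_U)\ar[r] & (B,D_B)
}
\qquad
\xymatrix{
(U',D_{B'}|_{U'})\ar[r]\ar[d] & (B',D_{B'}+Z_{B'})\ar[d]\\
(U, D_B|_U)\ar[r] & (B,D_B+Z_B)
}
\]
with $U=B-Z_B$ and $U'=B'-Z_{B'}$ are elementary distinguished squares in $\mSm_S$.

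Next I would apply Nisnevich descent (which holds in $\mSH_{S^1}(S,\Lambda)$ by construction) to both squares. The two squares share the same left column, so the cofibers of the two right vertical maps agree. Taking horizontal cofibers and reorganizing, this says precisely that $\motive_{Z_{B'}}(B',D_{B'})\to \motive_{Z_B}(B,D_B)$ is an equivalence. An entirely analogous (and simpler) argument applied to $X'\to X$ and the open/closed decomposition $X=U\cup Z$ (with $U=X-Z$) shows that $\motive_Z(\mathcal{X}')\to \motive_Z(\mathcal{X})$ is an equivalence.

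Finally, I would observe that the section $s_1\colon X\to B$ (resp.~$X'\to B'$) is compatible with the étale morphisms, yielding a commutative square
\[
\xymatrix{
\motive_Z(\mathcal{X}')\ar[r]^-{s_1}\ar[d]_\sim & \motive_{Z_{B'}}(B',D_{B'})\ar[d]^\sim\\
\motive_Z(\mathcal{X})\ar[r]^-{s_1} & \motive_{Z_B}(B,D_B)
}
\]
in which the vertical arrows are equivalences. Therefore the top $s_1$ is an equivalence if and only if the bottom one is, i.e.~$(\mathrm{GS})_{(\mathcal{X}',Z)}$ holds if and only if $(\mathrm{GS})_{(\mathcal{X},Z)}$ holds. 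The only mildly delicate point is verifying the base change $B'\cong X'\times_X B$ and the resulting isomorphism $Z_{B'}\cong B'\times_B Z_B$; once these are in hand, the rest is a routine application of Nisnevich descent, just as in the preceding excision lemmas.
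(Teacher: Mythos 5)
Your proof is correct and follows essentially the same route as the paper: exhibit the commutative square relating $s_1\colon \motive_Z(\mathcal{X}')\to\motive_{Z_{B'}}(B',D_{B'})$ to $s_1\colon \motive_Z(\mathcal{X})\to\motive_{Z_B}(B,D_B)$, and show the two vertical comparison maps are equivalences by Nisnevich descent together with the pasting law. The only difference is that you make explicit the flat base-change identification $B'\cong X'\times_X B$ (and hence $Z_{B'}\cong B'\times_B Z_B$ and the compatibility of divisors), which the paper leaves implicit when it says ``a similar argument shows that $\beta$ is also an equivalence.''
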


\begin{proof}
    For the pair $(\mathcal{X},Z)$ (resp. $(\mathcal{X}',Z)$), we define $B,Z_B$ (resp. $B',Z_{B'}$) as in Proposition \ref{MBU}.
    It suffices to show that the following diagram is coCartesian:
    $$
    \xymatrix{
         \motive_Z(X',D|_{X'})\ar[r]^-{s_1}\ar[d]^-\alpha
         &
         \motive_{Z_{B'}}(B',D_{B'})\ar[d]^-\beta\\
         \motive_Z(X,D)\ar[r]^-{s_1}
         &
         \motive_{Z_B}(B,D_B).
    }
    $$
    We prove that the morphisms $\alpha$ and $\beta$ are equivalences.
    As for $\alpha$, we consider the following square, where $U=X-Z$ and $U'=X'-Z$:
    $$
        \xymatrix{
        \motive(U',D|_{U'})
        \ar[r]
        \ar[d]
        &
        \motive(X',D|_{X'}+Z)
        \ar[r]
        \ar[d]
        &
        \motive(X',D|_{X'})
        \ar[d]
        \\
        \motive(U,D|_U)
        \ar[r]
        &
        \motive(X,D+Z)
        \ar[r]
        &
        \motive(X,D).
    }
    $$
    The left square and the total rectangle are coCartesian by the Nisnevich descent.
    By the pasting law, the right square is also coCartesian, which proves the claim.
    A similar argument shows that $\beta$ is also an equivalence.
\end{proof}

\begin{lemma}\label{GS_A^1}
    $(\mathrm{GS})_{((\mathbb{A}^1,\varnothing),0)}$ holds.
\end{lemma}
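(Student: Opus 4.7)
The plan is to deduce this statement directly from the cube case proved in Lemma \ref{GS_cube} via the excision principle of Lemma \ref{GS_excision}. The key observation is that the open immersion $j\colon \mathbb{A}^1\hookrightarrow \mathbb{P}^1$ is étale and induces an isomorphism over $\{0\}\subset \mathbb{P}^1$. Moreover, taking $\mathcal{X}=\bcube=(\mathbb{P}^1,[\infty])$, the pullback of the modulus $[\infty]$ along $j$ is empty, so $(\mathbb{P}^1,[\infty])|_{\mathbb{A}^1}=(\mathbb{A}^1,\varnothing)$.

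Concretely, I would proceed in two lines. First, apply Lemma \ref{GS_excision} to the pair $(\mathcal{X},Z)=(\bcube,\{0\})$ and the étale morphism $j\colon \mathbb{A}^1\to \mathbb{P}^1$: this yields $\mathcal{X}'=(\mathbb{A}^1,\varnothing)$, and gives the equivalence
\[
    (\mathrm{GS})_{(\bcube,\{0\})}\iff (\mathrm{GS})_{((\mathbb{A}^1,\varnothing),\{0\})}.
\]
Second, invoke Lemma \ref{GS_cube}, which asserts that $(\mathrm{GS})_{(\bcube,\{0\})}$ holds. Combining the two, we conclude $(\mathrm{GS})_{((\mathbb{A}^1,\varnothing),\{0\})}$, as desired.

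There is no substantive obstacle here, since the two auxiliary results used do all of the real work: Lemma \ref{GS_cube} furnishes the essential homotopy between the sections $s_0$ and $s_1$ via the multiplication-type map $(s,t)\mapsto (st,t)$, and Lemma \ref{GS_excision} transfers the conclusion along Nisnevich-type excision. The only thing worth double-checking is that the hypotheses of Lemma \ref{GS_excision} are indeed satisfied in this particular setup; namely that $\{0\}$ is a smooth divisor transversal to $|[\infty]|=\{\infty\}$ in $\mathbb{P}^1$ (which is trivially true since they are disjoint), and that $j$ restricts to an isomorphism over $\{0\}$ (which is clear). This completes the proof.
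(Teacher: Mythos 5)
Your proof is correct and is exactly the approach the paper takes: the paper's proof of this lemma is the single line "This follows from Lemma \ref{GS_cube} and Lemma \ref{GS_excision}." Your spelling out of the excision step — taking $j\colon\mathbb{A}^1\hookrightarrow\mathbb{P}^1$ as the \'etale morphism, noting $j^{-1}(\{0\})\cong\{0\}$ and $[\infty]|_{\mathbb{A}^1}=\varnothing$ — and your verification of the transversality hypothesis are precisely the routine checks the paper leaves implicit.
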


\begin{proof}
    This follows from Lemma \ref{GS_cube} and Lemma \ref{GS_excision}.
\end{proof}

\begin{proof}[Proof of Theorem \ref{GS}]
    As in the proof of Theorem \ref{SBU}, we may reduce to the case $\mathcal{X}=(Z,D|_Z)\otimes (\mathbb{A}^1,\varnothing)$ using Lemma \ref{transversal_structure} and Lemma \ref{GS_descent}.
    In this case the claim follows from Lemma \ref{GS_excision}, Lemma \ref{GS_tensor} and Lemma \ref{GS_A^1}.
\end{proof}

\section{Projective bundle formula and the Thom isomorphism}\label{PBF-Thom}

Let $\Lambda$ be a connective commutative ring spectrum.
In this section we prove the projective bundle formula and the Thom isomorphism for oriented ring spectra in $\mSH(S,\Lambda)$ (cf. Definition \ref{def;mSH}).
The content dealt with in this section is essentially a reworking of the corresponding material in logarithmic motivic homotopy theory \cite[\S 7]{BPO}, so the proofs are kept concise.

\begin{definition}
    We write $\motive(\mathbb{P}^\infty,\varnothing)=\colim_{n\to \infty}\motive(\mathbb{P}^n,\varnothing)$.
\end{definition}

\begin{definition}
    Let $E$ be a homotopy commutative ring spectrum in $\mSH(S,\Lambda)$ and $F$ be an $E$-module.
    For $\mathcal{X}=(X,D)\in \mSm_S$, we write
    \begin{align*}
        E^{p,q}(\mathcal{X}):=\Hom_{\mSH(S,\Lambda)}(\motive(\mathcal{X}), \Sigma^{p,q}E),\\
        F^{p,q}(\mathcal{X}):=\Hom_{\mSH(S,\Lambda)}(\motive(\mathcal{X}), \Sigma^{p,q}F),
    \end{align*}
    where $\Sigma^{p,q}E=(S^1_t)^{\otimes q}\otimes \Sigma^{p-q}E$.
    We can define an action
     \begin{equation}\label{eq:actionEF}
        \cup\colon E^{p,q}(X,\varnothing)\otimes F^{p',q'}(\mathcal{X})\to F^{p+p',q+q'}(\mathcal{X})
    \end{equation}
    by the following composition:
    $$
        \alpha\cup \beta\colon \mathcal{X}\xrightarrow{\Delta} (X,\varnothing) \otimes \mathcal{X}
        \xrightarrow{\alpha\otimes \beta}\Sigma^{p,q}E\otimes \Sigma^{p',q'}F\xrightarrow{\cdot} \Sigma^{p+p',q+q'}F.
    $$
\end{definition}

We fix a homotopy commutative ring spectrum $E$ in $\mSH(S,\Lambda)$.

\begin{definition}
    An \emph{orientation} of $E$ is a cohomology class $c_\infty\in E^{2,1}(\motive(\mathbb{P}^\infty,\varnothing)/\motive(\pt))$ whose restriction to $\motive(\mathbb{P}^1,\varnothing)/\motive(\pt)$ is the class
    $$
        \Sigma^{2,1}(1)\colon \motive(\mathbb{P}^1,\varnothing)/\motive(\pt)\simeq \Sigma^{2,1}\motive(\pt)\to \Sigma^{2,1}E,
    $$
    where the equivalence comes from Lemma \ref{lem;S1S^1_t} and the last map is induced by the unit of $E$.
    We say that $E$ is \emph{oriented} if an orientation of $E$ is specified.
\end{definition}
\begin{definition}
    Suppose that $E$ is oriented.
    For $X\in \Sm_S$, we consider the ``classifying map''
    $$
        \Pic(X)\to \Hom_{\logSH_{S^1}(S,\Lambda)}(\motive^{\log}(X), \motive^{\log}(\mathbb{P}^\infty)/\motive^{\log}(\pt))
    $$
    constructed in \cite[(7.1.2)]{BPO}.
    Composing with the functor $t^*$ and the functor $\mSH_{S^1}(S,\Lambda)\to \mSH(S,\Lambda)$, we get a map
    $$
        \Pic(X)\to \Hom_{\mSH(S,\Lambda)}(\motive(X,\varnothing), \motive(\mathbb{P}^\infty,\varnothing)/\motive(\pt))
    $$
    Moreover, composing with the orientation class $c_\infty\colon \motive(\mathbb{P}^\infty,\varnothing)/\motive(\pt)\to \Sigma^{2,1}E$, we get the \emph{first Chern class}
    $$
        c_1\colon \Pic(X)\to E^{2,1}(X,\varnothing).
    $$
    By definition, we have 
      \begin{equation}\label{eq:c_1-formula}
      c_1(\mathcal{O}_{\mathbb{P}^1}(1))=\Sigma^{2,1}(1)\in E^{2,1}(\mathbb{P}^1).
      \end{equation}
    For any morphism $f\colon Y\to X$ in $\Sm_S$ and any line bundle $L$ on $X$, we have $f^*c_1(L)=c_1(f^*L)$.
\end{definition}

\begin{definition}
    Suppose that $E$ is oriented, and let $F$ be an $E$-module.
    Let $\mathcal{X}=(X,D)\in \mSm_S$ and let $\mathcal{V}\to \mathcal{X}$ be a vector bundle of rank $d+1$ on $\mathcal{X}$.
    Let $p\colon \mathbb{P}(\mathcal{V})\to \mathcal{X}$ be the projectivization.
    We define a map
    \begin{align}\label{eq:PBF_map}
    \rho_{\mathcal{V}}\colon \bigoplus_{i=0}^d F^{*-2i,*-i}(\mathcal{X})\to F^{*,*}(\mathbb{P}(\mathcal{V}))
    \end{align}
    by the following formula:
    $$
        \rho_{\mathcal{V}}(x_0,\dots,x_d) = \sum_{i=0}^d p^*(x_i)\cup c_1(L)^i.
    $$
    Here, $L$ denotes the dual of the tautological line bundle on $\mathbb{P}(V)$.
\end{definition}

\begin{theorem}[Projective bundle formula] \label{thm:pbf}
    Suppose that $E$ is oriented, and let $F$ be an $E$-module.
    Let $\mathcal{X}=(X,D)\in \mSm_S$ and let $\mathcal{V}\to \mathcal{X}$ be a vector bundle of rank $d+1$ on $\mathcal{X}$.
    Then, the map \eqref{eq:PBF_map} is an isomorphism.
\end{theorem}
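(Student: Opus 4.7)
The plan is to adapt the argument for the logarithmic projective bundle formula \cite[\S 7]{BPO} to the modulus setting, using the structural results we have: Nisnevich descent, the Gysin sequence for divisors (Theorem \ref{GS_divisor}), Lemma \ref{P^n_relative_motive} computing $\motive(\mathbb{P}^n,\mathbb{P}^{n-1})$, the identification of Thom spaces from Proposition \ref{mthiso}, and the description of $S^1_t$ from Lemma \ref{lem;S1S^1_t}.

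As a first step I would reduce to the trivial bundle case. The map $\rho_{\mathcal{V}}$ is natural in $\mathcal{X}$ and compatible with open immersions; since both sides satisfy Zariski descent in $\mathcal{X}$, a standard Mayer--Vietoris and five lemma argument allows us to assume $\mathcal{V}=\mathcal{X}\otimes(\mathbb{A}^{d+1}_S,\varnothing)$, so that $\mathbb{P}(\mathcal{V})=\mathcal{X}\otimes(\mathbb{P}^d_S,\varnothing)$. In this trivial case, compatibility of the cup-product action \eqref{eq:actionEF} with the symmetric monoidal structure reduces the claim to showing the motivic decomposition
\[
    E\otimes \motive(\mathbb{P}^d_S,\varnothing) \;\simeq\; \bigoplus_{i=0}^d \Sigma^{2i,i} E
\]
in the $\infty$-category of $E$-modules in $\mSH(S,\Lambda)$, where the equivalence is given by the powers of $c_1(\mathcal{O}(1))$; taking mapping spectra into $F$ then recovers the statement for $\rho_{\mathcal{V}}$.

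I would prove the motivic decomposition by induction on $d$, the case $d=0$ being trivial. For $d\geq 1$, embed $\mathbb{P}^{d-1}\hookrightarrow \mathbb{P}^d$ as a hyperplane, whose normal bundle is $\mathcal{O}_{\mathbb{P}^{d-1}}(1)$. Applying the Gysin sequence for divisors (Theorem \ref{GS_divisor}) to $\mathcal{X}=(\mathbb{P}^d,\varnothing)$ and $Z=\mathbb{P}^{d-1}$, and combining it with Lemma \ref{P^n_relative_motive}(1), yields a cofiber sequence
\[
    \motive(\pt)\;\simeq\;\motive(\mathbb{P}^d,\mathbb{P}^{d-1}) \longrightarrow \motive(\mathbb{P}^d,\varnothing)\longrightarrow \MTh(\mathcal{O}_{\mathbb{P}^{d-1}}(1)).
\]
Using the orientation of $E$, one establishes a Thom isomorphism $E\otimes \MTh(\mathcal{O}(1))\simeq \Sigma^{2,1}(E\otimes \motive(\mathbb{P}^{d-1},\varnothing))$ compatible with $c_1$. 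Combining this with the inductive hypothesis and Lemma \ref{lem;S1S^1_t} yields the desired decomposition, with the splitting of the cofiber sequence afforded by the powers of $c_1(\mathcal{O}(1))$.

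The principal obstacle is the Thom isomorphism for line bundles, together with its compatibility with $c_1(L)$. The argument mirrors \cite[\S 7.3]{BPO}: one uses Proposition \ref{mthiso} to identify $\MTh(L)$ with $\cofib(\motive(\mathbb{P}(L))\to \motive(\mathbb{P}(L\oplus \mathcal{O})))$, and exploits the naturality of $c_1$ together with the universal Chern class $c_\infty$ on $\mathbb{P}^\infty$. Since all structural ingredients (Gysin sequence, Chern classes, Thom-space presentation) are already available in $\mSH(S,\Lambda)$, the translation from \cite{BPO} is essentially formal; the genuine content is to verify that our $c_1$ agrees with the class coming from the Gysin sequence applied to the zero section, so that the two resulting splittings of the Gysin cofiber sequence are compatible.
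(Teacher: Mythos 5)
Your route differs materially from the paper's. The paper reduces to the trivial bundle $\mathbb{P}^d$ over $\mathcal{X}=\pt$ (by replacing $F$ with the internal hom $F^{\mathcal{X}}$), and then drives the induction on $d$ entirely via Lemma~\ref{lem:Pn_cell_motive}, the cell decomposition $\motive(\mathbb{P}^d,\varnothing)/\motive(\mathbb{P}^{d-1},\varnothing)\simeq(\motive(\mathbb{P}^1,\varnothing)/\motive(\pt))^{\otimes d}$, which is imported wholesale from $\logSH$ through $t^*$; the compatibility Lemmas~\ref{lem:PBF_commutativity1} and~\ref{lem:PBF_commutativity2} then identify the connecting maps, and no Gysin or Thom isomorphism appears anywhere in the argument. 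By contrast you propose to obtain the relevant cofiber sequence from the Gysin sequence (Theorem~\ref{GS_divisor}) applied to $\mathbb{P}^{d-1}\subset\mathbb{P}^d$ and to finish via a Thom isomorphism for the line bundle $\mathcal{O}_{\mathbb{P}^{d-1}}(1)$.

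Here is the genuine gap. In the paper the Thom isomorphism (Corollary~\ref{Thom}) is a \emph{consequence} of the projective bundle formula, so invoking a Thom isomorphism for $\mathcal{O}(1)$ inside the inductive step needs an independent argument that you do not supply. The inductive hypothesis only gives you the PBF for the trivial bundle $\mathbb{P}^{d-1}$ over a point; but by Proposition~\ref{mthiso}, computing $E\otimes\MTh(\mathcal{O}_{\mathbb{P}^{d-1}}(1))$ amounts to computing $E\otimes\motive(\mathbb{P}(\mathcal{O}(1)\oplus\mathcal{O}))$, which is a \emph{nontrivial} $\mathbb{P}^1$-bundle over $\mathbb{P}^{d-1}$ (indeed $\cong\Bl_{\pt}\mathbb{P}^d$), and that is outside the scope of the inductive hypothesis. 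You do flag this as "the principal obstacle," but resolving it by "mirroring \cite[\S 7.3]{BPO}" would require first constructing a globally defined Thom class from $c_1$ and then running a Mayer--Vietoris argument to reduce to the trivial line bundle case --- a nontrivial piece of infrastructure that is neither set up here nor needed in the paper's organization. Without that, the proposal is circular. If you want to keep the Gysin route, the minimal fix is to replace the appeal to a Thom isomorphism by a direct appeal to Lemma~\ref{lem:Pn_cell_motive}, which after combining with blow-up excision supplies exactly the identification of the cofiber you need, and then verify the compatibility with $c_1$ as in Lemmas~\ref{lem:PBF_commutativity1}--\ref{lem:PBF_commutativity2}; at that point you have essentially rederived the paper's argument.
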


\begin{remark}
    We thank Ryomei Iwasa for pointing out that Theorem \ref{thm:pbf} is a special case of the formalism of $\PP^1$-stable theories developed in \cite{Annala-Iwasa}.
    Indeed, in \cite[Lemma 3.3.5, 3.2.3]{Annala-Iwasa}, they discuss $\mathbb{P}^1$-stable theories for any reasonable modules $V$ over $\Sh_\Zar(\Sm_S)$; in particular, one can apply the results to $V=\mH_*$ to obtain the projective bundle formula in our case.
    Nevertheless, we keep the original argument below for the reader's convenience. 
\end{remark}

\begin{lemma}\label{lem:PBF_commutativity1}
    Suppose that $E$ is oriented, and let $F$ be an $E$-module.
    Define a map $\psi_d\colon F^{*-2,*-1}(\pt)\to F^{*,*}((\mathbb{P}^d,\varnothing)/\pt)$ by
    $$
        \psi_d(x)=p^*(x)\cup c_1(\mathcal{O}_{\mathbb{P}^d}(1)),
    $$
    where $p\colon \mathbb{P}^d\to \pt$ and $\cup$ is from \eqref{eq:actionEF}.
    Let $i\colon \mathbb{P}^1\to \mathbb{P}^d$ be the inclusion.
    Then, the following diagram is commutative:
    $$
    \xymatrix{
        F^{*-2,*-1}(\pt)\ar@{=}[r]\ar[d]^-{\simeq}_-{\Sigma^{2,1}}&
        F^{*-2,*-1}(\pt)\ar[d]_-{\psi_d}\\
        F^{*,*}((\mathbb{P}^1,\varnothing)/\pt)&
        F^{*,*}((\mathbb{P}^d,\varnothing)/\pt).\ar[l]_-{i^*}
    }
    $$
\end{lemma}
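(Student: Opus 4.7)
The plan is a direct diagram chase using naturality of the first Chern class, the normalization axiom for the orientation, and the module unit axiom.

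First I would observe that since $i^*\mathcal{O}_{\mathbb{P}^d}(1)=\mathcal{O}_{\mathbb{P}^1}(1)$ and the first Chern class is natural with respect to pullback, we get
\[
    i^*c_1(\mathcal{O}_{\mathbb{P}^d}(1)) = c_1(\mathcal{O}_{\mathbb{P}^1}(1)),
\]
which, by the normalization formula \eqref{eq:c_1-formula}, is exactly the class $\Sigma^{2,1}(1) \in E^{2,1}((\mathbb{P}^1,\varnothing)/\pt)$ corresponding under Lemma \ref{lem;S1S^1_t} to the unit of $E$.

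Next I would combine this with naturality of the cup product and the identity $p_d\circ i = p_1$ (where $p_n\colon \mathbb{P}^n\to \pt$), which gives $i^*p_d^*(x)=p_1^*(x)$. Together these yield
\[
    i^*\psi_d(x) \;=\; i^*\bigl(p_d^*(x)\cup c_1(\mathcal{O}_{\mathbb{P}^d}(1))\bigr) \;=\; p_1^*(x)\cup \Sigma^{2,1}(1),
\]
and the claim reduces to showing that this last class, viewed in $F^{*,*}((\mathbb{P}^1,\varnothing)/\pt)$, coincides with $\Sigma^{2,1}(x)$ under the identification
$F^{*-2,*-1}(\pt)\simeq F^{*,*}((\mathbb{P}^1,\varnothing)/\pt)$ induced by Lemma \ref{lem;S1S^1_t}.

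For this final step I would unpack the cup product using its definition: since $p_1^*(x)\colon \mathbb{P}^1\to\pt\xrightarrow{x}\Sigma^{*-2,*-1}F$ factors through $\pt$, the composition with the diagonal $\Delta\colon \mathbb{P}^1\to \mathbb{P}^1\otimes \mathbb{P}^1$ reduces (via $(p_1,\id)\circ\Delta=\id_{\mathbb{P}^1}$) to the external product
$\pt\otimes \mathbb{P}^1 \xrightarrow{x\otimes \Sigma^{2,1}(1)} \Sigma^{*-2,*-1}F\otimes\Sigma^{2,1}E \to \Sigma^{*,*}F$. Since $\Sigma^{2,1}(1)$ vanishes on the basepoint of $\mathbb{P}^1$, this external product descends to $\pt\otimes (\mathbb{P}^1,\varnothing)/\pt$, and the $E$-module unit axiom (multiplication by $1\in E^{0,0}(\pt)$ is the identity) identifies it with $\Sigma^{2,1}(x)$ under the equivalence of Lemma \ref{lem;S1S^1_t}. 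The main conceptual point — and the only place where real content enters beyond naturality — is this last identification, which essentially says that cupping with the normalized class $c_1(\mathcal{O}_{\mathbb{P}^1}(1))=\Sigma^{2,1}(1)$ implements the bi-grading shift $\Sigma^{2,1}$ on $F$-cohomology.
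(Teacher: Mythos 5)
Your proof is correct and follows essentially the same route as the paper's: apply naturality of $c_1$ and pullback, invoke the normalization \eqref{eq:c_1-formula}, and conclude $i^*\psi_d(x)=\Sigma^{2,1}(x)$. The only difference is that you explicitly unpack the final identity $q^*(x)\cup\Sigma^{2,1}(1)=\Sigma^{2,1}(x)$ via the module unit axiom, which the paper leaves implicit; this extra step is sound and arguably makes the argument more self-contained.
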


\begin{proof}
    Let $q\colon \mathbb{P}^1\to \pt$.
    We have (cf. \eqref{eq:c_1-formula})
    \begin{align*}
        i^*\psi_d(x)&=i^*(p^*(x)\cup c_1(\mathcal{O}_{\mathbb{P}^d}(1)))\\
        &=q^*(x)\cup c_1(\mathcal{O}_{\mathbb{P}^1}(1))\\
        &=q^*(x)\cup \Sigma^{2,1}(1)\\
        &=\Sigma^{2,1}(x).
    \end{align*}
    This shows that $i^*\circ \psi_d$ coincides with the canonical isomorphism induced by $\Sigma^{2,1}$.
\end{proof}

\begin{lemma}\label{lem:Pn_cell_motive}
    In $\mSH_{S^1}(S,\Lambda)$, there is an equivalence
    \begin{align}\label{eq:Pn_cell_motive}
        \motive(\mathbb{P}^d,\varnothing)/\motive(\mathbb{P}^{d-1},\varnothing)\xrightarrow{\simeq}
        (\motive(\mathbb{P}^1,\varnothing)/\motive(\pt))^{\otimes d}.
    \end{align}
\end{lemma}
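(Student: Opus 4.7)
The plan is to argue by induction on $d$ using the Gysin sequence (Theorem \ref{GS_divisor}), an identification of the relevant Thom space, and the octahedral axiom. The base case $d=1$ is immediate.

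For the inductive step, apply the Gysin sequence to the smooth divisor $Z=\mathbb{P}^{d-1}\subset\mathbb{P}^d$ with trivial ambient modulus:
\[
\motive(\mathbb{P}^d,\mathbb{P}^{d-1})\to\motive(\mathbb{P}^d,\varnothing)\to\MTh(\mathcal{O}_{\mathbb{P}^{d-1}}(1)).
\]
By Lemma \ref{P^n_relative_motive}(1), the first term is equivalent to $\motive(\pt)$, and inspecting the proof there reveals that the resulting map $\motive(\pt)\to\motive(\mathbb{P}^d,\varnothing)$ is homotopic to the inclusion of a rational point lying inside $\mathbb{P}^{d-1}$. This point inclusion is split by the structure morphism $\mathbb{P}^d\to\pt$, so the Gysin cofiber sequence yields a canonical direct sum decomposition $\motive(\mathbb{P}^d,\varnothing)\simeq\motive(\pt)\oplus\MTh(\mathcal{O}_{\mathbb{P}^{d-1}}(1))$.

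The key auxiliary ingredient is a ``$\mathbb{P}^1$-bundle with section'' lemma: for any Zariski-locally trivial $\mathbb{P}^1$-bundle $P\to X$ with a marked section $s\colon X\to P$, there is a canonical equivalence $\cofib(\motive(s(X),\varnothing)\to\motive(P,\varnothing))\simeq\motive(X,\varnothing)\otimes T$, where $T:=\motive(\mathbb{P}^1,\varnothing)/\motive(\pt)$. The proof proceeds by Zariski descent: since the structure group of $P\to X$ reduces to the affine group once $s$ is marked, one can choose Zariski-local trivializations compatible with $s$, reducing to the trivial bundle $P=X\times\mathbb{P}^1$ with $s=X\times\{\infty\}$. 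In the trivial case the statement is immediate from the Day convolution $\motive(X\times\mathbb{P}^1,\varnothing)\simeq\motive(X,\varnothing)\otimes\motive(\mathbb{P}^1,\varnothing)$ together with the fact that tensoring with a fixed object preserves colimits. Combining this lemma with Proposition \ref{mthiso} applied to the $\mathbb{P}^1$-bundle $\mathbb{P}(\mathcal{O}(1)\oplus\mathcal{O})\to\mathbb{P}^{d-1}$ yields $\MTh(\mathcal{O}_{\mathbb{P}^{d-1}}(1))\simeq\motive(\mathbb{P}^{d-1},\varnothing)\otimes T$.

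To finish, strengthen the induction to the cell decomposition $\motive(\mathbb{P}^k,\varnothing)\simeq\bigoplus_{i=0}^{k}T^{\otimes i}$ for all $k\leq d$; this is verified by substituting the previous two displayed equivalences and using the inductive hypothesis, giving $\motive(\mathbb{P}^d,\varnothing)\simeq\motive(\pt)\oplus(\motive(\mathbb{P}^{d-1},\varnothing)\otimes T)\simeq\bigoplus_{i=0}^{d}T^{\otimes i}$. Applying the octahedral axiom to the factorization $\motive(\pt)\to\motive(\mathbb{P}^{d-1},\varnothing)\to\motive(\mathbb{P}^d,\varnothing)$, combined with the Gysin identification of $\motive(\mathbb{P}^d,\varnothing)/\motive(\pt)$ with $\MTh(\mathcal{O}_{\mathbb{P}^{d-1}}(1))\simeq\motive(\mathbb{P}^{d-1},\varnothing)\otimes T$, will identify the cofiber $\motive(\mathbb{P}^d,\varnothing)/\motive(\mathbb{P}^{d-1},\varnothing)$ with the top summand $T^{\otimes d}$, proving the lemma. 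The main technical difficulty is ensuring that the inductively-built cell decompositions are compatible with the hyperplane inclusions $\mathbb{P}^{k-1}\hookrightarrow\mathbb{P}^k$, so that the two splittings align and the identified cofiber is genuinely the top Tate summand; this amounts to a careful bookkeeping of the canonical maps through the Gysin and octahedral sequences at each stage.
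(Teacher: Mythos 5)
The paper's own proof is a one-liner: it cites \cite[Lemma 7.2.1]{BPO} for the equivalence in $\logSH_{S^1}(S,\Lambda)$ and applies the functor $t^*$ from Corollary \ref{mH_tH_adjunction}, using that $t^*$ preserves colimits and the tensor product (Lemmas \ref{t^*_preserves_colimits} and \ref{t^*_monoidal}) and that $t^*\motive^{\log}(\mathbb{P}^k,\varnothing)\simeq\motive(\mathbb{P}^k,\varnothing)$. Your proposal attempts a self-contained inductive argument via the Gysin sequence, Thom spaces, and the octahedral axiom, which is a genuinely different route, but it has a gap that I do not think you can fill by the method you describe.

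The gap is the ``$\mathbb{P}^1$-bundle with section'' lemma. Granting that a marked section lets you choose Zariski-local trivializations of $P\to X$, the descent step still requires the local identifications $\cofib(\motive(s(U_i))\to\motive(P|_{U_i}))\simeq\motive(U_i)\otimes T$ to be compatible on overlaps $U_i\cap U_j$. For $P=\mathbb{P}(V\oplus\mathcal{O})$ with section $\mathbb{P}(V)$, the transitions act on fibers by scaling $z\mapsto a_{ij}z$ with $a_{ij}\in\mathcal{O}^\times$; the descent therefore needs this scaling to induce the identity on $T=\motive(\mathbb{P}^1,\varnothing)/\motive(\pt)$. In the $\A^1$-invariant setting one contracts the scaling to the identity via $(z,t)\mapsto((1-t)+ta)z$, but this does \emph{not} extend to a morphism out of $\mathcal{X}\otimes\bcube$: the homotopy parameter lives on $\mathbb{P}^1$ and the formula has an indeterminacy at $t=\infty$, so cube-invariance alone does not provide the required triviality. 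This is precisely the kind of $\A^1$-specific reflex that fails in the modulus setting. Moreover, the lemma in the generality you state, combined with Proposition \ref{mthiso}, would give a motivic-level Thom isomorphism $\MTh(L)\simeq\motive(X,\varnothing)\otimes T$ for \emph{all} line bundles $L$ on arbitrary $X$; the paper only proves the Thom isomorphism cohomologically for oriented ring spectra (Corollary \ref{Thom}), so a short descent argument yielding the far stronger motivic statement should be treated with suspicion rather than assumed. You also flag, but do not carry out, the bookkeeping needed in the octahedral step to guarantee that the inductive cell decompositions are aligned with the hyperplane inclusions $\mathbb{P}^{k-1}\hookrightarrow\mathbb{P}^k$; this is a real issue, not cosmetic. (As a minor point, the Gysin splitting works because the map $\motive(\pt)\to\motive(\mathbb{P}^d,\varnothing)$ admits a retraction via the structure morphism; you do not need, and in fact cannot take, a point of $\mathbb{P}^{d-1}$, since such a point is not in the interior of $(\mathbb{P}^d,\mathbb{P}^{d-1})$ and so does not give a morphism from $\pt$ in $\mSm_S$.)
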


\begin{proof}
    This statement is proved in \cite[Lemma 7.2.1]{BPO} for $\logSH_{S^1}(S,\Lambda)$.
    Applying the functor $t^*$, we get the desired equivalence.
\end{proof}

\begin{lemma}\label{lem:PBF_commutativity2}
    In $\mSH_{S^1}(S,\Lambda)$, there is a commutative diagram
    $$
    \xymatrix{
    (\motive(\mathbb{P}^1,\varnothing)/\motive(\pt))^{\otimes d}\ar[r]^-{\widetilde{i}}&
    (\motive(\mathbb{P}^d,\varnothing)/\motive(\pt))^{\otimes d}\\
    \motive(\mathbb{P}^d,\varnothing)/\motive(\mathbb{P}^{d-1},\varnothing)\ar[u]_-{\simeq}^-{\eqref{eq:Pn_cell_motive}}&
    \motive(\mathbb{P}^d,\varnothing)/\motive(\pt)\ar[l]\ar[u]^-{\Delta},
    }
    $$
    where $\widetilde{i}=i^{\otimes d}$ with the inclusion $i:\mathbb{P}^1\to \mathbb{P}^d$ and $\Delta$ is the diagonal.
\end{lemma}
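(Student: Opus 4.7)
The plan is to transport the required commutativity from the logarithmic motivic category $\logSH_{S^1}(S,\Lambda)$ via the symmetric monoidal, colimit-preserving functor $t^*$, mirroring the strategy used in the proof of Lemma \ref{lem:Pn_cell_motive}. The equivalence \eqref{eq:Pn_cell_motive} was defined there as the image under $t^*$ of the log-motivic cellular equivalence established in \cite[Lemma 7.2.1]{BPO}. Moreover, every morphism appearing in the diagram, namely the inclusion $i\colon \mathbb{P}^1\hookrightarrow \mathbb{P}^d$, the diagonal $\Delta$, and the canonical quotient $\motive(\mathbb{P}^d,\varnothing)/\motive(\pt)\to \motive(\mathbb{P}^d,\varnothing)/\motive(\mathbb{P}^{d-1},\varnothing)$, comes from a morphism in $\Sm_S$, hence from log schemes in $\SmlSm_S$ with trivial log structure; the identifications $t^*\motive^{\log}(\mathbb{P}^i)\simeq \motive(\mathbb{P}^i,\varnothing)$ furnished by Corollary \ref{mH_tH_adjunction} allow us to view the entire diagram as $t^*$ of an analogous diagram in $\logSH_{S^1}(S,\Lambda)$.

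Since $t^*$ is symmetric monoidal (Lemma \ref{t^*_monoidal}) and preserves colimits (Lemma \ref{t^*_preserves_colimits}), it is therefore enough to prove the commutativity of the corresponding diagram
\[
\xymatrix{
(\motive^{\log}(\mathbb{P}^1)/\motive^{\log}(\pt))^{\otimes d}\ar[r]^-{\widetilde{i}}&
(\motive^{\log}(\mathbb{P}^d)/\motive^{\log}(\pt))^{\otimes d}\\
\motive^{\log}(\mathbb{P}^d)/\motive^{\log}(\mathbb{P}^{d-1})\ar[u]^-{\simeq}&
\motive^{\log}(\mathbb{P}^d)/\motive^{\log}(\pt)\ar[l]\ar[u]^-{\Delta}
}
\]
in $\logSH_{S^1}(S,\Lambda)$. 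This is exactly the content of the corresponding step in the proof of the log projective bundle formula in \cite[\S 7.2]{BPO}, where the log cellular equivalence is constructed iteratively: each stage splits $\motive^{\log}(\mathbb{P}^d)/\motive^{\log}(\mathbb{P}^{d-1})$ via the diagonal $\mathbb{P}^d\to \mathbb{P}^d\times \mathbb{P}^d$ followed by a projection onto $\motive^{\log}(\mathbb{P}^1)/\motive^{\log}(\pt)$ and the cofiber sequence associated to $\mathbb{P}^{d-1}\hookrightarrow \mathbb{P}^d$, which makes the compatibility with $\Delta$ manifest at each inductive step.

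The main (and essentially only) point to verify is thus this tautological compatibility of the log cellular splitting with the diagonal. Once it is established in the log setting, the commutativity in $\mSH_{S^1}(S,\Lambda)$ follows automatically by applying $t^*$ to the whole diagram. The potential obstacle I foresee is one of bookkeeping rather than mathematics: one must ensure that the equivalence \eqref{eq:Pn_cell_motive} as used throughout the present section is literally the $t^*$-image of the cellular equivalence of \cite[Lemma 7.2.1]{BPO}, and not some other model of the same equivalence, so that the appeal to the diagonal-compatibility in the log setting is legitimate.
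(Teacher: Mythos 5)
Your proposal is correct and matches the paper's proof, which simply invokes the corresponding log-motivic statement (\cite[Lemma 7.2.2]{BPO}) and applies $t^*$. The bookkeeping worry you flag is already taken care of: the proof of Lemma~\ref{lem:Pn_cell_motive} in the paper defines the equivalence \eqref{eq:Pn_cell_motive} precisely as the $t^*$-image of the BPO cellular equivalence.
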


\begin{proof}
    This statement is proved in \cite[Lemma 7.2.2]{BPO} for $\logSH_{S^1}(S,\Lambda)$.
    Applying the functor $t^*$, we get the desired commutative diagram.
\end{proof}

\begin{proof}[Proof of Theorem \ref{thm:pbf}]
    By the Nisnevich descent, we may assume that $\mathcal{V}$ is trivial.
    Replacing $F$ by $F^{\mathcal{X}}$, we may assume that $\mathcal{X}=\pt$.
    Therefore, have only to prove that
    $$
        \rho_d\colon \bigoplus_{i=0}^dF^{*-2i,*-i}(\pt)\to F^{*,*}(\mathbb{P}^d,\varnothing);\quad (x_0,\dots,x_d)\mapsto \sum_{i=0}^d p^*(x_i)\cup c_1(\mathcal{O}_{\mathbb{P}^d}(1))^i
    $$
    is an isomorphism, where $p\colon \mathbb{P}^d\to \pt$.
    It suffices to show that the induced map
    $$
        \widetilde{\rho}_d\colon \bigoplus_{i=1}^dF^{*-2i,*-i}(\pt)\to F^{*,*}((\mathbb{P}^d,\varnothing)/\pt)
    $$
    is an isomorphism.
    We proceed by induction on $d$.
    The claim is trivial if $d=0$.
    Let $d>0$ and Suppose that $\widetilde{\rho}_{d-1}$ is an isomorphism.
    Then, $i^*\colon F^{*,*}(\mathbb{P}^d,\varnothing)\to F^{*,*}(\mathbb{P}^{d-1},\varnothing)$ is surjective, where $i\colon \mathbb{P}^{d-1}\to \mathbb{P}^d$ is the inclusion.
    From the long exact sequence
    \begin{align*}
        \cdots \to F^{*-1,*}((\mathbb{P}^d,\varnothing)/\pt)\xrightarrow{i^*} F^{*-1,*}((\mathbb{P}^{d-1},\varnothing)/\pt)&\\
        F^{*,*}((\mathbb{P}^d,\varnothing)/(\mathbb{P}^{d-1},\varnothing))\to
        F^{*,*}((\mathbb{P}^d,\varnothing)/\pt)\xrightarrow{i^*}
        F^{*,*}((\mathbb{P}^{d-1},\varnothing)/\pt)&\to \cdots,
    \end{align*}
    we see that the sequence
    $$
        0\to 
        F^{*,*}((\mathbb{P}^d,\varnothing)/(\mathbb{P}^{d-1},\varnothing))\to
        F^{*,*}((\mathbb{P}^d,\varnothing)/\pt)\xrightarrow{i^*}
        F^{*,*}((\mathbb{P}^{d-1},\varnothing)/\pt)
        \to 0
    $$
    is exact.
    Consider the following diagram with exact rows:
    $$
    \xymatrix{
    0\ar[r]&
        F^{*-2d,*-d}(\pt)\ar[r]\ar[d]^-{\simeq}&
        \bigoplus_{i=1}^d F^{*-2i,*-i}(\pt)\ar[r]\ar[d]_-{\widetilde{\rho}_d}&
        \bigoplus_{i=1}^{d-1} F^{*-2i,*-i}(\pt)\ar[r]\ar[d]^-{\simeq}_-{\widetilde{\rho}_{d-1}}&0\\
    0\ar[r]& 
        F^{*,*}((\mathbb{P}^d,\varnothing)/(\mathbb{P}^{d-1},\varnothing))\ar[r]&
        F^{*,*}((\mathbb{P}^d,\varnothing)/\pt)\ar[r]^-{i^*}&
        F^{*,*}((\mathbb{P}^{d-1},\varnothing)/\pt)\ar[r]&0.
    }
    $$
    Here, the left vertical isomorphism is the composition
    $$
    \xymatrix{
        F^{*-2d,*-d}(\pt)\ar[r]^-{\Sigma^{2d,d}}_-{\simeq}&
        F^{*,*}(((\mathbb{P}^1,\varnothing)/\pt)^{\otimes d})\ar[r]^-{\eqref{eq:Pn_cell_motive}}_-{\simeq}&
        F^{*,*}((\mathbb{P}^d,\varnothing)/(\mathbb{P}^{d-1},\varnothing)),
    }
    $$
    and the right vertical morphism is an isomorphism by the induction hypothesis.
    Therefore, it suffices to show that the above diagram is commutative.
    This is equivalent to the commutativity of the following diagram:
    $$
    \xymatrix{
        F^{*-2d,*-d}(\pt)\ar@{=}[r]\ar[d]^-{\simeq}_-{\Sigma^{2d,d}}&
        F^{*-2d,*-d}(\pt)\ar[r]\ar[d]_-{\psi_d^{\otimes d}}&
        0\ar[dd]\\
        F^{*,*}(((\mathbb{P}^1,\varnothing)/\pt)^{\otimes d})\ar[d]^-{\simeq}_-{\eqref{eq:Pn_cell_motive}}&
        F^{*,*}(((\mathbb{P}^d,\varnothing)/\pt)^{\otimes d})\ar[l]\ar[d]_-{\Delta^*}&
        \\
        F^{*,*}((\mathbb{P}^d,\varnothing)/(\mathbb{P}^{d-1},\varnothing))\ar[r]&
        F^{*,*}((\mathbb{P}^d,\varnothing)/\pt)\ar[r]^-{i^*}&
        F^{*,*}((\mathbb{P}^{d-1},\varnothing)/\pt).
    }
    $$
    The left squares are commutative by Lemma \ref{lem:PBF_commutativity1} and Lemma \ref{lem:PBF_commutativity2} respectively.
    Since the composition of the bottom row is zero, it follows that the right square is also commutative.
    This finishes the proof of the projective bundle formula.
\end{proof}

\begin{corollary}[Thom isomorphism]\label{Thom}
    Suppose that $E$ is oriented, and let $F$ be an $E$-module.
    Let $\mathcal{X}=(X,D)\in \mSm_S$ and let $\mathcal{V}\to \mathcal{X}$ be a vector bundle of rank $d$ on $\mathcal{X}$.
    Then, there is a canonical isomorphism
    $$
        F^{*,*}(\mathcal{X})\xrightarrow{\simeq} F^{*+2d,*+d}(\MTh(\mathcal{V})).
    $$
\end{corollary}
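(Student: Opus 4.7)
The plan is to deduce the Thom isomorphism from the projective bundle formula (Theorem \ref{thm:pbf}) applied to $\mathcal{V}$ and to $\mathcal{V} \oplus \mathcal{O}$, using Proposition \ref{mthiso} to control $\MTh(\mathcal{V})$.

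First, by Proposition \ref{mthiso} there is a cofiber sequence
$$\motive(\mathbb{P}(\mathcal{V})) \xrightarrow{j_*} \motive(\mathbb{P}(\mathcal{V} \oplus \mathcal{O})) \to \MTh(\mathcal{V}),$$
where $j\colon \mathbb{P}(\mathcal{V}) \hookrightarrow \mathbb{P}(\mathcal{V} \oplus \mathcal{O})$ is the inclusion of the hyperplane at infinity. Applying $F$-cohomology yields a long exact sequence in which the middle map is $j^*$.

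Second, apply Theorem \ref{thm:pbf} to both sides. Let $L$ denote the dual of the tautological line bundle on $\mathbb{P}(\mathcal{V} \oplus \mathcal{O})$, so that $L' := j^*L$ is the dual of the tautological line bundle on $\mathbb{P}(\mathcal{V})$. By naturality of the first Chern class, $j^*c_1(L) = c_1(L')$, hence the isomorphisms $\rho_{\mathcal{V} \oplus \mathcal{O}}$ and $\rho_{\mathcal{V}}$ from \eqref{eq:PBF_map} intertwine $j^*$ with the canonical projection
$$\bigoplus_{i=0}^{d} F^{*-2i,*-i}(\mathcal{X}) \twoheadrightarrow \bigoplus_{i=0}^{d-1} F^{*-2i,*-i}(\mathcal{X})$$
that kills the top-degree summand. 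In particular $j^*$ is split surjective, so the long exact sequence breaks into short exact sequences
$$0 \to F^{*,*}(\MTh(\mathcal{V})) \to F^{*,*}(\mathbb{P}(\mathcal{V} \oplus \mathcal{O})) \xrightarrow{j^*} F^{*,*}(\mathbb{P}(\mathcal{V})) \to 0,$$
and the kernel of $j^*$ is precisely the top summand $p^*F^{*-2d,*-d}(\mathcal{X}) \cup c_1(L)^d$, which is canonically isomorphic to $F^{*-2d,*-d}(\mathcal{X})$ via $\rho_{\mathcal{V}\oplus \mathcal{O}}$.

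Combining these identifications yields the desired isomorphism $F^{*,*}(\mathcal{X}) \xrightarrow{\simeq} F^{*+2d,*+d}(\MTh(\mathcal{V}))$, realized as cup product with a Thom class whose image in $F^{2d,d}(\mathbb{P}(\mathcal{V} \oplus \mathcal{O}))$ is $c_1(L)^d$. The only nontrivial point is the compatibility between $j^*$ and the PBF decompositions, which reduces to naturality of $c_1$ and the identity $j^*L = L'$; once this is granted, everything else is formal from Theorem \ref{thm:pbf}.
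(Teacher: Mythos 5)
Your overall strategy agrees with the paper's: realize $\MTh(\mathcal{V})$ via Proposition \ref{mthiso} as the cofiber of $\motive(\mathbb{P}(\mathcal{V}))\to\motive(\mathbb{P}(\mathcal{V}\oplus\mathcal{O}))$, then use the projective bundle formula on both sides and compare kernels of $j^*$. However, the step you call ``the only nontrivial point'' is precisely where the argument fails.

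You assert that the PBF isomorphisms intertwine $j^*$ with the projection killing the top-degree summand, so that $\ker(j^*)$ is $p^*F^{*-2d,*-d}(\mathcal{X})\cup c_1(L)^d$ and the Thom class restricts to $c_1(L)^d$. This is false for non-trivial bundles. From $j^*L=L'$ and naturality one only gets $j^*(p^*(x)\cup c_1(L)^i)=p'^*(x)\cup c_1(L')^i$ for every $i$; but for $i=d$ the class $c_1(L')^d$ is generally nonzero in $F^{2d,d}(\mathbb{P}(\mathcal{V}))$. By Theorem \ref{thm:pbf} the classes $1,c_1(L'),\dots,c_1(L')^{d-1}$ form a basis, so $c_1(L')^d$ equals an explicit combination of lower powers with Chern-class coefficients (the Grothendieck relation); it vanishes only when $\mathcal{V}$ is trivial. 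Hence the top summand is not contained in $\ker(j^*)$, and the Thom class is the Euler-class-type element $c_1(L)^d+p^*(c_1)\cup c_1(L)^{d-1}+\dots+p^*(c_d)$ rather than $c_1(L)^d$.

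What is actually needed, and still true: $j^*$ carries the $i$-th summand isomorphically onto the $i$-th summand for $i=0,\dots,d-1$, so it is split surjective and its kernel is a free rank-one summand isomorphic (though not equal as a submodule of $F^{*,*}(\mathbb{P}(\mathcal{V}\oplus\mathcal{O}))$) to $F^{*-2d,*-d}(\mathcal{X})$. This is exactly what the paper's proof extracts from its commutative square, without the false claim that $j^*$ corresponds to a coordinate projection. You should either replace the intertwining claim by this weaker statement and identify the kernel by the Grothendieck relation, or reduce to the trivial bundle case by Nisnevich descent (as in the proof of Theorem \ref{thm:pbf}), where $c_1(L')^d=0$ and your original argument does go through.
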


\begin{proof}
    By the projective bundle formula, we have the following commutative diagram:
    $$
        \xymatrix{
        \bigoplus_{i=0}^{d}F^{*+2i,*+i}(\mathcal{X})\ar[r]\ar[d]^-{\simeq}&\bigoplus_{i=1}^dF^{*+2i,*+i}(\mathcal{X})\ar[d]^-{\simeq}\\
        F^{*+2d,*+d}(\mathbb{P}(\mathcal{V}\oplus \mathcal{O}))\ar[r]&F^{*+2d,*+d}(\mathbb{P}(\mathcal{V})).
        }
    $$
    This induces an isomorphism between kernels of the horizontal maps.
    By Proposition \ref{mthiso}, this gives the desired isomorphism.
\end{proof}

\part{Representability of cohomology theories}

\section{Hodge and Hodge-Witt cohomology}\label{sec:Hodge}

From here until the end of this paper, we fix a perfect base field $k$ and set $S=\Spec k$.
We write $\mSm$ for $\mSm_S$.
In this section, we prove that the Hodge cohomology with modulus \cite{KellyMiyazaki_Hodge1} \cite{KellyMiyazaki_Hodge2} and the Hodge-Witt cohomology with modulus \cite{Shiho} are representable in the categories $\mDA^\eff(k)=\mSH_{S^1}(\Spec k,\mathbb{Z})$ and $\mDA(k)=\mSH(\Spec k,\mathbb{Z})$.

We first recall from \cite{Koizumi-blowup} (inspired by \cite{RS21}) a useful construction of a sheaf of abelian groups on $\mSm$ from a collection of local data. 

\begin{definition}
A \emph{geometric henselian DVF} over $k$ is a discrete valuation field $(L,v_L)$ that is isomorphic to $\Frac \mathcal{O}_{X,x}^h$ for some $X \in \Sm$ and a point $x \in X$ of codimension $1$. Let $\Phi$ denote the collection of all geometric henselisn DVFs over $k$. For each $L \in \Phi$, write $\mathcal{O}_L$ for the valuation ring of $L$.
\end{definition}

\begin{definition}\label{def;FFil}
Let $F$ be a Nisnevich sheaf of abelian groups on $\Sm$.
Suppose that we are given a collection of increasing filtrations $\mathrm{Fil}=\{\mathrm{Fil}_r F(L)\}_{r \in \mathbb{Q}_{\geq 0}}$ on $F(L)$ indexed by $L \in \Phi$.
Let $\mathcal{X}=(X,D)\in \mSm$.
We say a section $a \in F(\mathcal{X}^\circ)$ is \emph{bounded by $D$} if for any $L \in \Phi$ and any commutative diagram of the form
$$
    \xymatrix{
        \Spec L \ar[r]^{\rho} \ar@{^(->}[d] & \mathcal{X}^\circ \ar@{^(->}[d] \\
        \Spec \mathcal{O}_L \ar[r]^(0.58){\widetilde{\rho}} & X,
    }
$$
where the vertical arrows are the natural inclusions, we have 
$\rho^*a \in \mathrm{Fil}_{v_L(\widetilde{\rho}^*D)} F(L)$.
We set
$$
    F_{\mathrm{Fil}}(\mathcal{X}) := \{a \in F(\mathcal{X}^\circ)\ |\ \text{$a$ is bounded by $D$}\}.
$$
One can easily see that this defines a Nisnevich sheaf of abelian groups $F_{\mathrm{Fil}}$ on $\mSm$ (cf. \cite[Lemma 2.5]{Koizumi-blowup}).
\end{definition}

\begin{definition}
    For a Nisnevich sheaf of abelian groups $F$ on $\mSm$ and $\mathcal{X}=(X,D)\in \mSm$, we set
    $$
        \mathrm{R}\Gamma(\mathcal{X},F):=\mathrm{R}\Gamma(X,F_{(X,D)}).
    $$
\end{definition}

\subsection{Hodge cohomology with modulus}\label{sec;MOmega}
Kelly and the second author constructed an extension $\mathrm{R}\Gamma({-},\ulM\Omega^q)$ of the Hodge cohomology to modulus pairs \cite{KellyMiyazaki_Hodge1} \cite{KellyMiyazaki_Hodge2}.
The first author generalized their construction to $\mathbb{Q}$-modulus pairs \cite{Koizumi-blowup}.
In this section we show that $\mathrm{R}\Gamma({-},\ulM\Omega^q)$ is representable in $\mDA^\eff(k)$.

Fix a non-negative integer $q \geq 0$.
For $L\in \Phi$, we define a filtration $\{\Fil_r\Omega^q(L)\}_{r\in \mathbb{Q}_{\geq 0}}$ on $\Omega^q(L)$ by
\begin{equation} \label{eq:MOmega_filtration}
    \mathrm{Fil}_r \Omega^q (L)
    =\begin{cases}
        \Omega^q(\mathcal{O}_L) & (r=0), \\
        t^{-\lceil r \rceil + 1} \cdot \Omega^q (\mathcal{O}_L)(\log) & (r>0).
    \end{cases}
\end{equation}
The sheaf associated to this filtration is denoted by $\ulM\Omega^q \in \Sh_{\Nis}(\mSm,\Ab)$.
By \cite[Lemma 4.5]{Koizumi-blowup}, this sheaf has the following simple description:
\begin{equation} \label{eq:MOmega}
    \ulM\Omega^q (\sX) = \Gamma (X,\Omega^q_X (\log |D|)(\lceil D \rceil -|D|))
    \;\text{ for }\mathcal{X}=(X,D)\in \mSm.
\end{equation}
Note that we have $\ulM\Omega^q (X,\varnothing) = \Omega^q (X)$ and $\ulM\Omega^q (X,|D|)=\Gamma (X,\Omega^q_X(\log |D|))$.
In other words, the sheaf $\ulM\Omega^q$ generalizes the usual sheaf of (logarithmic) differential forms.

\begin{theorem}[Kelly-Miyazaki, Koizumi]\label{MOmega_local}
    For any $q \geq 0$, the sheaf of spectra $\mathrm{R}\Gamma({-},\ulM\Omega^q)$ on $\mSm$ is $(\CI\cup\BI)$-local.
\end{theorem}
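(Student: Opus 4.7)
The plan is to establish both cube-invariance and SNC blow-up invariance for $\mathrm{R}\Gamma(-,\ulM\Omega^q)$ by exploiting the concrete coherent description $\ulM\Omega^q(X,D)=\Gamma(X,\Omega^q_X(\log|D|)(\lceil D\rceil-|D|))$ of equation \eqref{eq:MOmega}. For any morphism $f\colon \mathcal{Y}\to\mathcal{X}$ in $\CI\cup\BI$ with underlying $S$-morphism $f\colon Y\to X$, the Leray spectral sequence reduces the desired equivalence $\mathrm{R}\Gamma(\mathcal{X},\ulM\Omega^q)\simeq \mathrm{R}\Gamma(\mathcal{Y},\ulM\Omega^q)$ to proving the coherent identity
\begin{equation*}
    \mathrm{R}f_*\bigl(\Omega^q_Y(\log|f^*D|)(\lceil f^*D\rceil-|f^*D|)\bigr)\simeq \Omega^q_X(\log|D|)(\lceil D\rceil-|D|),
\end{equation*}
where $f^*D$ denotes pullback of the $\mathbb{Q}$-Cartier divisor. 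This splits the theorem into two independent coherent cohomology computations.

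For the cube class, $f$ is the projection $\pi\colon X\times\mathbb{P}^1\to X$ together with the divisor $\mathrm{pr}_1^*D+X\times\{\infty\}$. Since the component $X\times\{\infty\}$ is integral, the fractional twist $\lceil-\rceil-|-|$ depends only on $\mathrm{pr}_1^*D$, and the projection formula reduces the computation to the classical Künneth-style decomposition
\begin{equation*}
    \Omega^q_{X\times\mathbb{P}^1}(\log(\mathrm{pr}_1^*|D|+X\times\{\infty\}))\simeq \bigoplus_{i+j=q}\mathrm{pr}_1^*\Omega^i_X(\log|D|)\otimes\mathrm{pr}_2^*\Omega^j_{\mathbb{P}^1}(\log\infty).
\end{equation*}
Combined with $\mathrm{R}\pi_*\mathcal{O}_{\mathbb{P}^1}=\mathcal{O}_X$ and $\mathrm{R}\pi_*\Omega^1_{\mathbb{P}^1}(\log\infty)=\mathrm{R}\pi_*\mathcal{O}_{\mathbb{P}^1}(-1)=0$, this yields cube-invariance directly.

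For the blow-up class, $f=\pi\colon \Bl_Z\mathcal{X}\to\mathcal{X}$ along a smooth $Z\subset X$ with normal crossings to $|D|$, the required identity becomes the main invariance statement established by Kelly--Miyazaki in \cite{KellyMiyazaki_Hodge2} for integer moduli and extended by the first author in \cite{Koizumi-blowup} to $\mathbb{Q}$-modulus pairs. The proof strategy I would follow proceeds \'etale-locally via Lemma \ref{transversal_structure}, reducing to the model $\Bl_{Z\times\{0\}}(Z\times\mathbb{A}^n)\to Z\times\mathbb{A}^n$. On each standard affine chart of the blow-up one writes the sheaf $\Omega^q(\log|\pi^*D|)(\lceil \pi^*D\rceil-|\pi^*D|)$ as a sum of \emph{coefficient pieces} indexed by the order of vanishing along the exceptional divisor, and computes cohomology of each piece using vanishing for twists of $\Omega^{\bullet}$ on projective space.

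The main obstacle is the SNC blow-up invariance. Cube-invariance reduces to a short Künneth argument, but blow-up invariance requires delicate bookkeeping of how the ceiling operator and the support of $D$ interact with the exceptional divisor, and in particular how the fractional twist behaves under the change of basis induced by blowing up a stratum. This is precisely the technical core of \cite{KellyMiyazaki_Hodge2} and \cite{Koizumi-blowup}, and I would invoke those results for the final step.
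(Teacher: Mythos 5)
Your proposal is correct, and its load-bearing step coincides with the paper's: the paper proves this theorem by simply citing \cite[Corollary 5.2]{KellyMiyazaki_Hodge2} and \cite[Corollary 4.7]{Koizumi-blowup}, which is exactly where you land for the SNC blow-up invariance (the genuinely delicate half, especially the interaction of the ceiling twist with the exceptional divisor for $\mathbb{Q}$-multiplicities). What you add is a self-contained verification of cube-invariance: reducing via Leray to the coherent identity for the projection, noting that $\lceil\,\cdot\,\rceil-|\cdot|$ ignores the multiplicity-one component $X\times\{\infty\}$, and using the product decomposition of log forms together with $\mathrm{R}\pi_*\mathcal{O}_{\mathbb{P}^1}\simeq\mathcal{O}_X$ and $\Omega^1_{\mathbb{P}^1}(\log\infty)\cong\mathcal{O}_{\mathbb{P}^1}(-1)$, whose pushforward vanishes; this is a correct argument and makes the $\CI$-locality independent of the references, but it does not change the overall structure, since the $\BI$-locality for $\mathbb{Q}$-modulus pairs still rests on the same external results the paper invokes.
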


\begin{proof}
    This is an direct consequence of \cite[Corollary 5.2]{KellyMiyazaki_Hodge2} and \cite[Corollary 4.7]{Koizumi-blowup}.
\end{proof}

Theorem \ref{MOmega_local} implies the following result stating that the Hodge cohomology with modulus is representable in the $S^1$-stable motivic homotopy category with $\mathbb{Q}$-modulus over a perfect field of arbitrary characteristic.

\begin{theorem}\label{thm;MOmega_rep}
    For any $q\geq 0$, there is an object $\mathrm{m}\Omega^q\in \mDA^\eff(k)$ such that there is a natural equivalence
    $$
        \map_{\mDA^\eff(k)} (\motive (\sX),\mathrm{m}\Omega^q) \simeq \mathrm{R}\Gamma(\mathcal{X},\ulM\Omega^q).
    $$
\end{theorem}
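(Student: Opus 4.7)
The plan is to take $\mathrm{m}\Omega^q$ to be the sheaf $\ulM\Omega^q$ itself, viewed as an object of $\Sh_\Nis(\mSm,\Mod_\mathbb{Z})$ via the canonical Eilenberg--MacLane embedding $\Sh_\Nis(\mSm,\Ab)\hookrightarrow \Sh_\Nis(\mSm,\Mod_\mathbb{Z})$. The entire content of the statement then reduces to (i) the standard identification of mapping spectra of representables with Nisnevich hypercohomology, together with (ii) the fact, already established as Theorem \ref{MOmega_local}, that $\mathrm{m}\Omega^q$ is $(\CI\cup\BI)$-local and therefore lies in the full subcategory $\mDA^\eff(k)\subset \Sh_\Nis(\mSm,\Mod_\mathbb{Z})$.

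First, I would recall that for any $F\in \Sh_\Nis(\mSm,\Mod_\mathbb{Z})$ and any $\sX\in \mSm$, one has a natural equivalence
\[
\map_{\Sh_\Nis(\mSm,\Mod_\mathbb{Z})}\bigl(y(\sX),F\bigr)\simeq \mathrm{R}\Gamma_\Nis(\sX,F),
\]
since $y(\sX)$ is the free Nisnevich sheaf of $\mathbb{Z}$-modules represented by $\sX$; this is a purely formal fact about $\infty$-topoi of sheaves with values in $\Mod_\mathbb{Z}$. Applied to $F=\mathrm{m}\Omega^q$, this gives the desired computation before passing to the motivic localization.

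Second, Theorem \ref{MOmega_local} tells us that $\mathrm{m}\Omega^q$ is local with respect to $\CI\cup\BI$, and hence it lies in the essential image of the fully faithful inclusion $\iota\colon \mDA^\eff(k)\hookrightarrow \Sh_\Nis(\mSm,\Mod_\mathbb{Z})$. Combining this with the adjunction $\mathrm{L}_\mot\dashv \iota$ yields
\[
\map_{\mDA^\eff(k)}\bigl(\motive(\sX),\mathrm{m}\Omega^q\bigr)\simeq \map_{\Sh_\Nis(\mSm,\Mod_\mathbb{Z})}\bigl(y(\sX),\iota\,\mathrm{m}\Omega^q\bigr)\simeq \mathrm{R}\Gamma(\sX,\ulM\Omega^q),
\]
as required.

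The only nontrivial input is Theorem \ref{MOmega_local}, which is imported from \cite{KellyMiyazaki_Hodge2} and \cite{Koizumi-blowup}. Granting this, the present theorem is a formal consequence of the definition of $\mDA^\eff(k)$ as a Bousfield localization of $\Sh_\Nis(\mSm,\Mod_\mathbb{Z})$, so there is no further obstacle to overcome.
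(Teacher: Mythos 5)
Your proposal is correct and is essentially the same argument the paper intends: the paper simply remarks that Theorem \ref{MOmega_local} implies the representability statement, and you have filled in the standard details. You correctly identify the Eilenberg--MacLane embedding of $\ulM\Omega^q$ into $\Sh_\Nis(\mSm,\Mod_\mathbb{Z})$ with the sheaf of spectra $\mathrm{R}\Gamma_\Nis(-,\ulM\Omega^q)$ (the latter being what Theorem \ref{MOmega_local} directly shows to be $(\CI\cup\BI)$-local), and the rest follows from the adjunction $\mathrm{L}_\mot\dashv\iota$ and the Yoneda computation of mapping spectra out of free sheaves.
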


\begin{remark}
    In \cite{KellyMiyazaki_Hodge2} and \cite{Koizumi-blowup}, a similar representability result in the category of motives with modulus is proved, under the assumption that $k$ admits resolution of singularities.
    Here, we have avoided the use of resolution of singularities by modifying the construction of the motivic category.
\end{remark}

Next, we construct an oriented ring spectrum $\mathrm{m}\Omega\in \mDA(k)$ which represents the Hodge cohomology with modulus.

\begin{lemma}\label{lem:MOmega_key_computation}
    Let $\mathcal{X}=(X,D)\in \mSm$.
    Then, there is a canonical isomorphism of $\mathcal{O}_{X\times\mathbb{P}^1}$-modules
    $$
        \ulM\Omega^q_{\mathcal{X}\otimes (\mathbb{P}^1,[0]+[\infty])}\simeq
        \pr_1^*\ulM\Omega^q_{\mathcal{X}}\oplus \pr_1^*\ulM\Omega^{q-1}_{\mathcal{X}}.
    $$
\end{lemma}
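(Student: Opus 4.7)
\bigskip

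\noindent\textbf{Proof plan for Lemma \ref{lem:MOmega_key_computation}.}

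The plan is to unfold both sides via the explicit formula \eqref{eq:MOmega} and compare them using the classical K\"unneth-type decomposition of (log) differentials on a product. Write $\sY:=\mathcal{X}\otimes(\mathbb{P}^1,[0]+[\infty])=(X\times\mathbb{P}^1,\,\pr_1^*D+\pr_2^*([0]+[\infty]))$ and set $D':=\pr_1^*D$. Since $[0]$ and $[\infty]$ have integer multiplicity~$1$, the support and ceiling operations on the modulus of $\sY$ decompose as $|\pr_1^*D+[0]+[\infty]|=\pr_1^*|D|+\pr_2^*([0]+[\infty])$ and $\lceil\pr_1^*D+[0]+[\infty]\rceil-|\pr_1^*D+[0]+[\infty]|=\pr_1^*(\lceil D\rceil-|D|)$, so by \eqref{eq:MOmega}
\[
\ulM\Omega^q_{\sY}
=\Omega^q_{X\times\mathbb{P}^1}\bigl(\log(\pr_1^*|D|+\pr_2^*\{0,\infty\})\bigr)\otimes\pr_1^*\mathcal{O}_X(\lceil D\rceil-|D|).
\]

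Next I would invoke the standard K\"unneth decomposition for log differentials along a smooth projection. For the smooth morphism $\pr_1\colon X\times\mathbb{P}^1\to X$, pulling back log structures from each factor yields
\[
\Omega^q_{X\times\mathbb{P}^1}(\log(\pr_1^*|D|+\pr_2^*\{0,\infty\}))
\simeq \pr_1^*\Omega^q_X(\log|D|)\,\oplus\,\pr_1^*\Omega^{q-1}_X(\log|D|)\otimes\pr_2^*\Omega^1_{\mathbb{P}^1}(\log\{0,\infty\}),
\]
which is just the usual $\Omega^q_{Y}=\bigoplus\pr_1^*\Omega^i_X\otimes\pr_2^*\Omega^{q-i}_{\mathbb{P}^1}$ in the log setting (relative dimension~$1$ kills higher $i$). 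Then I use the trivialization $\Omega^1_{\mathbb{P}^1}(\log\{0,\infty\})\simeq\mathcal{O}_{\mathbb{P}^1}$ given by $dt/t$, so the second summand becomes $\pr_1^*\Omega^{q-1}_X(\log|D|)$.

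Finally, I tensor the displayed decomposition with $\pr_1^*\mathcal{O}_X(\lceil D\rceil-|D|)$. Since this twist is pulled back from $X$, it distributes over both summands and, using \eqref{eq:MOmega} again, identifies them with $\pr_1^*\ulM\Omega^q_{\mathcal{X}}$ and $\pr_1^*\ulM\Omega^{q-1}_{\mathcal{X}}$ respectively, proving the claim.

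The proof is essentially a bookkeeping exercise; there is no real obstacle. The only point worth double-checking is that the log differentials on the product decompose compatibly with the two disjoint log strata coming from $\pr_1^*|D|$ and $\pr_2^*\{0,\infty\}$, which holds because the two divisors meet transversally (they are pulled back from the two factors), and that the $\mathbb{Q}$-twist $\lceil D\rceil-|D|$ is pulled back from $X$, so it plays no role in the decomposition along $\pr_2$.
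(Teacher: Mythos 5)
Your proof is correct and follows essentially the same route as the paper: unfold both sides via \eqref{eq:MOmega}, invoke the K\"unneth decomposition of log differential forms on $X\times\mathbb{P}^1$ (the paper writes this with a $\boxtimes$ and leaves the twist $\widehat{D}=\lceil D\rceil-|D|$ inside, whereas you factor it out first, a purely cosmetic difference), and then use the trivialization $\Omega^1_{\mathbb{P}^1}(\log\{0,\infty\})\simeq\mathcal{O}_{\mathbb{P}^1}$ via $dt/t$ to identify the second summand.
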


\begin{proof}
    Let $\widehat{D}:=\ceil{D}-|D|$ and $E:=[0]+[\infty]$.
    Using \eqref{eq:MOmega}, we compute
    \begin{align*}
        \ulM\Omega^q_{\mathcal{X}\otimes (\mathbb{P}^1,[0]+[\infty])}
        &\simeq
        \Omega^q_{X\times\mathbb{P}^1}(\log |\pr_1^*D+\pr_2^*E|)(\pr_1^*\widehat{D})\\
        &\simeq
        \Omega^q_X(\log|D|)(\widehat{D})\boxtimes\Omega^0_{\mathbb{P}^1}\\
        &\qquad\oplus\Omega^{q-1}_X(\log|D|)(\widehat{D})\boxtimes\Omega^1_{\mathbb{P}^1}(\log E) \\
        &\simeq
        \pr_1^*\Omega^q_X(\log|D|)(\widehat{D})\oplus
        \pr_1^*\Omega^{q-1}_X(\log|D|)(\widehat{D})\\
        &\simeq
        \pr_1^*\ulM\Omega^q_{\mathcal{X}}\oplus\pr_1^*\ulM\Omega^{q-1}_{\mathcal{X}}.
    \end{align*}
    This finishes the proof.
\end{proof}

\begin{lemma}\label{lem:mOmega_deloop}
    In $\mDA^\eff(k)$, there is an equivalence
    $$
        \Omega_{S^1_t}(\mathrm{m}\Omega^q)\simeq \mathrm{m}\Omega^{q-1},
    $$
    where $\Omega_{S^1_t}$ is the right adjoint of $\Sigma_{S^1_t}=S^1_t\otimes({-})$.
\end{lemma}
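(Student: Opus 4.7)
The plan is to verify the claimed equivalence by a Yoneda argument, reducing it to the cohomological computation provided by Lemma \ref{lem:MOmega_key_computation}.

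For any $\mathcal{X}\in \mSm$, the definition $S^1_t = \motive(\mathbb{P}^1,[0]+[\infty])/\motive(\{1\})$, combined with the $\Sigma_{S^1_t}\dashv \Omega_{S^1_t}$ adjunction and Theorem \ref{thm;MOmega_rep}, identifies
$\map_{\mDA^\eff(k)}(\motive(\mathcal{X}),\Omega_{S^1_t}(\mathrm{m}\Omega^q))$
with the fiber of the restriction map
\[
\mathrm{R}\Gamma(\mathcal{X} \otimes (\mathbb{P}^1,[0]+[\infty]),\ulM\Omega^q) \longrightarrow \mathrm{R}\Gamma(\mathcal{X},\ulM\Omega^q)
\]
induced by the inclusion $\{1\}\hookrightarrow(\mathbb{P}^1,[0]+[\infty])$.

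Next, by Lemma \ref{lem:MOmega_key_computation} combined with the projection formula applied to $\pr_1\colon X\times\mathbb{P}^1\to X$, together with $\mathrm{R}\pr_{1*}\mathcal{O}_{X\times\mathbb{P}^1}\simeq \mathcal{O}_X$ (since $H^*(\mathbb{P}^1,\mathcal{O})$ is concentrated in degree zero), the source splits naturally in $\mathcal{X}$ as
\[
\mathrm{R}\Gamma(\mathcal{X},\ulM\Omega^q) \oplus \mathrm{R}\Gamma(\mathcal{X},\ulM\Omega^{q-1}).
\]
Under this splitting the restriction map is the projection onto the first summand: the second summand arises from forms of the type $\pr_1^*\alpha\wedge dt/t$, which pull back to zero along the closed immersion $\{1\}\hookrightarrow\mathbb{P}^1$ since any $1$-form on $\mathbb{P}^1$ restricts to zero at a closed point. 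Hence the fiber is $\mathrm{R}\Gamma(\mathcal{X},\ulM\Omega^{q-1})$, which by Theorem \ref{thm;MOmega_rep} equals $\map_{\mDA^\eff(k)}(\motive(\mathcal{X}),\mathrm{m}\Omega^{q-1})$.

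Since the identification above is natural in $\mathcal{X}$ and the objects $\motive(\mathcal{X})$ for $\mathcal{X}\in\mSm$ generate $\mDA^\eff(k)$ under colimits, the Yoneda lemma upgrades this pointwise equivalence to the desired equivalence $\Omega_{S^1_t}(\mathrm{m}\Omega^q)\simeq \mathrm{m}\Omega^{q-1}$ in $\mDA^\eff(k)$. The main subtlety is producing a \emph{canonical} comparison morphism rather than an abstract isomorphism on each $\mathcal{X}$; this is handled by observing that the splitting in Lemma \ref{lem:MOmega_key_computation} is natural in $\mathcal{X}$, so that projection onto the second summand assembles into a morphism of Nisnevich sheaves and, via Theorem \ref{thm;MOmega_rep}, into a morphism in $\mDA^\eff(k)$ which is then recognized as an equivalence by the computation above.
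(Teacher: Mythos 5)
Your proof is correct and takes essentially the same route as the paper's: both reduce the statement, via Theorem \ref{thm;MOmega_rep} and the cofiber description of $S^1_t$, to the splitting of $\ulM\Omega^q_{\mathcal{X}\otimes(\mathbb{P}^1,[0]+[\infty])}$ given by Lemma \ref{lem:MOmega_key_computation}, and identify the resulting fiber over the restriction to $\{1\}$ with $\mathrm{R}\Gamma(\mathcal{X},\ulM\Omega^{q-1})$. The additional details you spell out (the projection formula along $\pr_1$, the vanishing of the $dt/t$-summand under restriction to $\{1\}$, and the naturality needed for the Yoneda step) are precisely what the paper's argument leaves implicit.
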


\begin{proof}
    Let $\mathcal{X}=(X,D)\in \mSm$.
    In $\mDA^\eff(k)$, we have
    \begin{align*}
        &\map(\motive(\mathcal{X})\otimes \motive(\mathbb{P}^1,[0]+[\infty]),\mathrm{m}\Omega^q)\\
        &\simeq
        \mathrm{R}\Gamma(\mathcal{X}\otimes(\mathbb{P}^1,[0]+[\infty]),\ulM\Omega^q)\\
        &\simeq
        \mathrm{R}\Gamma(X\times \mathbb{P}^1,\pr_1^*\ulM\Omega^q_{\mathcal{X}})\oplus
        \mathrm{R}\Gamma(X\times \mathbb{P}^1,\pr_1^*\ulM\Omega^{q-1}_{\mathcal{X}})\\
        &\simeq
        \mathrm{R}\Gamma(\mathcal{X},\ulM\Omega^q)\oplus
        \mathrm{R}\Gamma(\mathcal{X},\ulM\Omega^{q-1}).
    \end{align*}
    Here, we used Lemma \ref{lem:MOmega_key_computation} for the second isomorphism.
    Recall that $S^1_t=\motive(\mathbb{P}^1,[0]+[\infty])/\motive(\{1\})$.
    Therefore, the above computation shows that
    $$
        \map(\motive(\mathcal{X}),\Omega_{S^1_t}(\mathrm{m}\Omega^q))
        \simeq
        \map(\motive(\mathcal{X})\otimes S^1_t,\mathrm{m}\Omega^q)
        \simeq
        \map(\motive(\mathcal{X}),\mathrm{m}\Omega^{q-1}).
    $$
    This finishes the proof.
\end{proof}

By Lemma \ref{lem:mOmega_deloop}, we can define the $S^1_t$-spectrum
$$
    \mathrm{m}\Omega := (\mathrm{m}\Omega^0,\mathrm{m}\Omega^1,\mathrm{m}\Omega^2,\dots)\in \mDA(k).
$$
For any $\mathcal{X}\in \mSm$, we have $\Sigma^{p,q}\mathrm{m}\Omega\simeq (\Sigma^{p-q}\mathrm{m}\Omega^q,\Sigma^{p-q}\mathrm{m}\Omega^{q+1},\dots)$ as an $S^1_t$-spectrum and hence
$$
    (\mathrm{m}\Omega)^{p,q}(\mathcal{X})
    :=\Hom_{\mDA(k)}(\motive(\mathcal{X}),\Sigma^{p,q}\mathrm{m}\Omega)\simeq
    \mathrm{H}^{p-q}(\mathcal{X},\ulM\Omega^q).
$$
Usual multiplication of differential forms
$$
    \ulM\Omega^q(\mathcal{X})\otimes
    \ulM\Omega^{q'}(\mathcal{Y})\to
    \ulM\Omega^{q+q'}(\mathcal{X}\otimes \mathcal{Y})
$$
defines a homotopy commutative ring structure on $\mathrm{m}\Omega$.
Moreover, the usual first Chern class $c_1(\mathcal{O}_{\mathbb{P}^d}(1))\in \mathrm{H}^1(\mathbb{P}^d,\Omega^1)\simeq (\mathrm{m}\Omega)^{2,1}(\mathbb{P}^d,\varnothing)$ determines an orientation of $\mathrm{m}\Omega$.
The projective bundle formula (Theorem \ref{thm:pbf}) and the Thom isomorphism (Theorem \ref{Thom}) imply the corresponding properties of the Hodge cohomology with modulus.
\begin{corollary}\label{cor:pbf_MOmega}
    Let $\mathcal{X}=(X,D)\in \mSm_k$ and let $\mathcal{V}\to \mathcal{X}$ be a vector bundle of rank $d+1$ on $\mathcal{X}$.
    Then, the map
    $$
        \bigoplus_{i=0}^d\mathrm{H}^{p-i}(\mathcal{X},\ulM\Omega^{q-i})\xrightarrow{\simeq}\mathrm{H}^{p}(\mathbb{P}(\mathcal{V}),\ulM\Omega^q);\quad (x_0,\dots,x_d)\mapsto \sum_{i=0}^d p^*(x)\cup c_1(L)^i
    $$
    is an isomorphism, where $L$ is the dual of the tautological bundle on $\mathbb{P}(\mathcal{V})$.
\end{corollary}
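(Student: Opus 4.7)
The plan is to deduce this as a direct application of Theorem \ref{thm:pbf} to the oriented ring spectrum $\mathrm{m}\Omega \in \mDA(k)$ that was just constructed, viewed as a module over itself. The orientation on $\mathrm{m}\Omega$ is already in place (determined by the classical first Chern class $c_1(\mathcal{O}_{\mathbb{P}^d}(1)) \in \mathrm{H}^1(\mathbb{P}^d,\Omega^1) \simeq (\mathrm{m}\Omega)^{2,1}(\mathbb{P}^d,\varnothing)$), and the multiplicative structure on $\mathrm{m}\Omega$ arising from the wedge product on differential forms gives the required module action. So the projective bundle formula in $\mDA(k)$ specializes, for $E = F = \mathrm{m}\Omega$, to an isomorphism $\rho_{\mathcal{V}}\colon \bigoplus_{i=0}^d (\mathrm{m}\Omega)^{*-2i,*-i}(\mathcal{X}) \xrightarrow{\sim} (\mathrm{m}\Omega)^{*,*}(\mathbb{P}(\mathcal{V}))$.

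The only remaining work is bookkeeping of bidegrees. Recalling the identification $(\mathrm{m}\Omega)^{p,q}(\mathcal{X}) \simeq \mathrm{H}^{p-q}(\mathcal{X},\ulM\Omega^q)$ established just after Lemma \ref{lem:mOmega_deloop}, I would substitute $(p,q) \leadsto (p+q,q)$ into the isomorphism $\rho_{\mathcal{V}}$ of Theorem \ref{thm:pbf}. Then the $i$-th summand on the left becomes $(\mathrm{m}\Omega)^{p+q-2i,q-i}(\mathcal{X}) \simeq \mathrm{H}^{p-i}(\mathcal{X},\ulM\Omega^{q-i})$, and the target becomes $(\mathrm{m}\Omega)^{p+q,q}(\mathbb{P}(\mathcal{V})) \simeq \mathrm{H}^p(\mathbb{P}(\mathcal{V}),\ulM\Omega^q)$, reproducing exactly the form in the corollary.

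It remains to check that under these identifications the formula $(x_0,\dots,x_d) \mapsto \sum_i p^*(x_i) \cup c_1(L)^i$ of Theorem \ref{thm:pbf} agrees with the formula in the corollary. For this one needs two compatibilities: first, that the abstract cup product \eqref{eq:actionEF} on $\mathrm{m}\Omega$ corresponds, under the identification with Hodge cohomology, to the usual cup product induced by wedging differential forms (this follows from the very definition of the ring structure on $\mathrm{m}\Omega$ via $\ulM\Omega^q \otimes \ulM\Omega^{q'} \to \ulM\Omega^{q+q'}$); second, that the abstract first Chern class of $L$ on $\mathbb{P}(\mathcal{V})$ in $(\mathrm{m}\Omega)^{2,1}(\mathbb{P}(\mathcal{V}),\varnothing)$ equals the classical Hodge-theoretic first Chern class $c_1(L) \in \mathrm{H}^1(\mathbb{P}(\mathcal{V}),\Omega^1)$ under the identification. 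The latter is immediate from the way the orientation on $\mathrm{m}\Omega$ was fixed, since both classes are determined by naturality from the same universal class on $\mathbb{P}^\infty$.

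I do not anticipate any serious obstacle, as Theorem \ref{thm:pbf} does all the real work; the mild subtlety is simply the reindexing between motivic bidegrees $(p,q)$ and the cohomological bidegree appearing in the statement, which is mechanical once the dictionary $(\mathrm{m}\Omega)^{p,q} \simeq \mathrm{H}^{p-q}(-,\ulM\Omega^q)$ is fixed.
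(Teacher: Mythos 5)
Your proposal is correct and matches the paper's approach: the paper's proof is precisely the one-line observation that the corollary follows from Theorem \ref{thm:pbf} applied to $\mathrm{m}\Omega$, and your write-up just spells out the bidegree translation via $(\mathrm{m}\Omega)^{p,q} \simeq \mathrm{H}^{p-q}(-,\ulM\Omega^q)$ and the compatibility of cup products and Chern classes, which are exactly the (routine) details the paper leaves implicit.
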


\begin{proof}
    This is a consequence of the projective bundle formula in $\mDA(k)$ (Theorem \ref{thm:pbf}).
\end{proof}

\begin{corollary}\label{cor:Thom_MOmega}
    Let $\mathcal{X}=(X,D)\in \mSm_k$ and let $\mathcal{V}\to \mathcal{X}$ be a vector bundle of rank $d$ on $\mathcal{X}$.
    Then, there is a canonical isomorphism
    $$
        \mathrm{H}^p(\mathcal{X},\ulM\Omega^{q})\xrightarrow{\simeq} \mathrm{H}^{p+d}(\MTh(\mathcal{V}),\ulM\Omega^{q+d}).
    $$
\end{corollary}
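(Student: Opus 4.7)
The plan is to deduce this from the general Thom isomorphism (Corollary \ref{Thom}) applied to the oriented ring spectrum $\mathrm{m}\Omega \in \mDA(k)$, taking $F=E=\mathrm{m}\Omega$. Everything needed has already been assembled: $\mathrm{m}\Omega$ is a homotopy commutative ring spectrum, it has an orientation defined by the usual first Chern class $c_1(\mathcal{O}_{\mathbb{P}^\infty}(1))\in \mathrm{H}^1(\mathbb{P}^\infty,\Omega^1)$, and its bigraded cohomology on $\sX\in \mSm$ agrees with Hodge cohomology with modulus via
\[
    (\mathrm{m}\Omega)^{p,q}(\sX)\simeq \mathrm{H}^{p-q}(\sX,\ulM\Omega^q).
\]

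The plan is then simply to feed the indices through. Applying Corollary \ref{Thom} to the rank $d$ vector bundle $\mathcal{V}\to \sX$ produces, for all bidegrees $(a,b)$, a canonical isomorphism
\[
    (\mathrm{m}\Omega)^{a,b}(\sX)\xrightarrow{\simeq} (\mathrm{m}\Omega)^{a+2d,b+d}(\MTh(\mathcal{V})).
\]
Setting $(a,b)=(p+q,q)$ and translating both sides through the identification above, the left-hand side becomes $\mathrm{H}^{p}(\sX,\ulM\Omega^q)$ and the right-hand side becomes $\mathrm{H}^{p+d}(\MTh(\mathcal{V}),\ulM\Omega^{q+d})$, where the cohomology of $\MTh(\mathcal{V})$ is interpreted via the mapping spectrum into $\mathrm{m}\Omega$ (so it is really the cofiber of $\mathrm{R}\Gamma({-},\ulM\Omega^{q+d})$ applied to the blow-up square defining $\MTh(\mathcal{V})$ in Definition \ref{def:thomsp}).

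Since this is essentially a book-keeping argument, I do not expect any genuine obstacle; the only subtle point is verifying that the orientation on $\mathrm{m}\Omega$ is well-defined in our category, i.e.\ that the class $c_1(\mathcal{O}_{\mathbb{P}^\infty}(1))$ restricts to $\Sigma^{2,1}(1)$ on $\motive(\mathbb{P}^1,\varnothing)/\motive(\pt)$. This in turn follows from the explicit formula \eqref{eq:MOmega} together with the standard computation of $\mathrm{H}^1(\mathbb{P}^1,\Omega^1)$, which is compatible with the equivalence from Lemma \ref{lem;S1S^1_t}.
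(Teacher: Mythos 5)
Your proposal is correct and follows essentially the same route as the paper: the paper also simply cites the Thom isomorphism for oriented ring spectra (Corollary \ref{Thom}) applied to $\mathrm{m}\Omega$, and your index bookkeeping $(a,b)=(p+q,q)$ correctly converts the bigraded statement into the asserted form. The orientation on $\mathrm{m}\Omega$ is already established in the surrounding text, so no further verification is needed.
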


\begin{proof}
    This is a consequence of the Thom isomorphism in $\mDA(k)$ (Theorem \ref{Thom}).
\end{proof}

The properties of motives with modulus as in Theorem \ref{main-2} imply the corresponding properties of the Hodge cohomology with modulus.

\begin{corollary}
    Let $\mathcal{X}=(X,D)\in \mSm_k$ and let $Z\subset X$ be a smooth closed subscheme of codimension $d$ which is transversal to $|D|$.
    Then we have
    $$
        \mathrm{R}\Gamma((\Bl_ZX,\pi^*D), \ulM\Omega^q)\simeq
        \mathrm{R}\Gamma(\mathcal{X}, \ulM\Omega^q)\oplus
        \bigoplus_{i=1}^{d-1}\mathrm{R}\Gamma((Z,D|_Z),\ulM\Omega^{q-i}[-i]).
    $$
\end{corollary}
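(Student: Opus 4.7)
The plan is to combine the smooth blow-up excision (Theorem \ref{SBU}) with the projective bundle formula for Hodge cohomology with modulus (Corollary \ref{cor:pbf_MOmega}) and the Gysin pushforward for a divisor.

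Applying $\map_{\mDA^\eff(k)}(-, \mathrm{m}\Omega^q)$ to the coCartesian square
\begin{equation*}
\xymatrix{
\motive(E, \pi^*D|_E) \ar[r]\ar[d] & \motive(\Bl_Z X, \pi^*D) \ar[d]\\
\motive(Z, D|_Z) \ar[r] & \motive(\mathcal{X})
}
\end{equation*}
produced by Theorem \ref{SBU}, and using Theorem \ref{thm;MOmega_rep} to identify mapping spectra with $\mathrm{R}\Gamma(-, \ulM\Omega^q)$, yields a Cartesian square of cohomology spectra. Since the exceptional divisor $E$ is canonically $\mathbb{P}(\mathcal{N}_ZX)$ with structure morphism $\pi_E\colon E\to Z$, and since $\pi^*D|_E=\pi_E^*(D|_Z)$, Corollary \ref{cor:pbf_MOmega} applied to the rank-$d$ normal bundle decomposes
\begin{equation*}
\mathrm{R}\Gamma((E,\pi^*D|_E),\ulM\Omega^q) \simeq \bigoplus_{i=0}^{d-1} \mathrm{R}\Gamma((Z,D|_Z),\ulM\Omega^{q-i})[-i],
\end{equation*}
and identifies $\pi_E^*$ with the inclusion of the $i=0$ summand. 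This inclusion has an obvious retraction, so a routine manipulation of the Cartesian square yields a fiber sequence
\begin{equation*}
\mathrm{R}\Gamma(\mathcal{X},\ulM\Omega^q) \to \mathrm{R}\Gamma((\Bl_Z X,\pi^*D),\ulM\Omega^q) \xrightarrow{\varphi} \bigoplus_{i=1}^{d-1} \mathrm{R}\Gamma((Z,D|_Z),\ulM\Omega^{q-i})[-i],
\end{equation*}
where $\varphi$ is restriction to $E$ followed by projection onto the positive-$i$ summands.

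To conclude, one constructs a section of $\varphi$. The Gysin sequence (Theorem \ref{GS}) applied to the codimension-one inclusion $j\colon E\hookrightarrow \Bl_Z X$, together with the Thom isomorphism (Corollary \ref{cor:Thom_MOmega}), yields a Gysin pushforward
\begin{equation*}
j_*\colon \mathrm{R}\Gamma((E,\pi^*D|_E),\ulM\Omega^{q-1})[-1] \to \mathrm{R}\Gamma((\Bl_Z X,\pi^*D),\ulM\Omega^q).
\end{equation*}
Writing $L=\mathcal{O}_E(1)$ for the dual tautological line bundle on $E$, so that the conormal bundle of $E$ in $\Bl_Z X$ is $L$, one sets
\begin{equation*}
s_i(\alpha) := -j_*\bigl(\pi_E^*\alpha \cup c_1(L)^{i-1}\bigr)\quad\text{for }\alpha\in \mathrm{R}\Gamma((Z,D|_Z),\ulM\Omega^{q-i})[-i].
\end{equation*}
The self-intersection formula $j^*j_*(-)=(-)\cup c_1(\normal_E\Bl_Z X) = -(-)\cup c_1(L)$ combined with the projective bundle formula shows $\varphi\circ s_i=\mathrm{id}$ on the $i$-th summand, so that $s=\bigoplus s_i$ splits $\varphi$ and produces the desired decomposition.

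The main obstacle will be setting up the Gysin pushforward $j_*$ and the self-intersection formula in the modulus setting. While these are in principle formal consequences of the Gysin sequence and the Thom isomorphism, careful bookkeeping is needed to verify their compatibility with the cup product and with the projective bundle decomposition; once this is in place, the remaining identities fall out formally from the oriented-ring-spectrum formalism of Section \ref{PBF-Thom}.
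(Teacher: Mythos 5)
Your argument follows the same basic skeleton as the paper's very terse proof, which just cites the smooth blow-up excision (Theorem \ref{SBU}) and the projective bundle formula (Corollary \ref{cor:pbf_MOmega}). Where you go beyond the paper is in noticing, correctly, that applying $\map({-},\mathrm{m}\Omega^q)$ to the coCartesian SBU square and then using the PBF for $E\cong\mathbb{P}(\mathcal{N}_ZX)$ yields only a \emph{fiber} sequence
$\mathrm{R}\Gamma(\mathcal{X},\ulM\Omega^q)\to\mathrm{R}\Gamma((\Bl_ZX,\pi^*D),\ulM\Omega^q)\to\bigoplus_{i=1}^{d-1}\mathrm{R}\Gamma((Z,D|_Z),\ulM\Omega^{q-i})[-i]$,
not directly the asserted direct sum. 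The split injectivity of the bottom arrow $\pi_E^*$ in the Cartesian square does not, by a general nonsense argument, propagate to a retraction of the top arrow $\pi^*$; one genuinely has to produce a section of the boundary map. Your degree bookkeeping, the identification $\pi^*D|_E=\pi_E^*(D|_Z)$, and the verification that $E$ is transversal to $|\pi^*D|$ (so that Theorem \ref{GS} applies to $E\hookrightarrow\Bl_ZX$) are all correct.

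The one genuine gap is the one you already flag: the self-intersection formula $j^*j_*(\beta)=\beta\cup c_1(\normal_E\Bl_ZX)$ is nowhere established in the paper, and it is the load-bearing step that makes $\varphi\circ s_i=\id$. The Gysin pushforward $j_*$ itself is available from Theorem \ref{GS_divisor} plus Corollary \ref{cor:Thom_MOmega}, but computing $j^*j_*$ requires knowing that the Gysin map is induced by multiplication by the Thom class (equivalently, that the Euler class of the normal bundle arises as the pullback of the Thom class along the zero section), which is a separate lemma in the oriented-ring-spectrum formalism — cf.\ the analogous statement for $\logSH$ in \cite{BPO}, which Section \ref{PBF-Thom} of the present paper does not port over. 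Until that compatibility is proved for $\mSH(k)$ with coefficients in $\mathrm{m}\Omega$, the splitting $s=\bigoplus s_i$ is not yet justified, and the corollary is only known as a fiber sequence. So: sound strategy, honest identification of the missing piece, but the missing piece is real work rather than ``routine bookkeeping.''
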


\begin{proof}
    This is a consequence of the smooth blow-up excision in $\mDA^\eff(k)$ (Theorem \ref{SBU}) and the projective bundle formula for $\ulM\Omega^q$ (Corollary \ref{cor:pbf_MOmega}).
\end{proof}

\begin{corollary}
    Let $\mathcal{X}=(X,D)\in \mSm_k$ and let $Z\subset X$ be a smooth closed subscheme which is transversal to $|D|$.
    Then there exists a canonical fiber sequence
    $$
        \mathrm{R}\Gamma((Z,D|_Z),\ulM\Omega^{q+d}[d])
        \to
        \mathrm{R}\Gamma(\mathcal{X},\ulM\Omega^q)
        \to
        \mathrm{R}\Gamma((\Bl_ZX,q^*D+E),\ulM\Omega^q),
    $$
    where $q\colon \Bl_ZX\to X$ is the blow-up along $Z$ and $E$ is the exceptional divisor.
\end{corollary}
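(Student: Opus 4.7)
The plan is to apply the mapping functor $\map_{\mDA^\eff(k)}(-,\mathrm{m}\Omega^q)$ to the motivic Gysin sequence of Theorem \ref{GS}. That cofiber sequence in $\mSH_{S^1}(S)$,
$$\motive(\Bl_ZX,q^*D+E) \to \motive(\mathcal{X}) \to \MTh(\mathcal{N}_ZX),$$
is preserved by the $\mathbb{Z}$-linearization to $\mDA^\eff(k)$; then mapping into $\mathrm{m}\Omega^q$ and using the representability result Theorem \ref{thm;MOmega_rep} gives a fiber sequence of spectra
$$\map(\MTh(\mathcal{N}_ZX),\mathrm{m}\Omega^q) \to \mathrm{R}\Gamma(\mathcal{X},\ulM\Omega^q) \to \mathrm{R}\Gamma((\Bl_ZX,q^*D+E),\ulM\Omega^q).$$

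The remaining step is to identify the fiber term with (an appropriate shift of) the Hodge cohomology of $(Z,D|_Z)$. Since $\mathcal{N}_ZX\to (Z,D|_Z)$ is a vector bundle of rank $d$ in $\mSm_k$, Corollary \ref{cor:Thom_MOmega} applies and produces, for each integer $n$, a canonical isomorphism $\mathrm{H}^n((Z,D|_Z),\ulM\Omega^{q-d})\xrightarrow{\simeq}\mathrm{H}^{n+d}(\MTh(\mathcal{N}_ZX),\ulM\Omega^q)$. The natural thing to do is to lift this degreewise isomorphism to a spectrum-level equivalence $\mathrm{R}\Gamma((Z,D|_Z),\ulM\Omega^{q-d})[-d]\xrightarrow{\simeq}\map(\MTh(\mathcal{N}_ZX),\mathrm{m}\Omega^q)$, and then plug this into the above fiber sequence to obtain the desired statement (up to the indexing and shift convention used in the statement).

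The main technical point to verify is that the Thom isomorphism at the level of cohomology groups refines to an equivalence of mapping spectra. This is formal once one recalls that the Thom iso of Corollary \ref{Thom} is induced by cup product with the Thom class (itself coming from the orientation of $\mathrm{m}\Omega$ together with the projective bundle formula Theorem \ref{thm:pbf}), so it is realized by a canonical morphism of spectra whose induced maps on all homotopy groups are the stated isomorphisms. Once this is in place, the proof is a direct assembly of Theorem \ref{GS}, Theorem \ref{thm;MOmega_rep}, and Corollary \ref{cor:Thom_MOmega}; no further input from the structure of $\mSm_k$ is needed.
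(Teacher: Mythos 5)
Your approach is exactly the one the paper uses: apply $\map_{\mDA^\eff(k)}({-},\mathrm{m}\Omega^q)$ to the Gysin cofiber sequence of Theorem \ref{GS}, invoke the representability equivalence of Theorem \ref{thm;MOmega_rep} on two of the three terms, and then identify the fiber $\map(\MTh(\mathcal{N}_ZX),\mathrm{m}\Omega^q)$ via the Thom isomorphism (Corollary \ref{cor:Thom_MOmega}). The paper's own proof is precisely the one-line citation of Theorem \ref{GS} together with Corollary \ref{cor:Thom_MOmega}, so you have matched it.

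Two small remarks worth keeping in mind. First, your concern about promoting the degreewise Thom isomorphism to a spectrum-level equivalence is the only not-entirely-formal step, and the point you make is the right one: the isomorphism in Corollary \ref{cor:Thom_MOmega} is realized by cup product with the Thom class, which is an honest morphism of mapping spectra, so having an isomorphism on each $\pi_n$ suffices. Second, carrying the indices through carefully, the Thom isomorphism $\mathrm{H}^{p}((Z,D|_Z),\ulM\Omega^{q})\cong\mathrm{H}^{p+d}(\MTh(\mathcal{N}_ZX),\ulM\Omega^{q+d})$ identifies the fiber term as $\mathrm{R}\Gamma((Z,D|_Z),\ulM\Omega^{q-d})[-d]$, exactly as you computed, which does not literally agree with the $\ulM\Omega^{q+d}[d]$ appearing in the Corollary as printed; you were right to flag this as a notation/indexing discrepancy rather than a gap in your argument.
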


\begin{proof}
    This is a consequence of the Gysin sequence in $\mDA^\eff(k)$ (Theorem \ref{GS}) and the Thom isomorphism for $\ulM\Omega^q$ (Corollary \ref{cor:Thom_MOmega}).
\end{proof}

\subsection{Hodge-Witt cohomology with modulus}\label{sec;MWOmega}
Assume that $\ch(k)=p>0$.
Shiho constructed in \cite{Shiho} an extension $\mathrm{R}\Gamma({-},\ulM\Witt_n\Omega^q)$ of the Hodge-Witt cohomology to $\mathbb{Q}$-modulus pairs.
In this section we show that $\mathrm{R}\Gamma({-},\ulM\Witt_n\Omega^q)$ is representable in $\mDA^\eff(k)$.

Note that F. Ren and K. R\"ulling study a different version of Hodge-Witt sheaf with modulus in \cite{Ren-Rulling}, by which a beautiful duality theory for Hodge-Witt cohomology is constructed. It is an interesting task to compare the definitions in {\it loc. cit.} and \cite{Shiho}. 

We first recall the filtration on $\Witt_n\Omega^q$ from \cite[Definition 2.6]{Shiho}.
\begin{definition}
    Let $(X,D)\in \SmlSm$ and write $D=D_1+\dots+D_m$.
    We define $\mathcal{I}_{(X,D),i}$ to be the ideal of the structure sheaf $\mathcal{O}_{(X,D)/\Witt_n}$ of the log crystalline site $((X,D)/\Witt_n(k))_{\mathrm{crys}}$ generated by the local equations of $D_i$ (see \cite[p. \!6]{Shiho}).
    For $\mathbf{b}=(b_1,\dots,b_m)\in \mathbb{Z}_{\geq 0}^m$, we define
    $$
        \mathcal{I}_{(X,D)}^{\otimes \mathbf{b}}:=\bigotimes_{i=1}^m\mathcal{I}_{(X,D),i}^{\otimes b_i},
    $$
    and $\mathcal{I}_{(X,D)}^{\otimes (-\mathbf{b})}:=\mathcal{H}\mathrm{om}(\mathcal{I}_{(X,D)}^{\otimes \mathbf{b}},\mathcal{O}_{(X,D)/\Witt_n(k)})$.
\end{definition}

\begin{lemma}\label{lem:MWOmega_lift}
    Let $(X,D)\in \SmlSm$ and write $D=D_1+\dots+D_m$.
    Suppose that $(X,D)$ can be lifted to $(\widetilde{X},\widetilde{D})\in \SmlSm_{\Witt_n(k)}$.
    Write $\widetilde{D}=\widetilde{D}_1+\dots+\widetilde{D}_m$ so that $\widetilde{D}_i$ is a lift of $D_i$.
    Let $u_X\colon ((X,|D|)/\Witt_n(k))_{\mathrm{crys}}\to X_\Zar$ denote the canonical morphism of sites.
    Then we have
    $$
        \mathrm{R}^qu_{X,*}\mathcal{I}_{(X,D)}^{\otimes (-\mathbf{b})}\simeq \mathcal{H}^q\bigl(\Omega^\bullet_{\widetilde{X}/\Witt_n(k)}(\log \widetilde{D})(\textstyle\sum_{i=1}^mb_i\widetilde{D}_i)\bigr).
    $$
\end{lemma}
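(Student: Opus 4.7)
The claim is Zariski-local on $X$, so I may shrink $X$ to assume that $D_i = \{t_i = 0\}$ for part of a regular system of parameters and, using the hypothesis, that these are lifted to $\widetilde{t}_i \in \Gamma(\widetilde{X},\mathcal{O}_{\widetilde{X}})$ defining $\widetilde{D}_i$. My strategy is to apply a twisted form of Kato's log crystalline Poincar\'e lemma to the log smooth PD-thickening $(X,|D|) \hookrightarrow (\widetilde{X},\widetilde{D})$.

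The first step is to identify $\mathcal{I}_{(X,D)}^{\otimes(-\mathbf{b})}$ with a crystal on $((X,|D|)/\Witt_n(k))_{\mathrm{crys}}$ corresponding to an explicit module with log connection. By construction $\mathcal{I}_{(X,D),i}$ is the crystalline ideal locally generated by $t_i$, and its value on the thickening $(X,|D|) \hookrightarrow (\widetilde{X},\widetilde{D})$ is the $\mathcal{O}_{\widetilde{X}}$-module $\widetilde{t}_i\,\mathcal{O}_{\widetilde{X}} = \mathcal{O}_{\widetilde{X}}(-\widetilde{D}_i)$ equipped with the integrable log connection inherited from the trivial one on $\mathcal{O}_{\widetilde{X}}$. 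Dualising and tensoring over $i$, the crystal $\mathcal{I}_{(X,D)}^{\otimes(-\mathbf{b})}$ corresponds under the standard equivalence between locally free crystals and $\mathcal{O}_{\widetilde{X}}$-modules equipped with integrable log connection to the line bundle $\mathcal{O}_{\widetilde{X}}(\sum_i b_i\widetilde{D}_i)$ endowed with its canonical integrable log connection having log poles along $\widetilde{D}$.

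The second step is to invoke the log crystalline Poincar\'e lemma with coefficients: for any crystal $\mathcal{E}$ on $((X,|D|)/\Witt_n(k))_{\mathrm{crys}}$ corresponding to a module $(\widetilde{\mathcal{E}},\nabla)$ with integrable log connection on $(\widetilde{X},\widetilde{D})$, one has a natural isomorphism in the derived category of sheaves on $X_\Zar$
\[
    \mathrm{R}u_{X,*}\,\mathcal{E} \;\simeq\; \widetilde{\mathcal{E}} \otimes_{\mathcal{O}_{\widetilde{X}}} \Omega^\bullet_{\widetilde{X}/\Witt_n(k)}(\log\widetilde{D}).
\]
Applying this to $\mathcal{E}=\mathcal{I}_{(X,D)}^{\otimes(-\mathbf{b})}$ with $\widetilde{\mathcal{E}}=\mathcal{O}_{\widetilde{X}}(\sum_i b_i\widetilde{D}_i)$ gives a quasi-isomorphism $\mathrm{R}u_{X,*}\,\mathcal{I}_{(X,D)}^{\otimes(-\mathbf{b})} \simeq \Omega^\bullet_{\widetilde{X}/\Witt_n(k)}(\log\widetilde{D})(\sum_i b_i\widetilde{D}_i)$, and taking $\mathcal{H}^q$ yields the stated formula.

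The main obstacle is the crystal identification in the first step: the integrable log connection obtained from the PD-structure on the crystalline evaluation of $\mathcal{I}_{(X,D),i}$ must be computed and, after passing to negative tensor powers, matched with the canonical log connection on $\mathcal{O}_{\widetilde{X}}(\sum b_i\widetilde{D}_i)$. Once this is verified, the comparison with the twisted log de Rham complex is formal from the Poincar\'e lemma. A possible alternative, which bypasses the explicit connection computation, is to apply Kato's logarithmic linearisation functor $L$ directly to $\mathcal{I}_{(X,D)}^{\otimes(-\mathbf{b})}$ and identify its pushforward term-by-term with the twisted log de Rham complex via a local coordinate calculation.
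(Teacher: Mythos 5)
Your argument is correct and follows the same route as the source the paper cites (Shiho \cite[\S 2]{Shiho}): identify the crystal $\mathcal{I}_{(X,D)}^{\otimes(-\mathbf{b})}$ with the line bundle $\mathcal{O}_{\widetilde{X}}(\sum_i b_i\widetilde{D}_i)$ equipped with its tautological integrable log connection via the lift $(\widetilde{X},\widetilde{D})$, and then apply Kato's log crystalline Poincar\'e lemma with coefficients. The paper itself gives no argument beyond the citation, so your reconstruction is essentially the intended proof; the one step you flag as an ``obstacle'' (verifying that the PD-inherited connection on the dualised tensor product agrees with the canonical log connection on $\mathcal{O}_{\widetilde{X}}(\sum_i b_i\widetilde{D}_i)$) is indeed where the local coordinate check lives, but it is routine once you note that the connection on $\widetilde{t}_i\mathcal{O}_{\widetilde{X}}$ sends $\widetilde{t}_i \mapsto d\log\widetilde{t}_i\otimes\widetilde{t}_i$ and that duals and tensor products of crystals correspond to duals and tensor products of modules with connection.
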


\begin{proof}
    See \cite[\S 2]{Shiho}.
\end{proof}

For $L \in \Phi$, define a filtration $\{\mathrm{Fil}_r\Witt_n\Omega^q_L\}_{r\in \mathbb{Q}_{\geq 0}}$ on $\Witt_n\Omega^q_L$ by 
\begin{equation}\label{eq:MWOmega_filtration}
    \mathrm{Fil}_r \Witt_n\Omega^q_L
    :=\begin{cases}
        \Witt_n\Omega^q_{\mathcal{O}_L} & (r=0) \\
        \colim_{(X,D) \in \mathcal{P}_L} \mathrm{H}^q_{\mathrm{crys}} (\mathcal{I}_{(X,D)}^{\otimes (- p\lceil r \rceil + 1)}) & (r>0),
    \end{cases}
\end{equation}
where $\mathcal{P}_L$ is the partially ordered set of all pairs of an affine $X\in \Sm$ and a smooth principal divisor $D$ on $X$ approximating $\mathcal{O}_L$ (see the beginning of \S 2 of \cite{Shiho} for details).
The sheaf associated to this filtration is denoted by $\ulM \Witt_n\Omega^q\in \Sh_\Nis(\mSm,\Ab)$.
This sheaf has the following simple description:

\begin{lemma}\label{lem:MWOmega_global}
    Let $\mathcal{X}=(X,D)\in \mSm$, and write $D=\sum_{i=1}^mr_iD_i$ with $D_i$ smooth.
    Set $\ceil{\mathbf{r}}:=(\ceil{r_1},\dots,\ceil{r_m})$ and $\mathbf{1}=(1,\dots,1)$.
    Then we have
    $$
    \ulM\Witt_n\Omega^q (\sX) = \Gamma (X,\mathrm{R}^qu_{X,*}\mathcal{I}_{(X,|D|)}^{\otimes -p\ceil{\mathbf{r}}+\mathbf{1}}) =  \Gamma (X,\mathrm{R}^qu_{X,*}\mathcal{I}_{(X,|D|)}^{\otimes -p(\ceil{\mathbf{r}}-\mathbf{1})}).
    $$
\end{lemma}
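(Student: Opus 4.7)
The statement breaks into two assertions: (i) the first equality, translating the filtration-theoretic definition of $\ulM\Witt_n\Omega^q$ into global crystalline cohomology, and (ii) the second equality, which is an acyclicity statement allowing the pole order $p\ceil{r_i}-1$ to be replaced by $p(\ceil{r_i}-1)$. In both equalities, all three groups inject into $\Witt_n\Omega^q(X-|D|)$, so the task is to identify the corresponding subgroups.

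For (ii), the assertion is Zariski-local on $X$, so I may assume a lift $(\widetilde{X},\widetilde{D})$ of $(X,|D|)$ to $\Witt_n(k)$ exists. Lemma \ref{lem:MWOmega_lift} then reduces the claim to showing that the inclusion of log de Rham complexes
\[
\Omega^\bullet_{\widetilde{X}/\Witt_n(k)}(\log\widetilde{D})\bigl(\textstyle\sum_i p(\ceil{r_i}-1)\widetilde{D}_i\bigr)
\hookrightarrow
\Omega^\bullet_{\widetilde{X}/\Witt_n(k)}(\log\widetilde{D})\bigl(\textstyle\sum_i (p\ceil{r_i}-1)\widetilde{D}_i\bigr)
\]
is a quasi-isomorphism. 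I filter the target by pole order along each $\widetilde{D}_i$. Writing $t_i$ for a local equation of $\widetilde{D}_i$ and using the splitting $\Omega^\bullet_{\widetilde{X}}(\log\widetilde{D})/(t_i)\cong \Omega^\bullet_{\widetilde{D}_i}(\log)\oplus \Omega^{\bullet-1}_{\widetilde{D}_i}(\log)\wedge \tfrac{dt_i}{t_i}$, the graded piece at pole order $b'$ is identified with the mapping cone of multiplication by $-b'$ on $\Omega^\bullet_{\widetilde{D}_i}(\log)$. For $b'$ ranging over $p(\ceil{r_i}-1)+1,\dots,p\ceil{r_i}-1$ one has $b'\not\equiv 0\pmod p$, so $-b'\in \Witt_n(k)^\times$; hence every graded piece is acyclic and the inclusion is a quasi-isomorphism.

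For (i), I again work Zariski-locally on $X$ so that a lift exists, and combine Lemma \ref{lem:MWOmega_lift} with (ii) to identify the right-hand side with $\Gamma\bigl(X,\mathcal{H}^q(\Omega^\bullet_{\widetilde{X}}(\log\widetilde{D})(\sum_i(p\ceil{r_i}-1)\widetilde{D}_i))\bigr)$. Since the complex consists of locally free sheaves on the regular scheme $\widetilde{X}$, its $\mathcal{H}^q$ is torsion-free and $S_2$-reflexive away from $|D|$, so a section of $\Witt_n\Omega^q$ on $X-|D|$ belongs to this global group if and only if its restriction to the henselization at each generic point of $\widetilde{D}_i$ extends to the corresponding local complex with pole of order at most $p\ceil{r_i}-1$. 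On the other hand, by Definition \ref{def;FFil}, the left-hand side $\ulM\Witt_n\Omega^q(\sX)$ is the subgroup of $\Witt_n\Omega^q(X-|D|)$ consisting of sections bounded by $D$ for every $L\in\Phi$; any such diagram $\Spec\mathcal{O}_L\to X$ whose closed point lands in $|D|$ factors (via an unramified base change preserving the filtration) through the henselization at the generic point of some $D_i$, with $v_L(\widetilde{\rho}^*D)=r_i$. Thus the condition reduces to checking at the DVFs $L_i=\Frac\mathcal{O}_{X,D_i}^h$, where $\mathrm{Fil}_{r_i}\Witt_n\Omega^q_{L_i}$ is by \eqref{eq:MWOmega_filtration} precisely $\mathrm{H}^q_{\mathrm{crys}}(\mathcal{I}^{\otimes(-p\ceil{r_i}+1)})$. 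This matches the pole-order condition, yielding (i).

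The main obstacle is the reflexivity step in (i): the objects in play are cohomology sheaves of a complex rather than coherent modules directly, so one must verify that membership in the global sheaf $\mathrm{R}^qu_{X,*}\mathcal{I}^{\otimes(-p\ceil{\mathbf{r}}+\mathbf{1})}$ can be tested at the height-one primes lying on $|D|$. The secondary subtlety is the reduction of an arbitrary $L\in\Phi$ to the canonical DVFs $L_i$, which relies on Shiho's compatibility of the filtration with unramified extensions (\cite[\S 2]{Shiho}) together with absolute purity for $D$ inside $X$.
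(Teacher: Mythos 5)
The paper's own proof is a one-line citation to \cite[Propositions 2.12 and 2.4(3)]{Shiho}, so any from-scratch reconstruction is, by definition, a genuinely different route. Your argument for the second equality is sound in outline and is indeed the mechanism behind Shiho's Proposition 2.4(3): after reducing Zariski-locally to a lift and invoking Lemma \ref{lem:MWOmega_lift}, one filters the twisted log de Rham complex by pole order along each $\widetilde{D}_i$; the graded piece at pole order $b'$ is controlled by multiplication by $-b'$, and for $b'\in\{p(\ceil{r_i}-1)+1,\dots,p\ceil{r_i}-1\}$ one has $p\nmid b'$, so $-b'$ is a unit in $\Witt_n(k)=\mathrm{W}_n(k)\cong$ a $\Z/p^n$-algebra, and the graded pieces are acyclic.

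Your treatment of the first equality has a genuine gap. You assert that any diagram $\Spec\mathcal{O}_L\to X$ whose closed point lands in $|D|$ ``factors (via an unramified base change) through the henselization at the generic point of some $D_i$, with $v_L(\widetilde{\rho}^*D)=r_i$.'' Neither claim is true in general: if the closed point of $\Spec\mathcal{O}_L$ lands at a non-generic point of $D_i$, or at a crossing $D_i\cap D_j$, the map does not factor through $\mathcal{O}_{X,\xi_i}^h$, and $v_L(\widetilde{\rho}^*D)$ can be strictly larger than $r_i$ (e.g.\ for a parametrized arc tangent to $D_i$). The reduction from ``$a$ is bounded at every $L\in\Phi$'' to ``$a$ is bounded at the finitely many generic DVFs $L_i$'' — and in particular the implication that membership in $\Gamma(X,\mathrm{R}^qu_{X,*}\mathcal{I}_{(X,|D|)}^{\otimes -p\ceil{\mathbf{r}}+\mathbf{1}})$ forces the bound at \emph{all} $L$ — is precisely the non-trivial content of Shiho's Proposition 2.12, and it requires tracking how the filtration \eqref{eq:MWOmega_filtration} behaves under pullback along an arbitrary $\widetilde{\rho}$ through the approximation system $\mathcal{P}_L$, not a factorization of DVRs. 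The ``reflexivity'' step is likewise unjustified as written: $\mathcal{H}^q$ of a complex of locally free sheaves is not automatically $S_2$, and you give no argument (Cartier-type isomorphism or otherwise) that the relevant cohomology sheaf is determined by its stalks at height-one primes of $X$. You flag both points as ``obstacles,'' but they are not peripheral details to be dispatched — they are exactly where the substance of the first equality lives, and the sketch does not resolve them.
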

\begin{proof}
    The first equality is proved in \cite[Proposition 2.12]{Shiho}.
    The second equality follows from \cite[Proposition 2.4 (3)]{Shiho}.
\end{proof}

\begin{remark}
    The sheaf $\ulM \Witt_n\Omega^q$ coincides with $\ulM \Omega^q$ from the above example when $n=1$, and with $\ulM \Witt_n$ from \cite{Koizumi-blowup} when $q=0$ (see \cite[Theorem 1.4]{Shiho}).
\end{remark}

\begin{theorem}[Shiho]
    For any $n \geq 1$ and $q \geq 0$, the sheaf of spectra $\mathrm{R}\Gamma({-},\ulM \Witt_n\Omega^q)$ on $\mSm$ is $(\CI\cup\BI)$-local.
\end{theorem}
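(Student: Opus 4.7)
The plan is to decompose the $(\CI\cup\BI)$-locality into its two constituent properties --- cube-invariance and SNC blow-up invariance --- and deduce each from results of Shiho \cite{Shiho}, after a small reduction from $\mathbb{Q}$-moduli to integer moduli. The key observation that makes this reduction automatic is that, by Lemma \ref{lem:MWOmega_global}, the group $\ulM\Witt_n\Omega^q(X,D)$ depends on $D=\sum r_i D_i$ only through the integer vector $\ceil{\mathbf{r}}-\mathbf{1}$, so there is a natural identification $\ulM\Witt_n\Omega^q(X,D)\simeq \ulM\Witt_n\Omega^q(X,\ceil{D})$ where the right-hand side has integer multiplicities. Since both the formation of $\bcube$-products and the formation of SNC blow-ups along $Z\subset |D|$ commute with passage to $\ceil{D}$ (in particular $\ceil{\mathrm{pr}_1^*D + \mathrm{pr}_2^*[\infty]} = \mathrm{pr}_1^*\ceil{D} + \mathrm{pr}_2^*[\infty]$ and $\pi^*\ceil{D}=\ceil{\pi^*D}$ for an SNC blow-up $\pi$), it suffices to verify the two invariance properties for integer-multiplicity $\mathbb{Q}$-modulus pairs.

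For SNC blow-up invariance, I would first use Zariski descent to reduce to an affine setting, then invoke Lemma \ref{lem:MWOmega_lift} to replace $\mathrm{R}\Gamma(X, \mathrm{R} u_{X,*}\mathcal{I}^{\otimes(-\mathbf{b})}_{(X,|D|)})$ by the cohomology of $\Omega^\bullet_{\widetilde{X}/\Witt_n}(\log \widetilde{|D|})(\sum b_i \widetilde{D}_i)$ for a local lift $(\widetilde{X},\widetilde{D})$ over $\Witt_n(k)$. The blow-up invariance then follows by combining a Hodge-Witt version of the Hartshorne-type projection formula for log differentials with the birational invariance of $\mathrm{R}\pi_*\Omega^\bullet(\log)(\mathbf{b})$ for SNC blow-ups $\pi$ --- this is precisely the content of Shiho's blow-up theorem in \cite{Shiho}.

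For cube-invariance, I would establish a Hodge-Witt analog of Lemma \ref{lem:MWOmega_key_computation}, namely a canonical K\"unneth decomposition
\[
  \ulM\Witt_n\Omega^q_{\mathcal{X}\otimes\bcube} \simeq \mathrm{pr}_1^*\ulM\Witt_n\Omega^q_{\mathcal{X}} \oplus \mathrm{pr}_1^*\ulM\Witt_n\Omega^{q-1}_{\mathcal{X}},
\]
obtained by applying Lemma \ref{lem:MWOmega_lift} to a product lift $(\widetilde{X},\widetilde{D})\times (\widetilde{\PP^1},\widetilde{[\infty]})$ and decomposing $\Omega^\bullet_{\widetilde{X}\times\widetilde{\PP^1}}(\log)$ as the tensor product of the factor-wise complexes. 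Pushing forward along $\mathrm{pr}_1$ and using the standard computation $\mathrm{R}\Gamma(\PP^1,\Omega^\bullet_{\PP^1/\Witt_n}(\log \infty))\simeq \Witt_n(k)$ in degree zero (and zero otherwise) then shows that $\mathrm{R}\Gamma(\mathcal{X}\otimes\bcube,\ulM\Witt_n\Omega^q)\to\mathrm{R}\Gamma(\mathcal{X},\ulM\Witt_n\Omega^q)$ is an equivalence.

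The main obstacle is the cube-invariance step: one must verify that the crystalline K\"unneth formula for log ideal sheaves yields precisely the expected decomposition with no exotic contributions from the ideal exponents on $\infty$. This is delicate because the ideal sheaf $\mathcal{I}_{(\PP^1,[\infty])}$ interacts nontrivially with Frobenius via the $p\ceil{r}$ in the filtration \eqref{eq:MWOmega_filtration}, so the argument must check that the shift by $p$ in the exponents is compatible with the K\"unneth decomposition --- but this is ultimately what Shiho verifies in \cite{Shiho}, so the theorem can be deduced by a direct appeal to his main computations.
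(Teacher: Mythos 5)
The paper's own proof is a one-line citation of \cite[Theorem 1.4]{Shiho}, which was established directly for $\mathbb{Q}$-modulus pairs. Your proposal tries to rederive the $\mathbb{Q}$-case by reducing to integer multiplicities via $D\mapsto\ceil{D}$, but that reduction has a genuine error in the blow-up invariance step.

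You assert that $\pi^*\ceil{D}=\ceil{\pi^*D}$ for an SNC blow-up $\pi$. This is false. Take $D=\tfrac12 D_1+\tfrac12 D_2$ with $D_1,D_2$ smooth components of $|D|$ meeting transversally, and $Z=D_1\cap D_2$. Then $\pi^*D=\tfrac12\widetilde{D}_1+\tfrac12\widetilde{D}_2+E$, so $\ceil{\pi^*D}$ has multiplicity $1$ on $E$, while $\pi^*\ceil{D}=\pi^*(D_1+D_2)=\widetilde{D}_1+\widetilde{D}_2+2E$ has multiplicity $2$. In general one only has $\ceil{\pi^*D}\leq\pi^*\ceil{D}$. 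By Lemma \ref{lem:MWOmega_global} the sheaf $\ulM\Witt_n\Omega^q$ on $\Bl_ZX$ depends precisely on the rounded multiplicity vector, so the BI statement for $(X,\ceil{D})$ concerns a genuinely different sheaf on $\Bl_ZX$ than the BI statement for $(X,D)$ does, and the reduction does not go through. (Your reduction is fine for cube-invariance, where the divisor components on the $X$-factor and the $\mathbb{P}^1$-factor stay disjoint.) This distinction between $\ceil{\pi^*D}$ and $\pi^*\ceil{D}$ is, of course, exactly why Shiho's theorem must be --- and is --- proved for $\mathbb{Q}$-modulus pairs directly; it cannot be bootstrapped from the integer case by rounding.

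A smaller issue: the displayed decomposition $\ulM\Witt_n\Omega^q_{\mathcal{X}\otimes\bcube}\simeq \pr_1^*\ulM\Witt_n\Omega^q_{\mathcal{X}}\oplus\pr_1^*\ulM\Witt_n\Omega^{q-1}_{\mathcal{X}}$ is not correct as written. For $\bcube=(\mathbb{P}^1,[\infty])$ one has $\Omega^1_{\mathbb{P}^1}(\log\infty)\cong\mathcal{O}_{\mathbb{P}^1}(-1)$, so the second summand must carry a twist by $\pr_2^*\mathcal{O}_{\mathbb{P}^1}(-1)$. The untwisted identity is the content of the paper's Lemma \ref{lem:MWOmega_key_computation} for $(\mathbb{P}^1,[0]+[\infty])$ (used for $S^1_t$-delooping, not for cube-invariance), where $\Omega^1_{\mathbb{P}^1}(\log([0]+[\infty]))\cong\mathcal{O}_{\mathbb{P}^1}$. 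You implicitly correct for this when invoking $\mathrm{R}\Gamma(\mathbb{P}^1,\Omega^\bullet(\log\infty))$ in the next sentence --- indeed the twist is precisely what makes the second summand's pushforward vanish --- but the stated isomorphism is misleading. Finally, both subarguments end by appealing to "Shiho's blow-up theorem" and "Shiho's main computations" in generic terms, so the proposal does not replace the citation of \cite[Theorem 1.4]{Shiho} so much as re-quote it with extra, partly erroneous, bookkeeping.
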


\begin{proof}
    This is a direct consequence of  \cite[Theorem 1.4]{Shiho}.
\end{proof}

This immediately implies the following representability result. 

\begin{theorem}\label{thm;mWOmega}
    For any $n \geq 1$ and $q \geq 0$, there is an object $\mathrm{m}\Witt_n\Omega^q\in \mDA^\eff(k)$ such that there is a natural equivalence
    $$
        \map_{\mDA^\eff(k)} (\motive (\sX),\mathrm{m}\Witt_n\Omega^q) \simeq \mathrm{R}\Gamma(\mathcal{X},\ulM\Witt_n\Omega^q).
    $$
\end{theorem}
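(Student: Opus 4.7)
The plan is to mirror the proof of Theorem \ref{thm;MOmega_rep} in the Hodge case, since the only nontrivial input is the $(\CI\cup\BI)$-locality of $\mathrm{R}\Gamma({-},\ulM\Witt_n\Omega^q)$, which has just been asserted via Shiho's theorem. So the argument is purely formal once locality is in hand.

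First, I view $\ulM\Witt_n\Omega^q$ as an object of $\Sh_\Nis(\mSm_k,\Mod_\Z)$ by applying the Eilenberg-MacLane functor $\Ab\to \Mod_\Z$ termwise and sheafifying; equivalently, I take the presheaf of complexes $\mathrm{R}\Gamma({-},\ulM\Witt_n\Omega^q)$ and observe it is a Nisnevich sheaf of $\Mod_\Z$-valued spectra by the standard hypercompletness of Nisnevich cohomology on finite-type schemes. By the preceding locality theorem (i.e. Shiho's $(\CI\cup\BI)$-locality of $\mathrm{R}\Gamma({-},\ulM\Witt_n\Omega^q)$), this object already lies in the full subcategory $\mDA^\eff(k)\subset \Sh_\Nis(\mSm_k,\Mod_\Z)$. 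Define $\mathrm{m}\Witt_n\Omega^q$ to be this object.

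Next, the claimed representability is a direct application of the Yoneda lemma combined with the definition of $\motive(\sX)$: for any $\sX\in \mSm_k$,
\begin{align*}
\map_{\mDA^\eff(k)}(\motive(\sX),\mathrm{m}\Witt_n\Omega^q)
&\simeq \map_{\Sh_\Nis(\mSm_k,\Mod_\Z)}(y(\sX),\mathrm{m}\Witt_n\Omega^q)\\
&\simeq \mathrm{R}\Gamma(\sX,\ulM\Witt_n\Omega^q),
\end{align*}
where the first equivalence uses the adjunction $\mathrm{L}_\mot\dashv \iota$ together with the fact that $\mathrm{m}\Witt_n\Omega^q$ is $(\CI\cup\BI)$-local, and the second is the standard identification of the mapping spectrum out of a representable with the value of the sheaf on that representable. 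The naturality in $\sX$ is automatic.

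The only delicate point is ensuring that the formal inputs actually apply here, namely that $\mathrm{R}\Gamma({-},\ulM\Witt_n\Omega^q)$ genuinely defines a $\Mod_\Z$-valued Nisnevich sheaf of spectra, not just a presheaf of abelian groups. This follows immediately from the fact that $\ulM\Witt_n\Omega^q$ is a Nisnevich sheaf of abelian groups together with boundedness of Nisnevich cohomological dimension on qcqs schemes of finite Krull dimension. I do not expect any genuine obstacle: all heavy lifting was done in \cite{Shiho} and is imported through the preceding cited theorem.
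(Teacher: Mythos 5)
Your argument is correct and matches the paper's implicit reasoning: the paper treats this theorem as an immediate consequence of Shiho's $(\CI\cup\BI)$-locality result, exactly as you do via the Yoneda lemma and the localization adjunction. Nothing further is needed.
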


Next, we construct an oriented ring spectrum $\mathrm{m}\Witt_n\Omega\in \mDA(k)$ which represents the Hodge-Witt cohomology with modulus.

\begin{lemma}\label{lem:MWOmega_key_computation}
    Let $\mathcal{X}=(X,D)\in \mSm$.
    Then there is a canonical isomorphism of $\Witt_n\mathcal{O}_{X\times\mathbb{P}^1}$-modules
    $$
        \ulM\Witt_n\Omega^q_{\mathcal{X}\otimes (\mathbb{P}^1,[0]+[\infty])}\simeq
        \pr_1^*\ulM\Witt_n\Omega^q_{\mathcal{X}}\oplus \pr_1^*\ulM\Witt_n\Omega^{q-1}_{\mathcal{X}}.
    $$
\end{lemma}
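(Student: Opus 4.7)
The plan is to mirror the proof of Lemma \ref{lem:MOmega_key_computation}, substituting Shiho's local lifting description (Lemma \ref{lem:MWOmega_lift}) for the global formula \eqref{eq:MOmega}. First I would reduce to the local case: the assertion is Zariski-local on $X$, so one may assume $X=\Spec A$ is affine and that $(X,|D|)$ admits a smooth lift $(\widetilde{X},\widetilde{|D|})$ to $\Witt_n(k)$. Then $(\widetilde{X}\times_{\Witt_n(k)}\mathbb{P}^1_{\Witt_n(k)},\,\pr_1^*\widetilde{|D|}+[0]+[\infty])$ lifts the underlying scheme of $\mathcal{X}\otimes(\mathbb{P}^1,[0]+[\infty])$ together with the support of its modulus.

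Write $D=\sum_{i=1}^m r_iD_i$ and set $\widetilde{M}:=\sum_{i=1}^m p(\ceil{r_i}-1)\widetilde{D}_i$. Since $p(\ceil{1}-1)=0$, the components $[0]$ and $[\infty]$ contribute no extra twist to the corresponding exponent vector for the product pair. By Lemma \ref{lem:MWOmega_lift},
\[
\ulM\Witt_n\Omega^q_{\mathcal{X}}\simeq \mathcal{H}^q(K^\bullet),\qquad \ulM\Witt_n\Omega^q_{\mathcal{X}\otimes(\mathbb{P}^1,[0]+[\infty])}\simeq \mathcal{H}^q(L^\bullet),
\]
where $K^\bullet:=\Omega^\bullet_{\widetilde{X}/\Witt_n(k)}(\log\widetilde{|D|})(\widetilde{M})$ and $L^\bullet:=\Omega^\bullet_{\widetilde{X}\times\mathbb{P}^1/\Witt_n(k)}(\log(\pr_1^*\widetilde{|D|}+[0]+[\infty]))(\pr_1^*\widetilde{M})$. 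The Künneth decomposition of log-differential forms, using the global trivialization $d\log t$ of $\Omega^1_{\mathbb{P}^1}(\log([0]+[\infty]))$, yields
\[
L^q\simeq \pr_1^*K^q\oplus \pr_1^*K^{q-1}\wedge d\log t.
\]
Since $d(d\log t)=0$, this splitting is compatible with the de Rham differential, producing a decomposition of complexes $L^\bullet\simeq \pr_1^*K^\bullet\oplus \pr_1^*K^{\bullet-1}[-1]$. Flatness of $\pr_1$ ensures that $\pr_1^*$ is exact and commutes with $\mathcal{H}^q$, giving the desired isomorphism at the sheaf level locally.

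The main difficulty is globalization, since the smooth lift need not exist globally and is not canonical. I expect to handle this by first constructing the comparison map globally as $(\alpha,\beta)\mapsto \pr_1^*\alpha+\pr_1^*\beta\wedge d\log t$, where both $\pr_1^*$ and multiplication by the global section $d\log t$ are defined intrinsically on the crystalline cohomology sheaves $\mathrm{R}^q u_*\mathcal{I}^{\otimes-\mathbf{b}}$ via the functoriality developed in \cite[\S 2]{Shiho}; the local de Rham identification above then exhibits this global map as an isomorphism, in particular verifying independence from the chosen lift and compatibility with the $\Witt_n\mathcal{O}_{X\times\mathbb{P}^1}$-module structure.
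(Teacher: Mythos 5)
Your proof takes essentially the same approach as the paper: reduce Zariski-locally to the case where $(X,|D|)$ lifts to $\Witt_n(k)$, apply Shiho's identification (Lemma \ref{lem:MWOmega_lift} together with the exponent computation from Lemma \ref{lem:MWOmega_global}), use the K\"unneth splitting of the log-de Rham complex on $\widetilde{X}\times\mathbb{P}^1$ via $d\log t$, and pass to cohomology. The paper's proof is terser and leaves the globalization implicit, whereas you explicitly construct the comparison map intrinsically — a welcome extra step — but the core argument is the same; the only nit is that writing the second summand of the split complex as $\pr_1^*K^{\bullet-1}[-1]$ double-counts the degree shift, though you draw the correct conclusion $\mathcal{H}^q\simeq\mathcal{H}^{q-1}(K^\bullet)$.
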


\begin{proof}
    Since the problem is local, we may assume that $(X,D)$ can be lifted to $(\widetilde{X},\widetilde{D})\in \mSm_{\Witt_n(k)}$.
    Let $\widehat{D}:=\ceil{\widetilde{D}}-|\widetilde{D}|$ and $E:=[0]+[\infty]$.
    Using Lemmas \ref{lem:MWOmega_lift} and \ref{lem:MWOmega_global}, we compute
    \begin{align*}
        \ulM\Witt_n\Omega^q_{\mathcal{X}\otimes (\mathbb{P}^1,[0]+[\infty])}
        &\simeq
        \mathcal{H}^q\bigl(\Omega^\bullet_{\widetilde{X}\times\mathbb{P}^1/\Witt_n(k)}(\log |\pr_1^*\widetilde{D}+\pr_2^*E|)(p\cdot\pr_1^*\widehat{D})\bigr)\\
        &\simeq
        \mathcal{H}^q\bigl(\Omega^\bullet_{\widetilde{X}}(\log|\widetilde{D}|)(p\widehat{D})\boxtimes_{\Witt_n\mathcal{O}}\Omega^0_{\mathbb{P}^1/\Witt_n(k)}\\
        &\qquad\oplus\Omega^{\bullet-1}_{\widetilde{X}}(\log|\widetilde{D}|)(p\widehat{D})\boxtimes_{\Witt_n\mathcal{O}}\Omega^1_{\mathbb{P}^1/\Witt_n(k)}(\log E)\bigr) \\
        &\simeq
        \pr_1^*\mathcal{H}^q(\Omega^\bullet_{\widetilde{X}}(\log|\widetilde{D}|)(p\widehat{D}))\oplus
        \pr_1^*\mathcal{H}^{q-1}(\Omega^\bullet_{\widetilde{X}}(\log|\widetilde{D}|)(p\widehat{D}))\\
        &\simeq
        \pr_1^*\ulM\Witt_n\Omega^q_{\mathcal{X}}\oplus\pr_1^*\ulM\Witt_n\Omega^{q-1}_{\mathcal{X}}.
    \end{align*}
    This finishes the proof.
\end{proof}

\begin{lemma}\label{lem:mWOmega_deloop}
    In $\mDA^\eff(k)$, there is an equivalence
    $$
        \Omega_{S^1_t}(\mathrm{m}\Witt_n\Omega^q)\simeq \mathrm{m}\Witt_n\Omega^{q-1},
    $$
    where $\Omega_{S^1_t}$ is the right adjoint of $\Sigma_{S^1_t}=S^1_t\otimes({-})$.
\end{lemma}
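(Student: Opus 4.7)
The plan is to imitate the proof of Lemma \ref{lem:mOmega_deloop} for the Hodge case, replacing $\ulM\Omega^q$ with $\ulM\Witt_n\Omega^q$ throughout. By the representability result of Theorem \ref{thm;mWOmega} and the definition $S^1_t = \motive(\mathbb{P}^1,[0]+[\infty])/\motive(\{1\})$, it suffices to produce, functorially in $\mathcal{X}=(X,D) \in \mSm$, a natural equivalence of the form
\[
    \mathrm{R}\Gamma(\mathcal{X}\otimes (\mathbb{P}^1,[0]+[\infty]),\ulM\Witt_n\Omega^q) \;\simeq\; \mathrm{R}\Gamma(\mathcal{X},\ulM\Witt_n\Omega^q) \,\oplus\, \mathrm{R}\Gamma(\mathcal{X},\ulM\Witt_n\Omega^{q-1}),
\]
together with a compatible splitting coming from the base point $\{1\}\hookrightarrow \mathbb{P}^1$, so that taking the cofiber with respect to $\motive(\{1\})$ picks off the $\ulM\Witt_n\Omega^{q-1}$ summand.

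The first main step is to apply Lemma \ref{lem:MWOmega_key_computation}, which already gives a canonical isomorphism of $\Witt_n\mathcal{O}_{X\times\mathbb{P}^1}$-modules
\[
    \ulM\Witt_n\Omega^q_{\mathcal{X}\otimes (\mathbb{P}^1,[0]+[\infty])} \;\simeq\; \pr_1^*\ulM\Witt_n\Omega^q_{\mathcal{X}} \oplus \pr_1^*\ulM\Witt_n\Omega^{q-1}_{\mathcal{X}}.
\]
Taking $\mathrm{R}\Gamma(X\times\mathbb{P}^1,-)$ reduces the problem to showing $\mathrm{R}\pr_{1,*}\pr_1^* F \simeq F$ for $F = \ulM\Witt_n\Omega^q_{\mathcal{X}}$ and $F=\ulM\Witt_n\Omega^{q-1}_{\mathcal{X}}$. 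By the projection formula this boils down to $\mathrm{R}\pr_{1,*}\Witt_n\mathcal{O}_{X\times\mathbb{P}^1} \simeq \Witt_n\mathcal{O}_X$, which is proved by induction on $n$ using the short exact sequence $0\to \mathcal{O}\to \Witt_n\mathcal{O}\to \Witt_{n-1}\mathcal{O}\to 0$ (and the restriction map as connecting morphism) together with the classical vanishing $\mathrm{R}^i\pr_{1,*}\mathcal{O}_{X\times\mathbb{P}^1}=0$ for $i>0$ and $\pr_{1,*}\mathcal{O}_{X\times\mathbb{P}^1}=\mathcal{O}_X$.

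Combining the two steps yields
\[
    \map_{\mDA^\eff(k)}(\motive(\mathcal{X})\otimes \motive(\mathbb{P}^1,[0]+[\infty]),\mathrm{m}\Witt_n\Omega^q) \simeq \mathrm{R}\Gamma(\mathcal{X},\ulM\Witt_n\Omega^q) \oplus \mathrm{R}\Gamma(\mathcal{X},\ulM\Witt_n\Omega^{q-1}).
\]
The splitting induced by the unit section $\{1\}\hookrightarrow (\mathbb{P}^1,[0]+[\infty])$ identifies the first summand with $\map(\motive(\mathcal{X}),\mathrm{m}\Witt_n\Omega^q)$, so the cofiber computation defining $S^1_t$ gives
\[
    \map(\motive(\mathcal{X}),\Omega_{S^1_t}(\mathrm{m}\Witt_n\Omega^q)) \simeq \mathrm{R}\Gamma(\mathcal{X},\ulM\Witt_n\Omega^{q-1}) \simeq \map(\motive(\mathcal{X}),\mathrm{m}\Witt_n\Omega^{q-1}).
\]
Since the representables $\motive(\mathcal{X})$ generate $\mDA^\eff(k)$, this yields the desired equivalence by the Yoneda lemma.

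The one delicate point is verifying $\mathrm{R}\pr_{1,*}\Witt_n\mathcal{O}_{X\times\mathbb{P}^1}\simeq \Witt_n\mathcal{O}_X$; everything else is a formal transcription of the Hodge argument. Since the Witt structure sheaf fits in a short exact sequence reducing it layer by layer to $\mathcal{O}$, and the projection $\pr_1$ is the standard affine morphism for which the classical projection formula and cohomology of $\mathbb{P}^1$ are known, this obstacle is mild and the plan goes through essentially verbatim.
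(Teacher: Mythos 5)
Your proof follows exactly the same route as the paper: represent both sides by $\mathrm{R}\Gamma$, invoke Lemma~\ref{lem:MWOmega_key_computation} for the sheaf-level Künneth splitting, push forward along $\pr_1$, and conclude via the cofiber description of $S^1_t$ and Yoneda. The one place you go further than the paper is in spelling out why $\mathrm{R}\pr_{1,*}\pr_1^*\ulM\Witt_n\Omega^q_{\mathcal{X}}\simeq \ulM\Witt_n\Omega^q_{\mathcal{X}}$ (by devissage on the Verschiebung filtration reducing to the classical $\mathrm{R}\pr_{1,*}\mathcal{O}_{X\times\mathbb{P}^1}\simeq\mathcal{O}_X$), which the paper takes for granted just as in the Hodge case; this is a welcome clarification rather than a different argument.
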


\begin{proof}
    Let $\mathcal{X}=(X,D)\in \mSm$.
    In $\mDA^\eff(k)$, we have
    \begin{align*}
        &\map(\motive(\mathcal{X})\otimes \motive(\mathbb{P}^1,[0]+[\infty]),\mathrm{m}\Witt_n\Omega^q)\\
        &\simeq
        \mathrm{R}\Gamma(\mathcal{X}\otimes(\mathbb{P}^1,[0]+[\infty]),\ulM\Witt_n\Omega^q)\\
        &\simeq
        \mathrm{R}\Gamma(X\times \mathbb{P}^1,\pr_1^*\ulM\Witt_n\Omega^q_{\mathcal{X}})\oplus
        \mathrm{R}\Gamma(X\times \mathbb{P}^1,\pr_1^*\ulM\Witt_n\Omega^{q-1}_{\mathcal{X}})\\
        &\simeq
        \mathrm{R}\Gamma(\mathcal{X},\ulM\Witt_n\Omega^q)\oplus
        \mathrm{R}\Gamma(\mathcal{X},\ulM\Witt_n\Omega^{q-1}).
    \end{align*}
    Here, we used Lemma \ref{lem:MWOmega_key_computation} for the second isomorphism.
    Recall that $S^1_t=\motive(\mathbb{P}^1,[0]+[\infty])/\motive(\{1\})$.
    Therefore, the above computation shows that
    $$
        \map(\motive(\mathcal{X}),\Omega_{S^1_t}(\mathrm{m}\Witt_n\Omega^q))
        \simeq
        \map(\motive(\mathcal{X})\otimes S^1_t,\mathrm{m}\Witt_n\Omega^q)
        \simeq
        \map(\motive(\mathcal{X}),\mathrm{m}\Witt_n\Omega^{q-1}).
    $$
    This finishes the proof.
\end{proof}

By Lemma \ref{lem:mWOmega_deloop}, we can define the $S^1_t$-spectrum
$$
    \mathrm{m}\Witt_n\Omega := (\mathrm{m}\Witt_n\Omega^0,\mathrm{m}\Witt_n\Omega^1,\mathrm{m}\Witt_n\Omega^2,\dots)\in \mDA(k).
$$
For any $\mathcal{X}\in \mSm$, we have $\Sigma^{p,q}\mathrm{m}\Witt_n\Omega\simeq (\Sigma^{p-q}\mathrm{m}\Witt_n\Omega^q,\Sigma^{p-q}\mathrm{m}\Witt_n\Omega^{q+1},\dots)$ as an $S^1_t$-spectrum and hence
$$
    (\mathrm{m}\Witt_n\Omega)^{p,q}(\mathcal{X})
    :=\Hom_{\mDA(k)}(\motive(\mathcal{X}),\Sigma^{p,q}\mathrm{m}\Witt_n\Omega)\simeq
    \mathrm{H}^{p-q}(\mathcal{X},\ulM\Witt_n\Omega^q).
$$
Usual multiplication on the de Rham-Witt complex
$$
    \ulM\Witt_n\Omega^q(\mathcal{X})\otimes
    \ulM\Witt_n\Omega^{q'}(\mathcal{Y})\to
    \ulM\Witt_n\Omega^{q+q'}(\mathcal{X}\otimes \mathcal{Y})
$$
defines a homotopy commutative ring structure on $\mathrm{m}\Witt_n\Omega$.
Moreover, the usual first Chern class $c_1(\mathcal{O}_{\mathbb{P}^d}(1))\in \mathrm{H}^1(\mathbb{P}^d,\Witt_n\Omega^1)\simeq (\mathrm{m}\Witt_n\Omega)^{2,1}(\mathbb{P}^d,\varnothing)$ determines an orientation of $\mathrm{m}\Witt_n\Omega$.
The projective bundle formula (Theorem \ref{thm:pbf}) and the Thom isomorphism (Theorem \ref{Thom}) imply the corresponding properties of the Hodge-Witt cohomology with modulus.

\begin{corollary}\label{cor:pbf_MWOmega}
    Let $\mathcal{X}=(X,D)\in \mSm_k$ and let $\mathcal{V}\to \mathcal{X}$ be a vector bundle of rank $d+1$ on $\mathcal{X}$.
    Then, the map
    $$
        \bigoplus_{i=0}^d\mathrm{H}^{p-i}(\mathcal{X},\ulM\Witt_n\Omega^{q-i})\xrightarrow{\simeq}\mathrm{H}^{p}(\mathbb{P}(\mathcal{V}),\ulM\Witt_n\Omega^q);\quad (x_0,\dots,x_d)\mapsto \sum_{i=0}^d p^*(x)\cup c_1(L)^i
    $$
    is an isomorphism, where $L$ is the dual of the tautological bundle on $\mathbb{P}(\mathcal{V})$.
\end{corollary}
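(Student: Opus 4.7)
The plan is to mimic the proof of Corollary \ref{cor:pbf_MOmega} verbatim, simply replacing $\mathrm{m}\Omega$ by $\mathrm{m}\Witt_n\Omega$. All the work has in fact already been done in the paragraphs preceding the statement: we have built the $S^1_t$-spectrum $\mathrm{m}\Witt_n\Omega = (\mathrm{m}\Witt_n\Omega^0,\mathrm{m}\Witt_n\Omega^1,\dots) \in \mDA(k)$ using Lemma \ref{lem:mWOmega_deloop}, equipped it with a homotopy commutative ring structure coming from the de Rham-Witt product, and specified an orientation via the class $c_1(\mathcal{O}_{\mathbb{P}^d}(1)) \in \mathrm{H}^1(\mathbb{P}^d,\Witt_n\Omega^1)$. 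Under the representability identification
\[
(\mathrm{m}\Witt_n\Omega)^{p,q}(\mathcal{X}) \simeq \mathrm{H}^{p-q}(\mathcal{X},\ulM\Witt_n\Omega^q),
\]
the first Chern class $c_1(L) \in \mathrm{H}^1(\mathbb{P}(\mathcal{V}),\Witt_n\Omega^1)$ of the dual tautological bundle corresponds precisely to the Chern class used in the general projective bundle formula in $\mDA(k)$.

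The main step is to apply Theorem \ref{thm:pbf} to the oriented homotopy commutative ring spectrum $E = F = \mathrm{m}\Witt_n\Omega$, with the vector bundle $\mathcal{V} \to \mathcal{X}$ of rank $d+1$. The theorem gives that the map
\[
\rho_{\mathcal{V}}\colon \bigoplus_{i=0}^d (\mathrm{m}\Witt_n\Omega)^{*-2i,*-i}(\mathcal{X})\to (\mathrm{m}\Witt_n\Omega)^{*,*}(\mathbb{P}(\mathcal{V}));\quad (x_0,\dots,x_d) \mapsto \sum_{i=0}^d p^*(x_i) \cup c_1(L)^i
\]
is an isomorphism of bigraded groups. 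Specializing to bidegree $(p,q)$ and translating back via the representability isomorphism above yields exactly the claimed isomorphism. There is no nontrivial obstacle: everything reduces to invoking the already-established Theorem \ref{thm:pbf}, since the compatibility between $c_1(L) \in (\mathrm{m}\Witt_n\Omega)^{2,1}(\mathbb{P}(\mathcal{V}))$ and the usual first Chern class $c_1(L) \in \mathrm{H}^1(\mathbb{P}(\mathcal{V}),\Witt_n\Omega^1)$ is built into the construction of the orientation on $\mathrm{m}\Witt_n\Omega$.
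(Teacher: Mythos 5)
Your proof is correct and follows exactly the route the paper takes: the paper's own proof is a one-line invocation of Theorem~\ref{thm:pbf} applied to the oriented ring spectrum $\mathrm{m}\Witt_n\Omega$ constructed in the preceding paragraphs. Your added commentary about the compatibility of the first Chern classes is accurate and implicit in the paper's construction of the orientation.
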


\begin{proof}
    This is a consequence of the projective bundle formula in $\mDA(k)$ (Theorem \ref{thm:pbf}).
\end{proof}

\begin{corollary}\label{cor:Thom_MWOmega}
    Let $\mathcal{X}=(X,D)\in \mSm_k$ and let $\mathcal{V}\to \mathcal{X}$ be a vector bundle of rank $d$ on $\mathcal{X}$.
    Then, there is a canonical isomorphism
    $$
        \mathrm{H}^p(\mathcal{X},\ulM\Witt_n\Omega^{q})\xrightarrow{\simeq} \mathrm{H}^{p+d}(\MTh(\mathcal{V}),\ulM\Witt_n\Omega^{q+d}).
    $$
\end{corollary}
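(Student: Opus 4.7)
The plan is to deduce this from the general Thom isomorphism in $\mDA(k)$ (Corollary \ref{Thom}) applied to the oriented ring spectrum $\mathrm{m}\Witt_n\Omega$ constructed just above, exactly as in the Hodge case (Corollary \ref{cor:Thom_MOmega}). The only real work is a bookkeeping check that the bidegree shift on the spectrum side translates to the claimed shift in Hodge--Witt cohomological degrees.

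First I would recall that $\mathrm{m}\Witt_n\Omega \in \mDA(k)$ is a homotopy commutative ring spectrum endowed with the orientation coming from $c_1(\mathcal{O}_{\mathbb{P}^d}(1)) \in \mathrm{H}^1(\mathbb{P}^d,\Witt_n\Omega^1) \simeq (\mathrm{m}\Witt_n\Omega)^{2,1}(\mathbb{P}^d,\varnothing)$ discussed immediately prior to the statement, and that by construction of $\mathrm{m}\Witt_n\Omega$ together with Lemma \ref{lem:mWOmega_deloop}, one has the identification
\[
(\mathrm{m}\Witt_n\Omega)^{p,q}(\mathcal{X}) \simeq \mathrm{H}^{p-q}(\mathcal{X}, \ulM\Witt_n\Omega^q)
\]
for every $\mathcal{X} \in \mSm_k$.

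Next I would apply Corollary \ref{Thom} to the oriented ring spectrum $E = F = \mathrm{m}\Witt_n\Omega$ and the given vector bundle $\mathcal{V} \to \mathcal{X}$ of rank $d$, obtaining a canonical isomorphism
\[
(\mathrm{m}\Witt_n\Omega)^{\ast,\ast}(\mathcal{X}) \xrightarrow{\simeq} (\mathrm{m}\Witt_n\Omega)^{\ast+2d,\ast+d}(\MTh(\mathcal{V})).
\]
Setting the bidegree on the left to $(p+q,q)$, hence $(p+q+2d,q+d)$ on the right, and translating both sides via the above identification yields the desired isomorphism $\mathrm{H}^p(\mathcal{X},\ulM\Witt_n\Omega^q) \xrightarrow{\simeq} \mathrm{H}^{p+d}(\MTh(\mathcal{V}),\ulM\Witt_n\Omega^{q+d})$.

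I do not anticipate any real obstacle: all the substantive work (constructing $\mathrm{m}\Witt_n\Omega$, equipping it with an orientation, and proving the Thom isomorphism in $\mDA(k)$ at the level of oriented ring spectra) has already been carried out. The proof is therefore little more than a reindexing, parallel to the Hodge case.
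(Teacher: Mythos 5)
Your proposal is correct and is exactly the paper's proof: apply Corollary \ref{Thom} to the oriented ring spectrum $\mathrm{m}\Witt_n\Omega$ and translate bidegrees via $(\mathrm{m}\Witt_n\Omega)^{p,q}(\mathcal{X})\simeq \mathrm{H}^{p-q}(\mathcal{X},\ulM\Witt_n\Omega^q)$. The bookkeeping you carried out (bidegree $(p+q,q)\mapsto(p+q+2d,q+d)$ giving the shift from $\mathrm{H}^p(\,\cdot\,,\ulM\Witt_n\Omega^q)$ to $\mathrm{H}^{p+d}(\,\cdot\,,\ulM\Witt_n\Omega^{q+d})$) is accurate.
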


\begin{proof}
    This is a consequence of the Thom isomorphism in $\mDA(k)$ (Theorem \ref{Thom}).
\end{proof}

The properties of motives with modulus as in Theorem \ref{main-2} imply the corresponding properties of the Hodge-Witt cohomology with modulus.

\begin{corollary}
    Let $\mathcal{X}=(X,D)\in \mSm_k$ and let $Z\subset X$ be a smooth closed subscheme of codimension $d$ which is transversal to $|D|$.
    Then we have
    $$
        \mathrm{R}\Gamma((\Bl_ZX,\pi^*D), \ulM\Witt_n\Omega^q)\simeq
        \mathrm{R}\Gamma(\mathcal{X}, \ulM\Witt_n\Omega^q)\oplus
        \bigoplus_{i=1}^{d-1}\mathrm{R}\Gamma((Z,D|_Z),\ulM\Witt_n\Omega^{q-i}[-i]).
    $$
\end{corollary}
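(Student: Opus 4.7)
My plan is to mimic exactly the proof of the preceding corollary for $\ulM\Omega^q$, with the only substitution being the projective bundle formula for $\ulM\Witt_n\Omega^q$ in place of the one for $\ulM\Omega^q$. The three ingredients I will use are the smooth blow-up excision in $\mDA^\eff(k)$ (Theorem \ref{SBU}), the representability result for the Hodge-Witt cohomology with modulus (Theorem \ref{thm;mWOmega}), and the projective bundle formula for $\ulM\Witt_n\Omega^q$ (Corollary \ref{cor:pbf_MWOmega}).

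First, I would apply Theorem \ref{SBU} to the pair $(\mathcal{X},Z)$ to obtain the coCartesian square
$$\xymatrix{\motive(E,\pi^*D|_E)\ar[r]\ar[d] & \motive(\Bl_ZX,\pi^*D)\ar[d]\\ \motive(Z,D|_Z)\ar[r] & \motive(\mathcal{X})}$$
in $\mDA^\eff(k)$, where $E\cong \mathbb{P}(\mathcal{N}_ZX)$ is the exceptional divisor (a $\mathbb{P}^{d-1}$-bundle over $Z$). Applying the functor $\map_{\mDA^\eff(k)}(-, \mathrm{m}\Witt_n\Omega^q)$ and using Theorem \ref{thm;mWOmega} to identify the relevant mapping spectra with Hodge-Witt cohomology turns this into a Cartesian square of spectra, equivalently a fiber sequence
$$\mathrm{R}\Gamma(\mathcal{X}, \ulM\Witt_n\Omega^q) \to \mathrm{R}\Gamma((\Bl_ZX,\pi^*D), \ulM\Witt_n\Omega^q) \oplus \mathrm{R}\Gamma((Z,D|_Z), \ulM\Witt_n\Omega^q) \to \mathrm{R}\Gamma((E,\pi^*D|_E), \ulM\Witt_n\Omega^q).$$

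Next, Corollary \ref{cor:pbf_MWOmega} applied to the projective bundle $E = \mathbb{P}(\mathcal{N}_ZX)\to Z$ gives the canonical decomposition
$$\mathrm{R}\Gamma((E,\pi^*D|_E), \ulM\Witt_n\Omega^q) \simeq \bigoplus_{i=0}^{d-1} \mathrm{R}\Gamma((Z,D|_Z), \ulM\Witt_n\Omega^{q-i}[-i]).$$
By the explicit form of the isomorphism (cup product with powers of $c_1$ of the tautological line bundle), the $i=0$ summand is precisely the image of the pullback along $E\to Z$, and this pullback agrees with the rightmost map of the fiber sequence restricted to $\mathrm{R}\Gamma((Z,D|_Z), \ulM\Witt_n\Omega^q)$. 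Hence that map is split, the fiber sequence degenerates into a split short exact sequence of complexes, and rearranging the terms yields exactly the claimed decomposition of $\mathrm{R}\Gamma((\Bl_ZX,\pi^*D), \ulM\Witt_n\Omega^q)$.

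Given all the preceding machinery, I do not expect any substantial obstacle; the argument is entirely formal and parallel to the Hodge case treated immediately before. The only point deserving attention is the identification of the $i=0$ summand of the projective bundle decomposition with the pullback image, which is immediate from the definition of $\rho_{\mathcal{V}}$ since $c_1(L)^0 = 1$.
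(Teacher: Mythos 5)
Your choice of ingredients matches the paper's own (terse) proof, which cites exactly Theorem \ref{SBU} and Corollary \ref{cor:pbf_MWOmega}, and the first two steps of your argument (the coCartesian blow-up square, mapping into the representing object $\mathrm{m}\Witt_n\Omega^q$ of Theorem \ref{thm;mWOmega}, and the projective bundle decomposition of $\mathrm{R}\Gamma((E,\pi^*D|_E),\ulM\Witt_n\Omega^q)$) are fine. The gap is in your last step. From the fact that $p^*\colon \mathrm{R}\Gamma((Z,D|_Z),\ulM\Witt_n\Omega^q)\to \mathrm{R}\Gamma((E,\pi^*D|_E),\ulM\Witt_n\Omega^q)$ is split injective with image the $i=0$ summand, you conclude that ``that map is split'' and that the fiber sequence degenerates into a split exact sequence. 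This does not follow. What split injectivity of one leg of the Cartesian square gives you is only an identification of the cofiber of the parallel leg, i.e.\ a cofiber sequence
$$
\mathrm{R}\Gamma(\mathcal{X},\ulM\Witt_n\Omega^q)\xrightarrow{\pi^*} \mathrm{R}\Gamma((\Bl_ZX,\pi^*D),\ulM\Witt_n\Omega^q)\to \bigoplus_{i=1}^{d-1}\mathrm{R}\Gamma((Z,D|_Z),\ulM\Witt_n\Omega^{q-i}[-i]),
$$
not its splitting. To get the asserted direct-sum decomposition you must show that the classes $p^*(z)\cup c_1(L)^i$ with $1\le i\le d-1$ lie in the image of restriction from the blow-up (equivalently, that the connecting map to $\mathrm{R}\Gamma(\mathcal{X},\ulM\Witt_n\Omega^q)[1]$ vanishes), and nothing in your argument addresses this; it is exactly the nontrivial content of the blow-up formula. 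Formally, a Cartesian square of spectra in which the bottom map is the inclusion of a direct summand only says that the top map is the fiber of the composite to the complementary summand, and fiber sequences of spectra do not split in general (e.g.\ $\mathbb{Z}\xrightarrow{2}\mathbb{Z}\to \mathbb{Z}/2$ in the derived category of abelian groups).

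To close the gap you need an input beyond the bare statements of Theorem \ref{SBU} and Corollary \ref{cor:pbf_MWOmega}. A natural route inside the paper's framework is to use the Gysin sequence for the smooth divisor $E\subset \Bl_ZX$ (Theorem \ref{GS}) together with the Thom isomorphism (Corollary \ref{cor:Thom_MWOmega}) to produce a pushforward $i_{E*}\colon \mathrm{H}^{*-2}((E,\pi^*D|_E),\ulM\Witt_n\Omega^{q-1})\to \mathrm{H}^{*}((\Bl_ZX,\pi^*D),\ulM\Witt_n\Omega^{q})$; since $\mathcal{O}_E(1)\cong \mathcal{O}_{\Bl_ZX}(-E)|_E$, the self-intersection relation identifies $i_E^*i_{E*}(\alpha)$ with $\alpha$ cupped with the Euler class of the normal bundle of $E$, so the classes $p^*(z)\cup c_1(L)^{i}$ for $i\ge 1$ are restrictions of classes on the blow-up. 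This gives the needed surjectivity and an explicit splitting of the sequence above, and hence the claimed decomposition; alternatively one can first establish a direct-sum decomposition of $\motive(\Bl_ZX,\pi^*D)$ itself and then apply $\map({-},\mathrm{m}\Witt_n\Omega^q)$. As written, your proposal proves only the cofiber sequence, not the direct sum.
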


\begin{proof}
    This is a consequence of the smooth blow-up excision in $\mDA^\eff(k)$ (Theorem \ref{SBU}) and the projective bundle formula for $\ulM\Witt_n\Omega^q$ (Corollary \ref{cor:pbf_MWOmega}).
\end{proof}

\begin{corollary}
    Let $\mathcal{X}=(X,D)\in \mSm_k$ and let $Z\subset X$ be a smooth closed subscheme which is transversal to $|D|$.
    Then there exists a canonical fiber sequence
    $$
        \mathrm{R}\Gamma((Z,D|_Z),\ulM\Witt_n\Omega^{q+d}[d])
        \to
        \mathrm{R}\Gamma(\mathcal{X},\ulM\Witt_n\Omega^q)
        \to
        \mathrm{R}\Gamma((\Bl_ZX,q^*D+E),\ulM\Witt_n\Omega^q),
    $$
    where $q\colon \Bl_ZX\to X$ is the blow-up along $Z$ and $E$ is the exceptional divisor.
\end{corollary}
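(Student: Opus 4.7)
The plan is to deduce the statement by applying the representing functor $\map_{\mDAeff(k)}({-},\mathrm{m}\Witt_n\Omega^q)$ to the Gysin cofiber sequence of Theorem \ref{GS}, and then identifying the three resulting mapping spectra with the claimed $\mathrm{R}\Gamma$-objects using the representability (Theorem \ref{thm;mWOmega}) together with the Thom isomorphism (Corollary \ref{cor:Thom_MWOmega}).

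First, I would invoke Theorem \ref{GS} for $\mathcal{X}=(X,D)$ and the codimension-$d$ smooth closed subscheme $Z\subset X$ transversal to $|D|$, producing the canonical cofiber sequence
\[
\motive(\Bl_ZX,q^*D+E)\to \motive(\mathcal{X})\to \MTh(\mathcal{N}_ZX)
\]
in $\mDAeff(k)$. Since $\mDAeff(k)$ is stable, applying $\map_{\mDAeff(k)}({-},\mathrm{m}\Witt_n\Omega^q)$ yields the fiber sequence
\[
\map_{\mDAeff(k)}(\MTh(\mathcal{N}_ZX),\mathrm{m}\Witt_n\Omega^q)\to \map_{\mDAeff(k)}(\motive(\mathcal{X}),\mathrm{m}\Witt_n\Omega^q)\to \map_{\mDAeff(k)}(\motive(\Bl_ZX,q^*D+E),\mathrm{m}\Witt_n\Omega^q).
\]

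Next, by Theorem \ref{thm;mWOmega}, the middle and right terms are canonically equivalent to $\mathrm{R}\Gamma(\mathcal{X},\ulM\Witt_n\Omega^q)$ and $\mathrm{R}\Gamma((\Bl_ZX,q^*D+E),\ulM\Witt_n\Omega^q)$ respectively. For the left term, I would use the Thom isomorphism (Corollary \ref{cor:Thom_MWOmega}) applied to the normal bundle $\mathcal{N}_ZX\to (Z,D|_Z)$ of rank $d$, which gives an equivalence identifying $\map_{\mDAeff(k)}(\MTh(\mathcal{N}_ZX),\mathrm{m}\Witt_n\Omega^q)$ with the appropriate shift of $\mathrm{R}\Gamma((Z,D|_Z),\ulM\Witt_n\Omega^{q+d})$ with degree shift $[d]$, matching the claim.

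I do not expect any real obstacle: both inputs, Theorem \ref{GS} (Gysin) and Corollary \ref{cor:Thom_MWOmega} (Thom), are already established in the preceding sections, and the argument is formally identical to the Hodge case proved in the immediately preceding corollary. The only point worth verifying carefully is that Corollary \ref{cor:Thom_MWOmega}, stated at the level of cohomology groups, comes from an underlying equivalence of mapping spectra (hence identifies $\mathrm{R}\Gamma$-objects and not merely their cohomologies); this is automatic since that isomorphism is produced by a motivic equivalence in $\mDA(k)$ coming from the $\mathrm{m}\Witt_n\Omega$-orientation via the general Thom isomorphism in $\mDA(k)$ (Corollary \ref{Thom}).
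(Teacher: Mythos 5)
Your proof is correct and follows exactly the paper's intended route: apply $\map_{\mDAeff(k)}({-},\mathrm{m}\Witt_n\Omega^q)$ to the Gysin cofiber sequence (Theorem \ref{GS}) and identify the terms via representability (Theorem \ref{thm;mWOmega}) and the Thom isomorphism (Corollary \ref{cor:Thom_MWOmega}). The paper states this in one line; you have simply unwound the same argument, including a sensible sanity check that the Thom isomorphism lifts to an equivalence of $\mathrm{R}\Gamma$-spectra rather than merely cohomology groups.
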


\begin{proof}
    This is a consequence of the Gysin sequence in $\mDA^\eff(k)$ (Theorem \ref{GS}) and the Thom isomorphism for $\ulM\Witt_n\Omega^q$ (Corollary \ref{cor:Thom_MWOmega}).
\end{proof}

\section{Modulus sheaves with transfers}

In order to prove the representability of cohomology for more general sheaves, we recall the theory of \emph{modulus sheaves with transfers} developed in \cite{KMSY1}, \cite{KMSY2}, \cite{KSY2}, \cite{Sai20}, \cite{BRS}.
In particular, we upgrade the \emph{strict cube-invariance theorem} due to the third author \cite[Theorem 9.3]{Sai20} to $\mathbb{Q}$-modulus pairs.

We continue to fix a perfect base field $k$.
First we recall the definition of general $\mathbb{Q}$-modulus pairs and finite correspondences between them (see \cite{Koizumi-Miyazaki}).

\begin{definition}\label{def;ulMCorQ}
    A \emph{$\mathbb{Q}$-modulus pair} over $k$ is a pair $\mathcal{X}=(X,D)$ where $X\in \Sch_k$ and $D$ is an effective $\mathbb{Q}$-Cartier divisor on $X$ such that $\mathcal{X}^\circ:=X-|D|$ is smooth over $k$.
    If $D$ is represented by a usual Cartier divisor, then we say that $(X,D)$ is a \emph{$\mathbb{Z}$-modulus pair}.
    
    Let $\mathcal{X}=(X,D)$, $\mathcal{Y}=(Y,E)$ be two $\mathbb{Q}$-modulus pairs.
    We define a subgroup $\ulMCor(\mathcal{X},\mathcal{Y})$ of $\Cor(\mathcal{X}^\circ,\mathcal{Y}^\circ)$ by
    $$
        \ulMCor(\mathcal{X},\mathcal{Y})=
        \biggl\{\sum_i n_i[V_i]\in \Cor(\mathcal{X}^\circ,\mathcal{Y}^\circ)\biggm|
        \begin{array}{l}
            \overline{V_i}\text{ is proper over }X,\\
            (\pr_1^*D)|_{\overline{V_i}^N}\geq (\pr_2^*E)|_{\overline{V_i}^N}
        \end{array}
        \biggr\},
    $$
    where $\overline{V_i}$ is the closure of $V_i$ in $X\times Y$ and $\overline{V_i}^N$ is its normalization.
    An element of $\ulMCor(\mathcal{X},\mathcal{Y})$ is called a \emph{finite correspondence} from $\mathcal{X}$ to $\mathcal{Y}$.
\end{definition}

There is a category $\ulMCor$ whose objects are $\mathbb{Q}$-modulus pairs and whose morphisms are given by $\ulMCor(\mathcal{X},\mathcal{Y})$ \cite[Lemma 2.2]{Koizumi-Miyazaki}.
The category $\ulMCor$ has a symmetric monoidal structure $\otimes$ which is defined by
$$
	(X,D)\otimes (Y,E)=(X\times Y, \pr_1^*D+\pr_2^*E).
$$

\begin{remark}\label{rem:graph}
    \begin{enumerate}
        \item Let $\mathcal{X}=(X,D)$ and $\mathcal{Y}=(Y,E)$ be two $\mathbb{Q}$-modulus pairs.
        If $f\colon X\to Y$ is a morphism of $k$-schemes satisfying $D\geq f^*E$, then the graph of $f|_{\mathcal{X}^\circ}$ defines an element of $\ulMCor(\mathcal{X},\mathcal{Y})$.
        A finite correspondence of this form is called an \emph{ambient morphism}.
        There is a faithful functor $\mSm\to \ulMCor$ which sends a morphism to the associated ambient morphism.
        \item Let $\mathcal{X}=(X,D)$ be a $\mathbb{Q}$-modulus pair.
        If $f\colon Y\to X$ is a proper morphism which is an isomorphism over $X-|D|$, then the ambient morphism $f\colon (Y,f^*D)\to (X,D)$ is an isomorhism in $\ulMCor$ because the transpose of the graph of $f|_{\mathcal{X}^\circ}$ gives the inverse of $f$.
    \end{enumerate}
\end{remark}

\begin{remark}
    In the previous literature, e.g., \cite{KMSY1,KMSY2, KMSY3}, $\Z$-modulus pairs are simply called modulus pairs, and the category of $\Z$-modulus pairs was denoted $\ulMCor$. 
For simplicity of notation, we use the same notation for the category of $\Q$-modulus pairs. 
By definition, the category of $\Z$-modulus pairs is a full subcategory of the category of $\Q$-modulus pairs, hence there is little risk of confusion caused by this abuse of notation. 
\end{remark}

\begin{definition}
    A presheaf of abelian groups $F\in \PSh(\ulMCor,\Ab)$ is called a \emph{Nisnevich sheaf} if for every $\mathbb{Q}$-modulus pair $\mathcal{X}=(X,D)$, the presheaf
    $$
        F_\mathcal{X}\colon (X_\Nis)^\op\to \Ab;\quad (U\to X)\mapsto F(U,D|_U)
    $$
    is a Nisnevich sheaf.
    In this situation, the restriction of $F$ to $\mSm$ is a Nisnevich sheaf of abelian groups on $\mSm$.
    We write $\mathrm{H}^i(\mathcal{X},F):=\mathrm{H}^i(X,F_\mathcal{X})$ for $\mathcal{X}\in \mSm$.
    We write $\Sh_\Nis(\ulMCor,\Ab)$ for the category of Nisnevich sheaves of abelian groups on $\ulMCor$.
\end{definition}

\begin{definition}
    Let $\omega\colon \ulMCor\to \Cor$ denote the functor $\mathcal{X}\mapsto \mathcal{X}^\circ$, which admits a left adjoint $X\mapsto (X,\varnothing)$.
    These functors induce a pair of adjoint functors
    $$
        \begin{tikzcd}
            \mathcal{\Sh_\Nis(\ulMCor,\Ab)}
            \arrow[rr,shift left=0.75ex,"\omega_!"]
            \arrow[rr,leftarrow, shift right=0.75ex,"\omega^*"']
            &&
            \mathcal{\Sh_\Nis(\Cor,\Ab)},
        \end{tikzcd}
    $$
    where $(\omega_!F)(X)=F(X,\varnothing)$ and $(\omega^*G)(\mathcal{X})=G(\mathcal{X}^\circ)$.
\end{definition}

\begin{definition}
    Let $H$ be a presheaf of abelian groups on $\mSm$.
    \begin{enumerate}
        \item We say that $H$ is \emph{cube-invariant} if for any $\mathcal{X}\in \mSm$, the map
        $H(\mathcal{X})\to H(\mathcal{X}\otimes \bcube)$
        is an isomorphism.
        \item We say that $H$ is \emph{blow-up invariant} if for any SNC blow-up $\mathcal{Y}\to \mathcal{X}$ in $\mSm$, the map
        $H(\mathcal{X})\to H(\mathcal{Y})$
        is an isomorphism.
    \end{enumerate}
\end{definition}

\subsection{LS-Cube-invariance}

\begin{definition}
Let $F$ be a presheaf of abelian groups on $\ulMSm^\mathbb{Q}$.
We say that $F$ is \emph{LS-cube-invariant} if $F|_{\mSm}$ is cube-invariant\footnote{The abbreviation LS stands for log-smooth.}.

\end{definition}

\begin{definition}\label{def;cubehomotopic}
Let $\mathcal{X}=(X,D)$, $\mathcal{Y}=(Y,E)$ be $\mathbb{Q}$-modulus pairs, and let $\alpha,\beta\colon \mathcal{X}\to \mathcal{Y}$ be two finite correspondences.
We say that $\alpha$ and $\beta$ are \emph{cube-homotopic} if there is a finite correspondence $\gamma\colon \mathcal{X}\otimes\bcube\to \mathcal{Y}$ such that
$$
\gamma\circ i_0=\alpha,\quad \gamma\circ i_1=\beta,
$$
where $i_0, i_1\colon \mathcal{X}\to \mathcal{X}\otimes\bcube$ are the ambient morphisms induced by
$$
    i_\nu\colon X\xrightarrow{\sim} X\times \{\nu\}\hookrightarrow X\times \mathbb{P}^1\quad (\nu=0,1).
$$
\end{definition}

\begin{lemma}\label{LS_CI_characterization}
    Let $F$ be a presheaf of abelian groups on $\ulMCor$.
    The following conditions are equivalent:
    \begin{enumerate}
        \item $F$ is LS-cube-invariant.
        \item If $\mathcal{X},\mathcal{Y}$ are $\mathbb{Q}$-modulus pairs with $\mathcal{X}$ log-smooth, and $\alpha,\beta\colon \mathcal{X}\to \mathcal{Y}$ are two finite correspondences which are cube-homotopic, then we have
        $$
        \alpha^*=\beta^*\colon F(\mathcal{Y})\to F(\mathcal{X}).
        $$
    \end{enumerate}
\end{lemma}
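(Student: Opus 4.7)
The plan is to prove the two implications separately.

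For $(1)\Rightarrow(2)$: since $\mathcal{X}$ is log-smooth, so is $\mathcal{X}\otimes\bcube$, so cube-invariance of $F|_{\mSm}$ makes the pullback $p^*\colon F(\mathcal{X})\to F(\mathcal{X}\otimes\bcube)$ along the projection $p$ an isomorphism. Since $p\circ i_\nu=\id_\mathcal{X}$ for $\nu=0,1$, both $i_0^*$ and $i_1^*$ coincide with $(p^*)^{-1}$, and hence $\alpha^*=i_0^*\gamma^*=i_1^*\gamma^*=\beta^*$.

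For $(2)\Rightarrow(1)$: the map $p^*$ is already a split injection with left inverse $i_0^*$, so I need to show $p^*\circ i_0^*=\id_{F(\mathcal{X}\otimes\bcube)}$, i.e., $(i_0\circ p)^*=\id_{\mathcal{X}\otimes\bcube}^*$ on $F(\mathcal{X}\otimes\bcube)$. Because $\mathcal{X}\otimes\bcube$ is log-smooth, condition (2) will supply this equality as soon as I exhibit a cube-homotopy between $i_0\circ p$ and $\id_{\mathcal{X}\otimes\bcube}$ (viewed as finite correspondences $\mathcal{X}\otimes\bcube\to\mathcal{X}\otimes\bcube$).

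The required homotopy will come from the multiplication. Let $\mu_0\colon \Bl_{(0,\infty),(\infty,0)}(\mathbb{P}^1\times\mathbb{P}^1)\to\mathbb{P}^1$ be the extension of $(x,t)\mapsto xt$ that was already used in the proof of Lemma \ref{cube_epsilon_invariance}. The two blow-up centers are contained in the support of $\pr_1^*[\infty]+\pr_2^*[\infty]$, so by Remark \ref{rem:graph}(2) the projection $f\colon \Bl_{(0,\infty),(\infty,0)}(\bcube\otimes\bcube)\to \bcube\otimes\bcube$ is an isomorphism in $\ulMCor$. A short local computation on the two exceptional divisors then gives the modulus inequality $\mu_0^*[\infty]\leq f^*(\pr_1^*[\infty]+\pr_2^*[\infty])$, so $\mu_0$ defines an ambient morphism to $\bcube$, which combined with the inverse of $f$ yields a finite correspondence $\mu\in \ulMCor(\bcube\otimes\bcube,\bcube)$. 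By construction, $\mu\circ(\id_\bcube\otimes i_0)$ is the constant correspondence at $0$, while $\mu\circ(\id_\bcube\otimes i_1)=\id_\bcube$; tensoring with $\id_\mathcal{X}$ produces the desired cube-homotopy $\id_\mathcal{X}\otimes\mu\colon \mathcal{X}\otimes\bcube\otimes\bcube\to \mathcal{X}\otimes\bcube$ between $i_0\circ p$ and $\id_{\mathcal{X}\otimes\bcube}$. The only non-formal step is the modulus inequality for $\mu_0$; this is a routine check that is unaffected by allowing $\mathbb{Q}$-coefficients in the target, since $\bcube$ carries integer-coefficient modulus.
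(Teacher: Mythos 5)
Your argument is correct. The paper does not prove this lemma in-house; its proof is the one-line citation \cite[Lemma 3.5]{Koizumi-Miyazaki}, so there is no in-paper argument to compare against. Nevertheless, your $(2)\Rightarrow(1)$ direction uses exactly the multiplication-homotopy device that the paper itself deploys in the neighbouring Lemma \ref{cube_epsilon_invariance_sheaf}, so you are squarely in the expected circle of ideas. Two small remarks: (i) the existence of $\mu\in\ulMCor(\bcube\otimes\bcube,\bcube)$ can also be read off from \cite[Lemma 5.1.1]{KMSY3} (the case $\varepsilon=1$), which the paper quotes for $\bcube^\varepsilon\otimes\bcube\to\bcube^\varepsilon$, so the explicit blow-up computation is avoidable; (ii) that computation is nonetheless correct: near $(0,\infty)$, working in the chart with coordinates $(x,u)$ where $1/t=xu$, one has $\mu_0=1/u$, hence $\mu_0^*[\infty]=\{u=0\}\leq \{x=0\}+\{u=0\}=f^*\pr_2^*[\infty]\leq f^*(\pr_1^*[\infty]+\pr_2^*[\infty])$; the other centre is handled symmetrically, and away from the centres the two divisors coincide. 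The $(1)\Rightarrow(2)$ direction is the standard split-surjection argument and is fine.
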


\begin{proof}
    See \cite[Lemma 3.5]{Koizumi-Miyazaki}.
\end{proof}

\begin{lemma}\label{cube_epsilon_invariance_sheaf}
    Let $F\in \PSh(\ulMCor,\Ab)$ be an LS-cube-invariant presheaf.
    For any $\mathcal{X}\in \mSm$ and $\varepsilon\in (0,1]\cap \mathbb{Q}$, the map $F(\mathcal{X})\to F(\mathcal{X}\otimes\bcube^\varepsilon)$ is an isomorphism.
\end{lemma}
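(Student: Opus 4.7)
The plan is to mimic the proof of the motivic statement in Lemma \ref{cube_epsilon_invariance}, but at the level of presheaves with transfers on $\ulMCor$, replacing the motivic blow-up invariance by Remark \ref{rem:graph}(2) and the motivic $\bcube$-invariance by the cube-homotopy characterization in Lemma \ref{LS_CI_characterization}. Concretely, I will show that the projection $p := \pr_1 \colon \mathcal{X}\otimes\bcube^\varepsilon \to \mathcal{X}$ induces an isomorphism on $F$. Injectivity of $p^*$ is immediate from the fact that $i_1' \colon \mathcal{X}\to \mathcal{X}\otimes\bcube^\varepsilon$, $x\mapsto (x,1)$, is a section of $p$ (valid as an ambient morphism since $1\notin |\varepsilon[\infty]|$), so $(i_1')^*\circ p^*=\id_{F(\mathcal{X})}$.

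For surjectivity, I would construct a retraction built from the rational multiplication $\mu\colon \mathbb{P}^1\times\mathbb{P}^1 \dashrightarrow \mathbb{P}^1$, $(s,t)\mapsto st$, whose indeterminacy locus is $\{(0,\infty),(\infty,0)\}$. Let $\pi\colon \widetilde{\mathcal{Y}} \to \mathcal{X}\otimes\bcube^\varepsilon\otimes\bcube$ be the blow-up along $X\times\{(0,\infty)\}\sqcup X\times\{(\infty,0)\}$ with pullback modulus. Both centers lie in the support of the modulus $\pr_1^*D+\varepsilon\pr_2^*[\infty]+\pr_3^*[\infty]$ (their second or third coordinate equals $\infty$), so $\pi$ is an isomorphism in $\ulMCor$ by Remark \ref{rem:graph}(2). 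The resolved multiplication yields a morphism $\tilde{\mu}\colon \widetilde{\mathcal{Y}}\to \mathcal{X}\otimes\bcube^\varepsilon$ in $\mSm$, and the two sections $i_\nu \colon \mathcal{X}\otimes\bcube^\varepsilon \to \mathcal{X}\otimes\bcube^\varepsilon\otimes\bcube$, $(x,s)\mapsto (x,s,\nu)$ lift (via strict transforms) to ambient morphisms $\iota_\nu\colon \mathcal{X}\otimes\bcube^\varepsilon \to \widetilde{\mathcal{Y}}$ satisfying $\pi\circ\iota_\nu=i_\nu$ for $\nu=0,1$.

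Setting $h := \pi^{-1}\in \ulMCor(\mathcal{X}\otimes\bcube^\varepsilon\otimes\bcube,\widetilde{\mathcal{Y}})$, we obtain $h\circ i_\nu = \iota_\nu$, so $\iota_0$ and $\iota_1$ are cube-homotopic in the sense of Definition \ref{def;cubehomotopic}. Since $\mathcal{X}\otimes\bcube^\varepsilon$ is log-smooth, Lemma \ref{LS_CI_characterization} gives $\iota_0^*=\iota_1^*$ on $F$. Computing on interiors, $\tilde{\mu}\circ\iota_1 = \id_{\mathcal{X}\otimes\bcube^\varepsilon}$ (as $s\cdot 1 = s$), while $\tilde{\mu}\circ\iota_0 = i_0' \circ p$ (as $s\cdot 0 = 0$), where $i_0'\colon \mathcal{X}\to \mathcal{X}\otimes\bcube^\varepsilon$, $x\mapsto (x,0)$. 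Hence
\[
  \id_{F(\mathcal{X}\otimes\bcube^\varepsilon)} = (\tilde{\mu}\circ\iota_1)^* = \iota_1^*\circ\tilde{\mu}^* = \iota_0^*\circ\tilde{\mu}^* = (\tilde{\mu}\circ\iota_0)^* = p^*\circ (i_0')^*,
\]
which proves surjectivity and concludes the argument.

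The hard part will be verifying that $\tilde{\mu}$ is a legitimate morphism in $\mSm$, i.e., that $\pi^*(\varepsilon\pr_2^*[\infty]+\pr_3^*[\infty])\geq \tilde{\mu}^*(\varepsilon[\infty])$. A local computation shows $\tilde{\mu}^*\varepsilon[\infty] = \varepsilon(\widetilde{s=\infty}+\widetilde{t=\infty})$ (no contribution from the exceptional divisors, as $\tilde{\mu}$ is non-constant along each of them), while the source modulus restricted to a neighborhood of the two exceptional divisors $E_1,E_2$ is $\varepsilon\widetilde{s=\infty}+\varepsilon E_1 + \widetilde{t=\infty}+E_2$; the inequality then reduces to $(1-\varepsilon)\widetilde{t=\infty}+\varepsilon E_1 + E_2 \geq 0$, which holds because $\varepsilon\in(0,1]$. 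A parallel local analysis is needed to check that the strict transform of $X\times\mathbb{P}^1\times\{0\}$ gives a valid ambient lift $\iota_0$, where the key point is that the pullback $\iota_0^*E_1$ is exactly $[\infty]\subset \mathbb{P}^1$ with multiplicity absorbed by $\varepsilon[\infty]$ in the source modulus.
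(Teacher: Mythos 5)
Your proof is correct and arrives at the same cube-homotopy conclusion as the paper, but it rebuilds a fact the paper simply cites. The paper's proof is shorter: it invokes \cite[Lemma 5.1.1]{KMSY3} to say directly that the multiplication map $\mu\colon\mathbb{A}^1\times\mathbb{A}^1\to\mathbb{A}^1$ defines a finite correspondence $\bcube^\varepsilon\otimes\bcube\to\bcube^\varepsilon$ in $\ulMCor$, then notes that $i_0^*=i_1^*$ on $F(\mathcal{X}\otimes\bcube^\varepsilon\otimes\bcube)$ because both are retractions of the isomorphism coming from LS-cube-invariance, and closes the diagram. You instead reconstruct that correspondence from scratch: you resolve the indeterminacy of $\mu$ by blowing up $(0,\infty)$ and $(\infty,0)$, verify the divisor inequality $\pi^*(\varepsilon\pr_2^*[\infty]+\pr_3^*[\infty])\geq\tilde{\mu}^*(\varepsilon[\infty])$ by a local computation (this is exactly the content being cited from KMSY3; your inequality $(1-\varepsilon)\widetilde{\{t=\infty\}}+\varepsilon E_1+E_2\geq 0$ is the right one, modulo a harmless labeling of $E_1$ versus $E_2$), then invert the blow-up using Remark \ref{rem:graph}(2) and invoke Lemma \ref{LS_CI_characterization} to get $\iota_0^*=\iota_1^*$. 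The two appeals to LS-cube-invariance are interchangeable via Lemma \ref{LS_CI_characterization}, so the logic is the same; yours is more self-contained but longer. A small simplification you could make: there is no need to establish that $\iota_\nu$ exists as an ambient lift — simply setting $\iota_\nu:=\pi^{-1}\circ i_\nu$ as a finite correspondence suffices, since you only need the composites, and correspondence identities can be checked on interiors.
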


\begin{proof}
    Consider the multiplication map $\mu\colon \mathbb{A}^1\times\mathbb{A}^1\to \mathbb{A}^1$.
    By \cite[Lemma 5.1.1]{KMSY3}, this gives a finite correspondence $\mu\colon \bcube^\varepsilon\otimes\bcube\to \bcube^\varepsilon$.
    Consider the following commutative diagram:
    $$
    \xymatrix{
        &F(\mathcal{X}\otimes\bcube^\varepsilon)\\
        F(\mathcal{X}\otimes\bcube^\varepsilon)\ar[r]^-{(\id\otimes\mu)^*}\ar[d]^-{i_0^*}\ar[ur]^-{\id}
        &F(\mathcal{X}\otimes\bcube^\varepsilon\otimes\bcube)\ar[d]^-{i_0^*}\ar[u]_-{i_1^*}\\
        F(\mathcal{X})\ar[r]&F(\mathcal{X}\otimes\bcube^\varepsilon).
    }
    $$
    We have $i_0^*=i_1^*$ since they are retractions of the isomorphism $F(\mathcal{X}\otimes\bcube^\varepsilon)\to F(\mathcal{X}\otimes\bcube^\varepsilon\otimes\bcube)$.
    Therefore the composition $F(\mathcal{X}\otimes\bcube^\varepsilon)\to F(\mathcal{X})\to F(\mathcal{X}\otimes\bcube^\varepsilon)
    $ coincides with the identity.
\end{proof}

\subsection{Good and excellent presheaves}

\begin{definition}\label{def;compactification}
    Let $\mathcal{X}=(X,D)$ be a $\mathbb{Q}$-modulus pair.
    A \emph{compactification} of $\mathcal{X}$ is a triple $(\overline{X},\overline{D},\Sigma)$ where $\overline{X}$ is a proper $k$-scheme and $\overline{D},\Sigma$ are effective $\mathbb{Q}$-Cartier divisors on $\overline{X}$, equipped with an identification $\overline{X}-|\Sigma|\cong X$ such that $\overline{D}|_X=D$.
\end{definition}

\begin{definition} \label{def:goodexc}
    Let $F\in \PSh(\ulMCor,\Ab)$.
    We say that $F$ is \emph{good} if the following conditions are satisfied:
    \begin{enumerate}
        \item (M-reciprocity) For any $\mathbb{Q}$-modulus pair $\mathcal{X}$, the canonical map
        $$
        \colim_{(\overline{X},\overline{D},\Sigma)}F(\overline{X},\overline{D}+\Sigma)\to F(\mathcal{X})
        $$
        is an isomorphism, where the colimit is taken over all compactifications of $\mathcal{X}$.
        \item (semi-purity) For any $\mathbb{Q}$-modulus pair $\mathcal{X}=(X,D)$, the canonical map $F(X,D)\to F(\mathcal{X}^\circ,\varnothing)$ is injective.
    \end{enumerate}
    We say that $F$ is \emph{excellent} if $F$ satisfies (1), (2), and the following condition:
    \begin{enumerate}\setcounter{enumi}{2}
        \item (left continuity) For any $\mathbb{Q}$-modulus pair $\mathcal{X}=(X,D)$, the canonical map $$\colim_{\varepsilon\to 0} F(X,(1-\varepsilon)D)\to F(X,D)$$ is an isomorphism.
    \end{enumerate}
\end{definition}

\begin{remark}
Let $F\in \PSh(\ulMCor,\Ab)$.
Then $F$ has semi-purity if and only if the unit morphism $F\to \omega^*\omega_!F$ is a monomorphism.
\end{remark}

\begin{remark}\label{rmk;M-reciprocity}
    Let $\mathcal{X}$ be a $\mathbb{Q}$-modulus pair and let $(\overline{X},\overline{D},\Sigma)$ be a compactification of $\mathcal{X}$.
    Then $\{(\overline{X},\overline{D},n\Sigma)\}_{n>0}$ is cofinal in the category of compactifications of $\mathcal{X}$ (see \cite[Lemma 2.4 (2)]{Koizumi-Miyazaki}).
    In particular, a presheaf of abelian groups $F$ on $\ulMCor$ has M-reciprocity if and only if for any $\mathbb{Q}$-modulus pair $\mathcal{X}$ and any compactification $(\overline{X},\overline{D},\Sigma)$ of $\mathcal{X}$, the canonical map
    $$
        \colim_{n\to \infty}F(\overline{X},\overline{D}+n\Sigma)\to F(\mathcal{X})
    $$
    is an isomorphism.
\end{remark}

\begin{remark}
    Let $G$ be a presheaf of abelian groups on the full subcategory $\MCor^\mathbb{Q}\subset \ulMCor$ spanned by proper modulus pairs, i.e., $(X,D)$ with $X$ proper.
    Then the left Kan extension of $G$ along $\MCor^\mathbb{Q}\hookrightarrow\ulMCor$ is given by
    $$
        (X,D)\mapsto
        \colim_{(\overline{X},\overline{D},\Sigma)}G(\overline{X},\overline{D}+\Sigma)
    $$
    (see \cite[Lemma 2.4]{Koizumi-Miyazaki}).
    Therefore a presheaf of abelian groups $F$ on $\ulMCor$ has M-reciprocity if and only if $F$ is left Kan-extended from $\MCor^\mathbb{Q}$.
\end{remark}

\begin{definition}
    Let $F\in \PSh(\ulMCor,\Ab)$ be a good presheaf.
    We define $F^\exc\in \PSh(\ulMCor,\Ab)$ by
    $$
        F^\exc (X,D)=\colim_{\varepsilon\to 0} F(X,(1-\varepsilon)D).
    $$
\end{definition}

\begin{lemma}\label{exc_preserves_everything}
    Let $F\in \PSh(\ulMCor,\Ab)$ be a good presheaf.
    \begin{enumerate}
        \item $F^\exc$ is excellent.
        \item If $F$ is a Nisnevich sheaf, then so is $F^\exc$.
        \item If $F$ is LS-cube-invariant, then so is $F^\exc$.
    \end{enumerate}
\end{lemma}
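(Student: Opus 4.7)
The plan is to verify the three parts separately, with part (1) requiring a careful colimit manipulation, and parts (2) and (3) reducing to general facts about filtered colimits combined with Lemma \ref{cube_epsilon_invariance_sheaf}.

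For part (1), I would verify the three defining conditions of excellence. \emph{Left continuity} is immediate: $F^\exc(X,(1-\delta)D) = \colim_{\varepsilon} F(X,(1-\varepsilon)(1-\delta)D)$, and since $(1-\varepsilon)(1-\delta)$ is cofinal in $\{1-\eta \mid \eta \in (0,1]\cap \mathbb{Q}\}$ as $\varepsilon,\delta \to 0$, the iterated colimit equals $F^\exc(X,D)$. For \emph{semi-purity}, note that $F^\exc(\sX^\circ,\varnothing) = F(\sX^\circ,\varnothing)$ since the colimit over an empty modulus is trivial; so if $a \in F^\exc(X,D)$ maps to zero, pick a representative $a_\varepsilon \in F(X,(1-\varepsilon)D)$ whose image in $F(\sX^\circ,\varnothing)$ is zero, apply semi-purity of $F$, and conclude $a_\varepsilon = 0$ hence $a=0$. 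For \emph{M-reciprocity}, fix a compactification $(\overline X,\overline D,\Sigma)$ of $\sX = (X,D)$. Then $(\overline X,(1-\varepsilon)\overline D,\Sigma)$ is a compactification of $(X,(1-\varepsilon)D)$, and by M-reciprocity of $F$ and Remark \ref{rmk;M-reciprocity},
\[
    F^\exc(\sX) = \colim_{\varepsilon \to 0}\, \colim_{n \to \infty} F(\overline X,(1-\varepsilon)\overline D + n\Sigma).
\]
On the other hand,
\[
    \colim_{n \to \infty} F^\exc(\overline X, \overline D + n\Sigma)
    = \colim_{n \to \infty}\, \colim_{\varepsilon \to 0} F(\overline X,(1-\varepsilon)(\overline D + n\Sigma)).
\]
Reindexing the inner colimit in the latter expression by $m:=(1-\varepsilon)n$ (cofinal as $n \to \infty$, $\varepsilon \to 0$) produces the same diagram as in the former expression, so the two agree and the M-reciprocity isomorphism follows.

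For part (2), suppose $F$ is a Nisnevich sheaf, so that for every $\sX=(X,D)$, the presheaf $U \mapsto F(U,D|_U)$ is a Nisnevich sheaf on $X_{\Nis}$. Then $F^\exc_\sX$ is the filtered colimit of the sheaves $F_{(X,(1-\varepsilon)D)}$ on $X_{\Nis}$. Since $X_{\Nis}$ is a noetherian site (and the relevant descent condition is detected on finite covers), filtered colimits of Nisnevich sheaves are Nisnevich sheaves, so $F^\exc_\sX$ is a sheaf on $X_{\Nis}$; hence $F^\exc$ is a Nisnevich sheaf on $\ulMCor$.

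For part (3), suppose $F$ is LS-cube-invariant and let $\sX = (X,D) \in \mSm$. Since $D$ is a relative $\mathbb{Q}$-SNCD, so is $(1-\varepsilon)D$, hence $(X,(1-\varepsilon)D) \in \mSm$ for every $\varepsilon \in (0,1] \cap \mathbb{Q}$. Note the identification
\[
    (X, (1-\varepsilon)D) \otimes \bcube^{1-\varepsilon} = (X \times \mathbb{P}^1,\, (1-\varepsilon)(\pr_1^* D + \pr_2^* [\infty])).
\]
Lemma \ref{cube_epsilon_invariance_sheaf} (applied to the log-smooth pair $(X,(1-\varepsilon)D)$ and the rational number $1-\varepsilon$) then gives an isomorphism
\[
    F(X,(1-\varepsilon)D) \xrightarrow{\sim} F((X,(1-\varepsilon)D)\otimes \bcube^{1-\varepsilon}),
\]
natural in $\varepsilon$. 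Taking $\colim_{\varepsilon \to 0}$ yields $F^\exc(\sX) \xrightarrow{\sim} F^\exc(\sX \otimes \bcube)$, establishing LS-cube-invariance of $F^\exc$.

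The main obstacle is the verification of M-reciprocity in part (1): one has to cofinally match the two double colimits indexed by $(\varepsilon,n)$, and the slightly subtle point is that enlarging $\Sigma$ and scaling $\overline D$ must be done compatibly so that the resulting divisors in $F(\overline X,-)$ run over a common cofinal system. The reindexing by $m=(1-\varepsilon)n$ is what makes this work, and all the remaining steps are bookkeeping consequences of the good/excellent axioms and the cube-invariance lemma.
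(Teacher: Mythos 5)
Your proof is correct and follows essentially the same approach as the paper: the paper dismisses parts (1) and (2) as ``clear from the definition,'' and for part (3) reduces, exactly as you do, to the termwise isomorphism $F(X,(1-\varepsilon)D)\xrightarrow{\sim} F((X,(1-\varepsilon)D)\otimes\bcube^{1-\varepsilon})$ furnished by Lemma \ref{cube_epsilon_invariance_sheaf} and then passes to the colimit. Your detailed verification of excellence (in particular the cofinality argument matching the two double colimits for M-reciprocity) and the observation that filtered colimits of Nisnevich sheaves on a noetherian scheme are Nisnevich sheaves are sound, just more explicit than what the paper records.
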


\begin{proof}
    The statements (1) and (2) are claer from the definition.
    As for (3), we have to show that
    $$
        \colim_{\varepsilon\to 0} F(X,(1-\varepsilon) D)\to \colim_{\varepsilon\to 0} F((X,(1-\varepsilon) D)\otimes(\mathbb{P}^1,(1-\varepsilon)[\infty]))
    $$
    is an isomorphism for $(X,D)\in \mSm$.
    This follows from Lemma \ref{cube_epsilon_invariance_sheaf}.
\end{proof}

\subsection{Dilation}

\begin{definition}
    For $F\in \PSh(\ulMCor,\Ab)$ and $N>0$, we define the \emph{$N$-th dilation} $F^{[N]}$ of $F$ by
    $$
    F^{[N]}(X,D)=F(X,(1/N)D)\;\text{ for } (X,D)\in \ulMCor.
    $$
\end{definition}

\begin{lemma}\label{dilation_preserves_everything}
    Let $F\in \PSh(\ulMCor,\Ab)$ and $N>0$.
    \begin{enumerate}
        \item If $F$ is a Nisnevich sheaf, then so is $F^{[N]}$.
        \item If $F$ is good (resp. excellent), then so is $F^{[N]}$.
        \item If $F$ is LS-cube-invariant, then so is $F^{[N]}$.
    \end{enumerate}
\end{lemma}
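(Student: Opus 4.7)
The first task is to verify that the assignment $(X,D) \mapsto (X, (1/N)D)$ extends to a functor $\delta_N \colon \ulMCor \to \ulMCor$, so that $F^{[N]} := F \circ \delta_N$ is well-defined as a presheaf on $\ulMCor$. This is immediate: if $V \subset X \times Y$ witnesses an element of $\ulMCor((X,D),(Y,E))$, then the inequality $(\pr_1^*D)|_{\overline{V}^N} \geq (\pr_2^*E)|_{\overline{V}^N}$ is preserved under scaling both sides by $1/N$, so the same cycle lies in $\ulMCor((X,(1/N)D),(Y,(1/N)E))$. With $F^{[N]}$ defined, claim (1) is immediate: for $\mathcal{X} = (X,D)$ the presheaf $F^{[N]}_{\mathcal{X}}$ on $X_\Nis$ coincides with $F_{(X,(1/N)D)}$, which is a Nisnevich sheaf by assumption.

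For (3), observe that $F^{[N]}(\mathcal{X} \otimes \bcube) = F((X,(1/N)D) \otimes (\PP^1, (1/N)[\infty])) = F((X,(1/N)D) \otimes \bcube^{1/N})$. Since $(X,(1/N)D) \in \mSm$ and $F$ is LS-cube-invariant, Lemma \ref{cube_epsilon_invariance_sheaf} (applied with $\varepsilon = 1/N$) gives that $F(X,(1/N)D) \to F((X,(1/N)D) \otimes \bcube^{1/N})$ is an isomorphism. Unwinding, this is precisely the map $F^{[N]}(\mathcal{X}) \to F^{[N]}(\mathcal{X} \otimes \bcube)$.

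For (2), we check the three defining conditions separately. M-reciprocity follows from an elementary cofinality argument: if $(\overline{X},\overline{D},\Sigma)$ ranges over compactifications of $\mathcal{X} = (X,D)$, then $(\overline{X},(1/N)\overline{D},(1/N)\Sigma)$ ranges cofinally over compactifications of $(X,(1/N)D)$ (scaling $\Sigma$ by $1/N$ is a bijection on effective $\mathbb{Q}$-Cartier divisors with support $\overline{X}-X$, cf. Remark~\ref{rmk;M-reciprocity}), so
\[
\colim_{(\overline{X},\overline{D},\Sigma)} F^{[N]}(\overline{X},\overline{D}+\Sigma)
\;=\; \colim F(\overline{X}, (1/N)\overline{D}+(1/N)\Sigma) \;\cong\; F(X,(1/N)D) \;=\; F^{[N]}(\mathcal{X}),
\]
the middle isomorphism being M-reciprocity for $F$ at $(X,(1/N)D)$. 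Semi-purity is transparent, since $|(1/N)D| = |D|$ implies $(X,(1/N)D)^{\circ} = \mathcal{X}^{\circ}$, so the injectivity of $F^{[N]}(X,D) \hookrightarrow F^{[N]}(\mathcal{X}^{\circ},\varnothing)$ is the injectivity of $F(X,(1/N)D) \hookrightarrow F(\mathcal{X}^{\circ},\varnothing)$. Finally, for the excellent case, left continuity is preserved because $\colim_{\varepsilon \to 0} F^{[N]}(X,(1-\varepsilon)D) = \colim_{\varepsilon \to 0} F(X,(1-\varepsilon)(1/N)D) = F(X,(1/N)D) = F^{[N]}(X,D)$ by left continuity of $F$ applied to $(X,(1/N)D)$.

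No serious obstacle is expected; the entire proof is an unwinding of definitions combined with Lemma~\ref{cube_epsilon_invariance_sheaf} for the cube-invariance part. The one place requiring minor care is the cofinality in M-reciprocity, where one must note that the rescaling $\Sigma \mapsto (1/N)\Sigma$ is an isomorphism of the relevant filtered poset of boundary divisors.
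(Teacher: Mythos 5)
Your proof is correct and follows essentially the same route as the paper: part (3) is the identical reduction to Lemma~\ref{cube_epsilon_invariance_sheaf} via $\bcube^{1/N}$, and parts (1) and (2) — which the paper dismisses as ``clear from the definition'' — are here spelled out by checking that $F^{[N]}_{\mathcal{X}} = F_{(X,(1/N)D)}$ as sheaves on $X_\Nis$, that rescaling is a bijection on the poset of compactifications (in fact an isomorphism, not merely cofinal), that $|(1/N)D| = |D|$ makes semi-purity trivial, and that left-continuity transports verbatim. The only thing the paper leaves implicit and you rightly make explicit is that dilation is well-defined as an endofunctor of $\ulMCor$, since the modulus condition on the normalization of $\overline{V}$ is homogeneous of degree $1$ in the divisors; note that your invocation of Lemma~\ref{cube_epsilon_invariance_sheaf} with $\varepsilon = 1/N$ tacitly uses $N \geq 1$, which is consistent with how the paper uses dilation (always with $N$ a positive integer) despite the statement only saying $N > 0$.
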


\begin{proof}
    The statements (1) and (2) are clear from the definition.
    As for (3), we have to show that $F(X,(1/N)D)\to F((X,(1/N)D)\otimes (\mathbb{P}^1,(1/N)[\infty]))$ is an isomorphism for $(X,D)\in \mSm$.
    This follows from Lemma \ref{cube_epsilon_invariance_sheaf}.
\end{proof}

\subsection{Nisnevich sheafification}

\begin{lemma}
    The inclusion functor $\Sh_\Nis(\ulMCor,\Ab)\hookrightarrow \PSh(\ulMCor,\Ab)$ admits an exact left adjoint $F\mapsto F_\Nis$, which is given by
    $$
    F_\Nis(X,D) = \colim_{f\colon Y\to X} \Gamma(Y,a_\Nis( F_{(Y,f^*D))})).
    $$
    Here, the colimit is taken over proper morphisms $f\colon Y\to X$ which is an isomorphism over $X-|D|$.
\end{lemma}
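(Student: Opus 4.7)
The proof divides naturally into three parts: (a) well-definedness and functoriality of the formula; (b) verification of the Nisnevich sheaf property; (c) the universal property together with exactness.

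For (a), I would first check that the indexing category of proper morphisms $f\colon Y\to X$ which restrict to an isomorphism over $X-|D|$ is cofiltered. Given $f_i\colon Y_i \to X$ ($i=1,2$), the fiber product $Y_1\times_X Y_2 \to X$ is still proper and an isomorphism over $X-|D|$; similarly the equalizer of two maps $Y_1 \rightrightarrows Y_2$ over $X$ is such a modification. Hence the colimit defining $F_\Nis(X,D)$ is genuinely filtered. By Remark \ref{rem:graph}(2), each such $f$ yields an isomorphism $(Y, f^*D)\cong (X,D)$ in $\ulMCor$, so morally all the contributions $\Gamma(Y, a_\Nis(F_{(Y,f^*D)}))$ refine the naive choice $\Gamma(X,a_\Nis(F_{(X,D)}))$. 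For functoriality, an ambient morphism $(X',D')\to(X,D)$ given by an étale map $X'\to X$ acts by base change $Y\times_X X'\to X'$, which is again a proper modification of $X'$; ambient proper modifications $X'\to X$ give cofinal refinements; and the transfer action along a general finite correspondence in $\ulMCor$ is induced by representing correspondences as integral closed subschemes and pulling them back along a suitable proper modification of the source, using the Nisnevich-local nature of sheafification as in \cite{KMSY1}.

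For (b), to check that $(F_\Nis)_{(X,D)}$ is a Nisnevich sheaf on $X_\Nis$, pick an elementary distinguished square on $X_\Nis$ and base-change it to each proper modification $f\colon Y\to X$. The sheaf condition for $a_\Nis(F_{(Y,f^*D)})$ on the base-changed square is automatic, and since filtered colimits preserve exact sequences, the sheaf condition descends to $F_\Nis(X,D)$.

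For (c), given a Nisnevich sheaf $G$ on $\ulMCor$ and a morphism $\varphi\colon F\to G$, the universal property of the ordinary Nisnevich sheafification on each $Y_\Nis$ yields a canonical factorization
\[
    \Gamma(Y, a_\Nis (F_{(Y,f^*D)})) \longrightarrow \Gamma(Y, G_{(Y,f^*D)}) = G(Y,f^*D) \xleftarrow{\sim} G(X,D),
\]
where the last isomorphism uses that $(Y,f^*D)\cong (X,D)$ in $\ulMCor$. These maps are compatible in $f$ and assemble to a unique morphism $F_\Nis(X,D)\to G(X,D)$, yielding the adjunction $({-})_\Nis \dashv (\text{inclusion})$. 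Exactness of $({-})_\Nis$ follows stalkwise: restriction to $Y_\Nis$, Nisnevich sheafification $a_\Nis$, and filtered colimits are each exact functors on $\Ab$-valued presheaves.

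The main subtlety is point (a): the transfer functoriality along general finite correspondences. One cannot just pull back a correspondence supported on $X\times X'$ along a given proper modification $Y\to X$ — one must find a \emph{further} proper modification that flattens the situation. This requires the standard machinery of modulus sheaves with transfers, in particular the observation that the relevant category of ``proper envelopes compatible with a correspondence'' is still cofiltered so the colimit absorbs the ambiguity.
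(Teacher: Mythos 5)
Your proposal takes a genuinely different route from the paper, and the difference exposes a gap that needs to be filled.

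The paper's strategy is to do as little new work as possible: it observes that the whole lemma was already proved in \cite[4.5.5]{KMSY1} for presheaves on the category $\ulMCor^{\Z}$ of $\mathbb{Z}$-modulus pairs, defines $F_\Nis$ by the stated formula on $\mathbb{Q}$-modulus pairs, and then constructs the action of a correspondence $\alpha\colon(X,D)\to(Y,E)$ between $\mathbb{Q}$-modulus pairs by the \emph{dilation trick}: choose $N$ with $ND$, $NE$ integral, and transport the already-constructed action along $\alpha\colon(X,ND)\to(Y,NE)$ through the identification $F_\Nis^{[N]}=(F^{[N]})_\Nis=(F_\Nis)^{[N]}$. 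The universal property and exactness then follow formally, by an argument essentially identical to your part (c).

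You instead attempt a from-scratch proof. Parts (b) and (c) are fine and match the standard argument. The issue is in part (a), precisely at what you yourself flag as the ``main subtlety'': functoriality of $F_\Nis$ along general finite correspondences. You invoke ``the standard machinery of modulus sheaves with transfers, \ldots as in \cite{KMSY1}'' to find a common proper modification flattening a given correspondence. But that machinery is built for $\mathbb{Z}$-modulus pairs, where the admissibility condition on correspondences compares honest Cartier divisors. The whole novelty of this lemma in the present paper is passing from $\mathbb{Z}$ to $\mathbb{Q}$, and your argument never engages with this: you treat $\ulMCor$ and its correspondence condition $(\pr_1^*D)|_{\overline{V}^N}\geq(\pr_2^*E)|_{\overline{V}^N}$ with $\mathbb{Q}$-divisors as if the \cite{KMSY1} construction applies verbatim. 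Either you must re-verify the cofinality and flattening arguments in the $\mathbb{Q}$-divisor setting (which is plausible but needs to be said and checked), or you should use the dilation argument to reduce to the known $\mathbb{Z}$ case, which is what the paper does and is considerably shorter. As written, there is a gap at exactly the point you identify as the crux.

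A smaller remark: in your functoriality discussion you only treat \'etale ambient morphisms and proper modifications; a general ambient morphism $f\colon(X',D')\to(X,D)$ with $D'\geq f^*E$ should be handled too (the base change $Y\times_X X'\to X'$ is still a proper morphism that is an isomorphism over $X'-|D'|$ because $X'-|D'|\subseteq X'-|f^*D|$, so this is not hard, but it should be stated).
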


\begin{remark} \label {rem;sheafification}\ 
    \begin{enumerate}
        \item We have to take the colimit along blow-ups in the above construction since, otherwise, the resulting sections will not form a presheaf on the category $\ulMCor$. Indeed, by definition, the morphisms $Y \to X$ apprearing in the above colimits induce isomorphisms in $\ulMCor$.
        \item \label{rem;Z-Q-agree} In the previous literature \cite{KMSY1}, the same result is proven for (pre)sheaves on the category of $\Z$-modulus pairs and modulus correspondences, which we denote by $\ulMCor^{\Z}$ to distinguish it from our category $\ulMCor$. Note that the natural functor $\ulMCor^{\Z} \to \ulMCor$ is fully faithful. 
        Given $F \in \PSh(\ulMCor,\Ab)$, we can consider its sheafification $(F|_{\ulMCor^{\Z}})_{\Nis}$ (which was denoted $(F|_{\ulMCor^{\Z}})_{\ulMNis}$ in \cite{KMSY1}) given by the same formula as above.
        By construction, for any $\Z$-modulus pair $(X,D)$, there exists a canonical identification $F_{\Nis}(X,D)=(F|_{\ulMCor^{\Z}})_{\Nis} (X,D)$. 
        In one phrase, this means that the two notions of sheafification agree on $\Z$-modulus pairs. 
    \end{enumerate}
    
\end{remark}

\begin{proof}
    This statement is proved in \cite[4.5.5]{KMSY1} for presheaves on the category of $\mathbb{Z}$-modulus pairs $\ulMCor^{\Z}$ (cf. Remark \ref{rem;sheafification} \eqref{rem;Z-Q-agree}), which can be applied to the restiction $F|_{\ulMCor^{\Z}}$ for any presheaf $F$ on $\ulMCor$.
    We extend this result to $\mathbb{Q}$-modulus pairs by using dilation.
    Let us define $F_\Nis(X,D)$ by the above formula.
    The non-trivial part is the construction of the correspondence action on $F_\Nis$.
    Given any finite correspondence $\alpha\colon (X,D)\to (Y,E)$ between $\mathbb{Z}$-modulus pairs, we have a well-defined pullback map
    $$
        \alpha^*\colon F_\Nis(Y,E)\to F_\Nis(X,D)
    $$
    by \cite[4.5.5]{KMSY1}.
    Given any finite correspondence $\alpha\colon (X,D)\to (Y,E)$ between $\mathbb{Q}$-modulus pairs, we can always take a positive integer $N$ such that $(X,ND)$ and $(Y,NE)$ are $\mathbb{Z}$-modulus pairs.
    Then, we can define the correspondence action $\alpha^*\colon F_\Nis(Y,E)\to F_\Nis(X,D)$ by
    $$
        F_\Nis(Y,E)=F^{[N]}_\Nis(Y,NE)\xrightarrow{\alpha^*} F^{[N]}_\Nis(X,ND)=F_\Nis(X,D),
    $$
    where $F^{[N]}_\Nis := (F^{[N]})_\Nis = (F_\Nis)^{[N]}$ (the second equality is easily checked by construction). 
    
    It is easy to see that this gives a well-defined presheaf structure on $F$.
    Let us prove that this gives an exact left adjoint to the inclusion functor.
    Let $F\in \PSh(\ulMCor,\Ab)$, $G\in \Sh_\Nis(\ulMCor,\Ab)$ and let $\varphi\colon F\to G$ be a morphism of presheaves.
    For any $\mathbb{Q}$-modulus pair $(X,D)$ and a proper morphism $f\colon Y\to X$ which is an isomorphism over $X-|D|$, we have an induced map
    $$
        \Gamma(Y,a_\Nis(F_{(Y,f^*D)}))\to \Gamma(Y,G_{(Y,f^*D)})\cong \Gamma(X,G_{(X,D)}),
    $$
    where the first map is induced by an adjunction since $G_{(Y,f^*D)}$ is a sheaf on $Y_{\Nis}$.
    This induces a map $F_\Nis(X,D)\to G(X,D)$.
    By construction, these maps are compatible with finite correspondences between $\mathbb{Z}$-modulus pairs.
    By replacing $F$ by $F^{[N]}$, it follows that these maps are compatible with finite correspondences between arbitrary $\mathbb{Q}$-modulus pairs.
\end{proof}

\begin{remark}
    In general, the sheaf $F_\Nis|_{\mSm}$ is not isomorphic to $a_\Nis(F|_{\mSm})$ since the former involves the colimit along the blow-ups while the latter does not. 
\end{remark}

\begin{lemma}\label{good_Nis}
    Let $F\in \PSh(\ulMCor,\Ab)$.
    If $F$ is good, then so is $F_\Nis$.
\end{lemma}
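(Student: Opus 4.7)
The plan is to verify the two defining conditions of a good presheaf separately for $F_\Nis$, namely semi-purity and M-reciprocity, working from the explicit formula
\[
F_\Nis(X,D)=\colim_{f\colon Y\to X}\Gamma(Y,a_\Nis(F_{(Y,f^*D)})),
\]
where $f$ runs over proper maps which are isomorphisms over $X-|D|$, and exploiting the fact that the indexing category is filtered (joint blow-ups exist).

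For semi-purity, fix $\mathcal{X}=(X,D)$ and a proper $f\colon Y\to X$ in the indexing diagram. The semi-purity of $F$ applied to each \'etale $U\to Y$ with open part $U^\circ = U\setminus f^{-1}(|D|)$ yields a monomorphism of presheaves $F_{(Y,f^*D)}\hookrightarrow j_{Y,*}(F_{(\mathcal{X}^\circ,\varnothing)}|_{U^\circ})$ on $Y_\Nis$, where $j_Y\colon\mathcal{X}^\circ\hookrightarrow Y$ (note $X-|D|=Y-f^{-1}(|D|)$ since $f$ is an isomorphism there). Exactness of $a_\Nis$ together with left-exactness of $\Gamma$ then produces an injection $\Gamma(Y,a_\Nis F_{(Y,f^*D)})\hookrightarrow \Gamma(\mathcal{X}^\circ,a_\Nis F_{(\mathcal{X}^\circ,\varnothing)})=F_\Nis(\mathcal{X}^\circ)$. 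Passing to the filtered colimit in $f$ preserves injectivity, giving the desired monomorphism $F_\Nis(\mathcal{X})\hookrightarrow F_\Nis(\mathcal{X}^\circ)$.

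For M-reciprocity, fix one compactification $(\overline{X},\overline{D},\Sigma)$ of $\mathcal{X}$; by Remark~\ref{rmk;M-reciprocity} the family $\{(\overline{X},\overline{D},n\Sigma)\}_{n>0}$ is cofinal, so it suffices to produce a natural isomorphism $\colim_n F_\Nis(\overline{X},\overline{D}+n\Sigma)\xrightarrow{\sim}F_\Nis(\mathcal{X})$. The strategy is to rewrite the double colimit defining each side and interchange filtered colimits. Given a proper $f\colon Y\to X$ in the diagram defining $F_\Nis(\mathcal{X})$, I would use Nagata compactification applied to $Y\to X\hookrightarrow\overline{X}$ followed by graph-closure to produce a proper $\overline{f}\colon \overline{Y}\to\overline{X}$ that is an isomorphism over $\mathcal{X}^\circ$ and restricts to $f$ over $X$. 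Applying M-reciprocity of $F$ sectionwise on $Y_\Nis$ (using the compactification $(\overline{U},\overline{f}^*\overline{D}|_{\overline{U}},n\Sigma_U)$ of $(U,f^*D|_U)$ with $\Sigma_U=\overline{U}-U$, where $\overline{U}$ is the closure in $\overline{Y}$ of any affine \'etale $U\to Y$), I obtain a presheaf isomorphism $F_{(Y,f^*D)}\simeq \colim_n F_{(\overline{Y},\overline{f}^*(\overline{D}+n\Sigma))}|_Y$ on $Y_\Nis$. Since $a_\Nis$ and $\Gamma(Y,-)$ both commute with filtered colimits (the latter because $Y$ is qcqs), this yields
\[
\Gamma(Y,a_\Nis F_{(Y,f^*D)})=\colim_n \Gamma(Y,a_\Nis(F_{(\overline{Y},\overline{f}^*(\overline{D}+n\Sigma))}|_Y)).
\]
Taking the colimit over $f$, swapping the two filtered colimits, and using that the system of such extensions $\overline{Y}\to\overline{X}$ is cofinal among proper maps to $\overline{X}$ that are isomorphisms over $\mathcal{X}^\circ$ (which includes $|\overline{D}+n\Sigma|$ in its complement), one identifies the resulting double colimit with $\colim_n F_\Nis(\overline{X},\overline{D}+n\Sigma)$.

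The main obstacle is the bookkeeping of the interchange of colimits in the M-reciprocity step, specifically verifying that passing from $Y$ to $\overline{Y}$ produces a cofinal subdiagram inside the one defining $F_\Nis(\overline{X},\overline{D}+n\Sigma)$: concretely, every proper birational $\overline{g}\colon\overline{Z}\to\overline{X}$ that is an isomorphism over $\mathcal{X}^\circ$ must be dominated by one of the form $\overline{Y}\to\overline{X}$ arising as a compactification of some $Y\to X$ in our diagram. This amounts to the observation that $\overline{Z}\times_{\overline{X}}X\to X$ is itself such a $Y$, and $\overline{Z}$ is its compactification, which closes the argument; the Nagata step is needed only to guarantee the compactification exists in the first place.
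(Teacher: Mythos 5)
Your proposal attempts a self-contained, direct proof of both properties, whereas the paper's proof is a one-paragraph reduction: the $\mathbb{Z}$-modulus case is quoted from \cite[Lemma 1.27, Lemma 1.29(4)]{Sai20} as a black box, and the $\mathbb{Q}$-modulus case follows by the dilation argument of Lemma \ref{dilation_preserves_everything} (choose $N$ making all divisors integral, identify $F_\Nis^{[N]}=(F_\Nis)^{[N]}$, and read off the conclusion). So the routes are genuinely different; unfortunately your direct route has a gap in the semi-purity step.

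Concretely: from the monomorphism of presheaves $F_{(Y,f^*D)}\hookrightarrow j_{Y,*}F_{(\mathcal{X}^\circ,\varnothing)}$ on $Y_\Nis$, exactness of $a_\Nis$ and left-exactness of $\Gamma(Y,-)$ only produce an injection
\[
\Gamma\bigl(Y,a_\Nis F_{(Y,f^*D)}\bigr)\hookrightarrow \Gamma\bigl(Y,a_\Nis\bigl(j_{Y,*}F_{(\mathcal{X}^\circ,\varnothing)}\bigr)\bigr),
\]
not, as you write, into $\Gamma(\mathcal{X}^\circ,a_\Nis F_{(\mathcal{X}^\circ,\varnothing)})$. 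To get to the latter you need the canonical comparison map $a_\Nis(j_{Y,*}G)\to j_{Y,*}(a_\Nis G)$ (with $G=F_{(\mathcal{X}^\circ,\varnothing)}$) to be injective, and this is not automatic: $G$ is only a presheaf, so $G\to a_\Nis G$ need not be injective, and on stalks the comparison map is $\colim_U G(U^\circ)\to\colim_U(a_\Nis G)(U^\circ)$, which can have a kernel coming from sections of $G$ that vanish Nisnevich-locally on some $U^\circ$ but do not vanish on refinements of $U$. Equivalently, what you actually need is that $a_\Nis F_{(Y,f^*D)}$ has no nonzero sections supported on $Y-\mathcal{X}^\circ$, i.e. that restriction to the open $\mathcal{X}^\circ$ is injective; that is precisely the content of the semi-purity statement for $F_\Nis$, so the argument is circular as written. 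This is exactly the subtle point the paper delegates to \cite[Lemma 1.29(4)]{Sai20}. Your M-reciprocity sketch is closer in spirit to what one must do (and the filtered-colimit interchange over qcqs $Y$ is fine), but it also has a bookkeeping slip: the scheme-theoretic closure $\overline{U}$ of an \'etale $U\to Y$ inside $\overline{Y}$ is not a compactification of $U$ when $U\to Y$ has degree $>1$; one needs a Nagata compactification of $U$ over $\overline{Y}$, not a closure. Both difficulties disappear if you instead follow the paper's strategy of quoting the integral case and performing the dilation reduction.
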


\begin{proof}
    This statement is proved in \cite[Lemma 1.27, Lemma 1.29 (4)]{Sai20} for $\mathbb{Z}$-modulus pairs.
    We extend this result to $\mathbb{Q}$-modulus pairs by using dilation.
    First we assume that $F\in \PSh(\ulMCor,\Ab)$ has semi-purity, and show that $F_\Nis$ also has semi-purity.
    By \cite[Lemma 1.29 (4)]{Sai20}, the map
    $$
        F_\Nis(\mathcal{X})\to F_\Nis(\mathcal{X}^\circ,\varnothing)
    $$
    is injective if $\mathcal{X}$ is a $\mathbb{Z}$-modulus pair.
    For an arbitrary $\mathbb{Q}$-modulus pair $\mathcal{X}=(X,D)$, we take a positive integer $N$ such that $(X,ND)$ is a $\mathbb{Z}$-modulus pair.
    By Lemma \ref{dilation_preserves_everything}, the dilation $F^{[N]}$ also has semi-purity.
    Therefore, it follows that the map
    $$
    F_\Nis(X,D)=F_\Nis^{[N]}(X,ND)\to F_\Nis^{[N]}(\mathcal{X}^\circ,\varnothing)=F_\Nis(\mathcal{X}^\circ,\varnothing)
    $$
    is injective.
    This means that $F_\Nis$ has semi-purity.
    By a similar argument using \cite[Lemma 1.27]{Sai20}, we can show that if $F$ has M-reciprocity, then so does $F_\Nis$.
\end{proof}

We refer to an argument as in the proof of Lemma \ref{good_Nis} as a \emph{dilation argument}.

\subsection{Contraction}

Recall that in Voevodsky's theory, the \emph{contraction} of an $\mathbb{A}^1$-invariant sheaf $F\in \Sh_\Nis(\Cor,\Ab)$ is defined by $F_{-1}(X)=F(X\times (\mathbb{A}^1-\{0\}))/F(X\times \mathbb{A}^1)$.
There is an analogous construction in our theory:

\begin{definition}
    Let $F\in \Sh_\Nis(\ulMCor,\Ab)$ be a good LS-cube-invariant sheaf.
    For $a\in \mathbb{Q}_{>0}$, we define $F^{(a)}_{-1}\in \PSh(\ulMCor,\Ab)$ by
    $$
    F^{(a)}_{-1} (\mathcal{X}) := \dfrac{F(\mathcal{X} \otimes (\mathbb{P}^1,a[0]+[\infty]))}{F(\mathcal{X}\otimes(\mathbb{P}^1,[\infty]))}.
    $$
    By definition, $F^{(a)}_{-1}$ is again a good LS-cube-invariant sheaf.
\end{definition}

\begin{theorem} \label{thm7.1}
Let $S$ be the henselian localization of an object of $\Sm$, and $D$ be an effective $\mathbb{Q}$-Cartier divisor on $S$.
For an $S$-scheme $T$, we write $D_T$ for the pullback of $D$ to $T$.
Let $Z:=\{t=0\}\subset \mathbb{A}^1_S$, $X:=(\AA^1_S)^h_{|Z}$ and write $i\colon Z\hookrightarrow X$ for the inclusion.
Let $F\in \Sh_\Nis(\ulMCor,\Ab)$ be a good LS-cube-invariant sheaf and $a\in \mathbb{Q}_{>0}$.
\begin{enumerate}
    \item   There exists an exact sequence of sheaves
    \[
    0 \to F_{(X,D_X)} \to F_{(X,aZ+D_X)} \to i_* (F^{(a)}_{-1})_{(Z,D_Z)} \to 0.
    \]
    \item   For any log-smooth $\mathbb{Q}$-modulus pair $\mathcal{Y}$, there exists an exact sequence of sheaves
            \[
            0 \to F_{(X,D_X)\otimes \mathcal{Y}} \to F_{(X,aZ+D_X)\otimes\mathcal{Y}} \to (i\times \id)_* (F^{(a)}_{-1})_{(Z,D_Z)\otimes \mathcal{Y}} \to 0.
            \]
\end{enumerate}
\end{theorem}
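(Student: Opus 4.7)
The theorem is a $\mathbb{Q}$-modulus refinement of the strict cube-invariance theorem of the third author \cite[Theorem 9.3]{Sai20}, which handles the integer case. I would deduce the rational case by a dilation argument using Lemma \ref{dilation_preserves_everything}.

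For the reduction of (2) to (1), I would apply (1) to the Nisnevich sheafification $(F^{\mathcal{Y}})_\Nis$ of the presheaf $F^{\mathcal{Y}}\colon \mathcal{Z}\mapsto F(\mathcal{Z}\otimes \mathcal{Y})$. Since $(-)\otimes \mathcal{Y}$ preserves log-smoothness, $F^{\mathcal{Y}}$ is LS-cube-invariant; using a compactification of $\mathcal{Y}$ and Lemma \ref{good_Nis} one verifies that $(F^{\mathcal{Y}})_\Nis$ is good. Moreover the contraction is compatible with this twisting: $((F^{\mathcal{Y}})_\Nis)^{(a)}_{-1}(\mathcal{Z})=F^{(a)}_{-1}(\mathcal{Z}\otimes \mathcal{Y})$, so applying (1) to $(F^{\mathcal{Y}})_\Nis$ yields (2).

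For part (1) when $a\in \mathbb{Z}_{>0}$ and $D$ is a usual Cartier divisor, all modulus pairs in sight are $\mathbb{Z}$-modulus pairs, and the assertion reduces to \cite[Theorem 9.3]{Sai20}. For a general $a\in \mathbb{Q}_{>0}$ and $\mathbb{Q}$-Cartier $D$, I would pick a positive integer $N$ with $Na\in \mathbb{Z}$ and $ND$ a usual Cartier divisor. The dilation $F^{[N]}$ is then good and LS-cube-invariant by Lemma \ref{dilation_preserves_everything}, and applying the integer case to $F^{[N]}$ with modulus $ND$ on $S$ and multiplicity $Na$ produces an exact sequence
\[
0\to F(X,D_X)\to F(X,aZ+D_X)\to \frac{F((Z,D_Z)\otimes(\mathbb{P}^1,a[0]+\tfrac{1}{N}[\infty]))}{F((Z,D_Z)\otimes(\mathbb{P}^1,\tfrac{1}{N}[\infty]))}\to 0.
\]
By Lemma \ref{cube_epsilon_invariance_sheaf}, the denominator is canonically identified with $F(Z,D_Z)$ via the projection, as is $F((Z,D_Z)\otimes(\mathbb{P}^1,[\infty]))$ by LS-cube-invariance. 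It remains to show that the inclusion of subgroups $F((Z,D_Z)\otimes(\mathbb{P}^1,a[0]+\tfrac{1}{N}[\infty]))\hookrightarrow F((Z,D_Z)\otimes(\mathbb{P}^1,a[0]+[\infty]))$ is surjective modulo the common subgroup $F(Z,D_Z)$; this identifies the third term in the displayed sequence with $F^{(a)}_{-1}(Z,D_Z)$.

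\emph{Main obstacle.} The principal technical difficulty is the last identification, which is a sheaf-level variant of the tame Hasse-Arf theorem in the $[\infty]$-direction. I expect it to follow from an argument modelled on the proof of Lemma \ref{cube_epsilon_invariance_sheaf}, applied to $(Z,D_Z)\otimes(\mathbb{P}^1,a[0])$ via the multiplication correspondence $\bcube\otimes\bcube\to \bcube$ of \cite[Lemma 5.1.1]{KMSY3}, which allows one to contract the modulus coefficient at $[\infty]$ using only LS-cube-invariance. Care is required because SNC blow-up invariance of $F$ is not assumed, so one cannot simply import Theorem \ref{THA} at the level of motives.
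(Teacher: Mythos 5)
Your overall strategy agrees with the paper's: the $\mathbb{Z}$-case of \cite[Theorem 7.1]{Sai20} is the input, and the $\mathbb{Q}$-case is deduced by the dilation argument via Lemma \ref{dilation_preserves_everything}.

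You are right, and indeed more careful than the paper, in tracking the coefficient of $[\infty]$ through the dilation. With $F^{[N]}(X,D)=F(X,(1/N)D)$ one gets, for $V\to Z$ \'etale,
\[
(F^{[N]})^{(aN)}_{-1}(V,ND_V)=\frac{F\bigl((V,D_V)\otimes(\mathbb{P}^1,a[0]+\tfrac{1}{N}[\infty])\bigr)}{F\bigl((V,D_V)\otimes(\mathbb{P}^1,\tfrac{1}{N}[\infty])\bigr)},
\]
so the paper's third claimed identity $i_*(F^{(a)}_{-1})_{(Z,D_Z)}=i_*\bigl((F^{[N]})^{(aN)}_{-1}\bigr)_{(Z,ND_Z)}$ is not a formal rewriting but already uses that $F((V,D_V)\otimes(\mathbb{P}^1,a[0]+c[\infty]))$ is independent of $c\in(0,1]\cap\mathbb{Q}$. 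The paper lists the three identities without comment; the first two are immediate, the third is exactly the step you single out.

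Your proposed fix for that step does not work as stated. The point of Lemma \ref{cube_epsilon_invariance_sheaf} is the multiplication map $\mu(x,y)=xy$, which \cite[Lemma 5.1.1]{KMSY3} makes into a modulus correspondence $\bcube^{\varepsilon}\otimes\bcube\to\bcube^{\varepsilon}$. But $\mu$ does not give a correspondence $(\mathbb{P}^1,a[0]+\varepsilon[\infty])\otimes\bcube\to(\mathbb{P}^1,a[0]+\varepsilon[\infty])$: since $\mu^{-1}(\{0\})\supset\mathbb{A}^1\times\{0\}$, the pullback of $a[0]$ contributes $a\cdot(\mathbb{A}^1\times[0])$ which is not bounded by the source modulus $a[0]_x+\varepsilon[\infty]_x+[\infty]_y$. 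More fundamentally, $(\mathbb{P}^1,a[0]+c[\infty])$ has interior $\mathbb{G}_m$, so there is no $\bcube$-contraction of it to a point, and applying Lemma \ref{cube_epsilon_invariance_sheaf} with base $(Z,D_Z)\otimes(\mathbb{P}^1,a[0])$ produces a statement about an extra $\mathbb{P}^1$ factor, not about varying $c$ on the existing one. So this part remains a flagged-but-open gap in your write-up, and, as you note, one cannot invoke Theorem \ref{THA} either, since SNC blow-up invariance of $F$ is not assumed.

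Finally, your reduction of (2) to (1) via the twist $F^{\mathcal{Y}}(\mathcal{Z})=F(\mathcal{Z}\otimes\mathcal{Y})$ does not establish the stated result. The sequence in (2) is a sequence of Nisnevich sheaves on $(X\times Y)_{\Nis}$, while applying (1) to $(F^{\mathcal{Y}})_{\Nis}$ gives a sequence of sheaves on $X_{\Nis}$; the latter are the direct images $\pi_*$ along the projection $\pi\colon X\times Y\to X$ of the sheaves appearing in (2). Neither exactness on $(X\times Y)_{\Nis}$ nor exactness on $X_{\Nis}$ implies the other without a further argument, so the twist does not reduce (2) to (1). The paper instead applies the same dilation argument directly to the $\mathbb{Z}$-statement of part (2) in \cite[Theorem 7.1]{Sai20}.
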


\begin{proof}
    This statement is proved in \cite[Theorem 7.1]{Sai20} for $\mathbb{Z}$-modulus pairs.
    We extend this result to $\mathbb{Q}$-modulus pairs by a dilation argument. 
    More precisely, for a positive integer $N$, we have the following identifications of sheaves on $X_{\Nis}$
    \begin{align*}
        F_{(X,D_X)} &= (F^{[N]})_{(X,ND_X)} \\
        F_{(X,aZ+D_X)} &= (F^{[N]})_{(X,aNZ+ND_X)} \\
        i_* (F^{(a)}_{-1})_{(Z,D_Z)} &= i_*((F^{[N]})^{(aN)}_{-1})_{(Z,ND_Z)}
    \end{align*}
    Now we can find $N$ such that all divisors appearing on the right hand sides of the above equalities are integral divisors. Then the corresponding sequence 
    \[
        0 \to (F^{[N]})_{(X,ND_X)} \to (F^{[N]})_{(X,aNZ+ND_X)} \to i_*((F^{[N]})^{(aN)}_{-1})_{(Z,ND_Z)} \to 0
    \]
    is known to be exact by {\it loc.cit.} and hence we have proven (1). The proof of (2) can be done in the same manner. 
\end{proof}

\subsection{Strict cube-invariance and strict blow-up invariance}

\begin{theorem}[Strict cube-invariance]\label{CCI}
    Let $F\in \Sh_\Nis(\ulMCor,\Ab)$ be a good LS-cube-invariant sheaf.
    Then the cohomology presheaf $\mathrm{H}^i({-},F)$ on $\mSm$ is cube-invariant.
\end{theorem}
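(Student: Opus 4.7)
My plan is to follow the proof of \cite[Theorem 9.3]{Sai20}, which establishes the analogous result in the $\mathbb{Z}$-modulus setting, and transfer it to the $\mathbb{Q}$-modulus setting using the toolkit developed earlier in this section. Saito's proof rests on three pillars: the contraction theorem, whose $\mathbb{Q}$-modulus version is Theorem \ref{thm7.1}; cube-invariance at the level of sections combined with semi-purity (Lemma \ref{cube_epsilon_invariance_sheaf} together with the good presheaf axioms); and the preservation of goodness under Nisnevich sheafification (Lemma \ref{good_Nis}). A naive dilation reduction, replacing $F$ and $\mathcal{X}=(X,D)$ with $F^{[N]}$ and $(X,ND)$ for $N$ with $ND$ integral, gives only the strict $\bcube^{1/N}$-invariance $\mathrm{H}^i(\mathcal{X},F)\simeq \mathrm{H}^i(\mathcal{X}\otimes\bcube^{1/N},F)$, which is strictly weaker than what we need (for example, on $\ulM\Omega^0$ one checks directly that $\bcube^{(N)}$-invariance fails for $N\geq 2$); so a genuine re-run of Saito's argument is required.

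In more detail, one reformulates the conclusion as the equivalence of sheaves $R\pi_* F_{\mathcal{X}\otimes\bcube}\simeq F_{\mathcal{X}}$ on $X_{\Nis}$, where $\pi\colon X\times\mathbb{P}^1\to X$ is the projection. Working Nisnevich-locally, one fixes a henselian local scheme $S=X_x^h$ and computes $R\Gamma(\mathbb{P}^1_S,\, F_{(S\times\mathbb{P}^1,\,\pr_1^*D|_S+\pr_2^*[\infty])})$. Mayer--Vietoris for the Zariski cover $\mathbb{P}^1=\mathbb{A}^1\cup(\mathbb{P}^1-\{0\})$ splits this computation into three pieces. The contribution of the affine chart $\mathbb{A}^1$, where the modulus at infinity plays no role, pairs against that of the intersection $\mathbb{G}_m$ via section-level cube-invariance (Lemma \ref{cube_epsilon_invariance_sheaf}). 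The remaining piece over $\mathbb{P}^1-\{0\}$ carries the modulus $[\infty]$; here the localization short exact sequence from Theorem \ref{thm7.1}(2) supplies the key identity, relating $F_{\mathcal{X}\otimes\bcube}$ to $F_{\mathcal{X}\otimes\mathbb{A}^1}$ modulo a contribution supported along the zero section coming from the contraction $F^{(1)}_{-1}$. An induction on cohomological degree, using that $F^{(1)}_{-1}$ is itself a good LS-cube-invariant Nisnevich sheaf on $\ulMCor$, then closes the argument.

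The main obstacle is the careful bookkeeping required to ensure that every auxiliary sheaf produced in the course of the argument---notably the iterated contractions and their sheafifications---retains the properties (goodness, LS-cube-invariance, and semi-purity) demanded by the inductive hypothesis. In the $\mathbb{Z}$-modulus setting these verifications were carried out in \cite{Sai20}; their $\mathbb{Q}$-modulus analogues follow by the systematic dilation arguments that have been used throughout this section to extend Theorem \ref{thm7.1} and Lemma \ref{good_Nis}. Beyond this bookkeeping, no genuinely new difficulties arise.
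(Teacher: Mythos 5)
Your diagnosis of why the naive dilation fails is correct (the cube's multiplicity-one modulus gets rescaled, and $\bcube^{(N)}$-invariance is genuinely false), but the route you then propose — re-running the whole proof of \cite[Theorem 9.3]{Sai20} in the $\mathbb{Q}$-modulus setting — contains a real gap rather than mere bookkeeping. First, the core computation is not actually supplied: the mechanism you name for it, a Mayer--Vietoris decomposition $\mathbb{P}^1=\mathbb{A}^1\cup(\mathbb{P}^1-\{0\})$ in which the $\mathbb{A}^1$-chart ``pairs against'' $\Gm$ via Lemma \ref{cube_epsilon_invariance_sheaf}, does not work as stated. That lemma is a statement about sections $F(\mathcal{X})\to F(\mathcal{X}\otimes\bcube^\varepsilon)$; it gives no control whatsoever of the higher Nisnevich cohomology of $F$ over the affine charts $\mathbb{A}^1_S$ and $\Gm{}_{,S}$, and controlling exactly that cohomology (via nontrivial vanishing statements of the type of Lemma \ref{lem:higher-HI}, plus the rest of Saito's machinery) is where the entire difficulty of \cite[Theorem 9.3]{Sai20} sits. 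Second, the blanket claim that every auxiliary statement in Saito's argument extends to $\mathbb{Q}$-modulus pairs ``by the systematic dilation arguments'' begs the question: dilation breaks precisely in those steps where the fixed multiplicity-one modulus at infinity of $\bcube$ interacts with a fractional divisor on the base — which, as your own opening observation shows, is the whole content of the theorem. So the proposal reduces the theorem to ``Saito's proof goes through verbatim over $\mathbb{Q}$,'' which is neither verified nor needed.

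The missing idea is that one can use Theorem \ref{thm7.1} not in the cube direction but to remove the $\mathbb{Q}$-divisor altogether, so that Saito's theorem is only ever invoked as a black box in a purely integral situation. After reducing to $F_{\mathcal{X}}\xrightarrow{\ \sim\ }\mathrm{R}p_*F_{\mathcal{X}\otimes\bcube}$ and henselian localization, write $D=D'+aE$ with $E$ one smooth component; the two exact sequences of Theorem \ref{thm7.1} (part (1) applied to $(X,D)$, and part (2) with $\mathcal{Y}=\bcube$ applied to $(X,D)\otimes\bcube$) exhibit $F_{(X,D)}$ and $F_{(X,D)\otimes\bcube}$ compatibly as extensions of $\iota_*(F^{(a)}_{-1})_{(E,D'|_E)}$, resp.\ $(\iota\times\id)_*(F^{(a)}_{-1})_{(E,D'|_E)\otimes\bcube}$, by $F_{(X,D')}$, resp.\ $F_{(X,D')\otimes\bcube}$. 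Since $F^{(a)}_{-1}$ is again good and LS-cube-invariant, induction on the number of components of $D$ reduces everything to $D=\varnothing$; there $(X,\varnothing)\otimes\bcube$ is a $\mathbb{Z}$-modulus pair, so \cite[Theorem 9.3]{Sai20} applies verbatim to $F|_{\ulMCor^{\mathbb{Z}}}$ and no part of Saito's proof has to be redone. This is the paper's proof, and it is both shorter and avoids the unverified transfer of Saito's intermediate results that your plan relies on.
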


\begin{proof}
    Let $\mathcal{X}=(X,D)\in \mSm$.
    It suffices to show that the canonical morphism $F_{\mathcal{X}}\to \mathrm{R}p_*F_{\mathcal{X}\otimes\bcube}$ is an equivalence, where $p\colon X\times\mathbb{P}^1\to X$ is the canonical projection.
    Considering the Leray spectral sequence, we can replace $X$ by its henselian localization at a point $x\in X$. 
    In this situation we can write $X=\Spec K\{x_1,\cdots,x_c\}$, where $K=k(x)$ and $|D|=\{x_1x_2\cdots x_r=0\}$.
    Write $D=a_1D_1+\dots+a_rD_r$ where $D_i=\{x_i=0\}$.
    Let $D':=a_1D_1+\dots+a_{r-1}D_{r-1}$, $a:=a_r$, and $E=D_r$ so that $D=D'+aE$.
    By Theorem \ref{thm7.1}, we have exact sequences
    \begin{align*}
        &0\to F_{(X,D')}\to F_{(X,D)}\to \iota_*(F^{(a)}_{-1})_{(E,D'|_E)}\to 0,\\
        &0\to F_{(X,D')\otimes\bcube}\to F_{(X,D)\otimes\bcube}\to (\iota\times \id)_*(F^{(a)}_{-1})_{(E,D'|_E)\otimes\bcube}\to 0
    \end{align*}
    where $\iota\colon E\to X$ is the inclusion.
    Therefore it suffices to prove the claim for $(X,D')$ and $(E,D'|_E)$.
    Repeating this argument, we can reduce to the case $r=0$, which is proved in \cite[Theorem 9.3]{Sai20}.
\end{proof}

\begin{lemma}\label{lem:higher-HI}
Let $F\in \Sh_\Nis(\ulMCor,\Ab)$ be a good LS-cube-invariant sheaf and $\sX=(X,D)\in \mSm$.
Let $E_1,\dots,E_n$ be effective $\mathbb{Q}$-Cartier divisors on $\mathbb{A}^1$ and write $\pi\colon \mathbb{A}^n_X\to X$ for the projection.
Then we have
\[
    \mathrm{R}^q\pi_* (F_{(\mathbb{A}^1,E_1)\otimes\ldots \otimes(\mathbb{A}^1, E_n)\otimes \sX})=0\quad (q>0).
\]
\end{lemma}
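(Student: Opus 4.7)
The plan is to prove the lemma by induction on $n$, reducing to the case $n=1$ and then handling that case through a combination of the global version of Theorem~\ref{thm7.1} and strict cube-invariance (Theorem~\ref{CCI}). For the inductive step, I would factor $\pi$ as $\mathbb{A}^n_X \xrightarrow{p_n} \mathbb{A}^{n-1}_X \xrightarrow{\pi'} X$ and invoke the Leray spectral sequence. The case $n=1$ applied over Henselian-local stalks of $\mathbb{A}^{n-1}_X$ shows $\mathrm{R}^j p_{n*}F_\bullet = 0$ for $j>0$. The remaining sheaf $p_{n*}F_\bullet$ on $\mathbb{A}^{n-1}_X$ can be identified with $F'_{(\mathbb{A}^1,E_1)\otimes\cdots\otimes(\mathbb{A}^1,E_{n-1})\otimes\sX}$ for $F'(\sY) := F(\sY\otimes(\mathbb{A}^1,E_n))$, which inherits good LS-cube-invariance from $F$, so the induction hypothesis applies.

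For the case $n=1$, I would take stalks at essentially-smooth Henselian-local $Y \to X$, reducing to showing $\mathrm{H}^q(\mathbb{A}^1_Y, F_{(\mathbb{A}^1,E)\otimes\sY}) = 0$ for $q>0$. I would then apply the global version of Theorem~\ref{thm7.1} on $\mathbb{A}^1_Y$ to peel off the components of $E$ one at a time: each peeling produces a short exact sequence whose third term is $i_{z*}(F^{(a)}_{-1})$ supported on $\{z\}\times Y$. Since $\{z\}\times Y = Y\otimes_k k(z)$ is semi-local Henselian (being finite étale over the Henselian-local $Y$), its positive-degree Nisnevich cohomology vanishes, so these contributions drop out of the cohomology long exact sequence. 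By induction on the number of components, one reduces to the base case $E=0$.

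For the base case $E=0$, I would use the compactification $j\colon\mathbb{A}^1_Y\hookrightarrow\mathbb{P}^1_Y$ together with M-reciprocity, which gives
\[
j_*F_{(\mathbb{A}^1,0)\otimes\sY} = \colim_m F_{(\mathbb{P}^1,m[\infty])\otimes\sY}
\]
as sheaves on $\mathbb{P}^1_Y$. For each integer $m\geq 1$, a further application of global Theorem~\ref{thm7.1} at $\{\infty\}$ reduces $F_{(\mathbb{P}^1,m[\infty])\otimes\sY}$ to $F_{\bcube\otimes\sY}$ modulo contractions supported on the Henselian-local $\{\infty\}\times Y$, so strict cube-invariance (Theorem~\ref{CCI}) yields $\mathrm{H}^q(\mathbb{P}^1_Y,F_{(\mathbb{P}^1,m[\infty])\otimes\sY})=0$ for $q>0$; for rational multiplicities one replaces $F$ with the dilation $F^{[N]}$ (which is again good LS-cube-invariant by Lemma~\ref{dilation_preserves_everything}). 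Passing to the filtered colimit gives $\mathrm{H}^q(\mathbb{P}^1_Y,j_*F_{(\mathbb{A}^1,0)\otimes\sY})=0$ for $q>0$.

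The main obstacle will be the transfer from $\mathbb{P}^1_Y$ to $\mathbb{A}^1_Y$. Via Leray for $j$, combined with the fact that $\mathrm{R}^k j_*F_\bullet$ ($k\geq 1$) is supported on $\{\infty\}\times Y$, which is Henselian local, the spectral sequence collapses to $\mathrm{H}^n(\mathbb{A}^1_Y,F_\bullet) = (\mathrm{R}^n j_*F_\bullet)(\{\infty\}\times Y)$ for $n\geq 1$. The remaining task, which I expect to be the technical heart of the proof, is to show that this "punctured Henselian" cohomology $\mathrm{H}^n(\Spec\mathcal{O}^h_Y\{u\}[1/u],F_\bullet)$ vanishes. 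I would approach this by iterating the same M-reciprocity argument: one compares with sheaves $F_{(\tilde W,m[u=0])}$ on the Henselian-local $\tilde W = \Spec\mathcal{O}^h_Y\{u\}$, whose cohomologies vanish trivially in positive degrees. Making this argument rigorous requires carefully commuting cohomology with the M-reciprocity colimits and may need a secondary induction on $\dim Y$ with base case $Y=\Spec K$ a field (where the punctured Henselian becomes the Henselian Laurent field $K\{u\}[1/u]$, which is a field and hence has no higher Nisnevich cohomology).
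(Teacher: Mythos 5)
Your proposal attempts to prove the vanishing from scratch, whereas the paper's proof is a two-step reduction: first apply the dilation argument (replace $F$ by $F^{[N]}$ for a suitable $N$) so that all the divisors $NE_1,\dots,NE_n,ND$ become integral, and then cite the known $\mathbb{Z}$-modulus case, which is \cite[Lemma 2.10]{BRS}. You do invoke the dilation trick, but only as a local patch deep inside your argument; applying it globally at the outset collapses the entire problem to a result that already exists in the literature, which is exactly what the paper does.

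Beyond missing this shortcut, your argument has a genuine gap that you yourself flag: the vanishing of the ``punctured henselian'' cohomology $\mathrm{H}^q\bigl(\Spec\mathcal{O}^h_Y\{u\}[1/u],F_\bullet\bigr)$ for $q>0$. When $Y$ is not a field this scheme is not local and its higher Nisnevich cohomology does not vanish for free; you sketch an M-reciprocity comparison with $F_{(\widetilde W,m[u=0])}$ on $\widetilde W=\Spec\mathcal{O}^h_Y\{u\}$, but the step of commuting the $\colim_m$ coming from M-reciprocity past $\mathrm{R}^qj_*$ and then past $\mathrm{R}\Gamma$ is exactly where the work lies, and you leave it as ``requires carefully commuting \dots and may need a secondary induction on $\dim Y$.'' As written, that step is not proved. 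There is also a secondary issue in your inductive step: you define $F'(\mathcal{Y}):=F(\mathcal{Y}\otimes(\mathbb{A}^1,E_n))$ and assert it ``inherits good LS-cube-invariance from $F$,'' but M-reciprocity of $F'$ is not automatic --- a compactification of $\mathcal{Y}$ does not give a compactification of $\mathcal{Y}\otimes(\mathbb{A}^1,E_n)$ since $\mathbb{A}^1$ is not proper, so M-reciprocity of $F'$ would itself need an argument. Both issues disappear if you simply dilate first and invoke \cite[Lemma 2.10]{BRS}, which is the content you are implicitly reproving.
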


\begin{proof}
    We proceed as in the proof of Theorem \ref{thm7.1} by using the dilation argument. 
    For any positive integer $N$, we note 
    \[
        F_{(\AA^1,E_1) \otimes \cdots (\AA^1,E_n) \otimes (X,D)} = (F^{[N]})_{(\AA^1,NE_1) \otimes \cdots (\AA^1,NE_n) \otimes (X,ND)},
    \]
    and we can make $NE_1,\dots,NE_n, ND$ integral divisors by choosing an appropriate $N$. 
    Then the vanishing of the higher direct image of the right hand side is known by \cite[Lemma 2.10]{BRS}.
\end{proof}

\begin{theorem}[Strict blow-up invariance] \label{BI}
    Let $F\in \Sh_\Nis(\ulMCor,\Ab)$ be an excellent LS-cube-invariant sheaf.
    Then the cohomology presheaf $\mathrm{H}^i({-},F)$ on $\mSm$ is blow-up invariant.
\end{theorem}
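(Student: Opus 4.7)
The plan is to follow the same pattern as the proof of the strict cube-invariance (Theorem \ref{CCI}): localize the statement on $X$, peel off the divisor components one at a time using the contraction exact sequence, and reduce the remaining base case to a $\mathbb{Z}$-modulus statement via the dilation argument.

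First, given an SNC blow-up $\pi : \mathcal{Y} = (\Bl_Z X, \pi^* D) \to \mathcal{X} = (X, D)$, it suffices by the Leray spectral sequence to show that the canonical map $F_\mathcal{X} \to \mathrm{R}\pi_* F_\mathcal{Y}$ is an equivalence of Nisnevich sheaves on $X$. This is a local statement, so we may replace $X$ by the henselization $\Spec \mathcal{O}_{X_0,x}^h$ at a point of some $X_0 \in \Sm_k$ and fix coordinates $x_1, \ldots, x_d$ so that $|D| = \{x_1 \cdots x_r = 0\}$, $D = \sum_{i=1}^r a_i D_i$ with $D_i = \{x_i = 0\}$, and the center $Z$ has normal crossings with $|D|$.

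Next, write $D = D' + a_r D_r$ with $D' = \sum_{i<r} a_i D_i$, and apply Theorem \ref{thm7.1} to obtain an exact sequence
\[
0 \to F_{(X,D')} \to F_{(X,D)} \to \iota_*(F^{(a_r)}_{-1})_{(D_r, D'|_{D_r})} \to 0
\]
together with its pullback to $\Bl_Z X$, where $\iota : D_r \hookrightarrow X$ is the inclusion. The contraction $F^{(a_r)}_{-1}$ is again good and LS-cube-invariant, and it inherits left continuity from $F$ by a direct verification (the colimit defining $F^{(a_r)}_{-1}(X, (1-\varepsilon) D)$ commutes with the quotient). Hence $F^{(a_r)}_{-1}$ is excellent. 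Comparing the two exact sequences via $\pi_*$ and using Lemma \ref{lem:higher-HI} to dispose of higher direct images of $F_{(\mathbb{A}^1, *) \otimes \cdots}$-type terms, a descending induction on $r$ reduces the claim to the case $D = 0$, plus the analogous claim for $F^{(a_r)}_{-1}$ on the smooth stratum $(D_r, D'|_{D_r})$ with the induced blow-up structure; all such strata involve strictly fewer divisor components.

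Finally, the base case is blow-up invariance for a $\mathbb{Q}$-modulus pair of the form $(X, 0)$, which is the classical smooth blow-up invariance; more generally, after peeling, we are left with excellent LS-cube-invariant sheaves on $\mathbb{Q}$-modulus pairs whose divisor is simpler. To reduce to an already-established result for $\mathbb{Z}$-modulus pairs, choose $N$ such that $N$ times every $\mathbb{Q}$-divisor appearing is integral; since the underlying SNC blow-up structure $(\Bl_Z X, N\pi^* D) \to (X, ND)$ has identical support and identical center, an SNC blow-up remains one after dilation. By Lemma \ref{dilation_preserves_everything}, $F^{[N]}$ is again excellent LS-cube-invariant, and we have
\[
\mathrm{H}^i(\mathcal{X}, F) = \mathrm{H}^i((X, ND), F^{[N]}), \qquad \mathrm{H}^i(\mathcal{Y}, F) = \mathrm{H}^i((\Bl_Z X, N\pi^* D), F^{[N]}),
\]
so the claim follows from the $\mathbb{Z}$-modulus version established in \cite{Sai20} together with the blow-up technique of \cite{Koizumi-blowup}. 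The main obstacle will be Step 2: ensuring that the contraction exact sequence above is compatible with the pullback along $\pi$ when the center $Z$ is contained in $D_r$ rather than meeting it transversally, which requires a careful local coordinate computation on $\Bl_Z X$ around the strict transform of $D_r$ and the exceptional divisor.
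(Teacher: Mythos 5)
Your proposal takes a genuinely different route from the paper's, and it has a real gap. The paper's own proof is a one-line citation: Theorem \ref{CCI} gives strict cube-invariance, Lemma \ref{lem:higher-HI} gives the $\mathbb{A}^n$-projection vanishing, and \cite[Theorem 1.4]{Koizumi-blowup} is a general criterion (essentially: a Nisnevich sheaf on $\mSm$ whose cohomology is cube-invariant and satisfies the $\mathbb{A}^n$-vanishing is automatically blow-up invariant). The paper never unwinds an induction for blow-ups; it simply verifies the two hypotheses and invokes the criterion as a black box.

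Your approach instead tries to mimic the proof of Theorem \ref{CCI} by peeling off divisor components with the contraction exact sequence of Theorem \ref{thm7.1}. This does not transfer: the key feature making that argument work for cube-invariance is part (2) of Theorem \ref{thm7.1}, which establishes compatibility of the contraction sequence with tensoring by a fixed log-smooth pair $\mathcal{Y}$. But $\mathcal{X}\otimes\bcube$ is a product, whereas $\Bl_Z\mathcal{X}$ is not; there is no analogue of Theorem \ref{thm7.1}(2) giving you a contraction exact sequence on $\Bl_Z X$ compatible with the one on $X$ via $\pi^*$, precisely because $\pi$ is not flat along $D_r$ when $Z\subset D_r$ and the exceptional divisor introduces a new component. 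You flag this yourself as the "main obstacle," which is an admission that the argument is not complete. In addition, the claimed base case is misstated: for $D=0$ there is no non-trivial SNC blow-up (the center must lie in $|D|$), so the induction cannot simply terminate at "classical smooth blow-up invariance"; the real content sits in the strata $(D_r, D'|_{D_r})$ where the induced blow-up data change in a way your peeling step doesn't control. The dilation reduction to $\mathbb{Z}$-modulus pairs at the end is sound as a local technique (and the paper uses it elsewhere), but it does not repair the missing compatibility of the contraction sequence with $\pi^*$.
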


\begin{proof}
    This follows from Theorem \ref{CCI}, Lemma \ref{lem:higher-HI}, and \cite[Theorem 1.4]{Koizumi-blowup}.
\end{proof}

\section{Relation with reciprocity sheaves} \label{sec:Fmod}

\subsection{Reciprocity presheaves}

First we recall the definition of reciprocity presheaves from \cite{KSY2}.

\begin{definition}\label{def;RSC}
    Let $F$ be a presheaf of abelian groups on $\Cor$.
    Let $X\in \Sm$ and $a\in F(X)$.
    A \emph{modulus} for $a$ is a $\mathbb{Q}$-modulus pair $\mathcal{Y}=(Y,E)$ with the following properties:
    \begin{enumerate}
        \item $X=Y-|E|$ and $Y$ is proper.
        \item For any $T\in \Sm$ and 
        $\alpha,\beta\in \Cor(T,X)=\ulMCor((T,\varnothing), \mathcal{Y})$ which are cube-homotopic (cf. Definition \ref{def;cubehomotopic}), we have $\alpha^*a=\beta^*a\in F(T)$.
    \end{enumerate}
    We say that $F$ is a \emph{reciprocity presheaf} if every section of $F$ admits a modulus.
    We write $\RSC$ for the category of reciprocity presheaves.
\end{definition}

\begin{remark}\label{rmk;ulMCorQ}
Recall that a presheaf $F$ of abelian groups on $\Cor$ is $\A^1$-invariant,
i.e. $F(X)\simeq F(X\times\A^1)$ for every $X\in \Sm_k$, if and only if for every $a\in F(X)$ with $X\in \Sm$ and $\alpha,\beta\in \Cor(T,X)$ with $T\in \Sm$ which are $\A^1$-homotopic, we have $\alpha^*a=\beta^*a\in F(T)$.
Here, we say that $\alpha,\beta$ are $\A^1$-homotopic if there is 
$\gamma\in\Cor(T\times\A^1,X)$ such that $\gamma\circ i_0=\alpha$ and $\gamma\circ i_1=\beta$, 
where $i_0, i_1\colon T\to T\otimes\A^1$ are induced by
$\{\nu\}\hookrightarrow \mathbb{A}^1$ for $\nu=0,1$.
Thus, Definition \ref{def;RSC} is a modulus refinement of the $\A^1$-invariance.
If $F$ is $\A^1$-invariant, then for $a\in F(X)$ as above, any $(Y,E)$ with $X=Y-|E|$ and $Y$ proper is a modulus of $a$ so that $F$ belongs to $\RSC$.
We refer to \cite[\S11.1]{BRS} for examples of reciprocity sheaves which are non-$\A^1$-invariant.
\end{remark}

\begin{remark}
    If $(Y,E)$ is a modulus for $a\in F(X)$, then $(Y,nE)\;(n>0)$ is also a modulus for $a$.
    This shows that our definition of reciprocity presheaves is equivalent to the original one \cite{KSY1} \cite{KSY2} which uses $\mathbb{Z}$-modulus pairs instead of $\mathbb{Q}$-modulus pairs.
\end{remark}

\begin{definition} \label{def:omegaCI}
    Let $F$ be a presheaf of abelian groups on $\Cor$.
    For a $\mathbb{Q}$-modulus pair $\mathcal{X}$, we define a subgroup $\omega^\CI F(\mathcal{X})\subset \omega^*F(\mathcal{X})=F(\mathcal{X}^\circ)$ by
    (see Definition \ref{def;compactification} for a compactification)
    $$
        \omega^\CI F(\mathcal{X})=\left\{a\in F(\mathcal{X}^\circ)\;\middle|\; \begin{array}{l}
            \text{There exists a compactification $(\overline{X},\overline{D},\Sigma)$ of $\mathcal{X}$}\\
            \text{such that $(\overline{X},\overline{D}+\Sigma)$ is a modulus for $a$}
        \end{array}\right\}.
    $$
\end{definition}

This defines a presheaf $\omega^\CI F$ on $\ulMCor$ \cite[Proposition 2.3.7]{KSY2}.
By definition, $\omega_!\omega^\CI F=F$ holds if and only if $F$ is a reciprocity presheaf.
The next lemma shows that $\omega^\CI F$ is the largest good LS-cube-invariant subpresheaf of $\omega^*F$:

\begin{lemma}\label{omega_CI_max}
    Let $F$ be a presheaf of abelian groups on $\Cor$.
    Then $\omega^\CI F$ is good and LS-cube-invariant.
    Moreover, if $G\subset \omega^*F$ is a good LS-cube-invariant subpresheaf, then we have $G\subset \omega^\CI F$.
\end{lemma}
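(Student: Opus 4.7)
The plan is to split the statement into two parts: (i) $\omega^\CI F$ is good and LS-cube-invariant, and (ii) any good LS-cube-invariant subpresheaf $G$ of $\omega^*F$ is contained in $\omega^\CI F$.

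For (i), semi-purity is immediate from the construction, since $\omega^\CI F(\sX)$ is defined as a subgroup of $F(\sX^\circ)=\omega^*F(\sX^\circ,\varnothing)$ and the restriction map $\omega^\CI F(\sX) \to \omega^\CI F(\sX^\circ,\varnothing)$ is just the inclusion of subgroups. For M-reciprocity, observe that for any compactification $(\overline X,\overline D,\Sigma)$ of $\sX$, the $\mathbb{Q}$-modulus pair $(\overline X,\overline D+\Sigma)$ is proper and thus equals its own compactification, so $\omega^\CI F(\overline X,\overline D+\Sigma)$ is precisely the set of $a \in F(\sX^\circ)$ admitting $(\overline X,\overline D+\Sigma)$ itself as a modulus; taking the colimit over all compactifications of $\sX$ recovers $\omega^\CI F(\sX)$ directly from Definition \ref{def:omegaCI}. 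For LS-cube-invariance, by Lemma \ref{LS_CI_characterization} it suffices to show that cube-homotopic correspondences $\alpha,\beta: \sX \to \mathcal Y$ in $\ulMCor$ with $\sX$ log-smooth act identically on $\omega^\CI F(\mathcal Y)$. Given $a \in \omega^\CI F(\mathcal Y)$ with modulus $(\overline Y,\overline E+\Sigma)$ and a cube-homotopy $\gamma : \sX \otimes \bcube \to \mathcal Y$, the strategy is to promote $\alpha$, $\beta$, and $\gamma$ to elements of $\ulMCor((\sX^\circ,\varnothing),(\overline Y,\overline E+n\Sigma))$ and $\ulMCor((\sX^\circ,\varnothing)\otimes\bcube,(\overline Y,\overline E+n\Sigma))$ respectively, for $n$ sufficiently large, using the cofinality of $\{n\Sigma\}_n$ from Remark \ref{rmk;M-reciprocity} together with standard containment results for modulus correspondences (in the spirit of \cite[Proposition 1.4.3]{KMSY1}, adapted to $\mathbb{Q}$-coefficients). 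The defining modulus property of $a$ then yields $\alpha^* a = \beta^* a$ in $F(\sX^\circ)$.

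For (ii), let $G \subset \omega^*F$ be a good LS-cube-invariant subpresheaf and take $a \in G(\sX)$ for a $\mathbb{Q}$-modulus pair $\sX = (X,D)$. By M-reciprocity of $G$, there exist a compactification $(\overline X,\overline D,\Sigma)$ of $\sX$ and a lift $a' \in G(\overline X,\overline D+\Sigma)$ whose restriction to $\sX$ equals $a$; it suffices to show that $(\overline X,\overline D+\Sigma)$ serves as a modulus for $\omega(a) \in F(\sX^\circ)$, the image of $a$ under $G(\sX) \hookrightarrow \omega^*F(\sX) = F(\sX^\circ)$. For any $T \in \Sm$ and $\alpha,\beta \in \ulMCor((T,\varnothing),(\overline X,\overline D+\Sigma))$ that are cube-homotopic, the source $(T,\varnothing)$ is log-smooth, so LS-cube-invariance of $G$ combined with Lemma \ref{LS_CI_characterization} yields $\alpha^* a' = \beta^* a'$ in $G(T,\varnothing)$. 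The naturality of the inclusion $G \hookrightarrow \omega^*F$ transports this equality into $\omega^*F(T,\varnothing) = F(T)$, where it reads $\alpha^* \omega(a) = \beta^* \omega(a)$, confirming that $(\overline X,\overline D+\Sigma)$ is a modulus for $\omega(a)$ and hence $a \in \omega^\CI F(\sX)$.

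The main technical obstacle is part (i), specifically promoting the cube-homotopy $\gamma$ to one compatible with a chosen compactification of the target, which requires the containment techniques for modulus correspondences adapted to the $\mathbb{Q}$-modulus setting. The maximality statement (ii), by contrast, reduces cleanly and essentially formally to Lemma \ref{LS_CI_characterization} once M-reciprocity of $G$ is used to pass from $\sX$ to a compactification.
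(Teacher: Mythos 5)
Your four-part decomposition (semi-purity, M-reciprocity, LS-cube-invariance, maximality) matches the paper's proof, and parts (i)-semi-purity, (i)-M-reciprocity, and (ii)-maximality are carried out essentially as the paper does; the maximality argument in particular is identical. Your M-reciprocity argument is a slight repackaging of the paper's (which simply observes that if $(\overline X, \overline D + \Sigma)$ is a modulus for $a$ then $a \in \omega^\CI F(\overline X, \overline D + \Sigma)$, since $(\overline X, \overline D + \Sigma, \varnothing)$ is a compactification of that proper $\mathbb{Q}$-modulus pair), but it works.

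The LS-cube-invariance part, however, which you flag as ``the main technical obstacle,'' is substantially simpler than you make it, and your invocation of the cofinality of $\{n\Sigma\}$ and of containment results in the spirit of \cite[Prop.\ 1.4.3]{KMSY1} is misplaced. The paper first uses semi-purity to reduce to $\mathcal X = (T, \varnothing)$ (this is why it suffices to check $\alpha^* a = \beta^* a$ inside $F(\mathcal X^\circ)$ --- you should state this step rather than silently passing to $(\sX^\circ,\varnothing)$). After that reduction, the key observation is purely formal: the open immersion $j \colon (Y,E) \hookrightarrow (\overline Y, \overline E + \Sigma)$ satisfies $E \geq j^*(\overline E + \Sigma) = E$, so it is an ambient morphism in $\ulMCor$ by Remark \ref{rem:graph}, and so is $(\mathcal X^\circ, \varnothing) \to \mathcal X$. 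Pre- and post-composing $\alpha$, $\beta$, and the given cube-homotopy $\gamma$ with these ambient morphisms already produces cube-homotopic finite correspondences $(T,\varnothing) \to (\overline Y, \overline E + \Sigma)$ with the same action on $F$, and the defining modulus property of $a$ then gives $\alpha^* a = \beta^* a$. There is no ``promotion'' step, no choice of $n$, and nothing in need of a $\mathbb{Q}$-coefficient adaptation of the containment machinery; as written, your argument appeals to lemmas you have not established in this setting, when a one-line composition via Remark \ref{rem:graph} suffices.
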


\begin{proof}
    It is clear from the definition that $\omega^\CI F$ has semi-purity.
    We show that $\omega^\CI F$ has M-reciprocity.
    Suppose that $\mathcal{X}$ is a $\mathbb{Q}$-modulus pair and $a\in \omega^\CI F(\mathcal{X})$.
    By definition, there exists a compactification $(\overline{X},\overline{D},\Sigma)$ of $\mathcal{X}$ such that $(\overline{X},\overline{D}+\Sigma)$ is a modulus for $a$.
    Since $(\overline{X},\overline{D}+\Sigma,\varnothing)$ is a compactification of $(\overline{X},\overline{D}+\Sigma)$, it follows that $a\in \omega^\CI F(\overline{X},\overline{D}+\Sigma)$.
    Therefore $\omega^\CI F$ has M-reciprocity.

    Next we show that $\omega^\CI F$ is LS-cube-invariant.
    By Lemma \ref{LS_CI_characterization}, it suffices to show that for $\mathbb{Q}$-modulus pairs $\mathcal{X},\mathcal{Y}$ with $\mathcal{X}$ log-smooth and finite correspondences $\alpha,\beta\colon \mathcal{X}\to \mathcal{Y}$ which are homotopic, we have $\alpha^*=\beta^*\colon \omega^\CI F(\mathcal{Y})\to \omega^\CI F(\mathcal{X})$.
    Since $\omega^\CI F$ has semi-purity, we may assume that $\mathcal{X}=(T,\varnothing)$ where $T\in \Sm$.
    Let $a\in \omega^\CI F(\mathcal{Y})$.
    By definition, there exists a compactification $(\overline{Y},\overline{E},\Sigma)$ of $\mathcal{Y}$ such that $(\overline{Y},\overline{E}+\Sigma)$ is a modulus for $a$.
    Since $\alpha,\beta\colon (T,\varnothing)\to \mathcal{Y}\to (\overline{Y},\overline{E}+\Sigma)$ are cube-homotopic, we get $\alpha^*a=\beta^*a$ by definition of a modulus.
    This shows that $\omega^\CI F$ is LS-cube-invariant.

    Finally, we prove the second statement.
    Let $\mathcal{X}=(X,D)$ is a $\mathbb{Q}$-modulus pair and $a\in G(\mathcal{X})$.
    By the M-reciprocity of $G$, we can find a compactification $(\overline{X},\overline{D},\Sigma)$ of $\mathcal{X}$ such that $a\in G(\overline{X},\overline{D}+\Sigma)$.
    Let $T\in \Sm$ and let $\alpha,\beta\colon (T,\varnothing)\to (\overline{X},\overline{D}+\Sigma)$ be finite correspondences which are cube-homotopic.
    Then the LS-cube-invariance of $G$ implies that $\alpha^*a=\beta^*a\in G(T,\varnothing)\subset F(T)$.
    If we regard $a$ as an element of $F(\mathcal{X}^\circ)$ via the inclusion $G\subset \omega^*F$, this shows that $(\overline{X},\overline{D}+\Sigma)$ is a modulus for $a$, so we have $a\in \omega^\CI F(\mathcal{X})$.
\end{proof}

\begin{definition} \label{omegaexc}
    Let $F$ be a presheaf of abelian groups on $\Cor$.
    We define $\omega^\exc F\subset \omega^*F$ by $\omega^\exc F = (\omega^\CI F)^\exc$, i.e.,
    $$
        \omega^\exc F(X,D) = \colim_{\varepsilon\to 0}\omega^\CI F(X,(1-\varepsilon)D).
    $$
\end{definition}

By definition, $\omega_!\omega^\exc F=F$ holds if and only if $F$ is a reciprocity presheaf.
The next lemma shows that $\omega^\exc F$ is the largest excellent LS-cube-invariant subpresheaf of $\omega^*F$:

\begin{lemma}\label{omega_exc_max}
    Let $F$ be a presheaf of abelian groups on $\Cor$.
    Then $\omega^\exc F$ is excellent and LS-cube-invariant.
    Moreover, if $G\subset \omega^*F$ is an excellent LS-cube-invariant subpresheaf, then we have $G\subset \omega^\exc F$.
\end{lemma}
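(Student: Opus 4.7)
The plan is to deduce both assertions directly from Lemma \ref{omega_CI_max}, Lemma \ref{exc_preserves_everything}, and an unraveling of the left-continuity axiom. The main point is that the operation $({-})^{\exc}$ is designed precisely to produce the largest excellent subpresheaf of a given good presheaf.

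First I would establish that $\omega^\exc F$ is excellent and LS-cube-invariant. By Lemma \ref{omega_CI_max}, the presheaf $\omega^\CI F$ is good and LS-cube-invariant, so by Lemma \ref{exc_preserves_everything} (1) and (3) its $\exc$-version is excellent and LS-cube-invariant. Since $\omega^\exc F = (\omega^\CI F)^\exc$ by Definition \ref{omegaexc}, this gives the first part. (Note that we never leave $\omega^*F$: the interior of $(X,(1-\varepsilon)D)$ equals that of $(X,D)$, so taking the filtered colimit over $\varepsilon\to 0$ of subgroups of $F(X-|D|)$ yields a subgroup of $F(X-|D|)=\omega^*F(X,D)$.)

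Next I would verify maximality. Let $G\subset \omega^*F$ be an excellent LS-cube-invariant subpresheaf. Since an excellent presheaf is in particular good, Lemma \ref{omega_CI_max} applies and yields the inclusion $G\subset \omega^\CI F$. Restricting to the $\mathbb{Q}$-modulus pair $(X,(1-\varepsilon)D)$ for each $\varepsilon\in(0,1]\cap\mathbb{Q}$ gives
\[
    G(X,(1-\varepsilon)D)\subset \omega^\CI F(X,(1-\varepsilon)D).
\]
Now I invoke the left-continuity of $G$ (axiom (3) of Definition \ref{def:goodexc}): passing to the filtered colimit along $\varepsilon\to 0$, we obtain
\[
    G(X,D)=\colim_{\varepsilon\to 0} G(X,(1-\varepsilon)D)\subset \colim_{\varepsilon\to 0}\omega^\CI F(X,(1-\varepsilon)D)=\omega^\exc F(X,D),
\]
which is the desired inclusion $G\subset \omega^\exc F$.

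I do not foresee a genuine obstacle: everything follows formally once one observes that $({-})^{\exc}$ applied to a good LS-cube-invariant presheaf yields an excellent LS-cube-invariant presheaf, and that left-continuity of $G$ automatically forces $G$ to sit inside the exc-completion of any good LS-cube-invariant presheaf containing $G$. The only thing to be mildly careful about is verifying that all relevant colimits are filtered and realized by inclusions inside $\omega^*F$; this is immediate since $(1-\varepsilon_1)D\geq (1-\varepsilon_2)D$ for $\varepsilon_1\leq \varepsilon_2$ produces the transition maps, and enlarging the $\mathbb{Q}$-divisor enlarges the subgroup of sections admitting the corresponding modulus.
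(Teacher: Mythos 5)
Your proposal is correct and follows essentially the same route as the paper: reduce the first assertion to Lemma~\ref{omega_CI_max} and Lemma~\ref{exc_preserves_everything}, and for maximality apply Lemma~\ref{omega_CI_max} to get $G\subset\omega^{\CI}F$ and then pass to the $\exc$-completion. The only difference is that the paper compresses your colimit argument into the phrase ``taking $({-})^{\exc}$ of both sides,'' leaving implicit the fact (which you spell out) that left-continuity of $G$ gives $G=G^{\exc}$.
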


\begin{proof}
    The first statement follows from Lemma \ref{omega_CI_max} and Lemma \ref{exc_preserves_everything}(3).
    If $G\subset \omega^*F$ is an excellent LS-cube-invariant subpresheaf, then we have $G\subset \omega^\CI F$ by Lemma \ref{omega_CI_max}.
    Taking $({-})^{\exc}$ of both sides, we get $G\subset \omega^{\exc}F$.
\end{proof}

\begin{theorem}\label{reciprocity_presheaf_characterization}
    Let $F$ be a presheaf of abelian groups on $\Cor$.
    The following conditions are equivalent:
    \begin{enumerate}
        \item $F$ is a reciprocity presheaf.
        \item There exists an excellent LS-cube-invariant presheaf $G\in \PSh(\ulMCor,\Ab)$ such that $\omega_!G\cong F$.
        \item There exists a good LS-cube-invariant presheaf $G\in \PSh(\ulMCor,\Ab)$ such that $\omega_!G\cong F$.
    \end{enumerate}
\end{theorem}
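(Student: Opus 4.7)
\medskip

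The proof will proceed by a cyclic implication $(1) \Rightarrow (2) \Rightarrow (3) \Rightarrow (1)$, where the implications rely almost entirely on the two maximality lemmas (Lemmas \ref{omega_CI_max} and \ref{omega_exc_max}), so the main work has already been done.

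For $(1) \Rightarrow (2)$, I would take $G := \omega^\exc F$. By Lemma \ref{omega_exc_max}, this sheaf is excellent and LS-cube-invariant. The only thing to verify is that $\omega_! G \cong F$. Unwinding definitions, $\omega_! \omega^\exc F(X) = \omega^\exc F(X, \varnothing) = \colim_{\varepsilon \to 0} \omega^\CI F(X, \varnothing) = \omega^\CI F(X, \varnothing)$, and this coincides with $F(X)$ precisely when every element of $F(X)$ admits a modulus, i.e., when $F$ is a reciprocity presheaf.

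The implication $(2) \Rightarrow (3)$ is trivial, since every excellent presheaf is good by Definition \ref{def:goodexc}.

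The key implication is $(3) \Rightarrow (1)$. Given a good LS-cube-invariant $G \in \PSh(\ulMCor, \Ab)$ with $\omega_! G \cong F$, the semi-purity of $G$ says that the unit map $G \to \omega^* \omega_! G \cong \omega^* F$ is a monomorphism: indeed, on a $\mathbb{Q}$-modulus pair $\mathcal{X} = (X, D)$ this map factors as $G(X,D) \hookrightarrow G(\mathcal{X}^\circ, \varnothing) = (\omega_! G)(\mathcal{X}^\circ) = F(\mathcal{X}^\circ) = \omega^* F(\mathcal{X})$. Thus $G$ is realized as a good LS-cube-invariant subpresheaf of $\omega^* F$. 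Applying the maximality statement in Lemma \ref{omega_CI_max}, we conclude $G \subseteq \omega^\CI F$ as subpresheaves of $\omega^* F$. Evaluating at $(X, \varnothing)$ gives $F(X) = G(X, \varnothing) \subseteq \omega^\CI F(X, \varnothing)$, and by Definition \ref{def:omegaCI}, this exactly says that every section of $F(X)$ admits a modulus. Hence $F$ is a reciprocity presheaf.

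There is essentially no obstacle here: the hard content is packaged into Lemmas \ref{omega_CI_max} and \ref{omega_exc_max}, and the argument is a formal extraction. The only subtle point worth emphasizing in the write-up is the use of semi-purity to inject $G$ into $\omega^* F$ in the last implication, since without it one cannot invoke the maximality of $\omega^\CI F$.
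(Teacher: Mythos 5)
Your proposal is correct and follows the paper's own proof essentially verbatim: take $G=\omega^{\exc}F$ for $(1)\Rightarrow(2)$, note $(2)\Rightarrow(3)$ is trivial, and for $(3)\Rightarrow(1)$ use semi-purity to embed $G\subset\omega^*F$, invoke the maximality in Lemma \ref{omega_CI_max} to get $G\subset\omega^{\CI}F$, and evaluate at $(X,\varnothing)$. The only (harmless) elaboration is that you spell out the computation of $\omega_!\omega^{\exc}F(X)$ and the factorization through $G(\mathcal{X}^\circ,\varnothing)$, which the paper packages into the remarks following Definitions \ref{def:omegaCI} and \ref{omegaexc}.
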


\begin{proof}
    If $F$ is a reciprocity presheaf, then $\omega^\exc F$ gives an excellent LS-cube-invariant presheaf with $\omega_!\omega^\exc F\cong F$.
    This proves (1) $\Rightarrow$ (2).
    The implication (2) $\Rightarrow$ (3) is trivial.
    Suppose that (3) holds.
    By the semi-purity, we can regard $G$ as a subpresheaf of $\omega^*F$.
    By Lemma \ref{omega_CI_max}, we have $G\subset \omega^\CI F$, which implies $F(X)=G(X,\varnothing)\subset \omega^\CI F(X,\varnothing)$ for $X\in \Sm$.
    Therefore any element of $F(X)$ admits a modulus, so $F$ is a reciprocity presheaf.
\end{proof}

\subsection{Reciprocity sheaves}

\begin{definition}
    A \emph{reciprocity sheaf} is a reciprocity presheaf which is a Nisnevich sheaf.
    We write $\RSC_\Nis$ for the category of reciprocity sheaves.
\end{definition}

\begin{lemma}\label{omega_CI_sheaf}
    If $F$ is a reciprocity sheaf, then $\omega^\CI F$ is a Nisnevich sheaf on $\ulMCor$.
\end{lemma}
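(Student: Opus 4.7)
The sheaf property for $\omega^\CI F$ amounts to showing that, for any $\mathcal{X}=(X,D)\in\ulMCor$ and any elementary distinguished Nisnevich square
$$
\xymatrix{
(V,D|_V) \ar[r] \ar[d] & (Y,D|_Y) \ar[d]^{p} \\
(U,D|_U) \ar[r]^{j} & (X,D),
}
$$
the sequence $0\to \omega^\CI F(\mathcal{X})\to \omega^\CI F(U,D|_U)\oplus \omega^\CI F(Y,D|_Y)\to \omega^\CI F(V,D|_V)$ is exact.

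For \emph{separation}, observe first that $\omega^* F$ is itself a Nisnevich sheaf on $\ulMCor$: for any $\mathcal{X}=(X,D)$, the restriction $(\omega^* F)_{\mathcal{X}}$ is the open-immersion pushforward along $X-|D|\hookrightarrow X$ of the Nisnevich sheaf $F|_{(X-|D|)_\Nis}$. Since $\omega^\CI F\hookrightarrow \omega^* F$ by semi-purity (Lemma \ref{omega_CI_max}), separation is immediate.

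For \emph{gluing}, compatible local sections $a_U,a_Y$ glue to a unique $a\in F(X-|D|)=\omega^* F(\mathcal{X})$. We must produce a compactification $(\overline{X},\overline{D},\Sigma)$ of $\mathcal{X}$ such that $(\overline{X},\overline{D}+\Sigma)$ is a modulus for $a$. Using Nagata compactification, fix any compactification $(\overline{X},\overline{D},\Sigma)$ of $\mathcal{X}$ and extend the \'etale maps $j,p$ to proper maps $\overline{j}:\overline{U}\to\overline{X}$ and $\overline{p}:\overline{Y}\to\overline{X}$ from proper $\overline{U},\overline{Y}$. Setting $(\overline{U},\overline{j}^*\overline{D},\overline{j}^*\Sigma+(\overline{U}-U))$ and $(\overline{Y},\overline{p}^*\overline{D},\overline{p}^*\Sigma+(\overline{Y}-Y))$ gives compactifications of $(U,D|_U)$ and $(Y,D|_Y)$. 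Since moduli pull back under proper surjective morphisms and the compactifications $(\overline{X},\overline{D}+n\Sigma,\varnothing)$ are cofinal among compactifications of $\mathcal{X}$ (Remark \ref{rmk;M-reciprocity}), after replacing the given moduli for $a_U$ and $a_Y$ by dominating ones and enlarging a common integer, we may assume $(\overline{U},\overline{j}^*(\overline{D}+n\Sigma)+n(\overline{U}-U))$ is a modulus for $a_U$ and $(\overline{Y},\overline{p}^*(\overline{D}+n\Sigma)+n(\overline{Y}-Y))$ is a modulus for $a_Y$ for some $n\gg 0$.

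We claim $(\overline{X},\overline{D}+n\Sigma)$ is then a modulus for $a$. Fix $T\in\Sm$ and $\alpha,\beta\in\ulMCor((T,\varnothing),(\overline{X},\overline{D}+n\Sigma))$ cube-homotopic via $\gamma\in\ulMCor((T,\varnothing)\otimes\bcube,(\overline{X},\overline{D}+n\Sigma))$. Since $F$ is a Nisnevich sheaf on $\Sm$, it suffices to check $\alpha^*a=\beta^*a$ on a Nisnevich cover $T'\to T$. Applying the platification theorem to the closures of the supports of $\alpha,\beta,\gamma$ in $T\times\overline{X}$ and $T\times\mathbb{P}^1\times\overline{X}$, and using that the finite map from these supports to $T$ Nisnevich-locally splits after pulling back the cover $\{U\to X,Y\to X\}$, we find $T'\to T$ over which the pullback of each of $\alpha,\beta,\gamma$ decomposes into a sum of integral components, each supported in the preimage of $T'\times U$ or $T'\times Y$. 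Lifting each such component to $\overline{U}$ or $\overline{Y}$ via $\overline{j}$ or $\overline{p}$ yields a cube-homotopy in the corresponding modulus pair, and applying the modulus property of $a_U$ or $a_Y$ componentwise and summing gives $\alpha^*a=\beta^*a$ on $T'$, as required.

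The \emph{main obstacle} is the decomposition step: tracking the modulus condition under Nisnevich refinement of the representing supports of correspondences, and verifying that the pullback along $\overline{j}$ and $\overline{p}$ of the closures of these components still satisfies the stronger modulus condition on $\overline{U}$ resp.\ $\overline{Y}$. This is where the choice of $n$ and the cofinality of compactifications play an essential role.
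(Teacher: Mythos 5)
Your proof takes a genuinely different route from the paper's. The paper imports the $\mathbb{Z}$-modulus-pair case from \cite[Corollary 4.16]{RS21} as a black box and extends it to $\mathbb{Q}$-modulus pairs by a short formal argument: it shows $\omega^\CI F\subset(\omega^\CI F)_\Nis\subset\omega^*F$, checks that $(\omega^\CI F)_\Nis$ is good (Lemma~\ref{good_Nis}) and LS-cube-invariant (by a dilation argument reducing $i_0^*$-injectivity to the $\mathbb{Z}$-case), and then invokes the maximality of $\omega^\CI F$ from Lemma~\ref{omega_CI_max} to conclude $\omega^\CI F=(\omega^\CI F)_\Nis$. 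You instead attempt a direct verification of the gluing axiom from the definition of modulus, which in effect reproves the RS21 input rather than using it.

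That direct route has a genuine gap in the gluing step. You take $\gamma\in\ulMCor((T,\varnothing)\otimes\bcube,(\overline{X},\overline{D}+n\Sigma))$ and wish to decompose it, Nisnevich-locally on $T$, into cube-homotopies landing in $(\overline{U},\overline{j}^*(\overline{D}+n\Sigma)+n(\overline{U}-U))$ or the analogue on $\overline{Y}$. The modulus inequality you are given is $(T\times[\infty])|_{\overline{V}^N}\geq(\overline{D}+n\Sigma)|_{\overline{V}^N}$ on the normalized closure in $T\times\mathbb{P}^1\times\overline{X}$. But for a component $V'\subset T'\times\mathbb{A}^1\times(U-|D|_U)$ the inequality you need is taken over the closure in $T'\times\mathbb{P}^1\times\overline{U}$ and carries the \emph{extra} summand $n(\overline{U}-U)$ on the right; that term is not controlled by the inequality on $\overline{X}$, since the boundary divisor $\overline{U}-U$ need not be dominated by $\overline{j}^*\Sigma$ without further choices. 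In addition, the claimed decomposition of the support of $\gamma$ into integral components each lying in the preimage of $T'\times U$ or $T'\times Y$ is not justified: a component can meet both opens, and pulling an integral cycle back along a Nisnevich cover of $T$ does not automatically split it into pieces contained in one piece of the cover while preserving the modulus bound. Both points are precisely where \cite{RS21} does nontrivial work; you flag the first one yourself as ``the main obstacle'' but do not resolve it, so the argument as written is incomplete. (A minor point: the inclusion $\omega^\CI F\hookrightarrow\omega^*F$ holds by the definition in Definition~\ref{def:omegaCI}, not by semi-purity.)
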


\begin{proof}
    This is proved in \cite[Corollary 4.16]{RS21} for $\mathbb{Z}$-modulus pairs.
    Our task is to extend this result to $\mathbb{Q}$-modulus pairs.
    First we note that we have $\omega^\CI F\subset (\omega^\CI F)_\Nis\subset \omega^*F$ by the exactness of the Nisnevich sheafification.
    Moreover, we have $\omega^\CI F(\mathcal{X})=(\omega^\CI F)_\Nis(\mathcal{X})$ for $\mathbb{Z}$-modulus pairs.
    We will prove that $\omega^\CI F=(\omega^\CI F)_\Nis$.
    
    By Lemma \ref{omega_CI_max}, it suffices to show that $(\omega^\CI F)_\Nis$ is good and LS-cube-invariant.
    Lemma \ref{good_Nis} shows that $(\omega^\CI F)_\Nis$ is good.
    To show that $(\omega^\CI F)_\Nis$ is LS-cube-invariant, it suffices to show that the split surjection $i_0^*\colon (\omega^\CI F)_\Nis(\mathcal{X}\otimes \bcube)\to (\omega^\CI F)_\Nis(\mathcal{X})$ is injective for any log-smooth $\mathbb{Q}$-modulus pair $\mathcal{X}$.
    We take a positive integer $N$ such that $(X,ND)$ is a $\mathbb{Z}$-modulus pair.
    Consider the following commutative diagram:
    $$
    \xymatrix{
        (\omega^\CI F)_\Nis(\mathcal{X}\otimes \bcube)\ar[r]^-{i_0^*}\ar@{^(->}[d]
        &(\omega^\CI F)_\Nis(\mathcal{X})\ar@{^(->}[d]\\
        (\omega^\CI F)_\Nis((X,ND)\otimes \bcube)\ar[r]^-{i_0^*}_-{\sim}
        &(\omega^\CI F)_\Nis(X,ND).
    }
    $$
    The vertical maps are injective by the semi-purity, and the lower horizontal map $i_0^*$ is an isomorphism by the result for $\mathbb{Z}$-modulus pairs.
    Therefore the upper horizontal map is injective.
\end{proof}

\begin{lemma}\label{omega_exc_sheaf}
    If $F$ is a reciprocity sheaf, then $\omega^\exc F$ is a Nisnevich sheaf.
\end{lemma}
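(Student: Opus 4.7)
The plan is to reduce the claim to the analogous statement for $\omega^{\CI}F$ (Lemma \ref{omega_CI_sheaf}) by exploiting the fact that the defining colimit in $\omega^{\exc}F$ is filtered and that Nisnevich descent is a finite-limit condition on a noetherian site.

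Fix a $\mathbb{Q}$-modulus pair $\mathcal{X}=(X,D)$. By definition of a Nisnevich sheaf on $\ulMCor$, what must be shown is that the presheaf $(\omega^{\exc}F)_{\mathcal{X}}$ on $X_{\Nis}$, which sends an \'etale $U\to X$ to $\omega^{\exc}F(U,D|_U)$, is a sheaf. Unwinding definitions gives
\[
    (\omega^{\exc}F)_{\mathcal{X}}(U)=\colim_{\varepsilon\to 0}(\omega^{\CI}F)_{(X,(1-\varepsilon)D)}(U),
\]
so $(\omega^{\exc}F)_{\mathcal{X}}$ is the filtered colimit in $\PSh(X_{\Nis},\Ab)$ of the presheaves $(\omega^{\CI}F)_{(X,(1-\varepsilon)D)}$ for $\varepsilon\in (0,1]\cap\Q$ tending to $0$. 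The key point is that each of these presheaves is a Nisnevich sheaf by Lemma \ref{omega_CI_sheaf}.

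To conclude, I would invoke the standard fact that on a noetherian scheme, filtered colimits of Nisnevich sheaves are again Nisnevich sheaves. The Nisnevich descent condition is equivalent to sending each elementary distinguished square to a Cartesian square of abelian groups (a finite limit), and filtered colimits of abelian groups commute with finite limits. Hence the colimit presheaf $(\omega^{\exc}F)_{\mathcal{X}}$ satisfies Nisnevich descent on $X_{\Nis}$, which is exactly what is required.

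The only mild obstacle is bookkeeping: one must check that the transition maps in the filtered system are compatible with restriction along \'etale maps $U\to X$, so that the colimit is genuinely taken in $\PSh(X_{\Nis},\Ab)$ rather than merely at each $U$. This is immediate since restriction $(X,(1-\varepsilon)D)\to (X,(1-\varepsilon')D)$ for $\varepsilon<\varepsilon'$ and the ambient \'etale morphisms $(U,(1-\varepsilon)D|_U)\to (X,(1-\varepsilon)D)$ in $\ulMCor$ are compatible by functoriality of pullback of $\Q$-Cartier divisors. With this checked, no further work is needed and the lemma follows.
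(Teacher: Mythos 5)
Your proof is correct and amounts to the same argument as the paper's: the paper cites Lemma \ref{omega_CI_sheaf} together with Lemma \ref{exc_preserves_everything}(2), and the latter is precisely the statement that $F\mapsto F^{\exc}$ preserves Nisnevich sheaves, whose proof the paper declares ``clear from the definition'' — the filtered-colimit-commutes-with-finite-limits reasoning you spell out is exactly what makes it clear. Your version is just a little more explicit about why the cd-structure characterization of Nisnevich descent makes the colimit step work.
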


\begin{proof}
    This follows from Lemma \ref{omega_CI_sheaf} and Lemma \ref{exc_preserves_everything}.
\end{proof}

\begin{theorem}
    Let $F$ be a Nisnevich sheaf of abelian groups on $\Cor$.
    The following conditions are equivalent:
    \begin{enumerate}
        \item $F$ is a reciprocity sheaf.
        \item There exists an excellent LS-cube-invariant sheaf $G\in \Sh_\Nis(\ulMCor,\Ab)$ such that $\omega_!G\cong F$.
        \item There exists a good LS-cube-invariant sheaf $G\in \Sh_\Nis(\ulMCor,\Ab)$ such that $\omega_!G\cong F$.
    \end{enumerate}
\end{theorem}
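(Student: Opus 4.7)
The strategy is to mimic the proof of Theorem \ref{reciprocity_presheaf_characterization} (the presheaf version proved just above), using the sheaf-theoretic refinements Lemma \ref{omega_CI_sheaf} and Lemma \ref{omega_exc_sheaf} to ensure that the modulus refinements $\omega^{\CI}F$ and $\omega^{\exc}F$ remain Nisnevich sheaves when $F$ is. The three implications will proceed as follows.

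For (1) $\Rightarrow$ (2), take $G := \omega^{\exc}F$. By Lemma \ref{omega_exc_max}, $G$ is excellent and LS-cube-invariant, and by Lemma \ref{omega_exc_sheaf} it lies in $\Sh_\Nis(\ulMCor,\Ab)$. Since $F$ is a reciprocity sheaf, every section of $F(X)$ admits a modulus, so $\omega_!\omega^{\exc}F\cong F$ by the definition of $\omega^{\exc}$ (Definition \ref{omegaexc}) and the fact that $(X,\varnothing)$ has no boundary to contract against. The implication (2) $\Rightarrow$ (3) is immediate because an excellent sheaf is in particular good.

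For (3) $\Rightarrow$ (1), let $G\in \Sh_\Nis(\ulMCor,\Ab)$ be good and LS-cube-invariant with $\omega_!G\cong F$. Forgetting that $G$ is a sheaf, we view $G$ as a good LS-cube-invariant presheaf; then Theorem \ref{reciprocity_presheaf_characterization} shows that $\omega_!G$ is a reciprocity presheaf. Hence $F$ is a reciprocity presheaf, and since it is given to be a Nisnevich sheaf by assumption, it is a reciprocity sheaf.

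No new technical obstacle arises beyond what has already been set up: the one place that required care, namely verifying that $\omega^{\CI}F$ and $\omega^{\exc}F$ are Nisnevich sheaves (rather than mere presheaves) when $F$ is, has been dealt with in Lemmas \ref{omega_CI_sheaf} and \ref{omega_exc_sheaf} via the dilation argument that reduces from $\mathbb{Q}$-modulus pairs to $\mathbb{Z}$-modulus pairs where the result of \cite{RS21} applies. With those lemmas in hand, the theorem is essentially a transcription of the presheaf version to the sheaf setting.
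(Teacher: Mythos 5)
Your proof is correct and follows exactly the route the paper takes: the paper's own (one-line) proof cites Theorem \ref{reciprocity_presheaf_characterization} together with Lemmas \ref{omega_CI_sheaf} and \ref{omega_exc_sheaf}, and your argument simply spells out those implications, using $\omega^{\exc}F$ for (1)~$\Rightarrow$~(2) (with $\omega_!\omega^{\exc}F\cong F$ as recorded after Definition \ref{omegaexc}) and forgetting the sheaf structure to apply the presheaf characterization for (3)~$\Rightarrow$~(1).
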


\begin{proof}
    This follows from Theorem \ref{reciprocity_presheaf_characterization}, Lemma \ref{omega_CI_sheaf}, and \ref{omega_exc_sheaf}.
\end{proof}

\subsection{Functor from $\RSC_\Nis$ to $\mDA^\eff$}

\begin{definition} \label{func_modulus}
    Let $F$ be a reciprocity sheaf.
    Then $\omega^\exc F$ is an excellent LS-cube-invariant sheaf on $\ulMCor$.
    By Theorem \ref{CCI} and theorem \ref{BI}, the cohomology presheaf 
    on $\mSm$ is cube-invariant and blow-up invariant.
  Therefore, we get an object of $\mDA^\eff(k)$:
    $$
        F^\modulus:=\mathrm{R}\Gamma_{\Nis}(-,(\omega^\exc F)|_{\mSm}) \in \Sh_{\Nis}(\mSm_k,\Mod_\Z).
    $$ 
    This defines a functor $({-})^\modulus\colon \RSC_\Nis\to \mDA^\eff(k)$.
\end{definition}

The following result, stating that the cohomology of $\omega^{\exc}F$ is reprensentable in the category of motives with modulus, is clear from the construction:

\begin{theorem} \label{thm:mapRG}
    Let $F$ be a reciprocity sheaf.
    For any $\mathcal{X}\in \mSm$, we have a canonical equivalence
    $$
        \map_{\mDA^\eff(k)}(\motive(\mathcal{X}),F^\modulus) \simeq \mathrm{R}\Gamma(\mathcal{X},\omega^\exc F).
    $$
\end{theorem}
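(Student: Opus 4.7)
The plan is to show that $F^\modulus$ already lies in the reflective subcategory $\mDA^\eff(k)\subset \Sh_\Nis(\mSm_k,\Mod_\Z)$ of $(\CI\cup\BI)$-local objects, after which the claim will follow formally from the Yoneda lemma combined with the adjunction $L_\mot\dashv i$ between the localization and its fully faithful inclusion.

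For the first step, I will invoke that by Lemma \ref{omega_exc_sheaf} the presheaf $\omega^\exc F$ is a Nisnevich sheaf on $\ulMCor$, and by Lemma \ref{omega_exc_max} it is excellent and LS-cube-invariant. The strict cube-invariance theorem (Theorem \ref{CCI}) then yields isomorphisms $\mathrm{H}^i(\mathcal{X},\omega^\exc F)\xrightarrow{\sim}\mathrm{H}^i(\mathcal{X}\otimes\bcube,\omega^\exc F)$ for every $i\geq 0$ and every $\mathcal{X}\in \mSm_k$. Since equivalences in $\Mod_\Z$ are detected on homotopy groups, these assemble into an equivalence $\mathrm{R}\Gamma(\mathcal{X},\omega^\exc F)\simeq \mathrm{R}\Gamma(\mathcal{X}\otimes\bcube,\omega^\exc F)$, showing $F^\modulus$ is $\CI$-local. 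In the same fashion, the strict blow-up invariance (Theorem \ref{BI}) gives $\mathrm{R}\Gamma(\mathcal{X},\omega^\exc F)\simeq \mathrm{R}\Gamma(\mathcal{Y},\omega^\exc F)$ for every SNC blow-up $\mathcal{Y}\to\mathcal{X}$, so $F^\modulus$ is $\BI$-local. Hence $F^\modulus\in \mDA^\eff(k)$.

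For the second step, since $\motive(\mathcal{X})=L_\mot(y(\mathcal{X}))$ and $F^\modulus$ is $L_\mot$-local, the adjunction gives
$$\map_{\mDA^\eff(k)}(\motive(\mathcal{X}),F^\modulus)\simeq \map_{\Sh_\Nis(\mSm_k,\Mod_\Z)}(y(\mathcal{X}),F^\modulus),$$
and the Yoneda lemma in the stable $\infty$-category of $\Mod_\Z$-valued Nisnevich sheaves identifies the right-hand side with the evaluation $F^\modulus(\mathcal{X})=\mathrm{R}\Gamma(\mathcal{X},\omega^\exc F)$, giving the asserted equivalence.

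There is essentially no main obstacle: the substance of the theorem is concentrated in the strict cube- and blow-up invariance results, which have already been established, and everything else is a formal consequence of the reflective localization defining $\mDA^\eff(k)$. The only minor point requiring care is the passage from cohomology-group isomorphisms (as delivered by Theorems \ref{CCI} and \ref{BI}) to an equivalence of derived global sections viewed as objects of $\Mod_\Z$, which is automatic since equivalences there can be tested on $\pi_\ast$.
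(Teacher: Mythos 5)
Your proposal is correct and follows exactly the route the paper intends: the paper declares Theorem \ref{thm:mapRG} ``clear from the construction'' precisely because Definition \ref{func_modulus} already invokes Theorems \ref{CCI} and \ref{BI} to make $F^\modulus$ a $(\CI\cup\BI)$-local Nisnevich sheaf, after which the identification of $\map_{\mDA^\eff(k)}(\motive(\mathcal{X}),F^\modulus)$ with $F^\modulus(\mathcal{X})=\mathrm{R}\Gamma(\mathcal{X},\omega^\exc F)$ is the same localization-adjunction-plus-Yoneda argument you give. You have simply written out the formal details the authors leave implicit, including the harmless passage from cohomology-group isomorphisms to an equivalence of $\mathrm{R}\Gamma$'s detected on homotopy groups.
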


\def\Log{\mathcal{L}og}

In \cite{Sai_log}, the third author constructed the logarithmic version of the above functor.
For any reciprocity sheaf $F \in \RSC_{\Nis}$, and $(X,D) \in \SmlSm$, we set
$$
    \Log F(X,D) := \omega^{\CI} F(X,D).
$$
The main result of \cite[\S 6]{Sai_log} is that this $\Log F$ has a functoriality for the logarithmic correspondences
and that its cohomology is $\mathrm{CI}^{\log}\cup\mathrm{BI}^{\log}$-local.
In particular, we obtain a functor 
$$
    (-)^{\log} : \RSC_{\Nis} \to \logDA^{\eff} ; \quad F \mapsto 
    \mathrm{R}\Gamma(-,\Log F)\in \Sh_{\sNis}(\SmlSm_k,\Mod_\Z).
$$
The existence of this functor, connecting the theory of reciprocity and logarithmic motives, has a fundamental importance.
The following result shows that our functor $(-)^{\modulus}$ is a ``lift'' of the functor $(-)^{\log}$.

\begin{theorem} \label{mod-log-diag}
    There exists a natural equivalence of functors $t_* \circ (-)^{\modulus} \simeq (-)^{\log}$. 
\end{theorem}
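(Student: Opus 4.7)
The strategy is to test the two sides on representables. Both $t_*F^{\modulus}$ and $F^{\log}$ are $(\CI^{\log}\cup \BI^{\log})$-local objects of $\logDA^{\eff}(k)$: the former because $F^{\modulus}$ is $(\CI\cup \BI)$-local by Theorems \ref{CCI} and \ref{BI} and because $t_*$ preserves local objects by Lemma \ref{lem:t_*_preserves_local_objects}, the latter by the main result of \cite{Sai_log}. By the Yoneda lemma, it therefore suffices to produce a natural equivalence between the values of the two functors on $\motive^{\log}(X,D)$ for every $(X,D)\in \SmlSm$.

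Fix such an $(X,D)$. Using the adjunction $t^*\dashv t_*$ from Corollary \ref{mH_tH_adjunction} together with the formula $t^*\motive^{\log}(X,D)\simeq \colim_{\varepsilon\to 0}\motive(X,\varepsilon D)$, Theorem \ref{thm:mapRG} yields
\begin{align*}
\map_{\logDA^{\eff}(k)}(\motive^{\log}(X,D),t_*F^{\modulus})
&\simeq \map_{\mDA^{\eff}(k)}(t^*\motive^{\log}(X,D),F^{\modulus})\\
&\simeq \lim_{\varepsilon\to 0}\mathrm{R}\Gamma(X,(\omega^{\exc}F)_{(X,\varepsilon D)}).
\end{align*}
For every $\varepsilon\in (0,1]\cap \Q$, the $\Q$-divisor $\varepsilon D$ has multiplicity $\leq 1$, so the tame Hasse-Arf theorem (Theorem \ref{THA}), applied inductively to each component of $D$, produces equivalences $\motive(X,D)\xrightarrow{\sim}\motive(X,\varepsilon D)$ in $\mDA^{\eff}(k)$. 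Consequently the transition maps in the $\varepsilon$-limit above are equivalences, and the limit collapses to $\mathrm{R}\Gamma(X,(\omega^{\exc}F)_{(X,D)})$, where $(X,D)$ is now viewed as a multiplicity-$1$ $\Q$-modulus pair. Since $\map_{\logDA^{\eff}(k)}(\motive^{\log}(X,D),F^{\log})\simeq \mathrm{R}\Gamma(X,(\omega^{\CI}F)_{(X,D)})$ by the construction of $F^{\log}$, the theorem reduces to exhibiting, naturally in $(X,D)$, an equivalence
\[
\mathrm{R}\Gamma(X,(\omega^{\exc}F)_{(X,D)})\xrightarrow{\sim}\mathrm{R}\Gamma(X,(\omega^{\CI}F)_{(X,D)}),
\]
induced by the canonical inclusion of sheaves $\omega^{\exc}F\hookrightarrow \omega^{\CI}F$.

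The main obstacle lies in this final equivalence. On sections the inclusion is typically strict: by definition $\omega^{\exc}F(X,D)=\colim_{\delta\to 0}\omega^{\CI}F(X,(1-\delta)D)$ captures only sections whose modulus drops strictly below multiplicity $1$. For example, on $F=\Het^1$ the subsheaf $\omega^{\exc}F$ restricted to multi-$1$ pairs records only essentially unramified characters, whereas $\omega^{\CI}F$ records all tame ones. Nevertheless, after passage to Nisnevich cohomology one expects the discrepancy to vanish: both sheaves have cube-invariant cohomology on $\mSm$ by Theorem \ref{CCI}, both agree on $\Sm\subset \mSm$ via $\omega_!$, and both underlie local objects in the respective homotopy categories. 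The plan for closing the argument is a devissage along the strata of $|D|$ using the contraction exact sequences of Theorem \ref{thm7.1}: iteratively peeling off the components of $D$ reduces to a vanishing statement for the cohomology of $\omega^{\CI}F/\omega^{\exc}F$ on henselian localizations along divisorial points of multiplicity exactly $1$, which can then be established by combining the strict cube-invariance with the left-continuity built into $\omega^{\exc}F$.
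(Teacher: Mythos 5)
Your overall strategy—reduce to a comparison on representables, use the adjunction $t^*\dashv t_*$ to pull back to $\mDA^{\eff}(k)$, use the tame Hasse--Arf theorem to identify $t^*\motive^{\log}(X,D)$ with $\motive(X,D)$, and thereby reduce to comparing $\omega^{\exc}F$ with $\omega^{\CI}F$ on multiplicity-$\leq 1$ pairs—is indeed the skeleton of the paper's argument. But there are two problems, one minor and one serious.

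The minor issue is your opening ``Yoneda'' step. Agreeing pointwise on representable mapping spectra does not by itself produce a natural transformation of functors $\RSC_{\Nis}\to \logDA^{\eff}(k)$; you must first \emph{exhibit} a comparison morphism. The paper first constructs a natural transformation $(-)^{\log}\to t_*(-)^{\modulus}$ explicitly (equivalently, $t^*(-)^{\log}\to (-)^{\modulus}$ at the level of sheaves, using that $t^*F^{\log}(X,D)=\omega^{\CI}F(X,|D|)$ maps into $\omega^{\exc}F(X,D)$ after shrinking $D$ slightly) and only then checks on compact generators that it is an equivalence. Your argument should likewise begin with the construction of the map.

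The serious issue is your treatment of the ``main obstacle.'' You assert that the inclusion $\omega^{\exc}F(X,D)\hookrightarrow \omega^{\CI}F(X,D)$ is ``typically strict'' at multiplicity $1$, and propose a cohomological devissage to bridge the discrepancy. This is backwards: for a \emph{reciprocity sheaf} $F$ and a log-smooth pair $(X,D)$ with $D$ of multiplicity $\leq 1$, the containment is already an \emph{equality of sheaves}—this is precisely Theorem~\ref{Rec-THA} and Corollary~\ref{cor:ciexc} (the tame Hasse--Arf theorem for reciprocity sheaves), which the paper proves specifically to close this step. Your claimed counterexample with $F=\Het^1$ is incorrect: the Brylinski--Kato filtration on $\mathrm{H}^1_{\et,p^n}$ depends only on $\lceil r\rceil$ and hence is constant on $r\in(0,1]$, so $\omega^{\exc}\Het^1$ at a multiplicity-$1$ pair captures all tame characters, not merely the unramified ones; the colimit defining $\omega^{\exc}$ stabilizes immediately. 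More generally, the jump structure of ramification filtrations for reciprocity sheaves guarantees $\omega^{\CI}F/\omega^{\exc}F$ vanishes on multiplicity-$\leq 1$ pairs, so there is no quotient sheaf whose cohomology needs to be killed by devissage. Your proposed devissage, if carried out, would amount to reproving Theorem~\ref{Rec-THA}; but as written, it rests on the false premise that a nontrivial discrepancy exists, and the claimed example undercuts rather than supports your plan.
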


As a preparation of the proof of Theorem \ref{mod-log-diag}, we prove the following result that has its own independent interest.

\begin{theorem} \label{Rec-THA}
    Any reciprocity sheaf satisfies the tame Hasse-Arf property. More precisely, for any reciprocity sheaf $F \in \RSC_{\Nis}$, $\sX = (X,D) \in \mSm$, smooth divisor $Z$ on $X$ intersecting transversally with $|D|$, and $\varepsilon \in (0,1] \cap \mathbb{Q}$, the natural map 
    $$
        \omega^{\CI}F (X,D+\varepsilon Z) \to \omega^{\CI}F (X,D+Z)
    $$
    is an isomorphism.
\end{theorem}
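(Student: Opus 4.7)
The injectivity of the map in the statement is automatic from the semi-purity of $\omega^{\CI}F$ (Lemma \ref{omega_CI_max}): both groups $\omega^{\CI}F(X,D+\varepsilon Z)$ and $\omega^{\CI}F(X,D+Z)$ are subgroups of $F(X-|D|-Z)$, and the natural map is the inclusion, which exists by monotonicity of $\omega^{\CI}F$ in the target divisor. So the content lies entirely in surjectivity, i.e., that every $a \in \omega^{\CI}F(X,D+Z)$ automatically admits a compactification modulus with coefficient $\varepsilon$ at $\overline{Z}$ rather than $1$.

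The plan is to combine the contraction exact sequence of Theorem \ref{thm7.1} with strict cube-invariance (Theorem \ref{CCI}) to reduce the assertion to a one-dimensional statement on $\mathbb{P}^1$. First, since both sides are Nisnevich sheaves on $X$, we check the statement on stalks. Off $Z$, the two sheaves coincide, so we focus on a point $x \in Z$. By Lemma \ref{transversal_structure}, étale locally near $x$ we may write $X = Z \times \mathbb{A}^1$ with $Z = Z \times \{0\}$ and $D$ pulled back from $Z$, placing us in the setup of Theorem \ref{thm7.1}. Since $\omega^{\CI}F$ is good and LS-cube-invariant (Lemma \ref{omega_CI_max}) and a sheaf (Lemma \ref{omega_CI_sheaf}), Theorem \ref{thm7.1}(2) applied for $a = \varepsilon$ and $a = 1$ produces two compatible short exact sequences of sheaves tensored with any log-smooth $\sY$:
\[
0 \to \omega^{\CI}F_{(X,D_X)\otimes\sY} \to \omega^{\CI}F_{(X,aZ+D_X)\otimes\sY} \to (i\times\id)_* ((\omega^{\CI}F)^{(a)}_{-1})_{(Z,D_Z)\otimes\sY} \to 0.
\]
The left terms agree and the middle-term comparison is the one we wish to prove an isomorphism. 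By the five-lemma, the theorem reduces to showing that the natural map $(\omega^{\CI}F)^{(\varepsilon)}_{-1} \to (\omega^{\CI}F)^{(1)}_{-1}$ is an isomorphism of sheaves on $\mSm$.

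Unwinding the definition of the contraction, this amounts to the equality
\[
\omega^{\CI}F(\sY \otimes (\mathbb{P}^1, \varepsilon[0]+[\infty])) = \omega^{\CI}F(\sY \otimes (\mathbb{P}^1, [0]+[\infty]))
\]
for every $\sY \in \mSm$, where the inclusion $\subset$ is again automatic by monotonicity. To establish the reverse inclusion, we exploit the involution $\sigma \colon \mathbb{P}^1 \xrightarrow{\sim} \mathbb{P}^1,\ x \mapsto 1/x$, which identifies $(\mathbb{P}^1, a[0]+[\infty])$ with $(\mathbb{P}^1,[0]+a[\infty])$ and thereby recasts the problem in terms of a $\bcube^{\varepsilon}$-type factor on the side of $\infty$. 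Combining this with the LS-cube-invariance of $\omega^{\CI}F$ (Lemma \ref{cube_epsilon_invariance_sheaf}) and the explicit cube-homotopy coming from the multiplication $\mu \colon \mathbb{A}^1\times\mathbb{A}^1 \to \mathbb{A}^1$ and its extension to $\Bl_{(0,\infty),(\infty,0)}(\mathbb{P}^1\times\mathbb{P}^1)$, as in the proof of Lemma \ref{cube_epsilon_invariance}, produces the required identification on sections.

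The main technical obstacle is this final step: the pair $(\mathbb{P}^1,[0]+\varepsilon[\infty])$ does not literally factor as a tensor product of $(\mathbb{P}^1,[0])$ with $\bcube^{\varepsilon}$ in $\mSm$, since the two components share the same copy of $\mathbb{P}^1$. One therefore has to realize the $\bcube^{\varepsilon}$-homotopy directly on the blow-up, verifying that the normalized closures of the relevant correspondences satisfy the required modulus conditions, and use the Nisnevich descent to descend the resulting identification to the level of sections of $\omega^{\CI}F$. All other ingredients of the proof—transversal structure, the contraction sequence, the five-lemma, and the LS-cube-invariance of $\omega^{\CI}F$—are entirely formal given the framework developed in the preceding sections.
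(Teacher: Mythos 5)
Your first two reduction steps (injectivity from semi-purity; localization via Lemma \ref{transversal_structure}) are sound, and the paper uses the same transversal-structure input. But the detour through the contraction sequence (Theorem \ref{thm7.1}) plus the five-lemma does not actually gain you anything: after unwinding the definition of $(\omega^{\CI}F)^{(a)}_{-1}$, the statement you need is
\[
\omega^{\CI}F\bigl(\sW\otimes(\mathbb{P}^1,\varepsilon[0]+[\infty])\bigr)=\omega^{\CI}F\bigl(\sW\otimes(\mathbb{P}^1,[0]+[\infty])\bigr)
\]
for arbitrary $\sW\in\mSm$, and this is nothing but the original theorem applied to $\sX=\sW\otimes\bcube$ and $Z=W\times\{0\}$. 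You have reduced the general case to a special case which is no simpler, and you explicitly acknowledge that the "$[\infty]$" component spoils the tensor factorization needed to invoke Lemma \ref{cube_epsilon_invariance_sheaf}. The sketch you give for closing this ("realize the $\bcube^{\varepsilon}$-homotopy directly on the blow-up... use Nisnevich descent to descend the identification") is exactly the missing content; as written it is not an argument, and your involution $\sigma$ trick merely moves the obstruction from $[0]$ to $[\infty]$ without removing it. This is a genuine gap.

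The paper avoids this entirely. It never invokes the contraction sequence: after localizing so that $X$ is Henselian local and $p:X'\to X$, $q:X'\to Z\times\mathbb{A}^1$ are the étale maps of Lemma \ref{transversal_structure} with $p^*D=q^*\pr_1^*(D|_Z)$, it forms the three-column diagram with outer terms over $X$ and over $Z\times\mathbb{P}^1$, compared via $X'$ in the middle. Both squares are simultaneously Cartesian and coCartesian (via the two Nisnevich short exact sequences, the $\mathrm{H}^1$-term vanishing since $Z$ is Henselian local and cube-invariant cohomology is used to kill it). This transfers the $\varepsilon$-versus-$1$ comparison to the right column, where the pair is $(Z,D|_Z)\otimes(\mathbb{P}^1,?[0])$ — a genuine external tensor with \emph{no extra $[\infty]$ on the $\mathbb{P}^1$ factor} — so that after the swap $\{0\}\leftrightarrow\{\infty\}$ it becomes $(Z,D|_Z)\otimes\bcube^{?}$ and Lemma \ref{cube_epsilon_invariance_sheaf} applies directly. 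The crucial difference between your route and the paper's is precisely where the $\mathbb{P}^1$-factor comes from: in the paper it is the ambient copy from the étale chart $q$ and carries only the divisor $?[0]$, whereas in your contraction-sequence reduction it is the cube direction and necessarily carries $?[0]+[\infty]$, which is why the factorization fails for you. If you want to salvage the contraction route, you would in any case have to run the paper's Nisnevich-descent comparison at the end, at which point the contraction sequence is redundant.
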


\begin{proof}
    We proceed as in the proof of Theorem \ref{THA} with a small modification. Let $F \in \RSC_{\Nis}$, $(X,D) \in \mSm_X$, and $Z \subset X$ be a smooth divisor which intersects transversely with $|D|$. 
    Our goal is to prove that the natural morphism $\omega^{\CI}F(X,D+\varepsilon Z) \to \omega^{\CI}F(X,D+Z)$ is an isomorphism. Since both $F_{(X,D+\varepsilon Z)}$ and $F_{(X,D+Z)}$ are sheaves on the Nisnevich (hence Zariski) small site on $X$, we may replace $X$ by a Zariski neighborhood of a point in $X$. 
    Thus, by Lemma \ref{transversal_structure}, we may assume that there exists a diagram of the form \eqref{diag:excision} with $D':=p^*D=q^*\mathrm{pr}_1^*(D|_Z)$. 
    Set $U:=X \setminus Z$, $U':=X' \setminus Z$.
    Moreover, since the problem is still Nisnevich local on $X$, we may assume that $X$ is Henselian local. 
    For simplicity of notation, set $\widetilde{F}:=\omega^{\CI}F$.
    Note that the two squares in the induced diagram 
    \[\xymatrix{
        \widetilde{F}(X,D+\varepsilon Z) \ar[d] \ar[r] & \widetilde{F}(X',D'+\varepsilon Z) \ar[d]  & \ar[l] \widetilde{F}(Z \times \PP^1, \varepsilon Z \times \{0\}+D_{|Z}\times \PP^1) \ar[d] \\
        \widetilde{F}(X,D+ Z) \ar[r] & \widetilde{F}(X',D'+ Z) & \ar[l] \widetilde{F}(Z \times \PP^1,  Z \times \{0\}+D_{|Z}\times \PP^1)
    }\]
    are both Cartesian and coCartesian. Indeed, since $\{X' \to X, U \to X\}$ forms a Nisnevich distinguished square and since $X$ is Nisnevich local, for each $? \in \{1,\varepsilon\}$, we have a short exact sequence
    \[
        0 \to \widetilde{F}(X,D+?Z) \to \widetilde{F}(X',D'+?Z) \oplus \widetilde{F}(U,D|_U) \to \widetilde{F}(U',D|_{U'}) \to 0.
    \]
    Then the snake lemma shows that the the left and the middle vertial arrows in the above diagram have the same kernels and cokernels. 
    Next, set $A:=\PP^1-\{0\}$ to lighten the notation, and for $?\in \{1,\varepsilon\}$, consider the following exact sequence associated with the elementary Nisnevich cover $\{X' \to Z \times \PP^1, Z \times A \to Z \times \PP^1\}$:
   \begin{align*}
        0 &\to \widetilde{F}(Z \times \PP^1,D|_Z \times \PP^1 + ?Z \times \{0\}) \to \widetilde{F}(X',D'+?Z) \oplus \widetilde{F}(Z \times A,D|_Z \times A) \to \widetilde{F}(U',D|_{U'}) \\
        &\to \mathrm{H}^1_{\Nis} (Z \times \PP^1, \widetilde{F}_{(Z \times \PP^1,D|_{Z} \times \PP^1 + ?Z \times \{0\})}) .
    \end{align*}
    Since $(\PP^1,\{0\}) \cong (\PP^1,\{\infty\})$, the last term is isomorphic to $\mathrm{H}^1_{\Nis} (Z,\widetilde{F}_{(Z ,D|_Z)})$ by Theorem \ref{CCI} and Lemma \ref{omega_CI_max}, which vanishes since $Z$ is henselian local.
    Then the snake lemma argument as above shows the case of the right square.

    Thus, we have proven that the squares in the above diagram are cartesian and cocartesian. 
    In particular, the left vertical arrow is an isomorphism if and only if so is the right vertical one, but the latter is indeed an isomorphism by Lemma \ref{cube_epsilon_invariance_sheaf}. 
\end{proof}

\begin{corollary} \label{cor:ciexc}
    For any reciprocity sheaf $F \in \RSC_{\Nis}$ and log-smooth $\mathbb{Q}$-modulus pair $\sX =(X,D) \in \mSm$ such that $D$ has multiplicity $\leq 1$, we have  $\omega^{\exc}F(\sX) = \omega^{\CI}F(\sX) = \omega^{\CI}F(X,|D|)$.
\end{corollary}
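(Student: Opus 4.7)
The plan is to derive the corollary as a direct consequence of the tame Hasse-Arf property for reciprocity sheaves (Theorem \ref{Rec-THA}) by iterating it once for each component of $D$. Write $D = \sum_{i=1}^m r_i D_i$ with $0 < r_i \leq 1$, so that $|D| = \sum_{i=1}^m D_i$ is a relative SNCD on $X$.

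First I would establish the equality $\omega^{\CI}F(\sX) = \omega^{\CI}F(X,|D|)$. Fix an index $j$ and set $D'_j := \sum_{i \neq j} r_i D_i$. Since $|D|$ is a relative SNCD, the smooth divisor $D_j$ is transversal to $|D'_j|$, so Theorem \ref{Rec-THA} applied to $(X, D'_j)$, the divisor $Z = D_j$, and $\varepsilon = r_j \in (0,1] \cap \Q$ yields an isomorphism $\omega^{\CI}F(X, D'_j + r_j D_j) \xrightarrow{\sim} \omega^{\CI}F(X, D'_j + D_j)$. Iterating this for $j = 1, \ldots, m$ (each step keeps the divisor log-smooth with multiplicity $\leq 1$, so the transversality hypothesis continues to hold) replaces each $r_j$ by $1$ and gives $\omega^{\CI}F(X,D) \cong \omega^{\CI}F(X,|D|)$.

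For the remaining equality $\omega^{\exc}F(\sX) = \omega^{\CI}F(\sX)$, recall from Definition \ref{omegaexc} that
\[
    \omega^{\exc}F(X,D) = \colim_{\varepsilon \to 0} \omega^{\CI}F(X, (1-\varepsilon) D).
\]
Since $(1-\varepsilon)D = \sum_i (1-\varepsilon) r_i D_i$ again has multiplicity $\leq 1$ with the same support $|D|$, the same iterated application of Theorem \ref{Rec-THA} yields $\omega^{\CI}F(X,(1-\varepsilon)D) \cong \omega^{\CI}F(X,|D|)$ for every $\varepsilon \in (0,1) \cap \Q$, and these isomorphisms are compatible with the transition maps of the colimit diagram. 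Hence the colimit is taken over a diagram of isomorphisms and collapses to $\omega^{\CI}F(X,|D|)$, which by the previous paragraph coincides with $\omega^{\CI}F(\sX)$.

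There is no genuine obstacle here: all required transversality hypotheses are automatic from $|D|$ being an SNCD, and the coefficients $(1-\varepsilon) r_i$ remain in $(0,1] \cap \Q$, which is precisely the range covered by Theorem \ref{Rec-THA}. The entire content of the corollary is packaged in that theorem together with the trivial observation that a filtered colimit of isomorphisms is an isomorphism onto any fixed term.
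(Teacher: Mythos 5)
Your argument is correct and matches the paper's (terse) proof: the paper invokes precisely Definition~\ref{omegaexc} and Theorem~\ref{Rec-THA}, and the intended argument is the componentwise iteration of Theorem~\ref{Rec-THA} that you spell out, together with the observation that the colimit defining $\omega^{\exc}$ then runs over isomorphisms. You have simply made explicit what the paper leaves implicit.
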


\begin{proof}
    This follows immediately from Definition \ref{omegaexc} and Theorem \ref{Rec-THA}.
\end{proof}

We are now ready to prove the desired comparison theorem.

\begin{proof}[Proof of Theorem \ref{mod-log-diag}]
    We first construct a natural transformation $(-)^{\log} \to t_* \circ (-)^{\modulus}$.
    By adjunction, it suffices to construct a natural transformation $t^* \circ (-)^{\log} \to (-)^{\modulus}$.
    We will do this on the level of sheaves.
    Let $F \in \RSC_{\Nis}$ and $\mathcal{X}=(X,D)\in \mSm$. We compute
    $$
        t^*F^{\log} (X,D) = F^{\log} (X,|D|) = \omega^{\CI}F(X,|D|)
        = \omega^{\CI}F(X,\varepsilon |D|) =  \omega^{\exc}F(X,\varepsilon |D|),
    $$
    where the third equality follows from the tame Hasse-Arf property \ref{Rec-THA} and the fourth equality follows from Corollary \ref{cor:ciexc}.
    Taking $\varepsilon$ small enough, the identity map on $X$ induces a morphism of $\mathbb{Q}$-modulus pairs $(X,D) \to (X,\varepsilon |D|)$ and hence the composite
    $$
        t^*F^{\log} (X,D) = \omega^{\exc}F(X,\varepsilon |D|) \to \omega^{\exc}F(X,D) = F^{\modulus} (X,D).
    $$
    It is obvious that this map does not depend on the choice of $\varepsilon$.

    Next, we prove that the natural transformation $(-)^{\log} \to t_* \circ (-)^{\modulus}$ that we have constructed above is an equivalence. Let $F \in \RSC_{\Nis}$ and consider the morphism $F^{\log} \to t_* F^{\modulus}$ in $\logDA^{\eff}$. Since the $\infty$-category $\logDA^{\eff}$ is compactly generated by representable objects $\motive (X,D), (X,D) \in \SmlSm$ \cite[Proposition 2.4.16]{LogHom}, it suffices to show that the induced map of spectra
    \begin{equation} \label{eq:mapmap}
        \map_{\logDA^{\eff}} (\motive (X,D),F^{\log}) \to \map_{\logDA^{\eff}} (\motive (X,D),t_*F^{\modulus})
    \end{equation}
    is an equivalence for any $(X,D) \in \SmlSm$.
    We have by adjunction
    $$
        \map_{\logDA^{\eff}} (\motive (X,D),t_*F^{\modulus}) \simeq \map_{\mDA^{\eff}} (t^*\motive (X,D),F^{\modulus}).
    $$
    Moreover, $t^*\motive (X,D)$ is computed as 
    $$
        t^*\motive (X,D) = \colim_{\varepsilon \to 0} \motive (X,\varepsilon D) = \motive (X,D),
    $$
    where the first equality follows from \ref{mH_tH_adjunction} and the second is the tame Hasse-Arf theorem \ref{THA}.
    Thus, the right hand side of \eqref{eq:mapmap} is equivalent to the spectrum $\mathrm{R}\Gamma(X,(\omega^{\exc}F)_{(X,D)})$ by Theorem \ref{thm:mapRG}.
    Since $(\omega^{\exc}F)_{(X,D)} = (\omega^{\CI}F)_{(X,D)}$ by Corollary \ref{cor:ciexc} (noting that \'etale maps do not increase the multiplicity of divisors), this is identified with 
    $\mathrm{R}\Gamma(X,(\omega^{\CI}F)_{(X,D)})$.
    But this also computes the left hand side of \eqref{eq:mapmap}. This completes the proof.
\end{proof}

\subsection{$(-)^{\modulus}$ for $\mathbb{A}^1$-invariant sheaves}

\begin{proposition}\label{prop:mod_for_A1_invariants}
    Let $F\in \Sh_\Nis(\Cor,\Ab)$ be an $\mathbb{A}^1$-invariant sheaf.
    Then we have $\omega^\exc F = \omega^* F$.
    In particular, $F^{\modulus}$ coincides with the sheaf of spectra
    $$
        \mathcal{X}\mapsto \mathrm{R}\Gamma(\mathcal{X}^\circ, F_{\mathcal{X}^\circ}).
    $$
\end{proposition}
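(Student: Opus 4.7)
The plan is first to establish the sheaf-level equality $\omega^{\exc}F = \omega^*F$ on $\ulMCor$, from which the description of $F^{\modulus}$ will follow. The inclusion $\omega^{\exc}F \subseteq \omega^*F$ holds by the semi-purity built into excellent presheaves (Lemma \ref{omega_exc_max}), so only the reverse inclusion requires work. Fix $\mathcal{X} = (X,D) \in \mSm$ and a section $a \in F(\mathcal{X}^\circ) = \omega^*F(\mathcal{X})$. Unwinding Definition \ref{omegaexc}, it suffices to show $a \in \omega^{\CI}F(X,(1-\varepsilon)D)$ for some rational $\varepsilon \in (0,1]$, i.e., to exhibit a compactification $(\bar X,\bar D,\Sigma)$ of $(X,(1-\varepsilon)D)$ for which $(\bar X,\bar D+\Sigma)$ is a modulus for $a$.

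The decisive simplification offered by the $\mathbb{A}^1$-invariance of $F$ is that the modulus condition becomes automatic. Indeed, Remark \ref{rmk;ulMCorQ} states that when $F$ is $\mathbb{A}^1$-invariant, any $\mathbb{Q}$-modulus pair $(Y,E)$ with $Y$ proper and $Y-|E| = \mathcal{X}^\circ$ is a modulus for every section $a \in F(\mathcal{X}^\circ)$. Thus the entire problem reduces to the purely geometric statement that some compactification of $(X,(1-\varepsilon)D)$ exists, and the specific value of $\varepsilon$ is immaterial.

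To produce such a compactification one proceeds by the standard recipe. Nagata's theorem embeds $X$ as an open subscheme of some proper $k$-scheme $\bar X_0$; blowing up the reduced boundary $\bar X_0 \setminus X$ one may arrange that $\bar X_0 \setminus X$ is the support of an effective Cartier divisor $\Sigma$; after a further blow-up if necessary, one extends $(1-\varepsilon)D$ to a $\mathbb{Q}$-Cartier divisor $\bar D$ on the resulting proper $\bar X$. (In any case, the theory of $\mathbb{Q}$-modulus pairs \cite{Koizumi-Miyazaki} presupposes the existence of compactifications, as is implicit in Remark \ref{rmk;M-reciprocity}.) This completes the main equality $\omega^{\exc}F = \omega^*F$.

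The ``in particular'' statement is then a direct translation: with $\omega^{\exc}F = \omega^*F$, the Nisnevich sheaf $(\omega^{\exc}F)_{\mathcal{X}}$ on $X_\Nis$ sends $U \to X$ \'etale to $F(U \setminus |D|_U)$, which is the pushforward $j_*F_{\mathcal{X}^\circ}$ along the open immersion $j\colon \mathcal{X}^\circ \hookrightarrow X$, and $\mathrm{R}\Gamma(X,j_*F_{\mathcal{X}^\circ}) \simeq \mathrm{R}\Gamma(\mathcal{X}^\circ, F_{\mathcal{X}^\circ})$ by the compatibility of Nisnevich cohomology with open restriction. The only potentially delicate step in this plan is the bookkeeping with blow-ups required to realize $D$ as a $\mathbb{Q}$-Cartier divisor on a compactification, but this is already standard in the literature on $\mathbb{Q}$-modulus pairs.
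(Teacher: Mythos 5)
Your proof is correct and takes essentially the same route as the paper: both proofs hinge on Remark \ref{rmk;ulMCorQ}, which says that for an $\mathbb{A}^1$-invariant $F$ every proper $\mathbb{Q}$-modulus pair with the right interior is a modulus, so the modulus condition in the definition of $\omega^{\CI}F$ becomes vacuous once a compactification exists. The paper packages this a bit more cleanly — it proves $\omega^{\CI}F = \omega^*F$ outright (no need to introduce the $\varepsilon$, since the argument applies to every $\mathbb{Q}$-modulus pair, not just $(X,(1-\varepsilon)D)$) and then invokes the trivial left-continuity of $\omega^*F$ to get $\omega^\exc F = (\omega^*F)^\exc = \omega^*F$, whereas you unwind the colimit in the definition of $\omega^\exc$; these are the same computation with different bookkeeping, and your appeal to Lemma \ref{omega_exc_max} for the easy inclusion is not needed (it holds by the very construction $\omega^\exc F=(\omega^\CI F)^\exc$ with $\omega^\CI F\subset\omega^*F$).
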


\begin{proof}
    Let $X\in \Sm$ and $a\in F(X)$.
    Since $F$ is $\mathbb{A}^1$-invariant, any $\mathbb{Q}$-modulus pair $(Y,E)$ with $X=Y-|E|$ and $Y$ proper is a modulus for $a$ (see Remark \ref{rmk;ulMCorQ}).
    This shows that $\omega^{\CI}F(\mathcal{X})=F(\mathcal{X}^\circ)=\omega^*F(\mathcal{X})$.
    Since $\omega^*F$ has left continuity, we get $\omega^\exc F = \omega^* F$.
\end{proof}

\subsection{$(-)^{\modulus}$ for Hodge cohomology}

\def\tF{\widetilde{F}}
\newcommand{\xr}[1] {\xrightarrow{#1}}
\newcommand{\xl}[1] {\xleftarrow{#1}}
\newcommand{\lra}{\longrightarrow}
\newcommand{\eq}[2]{\begin{equation}\label{#1}#2 \end{equation}}
\newcommand{\ml}[2]{\begin{multline}\label{#1}#2 \end{multline}}
\newcommand{\mlnl}[1]{\begin{multline*}#1 \end{multline*}}
\newcommand{\ga}[2]{\begin{gather}\label{#1}#2 \end{gather}}
\def\inj{\hookrightarrow}

Fix a non-negative integer $q \geq 0$.
Recall the cube-invariant sheaf $\ulM \Omega^q$ on $\mSm$ from \S \ref{sec;MOmega}.
In this section, we prove that $(\Omega^q)^{\modulus}\simeq \mathrm{m}\Omega^q$.
First, in order to extend $\ulM \Omega^q$ to a sheaf on $\ulMCor$, we generalize Definition \ref{def;FFil} as follows.

\begin{definition}\label{def:ram_fil}
	Let $F\in \Sh_\Nis(\Cor,\Ab)$.
	A \emph{ramification filtration} $\Fil$ on $F$ is a collection of increasing filtrations $\{\Fil_r F(L)\}_{r\in \mathbb{Q}_{\geq 0}}$ on $F(L)$ indexed by $L\in \Phi$ which satisfies the following:
	\begin{enumerate}
		\item	For any $L\in \Phi$, we have $\Im\bigl(F(\mathcal{O}_L)\to F(L)\bigr)\subset \Fil_0F(L)$.
		\item	If $L\in \Phi$ and $L'/L$ is a finite extension with ramification index $e$, then we have
				$$
					\Tr_{L'/L}\bigl(\Fil_r F(L')\bigr)\subset \Fil_{r/e}F(L)\quad (r\in \mathbb{Q}_{\geq 0}).
				$$
	\end{enumerate}
    Here, the trace map $\Tr_{L'/L}\colon F(L')\to F(L)$ is induced by the finite correspondence $\Spec L\to \Spec L'$ which is the transpose of the canonical map.
\end{definition}

\begin{definition} \label{def;ramfil}
	Let $F\in \Sh_\Nis(\Cor,\Ab)$ and let $\Fil$ be a ramification filtration on $F$.
	Let $\mathcal{X}\in \MCor$ and $a\in F(\mathcal{X}^\circ)$.
	We say that $a$ is \emph{bounded by $D_X$} if for any $L\in \Phi$ and any commutative diagram of the following form, we have $\rho^*a\in \Fil_{v_L(\widetilde{\rho}^*D_X)}F(L)$:
        \begin{align}\label{DVR_diagram}
            \xymatrix{
                \Spec L\ar[r]^-{\rho}\ar@{^(->}[d]             &\mathcal{X}^\circ\ar@{^(->}[d]\\
                \Spec \mathcal{O}_L\ar[r]^-{\widetilde{\rho}}       &X.
            }
        \end{align}
	We write $F_{\Fil}(\mathcal{X})$ for the subgroup of $F(\mathcal{X}^\circ)$ consisting of elements bounded by $D_X$.
\end{definition}

\begin{lemma}\label{lem:ram_fil}
	Let $F\in \Sh_\Nis(\Cor,\Ab)$ and let $\Fil$ be a ramification filtration on $F$.
	\begin{enumerate}
		\item	For any $\mathcal{X},\mathcal{Y}\in \MCor^{\mathbb{Q}}$ and $\alpha\in \MCor^{\mathbb{Q}}(\mathcal{X},\mathcal{Y})$, we have $\alpha^*F_{\Fil} (\mathcal{Y})\subset F_{\Fil} (\mathcal{X})$.
		\item	$F_{\Fil}$ is a Nisnevich sheaf on $\MCor^\mathbb{Q}$.
	\end{enumerate}
\end{lemma}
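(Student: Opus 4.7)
The plan is to treat (1) and (2) separately, with (1) being the more substantive statement that will require unpacking the modulus condition on finite correspondences together with the compatibility axioms of the ramification filtration.

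For (1), I would reduce to the case where $\alpha = [V]$ is given by a single integral closed subscheme $V \subset \mathcal{X}^\circ \times \mathcal{Y}^\circ$, finite and surjective over a component of $\mathcal{X}^\circ$. Fix $a \in F_{\Fil}(\mathcal{Y})$ and a test diagram as in \eqref{DVR_diagram}; the task is to show $\rho^*\alpha^* a \in \Fil_r F(L)$ with $r = v_L(\widetilde{\rho}^* D_X)$. The key construction is to form the pullback $W := \overline{V}^N \times_X \Spec \mathcal{O}_L$, which is finite over $\Spec \mathcal{O}_L$ and, since $\mathcal{O}_L$ is henselian, decomposes as a finite disjoint union of local finite $\mathcal{O}_L$-algebras, each of which is a DVR $\mathcal{O}_{L_j}$ (as $\overline{V}^N$ is normal, hence its pullback is a product of DVRs after henselian decomposition) with fraction field $L_j$ a finite extension of $L$ of ramification index $e_j$. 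Each component provides a commutative square lifting $\rho$ and $\widetilde{\rho}$ to $\Spec L_j \to \mathcal{Y}^\circ$ and $\Spec \mathcal{O}_{L_j} \to Y$, call them $y_j$ and $\widetilde{y}_j$. The modulus condition $(\pr_1^* D_X)|_{\overline{V}^N} \geq (\pr_2^* D_Y)|_{\overline{V}^N}$ pulled back along $\Spec \mathcal{O}_{L_j} \to \overline{V}^N$ yields $v_{L_j}(\widetilde{y}_j^* D_Y) \leq e_j \cdot r$. Therefore $y_j^* a \in \Fil_{e_j r} F(L_j)$ by the assumption $a \in F_{\Fil}(\mathcal{Y})$. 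Writing $\rho^*\alpha^* a = \sum_j m_j \Tr_{L_j/L}(y_j^* a)$ (with $m_j$ the lengths coming from the decomposition) and invoking condition (2) of Definition \ref{def:ram_fil}, we conclude that each summand lies in $\Fil_r F(L)$, hence so does $\rho^*\alpha^* a$.

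For (2), the point is that being ``bounded by $D_X$'' is a condition checked against maps from henselian DVRs into $X$. Given a Nisnevich cover $\{\mathcal{U}_i \to \mathcal{X}\}_{i \in I}$ and a section $a \in F(\mathcal{X}^\circ)$ whose restrictions satisfy $a|_{\mathcal{U}_i^\circ} \in F_{\Fil}(\mathcal{U}_i)$, any test diagram \eqref{DVR_diagram} for $\mathcal{X}$ lifts through the cover: since $\mathcal{O}_L$ is henselian local, the map $\Spec \mathcal{O}_L \to X$ factors through some $U_i$, and the pullbacks of the divisors match on the nose. The filtration condition for $a$ then follows from the corresponding condition for $a|_{\mathcal{U}_i^\circ}$. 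Combined with the fact that $F$ itself is a Nisnevich sheaf on $\Cor$, so that $F_{\Fil}$ inherits the descent property as a subpresheaf cut out by a local condition, this establishes (2).

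The main technical obstacle is in (1), specifically the bookkeeping of the decomposition of the finite $\mathcal{O}_L$-scheme $W$ and the verification that the valuative inequality extracted from the modulus condition interacts correctly with the ramification indices; all the other steps are either formal (as in (2)) or standard multiplicity manipulations on finite correspondences.
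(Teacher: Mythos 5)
The paper itself gives no internal argument for this lemma; it simply cites \cite[Lemma 2.5]{Koizumi-blowup}, so the proposal can only be assessed on its own merits.

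Your part (2) is correct: the condition of being ``bounded by $D$'' is tested against maps from henselian discrete valuation rings, any such $\Spec\mathcal{O}_L\to X$ lifts through a Nisnevich cover of $X$ (étale covers of $\Spec\mathcal{O}_L$ split by henselianity), and the pulled-back divisor agrees, so $F_{\Fil}$ is indeed a local condition inside the Nisnevich sheaf $\omega^*F$.

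Part (1) contains a genuine gap. First, the claim that $W:=\overline{V}^N\times_X\Spec\mathcal{O}_L$ is ``finite over $\Spec\mathcal{O}_L$'' and ``a product of DVRs'' is false as stated: for $\mathcal{X},\mathcal{Y}\in\MCor^{\mathbb{Q}}$ the definition of $\ulMCor$ only asks that $\overline{V}\to X$ be \emph{proper} (not finite), so $W$ is merely proper over $\Spec\mathcal{O}_L$; and normality of $\overline{V}^N$ is not preserved under the base change $\times_X\Spec\mathcal{O}_L$, so even the components of $W$ dominating $\Spec\mathcal{O}_L$ need not be spectra of DVRs. Second, and more seriously, you conflate two different collections of fields: the $L_j$ in the transfer formula $\rho^*\alpha^*a=\sum_j m_j\Tr_{L_j/L}(y_j^*a)$ are the residue fields of the closed points of $V\times_{\mathcal{X}^\circ}\Spec L$, computed on $V$ itself, while the modulus inequality $(\pr_1^*D_X)|_{\overline V^N}\geq(\pr_2^*D_Y)|_{\overline V^N}$ lives on the normalization $\overline{V}^N$. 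The lift $\Spec\mathcal{O}_{L_j}\to\overline{V}$ obtained from the valuative criterion for the proper morphism $\overline{V}\to X$ does \emph{not} automatically factor through $\overline{V}^N$: its generic point maps to a closed point of $V$, not to the generic point of $\overline{V}$, so the usual ``normal source dominating an integral target factors through the normalization'' criterion is unavailable. The standard repair is to pass to a further finite extension: pick a closed point of the nonempty finite $L_j$-scheme $\overline{V}^N\times_{\overline{V}}\Spec L_j$, giving $\Spec L'_j\to\overline{V}^N$ with $L'_j/L_j$ finite, and extend to $\Spec\mathcal{O}_{L'_j}\to\overline{V}^N$ by properness. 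Pulling back the modulus condition along this map yields $e(L'_j/L_j)\,e_j\,r\geq e(L'_j/L_j)\,v_{L_j}(\widetilde{y}_j^*D_Y)$, and cancelling the common factor $e(L'_j/L_j)$ recovers exactly the bound $v_{L_j}(\widetilde{y}_j^*D_Y)\leq e_j r$ that your argument needs. Without that detour, the central valuative inequality is asserted but not derived; once supplied, the trace estimate you give at the end is correct and completes the proof.
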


\begin{proof}
    See \cite[Lemma 2.5]{Koizumi-blowup}.
\end{proof}

It is proved in \cite[Lemma 4.4]{Koizumi-blowup} that the filtration (\ref{eq:MOmega_filtration}) defines a ramification filtration on $\Omega^q$.
We write $\ulM\Omega^q$ for the sheaf on $\ulMCor$ associated to this ramification filtration. Recall that this sheaf has an explicit global formula; see \eqref{eq:MOmega}.

\begin{lemma}\label{lem:MOmega_rec}
    For any $\mathcal{X}=(X,D)\in \mSm$, we have
    $$
        \ulM\Omega^q(\mathcal{X})\subset \omega^{\exc}\Omega^q(\mathcal{X}).
    $$
\end{lemma}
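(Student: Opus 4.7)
The plan is to identify $\ulM\Omega^q$ as a good LS-cube-invariant subpresheaf of $\omega^*\Omega^q$, so that Lemma \ref{omega_CI_max} yields $\ulM\Omega^q\subset \omega^\CI\Omega^q$, and then to pass to $(-)^{\exc}$. The inclusion $\ulM\Omega^q \subset \omega^*\Omega^q$ (i.e.\ semi-purity) is built into the definition of the ramification filtration (Definition \ref{def;ramfil}), and the LS-cube-invariance of $\ulM\Omega^q$ follows from Theorem \ref{MOmega_local} combined with Lemma \ref{LS_CI_characterization}. The substantive step is therefore the M-reciprocity of $\ulM\Omega^q$.

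To verify M-reciprocity, given $\mathcal{X}=(X,D)\in\mSm$ and $a\in\ulM\Omega^q(\mathcal{X})$, I would produce a compactification $(\overline{X},\overline{D},\Sigma_0)$ such that $a\in\ulM\Omega^q(\overline{X},\overline{D}+N\Sigma_0)$ for some $N\gg 0$. I first choose any proper compactification $X\hookrightarrow\overline{X}$ via Nagata, possibly after blow-ups so that the closure of $D$ and the reduced boundary become $\mathbb{Q}$-Cartier divisors $\overline{D}$ and $\Sigma_0$. For each geometric henselian DVF $L$ with $\widetilde{\rho}\colon\Spec\mathcal{O}_L\to\overline{X}$, the required boundedness $\rho^*a\in\Fil_{v_L(\widetilde{\rho}^*(\overline{D}+N\Sigma_0))}\Omega^q(L)$ is automatic when the closed point lies in $X-|D|$ (reducing to regularity) or in $|D|\cap X$ (by hypothesis on $a$). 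The remaining case of DVFs centered on $\overline{X}-X$ uses that $a$, being an algebraic $q$-form on $X-|D|$, extends as a rational section of $\Omega^q_{\overline{X}}$ whose divisor has bounded multiplicities along each of the finitely many components of $\Sigma_0$; choosing $N$ large enough to exceed all these multiplicities yields the required boundedness at the generic points of $\Sigma_0$, and the case of higher-codimension DVFs can be handled by reducing to the codimension-$1$ case via the sheaf property of $\Omega^q_\Fil$ (Lemma \ref{lem:ram_fil}).

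With semi-purity, LS-cube-invariance, and M-reciprocity established, Lemma \ref{omega_CI_max} gives $\ulM\Omega^q\subset \omega^\CI\Omega^q$, whence $(\ulM\Omega^q)^\exc\subset \omega^\exc\Omega^q$. To conclude the desired inclusion I invoke the global formula \eqref{eq:MOmega}: for log-smooth $(X,D)$, only finitely many multiplicities $r_i$ appear in $D$, so $\lceil(1-\varepsilon)D\rceil=\lceil D\rceil$ for all $\varepsilon>0$ sufficiently small, and the filtered colimit defining $(\ulM\Omega^q)^\exc(X,D)=\colim_{\varepsilon\to 0}\ulM\Omega^q(X,(1-\varepsilon)D)$ stabilizes to $\ulM\Omega^q(X,D)$. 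The lemma follows.

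The main obstacle is the M-reciprocity verification, particularly the uniform control of the pole order of $a$ along the boundary $\overline{X}-X$ and the passage from codimension-$1$ DVFs to general geometric henselian DVFs. If this step appears too delicate, an alternative is to construct the compactification and $\Sigma$ directly as a modulus for $a$ in the sense of Definition \ref{def;RSC}, invoking the LS-cube-invariance of $\ulM\Omega^q$ to convert boundedness at the boundary into the required equality $\alpha^*a=\beta^*a$ for cube-homotopic pairs.
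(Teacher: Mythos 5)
Your proposal follows essentially the same approach as the paper's: both reduce, via the stability of $\lceil(1-\varepsilon)D\rceil$ under formula \eqref{eq:MOmega}, to showing $\ulM\Omega^q(\mathcal{X})\subset\omega^\CI\Omega^q(\mathcal{X})$, and both verify this by producing a compactification $(\overline{X},\overline{D},\Sigma)$ with $a\in\ulM\Omega^q(\overline{X},\overline{D}+n\Sigma)$ for $n\gg 0$ and invoking LS-cube-invariance of $\ulM\Omega^q$ to conclude that $(\overline{X},\overline{D}+n\Sigma)$ is a modulus. Your packaging via M-reciprocity and Lemma \ref{omega_CI_max} is a clean reformulation of what the paper does inline, but note that the sole place requiring real work — controlling the pole order of $a$ along the boundary $\Sigma$ uniformly enough to land in the ramification filtration $\Fil_r\Omega^q(L)$, including at DVFs whose center has codimension $>1$ in $\overline{X}-X$ — is exactly where your sketch stays heuristic, while the paper settles it by a careful local coordinate computation on affine charts of $\overline{X}$.
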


\begin{proof}
By \eqref{eq:MOmega}, it suffices to show that $\ulM\Omega^q(\mathcal{X})\subset \omega^{\CI}\Omega^q(\mathcal{X})$.
Since the problem is local on $X$, we may assume that $D=\sum_{i=1}^mr_i\div(x_i)$ for some coordinate $x_1,\dots,x_n$ on $X$.
Let $a\in \ulM\Omega^q(\mathcal{X})$.
Take a normal compactification $\overline{X}$ of $X$ such that
\begin{itemize}
    \item $\overline{X}-X$ is the support of an effective Cartier divisor $\Sigma$ on $\overline{X}$, and
    \item $\div(x_i)$ extends to an effective Cartier divisor $\overline{D}_i$ for $i=1,2,\dots,m$.
\end{itemize}
Set $\overline{D}=\sum_{i=1}^m \overline{D}_i$.
Then $(\overline{X},\overline{D},\Sigma)$ is a compactification of $\mathcal{X}$.
We claim that for sufficiently large $n$, we have $a\in \ulM\Omega^q(\overline{X},\overline{D}+n\Sigma)$.
As $\ulM\Omega^q$ is LS-cube-invariant, this shows that $(\overline{X},\overline{D}+n\Sigma)$ is a modulus for $a$ and thus finishes the proof of the lemma.

Since $\overline{X}$ is quasi-compact, the claim is local on $\overline{X}$.
Let $U=\Spec A$ be an affine open subset of $\overline{X}$ such that we can write
$\overline{D}_i|_U=\div(y_i)$ and $\Sigma|_U = \div(f)$.
By \eqref{eq:MOmega}, we can write
$$
    a = \sum_{i=1}^m \dfrac{1}{x_i^{\ceil{r_i}-1}}\alpha_i + \sum_{i=1}^m\beta_i\dfrac{dx_i}{x_i^{\ceil{r_i}}},
$$
where $\alpha_i\in \Omega^q_{A[1/f]}$, $\beta_i\in \Omega^{q-1}_{A[1/f]}$.
Recall that $\overline{D}_i|_U$ extends the divisor $\div(x_i)$ on $\Spec A[1/f]$.
Therefore we have $x_i=e_iy_i$ for some $e_i\in A[1/f]^\times$.
Let $E_i$ be the Cartier divisor on $U$ defined by $e_i$.
Then we have $|E_i|\subset |\Sigma|$, so there is some $n_1>0$ such that $r_iE_i\leq n_1\Sigma$ holds for $i=1,2,\dots,m$.
Moreover, there is some $n_2>0$ such that $f^{n_2}\alpha_i\in \Omega^q_A$ and $f^{n_2}\beta_i\in \Omega^{q-1}_A$ holds for $i=1,2,\dots,m$.
Take $n\geq n_1+n_2$.
Then we have $f^na\in \ulM\Omega^q(U,\overline{D}|_U)$ and hence $a\in \ulM\Omega^q(U,\overline{D}|_U+n\Sigma|_U)$.
This finishes the proof of the claim.
\end{proof}

\begin{thm}\label{thm;MOmega_mod}
For any $\mathcal{X}=(X,D)\in \mSm$, we have
\[ \ulM \Omega^q(\mathcal{X}) = \omega^\exc \Omega^q(\mathcal{X})\]
as subgroups of $\Omega^q(\mathcal{X}^\circ)$.
In other words, we have $(\Omega^q)^{\modulus}\simeq \mathrm{m}\Omega^q$.
\end{thm}

\begin{proof}
By Lemma \ref{lem:MOmega_rec}, we have $\ulM \Omega^q(\mathcal{X}) \subset \omega^\exc \Omega^q(\mathcal{X})$.
Let us prove the opposite inclusion.
It suffices to prove that for each $L\in \Phi$ and $r\in \Q_{\geq 0}$, 
\eq{eq0;thm;MOmega_mod}{
\omega^\exc \Omega^q(\sO_L,\fm_L^{\lceil r \rceil})\subset  \Fil_r\Omega^q(L),}
where 
$$
    \mathrm{Fil}_r \Omega^q (L)
    =\begin{cases}
        \Omega^q(\mathcal{O}_L) & (r=0), \\
        t^{-\lceil r \rceil + 1} \cdot \Omega^q (\mathcal{O}_L)(\log) & (r>0).
    \end{cases}
$$
Since the both sides of \eqref{eq0;thm;MOmega_mod} depend only on $\lceil r \rceil$, we may assume $r\in \Z$.
We use existing results computing $\omega^{\CI}\Omega^q(\mathcal{O}_L,\mathfrak{m}_L^r)$.
If $\ch(k)=0$, the equality $\omega^{\CI}\Omega^q(\mathcal{O}_L,\mathfrak{m}_L^r)=\Fil_r\Omega^q(L)$ is proved in \cite[Theorem 6.4]{RS21}.
This proves \eqref{eq0;thm;MOmega_mod} in case $\ch(k)=0$.
If $\ch(k)=p>0$, we have
$$
    \omega^{\CI}\Omega^q(\mathcal{O}_L,\mathfrak{m}_L^r)
    =\begin{cases}
        \Omega^q(\mathcal{O}_L) & (r=0), \\
        t^{-r + 1} \cdot \Omega^q (\mathcal{O}_L)(\log) & (r\in \mathbb{Z}_{>0}-p\mathbb{Z}_{>0}),\\
        t^{-r} \cdot \Omega^q (\mathcal{O}_L) & (r\in p\mathbb{Z}_{>0})
    \end{cases}
$$
by \cite[Corollary 6.8]{RSram3}.
This proves \eqref{eq0;thm;MOmega_mod} for $r\in \mathbb{Z}_{>0}-p\mathbb{Z}_{>0}$.
When $r\in p\mathbb{Z}_{>0}$, we have to take care of the difference between $\omega^{\CI}$ and $\omega^{\exc}$.
Let $a\in \omega^{\exc}\Omega^q(\mathcal{O}_L,\mathfrak{m}_L^{r})$, where $r\in p\mathbb{Z}_{>0}$.
Then, we have $a\in \omega^{\CI}\Omega^q(\mathcal{O}_L,\mathfrak{m}_L^{r-(1/N)})$ for some positive integer $N$ with $p\nmid N$.
Take a totally ramified extension $L'/L$ with ramification index $N$.
Let $\pi\colon \Spec L'\to \Spec L$ denote the canonical map.
Then, we have
$$
    \pi^*a \in \omega^{\CI}\Omega^q(\mathcal{O}_{L'},\mathfrak{m}_{L'}^{rN-1})=\Fil_{rN-1}\Omega^q(L') \subset \Fil_{rN}\Omega^q(L').
$$
Since $\Fil$ is a ramification filtration on $\Omega^q$, we get
$$
    Na=\Tr_{L'/L}(\pi^*a)\in \Fil_r\Omega^q(L).
$$
Since $N$ is invertible in $k$, we get $a\in \Fil_r\Omega^q(L)$.
This proves \eqref{eq0;thm;MOmega_mod} for $r\in p\mathbb{Z}_{>0}$.
\end{proof}

\begin{remark}
    In the proof of Theorem \ref{thm;MOmega_mod}, we employed the result from \cite{RSram3}. We can also produce an alternative proof following the argument of \cite[Theorem 6.4]{RS21} by upgrading the theory of local symbols for $\mathbb{Q}$-modulus pairs. However, we decided not to include the detailed exposition here for the cleanliness of the present paper.
\end{remark}

\subsection{$(-)^{\modulus}$ for unramified cohomology}\label{sec:Hone_exc}

Assume that $\ch(k)=p>0$.
In this subsection, we interpret Brylinski-Kato's ramification filtration on the \'etale cohomology in our framework.

\begin{definition}
    Let $q\geq 0$ and $m\geq 1$.
    We define a Nisnevich sheaf $\mathrm{H}^{q+1}_{\ur, m}$ on $\Sm$ to be the Nisnevich sheafification of the presheaf
    $$
        X\mapsto \mathrm{H}^{q+1}_{\et}(X,\mathbb{Z}/m(q)),
    $$
    where $\mathbb{Z}/m(q)$ is the mod-$m$ \'etale motivic complex of weight $q$.
    Similarly, we define a Nisnevich sheaf $\mathrm{H}^{q+1}_{\ur}$ on $\Sm$ to be the Nisnevich sheafification of the presheaf
    $$
        X\mapsto \mathrm{H}^{q+1}_{\et}(X,\mathbb{Q}/\mathbb{Z}(q)).
    $$
\end{definition}

\begin{remark}
    Since the presheaf $\mathrm{H}^1_\et({-},\mathbb{Z}/m)$ is already a Nisnevich sheaf, we write $\mathrm{H}^1_{\et,m}$ for $\mathrm{H}^1_{\ur,m}$.
    Similarly, we write $\mathrm{H}^1_{\et}$ for $\mathrm{H}^1_{\ur}$.
\end{remark}

By definition, we have $\mathrm{H}^{q+1}_{\ur}\simeq \varinjlim_{m}\mathrm{H}^{q+1}_{\ur,m}$.
If we write $m=m'p^n$ with $p\nmid m'$, then we have $\mathrm{H}^{q+1}_{\ur,m}\simeq \mathrm{H}^{q+1}_{\ur,m'} \oplus \mathrm{H}^{q+1}_{\ur,p^n}$.
Moreover, there is an equivalence
$$
    \mathbb{Z}/p^n(q)[q+1]\simeq \cofib (\Witt_n\Omega^q\xrightarrow{1-F}\Witt_n\Omega^q/dV^{q-1}\Omega^{q-1})
$$
in the derived category of \'etale sheaves on $\Sm$.
Therefore, the sheaf $\mathrm{H}^{q+1}_{\et,p^n}$ is isomorphic to the cokernel of $(1-F)\colon \Witt_n\Omega^q\to\Witt_n\Omega^q/dV^{q-1}\Omega^{q-1}$ in $\Sh_\Nis(\Sm)$ via the connecting map
$$
    \delta\colon \Witt_n\Omega^q/dV^{q-1}\Omega^{q-1}\to \mathrm{H}^{q+1}_{\et,p^n}.
$$
In particular, the sheaf $\mathrm{H}^{q+1}_{\et,p^n}$ has a natural structure of a sheaf on $\Cor$.
Moreover, since $\RSC_\Nis\subset \Sh_\Nis(\Cor,\Ab)$ is closed under taking quotients, it follows that $\mathrm{H}^{q+1}_{\et,p^n}$ is a reciprocity sheaf.

\begin{definition}
    For $L\in \Phi$, we define a filtration $\{\Fil_r\Witt_n(L)\}_{r\in \mathbb{Q}_{\geq 0}}$ on $\Witt_n(L)$ by
    \begin{equation} \label{eq:MW_filtration}
    \mathrm{Fil}_r \Witt_n (L)
    =\begin{cases}
        \Witt_n(\mathcal{O}_L) & (r=0), \\
        \{a\in \Witt_n(L)\mid [t^{\lceil r \rceil - 1}] \cdot \mathrm{F}^{n-1}(a)\in \Witt_n(\mathcal{O}_L)\} & (r>0).
    \end{cases}
    \end{equation}
    It is proved in \cite[Lemma 4.11]{Koizumi-blowup} that the filtration \eqref{eq:MW_filtration} defines a ramification filtration on $\Witt_n$ (see Definition \ref{def;ramfil} for the terminology).
    We write $\ulM\Witt_n$ for the sheaf on $\ulMCor$ associated to this ramification filtration.
\end{definition}

\begin{lemma}
    The filtration \eqref{eq:MW_filtration} coincides with the filtration \eqref{eq:MWOmega_filtration} for $q=0$.
    Consequently, the sheaf $\ulM\Witt_n$ on $\mSm$ is canonically isomorphic to the sheaf $\ulM\Witt_n\Omega^0$.
\end{lemma}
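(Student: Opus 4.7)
The case $r=0$ is immediate from both definitions, so the task is to match the two filtrations on $W_n(L)$ for $r > 0$ and any $L \in \Phi$. The plan is to deduce the equality by applying Shiho's \cite[Theorem 1.4]{Shiho}, which has already been cited in the paper as establishing an isomorphism of Nisnevich sheaves $\ulM\Witt_n\Omega^0 \cong \ulM\Witt_n$ on $\mSm$ (the latter being the sheaf from \cite{Koizumi-blowup} built from \eqref{eq:MW_filtration}).

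The first step is to observe that each of \eqref{eq:MW_filtration} and \eqref{eq:MWOmega_filtration} is a ramification filtration in the sense of Definition \ref{def:ram_fil}, and that the associated Nisnevich sheaves on $\ulMCor$ via the $(-)_{\Fil}$ construction (Definition \ref{def;ramfil}) are, respectively, $\ulM\Witt_n$ and $\ulM\Witt_n\Omega^0$. For $\ulM\Witt_n\Omega^0$, this identification goes through the global formula of Lemma \ref{lem:MWOmega_global}: the filtration is read off by testing via the diagrams \eqref{DVR_diagram} and using that, on $\mathcal{O}_L$ itself, the colimit defining \eqref{eq:MWOmega_filtration} is by construction the stalk of $\ulM\Witt_n\Omega^0$ at the pro-modulus pair $(\Spec\mathcal{O}_L,\,r\cdot\div(t))$, computed as a colimit over the cofiltered system $\mathcal{P}_L$ of approximations in $\Sm$ (in the sense of the conventions section on essentially smooth schemes).

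The second step is to recover each filtration from its sheaf. A ramification filtration $\Fil$ is reconstructed by $\Fil_r F(L) = \colim_{(X,D)\in \mathcal{P}_L} F_{\Fil}(X, r\cdot D)$, which is a general principle for sheaves in the $(-)_{\Fil}$ image: the value on the generic point of $(X,rD)$ approaches the bound $\Fil_r F(L)$ in the cofiltered limit. Consequently, an isomorphism of Nisnevich sheaves $\ulM\Witt_n \cong \ulM\Witt_n\Omega^0$ on $\mSm$ forces the equality of filtrations on $W_n(L)$ for every $L \in \Phi$. Applying Shiho's theorem then gives the desired conclusion; the consequence on sheaves stated in the lemma follows formally (indeed, it is just the input to the argument made explicit via the filtration).

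The main obstacle is the verification that the colimit in \eqref{eq:MWOmega_filtration} indeed reproduces the stalk of Shiho's sheaf $\ulM\Witt_n\Omega^0$ at $(\Spec\mathcal{O}_L,\,r\cdot\div(t))$, rather than some a priori smaller subgroup. Granting this compatibility, which is built into Shiho's setup (as recalled before Lemma \ref{lem:MWOmega_global}), the proof reduces to citing Shiho's Theorem 1.4. An alternative direct route would be to unwind Lemma \ref{lem:MWOmega_lift} in the lifted case, identifying $\mathrm{H}^0$ of the appropriate log de Rham complex with the condition $[t^{\lceil r \rceil -1}]\cdot F^{n-1}(a)\in W_n(\mathcal{O}_L)$ via the de Rham--Witt/crystalline comparison; however, this direct calculation is precisely the content of Shiho's Theorem 1.4, so invoking it as a black box is the economical choice.
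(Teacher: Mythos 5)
The paper's own proof is a one-line citation of \cite[Proposition 2.11(2)]{Shiho}, which is precisely Shiho's local computation identifying the crystalline description of $\Fil_r W_n\Omega^0(L)$ with the Witt-vector description \eqref{eq:MW_filtration}. Your proposal instead takes a detour: invoke the sheaf-level coincidence via \cite[Theorem 1.4]{Shiho} and then try to reverse-engineer the equality of filtrations from the equality of sheaves. This is a genuinely different (and longer) route, and it contains a gap that you acknowledge but do not close.

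The gap is the ``reconstruction principle'' asserted in your second step, namely that
$\Fil_r F(L) = \colim_{(X,D)\in\mathcal{P}_L} F_{\Fil}(X, rD)$.
The $(-)_{\Fil}$ construction of Definition \ref{def;FFil} imposes the boundedness condition at \emph{all} DVR tests $\rho\colon \Spec L' \to X-D$, not just the one given by $L$ itself. Testing at the generic point of $D$ gives the containment $\colim F_{\Fil}(X,rD) \subset \Fil_r F(L)$, but the reverse inclusion is exactly what is at stake: given $a\in\Fil_r F(L)$, one must show that after shrinking $X$ the element $a$ passes every remaining DVR test, and nothing in Definition \ref{def:ram_fil} (which only imposes a trace compatibility, not a pullback compatibility) guarantees this. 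So ``$\Fil_r F(L)$ is the stalk of $F_{\Fil}$'' is not a general principle about ramification filtrations; for the two specific filtrations in play it is a nontrivial fact about their saturation. Your parenthetical that this ``is built into Shiho's setup'' is not a proof, and if one unwinds what would have to be checked, one lands essentially on the local comparison that \cite[Prop.~2.11(2)]{Shiho} proves directly --- at which point the detour through the sheaf isomorphism is no longer doing any work.

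There is also a reference issue that compounds this: you lean on \cite[Theorem 1.4]{Shiho}, which in this paper is cited for the $(\CI\cup\BI)$-locality of $\mathrm{R}\Gamma(-,\ulM\Witt_n\Omega^q)$, whereas the precise local statement being proved here is \cite[Prop.~2.11(2)]{Shiho}. Your sketched ``alternative direct route'' (unwinding Lemma \ref{lem:MWOmega_lift} via the de Rham--Witt/crystalline comparison) is in fact a description of that proposition, not of Theorem 1.4, and is the approach the paper actually takes. The economical choice is therefore not to invoke Theorem 1.4 as a black box, but to cite the proposition directly; this also avoids any circularity worry, since the earlier Remark's identification of the sheaves is itself a consequence of the filtration equality (that is what ``consequently'' in the lemma statement records), not an independent input one can safely feed back in.
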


\begin{proof}
    See \cite[Proposition 2.11 (2)]{Shiho}.
\end{proof}

\begin{lemma}
    For any $\mathcal{X}=(X,D)\in \mSm$, we have
    \begin{align}\label{eq:MW}
    \ulM\Witt_n(\mathcal{X})=\{(a_0,\dots,a_{n-1})\in \Witt_n(\mathcal{X}^\circ)\mid a_j^{p^{n-j-1}}\in \Gamma(X,\mathcal{O}_X(\ceil{D}-|D|))\}.
    \end{align}
\end{lemma}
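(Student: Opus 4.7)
The plan is to unwind Definition \ref{def;ramfil} applied to the explicit filtration \eqref{eq:MW_filtration} and compare with the stated global expression. First I would record a direct Witt-vector computation: in characteristic $p$ the Frobenius on $\Witt_n$ acts coordinatewise as the $p$-th power, so $F^{n-1}(a_0,\ldots,a_{n-1}) = (a_0^{p^{n-1}},\ldots,a_{n-1}^{p^{n-1}})$, and the Teichm\"uller multiplication rule $[s]\cdot(c_0,\ldots,c_{n-1}) = (sc_0, s^p c_1, \ldots, s^{p^{n-1}}c_{n-1})$ then gives, for every $L\in\Phi$ with uniformizer $t$ and $r > 0$,
\[
\Fil_r\Witt_n(L) = \{(a_0,\ldots,a_{n-1})\in \Witt_n(L) \mid v_L(a_j^{p^{n-1-j}})\geq -(\ceil{r}-1) \text{ for all } j\},
\]
while $\Fil_0\Witt_n(L) = \Witt_n(\mathcal{O}_L)$ simply means $v_L(a_j)\geq 0$ for all $j$.

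For the inclusion $\ulM\Witt_n(\mathcal{X})\subseteq$ RHS, I would write $D = \sum_i r_i D_i$ with $D_i$ prime and $r_i\in\mathbb{Q}_{>0}$. For each $i$, the fraction field $L_i$ of the henselization $\mathcal{O}_{X,\xi_i}^h$ at the generic point $\xi_i$ of $D_i$ lies in $\Phi$, and the canonical map $\tilde\rho_i\colon\Spec\mathcal{O}_{L_i}\hookrightarrow X$ satisfies $v_{L_i}(\tilde\rho_i^*D) = r_i$. Applying boundedness to $\tilde\rho_i$ yields $v_{L_i}(a_j^{p^{n-1-j}}) \geq -(\ceil{r_i}-1)$, which says precisely that $a_j^{p^{n-1-j}}$, regarded as a regular function on $\mathcal{X}^\circ$, has pole of order at most $\ceil{r_i}-1$ along each $D_i$. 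Since $X$ is smooth the coherent sheaf $\mathcal{O}_X(\ceil{D}-|D|)$ is an invertible (hence reflexive) $\mathcal{O}_X$-module, so the resulting section on the complement of the codimension-$2$ singular locus of $|D|$ extends uniquely to a global section by Hartogs' principle.

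For the reverse inclusion RHS$\subseteq\ulM\Witt_n(\mathcal{X})$, I would fix any $L\in\Phi$ together with a diagram of the form appearing in Definition \ref{def;ramfil}, and set $e_i := v_L(\tilde\rho^*D_i)\in \mathbb{Z}_{\geq 0}$, $r := v_L(\tilde\rho^*D) = \sum_i r_i e_i$. The hypothesis $a_j^{p^{n-1-j}}\in \Gamma(X,\mathcal{O}_X(\ceil{D}-|D|))$ directly gives
\[
v_L(\tilde\rho^*a_j^{p^{n-1-j}}) \geq -\sum_i e_i(\ceil{r_i}-1).
\]
If $r=0$, then $e_i=0$ whenever $r_i>0$, so $\tilde\rho$ factors through $\mathcal{X}^\circ$ and the above bound becomes $\geq 0$, placing $\rho^*a$ in $\Fil_0\Witt_n(L)=\Witt_n(\mathcal{O}_L)$. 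If $r>0$ then the strict inequality $\ceil{r_i}-1 < r_i$ combined with the fact that some $e_i r_i > 0$ yields $\sum_i e_i(\ceil{r_i}-1) < \sum_i e_i r_i = r$; as the left-hand side is a non-negative integer and $\ceil{r}$ is the least integer $\geq r$, this forces $\sum_i e_i(\ceil{r_i}-1) \leq \ceil{r}-1$, hence $v_L(\tilde\rho^*a_j^{p^{n-1-j}}) \geq -(\ceil{r}-1)$, which by Step~1 says $\rho^*a\in \Fil_r\Witt_n(L)$.

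I do not anticipate any substantial obstacle; the only delicate point is the numerical inequality in the final step, for which one must separately handle $r=0$ and $r>0$, using in the latter case the integrality of $\sum_i e_i(\ceil{r_i}-1)$ and the strict inequality $\ceil{r_i}-1 < r_i$ that is valid for every positive rational $r_i$.
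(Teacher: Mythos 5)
Your argument is correct. The paper itself gives no proof, replacing it with the citation ``See \cite[Lemma 4.13]{Koizumi-blowup}'', so a direct comparison is not possible; your proposal supplies a complete and self-contained verification of the formula. The key computations --- that $F^{n-1}(a_0,\dots,a_{n-1}) = (a_0^{p^{n-1}},\dots,a_{n-1}^{p^{n-1}})$ in characteristic $p$ and that $[s]\cdot(c_0,\dots,c_{n-1}) = (sc_0,s^pc_1,\dots,s^{p^{n-1}}c_{n-1})$, hence that $\Fil_r\Witt_n(L)$ (for $r>0$) is exactly the condition $v_L(a_j^{p^{n-1-j}})\geq -(\ceil{r}-1)$ --- are the crux and are correctly carried out. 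The forward inclusion tests boundedness only at the distinguished valuations $L_i = \Frac\mathcal{O}_{X,\xi_i}^h$ to get codimension-$1$ pole bounds and then invokes normality of $X$ (Hartogs), which is exactly the right level of generality since $\mathcal{O}_X(\ceil{D}-|D|)$ is invertible on a smooth scheme. The reverse inclusion has the only delicate step, the integrality trick: for $r>0$, the quantity $\sum_i e_i(\ceil{r_i}-1)$ is a non-negative integer strictly less than $r=\sum_i e_i r_i$ (using $\ceil{r_i}-1 < r_i$ for $r_i>0$ and the fact that at least one $e_i r_i>0$), hence $\leq \ceil{r}-1$; and the $r=0$ case is handled separately by noting that $r=0$ with all $r_i>0$ forces all $e_i=0$. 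Both cases are treated correctly. In short, your proof does the work that the paper delegates to an external reference, and it is sound.
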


\begin{proof}
    See \cite[Lemma 4.13]{Koizumi-blowup}.
\end{proof}

\begin{definition}
    The \emph{Brylinski-Kato filtration} $\{\Fil_r\mathrm{H}^{1}_{\et,p^n}(L)\}_{r\in \mathbb{Q}_{\geq 0}}$ on $\mathrm{H}^{1}_{\et,p^n}$ is defined by
    \begin{equation} \label{eq:Hone_filtration}
    \Fil_r\mathrm{H}^{1}_{\et,p^n}(L)
    =
    \Im (\Fil_r\Witt_n\xrightarrow{\delta}\mathrm{H}^1_{\et,p^n}(L)).
    \end{equation}
    For $q\geq 0$, the \emph{Brylinski-Kato filtration} $\{\Fil_r\mathrm{H}^{q+1}_{\ur,p^n}(L)\}_{r\in \mathbb{Q}_{\geq 0}}$ on $\mathrm{H}^{q+1}_{\ur,p^n}$ is defined by
    \begin{equation} \label{eq:Hur_filtration}
    \Fil_r\mathrm{H}^{q+1}_{\ur,p^n}(L)
    =
    \begin{cases}
        \mathrm{H}^{q+1}_{\ur,p^n}(\mathcal{O}_L)&(r=0),\\
        \Im (\Fil_r\mathrm{H}^1_{\et,p^n}(L)\otimes \mathrm{K}^{\mathrm{M}}_q(L)\to \mathrm{H}^{q+1}_{\ur,p^n}(L))  &(r>0).
    \end{cases}
    \end{equation}
    For $m=m'p^n$ with $p\nmid m'$, we set
    $$
        \Fil_r\mathrm{H}^{q+1}_{\ur,m}(L)=
        \begin{cases}
            \mathrm{H}^{q+1}_{\ur,m}(\mathcal{O}_L)&(r=0),\\
            \mathrm{H}^{q+1}_{\ur,m'}(L)\oplus \Fil_r\mathrm{H}^{q+1}_{\ur,p^n}(L)&(r>0).
        \end{cases}
    $$
    We set $\Fil_r\mathrm{H}^{q+1}_{\ur}(L)= \varinjlim_{m} \Fil_r\mathrm{H}^{q+1}_{\ur,m}(L)$.
    The sheaf associated to the Brylinski-Kato filtration is denoted by $\ulM\mathrm{H}^{q+1}_{\ur,m}, \ulM\mathrm{H}^{q+1}_{\ur}\in \Sh_\Nis(\mSm,\Ab)$.
\end{definition}

\begin{lemma}\label{lem:MH1_is_Im_of_delta}
    For any $\mathcal{X}=(X,D)\in \mSm$, the sheaf $(\ulM\mathrm{H}^1_{\et,p^n})_\mathcal{X}$ coincides with the image of the morphism
    $$
    \delta\colon (\ulM\Witt_n)_\mathcal{X}\to
    (\omega^*\mathrm{H}^1_{\et,p^n})_\mathcal{X}.
    $$
\end{lemma}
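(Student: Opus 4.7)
The plan is to verify the equality of Nisnevich subsheaves of $(\omega^*\mathrm{H}^1_{\et,p^n})_\mathcal{X}$ by showing both inclusions. The inclusion of the sheaf image of $\delta$ into $(\ulM\mathrm{H}^1_{\et,p^n})_\mathcal{X}$ is essentially definitional: for any local section of the form $a = \delta(b)$ with $b$ a section of $(\ulM\Witt_n)_\mathcal{X}$ and any diagram of the form \eqref{DVR_diagram}, the functoriality of $\delta$ under pullback together with $\rho^*b \in \Fil_{v_L(\widetilde{\rho}^*D)}\Witt_n(L)$ gives $\rho^*a = \delta(\rho^*b) \in \Fil_{v_L(\widetilde{\rho}^*D)}\mathrm{H}^1_{\et,p^n}(L)$, by the very definition \eqref{eq:Hone_filtration} of the Brylinski-Kato filtration at DVFs.

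For the reverse inclusion I would argue at Nisnevich stalks. Fix $x \in X$, set $A := \mathcal{O}_{X,x}^h$, and let $A^\circ := \Spec A \setminus |D|_A$; the stalks at $x$ identify with $\ulM\Witt_n(A,D|_A) \subseteq \Witt_n(A^\circ)$ and $\ulM\mathrm{H}^1_{\et,p^n}(A,D|_A) \subseteq \mathrm{H}^1_{\et,p^n}(A^\circ)$. What remains is to lift any $a \in \ulM\mathrm{H}^1_{\et,p^n}(A,D|_A)$ to some $b \in \ulM\Witt_n(A,D|_A)$ with $\delta(b) = a$. First I would produce any lift $b_0 \in \Witt_n(A^\circ)$ ignoring the modulus condition; this is possible because $A^\circ$ is a regular affine scheme, so $\mathrm{H}^i_{\et}(A^\circ,\Witt_n) = 0$ for $i \geq 1$ (by induction on $n$, starting from $\mathrm{H}^i_{\et}(A^\circ,\mathbb{G}_a) = 0$ on affines), and the long exact sequence for $0 \to \mathbb{Z}/p^n \to \Witt_n \xrightarrow{1-F} \Witt_n \to 0$ then yields surjectivity of $\delta\colon \Witt_n(A^\circ) \twoheadrightarrow \mathrm{H}^1_{\et,p^n}(A^\circ)$.

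The final step, which I expect to be the main technical obstacle, is to adjust the lift $b_0$ by an element of $\ker(\delta) = (1-F)\Witt_n(A^\circ)$ so as to land in $\ulM\Witt_n(A,D|_A)$. At each generic DVR $(\mathcal{O}_L, v_L)$ of a component $D_i$ of $|D|$, the Brylinski-Kato bound on $a$ provides a bounded Witt-vector representative $b'_L \in \Fil_{v_L(\widetilde{\rho}^*D)}\Witt_n(L)$ with $\rho^*b_0 - b'_L \in (1-F)\Witt_n(L)$, but $b_0$ itself need not be bounded. To globalize these local $(1-F)$-modifications into a single $c \in \Witt_n(A^\circ)$ for which $b_0 - (1-F)c$ satisfies the explicit polar bound of \eqref{eq:MW} Zariski-locally on $\Spec A$, I would decompose the polar parts of the Witt-vector components of $b_0$ at each smooth divisor $D_i$ using $A^\circ = A[1/(f_1 \cdots f_m)]$, apply Kato's structural reduction of Artin-Schreier-Witt classes modulo $(1-F)$ to produce canonical per-component adjustments, and verify that these adjustments assemble globally thanks to the smooth normal crossings geometry of $D$.
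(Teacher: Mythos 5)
Your first inclusion is correct and matches the paper: the definition of the Brylinski-Kato filtration (via \eqref{eq:Hone_filtration}, applied pointwise through \eqref{DVR_diagram}) directly shows that $\delta$ factors through $(\ulM\mathrm{H}^1_{\et,p^n})_\mathcal{X}$.

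For the reverse inclusion there is a genuine gap. You correctly identify the strategy — lift a local section $a$ to some $b_0 \in \Witt_n(A^\circ)$, then modify $b_0$ by an element of $(1-F)\Witt_n(A^\circ)$ to make it satisfy the polar bound \eqref{eq:MW} — but the final paragraph, where you promise to ``decompose the polar parts\ldots apply Kato's structural reduction\ldots and verify that these adjustments assemble globally,'' is precisely the hard content, and it is not carried out. Selecting per-component $(1-F)$-adjustments that work simultaneously at all branches of $|D|$ through a given point, and then gluing them, is a delicate computation with Artin--Schreier--Witt classes in a henselian local ring whose divisor is a $\Q$-SNCD; asserting that it ``works thanks to the smooth normal crossings geometry'' is not a proof. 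The paper sidesteps this entirely: it observes that if $a \in \ulM\mathrm{H}^1_{\et,p^n}(\mathcal{X})$ then $a \in \Fil_{r_i}\mathrm{H}^1_{\et,p^n}(L_i)$ at the generic point $\xi_i$ of each component $D_i$ (with $L_i = \Frac\,\mathcal{O}_{X,\xi_i}^h$), and then invokes Yatagawa's Proposition~1.31, which is exactly the statement that such a section lies in the image of $(\ulM\Witt_n)_\mathcal{X}$ under $\delta$ as a sheaf on $X_\Nis$. Your sketch is essentially an attempt to reprove Yatagawa's result; to complete your argument you would either need to cite that result or supply the omitted local Witt-vector analysis, which is substantially more work than the rest of your proposal suggests.
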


\begin{proof}
    By the definition of the Brylinski-Kato filtration on $\mathrm{H}^1_{\et,p^n}$, the morphism $\delta$ factors as
    $$
        \delta\colon (\ulM\Witt_n)_\mathcal{X}\to
        (\ulM\mathrm{H}^1_{\et,p^n})_\mathcal{X}\to 
        (\omega^*\mathrm{H}^1_{\et,p^n})_\mathcal{X}.
    $$
    This shows that the image of $\delta\colon \ulM\Witt_n(\mathcal{X})\to \mathrm{H}^1_{\et,p^n}(\mathcal{X}^\circ)$ is contained in $\ulM\mathrm{H}^1_{\et,p^n}(\mathcal{X})$.

    Conversely, suppose that $a\in \ulM\mathrm{H}^1_{\et,p^n}(\mathcal{X})$.
    Write $D=\sum_{i=1}^m r_iD_i$, where $D_i$ are smooth and irreducible.
    Let $\xi_i$ be the generic point of $D_i$ and $L_i = \Frac\mathcal{O}_{X,\xi_i}^h$.
    Our assumption implies that $a\in \Fil_{r_i}\mathrm{H}^1_{\et,p^n}(L_i)$ for $i=1,2,\dots,m$.
    By \cite[Proposition 1.31]{Yatagawa17}, this implies that $a$ lies in the image of the morphism of sheaves
    $$
        (\ulM\Witt_n)_{\mathcal{X}}\xrightarrow{\delta}(\omega^*\mathrm{H}^1_{\et,p^n})_{\mathcal{X}}.
    $$
    Therefore, the sheaf $(\ulM\mathrm{H}^1_{\et,p^n})_\mathcal{X}$ is contained in the image of $(\ulM\Witt_n)_\mathcal{X}$ under $\delta$.
\end{proof}

Next, we show that $\omega^\exc \mathrm{H}^1_\et$ and $\ulM \mathrm{H}^1_\et$ coincides on $\mSm$:

\begin{theorem}\label{thm:MHone_exc}
    For any $\mathcal{X}=(X,D)\in \mSm$, we have
    \[ \ulM \mathrm{H}^1_\et(\mathcal{X}) = \omega^\exc \mathrm{H}^1_\et(\mathcal{X})\]
    as subgroups of $\mathrm{H}^1_{\et}(\mathcal{X}^\circ)$.
    In particular, the sheaf $\ulM\mathrm{H}^1_\et|_{\mSm}$ on $\mSm$ is $(\CI\cup\BI)$-local and hence defines an object $\mathrm{mH}^1_\et$ of $\mDA^\eff(k)$.
\end{theorem}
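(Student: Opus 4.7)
The strategy is to prove the equality $\ulM \mathrm{H}^1_\et = \omega^\exc \mathrm{H}^1_\et$ as subsheaves of $\omega^*\mathrm{H}^1_\et$ on $\mSm$; once this is established, the $(\CI\cup\BI)$-locality is an automatic consequence of Theorems~\ref{CCI} and \ref{BI}, applied to the reciprocity sheaf $\mathrm{H}^1_\et$. To prove the equality, I would split $\mathrm{H}^1_\et$ into its prime-to-$p$ and $p$-primary parts via the canonical decomposition $\mathrm{H}^1_{\et,m} \simeq \mathrm{H}^1_{\et,m'} \oplus \mathrm{H}^1_{\et,p^n}$ for $m = m'p^n$ with $p \nmid m'$, and treat the two summands separately.

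The prime-to-$p$ piece is quick: $\mathrm{H}^1_{\et,m'}$ is $\mathbb{A}^1$-invariant on $\Sm_k$, so Proposition~\ref{prop:mod_for_A1_invariants} yields $\omega^\exc \mathrm{H}^1_{\et,m'} = \omega^*\mathrm{H}^1_{\et,m'}$, while the Brylinski-Kato filtration \eqref{eq:Hur_filtration} imposes no restriction on this summand, so also $\ulM \mathrm{H}^1_{\et,m'} = \omega^*\mathrm{H}^1_{\et,m'}$.

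The $p$-primary piece proceeds in two stages. First, one would establish the Witt-vector analogue $\ulM \Witt_n = \omega^\exc \Witt_n$ by the same template as Theorem~\ref{thm;MOmega_mod}: reduce to checking $\omega^\exc \Witt_n(\mathcal{O}_L, \mathfrak{m}_L^{\lceil r\rceil}) = \Fil_r \Witt_n(L)$ for each $L \in \Phi$ and $r \in \mathbb{Q}_{\geq 0}$, input the corresponding DVF computation of $\omega^\CI \Witt_n$ along the lines of \cite{RSram3}, and absorb the $r \in p\mathbb{Z}_{>0}$ case via the tame trace argument used in the proof of Theorem~\ref{thm;MOmega_mod}. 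Second, combine this with Lemma~\ref{lem:MH1_is_Im_of_delta}, which characterizes $\ulM \mathrm{H}^1_{\et,p^n}$ as the sheaf-theoretic image of $\delta\colon \ulM \Witt_n \to \omega^*\mathrm{H}^1_{\et,p^n}$. The inclusion $\ulM \mathrm{H}^1_{\et,p^n} \subset \omega^\exc \mathrm{H}^1_{\et,p^n}$ then follows from the functoriality of $\omega^\exc$ on reciprocity sheaves; for the reverse inclusion, work locally on DVFs and use that $\Fil_r \mathrm{H}^1_{\et,p^n}(L) = \delta(\Fil_r \Witt_n(L))$ by definition, together with a Nisnevich-local lifting of sections from $\omega^\CI \mathrm{H}^1_{\et,p^n}(X,(1-\varepsilon)D)$ to $\omega^\CI \Witt_n(X,(1-\varepsilon)D)$ (available after \'etale localization, since $1-F$ is surjective in the \'etale topology).

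The main obstacle is the Witt-vector analogue of Theorem~\ref{thm;MOmega_mod}. Although the structural argument is identical to that of \emph{loc.\,cit.}, carrying it out requires a DVF computation of $\omega^\CI \Witt_n(\mathcal{O}_L, \mathfrak{m}_L^r)$ parallel to \cite[Corollary~6.8]{RSram3}, and careful tracking of how the trace map on Witt vectors interacts with Frobenius and with the Brylinski-Kato filtration \eqref{eq:MW_filtration} in the $p$-divisible ramification range. Everything else in the argument is essentially formal once this computation is in hand.
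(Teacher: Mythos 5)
Your decomposition into prime-to-$p$ and $p$-primary parts, the use of Proposition~\ref{prop:mod_for_A1_invariants} for the tame part, and the inclusion $\ulM\mathrm{H}^1_{\et,p^n}\subset\omega^\exc\mathrm{H}^1_{\et,p^n}$ via Lemma~\ref{lem:MH1_is_Im_of_delta} and Lemma~\ref{lem:MW_rec} all match what the paper does. But your plan for the reverse inclusion of the $p$-primary part has a genuine gap: you route it through the full Witt-vector equality $\ulM\Witt_n=\omega^\exc\Witt_n$, and this is exactly the ingredient that is \emph{not} available. The paper only proves the one-sided inclusion $\ulM\Witt_n\subset\omega^\exc\Witt_n$ (Lemma~\ref{lem:MW_rec}), and in the Introduction the authors explicitly defer the computation of $\mathrm{m}\Witt_n\Omega^q$ to future work, which implies in particular that the DVF computation of $\omega^\CI\Witt_n$ you would need has not been carried out. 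You flag this as ``the main obstacle,'' but it is not merely a technical detail to be filled in --- as far as this paper goes, it is an open point, so any argument resting on it is incomplete.

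The paper avoids the obstacle entirely by working one level down, directly on $\mathrm{H}^1_{\et,p^n}$ rather than on $\Witt_n$. The reverse inclusion is reduced to the local statement $\omega^\exc\mathrm{H}^1_{\et,p^n}(\mathcal{O}_L,\mathfrak{m}_L^{\lceil r\rceil})\subset\Fil_r\mathrm{H}^1_{\et,p^n}(L)$ for $L\in\Phi$, which is attacked using the already-known identity $\omega^\CI\mathrm{H}^1_{\et,p^n}(\mathcal{O}_L,\mathfrak{m}_L^r)=\Fil^{\mathrm{nonlog}}_r\mathrm{H}^1_{\et,p^n}(L)$ from \cite[Theorem~8.8]{RS21}, together with the same trace-over-a-tame-extension trick used in Theorem~\ref{thm;MOmega_mod} to bridge the gap between $\Fil^{\mathrm{nonlog}}_r$ and $\Fil_r$ when $r\in p\mathbb{Z}_{>0}$. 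This is both shorter and actually available. Your proposed alternative for the reverse inclusion --- lifting a section of $\omega^\CI\mathrm{H}^1_{\et,p^n}(X,(1-\varepsilon)D)$ to $\omega^\CI\Witt_n(X,(1-\varepsilon)D)$ because $1-F$ is surjective étale-locally --- has a further problem beyond the missing Witt-vector result: surjectivity of $1-F$ in the étale topology gives a local lift in $\Witt_n$, not a lift satisfying the modulus bound, and passing from an étale-local lift to a Nisnevich-local one with the correct bound is a nontrivial claim that you have not justified. Replace this whole block by the direct DVF computation from \cite[Theorem~8.8]{RS21} plus the trace argument, and the proof closes.
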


\begin{lemma}\label{lem:MW_rec}
    For any $\mathcal{X}=(X,D)\in \mSm$, we have
    $$
        \ulM\Witt_n(\mathcal{X})\subset \omega^{\exc}\Witt_n(\mathcal{X}).
    $$
\end{lemma}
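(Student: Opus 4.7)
The plan is to mimic the proof of Lemma \ref{lem:MOmega_rec}, replacing the global formula \eqref{eq:MOmega} with the explicit description \eqref{eq:MW} of $\ulM\Witt_n$. First I would observe that for any $\mathcal{X} = (X,D) \in \mSm$ and any sufficiently small $\varepsilon \in (0,1]\cap \mathbb{Q}$, one has $\lceil(1-\varepsilon)r_i\rceil = \lceil r_i\rceil$ for every coefficient $r_i$ of $D$, so that $\lceil(1-\varepsilon)D\rceil - |(1-\varepsilon)D| = \lceil D\rceil - |D|$ as divisors on $X$. The formula \eqref{eq:MW} then yields the equality $\ulM\Witt_n(X,D) = \ulM\Witt_n(X,(1-\varepsilon)D)$ as subgroups of $\Witt_n(\mathcal{X}^\circ)$. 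It is therefore enough to establish the (a priori weaker) inclusion $\ulM\Witt_n(X,D) \subset \omega^{\CI}\Witt_n(X,D)$ for every $(X,D) \in \mSm$, since combining this with the previous equality produces $\ulM\Witt_n(X,D) \subset \omega^{\CI}\Witt_n(X,(1-\varepsilon)D) \subset \omega^{\exc}\Witt_n(X,D)$.

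For the inclusion into $\omega^{\CI}\Witt_n$, I would follow the argument of Lemma \ref{lem:MOmega_rec} verbatim. Since the problem is local on $X$, one reduces to the case where $D = \sum_{i=1}^m r_i\div(x_i)$ for some coordinate system, chooses a normal compactification $\overline{X}$ of $X$ such that $\overline{X} - X = |\Sigma|$ for an effective Cartier divisor $\Sigma$, and picks effective Cartier divisors $\overline{D}_i$ on $\overline{X}$ extending $\div(x_i)$. Setting $\overline{D} = \sum r_i \overline{D}_i$, the triple $(\overline{X},\overline{D},\Sigma)$ is a compactification of $\mathcal{X}$, and the key claim is that for every $a = (a_0,\ldots,a_{n-1}) \in \ulM\Witt_n(X,D)$ and all sufficiently large integers $N$, one has $a \in \ulM\Witt_n(\overline{X},\overline{D}+N\Sigma)$. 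By \eqref{eq:MW}, this reduces to showing that each function $a_j^{p^{n-j-1}} \in \Gamma(X,\mathcal{O}_X(\lceil D\rceil - |D|))$ extends to a section of $\mathcal{O}_{\overline{X}}(\lceil \overline{D}+N\Sigma\rceil - |\overline{D}+N\Sigma|)$ on all of $\overline{X}$ for $N$ large enough; this is a direct translation of the scalar-function estimate carried out in the proof of Lemma \ref{lem:MOmega_rec}, working on a finite affine cover of $\overline{X}$, writing local equations for the relevant divisors, and using quasi-compactness to absorb the poles of the $a_j^{p^{n-j-1}}$ uniformly into $N\Sigma$.

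Once this is established, the LS-cube-invariance of $\ulM\Witt_n$ --- a consequence of Shiho's $(\CI\cup\BI)$-locality of $\mathrm{R}\Gamma({-},\ulM\Witt_n\Omega^0)$ read off in degree $0$, combined with the transfer structure provided by Lemma \ref{lem:ram_fil} and the criterion of Lemma \ref{LS_CI_characterization} --- ensures that the proper modulus pair $(\overline{X},\overline{D}+N\Sigma)$ is a modulus for $a$ in the sense of Definition \ref{def;RSC}. This gives $a \in \omega^{\CI}\Witt_n(X,D)$ as required. I anticipate no serious obstacle, since the proof is essentially a direct transcription of the $\Omega^q$ case; the only extra bookkeeping is that the condition \eqref{eq:MW} splits into $n$ independent scalar function bounds (one per Witt component), each handled by exactly the same quasi-compactness argument as in Lemma \ref{lem:MOmega_rec}.
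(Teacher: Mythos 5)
Your proposal is correct and follows essentially the same route as the paper: reduce via the explicit formula \eqref{eq:MW} to the inclusion into $\omega^{\CI}\Witt_n$, then localize, take a normal compactification with $\overline{D}_i$ extending $\div(x_i)$, and absorb the poles of each $a_j^{p^{n-j-1}}$ into a sufficiently high multiple of $\Sigma$ by quasi-compactness, exactly as in Lemma~\ref{lem:MOmega_rec}. Your explicit spelling-out of the $\varepsilon$-reduction (using $\lceil(1-\varepsilon)r_i\rceil=\lceil r_i\rceil$ for small $\varepsilon$) and your choice $\overline{D}=\sum r_i\overline{D}_i$ (which is what the compactification condition $\overline{D}|_X=D$ requires) are small clarifications of the paper's terser presentation, not a different argument.
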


\begin{proof}
    By \eqref{eq:MW}, it suffices to show that $\ulM\Witt_n(\mathcal{X})\subset \omega^{\CI}\Witt_n(\mathcal{X})$.
    Since the problem is local on $X$, we may assume that $D=\sum_{i=1}^mr_i\div(x_i)$ for some coordinate $x_1,\dots,x_n$ on $X$.
Let $a\in \ulM\Witt_n(\mathcal{X})$.
Take a normal compactification $\overline{X}$ of $X$ such that
\begin{itemize}
    \item $\overline{X}-X$ is the support of an effective Cartier divisor $\Sigma$ on $\overline{X}$, and
    \item $\div(x_i)$ extends to an effective Cartier divisor $\overline{D}_i$ for $i=1,2,\dots,m$.
\end{itemize}
Set $\overline{D}=\sum_{i=1}^m \overline{D}_i$.
Then $(\overline{X},\overline{D},\Sigma)$ is a compactification of $\mathcal{X}$.
We claim that for sufficiently large $n$, we have $a\in \ulM\Witt_n(\overline{X},\overline{D}+n\Sigma)$.
As $\ulM\Witt_n$ is LS-cube-invariant, this shows that $(\overline{X},\overline{D}+n\Sigma)$ is a modulus for $a$ and thus finishes the proof of the lemma.

Since $\overline{X}$ is quasi-compact, the claim is local on $\overline{X}$.
Let $U=\Spec A$ be an affine open subset of $\overline{X}$ such that we can write
$\overline{D}_i|_U=\div(y_i)$ and $\Sigma|_U = \div(f)$.
By \eqref{eq:MW}, we can write
$$
    a = (a_0,a_1,\dots,a_{n-1}),\quad x_1^{\ceil{r_1}-1}\cdots x_m^{\ceil{r_m}-1}a_j^{p^{n-j-1}}\in A[1/f].
$$
Recall that $\overline{D}_i|_U$ extends the divisor $\div(x_i)$ on $\Spec A[1/f]$.
Therefore we have $x_i=e_iy_i$ for some $e_i\in A[1/f]^\times$.
Let $E_i$ be the Cartier divisor on $U$ defined by $e_i$.
Then we have $|E_i|\subset |\Sigma|$, so there is some $n_1>0$ such that $r_iE_i\leq n_1\Sigma$ holds for $i=1,2,\dots,m$.
Moreover, there is some $n_2>0$ such that
$$
f^{n_2}x_1^{\ceil{r_1}-1}\cdots x_m^{\ceil{r_m}-1}a_j^{p^{n-j-1}}\in A.
$$
Take $n\geq n_1+n_2$.
Then we have
$f^ny_1^{\ceil{r_1}-1}\cdots y_m^{\ceil{r_m}-1}a_j^{p^{n-j-1}}\in A$
and hence $a\in \ulM\Witt_n(U,\overline{D}|_U+n\Sigma|_U)$.
This finishes the proof of the claim.
\end{proof}

\begin{lemma}\label{lem:MHone_rec}
    For any $\mathcal{X}=(X,D)\in \mSm$, we have
    $$
        \ulM\mathrm{H}^1_{\et, p^n}(\mathcal{X})\subset \omega^{\exc}\mathrm{H}^1_{\et, p^n}(\mathcal{X}).
    $$
\end{lemma}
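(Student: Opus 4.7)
The plan is to bootstrap from the analogous inclusion $\ulM\Witt_n \subset \omega^{\exc}\Witt_n$ of Lemma \ref{lem:MW_rec}, using the description of $\ulM\mathrm{H}^1_{\et,p^n}$ as the image of $\delta$ from Lemma \ref{lem:MH1_is_Im_of_delta}. The bridge between the two is naturality of the operation $\omega^{\exc}$ on $\RSC_\Nis$. Concretely, for any morphism $\varphi : F \to G$ in $\RSC_\Nis$, I first check that $\omega^*\varphi$ restricts to a morphism $\omega^{\CI}F \to \omega^{\CI}G$: if $(\overline{X},\overline{D}+\Sigma)$ is a modulus for $a \in F(\mathcal{X}^\circ)$ in the sense of Definition \ref{def;RSC}, then for any cube-homotopic pair $\alpha,\beta$ of finite correspondences one has $\alpha^*\varphi(a) = \varphi(\alpha^*a) = \varphi(\beta^*a) = \beta^*\varphi(a)$, so $(\overline{X},\overline{D}+\Sigma)$ is also a modulus for $\varphi(a)$. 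Passing to the filtered colimit over $\varepsilon$ yields a morphism $\omega^{\exc}\varphi : \omega^{\exc}F \to \omega^{\exc}G$ compatible with $\omega^*\varphi$. I apply this to the quotient map $\delta : \Witt_n \to \mathrm{H}^1_{\et,p^n}$, which is a morphism in $\Sh_\Nis(\Cor,\Ab)$ by the discussion preceding Definition \ref{def;ramfil}.

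Next, given $a \in \ulM\mathrm{H}^1_{\et,p^n}(\mathcal{X})$, Lemma \ref{lem:MH1_is_Im_of_delta} provides a Nisnevich cover $\{\pi_i : U_i \to X\}$ of $X$ and sections $w_i \in \ulM\Witt_n(U_i, D|_{U_i})$ such that $\delta(w_i) = \pi_i^* a$ in $\mathrm{H}^1_{\et,p^n}(U_i - |D|_{U_i}|)$. By Lemma \ref{lem:MW_rec} applied to each $(U_i, D|_{U_i})$, each $w_i$ already lies in $\omega^{\exc}\Witt_n(U_i, D|_{U_i})$. Applying the morphism $\omega^{\exc}\delta$ from the previous step, I deduce $\pi_i^* a = \delta(w_i) \in \omega^{\exc}\mathrm{H}^1_{\et,p^n}(U_i, D|_{U_i})$, viewed as a subgroup of $\mathrm{H}^1_{\et,p^n}(U_i - |D|_{U_i}|)$.

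Finally, I conclude by a sheaf-theoretic gluing. By Lemma \ref{omega_CI_sheaf} and Lemma \ref{exc_preserves_everything}(2), the presheaf $\omega^{\exc}\mathrm{H}^1_{\et,p^n}$ is a Nisnevich sheaf, so $(\omega^{\exc}\mathrm{H}^1_{\et,p^n})_\mathcal{X}$ is a subsheaf of $(\omega^*\mathrm{H}^1_{\et,p^n})_\mathcal{X}$ on $X_\Nis$ (using semi-purity). Since $a$ is a global section of the ambient sheaf whose restrictions to the Nisnevich cover $\{U_i\}$ lie in the subsheaf, the sheaf axiom forces $a \in \omega^{\exc}\mathrm{H}^1_{\et,p^n}(\mathcal{X})$, completing the proof. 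The only part that requires care is the functoriality of $\omega^{\CI}$ (and hence $\omega^{\exc}$) for morphisms in $\RSC_\Nis$, but this is settled by the one-line verification above; the remaining steps are a formal combination of Lemmas \ref{lem:MH1_is_Im_of_delta}, \ref{lem:MW_rec}, and the sheaf property.
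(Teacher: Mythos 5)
Your proof is correct and follows essentially the same route as the paper's. The paper phrases the argument as a single commutative diagram of Nisnevich sheaves on $X$ (with $(\ulM\Witt_n)_\mathcal{X}$, $(\omega^*\Witt_n)_\mathcal{X}$, $(\omega^{\exc}\Witt_n)_\mathcal{X}$ on top and the corresponding $\mathrm{H}^1_{\et,p^n}$-sheaves on the bottom), then reads off the inclusion from the left vertical surjection of Lemma \ref{lem:MH1_is_Im_of_delta} and the top-row inclusion of Lemma \ref{lem:MW_rec}; your version simply unwinds that diagram into explicit local sections and a sheaf-gluing step, and — usefully — spells out the one tacit ingredient in the paper's diagram, namely the functoriality of $\omega^{\CI}$ (hence $\omega^{\exc}$) along morphisms in $\RSC_\Nis$, which is what makes the right vertical arrow exist. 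Both arguments are the same proof at different levels of explicitness.
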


\begin{proof}
Recall from Lemma \ref{lem:MH1_is_Im_of_delta} that $(\ulM\mathrm{H}^1_{p^n})_\mathcal{X}$ coincides with the image of the morphism
$$
    \delta\colon (\ulM\Witt_n)_\mathcal{X}\to
    (\omega^*\mathrm{H}^1_{\et,p^n})_\mathcal{X}.
$$
Consider the following commutative diagram of Nisnevich sheaves on $X$:
$$
    \xymatrix{
    (\ulM\Witt_n)_\mathcal{X}\ar@{^(->}[r]\ar@{->>}[d]     &(\omega^*\Witt_n)_\mathcal{X}\ar[d]&(\omega^{\exc}\Witt_n)_\mathcal{X}\ar@{_(->}[l]\ar[d]\\
    (\ulM\mathrm{H}^1_{\et, p^n})_\mathcal{X}\ar@{^(->}[r] &(\omega^*\mathrm{H}^1_{\et, p^n})_\mathcal{X}&(\omega^{\exc}\mathrm{H}^1_{\et, p^n})_\mathcal{X}.\ar@{_(->}[l]
    }
$$
By Lemma \ref{lem:MW_rec}, we have $(\ulM\Witt_n)_\mathcal{X}\subset (\omega^{\exc}\Witt_n)_\mathcal{X}$.
It follows from the above commutative diagram that $(\ulM\mathrm{H}^1_{\et, p^n})_\mathcal{X}$ is contained in $(\omega^{\exc}\mathrm{H}^1_{\et, p^n})_\mathcal{X}$.
\end{proof}

\begin{proposition}\label{prop;MHone_mod}
For any $\mathcal{X}=(X,D)\in \mSm$, we have
\[ \ulM \mathrm{H}^1_{\et, p^n}(\mathcal{X}) = \omega^\exc \mathrm{H}^1_{\et, p^n}(\mathcal{X})\]
as subgroups of $\mathrm{H}^1_{\et, p^n}(\mathcal{X}^\circ)$.
\end{proposition}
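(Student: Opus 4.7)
The inclusion $\ulM\mathrm{H}^1_{\et,p^n}(\mathcal{X}) \subset \omega^{\exc}\mathrm{H}^1_{\et,p^n}(\mathcal{X})$ is already established in Lemma \ref{lem:MHone_rec}, so the task is the reverse inclusion, and the proof will closely parallel that of Theorem \ref{thm;MOmega_mod}. By the construction of $\ulM\mathrm{H}^1_{\et,p^n}$ via the ramification filtration of Definition \ref{def;ramfil}, the statement is local at geometric henselian DVFs: it suffices to show that for every $L\in\Phi$ and every $r\in\mathbb{Q}_{\geq 0}$,
\[
\omega^{\exc}\mathrm{H}^1_{\et,p^n}(\mathcal{O}_L,\mathfrak{m}_L^{\lceil r\rceil}) \subset \Fil_r\mathrm{H}^1_{\et,p^n}(L).
\]
Since both sides depend only on $\lceil r\rceil$ we reduce to $r\in\mathbb{Z}_{\geq 0}$, the case $r=0$ being obvious.

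The core of the argument is to invoke an explicit computation of $\omega^{\CI}\mathrm{H}^1_{\et,p^n}(\mathcal{O}_L,\mathfrak{m}_L^r)$ analogous to \cite[Corollary 6.8]{RSram3}. Such a formula may be extracted by combining Lemma \ref{lem:MH1_is_Im_of_delta}, which identifies $\ulM\mathrm{H}^1_{\et,p^n}$ with the image of $\delta:\ulM\Witt_n\to \omega^*\mathrm{H}^1_{\et,p^n}$, with the known computation of $\omega^{\CI}\Witt_n$ at a henselian DVF (e.g.\ via the results of \cite{KellyMiyazaki_Hodge2}, \cite{Koizumi-blowup}, and \cite{RSram3}). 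One then compares the outcome with the Brylinski--Kato filtration \eqref{eq:Hone_filtration}: they coincide at all integers $r$ outside of a distinguished set of Artin--Schreier--Witt jump levels (concretely, those in $p\mathbb{Z}_{>0}$), and at those exceptional levels $\omega^{\CI}$ is a priori strictly larger than $\Fil_r$.

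At non-exceptional levels, the desired inclusion is immediate from the identification. At an exceptional level $r\in p\mathbb{Z}_{>0}$, we handle the discrepancy exactly as in the last paragraph of the proof of Theorem \ref{thm;MOmega_mod}: given $a\in \omega^{\exc}\mathrm{H}^1_{\et,p^n}(\mathcal{O}_L,\mathfrak{m}_L^r)$, by definition of $\omega^{\exc}$ there exists an integer $N>0$ with $p\nmid N$ such that $a\in \omega^{\CI}\mathrm{H}^1_{\et,p^n}(\mathcal{O}_L,\mathfrak{m}_L^{r-1/N})$. Choose a totally tamely ramified extension $L'/L$ of degree $N$ and let $\pi:\Spec L'\to\Spec L$ denote the canonical map. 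By functoriality together with the non-exceptional case applied to $L'$ (since $rN-1\notin p\mathbb{Z}_{>0}$), we obtain
\[
\pi^*a \in \Fil_{rN-1}\mathrm{H}^1_{\et,p^n}(L') \subset \Fil_{rN}\mathrm{H}^1_{\et,p^n}(L').
\]
Applying the trace compatibility of ramification filtrations (part (2) of Definition \ref{def:ram_fil}), which is established for the Brylinski--Kato filtration in \cite[Lemma 4.11]{Koizumi-blowup} via the surjection $\delta$, we conclude $Na = \Tr_{L'/L}(\pi^*a)\in \Fil_r\mathrm{H}^1_{\et,p^n}(L)$. Since $N$ is prime to $p$ and $\mathrm{H}^1_{\et,p^n}$ is $p$-primary, $N$ acts invertibly, yielding $a\in \Fil_r\mathrm{H}^1_{\et,p^n}(L)$ as desired.

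The main obstacle is to pin down the explicit local formula for $\omega^{\CI}\mathrm{H}^1_{\et,p^n}(\mathcal{O}_L,\mathfrak{m}_L^r)$ with full control over the jump phenomenon. The route through $\delta$ together with the Witt-vector computation is the cleanest: one needs to check that the image of $\Fil_r\Witt_n(L)\cap \omega^{\CI}\Witt_n(\mathcal{O}_L,\mathfrak{m}_L^r)$ under $\delta$ exhausts $\omega^{\CI}\mathrm{H}^1_{\et,p^n}(\mathcal{O}_L,\mathfrak{m}_L^r)$ away from jump levels, and to identify the precise discrepancy at jump levels, which is the content of the refined Artin conductor theory of Kato--Matsuda and its motivic incarnation in \cite{RSram3}.
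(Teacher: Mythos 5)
Your overall architecture matches the paper's: establish one inclusion via Lemma \ref{lem:MHone_rec}, reduce the other to a local statement at $L\in\Phi$, reduce to integral $r$, invoke a formula for $\omega^{\CI}\mathrm{H}^1_{\et,p^n}(\mathcal{O}_L,\mathfrak{m}_L^r)$, and at the exceptional levels $r\in p\mathbb{Z}_{>0}$ run the tame-base-change-and-trace argument exactly as in the proof of Theorem \ref{thm;MOmega_mod}. That last ingredient — pulling back along a totally tamely ramified $L'/L$ of degree $N$ with $p\nmid N$, observing $rN-1\notin p\mathbb{Z}_{>0}$, applying the trace axiom of Definition \ref{def:ram_fil}, and dividing by $N$ — is precisely what the paper does, and you have it right.

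The gap is in your treatment of the central local computation. You propose to deduce an explicit formula for $\omega^{\CI}\mathrm{H}^1_{\et,p^n}(\mathcal{O}_L,\mathfrak{m}_L^r)$ from the known computation of $\omega^{\CI}\Witt_n$ by pushing forward along $\delta$, appealing to Lemma \ref{lem:MH1_is_Im_of_delta}. But that lemma identifies the image of $\delta$ on $\ulM\Witt_n$ with $\ulM\mathrm{H}^1_{\et,p^n}$; it says nothing about $\omega^{\CI}$. The functor $\omega^{\CI}$ is a \emph{maximal} LS-cube-invariant subobject construction and does not commute with quotients, so there is no a priori reason why $\delta\bigl(\omega^{\CI}\Witt_n(\mathcal{O}_L,\mathfrak{m}_L^r)\bigr)$ should exhaust $\omega^{\CI}\mathrm{H}^1_{\et,p^n}(\mathcal{O}_L,\mathfrak{m}_L^r)$; one only gets an inclusion. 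You flag this yourself as ``the main obstacle,'' and indeed it is: this is exactly the content one cannot take for granted. The paper sidesteps the issue entirely by invoking \cite[Theorem 8.8]{RS21}, which directly identifies $\omega^{\CI}\mathrm{H}^1_{\et,p^n}(\mathcal{O}_L,\mathfrak{m}_L^r)$ with the \emph{non-logarithmic} filtration $\Fil_r^{\mathrm{nonlog}}\mathrm{H}^1_{\et,p^n}(L)$, a result proven on the $\mathrm{H}^1$-side rather than derived from $\Witt_n$ through $\delta$. With that citation in hand, the comparison to the Brylinski--Kato filtration $\Fil_r$ at non-jump levels and the tame-trace argument at jump levels goes through as you describe. (Two small corrections: the relevant reference is \cite{RS21}, not \cite{RSram3}, which the paper only uses for $\Omega^q$; and \cite[Lemma 4.11]{Koizumi-blowup} establishes the trace axiom for the filtration on $\Witt_n$, not directly on $\mathrm{H}^1_{\et,p^n}$ — the latter does descend through $\delta$, but that, too, should be stated rather than assumed.)
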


\begin{proof}
By Lemma \ref{lem:MHone_rec}, we have $\ulM \mathrm{H}^1_{\et, p^n}(\mathcal{X}) \subset \omega^\exc\mathrm{H}^1_{\et, p^n}(\mathcal{X})$.
Let us prove the opposite inclusion.
It suffices to prove that for each $L\in \Phi$ and $r\in \Q_{\geq 0}$, 
\eq{eq0;prop;MHone_mod}{
\omega^\exc \mathrm{H}^1_{\et, p^n}(\sO_L,\fm_L^{\lceil r \rceil})\subset  \Fil_r\mathrm{H}^1_{\et, p^n}(L).}
Since the both sides of \eqref{eq0;prop;MHone_mod} depend only on $\lceil r \rceil$, we may assume $r\in \Z$.
We use existing results computing $\omega^{\CI}\mathrm{H}^1_{\et, p^n}(\mathcal{O}_L,\mathfrak{m}_L^r)$.
Namely, we have
$$
    \omega^{\CI}\mathrm{H}^1_{\et, p^n}(\mathcal{O}_L,\mathfrak{m}_L^r)
    =\Fil_r^{\mathrm{nonlog}}\mathrm{H}^1_{\et, p^n} (L)
$$
by \cite[Theorem 8.8]{RS21}, where $\Fil_r^{\mathrm{nonlog}}$ is the \emph{non-logarithmic filtration} on $\mathrm{H}^1_{\et, p^n}(L)$.
Since $\Fil_r^{\mathrm{nonlog}}$ coincides with $\Fil_r$ when $r\not\in p\mathbb{Z}_{>0}$, this proves \eqref{eq0;prop;MHone_mod} for $r\not\in p\mathbb{Z}_{>0}$.
When $r\in p\mathbb{Z}_{>0}$, we have to take care of the difference between $\omega^{\CI}$ and $\omega^{\exc}$.
Let $a\in \omega^{\exc}\mathrm{H}^1_{\et, p^n}(\mathcal{O}_L,\mathfrak{m}_L^{r})$, where $r\in p\mathbb{Z}_{>0}$.
Then, we have $a\in \omega^{\CI}\mathrm{H}^1_{\et, p^n}(\mathcal{O}_L,\mathfrak{m}_L^{r-(1/N)})$ for some positive integer $N$ with $p\nmid N$.
Take a totally ramified extension $L'/L$ with ramification index $N$.
Let $\pi\colon \Spec L'\to \Spec L$ denote the canonical map.
Then, we have
$$
    \pi^*a \in \omega^{\CI}\mathrm{H}^1_{\et, p^n}(\mathcal{O}_{L'},\mathfrak{m}_{L'}^{rN-1})=\Fil_{rN-1}\mathrm{H}^1_{\et, p^n}(L') \subset \Fil_{rN}\mathrm{H}^1_{\et, p^n}(L').
$$
Since $\Fil$ is a ramification filtration on $\mathrm{H}^1_{\et, p^n}$, we get
$$
    Na=\Tr_{L'/L}(\pi^*a)\in \Fil_r\mathrm{H}^1_{\et, p^n}(L).
$$
Since $p\nmid N$, we get $a\in \Fil_r\mathrm{H}^1_{\et, p^n}(L)$.
This proves \eqref{eq0;prop;MHone_mod} for $r\in p\mathbb{Z}_{>0}$.
\end{proof}

\begin{proof}[Proof of Theorem \ref{thm:MHone_exc}]
    By Proposition \ref{prop;MHone_mod}, we have the equality for the $p$-primary part.
    It suffices to show that for any positive integer $m$ with $p\nmid m$, we have
    $$
        \ulM\mathrm{H}^1_{\et,m}(\mathcal{X})=\omega^\exc\mathrm{H}^1_{\et,m}(\mathcal{X}).
    $$
    By definition, we have $\ulM\mathrm{H}^1_{\et,m}(\mathcal{X})=\mathrm{H}^1_{\et,m}(\mathcal{X}^\circ)=\omega^*\mathrm{H}^1_{\et,m}(\mathcal{X})$.
    Therefore, the above equality follows from the $\mathbb{A}^1$-invariance of $\mathrm{H}^1_{\et,m}$ and Proposition \ref{prop:mod_for_A1_invariants}.
\end{proof}

We expect that Theorem \ref{thm:MHone_exc} extends to higher unramified cohomologies:

\begin{conj}\label{conj:unramified}
    For any $q\geq 0$ and $\mathcal{X}=(X,D)\in \mSm$, we have
    \[ \ulM \mathrm{H}^{q+1}_\et(\mathcal{X}) = \omega^\exc \mathrm{H}^{q+1}_\ur(\mathcal{X})\]
    as subgroups of $\mathrm{H}^{q+1}_{\ur}(\mathcal{X}^\circ)$.
    In particular, the sheaf $\ulM\mathrm{H}^{q+1}_\ur$ on $\mSm$ is $(\CI\cup\BI)$-local and hence defines an object $\mathrm{mH}^{q+1}_\ur$ of $\mDA^\eff(k)$.
    Moreover, there is an equivalence
    $$
        \Omega_{S^1_t}(\mathrm{mH}^{q+1}_{\ur}) \simeq \mathrm{mH}^q_{\ur}.
    $$
\end{conj}

\subsection{$(-)^{\modulus}$ for rank $1$ connections}\label{sec:Conn_exc}

The analogy between the wild ramifications of $\ell$-adic sheaves on varieties in positive characteristic and the irregular singularities of integrable connections on varieties in characteristic $0$ has been pointed out by many authors, e.g., \cite{Deligne70}, \cite{Kato94}.
The goal of this subsection is to show that the filtrations capturing the irregular singularities of rank $1$ connections are representable in our motivic homotopy category.

Let $k$ be a field of characteristic $0$.
For any $X \in \Sm_k$, consider the morphism
\[
    \dlog_X : \OO_X^\times \to \Omega^1_X; \quad u \mapsto \dlog (u):=du/u.
\]
Recall from \cite[Lemma 6.9]{RSram1} that this induces a morphism $\dlog : \OO^\times_X \to Z\Omega^1_X \subset \Omega^1_X$ in $\RSC_{\Nis}$, where $Z\Omega^1_X \subset \Omega^1_X$ is the kernel of the differential $d : \Omega^1_X \to \Omega^2_X$.

It is well-known that, for each $X \in \Sm_k$, the cokernel of $\dlog_X\colon \OO^\times_X \to \Omega^1_X$ in the Zariski topology is canonically identified with the Zariski sheaf of isomorphism classes of rank $1$ connections, which we denote by $\Conn^1_X$:
\[
    \Conn^1_X \cong \Coker_\Zar(\dlog_X : \OO_X^\times \to \Omega^1_X).
\]
Similarly, denoting by $\Conn^1_{\int,X} \subset \Conn^1_X$ the subsheaf of integrable rank $1$ connections on $X$, we have a canonical identification
\[
    \Conn^1_{\int,X} \cong \Coker_\Zar (\dlog_X : \OO_X^\times \to Z\Omega^1_X).
\]
Since $\RSC \subset \PSh(\Cor,\Ab)$ is closed under taking quotients (see \cite[Remark 2.2.5]{KSY2}), the cokernel presheaf of $\dlog\colon \mathcal{O}^\times\to \Omega^1$ (resp. $\dlog\colon \mathcal{O}^\times\to Z\Omega^1$) is a reciprocity presheaf.
By \cite[Corollary 3.2.2]{KSY2}, we have
\begin{align*}
    \Conn^1 &:= \Coker_\Zar(\dlog : \OO^\times \to \Omega^1)\cong \Coker_\Nis(\dlog : \OO^\times \to \Omega^1),\\
    \Conn^1_\int &:= \Coker_\Zar(\dlog : \OO^\times \to Z\Omega^1)\cong \Coker_\Nis(\dlog : \OO^\times \to Z\Omega^1).
\end{align*}
Moreover, \cite[Theorem 2.4.1 (1)]{KSY2} shows that $\Conn^1$ and $\Conn^1_\int$ are reciprocity sheaves.

\begin{definition}
    For $L\in \Phi$, we define the \emph{irregularity filtration} $\{\Fil_r\Conn^1(L)\}_{r\in \mathbb{Q}_{\geq 0}}$ on $\Conn^1(L)$ by
    $$
        \Fil_r\Conn^1(L)=
            \Im(\Fil_r\Omega^1(L)\to \Conn^1(L)).
    $$
    We also define the filtration $\{\Fil_r\Conn^1_\int(L)\}_{r\in \mathbb{Q}_{\geq 0}}$ on $\Conn^1_\int(L)$ to be the restriction of the above filtration to $\Conn^1_\int$.
    The fact that $\{\Fil_r \Omega^1(L)\}_{r \in \mathbb{Q}_{\geq 0}}$ defines a ramification filtration (see Definition \ref{def;ramfil} for the terminology) implies that so does $\{\Fil_r \Conn^1(L)\}_{r \in \mathbb{Q}_{\geq 0}}$, and hence we obtain a sheaf $\ulM \Conn^1 := (\Conn^1)_{\Fil}$ on $\ulMCor$.
    Similarly, we obtain a sheaf $\ulM\Conn^1_\int := (\Conn^1_\int)_{\Fil}$ on $\ulMCor$.
\end{definition}

\begin{lemma}\label{lem:conn_epi}
    Let $\mathcal{X}=(X,D) \in \mSm$ and write $j\colon U:=X-D\to X$ for the inclusion.
    Then, the canonical morphism of Zariski sheaves
    $$
        j_*\Omega^1_U\to j_*\Conn_U
    $$
    is an epimorphism.
\end{lemma}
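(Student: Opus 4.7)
The plan is to verify the claim on Zariski stalks. For any $x\in X$, an epimorphism of Zariski sheaves amounts to surjectivity of $(j_*\Omega^1_U)_x\to (j_*\Conn^1_U)_x$. For $x\in U$ this is immediate because $\Omega^1_U\to \Conn^1_U$ is an epimorphism of Zariski sheaves on $U$ by the very construction of $\Conn^1_U$ as the Zariski cokernel of $\dlog_U$. So the content of the lemma lies entirely in the case $x\in |D|$.

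For such an $x$, we would represent a given section of $(j_*\Conn^1_U)_x$ by the class of a rank $1$ connection $(L,\nabla)$ on $V-|D|$ for some open $V\ni x$, using the identification of $\Conn^1$ with the Zariski sheaf of isomorphism classes of rank $1$ connections recalled just before the lemma. The task then reduces to trivializing the underlying line bundle $L$ on $V'-|D|$ for some smaller open $V'\subset V$ containing $x$: once such a trivialization is available, the connection takes the form $d+\omega$ with $\omega\in\Omega^1(V'-|D|)$, and $\omega$ is the desired lift.

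The trivialization will follow from the vanishing of a local Picard group. Set $R=\mathcal{O}_{X,x}$; Auslander--Buchsbaum ensures that $R$ is a UFD. If $f_1,\dots,f_m\in R$ are local equations for the components of $|D|$ through $x$, then $\Spec R-|D|=\Spec R[(f_1\cdots f_m)^{-1}]$ is a localization of a UFD, hence itself a UFD, so $\Pic(\Spec R-|D|)=0$. Since $\Pic$ of affine schemes commutes with filtered colimits of rings, we obtain $\colim_{V\ni x,\, V\text{ affine}}\Pic(V-|D|)=0$, so the class of $L$ becomes trivial on some $V'-|D|$, as needed. I do not anticipate any substantive obstacle; the core of the argument is just the UFD property of regular local rings combined with the concrete description of $\Conn^1$.
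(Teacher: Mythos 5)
Your proof is correct, and it takes a noticeably different route from the paper's. The paper works homologically: it identifies the kernel of $\dlog$ as the constant sheaf $\overline{k}_X$ (the separable closure of $k$ in the function field), obtains the short exact sequence $0\to \mathcal{O}_U^\times/\overline{k}_X\to \Omega^1_U\to \Conn^1_U\to 0$, and reduces the statement to $\mathrm{R}^1j_*(\mathcal{O}_U^\times/\overline{k}_X)=0$. That vanishing is then squeezed between $\mathrm{R}^1j_*\mathcal{O}_U^\times$ (whose stalks are $\Pic(\Spec\mathcal{O}_{X,x}\times_X U)$, killed by the surjectivity of $\Pic(\Spec\mathcal{O}_{X,x})\to \Pic(\Spec\mathcal{O}_{X,x}\times_X U)$ from the Stacks project) and $\mathrm{R}^2j_*\overline{k}_X$, which vanishes because constant sheaves are flasque in the Zariski topology. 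You instead argue stalkwise and geometrically: represent a germ of $j_*\Conn^1_U$ by a rank~$1$ connection $(L,\nabla)$, and trivialize $L$ near $x$ by showing $\Pic(\Spec\mathcal{O}_{X,x}\times_X U)=0$ directly from the UFD property of the regular local ring $\mathcal{O}_{X,x}$ and the permanence of the UFD property under localization, together with the fact that $\Pic$ commutes with the filtered limit along affine transition maps; the connection then becomes $d+\omega$, and $\omega$ is the lift.

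Both arguments turn on the same local Picard vanishing, but they package it differently. Your route is more self-contained and elementary: it avoids introducing the constant sheaf $\overline{k}_X$ and the $\mathrm{R}^2j_*$ computation altogether, at the cost of invoking Auslander--Buchsbaum and the colimit compatibility of $\Pic$; the paper's route is shorter modulo the citation to Stacks~0BD9 and the exact-sequence bookkeeping. One small point worth making explicit if you wrote this up: $V-|D|$ is indeed affine for $V$ affine because $|D|$ is locally principal (it is an SNCD, hence a Cartier divisor), so the filtered colimit you invoke is genuinely a colimit of rings; this is implicit but should be stated.
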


\begin{proof}
    We may assume that $X$ is connected.
    Let $\overline{k}_X$ denote the separable closure of $k$ in the function field of $X$, regarded as a constant Zariski sheaf on $X$.
    By definition, we have an exact sequence of Zariski sheaves
    $$
        0\to \mathcal{O}_U^\times/\overline{k}_X\to \Omega^1_U\to \Conn^1_U\to 0
    $$
    on $U$.
    Therefore, it suffices to show that $\mathrm{R}^1j_*(\mathcal{O}^\times_U/\overline{k}_X)=0$.
    Consider the exact sequence
    $$
        \mathrm{R}^1j_*\mathcal{O}^\times_U\to \mathrm{R}^1j_*(\mathcal{O}^\times_U/\overline{k}_X)\to \mathrm{R}^2j_*\overline{k}_X.
    $$
    The last term is $0$ because constant sheaves are flasque in the Zariski topology.
    On the other hand, for $x\in X$ we have
    $$  
    (\mathrm{R}^1j_*\mathcal{O}^\times_U)_x\cong\Pic(\Spec \mathcal{O}_{X,x}\times_X U)=0,
    $$
    because $\Pic(\Spec \mathcal{O}_{X,x})\to \Pic(\Spec \mathcal{O}_{X,x}\times_X U)$ is surjective \cite[Lemma 0BD9]{stacks-project}.
    This shows that $\mathrm{R}^1j_*(\mathcal{O}^\times_U/\overline{k}_X)=0$.
\end{proof}

\begin{lemma}\label{lem:conn_purity}
    Let $\mathcal{X}=(X,D) \in \mSm$.
    Write $D=\sum_{i=1}^m r_iD_i$, where $D_i$ are smooth and irreducible.
    Let $\xi_i$ be the generic point of $D_i$ and $L_i = \Frac\mathcal{O}_{X,\xi_i}^h$.
    Write $\iota_i\colon \Spec L_i\to X$ for the canonical morphism.
    Then, the following sequence of Zariski sheaves on $X$ is exact:
    $$
        \ulM\Omega^1_\mathcal{X}\to (\omega^*\Conn^1)_\mathcal{X}\to \bigoplus_{i=1}^m\iota_{i,*}\left(\dfrac{\Conn^1(L_i)}{\Fil_{r_i}\Conn^1(L_i)}\right).
    $$
\end{lemma}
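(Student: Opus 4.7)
The plan is to check exactness of Zariski sheaves stalk-wise at each $x \in X$. Containment of the image in the kernel is immediate from the definitions: for $a \in \ulM\Omega^1(\mathcal{X})$, applying Definition \ref{def;FFil} to the canonical inclusion $\Spec L_i \hookrightarrow \Spec \mathcal{O}_{L_i} \to X$, whose pullback of $D$ is $r_i$ times the closed point, yields $a|_{L_i}\in \Fil_{r_i}\Omega^1(L_i)$, whence its image in $\Conn^1(L_i)$ lies in $\Fil_{r_i}\Conn^1(L_i)$ by the very definition of the latter filtration.

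For the reverse inclusion, I fix a germ $\alpha$ of $(\omega^*\Conn^1)_\mathcal{X}$ at $x \in X$ whose image in $\Conn^1(L_i)/\Fil_{r_i}\Conn^1(L_i)$ vanishes for every $i$ with $x \in D_i$. After shrinking to a Zariski open $V \ni x$ on which each such $D_i$ has a local equation $f_i \in \Gamma(V,\mathcal{O})$, Lemma \ref{lem:conn_epi} produces a lift $\tilde\alpha \in j_*\Omega^1_U(V)$ of $\alpha|_V$. The hypothesis supplies, at each such $\xi_i$, some $\beta_i \in \Fil_{r_i}\Omega^1(L_i)$ and $u_i \in L_i^\times$ with $\tilde\alpha|_{L_i} = \beta_i + \dlog(u_i)$; decomposing $u_i = f_i^{n_i} v_i$ with $v_i \in (\mathcal{O}_{X,\xi_i}^h)^\times$ and observing that $\dlog(v_i) \in \Omega^1(\mathcal{O}_{L_i}) = \Fil_0 \Omega^1(L_i)$, I obtain $\tilde\alpha|_{L_i} - n_i\cdot df_i/f_i \in \Fil_{r_i}\Omega^1(L_i)$.

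Replacing $\tilde\alpha$ by $\tilde\alpha' := \tilde\alpha - \dlog\bigl(\prod_{i\colon x\in D_i} f_i^{n_i}\bigr) \in j_*\Omega^1_U(V)$, which shares the same image $\alpha$ in $\omega^*\Conn^1$, I claim $\tilde\alpha' \in \ulM\Omega^1_\mathcal{X}(V)$. Using the identification $\ulM\Omega^1_\mathcal{X} = \Omega^1_X(\log|D|)(\lceil D\rceil - |D|)$ from \eqref{eq:MOmega}, which is locally free and hence reflexive on the regular scheme $V$, this membership can be tested at codimension-one points: outside $|D|$ it is automatic since $\tilde\alpha'$ is regular there, while at each $\xi_i$ the previous step, together with the observation that $df_j/f_j \in \Fil_0\Omega^1(L_i)$ for $j\neq i$ (as $f_j$ is a unit at $\xi_i$), yields $\tilde\alpha'|_{L_i}\in \Fil_{r_i}\Omega^1(L_i)$, which by \eqref{eq:MOmega_filtration} is precisely the $L_i$-point of $\ulM\Omega^1_\mathcal{X}$. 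The heart of the argument is this local-to-global passage from the henselian data $(\beta_i,u_i)$ at each $\xi_i$ to the single Zariski-local function $\prod f_i^{n_i}$; it works because the unit parts $v_i$ only introduce $\Fil_0$-errors via $\dlog$, and because the correction attached to $D_j$ is harmless at $\xi_i$ for $j \neq i$.
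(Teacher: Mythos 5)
Your argument is correct and rests on the same two inputs as the paper's $3\times 3$ diagram chase---Lemma \ref{lem:conn_epi} to lift from $\Conn^1$ to $\Omega^1$, together with the observation that $\dlog(L_i^\times)\subset\Omega^1(\mathcal{O}_{L_i})(\log)\subset\Fil_{r_i}\Omega^1(L_i)$ for $r_i>0$ (which is precisely what makes the paper's vertical arrow $\beta$ an isomorphism)---so it is essentially the paper's proof unrolled by hand, with the reflexivity check playing the role of the exactness of the top row. One small remark: the explicit correction $\tilde\alpha'=\tilde\alpha-\dlog\bigl(\prod_i f_i^{n_i}\bigr)$ is dispensable, since $\dlog(u_i)\in\Fil_{r_i}\Omega^1(L_i)$ already shows $\tilde\alpha|_{L_i}\in\Fil_{r_i}\Omega^1(L_i)$ directly, and in any case $\dlog\bigl(\prod_i f_i^{n_i}\bigr)$ is itself a section of $\ulM\Omega^1_\mathcal{X}(V)$, so subtracting it changes nothing.
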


\begin{proof}
    Consider the following commutative diagram of Zariski sheaves on $X$:
    \[
    \xymatrix{
    \ulM\Omega^1_\mathcal{X}\ar[r]\ar@{=}[d]&
    (\omega^*\Omega^1)_\mathcal{X}\ar[r]\ar[d]^-{\alpha}&
    \displaystyle\bigoplus_{i=1}^m\iota_{i,*}\left(\dfrac{\Omega^1(L_i)}{\Fil_{r_i}\Omega^1(L_i)}\right)\ar[d]^-{\beta}\\
    \ulM\Omega^1_\mathcal{X}\ar[r]&
    (\omega^*\Conn^1)_\mathcal{X}\ar[r]&
    \displaystyle\bigoplus_{i=1}^m\iota_{i,*}\left(\dfrac{\Conn^1(L_i)}{\Fil_{r_i}\Conn^1(L_i)}\right)
    }
    \]
    The top row is exact by \eqref{eq:MOmega}.
    The morphism $\alpha$ is an epimorphism by lemma \ref{lem:conn_epi}.
    Moreover, the morphism $\beta$ is an isomorphism because the image of
    $\dlog\colon L_i^\times\to \Omega^1(L_i)$
    is contained in $\Fil_{r_i}\Omega^1(L_i)$.
    The exactness now follows by a diagram chasing.
\end{proof}

\begin{lemma}\label{lem:MConn1_is_Im_of_MOmega}
    For any $\mathcal{X}=(X,D) \in \mSm$, the sheaf $\ulM\Conn^1_\mathcal{X}$ coincides with the image of the morphism
    $$
    \ulM\Omega^1_\mathcal{X}\to(\omega^*\Conn^1)_\mathcal{X},
    $$
    which is induced by adjunction from the quotient morphism $\omega_! \ulM \Omega^1 \cong \Omega^1 \twoheadrightarrow \Conn^1$.
\end{lemma}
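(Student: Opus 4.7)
The plan is to identify both $\ulM\Conn^1_\mathcal{X}$ and the image of $\ulM\Omega^1_\mathcal{X}$ as the same subsheaf of $(\omega^*\Conn^1)_\mathcal{X}$, using the purity-type exact sequence from Lemma \ref{lem:conn_purity} as the central tool.

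First I would dispatch the easy containment: the image of $\ulM\Omega^1_\mathcal{X}\to(\omega^*\Conn^1)_\mathcal{X}$ is contained in $\ulM\Conn^1_\mathcal{X}$. This is essentially by construction. Given a local section $\alpha\in\ulM\Omega^1(U,D|_U)$, Definition \ref{def;FFil} yields $\rho^*\alpha\in\Fil_{v_L(\tilde\rho^*D|_U)}\Omega^1(L)$ for every diagram as in \eqref{DVR_diagram}. Since $\Fil_r\Conn^1(L)$ is defined as the image of $\Fil_r\Omega^1(L)$ under $\Omega^1(L)\twoheadrightarrow\Conn^1(L)$, pushing forward shows that the image of $\alpha$ in $\Conn^1(\mathcal{X}^\circ\cap U)$ satisfies the bound required by Definition \ref{def;ramfil}, hence defines a section of $\ulM\Conn^1(U,D|_U)$.

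For the reverse inclusion, I would invoke Lemma \ref{lem:conn_purity}, which identifies the image of $\ulM\Omega^1_\mathcal{X}$ with the kernel of the evaluation map
\[
(\omega^*\Conn^1)_\mathcal{X}\to \bigoplus_{i=1}^m\iota_{i,*}\!\left(\dfrac{\Conn^1(L_i)}{\Fil_{r_i}\Conn^1(L_i)}\right).
\]
It then suffices to verify that every local section of $\ulM\Conn^1_\mathcal{X}$ maps to zero in this direct sum. Given $a\in\ulM\Conn^1(\mathcal{X})$, I would apply the defining boundedness condition of Definition \ref{def;ramfil} with $L=L_i$ and $\rho\colon\Spec L_i\to\mathcal{X}^\circ$ the canonical map induced by the generic point $\xi_i$ of $D_i$, extending to $\tilde\rho\colon\Spec\mathcal{O}_{L_i}\to X$. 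Writing $D=\sum_j r_jD_j$, only $D_i$ passes through $\xi_i$, so $v_{L_i}(\tilde\rho^*D)=r_i$ and therefore $a|_{L_i}\in\Fil_{r_i}\Conn^1(L_i)$, giving the required vanishing.

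I do not anticipate any substantive obstacle for this lemma in isolation: once Lemma \ref{lem:conn_purity} is in hand, the statement is a formal consequence of how the ramification filtration on $\Conn^1(L)$ was defined as the image of the one on $\Omega^1(L)$. The real work has been pushed into Lemma \ref{lem:conn_purity}, which in turn relies on the surjectivity of Lemma \ref{lem:conn_epi} and the observation that $\dlog(L_i^\times)\subset\Fil_{r_i}\Omega^1(L_i)$ for every $r_i>0$ so that the quotient sheaves at the generic points agree.
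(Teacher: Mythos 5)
Your proposal is correct and follows essentially the same route as the paper's proof: the easy containment is obtained by pushing the pointwise bounds on $\ulM\Omega^1$ forward along $\Omega^1(L)\twoheadrightarrow\Conn^1(L)$, and the reverse containment is deduced from the purity exact sequence of Lemma \ref{lem:conn_purity} by evaluating the boundedness condition of $\ulM\Conn^1$ at the generic points $\xi_i$, where $v_{L_i}(\widetilde{\rho}^*D)=r_i$. The only cosmetic difference is that you phrase the second step as verifying membership in the kernel of the residue map, while the paper phrases it as the element lying in the image Zariski-locally; these are the same by exactness.
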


\begin{proof}
    By the definition of the irregularity filtration on $\Conn^1$, the morphism in question factors as
    $$
        \ulM\Omega^1_\mathcal{X}\to
        \ulM\Conn^1_\mathcal{X}\to 
        (\omega^*\Conn^1)_\mathcal{X}.
    $$
    This shows that the image of $\ulM\Omega^1_\mathcal{X}\to(\omega^*\Conn^1)_\mathcal{X}$ is contained in $\ulM\Conn^1_\mathcal{X}$.

    Conversely, suppose that $a\in \ulM\Conn^1(\mathcal{X})$.
    Write $D=\sum_{i=1}^m r_iD_i$, where $D_i$ are smooth and irreducible.
    Let $\xi_i$ be the generic point of $D_i$ and $L_i = \Frac\mathcal{O}_{X,\xi_i}^h$.
    Our assumption implies that $a\in \Fil_{r_i}\Conn^1(L_i)$ for $i=1,2,\dots,m$.
    By Lemma \ref{lem:conn_purity}, this implies that, Zariski locally on $X$, the element $a$ lies in the image of $\ulM\Omega^1(\mathcal{X})$.
    This proves that $\ulM\Conn^1_\mathcal{X}$ is contained in the image of $\ulM\Omega^1_\mathcal{X}\to(\omega^*\Conn^1)_\mathcal{X}$.
\end{proof}

By our general construction from Definition \ref{omegaexc} and Lemma \ref{omega_exc_sheaf}, we obtain an excellent LS-cube invariant Nisnevich sheaf $\omega^{\exc}\Conn^1$ on $\ulMCor$, whose cohomology is readily representable in $\mDA^{\eff}(k)$ by Theorem \ref{thm:mapRG}.
We show that $\omega^{\exc}\Conn^1$ and $\ulM\Conn^1$ coincides on $\mSm$:

\begin{theorem} \label{thm:ConnComp}
    For any $\mathcal{X}=(X,D)\in \mSm$, we have
    \[
    \ulM\Conn^1(\mathcal{X}) = \omega^{\exc}\Conn^1(\mathcal{X})
    \]
    as subgroups of $\Conn^1(\mathcal{X}^\circ)$.
    In particular, the sheaf $\ulM\Conn^1|_{\mSm}$ on $\mSm$ is $(\CI\cup\BI)$-local and hence defines an object $\mathrm{m}\Conn^1$ of $\mDA^\eff(k)$.
\end{theorem}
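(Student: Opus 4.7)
The plan is to establish the two inclusions separately, mirroring the strategy of Theorem \ref{thm;MOmega_mod} and Theorem \ref{thm:MHone_exc}.

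For the easy inclusion $\ulM\Conn^1(\mathcal{X}) \subset \omega^{\exc}\Conn^1(\mathcal{X})$: Lemma \ref{lem:MConn1_is_Im_of_MOmega} realizes $\ulM\Conn^1_{\mathcal{X}}$ as the image of $\ulM\Omega^1_{\mathcal{X}}$ in $(\omega^*\Conn^1)_{\mathcal{X}}$ under the map induced by the quotient $\Omega^1 \twoheadrightarrow \Conn^1$ in $\RSC_{\Nis}$. The construction $F \mapsto \omega^{\exc}F$ is functorial as a subsheaf of $\omega^*$ (if $(Y,E)$ is a modulus for $a \in F(X)$, it is automatically a modulus for $\phi(a) \in G(X)$ for any $\phi : F \to G$), so the quotient induces $\omega^{\exc}\Omega^1 \to \omega^{\exc}\Conn^1$; combining with $\ulM\Omega^1 \subset \omega^{\exc}\Omega^1$ from Lemma \ref{lem:MOmega_rec}, a diagram chase yields the desired containment.

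For the opposite inclusion $\omega^{\exc}\Conn^1(\mathcal{X}) \subset \ulM\Conn^1(\mathcal{X})$, the purity statement of Lemma \ref{lem:conn_purity} realizes $\ulM\Conn^1_{\mathcal{X}}$ as the kernel of the residue map
\[
    (\omega^*\Conn^1)_{\mathcal{X}} \to \bigoplus_{i=1}^m \iota_{i,*}\!\left(\Conn^1(L_i)/\Fil_{r_i}\Conn^1(L_i)\right),
\]
where $L_i = \Frac \mathcal{O}_{X,\xi_i}^h$ runs over generic points $\xi_i$ of the components of $|D|$. By semi-purity of $\omega^{\exc}\Conn^1$ and functoriality of restriction to the henselian local ring at $\xi_i$, this reduces the global problem to the local statement
\[
    \omega^{\exc}\Conn^1(\Spec \mathcal{O}_L, r[\mathfrak{m}_L]) \subset \Fil_r\Conn^1(L) \qquad (L \in \Phi,\, r \in \mathbb{Q}_{\geq 0}).
\]
Since $\ch(k)=0$, the right-hand side depends only on $\lceil r\rceil$, so one may assume $r \in \mathbb{Z}_{\geq 0}$; the case $r=0$ reduces to the identification $\Conn^1(\mathcal{O}_L) = \Fil_0\Conn^1(L)$, which follows from the elementary fact $\dlog(L^\times) \cap \Omega^1(\mathcal{O}_L) = \dlog(\mathcal{O}_L^\times)$.

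The main obstacle is the local inclusion for $r \geq 1$. The approach is to establish the stronger equality $\omega^{\CI}\Conn^1(\Spec \mathcal{O}_L, \mathfrak{m}_L^r) = \Fil_r\Conn^1(L)$ by combining the known identity $\omega^{\CI}\Omega^1(\Spec \mathcal{O}_L, \mathfrak{m}_L^r) = \Fil_r\Omega^1(L)$ of \cite[Theorem 6.4]{RS21} with a lifting argument: given $a \in \omega^{\CI}\Conn^1(\Spec \mathcal{O}_L, \mathfrak{m}_L^r)$, one must use the compactification-modulus data witnessing its membership to produce a lift $\tilde a \in \omega^{\CI}\Omega^1(\Spec \mathcal{O}_L, \mathfrak{m}_L^r) = \Fil_r\Omega^1(L)$, with the ambiguity in the choice of lift in $\Omega^1(L)$ modulo $\dlog(L^\times)$ being absorbed by $\dlog(L^\times) \subset \Fil_1\Omega^1(L) \subset \Fil_r\Omega^1(L)$ for $r \geq 1$. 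The transition from $\omega^{\CI}$ to $\omega^{\exc}$ is then automatic in characteristic zero, since $\lceil r(1-\varepsilon)\rceil = r$ for all sufficiently small positive $\varepsilon$.
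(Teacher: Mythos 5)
Your first inclusion and your reduction of the reverse inclusion to a local statement at henselian DVFs both agree in substance with the paper (the paper phrases the reduction directly through the defining filtration condition of $\ulM\Conn^1$ rather than through Lemma \ref{lem:conn_purity}, but the endpoint, namely the inclusion $\omega^{\exc}\Conn^1(\mathcal{O}_L,\mathfrak{m}_L^{\lceil r\rceil})\subset \Fil_r\Conn^1(L)$, is the same; also, the passage from $\omega^{\CI}$ to $\omega^{\exc}$ needs no $\varepsilon$-rounding, since $\omega^{\exc}F\subset\omega^{\CI}F$ holds by definition, which is how the paper concludes).

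The gap is in your treatment of the local statement for $r\geq 1$. The paper does not derive it from the $\Omega^1$ computation: it invokes \cite[Theorem 6.11]{RS21}, which asserts exactly the equality $\omega^{\CI}\Conn^1(\mathcal{O}_L,\mathfrak{m}_L^r)=\Fil_r\Conn^1(L)$. You instead propose to deduce this from \cite[Theorem 6.4]{RS21} by lifting a section $a\in\omega^{\CI}\Conn^1(\mathcal{O}_L,\mathfrak{m}_L^r)$ to some $\tilde a\in\omega^{\CI}\Omega^1(\mathcal{O}_L,\mathfrak{m}_L^r)$, and this lifting step is precisely the hard point and is not justified. Membership of $a$ in $\omega^{\CI}\Conn^1$ only says that cube-homotopic correspondences act equally on $a$ in the quotient $\Conn^1=\Omega^1/\operatorname{dlog}\mathcal{O}^\times$; any lift $\tilde a\in\Omega^1(L)$ therefore satisfies the required congruences only up to $\operatorname{dlog}$ of units, and nothing forces some lift to admit the same (or any controlled) modulus in $\Omega^1$. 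Equivalently, applying $\omega^{\CI}$ to the surjection $\Omega^1\twoheadrightarrow\Conn^1$ is not formally right exact, and the assertion that the images match at the level of henselian DVFs, with the filtration identification, is the actual content of \cite[Theorem 6.11]{RS21} (whose proof is a separate analysis identifying the motivic conductor of a rank-$1$ connection with its irregularity), not a consequence of the $\Omega^1$ case. Your observation that the ambiguity of the lift lies in $\operatorname{dlog}(L^\times)\subset\Fil_1\Omega^1(L)$ only shows that the conclusion is insensitive to the choice of lift once a bounded lift exists; it does not produce one. To make your route work you would have to prove the surjectivity of $\omega^{\CI}\Omega^1(\mathcal{O}_L,\mathfrak{m}_L^r)\to\omega^{\CI}\Conn^1(\mathcal{O}_L,\mathfrak{m}_L^r)$, which amounts to reproving the cited result; as written, the proposal has a genuine gap there.
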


\begin{remark}
    By the same argument, we can show $\ulM\Conn^1_{\int}(\mathcal{X}) = \omega^{\exc}\Conn^1_{\int}(\mathcal{X})$ for $\mathcal{X}\in \mSm$.
\end{remark}

\begin{proof}
Recall from Lemma \ref{lem:MConn1_is_Im_of_MOmega} that $\ulM\Conn^1_\mathcal{X}$ coincides with the image of the morphism
$$
    \ulM\Omega^1_\mathcal{X}\to
    (\omega^*\Conn^1)_\mathcal{X}.
$$
Consider the following commutative diagram of Nisnevich sheaves on $X$:
$$
    \xymatrix{
    \ulM\Omega^1_\mathcal{X}\ar@{^(->}[r]\ar@{->>}[d]&
    (\omega^*\Omega^1)_\mathcal{X}\ar[d]&
    (\omega^{\exc}\Omega^1)_\mathcal{X}\ar@{_(->}[l]\ar[d]\\
    \ulM\Conn^1_\mathcal{X}\ar@{^(->}[r]&
    (\omega^*\Conn^1)_\mathcal{X}&
    (\omega^{\exc}\Conn^1)_\mathcal{X}.\ar@{_(->}[l]
    }
$$
By Lemma \ref{lem:MOmega_rec}, we have $(\ulM\Omega^1)_\mathcal{X}\subset (\omega^{\exc}\Omega^1)_\mathcal{X}$.
It follows from the above commutative diagram that $\ulM \Conn^1_\mathcal{X} \subset (\omega^\exc\Conn^1)_\mathcal{X}$.

Let us prove the opposite inclusion.
It suffices to prove that for each $L\in \Phi$ and $r\in \Q_{\geq 0}$, 
\begin{align}\label{eq0;thm;MConn1_mod}
    \omega^\exc \Conn^1(\sO_L,\fm_L^{\lceil r \rceil})\subset  \Fil_r\Conn^1(L).
\end{align}
Since the both sides of \eqref{eq0;prop;MHone_mod} depend only on $\lceil r \rceil$, we may assume $r\in \Z$.
We use an existing result computing $\omega^{\CI}\Conn^1(\mathcal{O}_L,\mathfrak{m}_L^r)$.
Namely, we have
$$
    \omega^{\CI}\Conn^1(\mathcal{O}_L,\mathfrak{m}_L^r)
    =\Fil_r\Conn^1(L)
$$
by \cite[Theorem 6.11]{RS21}.
Since $\omega^\exc\Conn^1\subset \omega^{\CI}\Conn^1$, this proves \eqref{eq0;thm;MConn1_mod}.
\end{proof}

\printbibliography

\end{document}